\newtheorem{thm}{Theorem}[subsection]
\newtheorem{propose}[thm]{Proposition}
\newtheorem{lemma}[thm]{Lemma}
\newtheorem{cor}[thm]{Corollary}
\theoremstyle{definition}
\newtheorem{defn}[thm]{Definition}
\newtheorem{notation}[thm]{Notation}
\newtheorem{remark}[thm]{Remark}
\newtheorem{remarks}[thm]{Remarks}
\newtheorem{example}[thm]{Example}
\newtheorem{examples}[thm]{Examples}
\numberwithin{equation}{section}
\newcounter{spec}
\newenvironment{thlist}{\begin{list}{\rm{(\roman{spec})}}%
{\usecounter{spec}\labelwidth=20pt\itemindent=0pt\labelsep=10pt}}%
{\end{list}}
\renewcommand{\d}{\mbox{\LARGE $\cdot $}}
\newcommand{\Xs}{X_{\d}}            %For simplicial
\renewcommand{\hat}{\widehat}
\newcommand{\Spec}{\operatorname{Spec}} %for Spec
\newcommand{\Hom}{\operatorname{Hom}}      % For Hom
\newcommand{\Map}{\operatorname{Map}}      % For Map
\newcommand{\Ext}{\operatorname{Ext}}      % For Ext
\newcommand{\Biext} {\operatorname{Biext}}  % For Biext
\newcommand{\DM}{\operatorname{DM}}          % For Voevodsky DM
\newcommand{\M}{\mathcal{M}_1}   % For category of 1-Motives
\newcommand{\MM}{\mathcal{MM}}  
\renewcommand{\1}{{}_{\leq 1}}
\newcommand{\ncM}{\mathcal{M}_{\rm nc}} % Non-connected 1-Motives
\newcommand{\anM}{\mathcal{M}_{\rm anc}} % Alg. non-con. 1-Motives
\newcommand{\Shv}{\operatorname{Shv}}
\newcommand{\PST}{\operatorname{PST}}
\newcommand{\EST}{\operatorname{EST}}
\newcommand{\HI}{\operatorname{HI}}
\newcommand{\CH}{\operatorname{CH}}
\newcommand{\car}{\operatorname{char}}
\newcommand{\Div}{\operatorname{Div}}
\newcommand{\sext}{\text{${\mathcal E}xt\,$}}  % For sheaf Ext
\newcommand{\shom}{\text{${\mathcal H}om\,$}}  %For sheaf Hom
\newcommand{\ihom}{{\rm\underline{Hom}}}  % For internal Hom
\newcommand{\oo}{\operatornamewithlimits{\otimes}\limits}
\renewcommand{\P}{\mathbb{P}}   % For Projective space
\newcommand{\Aff}{\mathbb{A}}   % For Affine space
\newcommand{\A}{{\rm\underline A}}      % For additive cat.
\newcommand{\E}{{\rm\underline E}}      % For exact  cat.
\newcommand{\sH}{\mathcal{H}}
\newcommand{\sO}{\mathcal{O}}
\newcommand{\C}{\mathbb{C}}     % For Complex numbers
\newcommand{\Q}{\mathbb{Q}}     % For rational numbers
\newcommand{\Z}{\mathbb{Z}}     % For integers
\newcommand{\N}{\mathbb{N}}
\newcommand{\G}{\mathbb{G}}     % For group schemes
\newcommand{\HH}{\mathbb{H}}    % For Hyper
\newcommand{\EExt}{{\rm \mathbb{E}xt}} % For HyperExtension
\newcommand{\R}{\mathbb{R}}     % For RHyper
\newcommand{\uG}{\underline{G}}
\newcommand{\uR}{\underline{R}}
\newcommand{\uT}{\underline{T}}
\newcommand{\im}{\operatorname{Im}}        % For the image of a morphism
\renewcommand{\ker}{\operatorname{Ker}}  % For the kernel of a morphism
\newcommand{\coker}{\operatorname{Coker}} % For the cokernel of a morphism
\newcommand{\gr}{\operatorname{gr}}        % For associated Graded
\newcommand{\Pic}{\operatorname{Pic}}     % For the Picard variety
\newcommand{\RPic}{\operatorname{RPic}}     % For the Picard complex
\newcommand{\Alb}{\operatorname{Alb}}     % For the Albanese variety
\newcommand{\LAlb}{\operatorname{LAlb}}     % For the Albanese complex
\newcommand{\SAb}{\operatorname{SAb}}
\newcommand{\AbS}{\operatorname{AbS}}
\newcommand{\LA}[1]{\mbox{${\rm L}_{#1}{\rm Alb}$}}
\newcommand{\RA}[1]{\mbox{${\rm R}^{#1}{\rm Pic}$}}
\newcommand{\Tot}{\operatorname{Tot}}     % For the Total Complex
\newcommand{\tot}{\operatorname{tot}}     % For the total complex
\newcommand{\NS}  {\operatorname{NS}}      % For the N\'eron Severi group
\newcommand{\rank}{\operatorname{rank}}    % For rank
\newcommand{\tors}{{\operatorname{tors}}}
\newcommand{\qi}{{\rm q.i.}\,}      % For quasi-iso
\newcommand{\by}[1]{\stackrel{#1}{\rightarrow}}
\newcommand{\longby}[1]{\stackrel{#1}{\longrightarrow}}
\newcommand{\vlongby}[1]{\stackrel{#1}{\mbox{\large{$\longrightarrow$}}}}
\newcommand{\iso}{\stackrel{\sim}{\longrightarrow}}
\renewcommand{\tilde}{\widetilde}
\newcommand{\df}{\mbox{\,${:=}$}\,}
\newcommand{\ie}{{\it i.e. }}
\newcommand{\cf}{{\it cf. }}
\newcommand{\eg}{{\it e.g. }}
\newcommand{\resp}{{\it resp. }}
\newcommand{\loccit}{{\it loc. cit. }}
\newcommand{\et} {{\rm \acute{e}t}}
\newcommand{\eh} {{\rm \acute{e}h}}
\newcommand{\Zar}{{\rm Zar}}
\newcommand{\Nis}{{\rm Nis}}
\newcommand{\cdh}{{\rm cdh}}
\newcommand{\an}{{\rm an}}
\newcommand{\fr}{{\rm fr}}
\newcommand{\tor}{{\rm tor}}
\newcommand{\dis}{{\rm dis}}
\newcommand{\red}{{\rm red}}
\newcommand{\tf}{{\rm tf}}
\newcommand{\eff}{{\rm eff}}
\newcommand{\gm}{{\rm gm}}
\renewcommand{\bar}{\overline}
\newcommand{\into}{\hookrightarrow}
\renewcommand{\implies}{\mbox{$\Rightarrow$}}
\newcommand{\veq}{\mbox{\large $\parallel$}}  %For vertical `equals'
\newcommand{\sQ}{\mbox{\scriptsize{$\Q$}}}   %\Q in subscript
\newcommand{\limdir}[1]{\mathop{\rm
lim}_{\buildrel\longrightarrow\over{#1}}}
\newcommand{\liminv}[1]{\mathop{\rm
lim}_{\buildrel\longleftarrow\over{#1}}}
\newcommand{\onto}{\mbox{$\to\!\!\!\!\to$}}
\newcommand{\boxtensor}{\def\boxtimesten{\Box\kern-7.59pt\raise1.2pt
\hbox{$\times$} }}                                  %For boxtensor
\newcounter{elno}                   % This to number lists
\newcommand{\cA}{\mathcal{A}}
\newcommand{\cB}{\mathcal{B}}
\newcommand{\cE}{\mathcal{E}}
\newcommand{\cF}{\mathcal{F}}
\newcommand{\cH}{\mathcal{H}}
\newcommand{\cM}{\mathcal{M}}
\newcommand{\cO}{\mathcal{O}}
\newcommand{\cP}{\mathcal{P}}
\newcommand{\cS}{\mathcal{S}}
\newcommand{\cT}{\mathcal{T}}
\newcommand{\cU}{\mathcal{U}}
\renewcommand{\phi}{\varphi}
\renewcommand{\epsilon}{\varepsilon}
\begin{document}

\title{On the derived category of 1-motives, I}
\author{Luca Barbieri-Viale}
\address{Dipartimento di Matematica Pura e Applicata, Universit\`a degli Studi di
Padova\\ Via Trieste, 63\\35121 Padova\\ Italy}
\email{barbieri@math.unipd.it}
\author{Bruno Kahn}
\address{Institut de Math{\'e}matiques de Jussieu\\
175--179 rue du Chevaleret\\75013 Paris\\ France}
\email{kahn@math.jussieu.fr}
\date{June 8, 2007}

\begin{comment}
\begin{abstract}
We consider the category $\M$ of Deligne
$1$-motives over a perfect field $k$ of exponential characteristic $p$ and its derived category $D^b(\M[1/p])$ for a suitable exact structure on the  associated $\Z[1/p]$-linear category. As a first result, we refine a result of Voevodsky/Orgogozo by constructing a fully faithful embedding $\Tot:D^b(\M[1/p])\into
\DM^{\eff}_{\gm,\et}$ into an \'etale version of Voevodsky's
triangulated category of geometric motives. Our second main result is
that, after tensoring with $\Q$, $\Tot$  has a left adjoint
$\LAlb^\Q :\DM^{\eff}_{\gm,\et}\otimes\Q\to D^b(\M[1/p])\otimes\sQ$. If fact, $\LAlb^\Q$ has an integral refinement $\LAlb$ as a functor from $\DM_\gm^\eff$ to $D^b(\M[1/p])$. Composing with
duality, we obtain a related functor $\RPic$. These functors provide natural complexes of
1-motives $\LAlb(X)$,
$\LAlb^c (X)$, $\LAlb^*(X)$, $\RPic(X)$, $\RPic^c(X)$ and
$\RPic^*(X)$ associated to an algebraic variety $X$ over $k$. The
unit $a_X : M (X)\to \Tot \LAlb(X)$ provide a universal map in
$\DM^{\eff}_{\gm,\et}$, the motivic Albanese map, which
``contains" the classical Albanese map: it is an isomorphism if
$\dim (X)\leq 1$. As one application, we get a motivic interpretation of Ro\v\i tman's theorem on torsion $0$-cycles. Finally, we compute $\LAlb(X)$ and $\RPic(X)$ for $X$ smooth, evaluate it for general $X$ and completely compute it for curves, recovering $1$-motives first discovered by Deligne and Lichtenbaum.
\end{abstract}
\end{comment}

\begin{abstract}
We consider the category of Deligne 1-motives over a perfect field $k$ of
exponential characteristic $p$ and its derived category for a suitable exact
structure after inverting $p$. As a first result, we provide a fully faithful
embedding into an \'etale version of Voevodsky's triangulated category of
geometric motives. Our second main result is that this full embedding ``almost"
has a left adjoint, that we call $\LAlb$.  Applied to the motive of a variety we
thus get a bounded complex of 1-motives, that we compute fully for smooth
varieties and partly for singular varieties. As an application we give motivic
proofs of Ro\v\i tman type theorems (in characteristic $0$).
\end{abstract}
\maketitle
\tableofcontents

\section*{Introduction}

While Grothendieck's construction of pure motives associated to smooth
projective varieties over a field $k$ is now classical
\cite{MA,demazure,kleiman,scholl}, the construction of mixed motives
associated to arbitrary $k$-varieties is still largely work in progress.
In this direction, the first concrete step was taken by Deligne in
\cite{D} where he defined 1-motives, which should ultimately be
\emph{mixed motives of level or dimension} $\leq 1$. 
They form a category that we shall denote by $\M(k)$ or $\M$.

Deligne's definition was motivated by Hodge theory, and 
he asked if some Hodge-theoretic constructions could be described as realisations of \emph{a priori} constructed $1$-motives. In this direction, the first author and Srinivas associated  in  \cite{BSAP} homological and cohomological Albanese and Picard 1-motives $\Alb^-(X)$, $\Alb^+(X)$, $\Pic^-(X)$ and $\Pic^+(X)$ to an algebraic scheme
$X$ in characteristic zero, providing extensions of the classical Picard and Albanese varieties. This work was pursued in \cite{BRS}, where part of Deligne's conjecture was reformulated and proven rationally (see also \cite{ram2}).

A different step towards mixed motives was taken by
Voevodsky who defined in \cite{V} a \emph{triangulated category} of
motives $\DM^{\eff}_{\gm}(k)$\index{$\DM^{\eff}_{\gm}(k)$}. Taken with rational coefficients, this
category is conjectured to have a ``motivic" $t$-structure whose heart
should be the searched-for abelian category of mixed motives.

Since $\M(k)$ is expected to be contained in such a heart, it is only natural to try and
relate Deligne's and Voevodsky's ideas. This is what Voevodsky did in \cite[p. 218]{V} (see
also \cite[Pretheorem 0.0.18]{V0}). Denote by $\M(k)\otimes\Q$ the abelian category  of
1-motives up to isogeny over $k$. When $k$ is perfect, Voevodsky said that there exists a
\emph{fully faithful functor}
\[D^b(\M(k)\otimes\Q)\into \DM^{\eff}_{-,\et}(k)\otimes\Q\]
whose essential
image is the thick subcategory
$d\1\DM^{\eff}_{\gm}(k)\otimes\Q\subseteq\allowbreak
\DM^{\eff}_{-,\et}(k)\otimes\Q$ generated by motives of smooth curves.

In fact, a $1$-motive may be regarded as a length $1$ complex of homotopy
invariant \'etale sheaves with transfers,  so that it defines an object of Voevodsky's triangulated category
$\DM^{\eff}_{-,\et}(k)$ \index{$\DM^{\eff}_{-,\et}(k)$} of
\'etale motivic complexes \cite[\S 3.3]{V} to which $\DM_\gm^\eff(k)$ maps.
This defines a functor
\[\M(k)\to \DM^{\eff}_{-,\et}(k).\]
 
F. Orgogozo justified Voevodsky's assertion in  \cite{OR} by extending a rational version of the above functor to $D^b(\M(k)\otimes\Q)$.

In this paper, we develop the above results as follows. First of all, let us stress that one has to invert the exponential characteristic $p$ of the base field $k$ throughout. This is due to several reasons:

\begin{itemize}
\item Since the category $\DM_{-,\et}^\eff(k)$ is $\Z[1/p]$-linear by \cite[Prop. 3.3.3 2)]{V}, we cannot expect better comparison results.
\item To be in the spirit of Voevodsky, we want to use only the \'etale topology and not the fppf topology which would be more natural from the viewpoint of $1$-motives. Trying to prove anything meaningful without inverting $p$ in this context seems doomed to failure.
\end{itemize}

Anyway, the basic reason why $p$ is inverted in $\DM_{-,\et}^\eff(k)$ is homotopy invariance (the Artin-Schreier exact sequence). But if one wants to deal with non homotopy invariant phenomena, Deligne $1$-motives are not sufficient and one should enlarge them to include $\G_a$ factors as in Laumon's $1$-motives (\cf  \cite{lau}, \cite{BM}). See \cite{alessandra} for work in this direction.

\subsection{The derived category of $1$-motives, $p$-integrally}\label{0.1} While the $\Z[1/p]$-linear category $\M[1/p]$ is not an abelian category, it fully embeds into the abelian category ${}^t\M[1/p]$ of \emph{$1$-motives with torsion} introduced in \cite{BRS} (in characteristic $0$), which makes it an exact category in the sense of Quillen (see \S \ref{1.2d}). Its derived category $D^b(\M[1/p])$ with respect to this exact structure makes
sense, and moreover the functor $D^b(\M[1/p])\to D^b({}^t\M[1/p])$ turns out to be an equivalence (Theorem \ref{ptors}).

\subsection{$p$-integral equivalence} \label{0.2}Let
$\DM_{\gm,\et}^\eff=\DM_{\gm,\et}^\eff(k)$ be the thick subcategory of
$\DM_{-,\et}^\eff(k)$ generated by the image of $\DM^{\eff}_{\gm}(k)$ (see
Definition \ref{d2.1.1}) and $d\1\DM_{\gm,\et}^\eff$ the thick
subcategory of $\DM_{\gm,\et}^\eff$ generated by motives of smooth
curves.  In Theorem \ref{t1.2.1}, we refine the Voevodsky-Orgogozo
equivalence to an equivalence of categories
\begin{equation}\label{eqeq}
D^b(\M(k)[1/p])\iso d\1\DM_{\gm,\et}^\eff
\end{equation}

\subsection{Duality}\label{0.3} Deligne's extension of Cartier duality to 1-motives \cite{D}
provides the category of 1-motives with a natural involution $M
\mapsto M^*$ which extends to $D^b(\M (k)[1/p])$: see Proposition \ref{pcd}. This duality exchanges
the category ${}^t\M[1/p]$ of  \S \ref{0.1} with an abelian category ${}_t\M[1/p]$ of \emph{$1$-motives
with cotorsion} (see \S \ref{1.8}).

We show in Theorem \ref{teq} that, under $\Tot$, Deligne's Cartier duality is transformed into
the involution $M\mapsto  \ihom (M,\Z (1))$ on $d\1\DM^{\eff}_{\gm,\et}(k)$ given by
the internal (effective) Hom. Of course, this result involves biextensions. 

\subsection{Left adjoint} Composing \eqref{eqeq} with the
inclusion into $\DM_{\gm,\et}^\eff(k)$, we obtain a ``universal
realisation functor"
\[\Tot:D^b(\M(k)[1/p])\to \DM_{\gm,\et}^\eff(k).\] 

It was conjectured by Voevodsky (\cite{V1}; this is also implicit in \cite[Preth. 0.0.18]{V0})
that, rationally, $\Tot$ has a left adjoint. We prove this in Section \ref{qlalb}.

It is shown in Remark \ref{noleft} that $\Tot$ does not have a left adjoint integrally.
There is nevertheless an integral statement, which involves an interplay between the \'etale and
the
\emph{Nisnevich} topology. Let $\alpha^*:\DM_\gm^\eff(k)\to \DM_{\gm,\et}^\eff(k)$ be the
change of topology functor. We find a functor
\[\LAlb: \DM^{\eff}_{\gm}(k)\to D^b(\M(k)[1/p])\]
verifying the following universal property: if $(M,N)\in \DM_\gm^\eff(k)\times D^b(\M[1/p])$,
then there is a functorial isomorphism
\begin{equation}\label{eq0.1}
\Hom_{\DM_{\gm,\et}^\eff(k)}(\alpha^*M,\Tot(N))\simeq
\Hom_{D^b(\M[1/p])}(\LAlb(M),N).
\end{equation}

We give its construction in Sect. \ref{lalb}. 

The point is that, applying  $\LAlb$  to various motives, we get
interesting and intrinsically-defined $1$-motives. For example, applying it
to the motive
$M(X)$ of a smooth variety
$X$, we get the \emph{homological Albanese complex} $\LAlb
(X):=\LAlb(M(X))$ of $X$. Its homology $1$-motives
$\LA{i}(X):={}_tH_i(\LAlb(X))$ relative to the $t$-structure on $D^b(\M[1/p])$ with heart ${}_t\M[1/p]$
(see \S \ref{0.3}) are $1$-motives (with cotorsion) functorially attached to $X$.

\subsection{Smooth schemes} We then proceed to compute  $\LAlb (X)$
for a smooth scheme
$X$: in principle this determines $\LAlb$ on the whole of $\DM_\gm^\eff$, since this category
is generated by the $M(X)$. It is related with the ``Albanese scheme" $\cA_{X/k}$ of \cite{ram}
(extending the Serre Albanese variety of \cite{serrealb}) in the following way: $\LAlb(X)$ is a
``$3$-extension" of $\cA_{X/k}$ by the Cartier dual of the N\'eron-Severi group of $X$, that we
define as the \'etale sheaf represented by cycles of codimension $1$ on $X$ modulo algebraic
equivalence.  (See Theorem
\ref{trunc}.) We deduce that
$\LA{1}(X)$ is isomorphic to the $1$-motive $\Alb^- (X)$ of \cite{BSAP}.

\subsection{$\LAlb$ and $\RPic$} Composing
$\LAlb$ with duality, we obtain a
contravariant functor
$$\RPic: \DM^{\eff}_{\gm}(k)\to D^b(\M(k)[1/p])$$
such that 
\[\RA{i}(M)\df{}^tH^i(\RPic (M)) \simeq {}_tH_i(\LAlb (M))^*\]
for any $M\in \DM^{\eff}_{\gm}(k)$. Here, 
${}^t H^i$ is defined with respect to the $t$-structure with heart ${}^t\M[1/p]$. We call
$\RPic$ the {\it motivic Picard}\, functor. We define the \emph{cohomological Picard
complex} by  $\RPic (X):=\RPic (M(X))$.

\subsection{Singular schemes}\label{sing}
When $k$ is of characteristic $0$, the motive and motive with compact
support $M(X)$ and $M^c(X)$ are defined for any variety $X$ as objects
of $\DM_\gm^\eff(k)$, so that
$\LAlb(X)$ and the 
\emph{Borel-Moore Albanese complex} $\LAlb^c (X) \df \LAlb (M^c
(X))$ make sense. Still in characteristic $0$ we further
define, for an equidimensional scheme $X$ of dimension
$n$, the \emph{cohomological Albanese complex} $\LAlb^* (X)
\df\allowbreak \LAlb (M (X)^*(n)[2n])$. We define similarly 
$\RPic^c (X):= \RPic (M^c (X))$  and $\RPic^* (X):=\RPic
(M (X)^*(n)[2n])$. 
 We describe some properties
of these complexes in Sect. \ref{6}.

We then give some general qualitative estimates for $\LA{i}(X)$ in Proposition \ref{c3.1} (see also Proposition \ref{c3.1bis}) as well as $\LA{i}^c (X)\allowbreak\df {}_tH_i(\LAlb^c (X))$ in
Proposition \ref{c3.1c}. Sect. \ref{12} is devoted to a detailed study of $\LA{1}(X)$,
$\LA{1}^c(X)$ and $\LA{1}^*(X)$; the main results are summarised in the introduction of this section. In
particular, we prove that $\LA{1}(X)$ is canonically isomorphic to the $1$-motive $\Alb^-(X)$ of \cite{BSAP} if $X$ is normal or proper. Here, the interplay between $\LAlb$ and $\RPic$ (duality between Picard and Albanese) plays an essential r\^ole. We also prove that $\LA{1}^*(X)\simeq \Alb^+(X)$ for any $X$ (in fact, $\RA{1}^*(X)\simeq \Pic^-(X)$ in Theorem~\ref{*=-}).

It is quite striking that $\LA{i}(X), \LA{i}^c(X)$ and $\LA{i}^*(X)$ are actually Deligne $1$-motives for $i\le 1$, but not in general for $i\ge 2$ (already for $X$ smooth projective).

We also completely compute $\LA{i}(X)$ for any curve $X$, showing that $M (X)$
has a ``Chow-K\"unneth decomposition" in 
$\DM^{\eff}_{\gm,\et}(k)\otimes\Q$ and that 
$\LA{i} (X)$ coincide with Deligne-Licht\-en\-baum motivic homology
of the curve $X$ (see Theorem~\ref{lasc}, \cf \cite{LI} and \cite{BH}). Finally, we
completely compute $\LA{i}^c (X)$
of a smooth curve $X$ (see Theorem~\ref{bmlac}), showing that
$\LA{1}^c (X) = H^1_m(X)(1)$ is Deligne's motivic $H^1$ in
\cite{D}. Dually, we recover Deligne's 1-motivic $H^1$ of
any curve. With a little more effort, one should be able to identify our computations with
those of Lichtenbaum in \cite{LI} and \cite{LI1}.

\subsection{Ro\v\i tman's torsion theorem} The isomorphism \eqref{eq0.1} comes with
a functorial map (``motivic Albanese map")
\begin{equation}\label{eq0.2}
a_M:\alpha^*M \to \Tot \LAlb (M)
\end{equation} 
for any $M\in\DM_\gm^\eff$. If $X$ is smooth projective, this canonical map
applied to $M=M(X)$ gives back the
Albanese map from the $0$-th Chow group to the rational points of the
Albanese variety. This thus translates very classical
mathematics to the motivic setting.  When $X$ is only smooth, we
recover a generalised Albanese map from Suslin homology
\[a_X^{sing}:H_0^{sing}(X;\Z)[1/p]\to \cA_{X/k}(k)[1/p]\] 
which was first constructed by Ramachandran \cite{ra1} and
Spie\ss-Szamuely \cite{spsz}.\footnote{The
observation that Suslin homology is related to $1$-motives is initially
due to Lichtenbaum \protect\cite{LI}.} The map $a_X^{sing}$ is an isomorphism if
$\dim (X)\leq 1$ (see Proposition~\ref{sus}). 

We then get a very natural proof of the classical theorem of Ro\v\i
tman, and even of its generalisation to open smooth varieties by Spie\ss-Sza\-mu\-e\-ly \cite[Th.
1.1]{spsz} (removing their hypothesis on the existence of a smooth compactification): see Theorem
\ref{troitclas}. 

We also deal with singular schemes when $\car k=0$, see Proposition
\ref{p12.3} and its corollaries. Here there is an overlap with recent work of Geisser
\cite{geisser2}. The works may be compared as follows: Geisser works in arbitrary
characteristic and can handle $p$-torsion in characteristic $p$, but he works only with proper
schemes, while the use of $\DM$ forces us to work in characteristic $0$ for singular schemes,
but we do handle open schemes.

Still in characteristic $0$, we get a Borel-Moore version of
Ro\v\i tman's theorem  as well, see Proposition \ref{p13.4} and its corollary.

Notably, we obtain a ``cohomological" Ro\v\i tman theorem, involving torsion in a motivic cohomology group: see Corollary \ref{c14.4}. In a recent work, Mallick  \cite{mallick} proves a parallel cohomological theorem, involving torsion in the Levine-Weibel Chow group (\cf \cite[Th. 6.4.1]{BSAP}).   Mallick works with projective schemes, but in any characteristic. Hopefully, the two theorems are compatible, see Remark \ref{r14.5}.\bigskip

Moreover, the consideration of Voevodsky's
ca\-te\-go\-ries provides us with some nonobvious extra
structures on $D^b(\M[1/p])$:

\subsection{The homotopy $t$-structure} It turns out that the homotopy $t$-structure on
$\DM_{-,\et}^\eff$ and the equivalence of categories \eqref{eqeq} induce a \emph{third}
$t$-structure on $D^b(\M[1/p])$, that we also call the homotopy $t$-structure (Theorem
\ref{t3.2.3}; see also Corollary \ref{c3.3.2}). Its heart is formed of so-called
\emph{$1$-motivic sheaves}: their consideration is very useful for the computation of
$\LAlb(X)$ for smooth $X$.

\subsection{Tensor structure and internal Hom} Similarly, the functor $\LAlb$ turns out to
transport the tensor structure on $\DM_\gm^\eff\otimes\Q$ to a tensor structure on
$D^b(\M\otimes\Q)$. This tensor structure is exact (for the standard $t$-structure), respects
the weight filtration and may be computed explicitly. There is also an exact internal Hom. See
Sect. \ref{tens}.

\subsection{Realisations} For $X$ smooth over $k =\C$ the complex numbers, one can easily check
that the $1$-motive $\RA{i} (X)$ has a Hodge realisation abstractly isomorphic to
$H^i_{(1)}(X_{\an},\Z (1))$, the largest 1-motivic part of the mixed Hodge structure on 
$H^i(X_{\an},\Z)$ Tate twisted by 1.
We expect that for any scheme
$X$  over a field $k$ of characteristic zero the 1-motives $\RA{i}
(X)$ are isogenous to the 1-motives $M_i(X)$ constructed in
\cite{BRS}. For $X$ normal or proper, this also holds for $i=1$ by the comparison results indicated in \ref{sing} and the results of \cite{BSAP}.

In the second part of this work, we plan to deal with Hodge and $\ell$-adic realisations, thereby providing a canonical version of the isomorphism of the previous paragraph.  More
generally, this should give a more conceptual (and hopefully more integral) proof of the main results of \cite{BRS}, and more, \eg see \cite{BM} where the expected formulas are displayed (up to isogeny).

\subsection{Caveat} While one might hope that these results are a partial template for a
future theory of mixed motives, we should stress that some of them are definitely special to
level $\le 1$. Namely:

\begin{itemize}
\item It is succintly pointed out in \cite[\S 3.4 p. 215]{V} that the non finite generation of
the Griffiths group prevents higher-dimensional analogues of $\LAlb$ to exist. (This goes
against \cite[Conj. 0.0.19]{V0}.)
\item Contrary to Theorem \ref{t3.2.3}, the homotopy $t$-structure does not induce a
$t$-structure on $d_{\le n}\DM_{\gm,\et}^\eff$ for $n\ge 2$. This can already be seen on
$\Z(2)$, although here the homotopy sheaves are conjecturally ind-objects of $d_{\le
2}\DM_{\gm,\et}^\eff$ (see \cite[\S 6]{V0}). The situation seems to be similar for a surface; it
would be interesting to work out a conjectural picture in general.
\end{itemize}

These two issues seem related in a mysterious way!

\subsection{A small reading guide} Since this article is rather long, we would like to offer
some suggestions to the reader, hoping that they will be helpful.

One might start by quickly brushing through \S \ref{1.1} to review the definition of Deligne's
$1$-motives, look up \S \ref{1.2d} to read the definition of $D^b(\M[1/p])$ and then proceed
directly to Theorem \ref{t1.2.1} (full embedding), referring to Section \ref{sect1} ad libitum
to read the proof of this theorem. The lengths of Sections \ref{homotopy} and \ref{dual} are
necessary evils; they may very well be skipped at first reading with just a look at their main
results (Theorem \ref{t3.2.3}, the homotopy $t$-structure, and Theorem \ref{teq}, agreement of
the two Cartier dualities). 

One may then read Section \ref{lalb} on the construction of $\LAlb$
(which hopefully will be pleasant enough), glance through Section \ref{qlalb} (the rational
version of $\LAlb$) and have a look in passing at Section \ref{tens} for the tensor structure and internal Hom on
$D^b(\M\otimes\Q)$. After this, the reader might fly over the mostly formal sections
\ref{6} and \ref{6c}, jump to Theorem \ref{trunc} which computes
$\LAlb(X)$ for a smooth scheme $X$, read Sections \ref{comps} and \ref{12} on $\LAlb$ of
singular schemes where he or she will have a few surprises, read Section \ref{rovi} on Ro\v\i tman's theorem and its generalisations,  finally have a well-earned rest in recovering familiar
objects in Section
\ref{scurves} (the case of curves). And never look at the appendices.

The reader will also find an index of notations at the end.
 
\subsection*{Acknowledgements}
This work has been done during periods of stay of the first author
at the IH\'ES and the ``Institut de Math\'ematiques de Jussieu" University of Paris 7 and of the second author at the Universities of Padova and Roma ``La Sapienza''. We like to thank
these institutions for their hospitality and financial support. We also thank A. Bertapelle, M. Hindry, L. Illusie, P. Jossen, M. Ojanguren, M. Saito, T. Szamuely, C. Voisin for helpful comments or suggestions regarding this work. The first author is grateful to all his coauthors in the subject of $1$-motives; the second author would like to acknowledge the inspiration provided by the paper of Spie\ss-Szamuely \cite{spsz}. Finally, our intellectual debt to A. Grothendieck, P. Deligne and V. Voevodsky is evident.
\bigskip 

\emph{In all this paper, $k$ is a perfect field of exponential characteristic $p$. We write
$Sm(k)$ \index{$Sch(k)$, $Sm(k)$} for the category of smooth schemes of finite type and $Sch(k)$  for the category of all
separated schemes of finite type. Since we ignore characteristic $p$ phenomena in this paper,  we invert $p$ in the Hom groups of all categories constructed out of commutative group schemes and \'etale sheaves from Subsection \ref{1.2} onwards.}

\part{The universal realisation functor}

\section{The derived category of $1$-motives} \label{sect1}

The main reference for (integral, free) $1$-motives is \cite[\S
10]{D}, see also \cite[\S 1]{BSAP}. We also provide an Appendix~\ref{AppendixB} 
on 1-motives with torsion which were introduced in \cite[\S 1]{BRS}. For the derived category of
$1$-motives up to isogeny we refer to \cite[Sect. 3.4]{V} and \cite{OR}: here we are interested
in the integral version.

\subsection{Deligne 1-motives}\label{1.1} The
following terminology is handy:

\begin{defn}\label{d1.1.1} a) An abelian sheaf $L$ on $(Sm(k))_\et$ is
\emph{discrete} if it is locally constant $\Z$-constructible (\ie with
finitely generated geometric fibres). The full subcategory of discrete
abelian sheaves on $(Sm(k))_\et$ is denoted by ${}^t\cM_0(k)={}^t\cM_0$. \\   
b) A \emph{lattice} is a
$k$-group scheme locally constant for the {\'e}tale topology, with
geometric fibre(s) isomorphic to a finitely generated free abelian group,
\ie representing a torsion-free discrete sheaf. The full subcategory of lattices is denoted by
$\cM_0(k)=\cM_0$. \index{$\cM_0$, ${}^t\cM_0$}
\end{defn}

A \emph{Deligne $1$-motive} over $k$ is a complex
of group  schemes 
$$M= [ L \by{u} G]$$ 
where $L$ is a lattice and
$G$ is a  semi-abelian $k$-scheme. Thus $G$ can be represented
by an extension
$$0\to T \to G \to A \to 0$$ where $T$ is a $k$-torus and $A$ is an
abelian $k$-scheme.

As a complex, \emph{we shall place $L$ in degree $0$ and $G$ in degree $1$}.
Note that this convention is only partially shared by the existing
literature.

A map from $M = [ L \by{u} G]$ to $M'= [ L' \by{u'} G']$ is a
commutative square
\[ \begin{CD}
 L @>u>>  G\\
@V{f}VV  @V{g}VV  \\
L'@>u'>> G'
\end{CD}\]
in the category of group schemes. Denote by $(f, g):M\to M'$ such a
map. The natural composition of squares makes up the category
of Deligne's 1-motives. We shall denote this category by
$\M(k)$. We shall usually write $\M$ instead
of $\M(k)$, unless it is necessary to specify $k$. The following lemma is
immediate:

\begin{lemma}\label{lidco} $\M$ is an idempotent complete additive category.\qed
\end{lemma}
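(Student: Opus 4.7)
The plan is to split the verification into the two assertions: additivity and idempotent completeness. Additivity is almost tautological. The zero object is $[0\to 0]$, and the direct sum of $M=[L\by u G]$ and $M'=[L'\by{u'} G']$ is manifestly $[L\oplus L'\by{u\oplus u'} G\oplus G']$, since the categories of lattices and of semi-abelian $k$-schemes admit finite direct sums. Pointwise addition of commutative squares $(f,g)$ makes $\Hom_\M(M,M')$ an abelian group, and composition is $\Z$-bilinear. So only idempotent completeness requires real content.

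For idempotent completeness, I would take an idempotent $(f,g)\colon M\to M$, i.e.\ $f\in\End(L)$ and $g\in\End(G)$ with $f^2=f$, $g^2=g$ and $u\circ f = g\circ u$, and show that the obvious candidate decomposition
\[ M \;\cong\; \bigl[\im(f)\by{u_1} \im(g)\bigr] \;\oplus\; \bigl[\ker(f)\by{u_2} \ker(g)\bigr]\]
lives inside $\M$. The compatibility $u\circ f=g\circ u$ immediately gives $u(\im f)\subseteq \im g$ and $u(\ker f)\subseteq \ker g$, so $u$ restricts to the two arrows $u_1,u_2$ once the splittings of $L$ and $G$ have been produced in their respective categories.

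The splitting of $L$ is easy: a lattice is locally constant with finitely generated free geometric fibre, and any idempotent on a finitely generated free abelian group has free image and free kernel, so \'etale-locally (hence globally) $L=\im(f)\oplus\ker(f)$ in $\cM_0$. The real work is the analogous statement for $g$ on the semi-abelian scheme $G$. Here I would use the standard trick: factor $g$ through its schematic image $G_1:=\im(g)$ as $g= i\circ p$ with $p\colon G\twoheadrightarrow G_1$ and $i\colon G_1\hookrightarrow G$. From $g^2=g$ and the surjectivity of $p$ together with the injectivity of $i$ one deduces $p\circ i=\mathrm{id}_{G_1}$, so that $i$ is a section of $p$. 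Then the morphism
\[ G \;\longrightarrow\; \ker(p)\times G_1,\qquad x\mapsto \bigl(x-i(p(x)),\,i(p(x))\bigr)\]
is an isomorphism of commutative group schemes, with inverse given by addition. Since $G$ is smooth and connected, so are $\ker(p)$ and $G_1$; and the induced extensions $0\to T_j\to G_j\to A_j\to 0$ coming from the splittings of the toric and abelian parts (which exist by the same argument applied in the categories of tori and of abelian varieties, the latter by Poincar\'e reducibility in the form $pi=\mathrm{id}$) exhibit both $G_1=\im(g)$ and $\ker(p)=\ker(g)$ as semi-abelian schemes.

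The main obstacle, if there is one, is this last semi-abelian splitting: in particular making sure that the kernel of an idempotent morphism of abelian varieties is again an abelian variety (which is exactly what the $p\circ i=\mathrm{id}$ argument delivers, bypassing the usual ``isogeny'' form of Poincar\'e reducibility), and that the short exact sequence structure on $G$ is compatible with the decompositions of its toric and abelian quotients. Once these ingredients are in place, the decomposition of $M$ is an immediate assembly and the lemma follows.
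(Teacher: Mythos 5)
Your argument is correct, and it supplies the routine verification that the paper omits (the lemma is stated with a terminal tombstone, i.e.\ as requiring no proof). Additivity is, as you say, immediate. For idempotent completeness, the one point of genuine content is exactly the one you isolate: that $\ker(g)$ and $\im(g)$ are again semi-abelian when $g$ is an idempotent endomorphism of a semi-abelian $G$. Your $pi=\mathrm{id}$ trick gives the direct-product decomposition $G\cong \ker(g)\times\im(g)$ in the (abelian) category of commutative $k$-group schemes of finite type, and since $G$ is smooth and geometrically connected, both factors inherit these properties. To see that each factor is semi-abelian, note that the maximal torus $T\subset G$ is a characteristic subgroup (any homomorphism from a torus to an abelian variety is trivial), so $g$ restricts to an idempotent $g_T$ of $T$ and descends to an idempotent $g_A$ of $A=G/T$; the splittings $T=\ker(g_T)\times\im(g_T)$ and $A=\ker(g_A)\times\im(g_A)$ are compatible with that of $G$, exhibiting $\ker(g)$ and $\im(g)$ as extensions of an abelian variety by a torus. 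The lattice side is as you describe (direct summands of a finitely generated free $Gal(\bar k/k)$-module are again of that form), and the compatibility of both splittings with $u$ is automatic. In short, both $\cM_0$ and the category of semi-abelian $k$-schemes are idempotent-complete additive categories, and $\M$ inherits this as a category of arrows between them.
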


\begin{defn}\label{1mot} Let $R$ be a commutative ring. For any additive
category
$\cA$, we denote by $\cA\otimes R$ \index{$\cA\otimes R$} the $R$-linear category obtained from $\cA$ by tensoring morphisms by $R$, and by $\cA\boxtimes R$
\index{$\cA\boxtimes R$} the
pseudo-abelian hull (idempotent completion) of $\cA\otimes R$.
\end{defn}

This distinction is useful as $\cA\otimes R$ may not be idempotent
complete even if $\cA$ is.

\begin{comment}
If $G$ is a semi-abelian variety and $n>0$ is an integer prime to the
exponential characteristic $p$ of $k$, then the kernel of multiplication
by $n$ in $G$ defines a locally constant constructible sheaf for the
\'etale topology. This is no longer true when $p>1$ and $n$ is a power of
$p$. On the other hand, we shall compare $1$-motives with Voevodsky's
category $\DM_{-,\et}^\eff(k)$ in which $p$ is invertible (see
\cite[Prop. 3.3.3 2]{V}), so in any case we have to lose $p$-torsion
phenomena. For these two reasons, we make the convention:

\begin{quote}\label{convention}\emph{From now on, $p$ is inverted in
$\M$ and similar categories.}
\end{quote}
\end{comment}

We shall also use the following category, which is technically very
useful:

\begin{defn}\label{Meff} Let $\anM^{\eff}$ \index{$\anM^{\eff}$} denote the category given by complexes of group schemes $[L\to G]$ where $L$ is discrete and
$G$ is a commutative algebraic group whose connected component of the
identity $G^0$ is semi-abelian. It contains $\M$ as a full subcategory.
\end{defn}

This category is studied in more detail in \S \ref{noncon}.

\begin{propose}[\cf \protect{\cite[3.2.2]{OR}}] \label{isoab} The
inclusion $\M\to\anM^\eff$ induces an equivalence of categories 
\[e:\M\otimes\Q\iso\anM^{\eff}\otimes \Q.\]
In particular, the category $\M\otimes\Q$ is abelian, hence
$\M\otimes\Q=\M\boxtimes\Q$.\index{$\M$, $\M\otimes\Q$}
\end{propose}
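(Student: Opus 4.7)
The plan is to prove that $e$ is fully faithful and essentially surjective, and then deduce the remaining assertions. Full faithfulness is automatic: by construction $\M$ is a full subcategory of $\anM^{\eff}$, so the Hom groups already coincide before rationalisation, and rationalising preserves this equality. Hence all the substance lies in essential surjectivity of $e$ and in the abelianness of $\M\otimes\Q$.

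Given $M'=[L'\by{u'} G']\in\anM^{\eff}$, I would construct an object of $\M$ isomorphic to $M'$ in $\anM^{\eff}\otimes\Q$ by two successive modifications. First, to handle the disconnectedness of $G'$, set $L_1\df (u')^{-1}((G')^0)\subseteq L'$: this is a sublattice of finite index, and the natural inclusion $[L_1\to (G')^0]\to[L'\to G']$ has ``cokernel'' $[L'/L_1\to\pi_0(G')]$ consisting of finite \'etale group schemes in both degrees. Since such objects are killed by their exponent, they become zero after $\otimes\Q$, so the inclusion is an isomorphism rationally. Second, to clear torsion in the lattice, put $T\df L_{1,\tors}$, $L\df L_1/T$ (a lattice), and $\bar G\df (G')^0/u'(T)$ (still semi-abelian, since $u'(T)$ is finite \'etale). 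Then $[L\to\bar G]\in\M$, and the canonical map $[L_1\to (G')^0]\to[L\to\bar G]$ has ``kernel'' $[T\to u'(T)]$ again consisting of finite group schemes, hence is an isomorphism in $\anM^{\eff}\otimes\Q$.

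The main technical subtlety is to upgrade these rational statements to genuine isomorphisms in $\anM^{\eff}\otimes\Q$: one has to produce an arrow in the opposite direction, not merely observe that the ``kernel'' and ``cokernel'' complexes vanish rationally. Choosing an integer $n$ annihilating all the finite \'etale pieces in sight, multiplication by $n$ factors through each sub/quotient lattice and through each sub/quotient group scheme, and the commutativity of the resulting square amounts to the vanishing of an obstruction in an $\Ext^1$ against a torsion target, which does hold after $\otimes\Q$. This rationalised splitting argument is the main obstacle and essentially reproduces the calculation of \cite[3.2.2]{OR}.

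Finally, the abelianness of $\anM^{\eff}\otimes\Q$, which is the substance of \cite[3.2.2]{OR} (kernels and cokernels of morphisms in $\anM^{\eff}$ fit into the enlarged category, and the abelian-category axioms hold after rationalisation), transports along the equivalence $e$ to endow $\M\otimes\Q$ with an abelian structure. Since any abelian category is idempotent complete, the identification $\M\otimes\Q=\M\boxtimes\Q$ is then automatic.
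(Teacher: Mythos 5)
Your construction is the same two-step d\'evissage the paper uses: first pull back along $(G')^0\hookrightarrow G'$ to replace $L'$ by the finite-index sublattice $L_1=(u')^{-1}((G')^0)$, then push out to kill the torsion of $L_1$; this matches the pull-back/push-out diagram in the paper's proof of Proposition \ref{isoab} exactly. Your insistence that one must actually exhibit an isogeny rather than merely observe that the ``kernel'' and ``cokernel'' complexes are torsion is also a well-founded caution, and it is precisely what makes such a map invertible in $\anM^\eff\otimes\Q$ (\cf Proposition \ref{pB.1.2}~b)).

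Where your write-up goes slightly astray is the claimed ``$\Ext^1$ obstruction'' to commutativity of the inverse square. No such obstruction ever appears: in the first step the candidate inverse $(f'_L,f'_G)$ is the \emph{unique} pair satisfying $i_L f'_L=n$ and $i_G f'_G=n$ (since $i_L$, $i_G$ are monomorphisms), and composing $i_G u' f'_L = u' i_L f'_L = n\,u' = i_G f'_G u'$ and cancelling the mono $i_G$ gives $u' f'_L = f'_G u'$ on the nose. Dually, in the push-out step the candidate inverse is uniquely determined by precomposing with the epimorphisms $\pi_L,\pi_G$, and the same cancellation trick yields commutativity integrally. So multiplication by a sufficiently divisible integer produces an honest morphism of complexes inverse to $f$ up to $n$, and no rationalisation of an obstruction is needed. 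Apart from this misplaced framing, your argument is correct and recovers the paper's proof.
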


\begin{proof} (See also Lemma \ref{lB.1.3}.)  It is enough to show that $e$ is essentially
surjective. But if
$[L\to G]\in\anM^{\eff}$, then we have a diagram
\[\begin{CD}
[L^0\to G^0]@>>> [L^0_{\fr}\to G^0/u(L^0_{\tor})]\\
@VVV\\
[L\by{u} G]
\end{CD}\]
where the vertical (\resp horizontal) map is a pull-back (\resp a
push-out) and $L^0_{\fr}\df L^0/L^0_{\tor}$ where $L^0_{\tor}$ is the
torsion subgroup of $L^0$. Both maps are isomorphisms in
$\anM^{\eff}\otimes\Q$. The last assertion follows from the fact that
$\anM^{\eff}\otimes\Q$ is abelian (Proposition \ref{nca}).
\end{proof}

\begin{sloppypar}
\begin{remarks} 1 (see also Def. \ref{dA.2} c)). An \emph{isogeny} between Deligne's 1-motives,
from $M = [ L \by{u}G]$ to $M'= [ L' \by{u'} G']$ in $\M (k)$,
is a diagram of group schemes with exact columns
$$
\begin{CD}
&& 0 \\
&& @V{}VV \\
0&& F\\
@V{}VV @V{}VV  \\
L @>{u}>>  G\\
@V{f}VV @V{g}VV  \\
L' @>{u'}>> G'\\
@V{}VV @V{}VV  \\
E&&0\\
@V{}VV&&\\
0&&
\end{CD}
$$
where $F$ and $E$ are finite. Isogenies become invertible in
$\M\otimes\Q$.\\

2. The category $\M$ of
Deligne's 1-motives has kernels and cokernels (see Proposition~\ref{lim})
but it is not abelian. This easily follows  from the diagram hereabove: an
isogeny has vanishing kernel and cokernel but it is not an isomorphism in
$\M$.
\end{remarks}
\end{sloppypar}

\subsection{Weights and cohomological dimension} Recall that $M =[L\to G]
\in \M$ has an increasing filtration by sub-$1$-motives as follows:\index{$W_i(M)$}
$$W_i(M) =\left \{\begin {array}{cl} M & i\geq 0\\ G
& i= -1\\ T & i= -2\\ 0 & i \leq -3 \end{array} \right. $$

We then have $\gr_{-2}^W(M) = T[-1]$, $\gr_{-1}^W(M) = A[-1]$ and
$\gr_{0}^W(M) = L$ (according to our convention of placing $L$ in
degree zero). We say that $M$ is \emph{pure of weight $i$} if
$\gr_j^WM=0$ for all $j\ne i$. Note that for two  pure 1-motives
$M, M'$, $\Hom (M, M')\neq 0$ only if they have the
same weight.

\begin{propose}[\protect{\cite[3.2.4]{OR}}] \label{iso1} The
category $\M\otimes\Q$ is of cohomological dimension $\leq 1$, \ie if
$\Ext^i (M,M') \neq 0$, for $M, M'\in \M\otimes\Q$, then $i =0$ or $1$.
\end{propose}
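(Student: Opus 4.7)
The plan is to use the weight filtration on $\M\otimes\Q$ to dévisser the problem down to the case of pure $1$-motives, and then to verify the Ext-vanishing in each pure-pure case.

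First, I would observe that every 1-motive $M = [L \xrightarrow{u} G]$ fits in a short exact sequence
\[
0 \longrightarrow [0 \to G] \longrightarrow M \longrightarrow [L \to 0] \longrightarrow 0
\]
in $\M\otimes\Q$, and the semi-abelian summand further decomposes via Chevalley as $0 \to [0 \to T] \to [0 \to G] \to [0 \to A] \to 0$. Thus every object of $\M\otimes\Q$ is a successive extension of three pure pieces (a lattice of weight $0$, an abelian variety of weight $-1$, a torus of weight $-2$). Applying the long exact sequence of $\Ext$ in each of the two arguments, to prove $\Ext^i(M, M')=0$ for $i\ge 2$ it suffices to check it when both $M$ and $M'$ are pure.

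Second, I would tabulate the nine pure--pure cases. The same-weight cases are immediate: the pure subcategories are semisimple isogeny categories, namely $\Q$-linear Galois representations factoring through a finite quotient (for lattices and, by Cartier duality, for tori; Maschke), and abelian varieties up to isogeny (Poincaré reducibility). Since any extension of two pure objects of the same weight is again of that weight, $\Ext^1_{\M\otimes\Q}$ vanishes in these cases. For mixed weights $\Ext^1$ is typically nonzero: one identifies it with the relevant group-scheme $\Hom$ or $\Ext^1$, e.g.\
\[
\Ext^1_{\M\otimes\Q}([L\to 0],[0\to T]) \simeq \Hom(L,T)\otimes\Q,
\]
and analogously for the other pairs involving lattices, and biextension data for $\Ext^1(A,T)$.

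Third, the heart of the argument is the vanishing of $\Ext^2_{\M\otimes\Q}(P,Q)$ for pure $P,Q$. A Yoneda 2-extension $0\to Q\to Y_1\to Y_2\to P\to 0$ factors through the middle object $Z = \mathrm{image}(Y_1\to Y_2)$. Decomposing $Z$ along its own weight filtration, the 2-extension class becomes the Yoneda product of elements in $\Ext^1(P,Z)$ and $\Ext^1(Z,Q)$; by the identifications of the previous paragraph the relevant $\Ext^1$'s reduce to group-scheme $\Hom$ or $\Ext^1$ groups of lattices, tori, and abelian varieties. One checks directly in each of the nine cases that the Yoneda pairing
\[
\Ext^1(P,Z)\otimes\Ext^1(Z,Q) \longrightarrow \Ext^2(P,Q)
\]
vanishes, because the intermediate $\Ext^1$'s are controlled by pure Galois-representation data and abelian-variety homomorphisms, all of which form semisimple $\Q$-linear categories.

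The main obstacle is this last Yoneda-pairing step: one must verify that a composition of two extensions really does become zero (not merely that each factor lies in a semisimple category). I would expect the cleanest route is to lift to the embedding $\M\otimes\Q\hookrightarrow D^b(\text{\'etale }\Q\text{-sheaves})$ sending $[L\to G]$ to the two-term complex in degrees $0,1$, compute the hyper-$\Ext$ via the truncation triangles $G[-1]\to M\to L$ and $G'[-1]\to M'\to L'$, and use that rationally $\Ext^{\ge 2}$ of lattices, tori, and abelian varieties by one another vanishes in the ambient sheaf category. The two-term nature of the complexes, combined with a single shift, then pins the total $\Ext$ to degrees $\le 1$.
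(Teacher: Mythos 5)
Your reduction to the pure--pure case via the weight filtration, and your identification of the single critical Yoneda pairing $\Ext^1(M_0,M_{-1})\times\Ext^1(M_{-1},M_{-2})\to\Ext^2(M_0,M_{-2})$ for pure objects of weights $0,-1,-2$, both match the paper's sketch, and you correctly flag the heart of the difficulty. But neither of your two proposed routes to the vanishing of that pairing actually closes the gap. The first (``the intermediate $\Ext^1$'s are controlled by pure Galois-representation data and abelian-variety homomorphisms, all of which form semisimple categories'') is not an argument: semisimplicity of the pure subcategories says nothing about whether a Yoneda composite of two nonzero $\Ext^1$'s dies, and here the two individual $\Ext^1$ groups (e.g.\ $\Hom(L,A)\otimes\Q$ and $\Ext^1(A,T)\otimes\Q\simeq\Hom(X^*(T),A^\vee)\otimes\Q$) are genuinely nonzero. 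The second route (lift to $D^b$ of \'etale $\Q$-sheaves and compute hyper-$\Ext$) is essentially circular in the context of this paper: the identification of $\Ext^i_{\M\otimes\Q}$ with $\Hom$ in a triangulated ambient category is precisely the full-faithfulness statement of Theorem \ref{t1.2.1}/Orgogozo, whose proof \emph{uses} Proposition \ref{iso1}. Even setting circularity aside, for a non-thick exact full embedding $\M\otimes\Q\hookrightarrow\Shv_\et$ the comparison map $\Ext^2_{\M\otimes\Q}(M,M')\to\Ext^2_{\Shv_\et}(M,M')$ need not be injective, so vanishing downstairs would not suffice; and the asserted vanishing of $\Ext^{\ge2}\otimes\Q$ for abelian varieties against lattices/tori in the ambient sheaf category is itself nonobvious.

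The idea the paper actually uses, and which is missing from your proposal, is a \emph{gluing} argument. After reducing (by a transfer argument) to $k=\bar k$ and to integral classes, take $e_1\in\Ext^1(L,A)$ and $e_2\in\Ext^1(A,T)$. The class $e_2$ realises a semi-abelian $G$ with $0\to T\to G\to A\to 0$, and because $G(\bar k)\to A(\bar k)$ is surjective while $L$ is free, the map $u\colon L\to A$ underlying $e_1$ lifts to $u'\colon L\to G$. The $1$-motive $M=[L\xrightarrow{u'}G]$ then carries the three-step weight filtration $W_{-2}M\subset W_{-1}M\subset W_0M=M$ whose successive quotient extensions recover $e_1$ and $e_2$. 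At that point the vanishing of the Yoneda product $e_2\cdot e_1\in\Ext^2(L,T)$ is a purely formal consequence of the two extensions coming from a single three-step filtration on one object (\cf \cite[IX, Prop.~9.3.8~c)]{sga7}). This ``assemble both extensions into one $1$-motive'' step is the piece your argument lacks.
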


Recall a sketch of the proof in \cite{OR}: one first checks that
$\Ext^1(M,M')\allowbreak=0$ if $M,M'$ are pure of weights $i,i'$ and $i\le i'$. This
formally reduces the issue to checking that if $M,M',M''$ are pure
respectively of weights $0,-1,-2$, then the Yoneda product of two classes
$(e_1,e_2)\in \Ext^1(M,M')\times \Ext^1(M',M'')$ is $0$. Of course we may
assume $e_1$ and $e_2$ integral. By a transfer argument, one may further
reduce to $k$ algebraically closed. Then the point is that $e_1$ and
$e_2$ ``glue" into a $1$-motive, so are induced by a 3 step filtration on
a complex of length $1$; after that, it is formal to deduce that $e_2\cdot
e_1=0$ (\cf \cite[IX, Prop. 9.3.8 c)]{sga7}).

\begin{remark} We observe that Proposition \ref{iso1} can be regarded as
an algebraic version of a well-known property of $\M (\C)\otimes \Q$.
Namely, $\M (\C)\otimes \Q$ can be realised as a  thick abelian
sub-category of $\Q$-mixed Hodge structures, see \cite{D}. Since the
latter is of cohomological dimension $\le 1$, so is $\M(\C)\otimes\Q$
(use \cite[Ch. III, Th. 9.1]{mcl}).
\end{remark}

\subsection{Group schemes and sheaves with transfers}\label{ur}

\begin{defn}[\cf Def. \protect{\ref{dD.1}}]\label{hi} We denote by $\HI_\et=\HI_\et(k)$ \index{$\HI_\et$} the
category of homotopy invariant \'etale sheaves with transfers over $Sm(k)$: this is
the full subcategory of the category $\EST(k)=\Shv_\et(SmCor(k))$ \index{$\EST(k)$} from \cite[\S
3.3]{V} consisting of those \'etale sheaves with transfers that are
homotopy invariant.
\end{defn}

Let $G$ be a commutative $k$-group
scheme. We shall denote by $\uG$ the associated \'etale sheaf
of abelian groups. In fact, under a minor assumption, $\uG$ is an
\emph{\'etale sheaf with transfers}, as explained by Spie\ss-Szamuely
\cite[Proof of Lemma 3.2]{spsz}, \cf also Orgogozo \cite[3.1.2]{OR}.
Both references use symmetric powers, hence deal only with smooth
quasi-projective varieties. Here is a cheap way to extend their
construction to arbitrary smooth varieties: this avoids to have to
prove that $\DM_\gm^\eff(k)$ may be presented in terms of
smooth quasi-projective varieties, \cf \cite[beg. of \S 1]{OR}.

\begin{lemma} \label{l1.3} Suppose that the neutral component $G^0$ is
quasi-pro\-jec\-tive. Then the
\'etale sheaf
$\uG$ is provided with a canonical structure of presheaf with transfers.
Moreover, if $G^0$ is a semi-abelian variety, then $\uG$ is
homotopy invariant.
\end{lemma}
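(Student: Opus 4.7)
The plan is to reduce to the quasi-projective case treated by Spie\ss-Szamuely \cite{spsz} and Orgogozo \cite{OR} via Zariski localization on $X$. A structure of presheaf with transfers on $\uG$ is determined by its effect on elementary correspondences $\alpha=[Z]$, where $Z\subset X\times Y$ is integral and finite surjective over a component of $X$. Since $X$ is smooth (hence regular) and $Z$ is equidimensional of top dimension over $X$, the map $Z\to X$ is locally free of some rank $n$. Given $f\in\uG(Y)=\Hom_k(Y,G)$, composition with the second projection yields $\tilde f\colon Z\to G$, and the task is to extract from this a canonical morphism $\alpha^*(f)\colon X\to G$, functorially in all the data.

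When $X$ is quasi-projective, the cited construction applies directly: the image of $\tilde f$ meets only finitely many connected components of $G$ (since $Z$ is of finite type), each a $G^0$-torsor and therefore quasi-projective, so that after translation into $G^0$ one can form the $n$-th symmetric power of $G^0$ (which exists as a $k$-scheme since $G^0$ is quasi-projective) and sum via the commutative group law. This recipe is manifestly natural for Zariski-open inclusions of quasi-projective smooth $k$-schemes.

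For arbitrary smooth $X$, choose a Zariski-open covering $X=\bigcup_i U_i$ by quasi-projective (e.g.\ affine) opens. The previous step yields transfers $(\alpha|_{U_i})^*(f)\in \uG(U_i)$ that agree on overlaps by naturality, and since $\uG$ is an \'etale, hence Zariski, sheaf they glue uniquely to $\alpha^*(f)\in \uG(X)$. Additivity in $\alpha$, independence of the covering, and the composition identity $(\beta\circ\alpha)^*=\alpha^*\circ\beta^*$ all reduce via this gluing to the known quasi-projective counterparts. The main obstacle is precisely the composition identity: although formal once one fixes compatible quasi-projective covers of all three schemes entering a composition, one must verify that the symmetric-product description of the composed transfer is insensitive to these choices.

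For the second assertion, assume $G^0$ is semi-abelian and let $\phi\colon X\times\Aff^1\to G$; write $\sigma\colon X\to X\times\Aff^1$ for the zero section and $\pi$ for the projection. It suffices to prove $\phi=\phi\circ\sigma\circ\pi$. Set $\phi'=\phi-\phi\circ\sigma\circ\pi$ in the group law of $G$; then $\phi'|_{X\times\{0\}}=0$, so $\phi'$ factors through $G^0$. For any $\bar x\in X(\bar k)$, the restriction $\phi'_{\bar x}\colon \Aff^1_{\bar k}\to G^0_{\bar k}$ sends $0$ to $e_G$. Composing with the quotient $G^0\to A$ produces a morphism $\Aff^1_{\bar k}\to A_{\bar k}$ that extends to $\P^1_{\bar k}\to A_{\bar k}$ by properness of $A$ and is therefore constant; hence the image of $\phi'_{\bar x}$ lies in the fibre torus, which splits over $\bar k$ and admits no non-constant morphism from $\Aff^1$ (maps to $\G_m$ are units in $\bar k[t]$). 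Thus $\phi'_{\bar x}=0$ for every geometric point, so $\phi'=0$ and $\phi$ factors through $X$, as required.
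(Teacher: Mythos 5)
Your overall strategy — reduce to the quasi-projective case and globalize by Zariski gluing, using that $\uG$ is a sheaf — is structurally the same as the paper's, and your self-contained argument for the second statement (factor $\phi'$ into $G^0$, kill the abelian part by properness of $A$ and the toric part by the absence of non-constant maps $\Aff^1\to\G_m$) is correct and complete; there the paper simply cites \cite[3.3.1]{OR}. The difficulty is exactly the step you flag as ``the main obstacle'' and then do not close: the composition identity $(\beta\circ\alpha)^*=\alpha^*\circ\beta^*$. Gluing on $X$ does not dispose of it. A composition involves a middle scheme $Y$, and a finite correspondence $\alpha\in c(X,Y)$ does \emph{not} Zariski-localize compatibly on $Y$: the support $Z\subset X\times Y$ has no reason to project into any quasi-projective open of $Y$, so ``reduces via this gluing to the known quasi-projective counterparts'' is not a proof, merely a restatement of what needs to be shown.

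The missing idea is the paper's reduction before any gluing takes place. Rather than constructing the full pairing $c(X,Y)\otimes\uG(X)\to\uG(Y)$ by hand, the paper invokes \cite[Ex.\ 2.4]{VL}: a presheaf-with-transfers structure on an \'etale sheaf is determined by a ``good transfer'' $f_*\colon\uG(W)\to\uG(X)$ for every \emph{finite surjective} $f\colon W\to X$ with $X$ a normal $k$-variety, and bilinearity, base change and composition are then automatic from that framework. For such an $f$ one covers $X$ by \emph{affine} opens $U_i$; the key observation is that $V_i=f^{-1}(U_i)$ is again affine because $f$ is finite, hence both $U_i$ and $V_i$ are quasi-projective, the symmetric-power construction of \cite{spsz,OR} applies to each $V_i\to U_i$, and the local $f_*$'s glue by the sheaf axiom. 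This route never has to compose correspondences over a possibly non-quasi-projective scheme, which is precisely the wall your argument hits. To finish your version you would have to either invoke the same criterion, or prove the composition identity directly — for instance, check it at geometric points (legitimate since $X$ is smooth, hence reduced, and $G$ is separated), where it becomes a Fubini-type identity, being careful about the multiplicities entering $\beta\circ\alpha$.
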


\begin{proof} For two smooth $k$-varieties $X,Y$, we have to provide a
pairing
\[c(X,Y)\otimes \uG(X)\to \uG(Y)\]
with the obvious compatibilities. As in \cite[Ex. 2.4]{VL}, it is enough
to construct a good transfer $f_*:\uG(W)\to \uG(X)$ for any finite
surjective map $f:W\to X$ with $X$ a normal $k$-variety. For $X$ and $W$
quasi-projective, this is done in \cite{spsz} or \cite{OR}\footnote{For the symmetric powers
of $G$ to exist as schemes, it suffices that $G^0$ be quasi-projective.}. In general, cover $X$
by affine opens $U_i$ and let $V_i=f^{-1}(U_i)$. Since $f$ is finite, $V_i$ is also affine,
hence transfers $\uG(V_i)\to \uG(U_i)$ and
$\uG(V_i\cap V_j)\to \uG(V_i\cap V_j)$ are defined; the commutative
diagram
\[\begin{CD}
0\to \uG(W)@>>> \prod \uG(V_i)@>>> \prod \uG(V_i\cap V_j)\\
&&@V{f_*}VV @V{f_*}VV\\
0\to \uG(X)@>>> \prod \uG(U_i)@>>> \prod \uG(U_i\cap U_j)
\end{CD}\]
uniquely defines the desired $f_*$.

The second statement of the lemma is well-known (\eg  \cite[3.3.1]{OR}).
\end{proof}
   
Actually, the proof of \cite[Lemma 3.2]{spsz}
defines a homomorphism in
$\HI_\et$
\[\sigma:L_\et (G)\to \uG\]
which is split by the obvious morphism of sheaves
\[\gamma:\uG\to L_\et (G)\]
given by the graph of a morphism. Therefore $\sigma$ is an epimorphism of
sheaves. (One should be careful, however, that $\gamma$ is not additive.)
When $\uG$ is homotopy invariant, one deduces from it  as in
\cite[Remark 3.3]{spsz} a morphism in $\DM_{-,\et}^\eff(k)$
\begin{equation}\label{eq1.4}
M_\et (G)=C_*(L_\et (G))\to \uG.
\end{equation}

Let $\HI_\et^{[0,1]}$ be the category
of complexes of length $1$ of objects of $\HI_\et$ (concentrated in
degrees $0$ and $1$): this is an abelian
category. Lemma \ref{l1.3} gives us a functor
\begin{align*}
\anM^\eff&\to \HI_\et^{[0,1]}\\
M&\mapsto\underline{M}
\end{align*}
hence, by composing with the embedding $\M\to \anM^\eff$ of
Proposition \ref{isoab}, another functor
\begin{equation}\label{eq2.2}
\M\to \HI_\et^{[0,1]}.
\end{equation}

\begin{comment}
Let $\alpha$ be the projection of the big \'etale site of smooth schemes
over $k$ onto the big Nisnevich site.

\begin{lemma}[\cf  \protect{\cite[Lect. 4]{VL}}] For $G=\G_m$, the sequence
\[0\to \Z\to L(\G_m)\longby{\sigma} \alpha_*\G_m\to 0\]
is split exact, where the first map is induced by the unit $1\in \G_m$.
Hence we have a canonical decomposition
\[M(\G_m)=\Z\oplus \alpha_*\G_m\]
in $\DM_{-}^\eff(k)$.
\end{lemma}
\end{comment}

\subsection{$1$-motives with torsion and an exact structure on $\M[1/p]$}\label{1.2}\

Recall that, from now on, we invert the exponential characteristic $p$ in the Hom groups of  all categories constructed out of commutative $k$-group schemes or \'etale $k$-sheaves. This does nothing in characteristic $0$.

The reader can check that most of the statements below become false if $p>1$ is not inverted. We hope that statements integral at $p$ may be recovered in the future by considering some kind of non-homotopy invariant motives and cohomology theories.

We start with:

\begin{propose}\label{pexact} Let $M^\cdot$ be a complex of
objects of
$\anM^\eff[1/p]$. The following conditions are equivalent:
\begin{thlist}
\item The total complex $\Tot(M^\cdot)$ in $C(\HI_\et)[1/p]$ (see Definition \ref{hi} and Lemma
\ref{l1.3}) is a\-cycl\-ic.
\item For any $q\in\Z$, $H^q(M^\cdot)$ is of the form $[F^q =F^q]$, where $F^q$ is finite.
\end{thlist}
\end{propose}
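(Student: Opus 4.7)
My plan is to decompose each $\uM^q$ as a length-$1$ complex $[\uL^q\to\uG^q]$, turning $\uM^\cdot$ into a bicomplex in $\HI_\et[1/p]$, and then compute both sides of the equivalence in terms of it. The first step is to write down the short exact sequence of complexes in $C(\HI_\et)[1/p]$
\[
0 \to \uG^\cdot[-1] \to \Tot(\uM^\cdot) \to \uL^\cdot \to 0
\]
coming from the filtration by internal degree, and check that the connecting map in its long exact cohomology sequence is precisely $\bar u^q:H^q(\uL^\cdot)\to H^q(\uG^\cdot)$, the map induced by $u^\cdot$. This reduces condition (i) to the assertion that $\bar u^q$ is an isomorphism of \'etale sheaves for every $q$.

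The second step is to compute $H^q(M^\cdot)$ inside $\anM^\eff[1/p]$. Kernels and cokernels there exist (Proposition \ref{lim}) and are formed componentwise, using that closed subgroups and quotients of a commutative algebraic $k$-group with semi-abelian neutral component remain of the same type. Hence
\[
H^q(M^\cdot) = \bigl[\,H^q(L^\cdot) \longby{\bar u^q} H^q(G^\cdot)\,\bigr],
\]
and the functors $L\mapsto \uL$, $G\mapsto \uG$ commute with this formation, so the underlying \'etale sheaves of $H^q(L^\cdot)$ and $H^q(G^\cdot)$ are exactly $H^q(\uL^\cdot)$ and $H^q(\uG^\cdot)$.

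The implication (ii)$\Rightarrow$(i) is then immediate: if $H^q(M^\cdot)=[F^q=F^q]$ with $F^q$ finite, then $\bar u^q$ is the identity of $\uF^q$, hence an isomorphism. For (i)$\Rightarrow$(ii) I would argue as follows. By the first step, $\bar u^q$ is an isomorphism for every $q$, so the underlying \'etale sheaf of $H^q(G^\cdot)$ is the locally constant $\Z[1/p]$-constructible sheaf $H^q(\uL^\cdot)$; in particular its geometric stalk at $\Spec\bar k$ is a finitely generated abelian group. On the other hand, for any commutative algebraic $k$-group $H$ with semi-abelian neutral component $H^0$, the group $H(\bar k)$ is finitely generated only when $\dim H^0 = 0$, since a positive-dimensional semi-abelian variety has divisible, non-finitely-generated group of geometric points. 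Applied to $H=H^q(G^\cdot)$, this forces $H^q(G^\cdot)$ to be a finite group $F^q$, \'etale after inverting $p$; the isomorphism $\bar u^q$ then identifies $H^q(L^\cdot)\iso F^q$, yielding $H^q(M^\cdot)\cong [F^q=F^q]$ as required.

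The main obstacle is this last structural observation --- that finitely generated geometric stalks force the neutral component to vanish --- which is where the $\Z$-constructibility hypothesis built into the definition of $\HI_\et$ and the inversion of $p$ really enter. Everything else is formal bicomplex bookkeeping, and I would expect no subtleties beyond checking that ``componentwise'' formation of kernels, cokernels and sheafification really does commute with taking $H^q$ in the subcategory $\anM^\eff[1/p]$.
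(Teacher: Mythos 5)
Your argument is correct and follows essentially the same path as the paper's proof: both use the filtration of $\Tot(M^\cdot)$ by internal degree to obtain the long exact sequence linking $H^q(\uL^\cdot)$, $H^q(\uG^\cdot)$ and $H^q(\Tot(M^\cdot))$, deduce that $\bar u^q$ is an isomorphism, and then conclude finiteness from the tension between the discrete side (finitely generated geometric fibres) and the algebraic-group side (divisible if positive-dimensional). You spell out the final structural step a little more explicitly than the paper does, which is welcome. One small slip: you invoke Proposition \ref{lim} for componentwise kernels and cokernels in $\anM^\eff[1/p]$, but \ref{lim} concerns $\M$ and ${}^t\M^\eff$, where kernels use $\ker^0$; the componentwise description you actually want is Proposition \ref{nca} (whose proof records exactly that $\ker(\phi)=[\ker f\to\ker g]$ and $\coker(\phi)=[\coker f\to\coker g]$ in $\anM^\eff[1/p]$).
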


\begin{proof} (ii) $\Rightarrow$ (i) is obvious. For the converse, let
$M^q=[L^q\to G^q]$ for all $q$. Let $L^\cdot$ and $\uG^\cdot$ be the two
corresponding ``column" complexes of sheaves. Then we have a long exact
sequence in
$\HI_\et$:
\[\dots\to H^q(L^\cdot)\to H^q(\uG^\cdot)\to H^q(\Tot(M^\cdot))\to
H^{q+1}(L^\cdot)\to\dots\]

The assumption implies that $H^q(L^\cdot)\iso H^q(\uG^\cdot)$ for all $q$.
Since $H^q(L^\cdot)$ is discrete and $H^q(\uG^\cdot)$ is representable by
a commutative algebraic group, both must be finite.
\end{proof}

We now restrict to complexes of $\M[1/p]$.

\begin{defn}\label{dexact} A complex of $\M[1/p]$ is \emph{acyclic} if it
satisfies the equivalent conditions of Proposition \ref{pexact}. An
acyclic complex of the form $0\to N'\by{i} N\by{j} N''\to 0$ is called a
\emph{short exact sequence}. 
\end{defn}

Recall that in \cite{BRS} a category of $1$-motives with torsion was introduced. We shall
denote it here by ${}^t\M$ \index{${}^t\M$, ${}^t\M^\eff$} in order to distinguish it from $\M$. Denote by ${}^t\M^\eff$ the
effective $1$-motives with torsion: ${}^t\M^\eff$ is the full subcategory of
the category $\anM^\eff$ of Definition \ref{Meff} consisting of the objects
$[L\to G]$ where
$G$ is connected. Then ${}^t\M$ is the
localisation of ${}^t\M^\eff$ with respect to quasi-isomorphisms.

The main properties of ${}^t\M$ are recalled in Appendix \ref{AppendixB}. In particular, the category ${}^t\M[1/p]$ is abelian (Theorem \ref{1mtora}) and by Proposition \ref{free} we have a full embedding
\begin{equation}\label{fullem}
\M[1/p]\into{}^t\M[1/p]
\end{equation}
which makes $\M[1/p]$ an exact subcategory of ${}^t\M[1/p]$. The following lemma is clear:

\begin{lemma}\label{lexact} A complex $0\to N'\by{i} N\by{j} N''\to 0$ in $\M[1/p]$ is a short exact sequence in the sense of Definition \ref{dexact} if and only if it is a short exact sequence for the exact structure given by Proposition \ref{free}.
\end{lemma}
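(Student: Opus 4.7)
The strategy is to identify both conditions with the vanishing of the three cohomology objects of $0\to N'\by{i} N\by{j} N''\to 0$, computed first in $\anM^\eff[1/p]$ (where kernels and cokernels exist) and then pushed into the abelian category ${}^t\M[1/p]$ via the localisation functor from Appendix \ref{AppendixB}.

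First I would unwind the two notions. By Proposition \ref{free}, the sequence is short exact for the induced exact structure on $\M[1/p]$ if and only if $\ker(i)$, $\ker(j)/\im(i)$, and $\coker(j)$ vanish in the abelian category ${}^t\M[1/p]$. On the other hand, Definition \ref{dexact} and Proposition \ref{pexact} say that the sequence is short exact in that sense if and only if the three analogous cohomology $1$-motives (computed at the level of $\anM^\eff[1/p]$, which has kernels and cokernels) are all of the form $[F=F]$ with $F$ finite.

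Next I would invoke Theorem \ref{1mtora}: the abelian category ${}^t\M[1/p]$ is the localisation of ${}^t\M^\eff[1/p]$ at quasi-isomorphisms of the associated two-term complexes of \'etale sheaves $[L\to\uG]$, so an effective object becomes zero in ${}^t\M[1/p]$ iff its two columns have isomorphic cohomology sheaves that are forced to be finite (by the same finiteness argument used in the proof of Proposition \ref{pexact}), i.e.\ iff it is of the form $[F=F]$ with $F$ finite. Combining these two observations, both notions of short-exactness translate into the same statement about the cohomology $1$-motives, and the equivalence follows.

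The main obstacle is checking that the kernels/cokernels computed at the level of $\anM^\eff[1/p]$ (where $\M[1/p]$ naturally lives) agree, after the localisation, with the kernels/cokernels computed in the abelian category ${}^t\M[1/p]$. This is where Appendix \ref{AppendixB} does the real work: once it is known that the localisation functor is exact and that its kernel consists exactly of the objects $[F=F]$, $F$ finite, the lemma is immediate. The rest is bookkeeping.
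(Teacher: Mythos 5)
Your plan correctly identifies Proposition \ref{pexact} and Proposition \ref{free} as the two sides of the equivalence, but the pivotal step is misstated. You assert that ``the localisation functor is exact and its kernel consists exactly of the objects $[F=F]$.'' Neither assertion holds. The localisation ${}^t\M^\eff[1/p]\to{}^t\M[1/p]$ is only \emph{left} exact (Proposition \ref{faith}), and it is faithful, so the only effective object it sends to zero is $0$ itself. Moreover, for $F$ a nonzero finite group the object $[F=F]$ does not even lie in ${}^t\M^\eff[1/p]$, whose degree-$1$ entry is required to be connected; it lives only in $\anM^\eff[1/p]$. Consequently the cohomology objects $H^q$ of Proposition \ref{pexact}(ii), which are computed in $\anM^\eff[1/p]$, cannot be compared to kernels and cokernels in ${}^t\M[1/p]$ simply by ``pushing forward through the localisation'' — carrying out that comparison is precisely what the lemma asks for.

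The comparison can be made directly, and this is what your ``bookkeeping'' should contain. Since all terms of the complex are Deligne $1$-motives, the shape $[F^q=F^q]$ from Proposition \ref{pexact}(ii) already forces $H^0=0$ (a finite subgroup of a lattice vanishes) and $H^2=0$ (a finite quotient of a connected group vanishes), so $i$ is a termwise monomorphism, $j$ a termwise epimorphism, and only $H^1\cong[F_1=F_1]$ remains. Writing $j=(f_L,f_G)$, the identification $H^1=[\ker f_L/L'\to\ker f_G/G']\cong[F_1=F_1]$ yields $\ker^0(f_L)=L'$ and $(\ker f_G)^0=G'$, so by Proposition \ref{lim} the $1$-motive $N'$ \emph{is} the kernel of $j$ in ${}^t\M^\eff[1/p]$, hence also in ${}^t\M[1/p]$ by left-exactness; and $j$ factors as the projection $N\to[L/L'\to G/G']$ (an epimorphism in ${}^t\M[1/p]$ by Lemma \ref{coker}) followed by the quasi-isomorphism $[L/L'\to G/G']\to N''$ with kernel $[F_1=F_1]$. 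This gives exactness in the sense of Proposition \ref{free}. For the converse, use Corollary \ref{corexseq} to replace the given short exact sequence of ${}^t\M[1/p]$ by an isomorphic exact sequence of complexes in ${}^t\M^\eff[1/p]$; the connecting quasi-isomorphisms induce quasi-isomorphisms of total complexes, so $\Tot$ of the original sequence is acyclic, which is Proposition \ref{pexact}(i).
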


\begin{remarks}\label{rexact} 1) There is another, much stronger, exact
structure on $\M[1/p]$, induced by its full embedding in $\anM^\eff[1/p]$: it
amounts to require  a complex $[L^\cdot\to G^\cdot]$ to be exact if and
only if both complexes $L^\cdot$ and $\uG^\cdot$ are acyclic. We shall
not use this exact structure in the sequel. (See also Remark \ref{r1.8.6}.)

2) Clearly, the complexes of Definition \ref{dexact} do not provide $\anM^\eff[1/p]$ with an exact
structure. It is conceivable, however, that they define an exact structure on the
localisation of $\anM^\eff[1/p]/\text{homotopies}$ with respect to morphisms with acyclic kernel and
cokernel.
\end{remarks}

\subsection{The derived category of $1$-motives}\label{1.2d}

\begin{lemma}\label{l1.5.1} A complex in $C(\M[1/p])$ is acyclic in the sense of Definition
\ref{dexact} if and only if it is acyclic with respect to the exact structure of $\M[1/p]$ provided
by Lemma
\ref{lexact} in the sense of \cite[1.1.4]{BBD} or \cite[\S 1]{neeman}.
\end{lemma}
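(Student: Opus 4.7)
The plan is to compare both notions of acyclicity inside the ambient abelian category ${}^t\M[1/p]$, in which $\M[1/p]$ embeds as a fully exact subcategory via \eqref{fullem}.

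First, by Proposition \ref{pexact}, a complex $M^\bullet$ in $C(\M[1/p])$ is acyclic in the sense of Definition \ref{dexact} precisely when each cohomology object $H^q(M^\bullet) \in \anM^\eff[1/p]$ has the form $[F^q = F^q]$ with $F^q$ finite. Such an object is quasi-isomorphic to zero and therefore vanishes in ${}^t\M[1/p]$, which by definition is obtained from ${}^t\M^\eff[1/p]$ by localizing at quasi-isomorphisms; the converse follows from the same description of ${}^t\M[1/p]$. Hence Definition \ref{dexact}-acyclicity of $M^\bullet$ is equivalent to acyclicity of $M^\bullet$ viewed as a complex in the abelian category ${}^t\M[1/p]$.

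Second, I would invoke the standard fact that, for a fully exact subcategory $\cA$ of an abelian category $\cB$ (meaning that the admissible short exact sequences of $\cA$ are exactly those short exact sequences of $\cB$ with all three terms in $\cA$), a complex of $\cA$ is BBD/Neeman acyclic in the sense of \cite[1.1.4]{BBD} if and only if it is acyclic in $\cB$ \emph{and} all cycle objects $Z^n = \ker(d^n)$, computed in $\cB$, lie in $\cA$. By Lemma \ref{lexact}, $\M[1/p]$ is fully exact in ${}^t\M[1/p]$ in this sense, so combined with the first step the lemma reduces to showing the following: for any complex in $C(\M[1/p])$, the cycle objects computed in ${}^t\M[1/p]$ automatically lie in $\M[1/p]$.

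This is the main (albeit mild) obstacle, and reduces to the description of kernels. Given a morphism $f = (f_L, f_G) : [L \to G] \to [L' \to G']$ in $\M[1/p]$, I would argue that its kernel in ${}^t\M[1/p]$ is represented by the pair $[\ker f_L \to (\ker f_G)^0]$: the lattice-part $\ker f_L$ is a subgroup of the torsion-free $L$, hence still a lattice; the neutral component $(\ker f_G)^0$ of $\ker f_G$ is a connected subgroup of the semi-abelian variety $G$, hence itself semi-abelian; and the finite quotient $\ker f_G / (\ker f_G)^0$, being invertible once $p$ is inverted, vanishes in ${}^t\M[1/p]$ through the localization at quasi-isomorphisms. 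This places $\ker f$ in $\M[1/p]$, so the cycle objects of any complex in $C(\M[1/p])$ are in $\M[1/p]$, closing the proof.
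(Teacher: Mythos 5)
Your overall strategy — compare the two notions inside an ambient abelian category and check that the cycle/image objects lie in $\M[1/p]$ — is the right idea, and your ``standard fact'' about fully exact subcategories is correct. But there is a real gap in Step~1, and Step~3 contains a genuine error.

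Step~1 asserts that Definition~\ref{dexact}-acyclicity is equivalent to the vanishing of the cohomology of $M^\bullet$ computed in the abelian category ${}^t\M[1/p]$, on the grounds that an object of the form $[F^q=F^q]$ is ``quasi-isomorphic to zero.'' This conflates two different cohomologies. Proposition~\ref{pexact} computes $H^q(M^\bullet)$ in the abelian category $\anM^\eff[1/p]$; the object $[F^q=F^q]$ has a \emph{disconnected} group part, so it does not even live in ${}^t\M^\eff[1/p]$, and the functor ${}^t\M^\eff[1/p]\to{}^t\M[1/p]$ is only left exact (Proposition~\ref{faith}), not exact. There is therefore no exact functor $\anM^\eff[1/p]\to{}^t\M[1/p]$ under which the $\anM^\eff$-cohomology of $M^\bullet$ would transport to the ${}^t\M$-cohomology of $M^\bullet$, and the kernels and cokernels of $d^n$ genuinely differ in the two categories (e.g.\ the kernel of $n\colon[0\to\G_m]\to[0\to\G_m]$ is $[0\to\mu_n]\ne 0$ in $\anM^\eff$ but is $0$ in ${}^t\M[1/p]$, while the cokernels are respectively $0$ and $[\mu_n\to 0]$). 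Establishing the equivalence you want requires the same sort of image-factorisation argument as the paper's proof — i.e.\ it is the crux of the lemma, not a reduction to it. The paper avoids this entirely by taking images $D^n=\im(d^n)$ inside $\anM^\eff[1/p]$; these are automatically Deligne $1$-motives, and Lemma~\ref{lexact} applied to the sequences $0\to D^{n-1}\to X^n\to D^n\to 0$ gives both directions directly, without passing through the intermediate notion of ${}^t\M$-acyclicity at all.

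In Step~3, your description of the kernel is incorrect: the complex $[\ker f_L\to(\ker f_G)^0]$ is not well-defined because $u(\ker f_L)$ need not land in $(\ker f_G)^0$. The correct statement (Proposition~\ref{lim}) is that the kernel in ${}^t\M^\eff[1/p]$ is $[\ker^0 f_L\to(\ker f_G)^0]$, where $\ker^0 f_L$ is the pullback of $(\ker f_G)^0$ along $u\vert_{\ker f_L}$, a finite-index sublattice of $\ker f_L$. Your conclusion that kernels of maps of Deligne $1$-motives in ${}^t\M[1/p]$ are again Deligne $1$-motives is correct, but the reason is this pullback description (plus left exactness of ${}^t\M^\eff[1/p]\to{}^t\M[1/p]$), not ``the finite quotient $\ker f_G/(\ker f_G)^0$ vanishing through localisation,'' which does not accurately describe what is happening.
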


\begin{proof} Let $X^\cdot\in C(\M[1/p])$. Viewing $X^\cdot$ as a complex of objects of $\anM^\eff[1/p]$,
we define $D^n=\im (d^n:X^n\to X^{n+1})$. Note that the $D^n$ are Deligne $1$-motives. Let
$e_n:X^n\to D^n$ be the projection and $m_n:D^n\to X^{n+1}$ be the inclusion. We have
half-exact sequences
\begin{equation}\label{eq1.5}
0\to D^{n-1}\by{m_{n-1}} X^n\by{e_n} D^n\to 0
\end{equation}
with middle cohomology equal to $H^n(X^\cdot)$. Thus, if $X^\cdot$ is acyclic in the sense of
Definition \ref{dexact}, the sequences \eqref{eq1.5} are short exact which means that $X^\cdot$
is acyclic with respect to the exact structure of $\M[1/p]$. Conversely, suppose that $X^\cdot$ is
acyclic in the latter sense. Then, by definition, we may find
${D'}^n$, $e'_n$, $m'_n$ such that $d^n=m'_ne'_n$ and that the sequences analogous to
\eqref{eq1.5} are short exact. Since $\anM^\eff[1/p]$ is abelian (Proposition \ref{nca}), ${D'}^n=D^n$ and we are done.
\end{proof}

From now on, we shall only say ``acyclic" without further precision. 

Let $K(\M[1/p])$ be the homotopy category of $C(\M[1/p])$. By \cite[Lemmas 1.1 and 1.2]{neeman}, the full
subcategory of $K(\M[1/p])$ consisting of acyclic complexes is triangulated and thick (the latter
uses the fact that $\M[1/p]$ is idempotent-complete, \cf Lemma \ref{lidco}). Thus one may define the 
derived category of $\M[1/p]$ in the usual way:

\begin{defn}\label{ddermot} \index{$D^b(\M[1/p])$}
a) The \emph{derived category of $1$-motives} is
the localisation $D(\M[1/p])$ of the homotopy category $K(\M[1/p])$ with
respect to the thick subcategory $A(\M[1/p])$ consisting of acyclic complexes. Similarly for
$D^\pm(\M[1/p])$ and $D^b(\M[1/p])$.\\
b) A morphism in $C(\M[1/p])$ is a
\emph{quasi-isomorphism} if its cone is acyclic.
\end{defn}

\subsection{Torsion objects in the derived category of $1$-motives}

Let $\cM_0$ be the category of lattices (see Definition \ref{d1.1.1}): the inclusion functor
$\cM_0[1/p]\by{A}
\M[1/p]$ provides it with the structure of an exact subcategory of $\M[1/p]$.
Moreover, the embedding
\[\cM_0[1/p]\by{B} {}^t\cM_0[1/p]\]
is clearly exact, where ${}^t\cM_0$ is the abelian category of discrete
\'etale sheaves (see Definition \ref{d1.1.1} again). In fact, we also have an exact functor
\begin{align*}
{}^t\cM_0[1/p]&\by{C}{}^t\M[1/p]\\
L&\mapsto [L\to 0].
\end{align*}

 Hence an induced diagram of
triangulated categories:
\[\begin{CD}
D^b(\cM_0[1/p])@>B>> D^b({}^t\cM_0[1/p])\\
@V{A}VV @V{C}VV\\
D^b(\M[1/p])@>D>> D^b({}^t\M[1/p]).
\end{CD}\]

\begin{thm}\label{ptors} In the
above diagram\\ 
a) $B$ and $D$ are equivalence of categories.\\ 
b) $A$ and $C$ are fully faithful; restricted to torsion objects they are equivalences of
categories.
\end{thm}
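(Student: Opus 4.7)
The plan is to establish the equivalences $B$ and $D$ first via a lattice resolution argument, then deduce that $C$ is fully faithful using an adjunction, and obtain the corresponding statement for $A$ by commutativity of the square $CB=DA$.

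The key input is the lemma that every $L\in{}^t\cM_0[1/p]$ admits a length-one lattice resolution $0\to L_1\to L_0\to L\to 0$ with $L_0,L_1\in\cM_0[1/p]$. One picks a finite Galois extension $k'/k$ trivialising $L$ and uses that $\Z[1/p]$ is hereditary to surject a Galois-equivariant free $\Z[1/p]$-module (for instance a sum of induced modules) onto $L(k')$; Galois descent yields a lattice surjection onto $L$ whose kernel, being a subsheaf of a torsion-free sheaf, is automatically a lattice. Hence $\cM_0[1/p]$ is left-resolving in ${}^t\cM_0[1/p]$ with resolutions of length $\le 1$, and the standard criterion for derived equivalence of an exact resolving subcategory gives that $B$ is an equivalence. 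For $D$, the same lattice resolution of the discrete component $L$ of an arbitrary $M=[L\by{u}G]\in{}^t\M[1/p]$ lifts to a short exact sequence
\begin{equation*}
0\to [L_1\to 0]\to [L_0\to G]\to [L\by{u}G]\to 0
\end{equation*}
in ${}^t\M[1/p]$, with $L_0\to G$ the composite $L_0\twoheadrightarrow L\by{u}G$. Both outer terms are Deligne $1$-motives, so $\M[1/p]$ is left-resolving in ${}^t\M[1/p]$ and the same criterion gives that $D$ is an equivalence.

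For full faithfulness of $C$, I would exploit that the inclusion $\iota:{}^t\cM_0[1/p]\hookrightarrow{}^t\M[1/p]$, $L\mapsto[L\to 0]$, admits an exact left adjoint $\pi:[L\to G]\mapsto L$ satisfying $\pi\iota=\mathrm{id}$. Exactness of $\pi$ is the compatibility of the weight filtration with short exact sequences of $1$-motives with torsion (Appendix \ref{AppendixB}): $\pi$ is the functor $W_0/W_{-1}$ extracting the discrete weight-zero part. Since both functors are exact they descend to $D^b$ with the same adjunction, so
\begin{equation*}
\Hom_{D^b({}^t\M[1/p])}(\iota X,\iota Y)\cong \Hom_{D^b({}^t\cM_0[1/p])}(\pi\iota X,Y)=\Hom_{D^b({}^t\cM_0[1/p])}(X,Y),
\end{equation*}
which is full faithfulness of $C$. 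Commutativity of the square $CB=DA$ (obvious on objects: both send a lattice $L$ to $[L\to 0]$) combined with $B$ and $D$ being equivalences forces $A$ to be fully faithful as well.

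For the torsion statement, observe that multiplication by a nonzero integer prime to $p$ on a nonzero semi-abelian variety is surjective with finite nonzero kernel, hence nonzero as a morphism in the abelian category ${}^t\M[1/p]$. Thus a torsion object $[L\to G]\in{}^t\M[1/p]$ must have $G=0$, i.e.\ lies in ${}^t\cM_0[1/p]$. An induction on cohomological amplitude using the standard truncation triangles and the already-established full faithfulness of $C$ then shows that every $M\in D^b({}^t\M[1/p])$ with torsion cohomology lies in the essential image of $C$; the corresponding assertion for $A$ is automatic via the equivalences $B$ and $D$. The main obstacle throughout is the exactness of $\pi$, which depends on the explicit description of kernels and cokernels in ${}^t\M[1/p]$ furnished by the Appendix — once that structural fact is in hand, the rest of the argument is largely formal.
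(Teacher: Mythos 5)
Your treatment of $B$ and $D$ is essentially the paper's: both rest on the observation that every discrete sheaf in ${}^t\cM_0[1/p]$ admits a length-one resolution by lattices, which lifts to a two-term $\M[1/p]$-resolution of any object of ${}^t\M[1/p]$, and Proposition~\ref{derxact} then gives the equivalences. That part is fine.

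The full faithfulness of $C$ is where the argument breaks. You assert that $\iota\colon{}^t\cM_0[1/p]\hookrightarrow{}^t\M[1/p]$ admits an \emph{exact left adjoint} $\pi$, identified with $\gr^W_0=W_0/W_{-1}$. Neither half of this holds. First, $\pi$ is not a left adjoint: the adjunction would force
\[
\Hom_{{}^t\M[1/p]}\bigl([0\to G],[L'\to 0]\bigr)=\Hom_{{}^t\cM_0[1/p]}(0,L')=0,
\]
but Proposition~\ref{hom}(d) computes the left-hand side as $\Hom_k({}_nG,L'_{\tor})$, which is nonzero whenever $L'$ has torsion and $G\ne0$ (for example $G=\G_m$, $L'=\Z/n$ over a field containing $\mu_n$). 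Second, $\gr^W_0$ (which sends $M$ to $L_{\fr}$, not $L$) is not exact: applying it to the short exact sequence $0\to[\Z\to 0]\by{n}[\Z\to 0]\to[\Z/n\to 0]\to 0$ yields $0\to\Z\by{n}\Z\to 0\to 0$, which is not exact; nor is the naive assignment $[L\to G]\mapsto L$ even well defined on ${}^t\M[1/p]$, since a quasi-isomorphism replaces $L$ by a quotient $L/F$. So the displayed chain of isomorphisms
\[
\Hom_{D^b({}^t\M[1/p])}(\iota X,\iota Y)\cong \Hom_{D^b({}^t\cM_0[1/p])}(\pi\iota X,Y)
\]
has no justification, and the full faithfulness of $C$ is unproved. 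The paper instead verifies the Kashiwara--Schapira criterion of Proposition~\ref{pschapira}: any monomorphism $[L\to 0]\hookrightarrow[L'\to G']$ in ${}^t\M[1/p]$ can be represented by an effective map with $L\to L'$ injective, and composing with the projection $[L'\to G']\to[L'\to 0]$ gives the required factorisation through an object of ${}^t\cM_0[1/p]$. That is a genuinely different (and correct) mechanism, and I would encourage you to look for an argument of that kind rather than an adjunction which does not exist here.

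The remainder of the proposal is sound. Deducing full faithfulness of $A$ from that of $C$ via $DA=CB$ is valid once $C$ is handled correctly. Your treatment of torsion objects — that a torsion object of ${}^t\M[1/p]$ has trivial semi-abelian part, followed by a Postnikov induction using full faithfulness to show every torsion complex is in the essential image of $C$, then transporting to $A$ via the equivalences — is a legitimate alternative to the paper's explicit zig-zag of quasi-isomorphisms in $K^b(\anM^\eff[1/p])$, and arguably cleaner; it just cannot be carried out until the full faithfulness of $C$ is actually established.
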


(For the notion of torsion objects, see Proposition \ref{p1.1}.)

\begin{proof} a) For $B$, this follows from Proposition
\ref{derxact} provided we check that any object $M$ in ${}^t\cM_0[1/p]$ has a
finite left resolution by objects in $\cM_0[1/p]$. In fact $M$ has a length
$1$ resolution: let $E/k$ be a finite Galois extension of group $\Gamma$
such that the Galois action on $M$ factors through $\Gamma$. Since $M$ is
finitely generated, it is a quotient of some power of $\Z[\Gamma]$, and
the kernel is a lattice. Exactly the same argument works for $D$.

b) By a) it is sufficient to prove that $C$ is fully faithful. It suffices to verify that the
criterion of Proposition \ref{pschapira} is verified by the full embedding ${}^t\cM_0[1/p]\to {}^t\M[1/p]$.

Let $[L\to 0]\into [L'\to G']$ be a monomorphism in ${}^t\M[1/p]$. We may assume that it is given
by an effective map. The assumption implies that $L\to L'$ is mono: it then suffices to compose
with the projection $[L'\to G']\to [L'\to 0]$.

It remains to show that $A$ is essentially
surjective on torsion objects. Let $X=[C^\cdot\to G^\cdot]\in
D^b(\M[1/p])$, and let $n>0$ be such that $n 1_X=0$. Arguing as in the proof of Proposition \ref{pexact}, this implies that the cohomology sheaves of both $C^\cdot$
and $G^\cdot$ are killed by some possibly larger integer
$m$. We have an exact triangle
\[[0\to G^\cdot]\to X\to [C^\cdot\to 0]\by{+1}
\]
which leaves us to show that $[0\to G^\cdot]$ is in the essential image
of $C$. Let $q$ be the smallest integer such that $G^q\ne 0$: we have an
exact triangle
\[\{G^q\to \im d^q\}\to  G^\cdot \to \{0\to G^{q+1}/\im
d^q\to\dots\}\by{+1}\]
(here we use curly braces in order to avoid confusion with the square
braces used for $1$-motives). By descending induction on $q$, the right
term is in the essential image, hence we are reduced to the case where
$G^\cdot$ is of length $1$. Then $d^q:G^q\to G^{q+1}$ is epi and
$\mu:=\ker d^q$ is finite and locally constant\footnote{Note that this is
true even if $m$ is divisible by the characteristic of $k$, since we
only consider sheaves over \emph{smooth} $k$-schemes.}. Consider the
diagram in
$K^b(\anM^\eff[1/p])$
\[
[\begin{smallmatrix}0\\\downarrow\\ 0\end{smallmatrix}\to
\begin{smallmatrix}G^q\\\downarrow\\ G^{q+1}\end{smallmatrix}] \leftarrow
[\begin{smallmatrix}0\\\downarrow\\ \mu\end{smallmatrix}\to
\begin{smallmatrix}G^q\\||\\ G^{q}\end{smallmatrix}]
\leftarrow [\begin{smallmatrix}L_1\\\downarrow\\ L_0\end{smallmatrix}\to
\begin{smallmatrix}G^q\\||\\ G^{q}\end{smallmatrix}]\to
[\begin{smallmatrix}L_1\\\downarrow\\ L_0\end{smallmatrix}\to
\begin{smallmatrix}0\\\downarrow\\ 0\end{smallmatrix}]
\]
where $L_1\to L_0$ is a resolution of $\mu$ by lattices (see proof of a)).
Clearly all three maps are quasi-isomorphisms, which implies that the
left object is quasi-isomorphic to the right one on
$D^b(\M[1/p])$.
\end{proof}

\begin{cor} \label{nobox} Let $A$ be a subring of $\Q$ containing $1/p$. Then the natural functor
\[D^b(\M[1/p])\otimes A\to D^b(\M\otimes A)\]
is an equivalence of categories. These categories are idempotent-com\-ple\-te for any $A$.
\end{cor}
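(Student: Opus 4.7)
The plan is to reduce the statement to the abelian setting via Theorem \ref{ptors}, and then invoke a standard ``localisation commutes with $D^b$'' principle for abelian categories.

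First, I would verify that the proof of Theorem \ref{ptors} carries over essentially verbatim with $A$ in place of $\Z[1/p]$: the category ${}^t\M\otimes A$ is abelian (it is obtained from the abelian category ${}^t\M[1/p]$ by inverting the primes in $A^\times$, an exact operation), and the arguments of Appendix \ref{AppendixB}—lattice resolutions of discrete sheaves together with the criterion of Proposition \ref{pschapira}—persist after tensoring with $A$. This yields an equivalence $D^b(\M\otimes A)\simeq D^b({}^t\M\otimes A)$, and in particular $\M\otimes A$ inherits a well-defined exact structure as a full exact subcategory of the abelian category ${}^t\M\otimes A$.

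Second, I would establish the general lemma: for an abelian category $\cB$ with $p$ invertible and for $A\subset \Q$ containing $1/p$, the canonical functor
\[
D^b(\cB)\otimes A \longrightarrow D^b(\cB\otimes A)
\]
is an equivalence. Essential surjectivity is obtained by clearing denominators in a bounded complex of $\cB\otimes A$: differentials $d^i=f^i/s^i$ with $f^i\in\Hom_\cB$ and $s^i\in A^\times$ satisfy $d^{i+1}d^i=0$, forcing $f^{i+1}f^i$ to be $s$-torsion in $\cB$; a further uniform rescaling (possible because the complex is bounded) produces a complex in $\cB$ mapping to the original. Full faithfulness is similar, clearing denominators in roofs representing morphisms and in the homotopies realising identities. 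Applying this with $\cB={}^t\M[1/p]$, and using ${}^t\M[1/p]\otimes A={}^t\M\otimes A$, we obtain the chain
\[
D^b(\M[1/p])\otimes A \simeq D^b({}^t\M[1/p])\otimes A \simeq D^b({}^t\M\otimes A) \simeq D^b(\M\otimes A),
\]
the outer equivalences coming from Theorem \ref{ptors} and its $A$-coefficient version.

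Finally, idempotent completeness of $D^b(\M\otimes A)$ follows from the Balmer--Schlichting theorem: the bounded derived category of any abelian category is Karoubian, applied to the abelian category ${}^t\M\otimes A$. The same conclusion transports across the equivalence to $D^b(\M[1/p])\otimes A$. The main obstacle is the general lemma: although clearing denominators sounds formal, one must do it in a way compatible with boundedness of complexes, with composition in roofs, and with the passage to homotopy equivalences; writing this out carefully is the only real work, while every other step is either a direct citation of Theorem \ref{ptors} or a standard consequence.
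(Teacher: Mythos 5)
Your proposal is correct and follows essentially the same route as the paper: reduce to the abelian category ${}^t\M$ via Theorem \ref{ptors} (and its $A$-coefficient analogue), apply the compatibility of $D^b$ with $\otimes A$ for abelian categories, and deduce idempotent completeness from Karoubianness of $D^b$ of an abelian category. The ``general lemma'' you set out to prove is precisely Proposition \ref{pB.4.1}, which the paper cites directly rather than re-proving, so your only real divergence is re-deriving a result that is already available.
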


\begin{proof} By Proposition \ref{pB.4.1}, this is true by replacing the category $\M[1/p]$ by ${}^t\M[1/p]$. On the
other hand, the same argument as above shows that the functor $D^b(\M\otimes A)\to
D^b({}^t\M\otimes A)$ is an equivalence. This shows the first statement; the second one follows
from the fact that $D^b$ of an abelian category is idempotent-complete.
\end{proof}

\subsection{Discrete sheaves and permutation modules} The following proposition will be used in \S \ref{s2.4.1}.

\begin{propose}\label{pperm} Let $G$ be a profinite group. Denote by $D^b_c(G)$ the
derived category of finitely generated (topological discrete)
$G$-modules. Then $D^b_c(G)$ is thickly generated by $\Z$-free permutation modules.
\end{propose}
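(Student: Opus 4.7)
The plan is a two-stage reduction followed by an analysis of the lattice case. First, I reduce from profinite $G$ to a finite quotient $\Gamma$; second, by devissage I reduce to a single $\Gamma$-lattice; finally, I handle lattices via a diagonal-embedding trick combined with induction on $|\Gamma|$.

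Let $P\subset D^b_c(G)$ denote the thick subcategory generated by the $\Z[G/H]$ for $H$ open in $G$. Given $M\in D^b_c(G)$, choose a bounded representative complex of finitely generated discrete $G$-modules. Each of the finitely many nonzero terms is fixed by some open subgroup of $G$; intersecting these yields an open normal $N\triangleleft G$ acting trivially on every term. Hence $M$ lies in the essential image of $D^b_c(G/N)\hookrightarrow D^b_c(G)$, and $\Z[(G/N)/(H/N)]$ pulls back to $\Z[G/H]$, so we may assume $G=\Gamma$ is finite. Any bounded complex is obtained from its finitely many cohomology modules by iterated cones, so it suffices to show every finitely generated $\Z[\Gamma]$-module $M$ lies in $P$. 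A surjection $\Z[\Gamma]^n\twoheadrightarrow M$ has kernel sitting inside the $\Z$-free module $\Z[\Gamma]^n$, hence a $\Gamma$-lattice; the associated exact triangle reduces the problem to showing every $\Gamma$-lattice $L$ lies in $P$.

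For a $\Gamma$-lattice $L$, the diagonal embedding $L\hookrightarrow \Z[\Gamma]\otimes_\Z L$, $\ell\mapsto\sum_{g\in\Gamma}g\otimes g^{-1}\ell$, is $\Gamma$-equivariant when the target is given the diagonal action, and the twist $g\otimes\ell\mapsto g\otimes g^{-1}\ell$ identifies the diagonal-action target with the free $\Z[\Gamma]$-module $\Z[\Gamma]^{\operatorname{rank} L}$, which is a permutation module. The resulting sequence $0\to L\to \Z[\Gamma]^{\operatorname{rank} L}\to Q\to 0$ reduces $L\in P$ to $Q\in P$. The main obstacle is controlling $Q$: its $\Z$-rank is $(\operatorname{rank} L)(|\Gamma|-1)$, typically larger than $\operatorname{rank} L$, so a naive induction on rank fails to terminate. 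My intended workaround is a secondary induction on $|\Gamma|$: pick a proper subgroup $\Gamma'\subsetneq\Gamma$, invoke the inductive hypothesis for $\Gamma'$ on $L|_{\Gamma'}$, and push the resulting permutation resolution back up using that $\operatorname{Ind}_{\Gamma'}^\Gamma$ carries permutation $\Gamma'$-modules to permutation $\Gamma$-modules. The key supporting fact is the orthogonality $\{\Z[\Gamma/H]\}^\perp=0$ in $D^b_c(\Gamma)$, which follows immediately from Shapiro's lemma with $H=1$: it guarantees that no ``hidden'' lattice escapes $P$ and anchors the inductive scheme.
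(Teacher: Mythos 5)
Your reduction to a finite quotient $\Gamma$ is the same as the paper's, and your further reduction to $\Gamma$-lattices (via a surjection $\Z[\Gamma]^n\twoheadrightarrow M$ with lattice kernel) is a legitimate alternative to the paper's reduction, which instead goes the other way: the paper splits off the torsion $M_\tors$, embeds $\bar M=M/M_\tors$ into $\Z[G]^n$ with a complement $N$ of finite colength, and uses thickness (closure under direct summands) to reduce to the case $M$ \emph{finite}. The finite case is then dispatched with an $\ell$-primary decomposition, the explicit Sylow induction--restriction splitting $M\mid \Z[G]\otimes_{\Z[S]}M$ (valid because $(G:S)$ is prime to $\ell$), and the fact that $\ell$-torsion $S$-modules for $S$ an $\ell$-group are iterated extensions of trivial modules.

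Your handling of the lattice case has a genuine gap, and I don't think it can be repaired along the lines you sketch. The diagonal embedding $L\hookrightarrow \Z[\Gamma]\otimes_\Z L\cong\Z[\Gamma]^{\rank L}$ is correct, but, as you note, the cokernel has strictly larger rank, so the first induction is dead. The proposed rescue by induction on $|\Gamma|$ does not go through: knowing $L|_{\Gamma'}\in P(\Gamma')$ and that $\operatorname{Ind}_{\Gamma'}^\Gamma$ sends permutation modules to permutation modules tells you that $\operatorname{Ind}_{\Gamma'}^\Gamma(L|_{\Gamma'})\in P(\Gamma)$, but there is in general no way to recover $L$ itself from $\operatorname{Ind}_{\Gamma'}^\Gamma\operatorname{Res}_{\Gamma'}^\Gamma L$: the counit $\operatorname{Ind}\operatorname{Res}L\to L$ is split only after inverting the index $(\Gamma:\Gamma')$, which is precisely what one is not allowed to do integrally. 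This is exactly where the paper's Sylow trick earns its keep, but it only works on the torsion primary piece, where the index is a unit.

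The ``anchor'' you invoke is also a non-sequitur. The vanishing $\{\Z[\Gamma/H]\}^\perp=0$ is true (indeed already $\{\Z[\Gamma]\}^\perp=0$, since $\Z[\Gamma]$ is a projective generator), but triviality of the right orthogonal does \emph{not} imply that the thick subcategory is all of $D^b_c(\Gamma)$. A concrete witness: the thick subcategory generated by $\Z[\Gamma]$ alone is the subcategory of perfect complexes, which for $\Gamma\neq 1$ is a proper thick subcategory of $D^b_c(\Gamma)$ (e.g.\ the trivial module $\Z/2$ is not perfect over $\Z[\Z/2]$), and yet $\{\Z[\Gamma]\}^\perp=0$. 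So the orthogonality computation neither anchors the induction nor rules out the possibility of a lattice lying outside $P$; it is simply irrelevant to the conclusion. To complete a proof you need the torsion reduction and the Sylow splitting, or some substitute for them; those ideas are absent from your proposal.
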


\begin{proof} The statement says that the smallest thick subcategory $\cT$ of $D^b_c(G)$ which contains permutation modules is equal to $D^b_c(G)$. Let $M$ be a finitely generated $G$-module: to prove that
$M\in \cT$, we immediately reduce to the
case where $G$ is finite. Let $\bar M = M/M_\tors$. Realise $\bar
M\otimes\Q$ as a direct summand of $\Q[G]^n$ for $n$ large enough. Up to
scaling, we may assume that the image of $\bar M$ in $\Q[G]^n$ is
contained in $\Z[G]^n$ and that there exists a submodule $N$ of $\Z[G]^n$
such that $\bar M\cap N=0$ and $\bar M\oplus N$ is of finite index in
$\Z[G]^n$. This reduces us to the case where $M$ is \emph{finite}.
Moreover, we may assume that $M$ is $\ell$-primary for some prime $\ell$.

Let $S$ be a Sylow $\ell$-subgroup of $G$. Recall that there exist two
inverse isomorphisms
\begin{gather*}
\phi:\Z[G]\otimes_{\Z[S]} M\iso \Hom_{\Z[S]}(\Z[G],M)\\
\phi(g\otimes m)(\gamma) =
\begin{cases} 
\gamma g m&\text{if $\gamma g\in S$}\\
0&\text{if $\gamma g\notin S$.}
\end{cases}\\
\psi:\Hom_{\Z[S]}(\Z[G],M)\iso \Z[G]\otimes_{\Z[S]} M\\
\psi(f)=\sum_{g\in S\backslash G}g^{-1}\otimes f(g).
\end{gather*}

On the other hand, we have the obvious unit and counit
homomorphisms
\begin{gather*}
\eta:M\to \Hom_{\Z[S]}(\Z[G],M)\\
\eta(m)(g)=gm\\
\epsilon: \Z[G]\otimes_{\Z[S]} M\to M\\
\epsilon(g\otimes m)= gm.
\end{gather*}

It is immediate that 
\[\epsilon\circ \psi\circ \eta = (G:S).\]

Since $(G:S)$ is prime to $\ell$, this shows that $M$ is a direct summand
of the induced module $\Z[G]\otimes_{\Z[S]} M\simeq
\Hom_{\Z[S]}(\Z[G],M)$. But it is well-known (see \eg \cite[\S 8.3, cor. to Prop. 26]{serrerep})
that $M$, as an $S$-module, is a successive extension of trivial
$S$-modules. Any trivial torsion $S$-module has a length $1$ resolution
by trivial torsion-free $S$-modules. Since the ``induced module" functor
is exact, this concludes the proof.
\end{proof}

\subsection{ Cartier duality and $1$-motives with cotorsion}\label{1.8}  We now introduce a new
category
${}_t\M$:

\begin{defn}\label{1cot}
We denote by ${}_t\M^\eff$ \index{${}_t\M$, ${}_t\M^\eff$} the full subcategory of $\anM^\eff$ consisting of those $[L\to G]$ such that $L$ is a lattice and $G$ is an
extension of an abelian variety by a group of multiplicative type, and
by ${}_t\M$ its localisation with respect to quasi-isomorphisms. An object of ${}_t\M$ is
called a \emph{$1$-motive with cotorsion}.
\end{defn}

Recall that Deligne \cite[\S 10.2.11-13]{D} (\cf \cite[1.5]{BSAP})
defined a self-duality on the category $\M$, that he called
\emph{Cartier duality}. The following facts elucidate the introduction
of the category ${}_t\M$.

\begin{lemma}\label{brst} Let $\Gamma$ be a group of multiplicative type,
$L$ its Cartier dual and $A$ an abelian
variety (over $k =\bar k$). We have an isomorphism
$$\tau:\Ext (A, \Gamma)\longby{\simeq} \Hom (L, \Pic^0(A))$$
given by the canonical ``pushout'' mapping.
\end{lemma}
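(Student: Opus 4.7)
The plan is to use additivity in the $\Gamma$-variable (equivalently, in $L = \Hom(\Gamma, \G_m)$) to reduce to the two building blocks $\Gamma = \G_m$ and $\Gamma = \mu_n$ with $\gcd(n,p) = 1$, handled by the Weil--Barsotti formula and the Kummer sequence respectively.

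First I would unwind $\tau$. Given an extension $0 \to \Gamma \to E \to A \to 0$ and a character $\ell: \Gamma \to \G_m$ (i.e.\ an element of $L$), pushing out along $\ell$ produces an extension $E \times^{\Gamma,\ell} \G_m \in \Ext(A, \G_m)$, whose class under the Weil--Barsotti identification $\Ext(A, \G_m) \simeq \Pic^0(A)(k)$ is by definition $\tau(E)(\ell)$. Both $L \mapsto \Ext(A, \shom(L, \G_m))$ and $L \mapsto \Hom(L, \Pic^0(A))$ are additive functors of $L$, and $\tau$ is a natural transformation by construction; hence it suffices to check that $\tau$ is an isomorphism on a set of generators of the category of finitely generated abelian groups.

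Second, because $k = \bar k$ and $p$ is inverted in all Hom groups, $L$ splits as a finite direct sum of copies of $\Z$ and of cyclic groups $\Z/n\Z$ with $\gcd(n, p) = 1$, so that $\Gamma$ decomposes as a product of copies of $\G_m$ and of $\mu_n$'s. The case $L = \Z$, $\Gamma = \G_m$ is tautologically Weil--Barsotti, $\tau$ being the identification itself. For $L = \Z/n$, $\Gamma = \mu_n$, apply $\Hom(A, -)$ to the Kummer sequence $0 \to \mu_n \to \G_m \by{n} \G_m \to 0$; since $A$ is proper and geometrically connected one has $\Hom(A, \G_m) = 0$, and the long exact sequence collapses to
\[
\Ext(A, \mu_n) \iso \ker\bigl(n : \Ext(A, \G_m) \to \Ext(A, \G_m)\bigr) = \Pic^0(A)[n] = \Hom(\Z/n, \Pic^0(A)).
\]

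Main obstacle: once additivity is set up, the reduction is formal and both elementary cases are classical. The real verification is the diagram chase showing that the composite isomorphism produced by the Kummer boundary and Weil--Barsotti in the $\mu_n$ case coincides with $\tau$ (rather than with a twisted variant); this amounts to the compatibility between pushout along $\mu_n \into \G_m$ and the Cartier-dual quotient $\Z \onto \Z/n$, which is a routine naturality check.
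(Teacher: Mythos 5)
Your proof is correct and close in spirit to the paper's, but the decomposition and the handling of the torsion part differ enough to be worth noting. The paper does not decompose $L$ as a direct sum: it uses the canonical filtration $0 \to L_{\tor} \to L \to L_{\fr} \to 0$ (valid over any base), dualises to $0 \to T \to \Gamma \to F \to 0$, and runs a five-lemma argument. For the torsion piece, it does not invoke the Kummer sequence for $\Ext(A,-)$ directly; instead it identifies $\Ext(A,F) \cong H^1(A,F)$ via Milne \cite[4.20]{MI} and uses freeness of $\NS(A)$ to see $\Hom(L_{\tor},\Pic^0(A)) = \Hom(L_{\tor},\Pic(A))$. Your approach replaces the filtration with a complete direct-sum decomposition $L \cong \bigoplus \Z \oplus \bigoplus \Z/n$ — which does use $k=\bar k$, though that is assumed — and handles $\mu_n$ via the Kummer sequence applied to $\Ext(A,-)$, giving $\Ext(A,\mu_n)\iso\Pic^0(A)[n]$ directly. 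The gain is a slightly more self-contained torsion computation (no appeal to $\Ext(A,F)\cong H^1(A,F)$, no mention of $\NS$); the cost is the additional naturality check you flag at the end, verifying that the Kummer boundary and pushout along $\mu_n\hookrightarrow\G_m$ produce the same map as $\tau$. That check is genuinely needed and genuinely routine; the paper avoids it by keeping the comparison at the level of the filtration. Both routes buy the lemma cleanly.
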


\begin{proof} Displaying $L$ as an extension of $L_{\fr}$ by $L_{\tor}$
denote the corresponding torus by $T \df \Hom (L_{\fr},\G_m)$ and let $F\df \Hom
(L_{\tor},\G_m)$ be the dual finite group. We obtain a map of short exact sequences
\[\begin{CD}
0\to \Ext (A, T)&\to& \Ext (A, \Gamma)&\to& \Ext (A, F)\to 0\\
@V{\tau_{\fr}}VV @V{\tau}VV  @V{\tau_{\tor}}VV\\
0\to \Hom (L_{\fr}, \Pic^0(A))&\to& \Hom (L, \Pic^0(A))&\to&\Hom
(L_{\tor}, \Pic^0(A))\to 0.
\end{CD}\]

Now $\tau_{\fr}$ is an isomorphism by the classical Weil-Barsotti
formula, \ie $\Ext (A, \G_m)\cong \Pic^0(A)$, and $\tau_{\tor}$ is an
isomorphism since the N\'eron-Severi group of $A$ is free: $\Hom
(L_{\tor}, \Pic^0(A)) = \Hom (L_{\tor}, \Pic (A)) = H^1(A, F) = \Ext (A,
F)$ (\cf \cite[4.20]{MI}).
\end{proof}

\begin{lemma}\label{dualt}  Cartier duality on $\M$
extends to a contravariant additive functor 
\[
(\ \ )^*:{}^t\M^\eff \to {}_t\M^\eff
\]
which sends a \qi to a \qi
%\footnote{We thank Peter Jossen and Tam\' as Szamuely 
%for pointing out a mistake in an earlier formulation of this lemma.}
\end{lemma}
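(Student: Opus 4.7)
The plan is to extend Deligne's biextension construction of the Cartier dual of a $1$-motive from \cite[\S 10.2]{D}, systematically tracking how torsion in the lattice $L$ produces finite multiplicative-type cotorsion in $G^*$ on the dual side.

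\emph{Construction on objects and functoriality.} Let $M = [L \by{u} G] \in {}^t\M^\eff$, with canonical filtrations $0\to L_\tor \to L \to L_\fr \to 0$ and $0 \to T \to G \to A \to 0$. Put $L^* := \shom(T,\G_m)$ (a lattice) and $H := \shom(L, \G_m)$, a group of multiplicative type fitting into $0 \to L_\tor^* \to H \to L_\fr^* \to 0$ with $L_\fr^*$ a torus and $L_\tor^*$ finite of multiplicative type. The composition $\bar u : L \to G \to A$ yields a class in $\Hom(L, \Pic^0(A))$; by Lemma \ref{brst} this corresponds to an extension $0 \to H \to G^* \to A^* \to 0$ with $A^* = \Pic^0(A)$. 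Thus $G^*$ is an extension of an abelian variety by a group of multiplicative type, \ie lies in ${}_t\M^\eff$. The homomorphism $u^* : L^* \to G^*$ is supplied by the Poincar\'e biextension: dualising the extension class $G \in \Ext(A,T)$ via Weil--Barsotti gives a morphism $L^* \to A^*$, which lifts canonically to $L^* \to G^*$ through the biextension of $(M, M^*)$ by $\G_m$. Set $M^* := [L^* \by{u^*} G^*]$. A morphism $(f, g): M \to M'$ restricts $g$ to $T \to T'$ and projects it to $A \to A'$; dualising, together with the compatibility $g\circ u = u'\circ f$, assembles into a morphism ${M'}^* \to M^*$ in ${}_t\M^\eff$. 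When $L$ is torsion-free and $G$ is semi-abelian, $H$ is a torus and $G^*$ is semi-abelian, and we recover Deligne's classical Cartier duality on $\M$.

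\emph{Quasi-isomorphisms.} By the description of ${}^t\M$ in Appendix \ref{AppendixB}, a morphism in ${}^t\M^\eff$ is a quasi-isomorphism iff both its kernel and its cokernel in the ambient abelian category $\anM^\eff[1/p]$ are of the form $[F = F]$ with $F$ a finite discrete group. The above construction sends such an acyclic object to $[F^* = F^*]$, where $F^* = \shom(F, \G_m)$ is a finite group of multiplicative type, which is again acyclic in the sense relevant to ${}_t\M^\eff$. Since after inverting $p$ Cartier duality is exact on finite flat group schemes, a snake-lemma chase on the dual of a quasi-isomorphism yields acyclic kernel and cokernel, completing the argument.

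\emph{Main obstacle.} The principal technical difficulty is to define $u^*$ in a manifestly functorial way that restricts to Deligne's construction on $\M$; this should follow by extending the biextension formalism of \cite[\S 10.2]{D} to allow torsion lattices, using Lemma \ref{brst} and Weil--Barsotti as the only extra inputs. A secondary issue is ensuring that the snake-lemma preservation of quasi-isomorphisms takes place inside the correct ambient abelian category $\anM^\eff[1/p]$: this amounts to checking that the duality construction commutes with the formation of kernels and cokernels there --- plausible but non-trivial, since $(\ )^*$ intertwines the roles of the two ``column'' complexes $L$ and $\uG$.
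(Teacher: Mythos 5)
Your construction of the object $M^*$ essentially coincides with the paper's: form $G^*$ (the paper's $G^{\bar u}$) as the extension of $A^* = \Pic^0(A)$ by the Cartier dual of $L$ corresponding to $\bar u\colon L\to A$ under Lemma \ref{brst}, and set $L^*=\Hom(T,\G_m)$. Two points, however, need repair.

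First, your definition of $u^*\colon L^*\to G^*$ via ``the biextension of $(M,M^*)$'' is circular, since $M^*$ is precisely what is being constructed. The paper defines $u^*$ directly and non-circularly as the connecting homomorphism
\[
u^*\colon \Hom(T,\G_m)\longrightarrow \Ext([L\by{\bar u}A],\G_m)=G^{\bar u}(k)
\]
attached to the extension $0\to[0\to T]\to M\to[L\by{\bar u}A]\to 0$. If you want to be self-contained, this is the recipe to use; the Poincar\'e biextension can be recovered afterwards, not assumed first.

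Second, the quasi-isomorphism step has a genuine gap which you yourself flag but do not close. Applying the snake lemma in $\anM^\eff[1/p]$ would require $(\ )^*$ to be an (anti-)exact functor on that ambient abelian category commuting with kernels and cokernels, but at this point $(\ )^*$ is only being defined on ${}^t\M^\eff$ with values in ${}_t\M^\eff$, and it swaps the roles of the degree-$0$ and degree-$1$ columns; so there is no ``ambient'' exactness to appeal to, and the fact that Cartier duality of finite group schemes is exact does not transfer by itself. The paper instead argues explicitly: a q.i.\ $M\onto M'$ with kernel $[F=F]$ induces $[L\by{\bar u}A]\onto[L'\by{\bar u'}A']$ with kernel $[F\onto F_A]$ (where $F_A=\ker(A\onto A')$) and $T\onto T'$ with kernel $F_T$; the dualised diagram is then a pushout along the isomorphism $\Hom(F_T,\G_m)\cong\Ext([F\onto F_A],\G_m)$, which exhibits $M'^*\to M^*$ directly as a q.i.\ in ${}_t\M^\eff$. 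Without carrying out this explicit diagram, the assertion that ``the construction sends $[F=F]$ to $[F^*=F^*]$'' does not follow, because the construction is not applied termwise to kernels and cokernels; it is applied to the 1-motives themselves, and the compatibility has to be checked by hand.
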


\begin{proof} The key point is that
$\Ext (- , \G_m)$ vanishes on discrete sheaves (\cf \cite[4.17]{MI}), hence Cartier duality
extends to an exact duality between discrete sheaves and groups of multiplicative type. 

To define the functor, we proceed  as usual (see
\cite[1.5]{BSAP}): starting with $M=[L\by{u}
A]\in {}^t\M^\eff$, le $G^u$ be the extension of the dual abelian
variety $A^*$ by the Cartier dual $L^*$ of $L$ given by Lemma
\ref{brst} (note that $G^u$ may be described as
the group scheme which represents the functor associated to $\Ext
(M,\G_m)$). We define $M^{*} = [0\to G^u]\in {}_t\M^\eff$. For a general
$M=[L\by{u} G]\in {}^t\M^\eff$, with $G$ an extension of $A$ by $T$, the
extension $M$ of $[L\by{\bar u} A]$ by the toric part $[ 0\to T]$ provides
the corresponding extension
$G^{\bar u}$ of $A'$ by $L^*$ and a boundary map
$$u^{*}:\Hom (T, \G_m)\to \Ext ([L\by{\bar u} A],\G_m)= G^{\bar u}(k)$$ 
which defines $M^*\in {}_t\M^\eff$.

For a quasi-isomorphism $M\onto M'$ with kernel $[F\by{=}F]$ for a finite
group $F$, \cf \eqref{qi1mot}, the quotient $[L\by{\bar u} A]\onto
[L'\by{\bar u'} A']$ has kernel
$[F\onto F_A]$ where $F_A\df \ker (A\onto A')$ and the following is a
pushout
\[\begin{CD}
0\to \Hom (T', \G_m)&\to& \Hom (T, \G_m)&\to&\Hom (F_T,\G_m)\to 0\\
@V{(u')^*}VV @V{u^*}VV  @V{\veq}VV\\
0\to \Ext ([L'\by{\bar u'} A'],\G_m)&\to& \Ext ([L\by{\bar u} A],\G_m)
&\to&\Ext ([F\onto F_A],\G_m)\to 0
\end{CD}\]
where $F_T\df \ker (T\onto T')$. 
\end{proof}

\begin{propose}\label{pcd} a) The functor of Lemma \ref{dualt} induces an
anti-equivalence of categories
\[(\ \ )^*:{}^t\M[1/p] \iso {}_t\M[1/p].\]
b) The category ${}_t\M[1/p]$ is abelian and the two functors of a) are exact.\\
c) Cartier duality on $\M[1/p]$ is an exact functor, hence induces a
triangulated self-duality on $D^b(\M[1/p])$.  
\end{propose}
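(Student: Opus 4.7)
My plan is to prove (a) by extending the functor to the localisations and producing a quasi-inverse, to deduce (b) by transport of structure, and to deduce (c) from (b) via the intrinsic characterisation of acyclicity.

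For (a): Lemma \ref{dualt} guarantees that $(\ )^*:{}^t\M^\eff \to {}_t\M^\eff$ sends quasi-isomorphisms to quasi-isomorphisms, so by the universal property of localisation it descends to a functor ${}^t\M[1/p]\to {}_t\M[1/p]$. I would construct a quasi-inverse $(\ )^\vee:{}_t\M^\eff\to {}^t\M^\eff$ by the symmetric recipe: for $N=[L\to G]\in {}_t\M^\eff$ with $L$ a lattice and $G$ an extension of an abelian variety $B$ by a multiplicative type group $H$, assemble the dual torus $L^*$, the dual abelian variety $B^*$, and the (possibly torsion) discrete Cartier dual $H^*$ of $H$ into a $1$-motive with torsion, imitating the construction of Lemma \ref{dualt} but with the roles of $L$ and $G$ swapped. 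Biduality $(M^*)^\vee\simeq M$ and $(N^\vee)^*\simeq N$ then reduces, via the weight filtration, to three classical biduality statements: Weil-Barsotti for abelian varieties, standard Cartier duality between tori and lattices, and---where inversion of $p$ is crucial---the anti-equivalence between finite discrete groups and finite groups of multiplicative type.

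For (b): since $(\ )^*$ is now a contravariant equivalence of additive categories and the source ${}^t\M[1/p]$ is abelian by Theorem \ref{1mtora}, ${}_t\M[1/p]$ inherits an abelian structure (kernels and cokernels being computed dually via $(\ )^\vee$), and both functors are automatically exact.

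For (c): I first observe that $(\ )^*$ restricts to a contravariant functor $\M[1/p]\to \M[1/p]$. Indeed, for $M=[L\to G]\in \M[1/p]$ with $L$ a lattice and $G$ a semi-abelian extension of $A$ by a torus $T$, the recipe of Lemma \ref{dualt} yields $[T^*\to G^u]$ with $T^*$ a lattice and $G^u$ an extension of $A^*$ by the torus $L^*$, again a Deligne $1$-motive. To see this restriction is exact for the exact structure of Lemma \ref{lexact}, I would rely on the intrinsic characterisation of Proposition \ref{pexact}: a complex in $\M[1/p]$ is acyclic if and only if its cohomologies in $\anM^\eff$ have the form $[F=F]$ with $F$ finite. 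Since $(\ )^*$ is an exact anti-equivalence ${}^t\M[1/p]\iso {}_t\M[1/p]$ by (b), it preserves kernels and cokernels; moreover its underlying construction sends $[F=F]$ to $[F^*=F^*]$, where $F^*$ is again finite and---once $p$ is inverted---discrete. Hence short exact sequences in $\M[1/p]$ dualise to short exact sequences in $\M[1/p]$, and $(\ )^*$ is exact. An exact contravariant functor on an exact category induces a triangulated contravariant functor on the bounded derived category, and biduality extends degreewise, yielding the triangulated self-duality on $D^b(\M[1/p])$.

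The main technical subtlety lies in (c): Cartier duality visibly lands in ${}_t\M[1/p]$, so one has two a priori different exact structures on $\M[1/p]$ to compare---the one from Lemma \ref{lexact} via the embedding into ${}^t\M[1/p]$, and the dual one via the embedding into ${}_t\M[1/p]$. The self-dual characterisation of acyclicity in Proposition \ref{pexact}, phrased in terms of the total complex in $C(\HI_\et)[1/p]$ and hence intrinsic to $\M[1/p]$, bridges this gap and makes the verification of exactness entirely symmetric.
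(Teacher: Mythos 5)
Your plan is correct, and for part (a) it takes a genuinely different route than the paper. You construct an explicit quasi-inverse by the symmetric recipe and propose to verify biduality $(M^*)^\vee\simeq M$ via d\'evissage along the weight filtration, reducing to Weil--Barsotti, torus/lattice Cartier duality, and (inverting $p$) finite discrete vs.\ finite multiplicative-type duality. The paper instead verifies essential surjectivity, faithfulness and fullness of $(\ )^*$ directly: essential surjectivity by re-running the construction of Lemma \ref{dualt} starting from an object of ${}_t\M[1/p]$, faithfulness by tracking the multiplicative-type component, and fullness by a diagram-chase argument that modifies the target $1$-motive by a finite subgroup $\mu$ to make an effective map liftable. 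Your approach is arguably more conceptual (an equivalence with a specified inverse), but it transfers the burden to a careful check that the extension data and the morphisms $u$ are recovered after two dualisations; the paper's hands-on approach avoids stating biduality as an isomorphism of functors and is self-contained modulo Lemma \ref{brst}. For (b) and (c) you and the paper argue essentially the same way: (b) is transport of structure, and (c) rests on the observation that the exact structures on $\M[1/p]$ induced by the two embeddings into ${}^t\M[1/p]$ and ${}_t\M[1/p]$ coincide. You make this precise by appealing to the intrinsic characterisation of Proposition \ref{pexact}, which is exactly the right bridge and spells out what the paper compresses into ``one checks as for ${}^t\M[1/p]$''. One small caveat in your (c): the step ``sends $[F=F]$ to $[F^*=F^*]$'' should not be read as applying the Cartier-duality \emph{functor} to these objects, since $[F=F]$ with $F$ finite lies in $\anM^\eff$ but not in ${}^t\M^\eff$ (its degree-$1$ term is disconnected); rather, one argues at the level of the exact structures induced on $\M[1/p]$, which is what you in fact do in the surrounding text, so the argument stands.
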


\begin{proof} a)  The said functor exists by Lemma \ref{dualt}, and it is clearly additive.
Let us prove that it is i) essentially surjective, i) faithful and iii) full.

i) We proceed exactly as in the proof of Lemma \ref{dualt}, taking an $[L'\to G']\in {}_t\M[1/p]$,
and writing $G'$ explicitly as an extension of an abelian variety by a group of multiplicative
type. 

ii) We reduce to show that the functor of Lemma \ref{dualt} is faithful by using that Lemma
\ref{B1.1} is also true in $ {}_t\M^\eff[1/p]$ (dual proof). By additivity, we need to prove that if
$f:M_0\to M_1$ is mapped to $0$, then $f=0$. But, by construction, $f^*$ sends the
mutiplicative type part of $M_1^*$ to that of $M_0^*$.

iii) Let $M_0=[L_0\to G_0]$, $M_1=[L_1\to G_1]$ in $ {}^t\M^\eff[1/p]$, and let $f:M_1^*\to M_0^*$
be (for a start) an effective map. We have a diagram
\[\begin{CD}
0@>>> \Gamma_1@>>> G'_1@>>> A'_1@>>> 0\\
&& && @V{f_G}VV\\
0@>>> \Gamma_0@>>> G'_0@>>> A'_0@>>> 0
\end{CD}\]
where $M_i^*=[L'_i\to G'_i]$, $A'_i$ is the dual of the abelian part of $M_i$ and $\Gamma_i$
is the dual of $L_i$. If $f_G$ maps $\Gamma_1$ to $\Gamma_2$, there is no difficulty to get an
(effective) map $g:M_0\to M_1$ such that $g^*=f$. In general we reduce to this case: let $\mu$
be the image of $f_G(\Gamma_1)$ in $A'_0$: this is a finite group. Let now $A'_2=A'_0/\mu$, so
that we have a commutative diagram
\[\begin{CD}
0@>>> \Gamma_0@>>> G'_0@>>> A'_0@>>> 0\\
&& @VVV @V{||}VV @VVV\\
0@>>> \Gamma_2@>>> G'_0@>>> A'_2@>>> 0
\end{CD}\]
where $\mu=\ker(A'_0\to A'_2)=\coker(\Gamma_0\to \Gamma_2)$. By construction, $f_G$ induces
maps $f_\Gamma:\Gamma_1\to \Gamma_2$ and $f_A:A'_1\to A'_2$.

Consider the object $M_2=[L_2\to G_2]\in {}^t\M^\eff[1/p]$ obtained from  $(L'_0,\Gamma_2,A'_2)$
and the other data by the same procedure as in the proof of Lemma \ref{dualt}. We then have a
\qi $s:M_2\to M_0$ with kernel $[\mu = \mu]$ and a map $g:M_2\to M_1$ induced by
$(f_L,f_\Gamma,f_A)$, and $(gs^{-1})^*=f$. 

If $f$ is a \qi, clearly $g$ is a \qi; this concludes the proof of fullness.

b) Since ${}^t\M[1/p]$ is abelian, ${}_t\M[1/p]$ is abelian by a). Equivalences of abelian categories
are automatically exact.

c) One checks as for ${}^t\M[1/p]$ that the inclusion of $\M[1/p]$ into ${}_t\M[1/p]$ induces the exact
structure of $\M[1/p]$.  Then, thanks to b), Cartier duality preserves exact sequences of $\M[1/p]$,
which means that it is exact on $\M[1/p]$. 
\end{proof}

\begin{remarks}\label{r1.8.6} 1) Cartier duality does not preserve the strong exact structure of
Remark \ref{rexact} 1). For example, let $A$ be an abelian variety, $a\in
A(k)$ a point of order $m>1$ and $B=A/\langle a\rangle$. Then the sequence
\[0\to [\Z\to 0]\by{m} [\Z\by{f} A]\to [0\to B]\to 0,\]
with $f(1) = a$, is exact in the sense of Definition \ref{dexact} but not in the sense of Remark
\ref{rexact}. However, its dual
\[0\to [0\to B^*]\to [0\to G]\to [0\to \G_m]\to 0\]
is exact in the strong sense. Taking the Cartier dual of the latter sequence, we come back
to the former.

2) One way to better understand what happens in Lemma \ref{dualt} and Proposition \ref{pcd}
would be to introduce a category ${}_t\tilde\M^\eff$, whose objects are quintuples
$(L,u,G,A,\Gamma)$ with $L$ a lattice, $\Gamma$ a group of multiplicative type, $A$ an abelian
variety, $G$ an extension of $A$ by $\Gamma$ and $u$ a morphism from $L$ to $G$. Morphisms in
${}_t\tilde\M^\eff$ are additive and respect all these structures. There is an obvious functor
$(L,u,G,A,\Gamma)\mapsto [L\by{u} G]$ from ${}_t\tilde\M^\eff$ to ${}_t\M^\eff$, the functor
of Lemma \ref{dualt} lifts to an anti-isomorphism of categories ${}^t\M^\eff\iso
{}_t\tilde\M^\eff$ and the localisation of ${}_t\tilde\M^\eff[1/p]$ with respect to the images of
\qi of ${}^t\M^\eff[1/p]$ is equivalent to ${}_t\M[1/p]$. We leave details to the interested reader.
\end{remarks}

Dually to Theorem \ref{ptors}, we now have:

\begin{thm}\label{tstr}  The natural functor $\M[1/p]\to
{}_t\M[1/p]$ is fully faithful and induces an equivalence of categories
\[
D^b(\M[1/p])\iso D^b({}_t\M[1/p]).
\]
Moreover, Cartier duality exchanges ${}^t\M[1/p]$ and
${}_t\M[1/p]$ inside the derived category $D^b(\M[1/p])$.
\end{thm}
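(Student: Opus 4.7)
The plan is to deduce all three assertions formally from Theorem \ref{ptors} and Proposition \ref{pcd}, using that ${}_t\M[1/p]$ is set up precisely so as to be the Cartier dual of ${}^t\M[1/p]$.

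First I would check full faithfulness of the natural functor $\M[1/p]\to {}_t\M[1/p]$. By Proposition \ref{pcd}(a), Cartier duality is an anti-equivalence $(\ \ )^*:{}^t\M[1/p]\iso {}_t\M[1/p]$, and by Proposition \ref{pcd}(c) it restricts to an anti-self-equivalence of $\M[1/p]$. A direct inspection of the construction of duality in Lemma \ref{dualt} shows that it intertwines the embedding \eqref{fullem} of $\M[1/p]$ into ${}^t\M[1/p]$ with the natural functor $\M[1/p]\to {}_t\M[1/p]$, because a Deligne $1$-motive is sent to a Deligne $1$-motive and the two constructions coincide on such objects. Hence the latter functor is obtained as a composition of an anti-equivalence, a fully faithful functor and another anti-equivalence, so it is fully faithful.

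Next I would establish the derived equivalence by composing three known equivalences. Start from Theorem \ref{ptors}(b), which gives $D^b(\M[1/p])\iso D^b({}^t\M[1/p])$. Then apply Cartier duality: it is exact on $\M[1/p]$ by Proposition \ref{pcd}(c), so it induces an anti-equivalence $D^b(\M[1/p])\iso D^b(\M[1/p])^{\mathrm{op}}$; and it is exact between ${}^t\M[1/p]$ and ${}_t\M[1/p]$ by Proposition \ref{pcd}(a)-(b), so it induces an anti-equivalence $D^b({}^t\M[1/p])\iso D^b({}_t\M[1/p])^{\mathrm{op}}$. Composing the three gives an equivalence $D^b(\M[1/p])\iso D^b({}_t\M[1/p])$, and the intertwining property noted in the previous paragraph shows that this equivalence is canonically isomorphic to the functor induced on bounded derived categories by the natural inclusion $\M[1/p]\hookrightarrow{}_t\M[1/p]$.

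For the final assertion, the two equivalences $D^b(\M[1/p])\iso D^b({}^t\M[1/p])$ and $D^b(\M[1/p])\iso D^b({}_t\M[1/p])$ transport the standard $t$-structures on the right-hand sides to $t$-structures on $D^b(\M[1/p])$ whose hearts are (the images of) ${}^t\M[1/p]$ and ${}_t\M[1/p]$ respectively. By the compatibility of dualities built into the previous step, the triangulated self-duality of $D^b(\M[1/p])$ coming from Proposition \ref{pcd}(c) corresponds, via these equivalences, to Cartier duality between $D^b({}^t\M[1/p])$ and $D^b({}_t\M[1/p])$; at the level of hearts this is the anti-equivalence of Proposition \ref{pcd}(a), which therefore exchanges ${}^t\M[1/p]$ and ${}_t\M[1/p]$ inside $D^b(\M[1/p])$, as claimed. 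The main obstacle here is purely organisational: one must keep track of the various duality functors and verify that all three equivalences in the composition are induced by a single underlying construction (that of Lemma \ref{dualt}) so that the final composite really is the functor attached to the natural inclusion and respects the hearts as predicted. No further hard input is needed.
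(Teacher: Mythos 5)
Your proof is correct and takes essentially the same route as the paper, which simply says the theorem "follows from Theorem \ref{ptors} and Proposition \ref{pcd}"; you have spelled out the composition of equivalences and the intertwining by Cartier duality that the authors left implicit. One tiny slip: the derived equivalence $D^b(\M[1/p])\iso D^b({}^t\M[1/p])$ is part (a) of Theorem \ref{ptors}, not part (b).
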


\begin{proof} This follows from Theorem \ref{ptors} and Proposition~\ref{pcd}.
\end{proof}

\begin{notation}\label{not} For $C\in D^b(\M[1/p])$, we write ${}^tH^n(C)$ (\resp ${}_tH^n(C)$  \index{${}^tH^n$, ${}_tH^n$} for
its cohomology objects relative to the 
$t$-structure with heart ${}^t\M[1/p]$ (\resp ${}_t\M[1/p]$). We also write ${}^tH_n$ for
${}^tH^{-n}$ and ${}_tH_n$ for ${}_tH^{-n}$.
\end{notation}

Thus we have \emph{two} $t$-structures on $D^b(\M[1/p])$ which are exchanged by Cartier duality;
naturally, these two $t$-structures coincide after tensoring with $\Q$. In Section
\ref{homotopy}, we shall introduce a third $t$-structure, of a
completely different kind: see Corollary \ref{c3.3.2}.

We shall also come back to Cartier duality in Section \ref{dual}.

\subsection{How not to invert $p$}\label{alessandra1} This has been done by Alessandra Ber\-ta\-pel\-le \cite{alessandra}. She defines a larger variant of $^t\M$ by allowing finite connected $k$-group schemes in the component of degree $0$. Computing in the fppf topology, she checks that the arguments provided in Appendix \ref{AppendixB} carry over in this context and yield in particular an integral analogue to Theorem \ref{1mtora}. Also, the analogue of \eqref{fullem} is fully faithful integrally, hence an exact structure on $\M$; she also checks that the analogue of Theorem \ref{ptors} holds integrally. 

In particular, her work provides an exact structure on $\M$, hence an integral definition of $D^b(\M)$. One could check that this exact structure can be described \emph{a priori} using Proposition \ref{pexact} and Lemma \ref{lexact}, and working with the fppf topology.

It is likely that the duality results of \S \ref{1.8} also extend to Bertapelle's context.

\section{Universal realisation}

\subsection{Statement of the theorem}
The derived category of $1$-motives up to isogeny can be realised in
Voevodsky's triangulated category of motives.  With rational coefficients, this is part of
Voevodsky's Pretheorem 0.0.18 in \cite{V0} and claimed in \cite[Sect.
3.4, on page 218]{V}. Details of this fact appear in Orgogozo \cite{OR}. In this section we
shall give a $p$-integral version of this theorem, where $p$ is the exponential characteristic
of $k$, using the \'etale version of Voevodsky's category.

By Lemma \ref{l1.3}, any $1$-motive $M =[L\to G]$ may be regarded as a
complex of homotopy invariant \'etale sheaves with transfers. By Lemma \ref{lD.1.3},
$M[1/p]\df M\otimes_\Z \Z[1/p]$ is a complex of strictly homotopy invariant \'etale sheaves
with transfers; this defines a functor
\begin{align}
\M(k)&\to \DM_{-,\et}^{\eff}(k)\label{eq8}\\
M&\mapsto M[1/p].\notag
\end{align}
(see \cite[Sect. 3]{V} for motivic
complexes).

From now on, we will usually drop the mention of $k$ from the notation for the various
categories of motives encountered.

\begin{defn}\label{d2.1.1} We denote by $\DM_{\gm,\et}^\eff$ \index{$\DM_{\gm,\et}^\eff$} the thick subcategory of
$\DM_{-,\et}^{\eff}$ generated by the image of $\DM_\gm^\eff$ under the ``change of
topology" functor
\[\alpha^*:\DM_-^\eff\to \DM_{-,\et}^\eff\] 
of \cite[\S 3.3]{V}. We set $M_\et(X)\df \alpha^* M(X)$.\index{$M_\et(X)$}
\end{defn}

\begin{thm}\label{t1.2.1} Let $p$ be the exponential characteristic of $k$. The functor
\eqref{eq8} extends to a fully faithful triangulated functor
\[T:D^b(\M[1/p])\to \DM^\eff_{-,\et}\]
where the left hand side was defined in \S \ref{1.2}. Its essential image is the thick
subcategory $d_{\le 1}\DM_{\gm,\et}^\eff$
\index{$d_{\le 1}\DM_{\gm,\et}^\eff$} of $\DM_{\gm,\et}^\eff$ 
generated by motives of smooth curves.
\end{thm}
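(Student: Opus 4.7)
The plan is to proceed in three phases: construct $T$ using the sheaf-level description of $1$-motives, establish full faithfulness by $t$-structure d\'evissage followed by an explicit computation on pure weight pieces, and finally identify the essential image. For the construction, I extend the functor \eqref{eq8} termwise to a functor $K^b(\M[1/p]) \to \DM^\eff_{-,\et}$ sending a complex $[L^\cdot \to G^\cdot]$ to the total complex of the associated double complex of homotopy-invariant \'etale sheaves with transfers furnished by Lemma \ref{l1.3}. Proposition \ref{pexact} asserts precisely that any complex of $\M[1/p]$ which is acyclic in the sense of Definition \ref{dexact} is sent to an acyclic complex in $\HI_\et[1/p]$; hence this functor kills the thick subcategory of acyclic objects of $K^b(\M[1/p])$ and descends to the desired triangulated $T:D^b(\M[1/p])\to \DM^\eff_{-,\et}$.

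For full faithfulness I would use Theorem \ref{ptors} to replace $D^b(\M[1/p])$ by $D^b({}^t\M[1/p])$ and then reduce by standard $t$-structure d\'evissage to checking that for every pair $M,N$ of $1$-motives with torsion the natural map
\[
\Ext^n_{{}^t\M[1/p]}(M,N) \longrightarrow \Hom_{\DM^\eff_{-,\et}}(T(M),T(N)[n])
\]
is an isomorphism for all $n \ge 0$. Using the weight filtration on $M$ and $N$ I would further reduce to the case where both arguments are pure --- lattices, tori, or abelian varieties --- where the comparison decomposes into finitely many explicit pieces: $\Hom$'s between lattices recover Galois cohomology on both sides; $\Ext^1$ from a lattice into a semi-abelian variety reduces on either side to \'etale $H^1(k,\uG)$ by Kummer theory; and $\Ext^1$ between semi-abelian varieties is identified with biextensions, whose motivic avatar is developed in Section \ref{dual}. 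The vanishing in degrees $n\ge 2$ on the motivic side follows from the bounded \'etale cohomological dimension of the sheaves in $\HI_\et[1/p]$ attached to $1$-motives, while on the $1$-motive side it follows from Proposition \ref{iso1} together with a torsion analysis based on Theorem \ref{ptors}(b) and Proposition \ref{pperm}.

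For the essential image, I would first verify that $T$ takes values in $d_{\le 1}\DM_{\gm,\et}^\eff$: lattices are summands of motives of zero-dimensional \'etale $k$-schemes, which are in turn summands of motives of smooth curves (e.g.\ via affine lines over them); $\G_m$ is a summand of $M_\et(\P^1)$; and any abelian variety is, by Poincar\'e's complete reducibility, isogenous to a quotient of a Jacobian, so its image is a summand of the motive of a smooth projective curve in $\DM^\eff_{\gm,\et}\otimes\Q$, the integral statement away from $p$ then following from Theorem \ref{ptors}(b) applied to the finite kernel and cokernel of the isogeny. Conversely, the motive of any smooth curve lies in the image of $T$: for a smooth projective $C$, $M_\et(C)$ admits a classical decomposition into $T(\Z)$, a piece isomorphic (up to shift compatible with our conventions) to $T([0\to J(C)])$, and $T([0\to \G_m])[2]$, while the case of a smooth open curve follows from the Gysin/localisation triangle relating $M_\et(U)$ to the motive of its projective closure and the motives of the missing closed points.

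The main obstacle will be the $\Ext^1$ computation between semi-abelian varieties in $\DM^\eff_{-,\et}$ and its identification with biextensions: this is precisely where the technology of Section \ref{dual} becomes essential, and it is the delicate $p$-integral strengthening of Orgogozo's rational comparison. A secondary but genuine difficulty is the torsion bookkeeping needed to upgrade the rational statement to a $p$-integral one, for which the equivalence $D^b(\M[1/p])\iso D^b({}^t\M[1/p])$ of Theorem \ref{ptors} and the reduction of torsion discrete sheaves to permutation modules (Proposition \ref{pperm}) are indispensable.
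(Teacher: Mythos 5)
Your construction of $T$ and your treatment of the essential image both follow the paper's route (termwise extension plus Proposition \ref{pexact}/Lemma \ref{l1.5.1} for the factorisation; the permutation-module, Jacobian, and Proposition \ref{lcurve} arguments for the image). The full-faithfulness step, however, departs from the paper and has two genuine problems.

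First, a circularity: you propose to identify $\Ext^1$ between semi-abelian varieties on the motivic side with biextensions via ``the technology of Section \ref{dual}.'' But Section \ref{dual} \emph{presupposes} Theorem \ref{t1.2.1}: Proposition \ref{ext=ext}, Theorem \ref{teq}, and the whole discussion there are phrased in terms of $\Tot$ (Definition \ref{tot}), which is by definition a quasi-inverse to the equivalence asserted in Theorem \ref{t1.2.1}. You cannot use those results as inputs to prove the theorem.

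Second, a false claim: you assert that $\Ext^n$ vanishes for $n\ge 2$ on both sides, citing ``bounded \'etale cohomological dimension'' and Proposition \ref{iso1}. But $k$ is an arbitrary perfect field, which may well have \'etale cohomological dimension $\ge 2$, and already for two lattices the relevant groups are Galois cohomology $H^n(k,-)$, nonzero in many degrees $n\ge 2$. Proposition \ref{iso1} only gives cohomological dimension $\le 1$ \emph{rationally}. What one must prove is that the higher Ext groups \emph{match} on both sides, not that they vanish — and precisely because they do not vanish, the weight devissage you propose forces you to carry out a $p$-integral version of Orgogozo's case-by-case comparison, which is exactly the calculation the paper is designed to avoid.

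The paper instead applies Proposition \ref{p1.2}: a triangulated functor is fully faithful if and only if its rationalisation and its restriction to torsion objects are both fully faithful. Rationally one quotes Orgogozo verbatim (via the equivalence $\DM_-^\eff\otimes\Q\iso\DM_{-,\et}^\eff\otimes\Q$); on torsion objects, the argument from \cite[Prop.\ 3.3.3 1)]{V} identifies the torsion part of $\DM_{-,\et}^\eff$ with the derived category of prime-to-$p$ torsion \'etale sheaves over $\Spec k$, and one concludes with Theorem \ref{ptors}. This neatly sidesteps both of the obstacles in your plan; I recommend recasting your full-faithfulness step around Proposition \ref{p1.2} rather than a direct weight devissage.
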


The proof is in several steps.

\subsection{Construction of $T$} We follow Orgogozo.
Clearly, the embedding \eqref{eq2.2} extends to a functor
\[C^b(\M)\to C^b(\HI_\et^{[0,1]}).\]

By Lemma \ref{ltot}, we have a canonical functor $C^b(\HI_\et^{[0,1]})\by{\Tot}D^b(\HI_\et)$,
and there is a canonical composite functor 
\[\begin{CD}
D^b(\HI_\et)@>\otimes_\Z \Z[1/p]>>D^b(\HI_\et^s)\to \DM_{-,\et}^\eff
\end{CD}\] 
where $\HI_\et^s$ is the category of strictly homotopy invariant \'etale sheaves with transfers
(see Def. \ref{dD.1} and Proposition \ref{pD.1.4}). To get $T$,
we are therefore left to prove

\begin{lemma}\label{l2.2.1} The composite functor
\[C^b(\M)\to C^b(\HI_\et^{[0,1]})\by{\Tot}D^b(\HI_\et)\]
factors through $D^b(\M[1/p])$.
\end{lemma}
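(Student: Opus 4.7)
The factorization amounts, by Definition \ref{ddermot}, to checking that the composite $F:C^b(\M)\to D^b(\HI_\et)$ descends through the two quotients $C^b(\M[1/p])\to K^b(\M[1/p])\to D^b(\M[1/p])$ defining the derived category. The plan is to treat each quotient in turn, the bulk of the argument being essentially tautological because Definition \ref{dexact} of acyclicity was written precisely for this purpose.

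First, I would note that both the pointwise functor $C^b(\M)\to C^b(\HI_\et^{[0,1]})$ coming from \eqref{eq2.2} and the totalisation $\Tot:C^b(\HI_\et^{[0,1]})\to D^b(\HI_\et)$ given by Lemma \ref{ltot} are additive. Since the target is triangulated, $F$ carries null-homotopic morphisms to null-homotopic (hence zero) morphisms, so it descends to the homotopy category $K^b(\M[1/p])$. (Recall that per the convention announced before Part I, $\M$ already has $p$ inverted on morphisms, so $C^b(\M)=C^b(\M[1/p])$ and the target $\HI_\et$ is $\Z[1/p]$-linear; no compatibility with $[1/p]$ needs to be checked separately.)

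Second, and this is the essential content, I would show that $F$ annihilates acyclic complexes. Let $M^\cdot\in C^b(\M[1/p])$ be acyclic in the sense of Definition \ref{dexact}. By Proposition \ref{pexact}(i), this is \emph{equivalent} to $\Tot(M^\cdot)$ being acyclic as a complex in $C(\HI_\et)$. But $\HI_\et$ is abelian and $D^b(\HI_\et)$ is its bounded derived category, so a complex is zero in $D^b(\HI_\et)$ if and only if it is exact as a complex of sheaves; thus $F(M^\cdot)\simeq 0$ in $D^b(\HI_\et)$.

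Combining these two observations, the universal property of the Verdier quotient $K^b(\M[1/p])\to D^b(\M[1/p])=K^b(\M[1/p])/A(\M[1/p])$ produces the desired factor $D^b(\M[1/p])\to D^b(\HI_\et)$. The ``main obstacle'' here is only apparent: the real work lay in setting up Definition \ref{dexact} so that acyclicity in the exact structure on $\M[1/p]$ (Lemma \ref{lexact}, Lemma \ref{l1.5.1}) coincides with acyclicity of $\Tot$ in $C(\HI_\et)$; once those equivalences are in hand, the lemma is immediate.
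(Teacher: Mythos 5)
Your proposal is correct and takes essentially the same route as the paper's proof: descend through $K^b(\M[1/p])$ because the functor is additive and kills null-homotopies, then observe (via Proposition \ref{pexact}/Lemma \ref{l1.5.1}) that a complex acyclic for the exact structure has acyclic totalisation in $C(\HI_\et)$, hence vanishes in $D^b(\HI_\et)$. The paper states exactly these two facts, just more tersely.
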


\begin{proof} It is a general fact that a homotopy in $C^b(\M)$ is mapped to a homotopy in
$C^b(\HI_\et^{[0,1]})$, and therefore goes to $0$ in $D^b(\HI_\et)$, so that the functor
already factors through $K^b(\M)$. The lemma now follows from Lemma \ref{l1.5.1}.
\end{proof}

\subsection{Full faithfulness}\label{2.3} It is sufficient by Proposition \ref{p1.2} to
show that $T\otimes\Q$ and $T_\tors$ are fully faithful. 

For the first fact, we reduce to \cite[3.3.3 ff]{OR}. We have to be a little careful since
Orgogozo's functor is not quite the same as our functor: Orgogozo sends $C$ to $\Tot(C)$ while
we send it to $\Tot(C)[1/p]$, but the map $\Tot(C)\to \Tot(C)[1/p]$ is an isomorphism in
$\DM_{-,\et}^\eff\otimes \Q$ by Proposition \ref{ptensq} (see also Remark \ref{r2.4} 2)). 

For the reader's convenience we sketch the
proof of \cite[3.3.3 ff]{OR}: it first uses the equivalence of categories
\[\DM_-^\eff\otimes\Q\iso \DM_{-,\et}^\eff\otimes \Q\]
of \cite[Prop. 3.3.2]{V} (\cf Proposition \ref{ptensq}).
One then reduces to show
that the morphisms
\[\Ext^i(M,M')\to \Hom(\Tot(M),\Tot(M')[i])\]
are isomorphisms for any pure $1$-motives $M,M'$ and any $i\in\Z$.
This is done by a case-by-case inspection, using the fact
\cite[3.1.9 and 3.1.12]{V} that in $\DM_-^{\eff}\otimes\Q$
\[\Hom(M(X),C)\otimes\Q=\HH^0_{\Zar}(X,C)\otimes\Q\]
for any smooth variety $X$. The key points are that 1) for such $X$ we have
$H^i_{\Zar}(X,\G_m)=0$ for
$i>1$ and for an abelian variety $A$, $H^i_{\Zar}(X,A)=0$ for $i>0$
because the sheaf $A$ is flasque, and 2) that any abelian variety is up
to isogeny a direct summand of the Jacobian of a curve. This point will  also be used for the
essential surjectivity below.

\begin{sloppypar}
For the second fact, the argument in the proof of \cite[Prop. 3.3.3 1]{V} shows that the
functor $\DM_{-,\et}^\eff\to D^-(\Shv((\Spec k)_\et))$ which takes a complex of sheaves on
$Sm(k)_\et$ to its restriction to $(\Spec k)_\et$ is an equivalence of categories on the
full subcategories of objects of prime-to-$p$ torsion. The conclusion
then follows from Proposition \ref{ptors}.
\end{sloppypar}

\subsection{Gersten's principle} We want to formalise here an important computational method
which goes back to Gersten's conjecture but was put in a wider perspective and systematic
use by Voevodsky. For the \'etale topology it replaces advantageously (but not completely) the
recourse to proper base change.

\begin{propose}\label{pgersten} a) Let $C$ be a complex of presheaves with transfers on $Sm(k)$
with homotopy invariant cohomology presheaves. Suppose that $C(K)\df \varinjlim_{k(U)=K} C(U)$
is acyclic for any function field $K/k$. Then the associated complex of Zariski sheaves $C_\Zar$
is acyclic.\\ 
b) Let $f:C\to D$ be a morphism of complex of presheaves with transfers on $Sm(k)$
with homotopy invariant cohomology presheaves. Suppose that for any function field $K/k$,
$f(K):C(K)\to D(K)$ is a quasi-isomorphism. Then $f_\Zar:C_\Zar\to D_\Zar$ is a
quasi-isomorphism.  \\
c) The conclusions of a) and b) hold for the \'etale topology if their hypotheses are weakened
by replacing $K$ by $K_s$, a separable closure of $K$.
\end{propose}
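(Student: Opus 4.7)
The plan is to deduce all three parts from Voevodsky's ``Gersten vanishing'' principle for homotopy invariant presheaves with transfers: if $F$ is such a presheaf and $F(K)=0$ for every function field $K/k$, then the associated Nisnevich sheaf $F_\Nis$ vanishes; since for such an $F$ the Zariski and Nisnevich sheafifications coincide (this is part of the content of Proposition~\ref{pD.1.4}), one has $F_\Zar=0$ as well. The \'etale analogue, obtained by combining this with the comparison between Nisnevich and \'etale cohomology of strictly homotopy invariant sheaves with transfers after inverting $p$, says that $F_\et=0$ whenever $F(K_s)=0$ for every function field $K/k$.

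First I would reduce (b) to (a). Given $f:C\to D$, form the mapping cone $E$. The cohomology presheaves $H^i(E)$ sit in long exact sequences built from the $H^j(C)$ and $H^j(D)$; since kernels, cokernels and extensions of homotopy invariant presheaves with transfers are computed pointwise and are again homotopy invariant presheaves with transfers (apply the five lemma to the zero-section--projection pair along $\Aff^1$), each $H^i(E)$ is itself a homotopy invariant presheaf with transfers. The hypothesis on $f$ yields that $E(K)$ is acyclic for every function field $K$, so (a) applied to $E$ produces the quasi-isomorphism $f_\Zar$.

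For (a), exactness of Zariski sheafification gives $\cH^i(C_\Zar)=(H^i(C))_\Zar$, so it suffices to prove that each $(H^i(C))_\Zar$ vanishes. Set $F=H^i(C)$: this is a homotopy invariant presheaf with transfers, and commuting filtered colimits with cohomology yields $F(K)=H^i(C(K))=0$ for every function field $K/k$. The Gersten vanishing principle above then forces $F_\Zar=0$. Part (c) is proved by the identical argument, with $K$ replaced by $K_s$, Zariski sheafification replaced by \'etale sheafification, and the \'etale version of the vanishing principle invoked in place of the Zariski/Nisnevich one.

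The only genuine technical input, as opposed to formal manipulation, is the Gersten vanishing principle itself, which is the substance of the Voevodsky machinery summarised in Proposition~\ref{pD.1.4}; once it is in hand in both the Zariski/Nisnevich and the \'etale forms, the three statements of the proposition follow in a completely formal manner.
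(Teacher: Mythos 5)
Your argument follows the paper's own proof closely in structure: reduce (b) to (a) via the mapping cone, observe that homotopy invariance passes to the cohomology presheaves of the cone, pass from the complex to its individual cohomology presheaves $F=H^q(C)$, and then invoke Voevodsky's Gersten-type input to conclude that the sheafification of $F$ vanishes. The reduction steps are correct, and you correctly identify that the only non-formal ingredient is the Gersten-type statement for homotopy invariant presheaves with transfers.

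However, your attribution of that ingredient is wrong. Proposition~\ref{pD.1.4} has nothing to do with Gersten's conjecture or with the Zariski/Nisnevich comparison; it states only that $\HI_\et^s = \HI_\et[1/p]$, i.e.\ that the strictly homotopy invariant \'etale sheaves with transfers are exactly the ones on which $p$ is inverted. The actual input the paper uses is Voevodsky's injectivity theorem [V2, Cor.~4.18]: for a homotopy invariant presheaf with transfers $F$, a smooth variety $X$ with function field $K$, and a point $x\in X$, the map $F(\cO_{X,x})\to F(K)$ is injective. From this, $F(K)=0$ for all function fields forces $F(\cO_{X,x})=0$ for all local rings of smooth varieties, and these are precisely the Zariski stalks of $F$, so $F_\Zar=0$ directly --- no detour through the Nisnevich topology nor the comparison $F_\Zar=F_\Nis$ is needed. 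Your route through Nisnevich sheafification is not incorrect in substance (both facts are part of Voevodsky's package), but it inverts the logical order --- the injectivity at Zariski stalks is what proves the Nisnevich vanishing, not the other way around --- and combined with the erroneous citation it would leave the reader unable to reconstruct the actual input. You should replace the reference to Proposition~\ref{pD.1.4} by the reference to [V2, Cor.~4.18] and argue at the Zariski stalks as the paper does.
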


\begin{proof} a) Let $F = H^q(C)$ for some $q\in\Z$, and let $X$ be a smooth $k$-variety with
function field $K$. By \cite[Cor. 4.18]{V2}, $F(\cO_{X,x})\into F(K)$ for any $x\in X$, hence
$F_\Zar=0$. b) follows from a) by considering the cone of $f$. c) is seen similarly.
\end{proof}

\subsection{An important computation}\label{2.5} Recall that the category $\DM_{-,\et}^{\eff}$ is provided with a partial internal Hom denoted by $\ihom_\et$, \index{$\ihom_\et$} defined on pairs  $(M,M')$ with $M\in\DM_{\gm,\et}^{\eff}$: it is defined analogously to the one of \cite[Prop. 3.2.8]{V} for the Nisnevich topology. We need:

\begin{defn}\label{dpi0} Let $X\in Sch(k)$. We denote by $\pi_0(X)$ the largest \'etale $k$-scheme such that the structural map $X\to \Spec k$ factors through $\pi_0(X)$.
\index{$\pi_0(X)$}
\end{defn}

(The existence of $\pi_0(X)$ is obvious, for example by Galois descent.)

\begin{propose}\label{lcurve} Let $f:C\to \Spec k$ be a smooth projective
$k$-curve. Then, in $\DM_{-,\et}^\eff$: \\
a) There is a canonical isomorphism
\[\ihom_\et(M_\et(C),\Z(1)[2])\simeq R_\et f_*\G_m[1/p][1].\]
b) we have
\[R^q_\et f_*\G_m[1/p]=
\begin{cases}
R_{\pi_0(C)/k}\G_m[1/p] &\text{for $q=0$}\\
\underline{\Pic}_{C/k}[1/p] &\text{for $q=1$}\\
0&\text{else.}
\end{cases}
\]
Here, $R_{\pi_0(C)/k}$ denotes the Weil restriction of scalars from $\pi_0(C)$ to $k$.\\
c) The morphism
\[M_\et(C)\to \ihom_\et(M_\et(C),\Z(1)[2])\]
induced by the class $\Delta_C\in \Hom(M_\et(C)\otimes M_\et(C),\Z(1)[2])$ of the diagonal is an
isomorphism.
\end{propose}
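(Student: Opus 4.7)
Part (a) is essentially formal. After inverting $p$ we have a canonical identification $\Z(1)[1/p]\simeq\G_m[-1][1/p]$ in $\DM^\eff_{-,\et}$ (see \cite[Prop.~3.3.3]{V} and Proposition~\ref{ptensq}); hence $\Z(1)[2]\simeq\G_m[1][1/p]$. The partial internal Hom $\ihom_\et(M_\et(C),-)$ in $\DM^\eff_{-,\et}$, being defined by the expected adjunction with the tensor structure, coincides with the derived pushforward along $f\colon C\to\Spec k$ when evaluated on constant sheaves pulled back from the base. Thus
\[
\ihom_\et(M_\et(C),\Z(1)[2])\simeq Rf_*\G_m[1/p][1],
\]
as claimed.

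Part (b) I would dispatch using Gersten's principle (Proposition~\ref{pgersten}(c)). Each cohomology presheaf of $Rf_*\G_m[1/p]$ is homotopy invariant with transfers (being representable by commutative algebraic groups), so checking its value amounts to computing stalks at $\Spec K_s$ for $K/k$ a function field. At such a stalk one is computing classical étale cohomology $H^q_\et(C_{K_s},\G_m)[1/p]$ of a smooth projective curve over the separably closed field $K_s$: the formulae for $H^0$ and $H^1$ follow at once, matching the Weil restriction $R_{\pi_0(C)/k}\G_m$ and the relative Picard functor $\underline{\Pic}_{C/k}$ respectively; $H^2=\Br(C_{K_s})=0$ by Tsen's theorem; and $H^{\geq 3}=0$ after $[1/p]$ follows from the Kummer sequence with $\mu_n$ ($n$ prime to $p$), together with the cohomological dimension bound $\operatorname{cd}_\ell(C_{K_s})\leq 2$ for $\ell\neq p$ and the resulting unique divisibility of the upper groups.

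The genuine content is Part (c), which is a form of Poincaré duality. My plan is to apply Gersten's principle once more to the cone of the duality map $\psi$. Since both source and target lie in $\DM^\eff_{-,\et}$ (the target by (a) and (b)), the cone has homotopy invariant cohomology sheaves, and it suffices to verify that $\psi$ induces a quasi-isomorphism on sections over each $\Spec K_s$. The problem then becomes a classical assertion for the smooth projective curve $C_{K_s}$ over a separably closed field: the diagonal-induced map compares the étale Suslin complex of $C_{K_s}$ with $R\Gamma_\et(C_{K_s},\G_m[1/p])[1]$. Here one can either invoke the well-known Poincaré duality in $\DM$ (rationally, via the Chow--Künneth decomposition $M(C)\otimes\Q=\Q\oplus h_1(C)\oplus\Q(1)[2]$ together with the self-duality of $h_1(C)$ induced by the principal polarization of the Jacobian), or verify the isomorphism directly on cohomology sheaves, matching the $\pi_0$/Jacobian/degree data on the source with the units/$\Pic$-decomposition on the target.

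The principal difficulty will be the \emph{integral} (after $[1/p]$) version of Part (c): the rational Chow--Künneth decomposition is unavailable integrally, so one must handle the non-connected part of $C$, the degree homomorphism, the Jacobian piece, and prime-to-$p$ torsion in $\Pic^0$ with care. Reducing via Gersten's principle to the separably closed function field case, where the polarization and the degree-splitting of $\Pic(C_{K_s})$ are available at the level of group schemes, is the cleanest way to sidestep these complications and obtain the integral isomorphism; this is the step I expect to require the most bookkeeping.
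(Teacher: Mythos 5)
Your treatment of parts (a) and (b) follows the same route as the paper: (a) is the \'etale analogue of \cite[Prop.~3.2.8]{V} together with $\Z(1)[1/p]\simeq\G_m[1/p][-1]$, and (b) uses Gersten's principle (Proposition~\ref{pgersten}) to reduce the vanishing for $q\geq 2$ to classical computations over a separably closed field. These two parts are fine.

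For part (c), you take a genuinely different and considerably more laborious path than the paper, and you have missed a shortcut. The paper does \emph{not} reprove Poincar\'e duality for curves over $K_s$: the Nisnevich statement $M(C)\iso\ihom_\Nis(M(C),\Z(1)[2])$ already holds \emph{integrally} in $\DM_\gm^\eff$ by \cite[Th.~4.3.2 and Cor.~4.3.6]{V} (and \cite[App.~B]{hk} removes the resolution-of-singularities hypothesis), and the paper then observes, using the computation in (b), that the natural comparison map $\alpha^*\ihom_\Nis(M(C),\Z(1))\to\ihom_\et(\alpha^*M(C),\Z(1))$ is an isomorphism: by Hilbert~90 the cohomology sheaves of the Nisnevich internal Hom ($f_*\G_m$ and $\underline{\Pic}_{C/k}$) are already \'etale sheaves, so $\alpha^*$ changes nothing, and applying $\alpha^*$ to the Nisnevich duality isomorphism gives (c) at once. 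Your plan --- Gersten reduction to $K_s$ followed by a direct verification --- is sound in outline, since the hypotheses of Proposition~\ref{pgersten}~(c) do apply to $\psi$ by (a) and (b). But the deferred bookkeeping is substantial and your sketch only matches the \emph{groups} on both sides over $K_s$ (discrete/Jacobian/degree versus units/$\Pic$); what must actually be checked is that the map \emph{induced by $\Delta_C$} realizes those identifications, and you also rely on the vanishing of $\Z[1/p]$-Suslin homology of $C_{K_s}$ in degrees $\geq 2$, neither of which is free --- both are essentially the content of \cite[Th.~4.3.2]{V} which you would be re-deriving. Importing the Nisnevich theorem and changing topology is both shorter and the route the paper actually takes.
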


\begin{proof} This is \cite[Cor. 3.1.6]{OR} with three differences: 1) the fppf topology should
be replaced by the \'etale topology; $p$ must be inverted (\cf Corollary \ref{cD.1}); 3) the
truncation is not necessary since $C$ is a curve.

a) is the \'etale analogue of \cite[Prop. 3.2.8]{V}
since $\Z_\et(1)=\G_m[1/p][-1]$ (see Corollary \ref{cD.1}) and $f^* (\G_{m, k}) =\G_{m, C}$ for
the big \'etale sites. In b), the isomorphisms for $q=0,1$ are clear; for $q>2$, we reduce by
Gersten's principle (Prop. \ref{pgersten}) to stalks at separably closed fields, and
then the result is classical
\cite[IX (4.5)]{sga4}.

It remains to prove c). Recall that its Nisnevich analogue is true in $\DM_\gm^\eff$
(\cite[Th. 4.3.2 and Cor. 4.3.6]{V}, but see \cite[App. B]{hk} to avoid resolution of
singularities). Let $\alpha^*:\DM_-^\eff\to \DM_{-,\et}^\eff$ be the change of topology functor
(\cf \cite[Remark 14.3]{VL}). By b), the natural morphism
\begin{equation}\label{curves}
\alpha^*\ihom_\Nis(M(C), \Z (1))\to \ihom_\et(\alpha^*M(C), \Z (1))
\end{equation}
is an isomorphism. Hence the result.
\end{proof}

\subsection{Essential image} 
We proceed in two steps:

\subsubsection{The essential image of $T$ is contained in 
$\cT:=d_{\le 1}\DM_{\gm,\et}^\eff$}\label{s2.4.1} It is
sufficient to prove that
$T(N)\in \cT$ for
$N$ a $1$-motive of type $[L\to 0]$, $[0\to G]$ ($G$ a torus) or $[0\to
A]$ ($A$ an abelian variety). For the first type, this follows from
Proposition \ref{pperm}. For the second type, Proposition \ref{pperm}
applied to the character group of $G$ shows that $T([0\to G])$ is
contained in the thick subcategory generated by permutation tori, which
is clearly contained in $\cT$.

It remains to deal with the third type. If $A=J(C)$ for a smooth
projective curve $C$ having a rational $k$-point $c$, then $T([0\to
A])=A[-1]$ is the direct summand of $M(C)[-1]$ (determined by $c$)
corresponding to the pure motive $h^1(C)$, so belongs to $\cT$. If $A\to
A'$ is an isogeny, then Proposition \ref{pperm} implies that
$A[-1]\in\cT$ $\iff$ $A'[-1]\in \cT$. In general we may write $A$ as the
quotient of a jacobian $J(C)$. Let $B$ be the connected part of the
kernel: by complete reducibility there exists a third abelian variety
$B'\subseteq J(C)$ such that $B+B'=J(C)$ and $B\cap B'$ is finite. Hence
$B\oplus B'\in \cT$, $B'\in\cT$ and finally $A\in \cT$ since it is
isogenous to $B'$.

\subsubsection{The essential image of $T$ contains $\cT$} It suffices to
show that $M(X)$ is in the essential image of $T$ if $X$ is smooth
projective irreducible of dimension $0$ or $1$. Let $E$ be the field of
constants of $X$. If $X=\Spec E$, $M(X)$ is the image of $[R_{E/k}\Z\to
0]$. If $X$ is a curve, we apply Proposition \ref{lcurve}: by c) it suffices to show that the
sheaves of b) are in the essential image of $T$.
We have already observed that $R_{E/k}\G_m[1/p]$ is in the essential image of $T$. We then have
a short exact sequence
\[0\to R_{E/k} J(X)[1/p]\to \underline{\Pic}_{X/k}[1/p]\to R_{E/k}\Z[1/p]\to 0.\]

Both the kernel and the cokernel in this extension belong to the image of $T$, and the proof
is complete.
\qed

\subsection{The universal realisation functor} 

\begin{defn}\label{tot} Define the \emph{universal realisation functor}
$$\Tot : D^b(\M[1/p]) \to \DM_{\gm,\et}^{\eff}$$\index{$\Tot$}
to be the composition of the equivalence of categories of Theorem \ref{t1.2.1} and the embedding
$d_{\le 1}\DM_{\gm,\et}^\eff\to \DM_{\gm,\et}^\eff$.
\end{defn}

\begin{remarks} \label{r2.4} 1) In view of Theorem \ref{tstr}, the equivalence of Theorem
\ref{t1.2.1}, yields \emph{two} ``motivic" $t$-structures on $d_{\leq
1}\DM_{\gm,\et}^{\eff}$: one with heart ${}^t\M[1/p]$ and the other with
heart ${}_t\M[1/p]$. We shall describe a third one, the homotopy $t$-structure, in Theorem
\ref{t3.2.3}.

2) In what follows we shall frequently commit an abuse of notation in writing $\uG$ rather
that $\uG[1/p]$, etc. for the image of (say) a semi-abelian variety in $\DM_{\gm,\et}^\eff$
by the functor $\Tot$. This is to keep notation light. A more mathematical justification is
that, according to Proposition \ref{pD.2}, the functor $T$ is naturally isomorphic to the
composition
\begin{multline*}
D^b(\M[1/p])\to D^b(\HI_\et^{[0,1]}[1/p])\to D^b(\HI_\et[1/p])\\
\to
D^-(\Shv_\et(Sm(k))[1/p])\longby{C_*}\DM_{-,\et}^\eff
\end{multline*}
which (apparently) does not invert $p$ on objects.
\end{remarks}

\section{$1$-motivic sheaves and the homotopy $t$-structure}\label{homotopy}

We recall the blanket assumption that $p$ is inverted in all Hom groups.

\subsection{Some useful lemmas} Except for Proposition \ref{ptorsion}, this subsection is in
the spirit of \cite[Ch. VII]{gacl}.

Let $G$ be a commutative $k$-group scheme, and let us write $\uG$\index{$\uG$} for
the associated sheaf of abelian groups for a so far unspecified
Grothendieck topology. Let also $\cF$ be another sheaf of abelian groups.
We then have:

\begin{itemize}
\item $\Ext^1(\uG,\cF)$ (an Ext of sheaves);
\item $H^1(G,\cF)$ (cohomology of the scheme $G$);
\item $\bar H^2(G,\cF)$: this is the homology
of the complex
\[\cF(G)\by{d^1} \cF(G\times G)\by{d^2} \cF(G\times G\times G)\]
where the differentials are the usual ones.
\end{itemize}

\begin{propose}\label{p1} There is an exact sequence (defining $A$)
\[0\to A\to \Ext^1(\uG,\cF)\by{b} H^1(G,\cF)\by{c} H^1(G\times G,\cF)\]
and an injection
\[0\to A\by{a} \bar H^2(G,\cF).\]
\end{propose}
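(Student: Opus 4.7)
The plan is to interpret $\Ext^1(\uG, \cF)$ concretely as Yoneda classes of extensions $0 \to \cF \to \cE \to \uG \to 0$ of abelian sheaves and to split the data in $\cE$ into its underlying $\cF$-torsor over $G$ (giving $b$) and its group law (giving the compatibility $cb = 0$ and, when the torsor is trivial, a $2$-cocycle giving $a$).

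First I would define $b([\cE])$ to be the class of $\cE \to \uG$ regarded as an $\cF$-torsor on the scheme $G$ representing $\uG$. The group law $\cE \times \cE \to \cE$ covers $m : G \times G \to G$, producing a canonical isomorphism of $\cF$-torsors $p_1^*\cE + p_2^*\cE \iso m^*\cE$ on $G \times G$, which is exactly $c(b(\cE)) = 0$. Setting $A := \ker b$, an element $[\cE] \in A$ is an extension whose underlying torsor admits a set-theoretic section $\sigma : \uG \to \cE$ (equivalently, $\sigma \in \cE(G)$ lifting the identity of $\uG(G)$). I then form
\[
f_\sigma := p_1^*\sigma + p_2^*\sigma - \sigma \circ m ;
\]
both $p_1^*\sigma + p_2^*\sigma$ and $\sigma \circ m$ lie over $m \in \uG(G \times G)$ (using the group law of $\uG$ for the sum), so $f_\sigma \in \cF(G \times G)$. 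Associativity of the group law on $\cE$ yields $d^2 f_\sigma = 0$, and replacing $\sigma$ by $\sigma + \tau$ for $\tau \in \cF(G)$ changes $f_\sigma$ by $d^1\tau$; hence $a([\cE]) := [f_\sigma] \in \bar H^2(G, \cF)$ is well defined. If $a([\cE]) = 0$, some suitable $\sigma$ has $f_\sigma = 0$, i.e., is a group homomorphism, so $\cE$ splits; this gives injectivity of $a$.

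The main obstacle is exactness at $H^1(G, \cF)$. Given $P \in \ker c$, a trivialization $\phi : m^*P \iso p_1^*P + p_2^*P$ equips $P \to G$ with a candidate multiplication that is $\cF$-equivariant, admits a unit from the identity section of $G$, and has inverses; the failure of associativity is a class $\theta_\phi \in \cF(G \times G \times G)$ which, by a pentagon-type identity, is a cocycle and which is altered by $d^2\alpha$ when $\phi$ is changed by $\alpha \in \cF(G \times G)$. Lifting $P$ to an extension therefore requires a secondary obstruction in $\bar H^3(G, \cF)$ to vanish; this is the delicate step (and the one to scrutinize carefully if full exactness rather than a complex is required). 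By contrast the three items actually used downstream in the statement—the vanishing $cb = 0$, the definition of $A$, and the injection $a$—are purely formal manipulations of torsors and cochains.
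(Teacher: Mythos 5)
Your proposal reconstructs the paper's own proof. The paper defines $b$ by pulling back the torsor (equivalently, applying the connecting homomorphism of $\cE$ to $1_G\in\uG(G)$), defines $a$ as the class of the cocycle measuring the defect of a lift $s\in\cE(G)$ of $1_G$ to be additive, and then asserts tersely that exactness is checked ``by inspection.'' Your bookkeeping---$cb=0$ from the group law of $\cE$; $f_\sigma\in\cF(G\times G)$ because $p_1+p_2=\mu$ in $\uG(G\times G)$; $f_\sigma$ changing by $d^1\tau$ under $\sigma\mapsto\sigma+\tau$; $d^2f_\sigma=0$ from associativity; injectivity of $a$ because $f_\sigma=0$ forces $\sigma$ to be a splitting---is exactly this computation.

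The one place where your caution is warranted is the inclusion $\ker c\subseteq\im b$, which the paper does not actually verify. As you say, from $P\in\ker c$ one builds a candidate group law by choosing a trivialization $\phi$ of $p_1^*P+p_2^*P-\mu^*P$, and the resulting associativity defect is a cocycle in $\cF(G^3)$ whose class in $\bar H^3(G,\cF)$ is the genuine obstruction (commutativity imposes a further one). This is not formal: for $G$ an abelian variety and $\cF=\G_m$, the equality $\ker c=\Pic^0(G)=\Ext^1(G,\G_m)$ is the theorem of the cube, not an inspection. However, as you guess, this direction of exactness is never invoked downstream: Example \ref{ex3.1.4}, Lemma \ref{c3.2} and the proof of Theorem \ref{t6.3} use only $\im b\subseteq\ker c$, $A=\ker b$, and the injection $A\into\bar H^2(G,\cF)$ together with Proposition \ref{p2}, all of which your argument supplies. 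So your proposal matches the paper's proof both in content and in incompleteness, and the gap you flag is real but inconsequential for the paper's applications.
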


\begin{proof} Let us first define the maps $a,b,c$:

\begin{itemize}
\item $c$ is given by $p_1^*+p_2^*-\mu^*$, where $\mu$ is the group law of
$G$.
\item For $b$: let $\cE$ be an extension of $\uG$ by $\cF$. We have an
exact sequence
\[\cE(G)\to \uG(G)\to H^1(G,\cF).\]
Then $b([\cE])$ is the image of $1_G$ by the connecting homomorphism. Alternatively, we may
think of $\cE$ as an $\cF$-torsor over $G$ by forgetting its group structure.
\item For $a$: we have $b([\cE])=0$ if and only if $1_G$ has an antecedent
$s\in \cE(G)$. By Yoneda, this $s$ determines a section $s:\uG\to \cE$ of
the projection. The defect of $s$ to be a homomorphism gives a
well-defined element of  $\bar H^2(G,\cF)$ by the usual cocycle
computation: this is $a([\cE])$.
\end{itemize}

Exactness is checked by inspection.
\end{proof}

\begin{remark} It is not clear whether $a$ is surjective.
\end{remark}

\begin{propose}\label{p2} Suppose that the map
\[\cF(G)\oplus\cF(G)\vlongby{(p_1^*,p_2^*)}\cF(G\times G)\]
is surjective. Then $ \bar H^2(G,\cF)=0$.
\end{propose}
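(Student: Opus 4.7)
The plan is to show that every 2-cocycle is a 2-coboundary, by writing a representative in the form $p_1^*\alpha + p_2^*\beta$ and forcing $\alpha$ and $\beta$ to be ``constants'' on $G$.

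Let $\phi \in \cF(G\times G)$ satisfy $d^2\phi = 0$. By the surjectivity hypothesis I can write $\phi = p_1^*\alpha + p_2^*\beta$ with $\alpha,\beta \in \cF(G)$. The standard cobar formula
\[d^2\psi(g_1,g_2,g_3) = \psi(g_2,g_3) - \psi(g_1g_2,g_3) + \psi(g_1,g_2g_3) - \psi(g_1,g_2)\]
then yields
\[d^2\phi(g_1,g_2,g_3) = \alpha(g_2) - \alpha(g_1g_2) + \beta(g_2g_3) - \beta(g_2) = 0.\]
Pulling this identity back along the two sections $G^2 \to G^3$, $(g_1,g_2)\mapsto (g_1,g_2,e)$ and $(g_2,g_3)\mapsto (e,g_2,g_3)$, where $e\colon \Spec k \to G$ is the identity, one obtains respectively
\[\mu^*\alpha = p_2^*\alpha \quad\text{and}\quad \mu^*\beta = p_1^*\beta \quad \text{in } \cF(G\times G).\]

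Next, I pull the first of these back further along $G \to G\times G$, $g\mapsto (g,e)$: the left-hand side becomes $\alpha$ and the right-hand side becomes $s_G^* e^*\alpha$, where $s_G\colon G\to \Spec k$ is the structure map. Hence $\alpha$ is the pullback of the section $e^*\alpha \in \cF(\Spec k)$; the symmetric pullback of the second identity gives $\beta = s_G^* e^*\beta$. Consequently
\[\phi = p_1^* s_G^* e^*\alpha + p_2^* s_G^* e^*\beta = s_{G^2}^*(e^*\alpha + e^*\beta),\]
a ``constant'' in $\cF(G^2)$, \ie the pullback of a single section $c := e^*\alpha + e^*\beta \in \cF(\Spec k)$.

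Finally, set $\gamma := s_G^*c \in \cF(G)$. Using $s_G\circ p_1 = s_G\circ p_2 = s_G\circ \mu = s_{G^2}$, one computes $d^1\gamma = p_2^*\gamma - \mu^*\gamma + p_1^*\gamma = s_{G^2}^*c = \phi$, so $\phi \in \im d^1$ and $\bar H^2(G,\cF) = 0$. There is no real obstacle beyond the routine cocycle bookkeeping; the key point is simply that a section of the form $p_1^*\alpha + p_2^*\beta$ which is also a 2-cocycle must be a pullback from $\Spec k$, and such pullbacks are automatically $d^1$-coboundaries.
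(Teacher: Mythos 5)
Your proof is correct and follows exactly the same route as the paper's: write the cocycle as $p_1^*\alpha + p_2^*\beta$, deduce from the cocycle identity that $\alpha$ and $\beta$ are pulled back from $\Spec k$, and observe that a constant is its own coboundary. You have simply spelled out, via pullbacks along the unit section, the step that the paper compresses into ``the cocycle condition implies that $\alpha$ and $\beta$ are constant.''
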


\begin{proof} Let $\gamma\in \cF(G\times G)$ be a $2$-cocycle. We may
write $\gamma=p_1^*\alpha+p_2^*\beta$. The cocycle condition implies that
$\alpha$ and $\beta$ are constant. Hence $\gamma$ is constant, and it is
therefore a $2$-coboundary (of itself).
\end{proof}

\begin{example}\label{ex3.1.4}
$\cF$ locally constant, $G$ smooth, the topology = the \'etale topology. Then
the condition of Proposition \ref{p2} is verified. We thus get an isomorphism
\[\Ext^1(\uG,\cF)\iso H^1_\et(G,\cF)_{mult}\]
with the group of multiplicative classes in $H^1_\et(G,\cF)$.
\end{example}

\begin{lemma}\label{c3.2} Let $G$ be a semi-abelian $k$-variety and $L$ a
locally constant $\Z$-constructible \'etale sheaf with torsion-free
geometric fibres. Then $\Ext^1(\uG,L)=0$.
\end{lemma}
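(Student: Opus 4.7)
The strategy is to combine Propositions \ref{p1} and \ref{p2} with the observation (Example \ref{ex3.1.4}) that the \'etale setup applies: we shall show both $A=0$ (using Prop.~\ref{p2}) and that the map $c:H^1_\et(G,L)\to H^1_\et(G\times G,L)$ of Prop.~\ref{p1} is injective. These two facts together force $\Ext^1(\uG,L)=0$.

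The vanishing of $A$ is the easy step. Since $G$ is semi-abelian it is geometrically connected, and the same holds for $G\times G$; because $L$ is locally constant, this gives $L(G)=L(G\times G)=L(\Spec k)$, so that the map $L(G)\oplus L(G)\to L(G\times G)$ of Proposition~\ref{p2} is the addition map, in particular surjective. Hence $\bar H^2(G,L)=0$ and $A=0$.

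The injectivity of $c$ is the substantive step, and where I expect the real work. The idea is to compute $H^1_\et(-,L)$ by Galois descent from a splitting field. Choose a finite Galois extension $k'/k$ with group $\Gamma$ such that $L|_{k'}$ is the constant sheaf $\Z^r$. For any smooth connected $k$-variety $X$, the scheme $X_{k'}$ is still connected, so its \'etale fundamental group is profinite and
\[
H^1_\et(X_{k'},\Z^r)\;=\;\Hom_{\mathrm{cont}}(\pi_1^\et(X_{k'}),\Z^r)\;=\;0.
\]
Applied to $X=G$ and $X=G\times G$, the Hochschild--Serre spectral sequence
\[
H^p(\Gamma,H^q_\et(X_{k'},L_{k'}))\;\Longrightarrow\;H^{p+q}_\et(X,L)
\]
degenerates in low degree to give canonical isomorphisms
\[
H^1_\et(G,L)\;\simeq\;H^1(\Gamma,L)\;\simeq\;H^1_\et(G\times G,L),
\]
since in both cases $H^0_\et(X_{k'},L_{k'})=L(k')$. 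By naturality, each of $p_1^*,p_2^*,\mu^*$ is induced on $H^1(\Gamma,L)$ by the identity of $L(k')$, so $c=1+1-1=\mathrm{id}$, which is injective. Combining with $A=0$, Proposition~\ref{p1} gives $\Ext^1(\uG,L)=0$, as required. The main subtlety I anticipate is simply being careful that the identifications furnished by Hochschild--Serre intertwine the three pullbacks correctly --- but since $p_1,p_2,\mu$ are all $k$-morphisms, they act trivially on the edge piece $H^1(\Gamma,L(k'))$, so this is mechanical naturality rather than a genuine obstacle.
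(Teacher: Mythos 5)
Your proof is correct, but it takes a somewhat different route from the paper's. The paper's argument passes through the local-to-global $\Ext$ spectral sequence: it suffices to show $\shom(\uG,L)=\sext(\uG,L)=0$, and since these are sheaves one may reduce to the case $L=\Z$; then $\shom(\uG,\Z)=0$ is obvious, and $\sext(\uG,\Z)=0$ follows from Example \ref{ex3.1.4} together with the vanishing $H^1_\et(G,\Z)=0$ for $G$ smooth connected --- so in the paper's reduction there simply are no multiplicative $H^1$-classes at all. You instead stay with the global $\Ext$ group and the global five-term sequence of Proposition~\ref{p1}: you verify $A=0$ via Proposition~\ref{p2} exactly as in Example~\ref{ex3.1.4}, and then, rather than killing $H^1_\et(G,L)$ outright (which need not vanish when $L$ is non-constant), you show only that $c=p_1^*+p_2^*-\mu^*$ is injective, by computing both $H^1_\et(G,L)$ and $H^1_\et(G\times G,L)$ as $H^1(\Gamma,L(k'))$ via Hochschild--Serre (using that $H^1_\et$ of a normal geometrically connected variety with constant torsion-free coefficients vanishes --- note this uses normality, which you get from smoothness, not merely connectedness) and observing that $p_1^*,p_2^*,\mu^*$ all induce the identity on the edge term, so $c=\mathrm{id}$. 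This yields a self-contained argument that never invokes the local-to-global $\Ext$ spectral sequence or the sheaf $\sext$, at the cost of a somewhat longer naturality check; the paper's version is shorter because it reduces to constant coefficients once and for all. Both are sound.
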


\begin{proof} By the $\Ext$ spectral sequence, it suffices to show that
$\shom(\uG,L)\allowbreak=\sext(\uG,L)=0$. This reduces us to the case $L=\Z$. Then the first
vanishing is obvious and the second follows from Example \ref{ex3.1.4}.
\end{proof}

\begin{lemma}\label{trans} Let $\cE\in \Ext^1(\uG,\G_m)$ and let $g\in G(k)$. Denote by
$\tau_g$ the left translation by $g$. Then $\tau_g^*b(\cE)=b(\cE)$. Here $b$ is the map of
Proposition \ref{p1}.
\end{lemma}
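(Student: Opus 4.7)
The plan is to exploit the fact that $b(\cE)\in H^1(G,\G_m)$ is a multiplicative class, i.e.\ lies in the kernel of the map $c$ of Proposition \ref{p1}. Explicitly, we have the identity
\[
\mu^* b(\cE) \;=\; p_1^* b(\cE) + p_2^* b(\cE) \qquad \text{in } H^1(G\times G,\G_m),
\]
and I would pull this identity back along a suitably chosen morphism $G\to G\times G$ in order to isolate $\tau_g^* b(\cE)$.

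Concretely, let $s_g:G\to G\times G$ be the morphism $s_g=(\bar g, 1_G)$, where $\bar g: G\to\Spec k\to G$ is the constant morphism at $g\in G(k)$. Then $\mu\circ s_g = \tau_g$, $p_2\circ s_g = 1_G$, and $p_1\circ s_g = \bar g$. Applying $s_g^*$ to the displayed identity yields
\[
\tau_g^* b(\cE) \;=\; \bar g^* b(\cE) + b(\cE).
\]

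It therefore suffices to verify that $\bar g^* b(\cE) = 0$, and this is immediate: the homomorphism $\bar g^*$ factors through $H^1(\Spec k,\G_m)$, which vanishes by Hilbert 90 (in the \'etale topology, or more trivially in the Zariski topology since $\Pic(\Spec k)=0$). I do not foresee any real obstacle in carrying out this plan; the only mild subtlety is that the argument genuinely uses $\cF=\G_m$, or more generally any coefficient sheaf $\cF$ with $H^1(\Spec k,\cF)=0$, which is presumably why the lemma is stated only in the $\G_m$ case.
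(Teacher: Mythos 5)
Your proof is correct, and it takes a different (though closely related) route from the paper's one-line argument. The paper uses Hilbert 90 to lift $g$ to some $e \in \cE(k)$ (the surjectivity of $\cE(k) \to G(k)$) and observes that translation by $e$ on the total space $\cE$ covers $\tau_g$ on $G$, giving a morphism --- hence automatically an isomorphism --- of $\G_m$-torsors from $b(\cE)$ to $\tau_g^* b(\cE)$. You instead work purely with cohomology classes: the multiplicativity identity $\mu^* b(\cE) = p_1^* b(\cE) + p_2^* b(\cE)$ (that is, $c\circ b = 0$, which is part of the exactness asserted in Proposition \ref{p1}) restricted along $s_g = (\bar g, 1_G)$ gives $\tau_g^* b(\cE) = \bar g^* b(\cE) + b(\cE)$, and the constant term vanishes because $H^1(\Spec k, \G_m) = 0$. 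The two arguments invoke Hilbert 90 in equivalent guises and are comparably short; yours makes explicit that translation invariance is a formal consequence of multiplicativity plus triviality over the base, while the paper's is more geometric, exploiting the group law on $\cE$ directly rather than unpacking the cocycle relation. Your closing remark is also accurate: the only feature of $\G_m$ used is the vanishing of $H^1(\Spec k, \G_m)$, which is precisely what makes the paper's lifting step work as well.
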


\begin{proof} By Hilbert's theorem 90, $g$ lifts to an $e\in \cE(k)$. Then $\tau_e$ induces a
morphism from the $\G_m$-torsor $b(\cE)$ to the $\G_m$-torsor $\tau_g^*b(\cE)$: this morphism
must be an isomorphism.
\end{proof}

For the proof of Theorem \ref{text} below we shall need the case $i=2$ of the following
proposition, which unfortunately cannot be proven with the above elementary methods.

\begin{propose}\label{ptorsion} Let $G$ be a smooth commutative algebraic $k$-group and $L$ a
discrete $k$-group scheme. Let $\cA=\Shv_\et(Sm(k))$ be the category of abelian \'etale sheaves
on the category of smooth $k$-varieties. Then, for any $i\ge 2$, the group $\Ext^i_\cA(\uG,L)$
is torsion.
\end{propose}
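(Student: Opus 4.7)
The plan is to show $\Ext^i_\cA(\uG, L) \otimes \Q = 0$ for $i \geq 2$, which is equivalent to the torsion claim. \emph{Reductions on $L$ and $G$.} Using the short exact sequence $0 \to L_{\tor} \to L \to L_{\tf} \to 0$ (with $L_\tor$ finite, giving torsion Ext) and $0 \to L \to L \otimes \Q \to L \otimes \Q/\Z \to 0$ (the outer term $L\otimes\Q/\Z$ contributing only torsion), it suffices to show $\Ext^i_\cA(\uG, V) = 0$ for $V$ a $\Q$-vector space discrete sheaf. Writing $V$ as a filtered colimit of its finitely generated subsheaves, applying the $\Q$-version of Proposition \ref{pperm} to reduce to permutation representations of a finite Galois quotient, and using the adjunction $\Ext^i_\cA(\uG, f_* W) = \Ext^i_{\cA_E}(\uG_E, W)$ for $f: \Spec E \to \Spec k$ finite \'etale, one reduces to $V = \Q$ the constant sheaf. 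Similarly, the exact sequence $0 \to G^0 \to G \to \pi_0(G) \to 0$, with $\pi_0(G)$ discrete, reduces $G$ to being connected (the discrete part being handled by the same Galois-theoretic analysis).

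\emph{Main computation.} For $G$ connected smooth commutative, I would use an Eilenberg--MacLane bar resolution
\[
\ldots \to \Z[G^2] \to \Z[G] \to \uG \to 0
\]
in $\cA$, with differentials encoding the group law of $G$ through cross-effects. Since $\Hom_\cA(\Z[G^p], -) = \Gamma(G^p, -)$, its derived functors are $H^*_\et(G^p, -)$, producing a spectral sequence $E_1^{p,q} = H^q_\et(G^p, \Q) \Rightarrow \Ext^{p+q}_\cA(\uG, \Q)$. The key vanishing is $H^q_\et(X, \Q) = 0$ for $X$ smooth and $q \geq 1$, which follows from $0 \to \Z \to \Q \to \Q/\Z \to 0$, the profiniteness of $\pi_1^\et(X)$, and the torsion nature of $H^q_\et(X, \Q/\Z)$. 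Only the row $q = 0$ survives: it is a bar cochain complex of copies of $\Q$ with differentials given by alternating sums of pullbacks along face maps, and a direct calculation in the spirit of Proposition \ref{p1} (one observes that on $H^0$ each face induces the identity, so the differential alternates between $\pm 1$ and $0$) shows it has cohomology concentrated in degree $0$. Hence $\Ext^i_\cA(\uG, \Q) = 0$ for $i \geq 1$, a fortiori for $i \geq 2$.

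\emph{Main obstacle.} The principal technical difficulty is constructing and justifying the Eilenberg--MacLane resolution of $\uG$ in $\cA$: this is a sheafified version of the classical cross-effect resolution of an abelian group, and care is needed to ensure that the sheafification remains exact in the big \'etale topos. An alternative, more robust route is to invoke Breen's deep computations of $\Ext$-groups of commutative algebraic groups, reducing by d\'evissage to the basic cases of tori, abelian varieties, and (in characteristic zero) unipotent groups, and verifying the torsion property directly in each case; this bypasses the need for a single uniform resolution at the cost of a case-by-case analysis.
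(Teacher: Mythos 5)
The central step of your proposal does not work: the complex
\[
\cdots \to \Z[G^2] \to \Z[G] \to \uG \to 0
\]
with bar-type differentials is \emph{not} a resolution of $\uG$. With the differential $[g,h]\mapsto [g]+[h]-[gh]$, the sequence is indeed exact at $\Z[G]$ (the image is $\Z[e]+I^2$, where $I$ is the augmentation ideal, and $\Z[G]/(\Z[e]+I^2)\simeq I/I^2\simeq \uG$), but exactness already fails one step further to the left: the homology there is $H_2(G,\Z)$ of the (discrete) abelian group $G$, which is $\Lambda^2 G$ and is nonzero in general; its sheafification $\Lambda^2\uG$ is nonzero as well (take $G=\G_m$). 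So the spectral sequence $E_1^{p,q}=H^q_\et(G^p,\Q)\Rightarrow \Ext^{p+q}_\cA(\uG,\Q)$ simply does not exist, and this is not a sheafification subtlety but an honest failure of acyclicity of the complex. This is precisely the obstruction that forces Breen (and the paper, which follows him) to abandon the idea of a single resolution-type spectral sequence.

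What the paper actually does (Prop.~\ref{ptorsion}): after reducing to $G$ connected, it invokes Breen's \emph{two} spectral sequences $'E$ and $''E$ converging to a common abutment (the cohomology of the Eilenberg--Mac Lane spectrum attached to $\uG$). The first, $'E$, has $E_1$-terms built from $H^q_\et(G^a,L)$, and your $q=0$-row analysis is essentially the computation of $'E$: the row $q=0$ is the bar complex with constant coefficients, hence acyclic in positive degrees, while the rows $q\ge 1$ are torsion (here the paper cites Deninger's result that $H^q_\et(G^a,L)$ is torsion for $q>0$, which is the integral analogue of your vanishing of $H^q_\et(X,\Q)$). This shows the abutment is torsion in all positive degrees. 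But the $\Ext$ groups do not live in $'E$; they live in the \emph{companion} spectral sequence $''E$, on the line $q=1$ (i.e.\ $''E_2^{p,1}=\Ext^p_\cA(\uG,L)$), with the lines $q\neq 1$ consisting of torsion groups (Breen's analysis of stable cohomology operations). The comparison of the two spectral sequences is the crux, and it is absent from your sketch. Your "alternative route" is also not what the paper does: there is no d\'evissage into tori, abelian varieties and unipotent groups; Breen's machinery applies uniformly to any connected smooth commutative $G$.

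A smaller point: the reduction "write $V$ as a filtered colimit of its finitely generated subsheaves" needs $\Ext^i_\cA(\uG,-)$ to commute with filtered colimits, which is not automatic since $\uG$ need not be a compact object of $D(\cA)$. The paper sidesteps this by keeping $L$ fixed and only reducing the connected-component of $G$.
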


\begin{proof} Considering the connected part $G^0$ of $G$, we reduce to the case where $G$ is connected, hence geometrically connected. We now turn to the techniques of \cite{breen}\footnote{We thank L. Illusie for pointing out this reference.}: using essentially the Eilenberg-Mac Lane spectrum associated to $\uG$, Breen gets two spectral sequences $'E_r^{p,q}$ and $''E_r^{p,q}$ converging to the same abutment, with
\begin{itemize}
\item $''E_2^{p,1}=\Ext^p_\cA(\uG,L)$;
\item $''E_2^{p,q}$ is torsion for $q\ne 1$;
\item $'E_2^{p,q}$ is the $p$-th cohomology group of a complex involving terms of the form $H^q_\et(G^a,L)$.
\end{itemize}

(In \cite{breen}, Breen works with the fppf topology but his methods carry over here without
any change: see remark in  \loccit top of p. 34.) It follows from \cite[(2.1)]{deninger} that
$H^q_\et(G^a,L)$ is torsion for any $q>0$: to see this easily, reduce to the case where $L$ is
constant by a transfer argument involving a finite extension of $k$. Hence $'E_2^{p,q}$ is
torsion for $q>0$. On the other hand, since $G$ is geometrically connected, so are its powers
$G^a$, which implies that $H^0(G^a,L)=H^0(k,L)$ for any $a$. Since the complex giving
$'E_2^{*,0}$ is just the bar complex, we get that $'E_2^{0,0} = L(k)$ and $'E_2^{p;0}=0$ for
$p>0$. Thus all degree $>0$ terms of the abutment are torsion, and the conclusion follows.
\end{proof}

\subsection{$1$-motivic sheaves} 

\begin{defn}\label{d3.1} An \'etale sheaf $\cF$ on $Sm(k)$ is
\emph{$1$-motivic} if  there is a morphism of sheaves
\begin{equation}\label{2}
\uG\by{b} \cF
\end{equation}
where $G$ is a semi-abelian variety and $\ker b, \coker b$ are discrete
(see Definition \ref{d1.1.1}).\\ We denote by $\Shv_0$ the full
subcategory of $\Shv_\et(Sm(k))[1/p]$ consisting of discrete sheaves and by
$\Shv_1$ the full subcategory of $\Shv_\et(Sm(k))[1/p]$ consisting of
$1$-motivic sheaves.\index{$\Shv_1$, $\Shv_0$}
\end{defn}

\begin{remark} The category $\Shv_0$ is equivalent to the category ${}^t\cM_0[1/p]$ of
Definition \ref{d1.1.1}.
\end{remark}

\begin{propose}\label{p3.1} a) In Definition \ref{d3.1} we may choose $b$ such that $\ker
b$ is torsion-free: we then say that $b$ is \emph{normalised}.\\ 
b)  Given two $1$-motivic sheaves $\cF_1,\cF_2$, normalised morphisms
$b_i:\uG_i\to \cF_i$ and a map $\phi:\cF_1\to \cF_2$,
there exists a unique homomorphism of group schemes $\phi_G:G_1\to G_2$ such that the diagram
\[\begin{CD}
\uG_1@>{b_1}>> \cF_1\\
@V{\phi_G}VV@V{\phi}VV\\
\uG_2@>{b_2}>> \cF_2
\end{CD}\]
commutes.\\
c) Given a $1$-motivic sheaf $\cF$, a pair $(G,b)$ with $b$ normalised
is uniquely determined by $\cF$.\\
d) The categories $\Shv_0$ and $\Shv_1$ are exact abelian subcategories of $\Shv_\et(Sm(k))$.
\end{propose}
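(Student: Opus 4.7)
The plan is to prove (a)--(c) in sequence and then derive (d) from (b). Two vanishing results in $\Shv_\et(Sm(k))[1/p]$ underpin everything: for any semi-abelian $G$ and any discrete sheaf $L$, $\Hom(\uG,L)=0$ (by Yoneda, such a morphism corresponds to a locally constant $L$-valued function on the geometrically connected scheme $G$ vanishing at the origin, hence to $0$), and for $L$ torsion-free discrete, $\Ext^1(\uG,L)=0$ by Lemma~\ref{c3.2}.

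For (a), let $b\colon\uG\to \cF$ be any presentation and set $F:=(\ker b)_\tor$. As a locally constant torsion $\Z$-constructible subsheaf of $\uG$, $F$ is representable by a finite \'etale $k$-group scheme of order prime to $p$; by Yoneda and injectivity on sections, the inclusion $F\hookrightarrow\uG$ realises $F$ as a closed subgroup scheme of $G$. Then $G':=G/F$ is semi-abelian, $b$ factors through $b'\colon\uG'\to\cF$, and by construction $\ker b'=(\ker b)/F$ is torsion-free while $\coker b'=\coker b$ is unchanged.

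For (b), consider $\phi\circ b_1\colon\uG_1\to\cF_2$. Its further composition with $\cF_2\to\coker b_2$ vanishes by the Hom-vanishing above, so $\phi\circ b_1$ lands in $\im b_2=\uG_2/\ker b_2$. The short exact sequence $0\to\ker b_2\to\uG_2\to\im b_2\to 0$ then produces an obstruction in $\Ext^1(\uG_1,\ker b_2)$ to lifting to $\uG_2$; this Ext vanishes by Lemma~\ref{c3.2} precisely because $b_2$ is normalised. The resulting sheaf morphism $\psi\colon\uG_1\to\uG_2$ represents by Yoneda a $k$-morphism $G_1\to G_2$, and $\psi(0)=0$ together with additivity force it to be a group scheme homomorphism $\phi_G$. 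Two choices of $\phi_G$ differ by a morphism $\uG_1\to\ker b_2$, which is $0$ by the same Hom-vanishing, giving uniqueness. Part (c) is then immediate: apply (b) to $\id_\cF$ with two normalised presentations $(G,b)$ and $(G',b')$; uniqueness of the lift forces the two resulting homomorphisms $G\to G'$ and $G'\to G$ to be mutually inverse.

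For (d), closure of $\Shv_0$ under kernels, cokernels and extensions is routine since discrete $\Z$-constructible sheaves are the finitely generated continuous modules over the absolute Galois group of $k$. For $\Shv_1$, given $\phi\colon\cF_1\to\cF_2$ lifted to $\phi_G\colon G_1\to G_2$ by (b), the semi-abelian variety $H:=(\ker\phi_G)^0$ (respectively the semi-abelian quotient $G_2/\im\phi_G$) maps naturally to $\ker\phi$ (respectively to $\coker\phi$), and a snake-lemma chase on
\[0\to\ker b_i\to\uG_i\to\cF_i\to\coker b_i\to 0,\qquad i=1,2,\]
identifies the kernel and cokernel of these maps as discrete, using finiteness of $\pi_0(\ker\phi_G)$ together with discreteness of $\ker b_i$ and $\coker b_i$. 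Extensions are handled the same way since an extension of semi-abelian varieties is semi-abelian. The main obstacle is (b): both the existence and uniqueness of the lift $\phi_G$ rest on simultaneously invoking the Hom-vanishing (from connectedness of $G_1$) and the Ext-vanishing of Lemma~\ref{c3.2} (from torsion-freeness of $\ker b_2$), and the normalisation condition in (a) is engineered precisely so that Lemma~\ref{c3.2} applies. Once (b) is available, (c) is formal and (d) reduces to the diagram chase above.
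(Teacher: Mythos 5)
Your proof of (a)--(c) and the kernel/cokernel part of (d) matches the paper's argument essentially step for step: normalise by dividing $G$ by the torsion of $\ker b$, lift using the $\Ext^1$-vanishing of Lemma~\ref{c3.2}, get uniqueness from the $\Hom$-vanishing forced by connectedness, derive (c) by applying (b) to $\mathrm{id}_\cF$, and in (d) take $G_3=(\ker\phi_G)^0$ and $G_4=\coker\phi_G$ and chase the four-term exact sequences.

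One caution about the final line of your (d): the claim that ``extensions are handled the same way since an extension of semi-abelian varieties is semi-abelian'' does not work. Given a short exact sequence $0\to\cF_1\to\cF\to\cF_2\to 0$ in $\Shv_\et(Sm(k))[1/p]$ with $\cF_1,\cF_2\in\Shv_1$, there is no a priori reason for $\cF$ to be built from an extension of $G_2$ by $G_1$; the extension happens in a much larger sheaf category and need not descend to the semi-abelian pieces. Closure of $\Shv_1$ under extensions is precisely the content of Theorem~\ref{text}, whose proof requires Proposition~\ref{ptorsion} (that $\Ext^2_\cA(\uG,L)$ is torsion) and a delicate devissage through the auxiliary sheaves $\cF^m$. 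Fortunately the present proposition only asserts that $\Shv_0$ and $\Shv_1$ are exact abelian subcategories (stable under kernels and cokernels, with exact inclusion), so this overreach does not affect what you needed to prove here; but keep in mind that thickness is a genuinely separate theorem.
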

\begin{proof} 
a) If $\ker b$ is not torsion-free, simply divide $G$ by the image of its torsion. 

b) We want to construct a commutative diagram
\begin{equation}\label{eq4.2}
\begin{CD}
0@>>> L_1@>a_1>> \uG_1@>{b_1}>> \cF_1@>{c_1}>> E_1@>>> 0\\
&&@V{\phi_L}VV@V{\phi_G}VV@V{\phi}VV@V{\phi_E}VV\\
0@>>> L_2@>a_2>> \uG_2@>{b_2}>> \cF_2@>{c_2}>> E_2@>>> 0
\end{CD}
\end{equation}
where $L_i=\ker b_i$ and $E_i=\coker b_i$. It is clear that
$c_2\phi b_1=0$: this proves the existence of $\phi_E$. We also get a
homomorphism of sheaves $\uG_1\to \uG_2/L_2$, which lifts to
$\phi_G:\uG_1\to \uG_2$ by Lemma \ref{c3.2}, hence $\phi_L$.

From the construction, it is clear that $\phi_E$ is uniquely determined
by $\phi$ and that $\phi_L$ is uniquely determined by $\phi_G$. It
remains to see that $\phi_G$ is unique. Let $\phi'_G$ be another choice.
Then $b_2(\phi_G-\phi'_G)=0$, hence $(\phi_G-\phi'_G)(\uG_1)\subseteq
L_2$, which implies that $\phi_G=\phi'_G$.

c) Follows from b).

d) The case of $\Shv_0$ is obvious. For $\Shv_1$, given a map $\phi$ as in
b), we want to show that
$\cF_3=\ker
\phi$ and $\cF_4=\coker\phi$ are $1$-motivic. Let $G_3=(\ker\phi_G)^0$ and
$G_4=\coker\phi_G$: we get induced maps $b_i:\uG_i\to \cF_i$ for $i=3,4$.
An easy diagram chase shows that $\ker b_i$ and $\coker b_i$ are both discrete. 
\end{proof}

Here is an extension of Proposition \ref{p3.1} which elucidates the
structure of $\Shv_1$ somewhat:

\begin{thm}\label{t3.2.4} a) Let $\SAb$\index{$\SAb$} be the category of
semi-abelian $k$-varie\-ties. Then the inclusion functor
\begin{align*}
\SAb&\to \Shv_1\\
G&\mapsto \underline{G}
\end{align*}
has a right adjoint/left inverse $\gamma$; the counit of this adjunction
is given by \eqref{2} (with $b$ normalised). The functor $\gamma$ is faithful and ``exact up to
isogenies". For a morphism
$\phi\in
\Shv_1$, $\gamma(\phi)=\phi_G$ is an isogeny if and only if $\ker\phi$ and
$\coker\phi\in \Shv_0$. In particular, $\gamma$ induces an
equivalence of categories 
\[\Shv_1/\Shv_0\iso\SAb\otimes\Q\] 
where $\SAb\otimes\Q$ is the category of semi-abelian varieties up to
isogenies.\\ b) The inclusion functor $\Shv_0\to \Shv_1$ has a left
adjoint/left inverse $\pi_0$; the unit of this adjunction is given by
$\coker b$ in \eqref{2}. The right exact functor
\[(\pi_0)_\Q:\Shv_1\to \Shv_0\otimes\Q\]
has one left derived functor $(\pi_1)_\Q$ given by $\ker b$ in \eqref{2}.
\end{thm}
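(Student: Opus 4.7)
The plan is to use the functoriality of normalised presentations (Proposition \ref{p3.1}) systematically: both $\gamma$ and $\pi_0$ will be read off directly from the canonical diagram \eqref{eq4.2}. On objects I set $\gamma(\cF) = G$ and $\pi_0(\cF) = E$, where $b:\uG \to \cF$ is the essentially unique normalised presentation with kernel $L$ and cokernel $E$; on morphisms I set $\gamma(\phi) = \phi_G$ and $\pi_0(\phi) = \phi_E$ from Proposition \ref{p3.1} b). Functoriality follows from the uniqueness clauses there. The counit $\uG \to \cF$ of the first adjunction is $b$ itself, the unit $\cF \to E$ of the second is the projection $c$ in \eqref{eq4.2}, and both are natural by construction.

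For the adjunction between $\SAb$ and $\Shv_1$, I would observe that for any semi-abelian variety $G$ the identity $\uG \to \uG$ is already a normalised presentation (kernel and cokernel both zero), so Proposition \ref{p3.1} b) lifts any morphism $\phi:\uG \to \cF$ uniquely to $G \to \gamma(\cF)$ in $\SAb$; conversely any $G \to \gamma(\cF)$ composed with $b$ recovers such a $\phi$. These assignments are mutually inverse, yielding the adjunction; left invertibility of $\gamma$ is immediate since $\gamma(\uG) = G$, and similarly for $\pi_0$. For the $\pi_0$ adjunction, the input is $\Hom_{\Shv_1}(\uG, L) = 0$ whenever $G$ is semi-abelian and $L$ discrete (by connectedness, $\uG$ admits only the zero morphism to any discrete sheaf, compare Lemma \ref{c3.2}); thus any $\cF \to L$ kills $\im b$ and factors uniquely through $E = \pi_0(\cF)$.

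The isogeny characterisation in (a) is extracted from \eqref{eq4.2} by a snake-lemma chase. If $\phi_G$ is an isogeny then $\ker\phi_G, \coker\phi_G$ are finite, and a chase exhibits $\ker\phi$ and $\coker\phi$ as successive extensions of discrete sheaves, hence discrete. Conversely, assuming $\ker\phi \in \Shv_0$, the identity component $(\ker\phi_G)^0 \subset G_1$ is semi-abelian, and $b_1$ embeds its quotient by $L_1 \cap (\ker\phi_G)^0$ into $\ker\phi$; a subsheaf of a discrete sheaf arising from a semi-abelian variety via a normalised map must vanish, forcing $(\ker\phi_G)^0 = 0$ and $\ker\phi_G$ finite. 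A dual argument treats $\coker\phi_G$. This identifies the morphisms inverted by the quotient $\Shv_1 \to \Shv_1/\Shv_0$ with the isogenies of $\SAb$, yielding the equivalence $\Shv_1/\Shv_0 \iso \SAb \otimes \Q$ after essential surjectivity (tautological from the definitions). The faithfulness of $\gamma$ is to be interpreted through this quotient: if $\phi_G = 0$ then $\phi \circ b_1 = 0$, so $\phi$ factors through the discrete sheaf $\pi_0(\cF_1)$ and hence vanishes in $\Shv_1/\Shv_0$.

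For part (b), the snake lemma applied to the stacked four-term exact sequences
\[
0 \to L_i \to \uG_i \to \cF_i \to E_i \to 0
\]
functorially attached to a short exact sequence $0 \to \cF_1 \to \cF_2 \to \cF_3 \to 0$ in $\Shv_1$ produces the six-term exact sequence
\[
0 \to L_1 \to L_2 \to L_3 \to E_1 \to E_2 \to E_3 \to 0
\]
in $\Shv_0 \otimes \Q$; here the exactness of the middle $\uG_i$ column up to isogeny is precisely the statement of (a), and is what forces the snake connecting maps to land in $\Shv_0$ after tensoring with $\Q$. This six-term sequence exhibits $(\pi_1)_\Q$ as the first left derived functor of $(\pi_0)_\Q$, the termination on both sides reflecting the length-one nature of the normalised presentation (lattices and semi-abelian sheaves being $(\pi_0)_\Q$-acyclic). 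The main obstacle I anticipate is the rigorous passage from a short exact sequence of $1$-motivic sheaves to a short exact sequence of semi-abelian varieties up to isogeny: this relies on the isogeny characterisation together with careful bookkeeping of how kernels and cokernels formed in $\Shv_1$ (Proposition \ref{p3.1} d)) interact with the normalised presentations produced by the functorial construction.
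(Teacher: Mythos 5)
Your proposal follows the same overall plan as the paper: extract $\gamma$ and $\pi_0$ from the normalised presentation of Proposition~\ref{p3.1}, obtain the adjunctions from its uniqueness clause, and reduce everything else to diagram chases in~\eqref{eq4.2}. The paper's own proof compresses almost all of this to the remark that ``the only delicate thing is the exactness of $\gamma$ up to isogenies'' and then sketches the chase; you instead spell out the adjunctions and the isogeny characterisation explicitly, which is a reasonable reconstruction. However, one step in the middle is not sound as written.

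The assertion ``a dual argument treats $\coker\phi_G$'' would fail if taken literally, because the two directions are not symmetric. Your $\ker$ argument works because $b_1$ restricted to $(\ker\phi_G)^0$ produces an \emph{injection} of $\underline{(\ker\phi_G)^0}/(L_1\cap(\ker\phi_G)^0)$ into the discrete sheaf $\ker\phi$, and then $\Hom(\uG,E)=0$ for $G$ connected and $E$ discrete (Proposition~\ref{pi0}~a)) forces this quotient to vanish; injectivity is essential. For cokernels there is no such injection: the natural map $\uG_2/\phi_G(\uG_1)\to\coker\phi$ has kernel $b_2^{-1}(\im\phi)/\phi_G(\uG_1)$, which is typically nonzero (it contains the image of $L_2$), so the ``subsheaf of a discrete sheaf arising from a semi-abelian variety'' mechanism does not apply. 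A genuine additional argument is required. The cleanest route is the paper's own: first prove half-exactness of $\gamma$ on short exact sequences by the chase listed in points (1)--(3) of the paper's proof (in particular point (2), showing $G\to G''$ is surjective using that $\Hom(\uG'',\coker(E'\to E))=0$ and that a discrete quotient of $\uG''$ must be trivial by connectedness), and then \emph{deduce} the isogeny characterisation by applying that exactness to the two short exact sequences $0\to\ker\phi\to\cF_1\to\im\phi\to 0$ and $0\to\im\phi\to\cF_2\to\coker\phi\to 0$, since $\gamma$ kills $\Shv_0$. This ordering also makes part (b) cleaner: once one knows $\uG_1^\Q\to\uG_2^\Q\to\uG_3^\Q$ is short exact, the stacked complexes $[\uG_i\to\cF_i]$ form a short exact sequence of two-term complexes in $\Shv_1\otimes\Q$ whose cohomology long exact sequence is precisely the desired six-term sequence $0\to L_1^\Q\to L_2^\Q\to L_3^\Q\to E_1^\Q\to E_2^\Q\to E_3^\Q\to 0$ (a literal application of ``snake lemma to four-term sequences'' is not defined, but this reformulation makes your intended argument precise).
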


\begin{proof} a) The only delicate thing is the exactness of $\gamma$ up to isogenies. This
means that, given a short exact sequence $0\to \cF'\to \cF\to \cF''\to 0$ of $1$-motivic
sheaves, the sequence
\[0\to \gamma(\cF')\to \gamma(\cF)\to\gamma(\cF'')\to 0\]
is half exact and the middle homology is finite. This follows from a chase in the diagram
\[\begin{CD}
0@>>> L'@>a'>> \uG'@>{b'}>> \cF'@>{c'}>> E'@>>> 0\\
&&@VVV@VVV@VVV@VVV\\
0@>>> L@>a>> \uG@>{b}>> \cF@>{c'}>> E@>>> 0\\
&&@VVV@VVV@VVV@VVV\\
0@>>> L''@>a''>> \uG''@>{b''}>> \cF''@>{c''}>> E''@>>> 0
\end{CD}\]
of which we summarize the main points: (1) $G'\to G$ is injective because its kernel is the
same as $\ker(L'\to L)$. (2) $G\to G''$ is surjective because (i) $\Hom(\uG''\to \coker(E'\to
E))=0$ and (ii) if $L''\to \coker(\uG\to \uG'')$ is onto, then this cokernel is $0$. (3) The
middle homology is finite because the image of $\ker(\uG'\to \uG)\to E'$ must be finite.

In b), the existence and characterisation of $(\pi_1)_\Q$ follows from the exactness of $\gamma$
in a).
\end{proof}

\begin{remark} One easily sees that $\pi_1$ does not exist integrally. Rather, it exists as a
functor to the category of pro-objects of $\Shv_0$. (Actually to a finer subcategory: compare
\cite{proalg}.)
\end{remark}

\begin{comment}
\begin{cor}\label{c4.2} Let $l$ be a prime number different from $\car
k$; for $\cF\in \Shv_1$, let $T_\ell(\cF)$ denote the Tate module of $\cF$
and let $V_\ell(\cF)=\Q_\ell\otimes_{\Z_\ell} T_\ell(\cF)$. Let $\phi:\cF_1\to\cF_2$
be a morphism of $1$-motivic sheaves. Then $V_\ell(\phi)$ is an isomorphism
if and only if $\ker\phi$ and $\coker\phi\in \Shv_0$.
\end{cor}

\begin{proof} Indeed, $V_\ell$ is exact and vanishes on $\Shv_0$, hence
factors through $\SAb\otimes\Q$ by Theorem \ref{t3.2.4} a). The
resulting functor is clearly the usual $V_\ell$ on semi-abelian varieties,
which is well-known to be conservative on $\SAb\otimes\Q$.
\end{proof}
\end{comment}

\subsection{Extensions of $1$-motivic sheaves} The aim of this subsection is to prove:

\begin{thm}\label{text}
The categories $\Shv_0$ and $\Shv_1$ are thick in the abel\-ian category $\Shv_\et(Sm(k))[1/p]$.
\end{thm}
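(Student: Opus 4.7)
The plan is to prove closure under extensions for each category, as Proposition \ref{p3.1}(d) already yields closure under subobjects, quotients and direct summands. For $\Shv_0$, this is the standard fact that an extension of locally constant $\Z[1/p]$-constructible \'etale sheaves on $Sm(k)$ is of the same type: finite generation of stalks is inherited from the outer terms, and local constancy survives since one may pass to an \'etale cover on which both outer terms are constant and then further to one trivialising the corresponding class in $H^1_\et$.

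For $\Shv_1$, given $0\to\cF'\to\cF\to\cF''\to 0$ with $\cF',\cF''\in\Shv_1$ and normalised presentations $\uG'\by{b'}\cF'$, $\uG''\by{b''}\cF''$, the strategy is to perform pullback and pushout reductions that replace $\cF',\cF''$ by their semi-abelian parts at the cost of discrete subquotients. Pulling $\cF\to\cF''$ back along $\im b''\hookrightarrow\cF''$ and $\uG''\twoheadrightarrow\im b''$ produces $\hat\cF$ with $0\to\cF'\to\hat\cF\to\uG''\to 0$, differing from $\cF$ only by the discrete subquotients $L''$ and $E''$; pushing out further along $\cF'\twoheadrightarrow E'$ and factoring through $\im b'$ reduces, modulo the discrete subquotients $L'$ and $E'$, to three fundamental cases: (i) extensions of discrete by semi-abelian, (ii) extensions of semi-abelian by discrete, and (iii) extensions of semi-abelian by semi-abelian. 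Each reduction is licit because an extension of a $1$-motivic sheaf by a discrete sheaf (or vice versa) is manifestly $1$-motivic, by composing with the normalised map and invoking the $\Shv_0$ case for the cokernel.

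Case (i) is immediate. Case (ii) is handled by decomposing $L=L_\tors\oplus(\text{torsion-free})$ up to extension, using Lemma \ref{c3.2} to split off the torsion-free part, and exhibiting the torsion case (where $nL=0$ after inverting $p$) as the pushout of $0\to\uG''[n]\to\uG''\by{n}\uG''\to 0$ along a morphism $\uG''[n]\to L$, which tautologically provides a normalised map from $\uG''$ with finite kernel and discrete cokernel. Case (iii) is the main obstacle: one needs to show that any sheaf extension of $\uG''$ by $\uG'$ is represented by a semi-abelian variety. Decomposing $\uG'$ into its toric and abelian parts, this can be attacked via the torsor description of Proposition \ref{p1} and Example \ref{ex3.1.4} together with Lemma \ref{brst} (Weil-Barsotti) and the isogeny reduction to Jacobians of \S\ref{s2.4.1}. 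Since Proposition \ref{ptorsion} only covers discrete coefficients, matching $\Ext^1_{\Shv_\et}(\uG'',\uG')$ with $\Ext^1$ in commutative algebraic groups here requires a direct torsor-theoretic argument, and this is where I expect the proof to be the most delicate.
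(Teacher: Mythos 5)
Your proposal has a real gap precisely at the point you describe as a ``licit'' reduction, and it somewhat misplaces where the difficulty in the argument actually lives.

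Your pullback along $\im b''\hookrightarrow\cF''$ and $\uG''\twoheadrightarrow\im b''$ to reduce to $\cF''=\uG''$ is fine (it is essentially the last step of the paper's proof, which uses $b_2^*$). Your case (i) is immediate, and your case (ii) — splitting off the torsion-free part of $L$ via Lemma \ref{c3.2} and realising the torsion case as a pushout of $\uG''\by{n}\uG''$ along $\uG''[n]\to L$ — is correct and matches the spirit of the paper's special case (2). The trouble is the next reduction: you push out $\hat\cF$ along $\cF'\twoheadrightarrow E'$, correctly obtaining that $\hat\cF$ is an extension of something (a case-(ii) extension) by $\im b'=\uG'/L'$, and then claim to discard $L'$ as a ``discrete subquotient'' and land in case (iii). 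But $L'$ sits \emph{inside} $\uG'$: to replace $\uG'/L'$ by $\uG'$ you must \emph{lift} an extension of $\uG''$ by $\uG'/L'$ to an extension by $\uG'$, and the obstruction to that lives in $\Ext^2_\cA(\uG'',L')$. The entire content of the paper's proof at this stage is that this $\Ext^2$ is \emph{torsion} (Proposition \ref{ptorsion}, which rests on Breen's comparison of spectral sequences), so that after multiplying by a suitable $m$ the obstruction dies; the auxiliary sheaf $\cF^m$ in the paper's diagram \eqref{eq3.2} is exactly the device that implements this. You cite Proposition \ref{ptorsion} only as a side remark about case (iii) (``only covers discrete coefficients''), but you never use it where it is actually needed, and your assertion that ``an extension of a $1$-motivic sheaf by a discrete sheaf... is manifestly $1$-motivic'' is simply not manifest in the direction $0\to D\to\cF\to\cG\to 0$: composing a normalised $\uG\to\cG$ through $\cF$ is impossible (the arrow goes the wrong way), pulling back to $\uG$ leaves you in case (ii) as you say, but the step that then re-inserts $L'$ underneath $\uG'$ is where the argument must do real work and where yours says nothing.

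A secondary mislocation: you flag case (iii) — extensions of $\uG''$ by $\uG'$ as sheaves — as ``the main obstacle.'' The paper treats this as a known classical fact (it is essentially Weil--Barsotti together with Serre's results on extensions of algebraic groups) and lists it without proof as special case (1). It is reasonable to want a reference or a proof here, but it is not where the paper spends its effort; the effort is entirely on the $\uG'/L'$ step you wave through.
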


\begin{proof} For simplicity, let us write $\cA\df \Shv_\et(Sm(k))[1/p]$ as in  Proposition
\ref{ptorsion}. The statement is obvious for $\Shv_0$. Let us now show that $\Shv_1$ is closed
under extensions in $\cA$. Let
$\cF_1,\cF_2$ be as in
\eqref{eq4.2} (no map given between them). We have to show that the injection
\begin{equation}\label{eq3.1}
\Ext^1_{\Shv_1}(\cF_2,\cF_1)\into \Ext^1_\cA(\cF_2,\cF_1)
\end{equation}
is surjective. This is certainly so in the following special cases:
\begin{enumerate}
\item $\cF_1$ and $\cF_2$ are semi-abelian varieties;
\item $\cF_2$ is semi-abelian and $\cF_1$ is discrete (see Example \ref{ex3.1.4}).
\end{enumerate}

For $m>1$, consider 
\[\cF^m=\coker(L_1\vlongby{(a_1,m)}\uG_1\oplus L_1)\]
so that we have two exact sequences
\[\begin{CD}
0@>>> \uG_1@>(1_{G_1},0)>> \cF^m@>>> L_1/m@>>> 0\\
0@>>> L_1@>(a_1,0)>> \cF^m@>>> \uG_1/L_1\oplus L_1/m@>>> 0.
\end{CD}\] 

The first one shows that \eqref{eq3.1} is surjective for $(\cF_2,\cF_1)=(\uG_2,\cF^m)$. Let us now consider the commutative diagram with exact rows associated to the second one, for an unspecified $m$:
\begin{equation}
\begin{CD}
\Ext^1_{\Shv_1}(\uG_2,\cF^m)&\to& \Ext^1_{\Shv_1}(\uG_2,\uG_1/L_1\oplus L_1/m)&\longby{}& \Ext^2_{\Shv_1}(\uG_2,L_1)\\
@V{\wr}VV @VVV @VVV\label{eq3.2}\\
\Ext^1_\cA(\uG_2,\cF^m)&\to& \Ext^1_\cA(\uG_2,\uG_1/L_1\oplus L_1/m)&\longby{\delta^m}& \Ext^2_\cA(\uG_2,L_1).
\end{CD}
\end{equation}

Note that the composition 
\[\Ext^1_\cA(\uG_2,\uG_1/L_1)\to \Ext^1_\cA(\uG_2,\uG_1/L_1\oplus L_1/m)\longby{\delta^m} \Ext^2_\cA(\uG_2,L_1)\]
coincides with the boundary map $\delta$ associated to the exact sequence
\[0\to L_1\to \uG_1\to \uG_1/L_1\to 0.\]

Let $e\in \Ext^1_\cA(\uG_2,\uG_1/L_1)$. By Proposition \ref{ptorsion}, $f=\delta(e)$ is torsion. Choose now $m$ such that $mf=0$. Then there exists $e'\in \Ext^1_\cA(\uG_2,L_1/m)$ which bounds to $f$ via the Ext exact sequence associated to the exact sequence of sheaves
\[0\to L_1\by{m} L_1\to L_1/m\to 0.\]

Since $\delta^m(e,-e')=0$, \eqref{eq3.2} shows that $(e,-e')$ comes from the left, which shows that \eqref{eq3.1} is surjective for $(\cF_2,\cF_1)=(\uG_2,\uG_1/L_1)$.

By Lemma \ref{c3.2}, in the commutative diagram
\[\begin{CD}
\Ext^1_{\Shv_1}(\uG_2,\uG_1/L_1)@>>> \Ext^1_{\Shv_1}(\uG_2,\cF_1)\\
@V{\wr}VV @VVV\\
\Ext^1_\cA(\uG_2,\uG_1/L_1)@>>> \Ext^1_\cA(\uG_2,\cF_1)
\end{CD}\]
the horizontal maps are isomorphisms. Hence  \eqref{eq3.1} is surjective for $\cF_2=\uG_2$ and any $\cF_1$. 

To conclude, let $\cF$ be an extension of $\cF_2$ by $\cF_1$ in $\cA$. By the above, $\cF'\df b_2^*\cF$ is $1$-motivic as an extension of $\uG_2$ by $\cF_1$, and we have an exact sequence
\[0\to L_2\to \cF'\to\cF\to E_2\to 0.\]

Let $b':\uG\to \cF'$ be a normalised map (in the sense of Proposition \ref{p3.1}) from a
semi-abelian variety to $\cF'$ and let $b:\uG\to \cF$ be its composite with the above map. It
is then an easy exercise to check that $\ker b$ and $\coker b$ are both discrete. Hence $\cF$
is $1$-motivic.
\end{proof}

\begin{remark}\label{fppf} We may similarly define $1$-motivic sheaves for the fppf
topology over $\Spec k$; as one easily checks, all the above results hold equally
well in this context. This is also the case for \S \ref{3.7} below. 

In fact, let $\Shv_1^{\rm fppf}$ be the $\Z[1/p]$-linear category of fppf
$1$-motivic sheaves and $\pi:(\Spec k)_{\rm fppf}\to Sm(k)_\et$ be the projection functor. Then the
functors $\pi^*$ and $\pi_*$ induce \emph{quasi-inverse equivalences of categories} between
$\Shv_1$ and $\Shv_1^{\rm fppf}$. Indeed it suffices to check that $\pi_*\pi^*$ is naturally
isomorphic to the identity on $\Shv_1$: if $\cF\in \Shv_1$ and we consider its normalised
representation, then in the commutative diagram
\[\begin{CD}
0@>>> L@>>> \uG@>>> \cF@>>> E@>>> 0\\
&& @VVV @VVV @VVV @VVV\\
0@>>> \pi_*\pi^*L@>>> \pi_*\pi^*\uG@>>> \pi_*\pi^*\cF@>>> \pi_*\pi^*E@>>> 0
\end{CD}\]
the first, second and fourth vertical maps are isomorphisms and the lower
sequence is still exact: both facts follow from \cite[p. 14, Th. III.3.9]{MI}.

In particular the restriction of $\pi_*$ to $\Shv_1^{\rm fppf}$ is exact. Actually,
$(R^q\pi_*)_{|\Shv_1^{\rm fppf}}=0$ for $q>0$ (use same reference).  
\end{remark}

\subsection{A basic example} 

\begin{propose}\label{p3.3.1} Let $X\in Sm(k)$. Then the sheaf $\Pic_{X/k}$ is
$1$-motivic.\index{$\Pic_{X/k}$, $\NS_{X/k}$}
\end{propose}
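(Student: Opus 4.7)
The plan is to produce a canonical short exact sequence of étale sheaves
$$0 \to \underline{\Pic^0_{X/k}} \to \Pic_{X/k} \to \NS_{X/k} \to 0$$
in which $\Pic^0_{X/k}$ is a semi-abelian $k$-variety and $\NS_{X/k}$ is discrete in the sense of Definition \ref{d1.1.1}. Granted this, the morphism $\underline{\Pic^0_{X/k}} \to \Pic_{X/k}$ has trivial kernel and discrete cokernel, hence is a normalised presentation of $\Pic_{X/k}$ as a $1$-motivic sheaf in the sense of Proposition \ref{p3.1}.

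First I would reduce to the case of $X$ geometrically connected, using Galois descent and the fact that $\Shv_1$ is stable under finite direct sums (Theorem \ref{text}). Then I would choose a smooth compactification $j : X \hookrightarrow \bar X$ with complement a strict normal crossings divisor $D = \sum D_i$ (resolution of singularities in characteristic zero; in positive characteristic one reduces to this via a smooth alteration, which is harmless after inverting $p$). The localisation sequence for divisors provides a surjection of the permutation sheaf $\bigoplus_i \Z[D_i]$ on the boundary components onto $\ker(\Pic_{\bar X/k} \to \Pic_{X/k})$. Intersecting with $\Pic^0_{\bar X/k}$ and applying the snake lemma yields $\Pic^0_{X/k}$ as an extension
$$0 \to T \to \Pic^0_{X/k} \to A \to 0,$$
where $A$ is a quotient of the abelian variety $\Pic^0_{\bar X/k}$ by a finite subgroup (hence an abelian variety) and $T$ is the torus whose character lattice is generated by the classes of boundary components algebraically trivial on $\bar X$.

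Second, the cokernel $\NS_{X/k}$ appears as a subquotient of the finitely generated discrete sheaves $\NS_{\bar X/k}$ (classical N\'eron--Severi theorem) and of the permutation sheaf $\bigoplus_i \Z[D_i]$; hence it is itself finitely generated and carries a continuous (finite) Galois action, placing it in ${}^t\cM_0$ as required by Definition \ref{d1.1.1}.

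The main obstacle will be keeping track of the interplay between the toric and abelian contributions when passing from $\bar X$ to $X$. The cleanest bookkeeping is provided by the Deligne $1$-motive attached to the pair $(\bar X, D)$, namely $[\Div^0_D(\bar X) \to \Pic^0_{\bar X/k}]$, with lattice the divisors on $\bar X$ supported on $D$ and algebraically equivalent to zero on $\bar X$; its semi-abelian part is precisely $\Pic^0_{X/k}$. A secondary issue is the existence of a smooth compactification in positive characteristic; this is circumvented by passing to a smooth alteration, whose effect on $\Pic$ is controlled by a trace map that becomes invertible after inverting $p$.
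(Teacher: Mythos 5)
Your characteristic-zero argument is essentially the classical construction from \cite{BSAP}: take a smooth compactification $\bar X$ with SNC boundary $D$, and use the localisation sequence for $\Pic$ to exhibit $\Pic_{X/k}$ with discrete cokernel and representable connected part. That part is sound, though one small structural remark: for smooth $X$, the semi-abelian part of $\Pic_{X/k}$ is actually an \emph{abelian} variety (it is isogenous to the dual $A^*_{X/k}$ of the abelian part of Serre's Albanese, see Lemma \ref{l6.1}); the torus you describe, with character lattice $\Div^0_D(\bar X)$, lives in the Albanese $\cA_{X/k}^0$, not in $\Pic^0_{X/k}$. This does not affect $1$-motivicity — abelian is a special case of semi-abelian — but the bookkeeping via the Deligne $1$-motive $[\Div^0_D(\bar X)\to \Pic^0_{\bar X/k}]$ you invoke identifies its Cartier dual with the Albanese side, not the Picard side.

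The genuine gap is in positive characteristic, and it is not a secondary issue. De Jong's theorem produces a proper generically finite \emph{alteration}, not a finite morphism, and there is no trace map $\Pic_{Y/k}\to\Pic_{X/k}$ for a generically finite $Y\to X$ as a map of \'etale sheaves. Even where you can restrict to an open dense $U\subset X$ over which the alteration $p:\tilde U\to U$ becomes finite \'etale (which is what de Jong actually supplies), the trace satisfies $p_*p^* = \deg p$ and $\deg p$ is in general \emph{not} a power of $p$, so it is not invertible in $\Z[1/p]$; you therefore cannot realise $\Pic_{U/k}$ as a direct summand of $\Pic_{\tilde U/k}$. The paper instead runs the \v Cech spectral sequence of the finite \'etale cover $p:\tilde U\to U$: it presents $\Pic_{U/k}$ as an extension whose outer terms are cohomology sheaves of complexes of objects of the abelian category $\Shv_1$, and then uses Proposition \ref{p3.1} d) and the thickness of $\Shv_1$ in $\Shv_\et(Sm(k))[1/p]$ (Theorem \ref{text}) to conclude $\Pic_{U/k}\in\Shv_1$, and from there $\Pic_{X/k}\in\Shv_1$. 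So the two proofs do not merely differ in presentation: you are trying to construct the normalised presentation of $\Pic_{X/k}$ directly, which requires data (a smooth SNC compactification) unavailable in characteristic $p$, whereas the paper's argument replaces this by a descent step that works uniformly. If you want to repair your argument, replace the trace-map heuristic by the \v Cech descent along the finite \'etale piece of the alteration and appeal to the closure properties of $\Shv_1$.
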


\begin{proof} Suppose first that $X$ is smooth projective. Then $\Pic_{X/k}$ is an extension
of the discrete sheaf $\NS_{X/k}$ (N\'eron-Severi) by the abelian variety $\Pic^0_{X/k}$ (Picard variety). 

In general, we apply de Jong's theorem \cite[Th. 4.1]{DJ}: there exists a
diagram
\[\begin{CD}
\tilde U @>>> \bar X\\
@V{p}VV\\
   U  @>>>   X
\end{CD}\]
where the horizontal maps are open immersions, $\bar X$ is smooth
projective and the vertical map is finite \'etale. Then we get a
corresponding diagram of Pics
\[\begin{CD}
\Pic_{\tilde U/k} @<<< \Pic_{\bar X/k}\\
@A{p^*}AA\\
\Pic_{U/k}  @<<<   \Pic_{X/k}.
\end{CD}\]

The horizontal morphisms are epimorphisms and their kernels are
lattices. This already shows by Proposition \ref{p3.1} d) that $\Pic_{\tilde U/k}\in \Shv_1$. 

Consider the \v Cech spectral sequence associated to the \'etale cover $p$. It yields an exact sequence
\begin{multline*}
0\to \check H^1(p,\underline{H}^0_\et(\tilde U,\G_m))\to \Pic_{U/k}\to \check H^0(p,\Pic_{\tilde U/k})\\
\to \check H^2(p,\underline{H}^0_\et(\tilde U,\G_m)).
\end{multline*}

All the $\check H^i$  are cohomology sheaves of complexes of objects of the abelian category
$\Shv_1$, hence belong to $\Shv_1$; it then follows from Theorem \ref{text} that  $\Pic_{U/k}\in
\Shv_1$, as well as $\Pic_{X/k}$.
\end{proof}

\subsection{Application: the N\'eron-Severi group of a smooth scheme}

\begin{defn}\label{dNS} Let $X\in Sm(k)$.\\
a) Suppose that $k$ is algebraically closed. Then we write $\NS(X)$ for the group of cycles of
codimension $1$ on $X$ modulo algebraic equivalence.\\ b) In general, we define $\NS_{X/k}$ as
the \'etale sheaf on $Sm(k)$ given by
\[\NS_{X/k}(U)=\NS(X\times_k \overline{k(U)})^G\]
where $U\in Sm(k)$ is irreducible, $\overline{k(U)}$ is a separable closure of $k(U)$ and
$G=Gal(\overline{k(U)}/k(U))$.
\end{defn}

\begin{sloppypar}
\begin{propose} \label{belong} The natural map
$e:\underline{\Pic}_{X/k}\to \NS_{X/k}$ identifies $\NS_{X/k}$ with
$\pi_0(\underline{\Pic}_{X/k})$ (\cf Theorem \ref{t3.2.4} b)). In particular, $\NS_{X/k}\in
\Shv_0$. 
\end{propose}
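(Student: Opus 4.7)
The starting point is Proposition \ref{p3.3.1}, which gives $\underline{\Pic}_{X/k} \in \Shv_1$; Theorem \ref{t3.2.4}(b) then presents $\pi_0(\underline{\Pic}_{X/k})$ as the cokernel of a normalised map $\uG \to \underline{\Pic}_{X/k}$ from a semi-abelian variety, and characterises it by its universal property with respect to maps to discrete sheaves. The plan is therefore to show first that $\NS_{X/k}$ is discrete, and then that the factorisation $\bar e: \pi_0(\underline{\Pic}_{X/k}) \to \NS_{X/k}$ of $e$ provided by this universal property is an isomorphism.

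For discreteness I would argue in two stages. If $X$ is smooth projective, $\Pic_{X/k}$ is representable by a scheme whose identity component is the abelian variety $\Pic^0_{X/k}$, and its component group scheme is precisely $\NS_{X/k}$, locally constant $\Z$-constructible by the classical N\'eron--Severi theorem. For $X$ arbitrary smooth, I would use the de Jong diagram of Proposition \ref{p3.3.1}: the surjection $\NS(\bar X_{\bar k}) \onto \NS(\tilde U_{\bar k})$ modulo boundary divisor classes has finitely generated source; transfer along the finite \'etale cover $\tilde U \to U$, using $p_* \circ p^* = \deg p$, controls $\NS(U_{\bar k})$ up to $\deg(p)$-torsion; and $\NS(X_{\bar k})$ is then obtained from $\NS(U_{\bar k})$ by adding the finitely many classes of irreducible components of $X \setminus U$, hence is finitely generated. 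The $\operatorname{Gal}(\bar k/k)$-action factors through a finite quotient by specialisation, so $\NS_{X/k} \in \Shv_0$.

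To check that $\bar e$ is an isomorphism it suffices, by passage to geometric stalks, to assume $k = \bar k$. There, for any smooth $X$, $\Pic^0_{X/k}$ is represented by a semi-abelian variety: Grothendieck--Chevalley gives an abelian variety when $X$ is projective, and in general one finds an open immersion $X \hookrightarrow \bar X$ into a smooth projective variety (after a de Jong alteration if necessary) and realises $\Pic^0_{X/k}$ as an extension of an isogeny factor of $\Pic^0_{\bar X/k}$ by a torus arising from the boundary. The quotient $\underline{\Pic}_{X/k}/\underline{\Pic^0}_{X/k}$ is then $\NS(X)$ by the very definition of algebraic equivalence, and it is simultaneously $\pi_0(\underline{\Pic}_{X/k})$ since $\Pic^0_{X/k}$ is semi-abelian; this delivers the required isomorphism.

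The main obstacle is the structural description of $\Pic^0_{X/k}$ in the non-projective case, particularly in positive characteristic where a smooth compactification of $X$ need not exist a priori; the de Jong alteration built into the proof of Proposition \ref{p3.3.1}, together with the \v Cech spectral sequence used there, is tailored precisely to sidestep this, at the cost of inverting $p$, which is our standing hypothesis.
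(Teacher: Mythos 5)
Your strategy diverges from the paper's: you first try to prove directly that $\NS_{X/k}$ is discrete (via de Jong and transfer), and then that the factored map $\bar e$ is an isomorphism by appealing to representability of $\Pic^0_{X/k}$. The paper instead observes that $\NS_{X/k}$ is a Galois module sheaf in the sense of Proposition \ref{pi0} (because $\NS$ is invariant under extension of algebraically closed base field), so \ref{pi0} b) already produces the factorisation $\bar e: \pi_0(\underline{\Pic}_{X/k})\to \NS_{X/k}$ \emph{without} knowing discreteness; it then kills the kernel of $\bar e$ by noting it is a quotient of the divisible group $\Pic^0(X_{\bar k})$ (Bloch--Ogus, Lemma~7.10) sitting inside the discrete sheaf $\pi_0(\underline{\Pic}_{X/k})$, hence zero. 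Discreteness of $\NS_{X/k}$ then \emph{follows}, as the paper records in Remark \ref{r6.1}. This is a leaner route: you never need a separate finite-generation argument for $\NS$.

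There is also a genuine error in your second step. You assert that for a smooth open $X$, ``$\Pic^0_{X/k}$ is represented by a semi-abelian variety \dots an extension of an isogeny factor of $\Pic^0_{\bar X/k}$ by a torus arising from the boundary.'' This is false on two counts. First, you have dualised incorrectly: it is the \emph{generalised Albanese} $\cA_{X/k}^0$ that acquires a torus part from the boundary; $\Pic^0(X)$ is instead a \emph{quotient} of $\Pic^0(\bar X)$ (since $\Pic(\bar X)\onto\Pic(X)$ with kernel generated by boundary divisor classes). Second, that quotient is by a finitely generated subgroup of $\Pic^0(\bar X)(\bar k)$, typically not a closed algebraic subgroup, so $\Pic^0_{X/k}=\ker e$ need not be representable by a group scheme at all — it is just an abstract sheaf. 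What survives and is actually needed is \emph{divisibility} of $\Pic^0(X_{\bar k})$ (as a quotient of the divisible $\Pic^0(\bar X_{\bar k})$, or by the Bloch--Ogus reference), together with the fact that any divisible subquotient of a discrete sheaf vanishes. Your closing appeal to ``the very definition of algebraic equivalence'' does not by itself identify $\Pic^0_{X/k}$ with the image of the normalising semi-abelian variety $G$ of Theorem~\ref{t3.2.4}; that identification precisely requires the divisibility argument you omit. Once you replace the representability claim by divisibility, your argument closes.
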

\end{sloppypar}

\begin{proof} It is well-known that cycles modulo algebraic equivalence are
invariant by extension of algebraically closed base field. By Proposition \ref{pi0} b), this
implies that $e$ induces a map $\bar e:\pi_0(\underline{\Pic}_{X/k})\to \NS_{X/k}$, which is
evidently epi. But let $\underline{\Pic}^0_{X/k}=\ker e$: by \cite[Lemma 7.10]{bo},
$\Pic^0(X_{\bar k})=\underline{\Pic}^0_{X/k}(\bar k)$ is divisible, which forces $\bar e$ to be
an isomorphism.\index{$\Pic_{X/k}$, $\NS_{X/k}$}
\end{proof}

\begin{remark}\label{r6.1} In particular, $\NS(X)$ is finitely generated if $k$ is
algebraically closed: this was proven in \cite[Th. 3]{pic} in a quite different way.
\end{remark}

\subsection{Technical results on $1$-motivic sheaves}

\begin{propose}\label{p4.2} The functor
\begin{align*}
ev: \Shv_1&\to Ab\\
\cF&\mapsto \cF(\bar k)
\end{align*}
to the category $Ab$ of abelian groups is faithful, hence (\cf \cite[Ch. 1, p. 44, prop. 1]{bbki})
``faithfully exact": a se\-quen\-ce $\cE$ is exact if and only if $ev(\cE)$ is exact.
\end{propose}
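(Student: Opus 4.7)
The plan is to reduce the faithfulness statement to the detection of zero objects and then use the normalised presentation provided by Proposition \ref{p3.1} together with a divisibility argument for semi-abelian varieties over an algebraic closure.

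First I would observe that $ev$ is exact: taking the stalk at a geometric point is an exact functor on étale sheaves, and this property survives passage to the $\Z[1/p]$-linearisation. For an exact additive functor between abelian categories, faithfulness is equivalent to the condition that $ev(\cF)=0$ implies $\cF=0$; combined with exactness, this at once gives the ``faithfully exact'' conclusion quoted from Bourbaki. So the whole proposition reduces to showing: if $\cF\in \Shv_1$ and $\cF(\bar k)=0$, then $\cF=0$.

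To exploit the hypothesis, I would write down a normalised presentation
\[
0 \to L \to \uG \to \cF \to E \to 0
\]
with $L$ a torsion-free discrete sheaf, $G$ a semi-abelian variety and $E$ discrete (Proposition \ref{p3.1} a)). Taking stalks at $\bar k$ preserves the exactness, so if $\cF(\bar k)=0$ we get $E(\bar k)=0$ and a surjection $L(\bar k)\twoheadrightarrow G(\bar k)$. Since $E$ is discrete, its vanishing at $\bar k$ forces $E=0$; so everything hinges on showing that the surjection $L(\bar k)\twoheadrightarrow G(\bar k)$ forces $G=0$, after which $L=0$ is immediate (as $L\hookrightarrow \uG$ is injective with $G=0$) and hence $\cF=0$.

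The key obstacle, and the one substantive step, is therefore the following structural observation: $L(\bar k)$ is a finitely generated abelian group, whereas $G(\bar k)$ is divisible by every prime. For a torus $T$, $T(\bar k)\cong (\bar k^{*})^{r}$ is divisible (including by $p=\car k$, since $\bar k$ is algebraically closed, so $T\mapsto T^{p}-a$ always has a root); for an abelian variety $A$, multiplication by $n$ is an isogeny, surjective on $\bar k$-points for every $n$; and extensions of divisible groups by divisible groups are divisible. Since a finitely generated divisible abelian group is zero, the surjection $L(\bar k)\twoheadrightarrow G(\bar k)$ forces $G(\bar k)=0$, hence $G=0$. This finishes the argument and, as outlined above, yields the faithful exactness of $ev$.
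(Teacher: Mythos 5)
Your proof is correct, and it takes a genuinely different route from the paper's. The paper argues directly on a morphism $\phi\colon\cF_1\to\cF_2$ with $ev(\phi)=0$: it invokes Proposition~\ref{p3.1}~b) to get the induced maps $(\phi_L,\phi_G,\phi_E)$ on the two normalised presentations, observes that $\phi_G(\uG_1(\bar k))\subseteq L_2(\bar k)$, and kills $\phi_G$, then $\phi_E$, then $\phi$ itself, one at a time. You instead use exactness of $ev$ abstractly to reduce faithfulness to the statement that $ev$ reflects zero objects (via $\mathrm{im}(\phi)=0\Rightarrow\phi=0$), and then you only need the normalised presentation of a single sheaf, applying $ev$ to the four-term sequence and squeezing $G(\bar k)$ between a finitely generated group and a divisible one. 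Both arguments hinge on the same dichotomy (divisibility of semi-abelian points over $\bar k$ versus finite generation of discrete sheaves), but yours avoids the functoriality of the normalised presentation at the cost of the extra (standard) categorical lemma. One small precision worth keeping in mind: the reduction ``$ev$ exact and reflects zero $\Rightarrow$ $ev$ faithful'' takes place in the $\Z[1/p]$-linearised category $\Shv_1$, so a morphism there is a fraction of a genuine sheaf map; this is harmless since the image in $\Shv_1$ of such a fraction is the image of the underlying sheaf map, but it is good to be aware that the categorical step is being applied in the linearised setting.
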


\begin{proof} The exactness of $ev$ is clear. For faithfulness, let
$\phi:\cF_1\to \cF_2$ be such that $ev(\phi)=0$. In $ev$\eqref{eq4.2}, we
have $\phi_G(\uG_1(\bar k))\subseteq L_2(\bar k)$; since the former group
is divisible and the latter is finitely generated, $ev(\phi_G)=0$. Hence
$\phi_G=0$. On the other hand, $ev(\phi_E)=0$, hence $\phi_E=0$. This
implies that $\phi$ is of the form $\psi c_1$ for $\psi:E_1\to \cF_2$.
But $ev(\psi)=0$, which implies that $\psi=0$.
\end{proof}

The following strengthens Theorem \ref{t3.2.4} b):

\begin{propose} \label{pi0} a) Let $G$ be a commutative algebraic
$k$-group and let $E$ be a $Gal(\bar k/k)$-module, viewed as an \'etale
sheaf over $Sm(k)$ ($E$ is not supposed to be constructible). Then $\Hom(\uG,E)=0$.\\ b) Let
$\cF\in
\Shv_1$ and
$E$ as in a). Then any morphism $\cF\to E$ factors canonically through
$\pi_0(\cF)$.
\end{propose}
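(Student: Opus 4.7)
The plan is to reduce (b) to (a) and then prove (a) by a Yoneda-plus-additivity argument. First I would note that (a) implicitly requires $G$ to be connected (and hence geometrically connected, since a connected commutative algebraic $k$-group is automatically geometrically connected), since for truly disconnected $G$ the quotient $\underline{\pi_0(G)}$ may carry nonzero maps to a Galois module; this hypothesis is automatic in the only case used in (b), where $G$ is semi-abelian.

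For (a) with $G$ smooth and geometrically connected, I would apply Yoneda to identify $\Hom(\uG,E)$ with $E(G)$ via $\phi \mapsto e := \phi_G(\mathrm{id}_G)$. Next I would use that $E$ is pulled back from the small \'etale site of $\Spec k$: since $G$ is geometrically connected, the structural morphism $\pi_G \colon G \to \Spec k$ induces an isomorphism $E(k) \iso E(G)$, so $e = \pi_G^*(e_0)$ for a unique $e_0 \in E(k)$. Consequently, for every $X \in Sm(k)$ and every $g \in G(X)$, functoriality would give $\phi_X(g) = g^*(e) = \pi_X^*(e_0)$, independently of $g$.

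To extract the vanishing of $e_0$, I would exploit additivity on the group law $\mu = p_1 + p_2 \in G(G \times G)$: on one side $\phi_{G\times G}(\mu) = \mu^*(e) = \pi_{G\times G}^*(e_0)$ by the same constancy, while on the other side $\phi_{G\times G}(\mu) = \phi(p_1)+\phi(p_2) = 2\,\pi_{G\times G}^*(e_0)$. Since $G \times G$ is again geometrically connected, $\pi_{G\times G}^*$ is injective, so $e_0 = 2 e_0$, hence $e_0 = 0$ and $\phi = 0$.

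For (b) I would choose a normalised presentation $b \colon \uG \to \cF$ as in Proposition \ref{p3.1}, with $G$ semi-abelian; by construction $\pi_0(\cF) = \coker b$. Given $\phi \colon \cF \to E$, the composite $\phi \circ b \colon \uG \to E$ vanishes by (a) applied to this $G$, so $\phi$ factors uniquely through $\coker b = \pi_0(\cF)$, with canonicity coming from the universal property of the cokernel together with the essential uniqueness of the normalised $b$ (Proposition \ref{p3.1}(c)). The only real subtlety is the connectedness hypothesis needed in (a), but the normalised presentation supplies it for free in the application to (b).
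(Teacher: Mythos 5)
Your proof is correct and follows essentially the same route as the paper: reduce $\Hom(\uG,E)$ via Yoneda to a subgroup of $E(G)\cong E(k)$ (the paper's shortcut "constant, hence $0$" amounts to evaluating at the identity section $0\in G(k)$, slightly more direct than your $\mu=p_1+p_2$ argument, but both are fine, and you correctly note that no reduction to $k=\bar k$ is really needed), and then deduce (b) from the cokernel description of $\pi_0(\cF)$ in Proposition \ref{p3.1}/Theorem \ref{t3.2.4}. Your observation that (a) as literally stated requires $G$ to be (geometrically) connected — which fails already for $G=\Z/2$, but is automatic in the semi-abelian case needed for (b) — is a correct and worthwhile caveat that the paper leaves implicit.
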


\begin{proof} a) Thanks to Proposition \ref{p4.2} we may assume $k$
algebraically closed. By Yoneda, $\Hom(\uG,E)$ is a subgroup of $E(G)$
(it turns out to be the subgroup of multiplicative sections but we don't
need this). Since $E(k)\iso E(G)$, any homomorphism from $\uG$ to $E$ is
constant, hence $0$.

b) follows immediately from a) and Proposition \ref{p3.1}.
\end{proof}

\begin{lemma}\label{linj} Let $\cF\in \Shv_1$, $K$ a separably closed extension of $k$ and $M/K$
an algebraic extension. Then the map $\cF(K)\to \cF(M)$ is injective.
\end{lemma}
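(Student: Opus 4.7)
The plan is to reduce to the normalized presentation provided by Proposition \ref{p3.1}(a): write
\[0 \to L \to \uG \to \cF \to E \to 0\]
with $L$ a torsion-free lattice, $G$ semi-abelian, and $E$ discrete, and split this into two short exact sequences $0 \to L \to \uG \to \cF' \to 0$ and $0 \to \cF' \to \cF \to E \to 0$, where $\cF' = \uG/L$. Taking sections over $K$ and $M$ and doing a diagram chase should give the result, provided two auxiliary facts: (i) the restriction maps $L(K) \to L(M)$ and $E(K) \to E(M)$ are isomorphisms, and (ii) $G(K) \to G(M)$ is injective, together with the vanishing of $H^1_\et(K, L)$.

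The key geometric observation is that since $K$ is separably closed and $M/K$ is algebraic, every element of $M$ is purely inseparable over $K$ (its separable minimal polynomial would otherwise force degree $1$); hence $M/K$ is purely inseparable, and the small \'etale sites of $\Spec K$ and $\Spec M$ are equivalent. Fact (i) is then immediate for any discrete sheaf, as is the vanishing $H^1_\et(K,L) = H^1_\et(M,L) = 0$. Fact (ii) holds for formal reasons: $\Spec M \to \Spec K$ is an epimorphism of schemes, so two distinct $K$-points of $G$ remain distinct as $M$-points.

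Given these, the first short exact sequence yields $\cF'(K) = G(K)/L(K)$ and $\cF'(M) = G(M)/L(M)$, and combining the isomorphism $L(K) \iso L(M)$ with the injection $G(K) \hookrightarrow G(M)$ shows that $\cF'(K) \to \cF'(M)$ is injective. A final diagram chase in
\[\begin{CD}
0 @>>> \cF'(K) @>>> \cF(K) @>>> E(K) \\
@. @VVV @VVV @V{\wr}VV \\
0 @>>> \cF'(M) @>>> \cF(M) @>>> E(M)
\end{CD}\]
then produces the injectivity of $\cF(K) \to \cF(M)$: an element of $\cF(K)$ mapping to zero in $\cF(M)$ has vanishing image in $E(K) = E(M)$, hence lifts to some $y \in \cF'(K)$ whose image in $\cF'(M)$ embeds to $0$ in $\cF(M)$ and is therefore $0$, and finally $y = 0$ by injectivity of $\cF'(K) \to \cF'(M)$. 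I do not anticipate any real obstacle; the only care required is in interpreting sections of sheaves on $Sm(k)_\et$ at the non-finite-type points $\Spec K$ and $\Spec M$, for which one uses the standard left Kan extension to pro-smooth schemes.
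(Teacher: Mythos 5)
Your proposal is correct and follows essentially the same route as the paper: the paper also takes the normalised presentation $0\to L\to \uG\to\cF\to E\to 0$, passes to sections over $K$ and $M$, and chases the diagram using precisely the three facts you isolate — that $L(K)\to L(M)$ and $E(K)\to E(M)$ are isomorphisms and $G(K)\to G(M)$ is a monomorphism. Your splitting into two short exact sequences and the explicit observation that $M/K$ is purely inseparable with equivalent small \'etale sites simply spells out why the rows in the paper's diagram are exact; it is the same argument, written with a little more care.
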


\begin{proof} Consider a normalised representation of $\cF$:
\begin{equation}\label{eqnorm}
0\to L\to \uG\by{b} \cF\to E\to 0.
\end{equation}

The lemma then follows from an elementary chase in the diagram:
\[\begin{CD}
0@>>> L(K)@>>> G(K)@>>> \cF(K)@>>> E(K)@>>> 0\\
&& @V{\wr}VV @V{\text{mono}}VV @VVV @V{\wr}VV\\
0@>>> L(M)@>>> G(M)@>>> \cF(M)@>>> E(M)@>>> 0.
\end{CD}\]
\end{proof}

\begin{defn}\label{dsabt} We denote by ${}^t\AbS(k)={}^t\AbS$ 
\index{${}^t\AbS$, $\AbS$} the $\Z[1/p]$-linear category of commutative
$k$-group schemes $G$ such that $G^0$ is semi-abelian and $\pi_0(G)$ is discrete.  An object of ${}^t\AbS$ is called a \emph{semi-abelian scheme with torsion}.
\end{defn}

\begin{propose}\label{pladj} The functor
\begin{align*}
{}^t\AbS&\to \Shv_1\\
G&\mapsto \uG
\end{align*}
has a left adjoint/left inverse $\Omega$.
\end{propose}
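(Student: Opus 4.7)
The plan is to construct $\Omega$ explicitly via normalised presentations. Given $\cF \in \Shv_1$, Proposition \ref{p3.1} yields a normalised representation
\[
0 \to L \xrightarrow{a} \uG \xrightarrow{b} \cF \to E \to 0
\]
with $G$ semi-abelian, $L$ a torsion-free lattice and $E$ discrete. Let $K \subseteq G$ be the Zariski closure (with reduced structure) of the Galois-stable finitely generated subgroup $a(L(\bar k)) \subseteq G(\bar k)$. Galois stability ensures $K$ is defined over $k$ as a closed $k$-subgroup scheme; after inverting $p$, $K$ is smooth, so $G' := G/K$ is a connected semi-abelian $k$-variety.

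I then form the pushout $\cF' := \cF \oplus_{\uG} \underline{G'}$ in $\Shv_\et(Sm(k))[1/p]$ along $b$ and the projection $\uG \twoheadrightarrow \underline{G'}$. Since the composite $L \to \uG \to \underline{G'}$ vanishes by construction and $\coker b = E$, the pushout sits in an exact sequence
\[
0 \to \underline{G'} \to \cF' \to E \to 0,
\]
and by Theorem \ref{text} it lies in $\Shv_1$. The crucial step is to see that $\cF'$ is representable by a commutative $k$-group scheme $H$ with $H^0 = G'$ and $\pi_0(H) = E$: since $E$ is étale, the sequence splits étale-locally, so étale-locally $\cF' \simeq \underline{G'\times E}$, and étale descent yields an algebraic group structure placing $H$ in ${}^t\AbS$. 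I set $\Omega(\cF) := H$, with unit $\eta_\cF : \cF \to \underline{H}$ given by the canonical map to the pushout.

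For the universal property, let $\phi : \cF \to \underline{H'}$ with $H' \in {}^t\AbS$. By Yoneda, $\phi \circ b : \uG \to \underline{H'}$ corresponds to a group scheme morphism $f : G \to H'$; connectedness of $G$ forces $f$ to land in $H'^{0}$. The relation $\phi \circ b \circ a = 0$ gives $a(L(\bar k)) \subseteq (\ker f)(\bar k)$, so by minimality of $K$ one has $K \subseteq \ker f$ and $f$ factors through $G'$. Combined with $\phi$, the pushout property yields a unique morphism $\cF' \to \underline{H'}$, i.e.\ a unique $H \to H'$ in ${}^t\AbS$. Functoriality of $\Omega$ follows from Proposition \ref{p3.1}b), which lifts any morphism $\cF_1 \to \cF_2$ to compatible data on normalised presentations, carrying $K_1$ into $K_2$. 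The left-inverse property is immediate: for $\cF = \underline{H}$ with $H \in {}^t\AbS$, one has $L = 0$, hence $K = 0$, $G' = H^0$, $\cF' = \cF$, and $\Omega(\underline{H}) = H$.

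The main obstacle is the representability step: verifying that after inverting $p$ the Zariski closure $K$ is a genuinely smooth closed $k$-subgroup scheme, and that the sheaf-theoretic pushout $\cF'$ is represented by an algebraic extension. These hinge on the smoothness of closures of finitely generated subgroups in semi-abelian varieties once $p$ is inverted, and on étale descent combined with the local splittability of an extension by a discrete étale sheaf. Once these representability issues are settled, the rest of the argument is formal manipulation with Yoneda and pushouts.
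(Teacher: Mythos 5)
Your proof is essentially the paper's: the pushout $\cF \oplus_{\uG} \underline{G/K}$ you form is exactly the quotient $\cF/b(\underline{K})$ of $\cF$ by the image of the Zariski closure $K$ of $L$ in $G$, which is precisely how the paper defines $\Omega(\cF)$ (the paper locates $K$ as the minimal closed $H\subseteq G$ killing $L\to \uG/\underline{H}$ via an Artinian argument and then notes it is this Zariski closure). The paper is much terser---asserting representability and citing Proposition~\ref{p3.1}~b) for the universal property---whereas you spell out the representability-by-\'etale-descent, adjunction, and left-inverse checks explicitly.
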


\begin{proof} Let $\cF\in \Shv_1$ with normalised representation \eqref{eqnorm}. As the set of
closed subgroups of $H\subseteq G$ is Artinian, there is a minimal $H$ such that the composition
\[L\to \uG\to \uG/\underline{H}\]
is trivial. Then $\cF/b(\underline{H})$ represents an object $\Omega(\cF)$ of ${}^t\AbS$ and
it follows from Proposition \ref{p3.1} b) that the universal property is satisfied.
(In other words,  $\Omega(\cF)$ is the quotient of $\cF$ by the Zariski closure of $L$ in $G$.)
\end{proof}

\begin{propose}\label{p3.6} Let $f:\cF_1\to \cF_2$ be a morphism in $\Shv_1$.
Assume that for any $n>1$ prime to $p$, $f$ is an isomorphism on $n$-torsion and
injective on
$n$-cotorsion. Then $f$ is injective with lattice cokernel. If $f$ is even
bijective on $n$-cotorsion, it is an isomorphism.
\end{propose}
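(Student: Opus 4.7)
The plan is to exploit the abelian structure of $\Shv_1$ (Proposition \ref{p3.1} d)) together with the structural information given by normalised presentations. Set $K=\ker f$ and $C=\coker f$, both in $\Shv_1$, and factor $f$ as $\cF_1\twoheadrightarrow I\hookrightarrow \cF_2$.

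First I would apply the snake lemma to the short exact sequences $0\to K\to \cF_1\to I\to 0$ and $0\to I\to \cF_2\to C\to 0$ with multiplication by $n$, for arbitrary $n$ prime to $p$. The hypothesis that $\cF_1[n]\to \cF_2[n]$ is bijective forces $K[n]=0$ and identifies $I[n]$ with $\cF_2[n]$; from the first snake this gives $K/n\hookrightarrow \cF_1/n$, and composing with the injection $\cF_1/n\hookrightarrow \cF_2/n$ (second hypothesis), together with the fact that the composite $K\to \cF_2$ is zero, yields $K/n=0$. The second snake then forces $C[n]=0$, and, in the bijective case, $C/n=0$ as well.

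The key structural step I would prove next is the following assertion: \emph{a $1$-motivic sheaf $M$ with $M[n]=0$ for every $n$ prime to $p$ is necessarily a lattice.} To establish it, take a normalised presentation $0\to L\to \uG\to M\to E\to 0$ (Proposition \ref{p3.1} a)) and set $J=\uG/L$. The snake lemma applied to $0\to L\to \uG\to J\to 0$ uses that $L[n]=0$ (as $L$ is a lattice) and $\uG/n=0$ (as $G$ is semi-abelian and $n$ is prime to $p$); combined with $J[n]\subseteq M[n]=0$, it yields $\uG[n]=0$ and $L/n=0$ for all such $n$. A nonzero semi-abelian variety over $\bar k$ has nontrivial $n$-torsion for every $n$ prime to $p$, so $G=0$; and a nonzero lattice has nontrivial mod-$n$ reduction, so $L=0$. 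Hence $M=E$ is discrete, and $E[n]=0$ makes $E$ torsion-free, so $M$ is a lattice.

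Applied to $K$ this gives that $K$ is a lattice; combined with $K/n=0$ for all $n$ prime to $p$, the same finite-rank argument yields $K=0$, so $f$ is injective. Applied to $C$ it gives that the cokernel is a lattice, which is the first assertion. If furthermore $f$ is bijective on $n$-cotorsion, then $C/n=0$ for all $n$ prime to $p$ and again $C=0$, whence $f$ is an isomorphism. The main obstacle is the structural assertion in the third paragraph: the analogous statement is false for a generic \'etale sheaf, and one must really track how the semi-abelian and discrete parts interact in a normalised presentation, using crucially both the divisibility of $\uG$ at $n$ and the torsion-freeness of $L$.
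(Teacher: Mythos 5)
Your proof is correct and follows essentially the same route as the paper: it first reduces to the special case $\cF_1=0$ (your structural assertion that a $1$-motivic sheaf with no prime-to-$p$ torsion is a lattice), which the paper also proves by the snake lemma applied to a normalised presentation, using the torsion-freeness of $L$ and the $n$-divisibility of $\uG$. The torsion/cotorsion diagram chase on the two short exact sequences factoring $f$ is the same as in the paper, with only a slightly different ordering of the intermediate conclusions.
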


\begin{proof} a) We first treat the special case where $\cF_1=0$. Consider multiplication by $n$ on the normalised presentation of $\cF_2$:
\[\begin{CD}
0@>>> L@>>> G@>>> \cF_2@>>> E@>>> 0\\
&& @V{n_L}VV @V{n_G}VV @V{n}VV @V{n_E}VV\\
0@>>> L@>>> G@>>> \cF_2@>>> E@>>> 0.
\end{CD}\]

Since $L$ is torsion-free, $n_G$ is injective for all $n$, hence $G=0$ and $\cF_2=E$. If moreover multiplication by $n$ is surjective for any $n$, we have $\cF_2=0$ since $E$ is finitely generated. 

b) The general case. Split $f$ into two short exact sequences:
\begin{gather*}
0\to K\to \cF_1\to I\to 0\\
0\to I\to \cF_2\to C\to 0.
\end{gather*}

We get torsion/cotorsion exact sequences
\begin{gather*}
0\to {}_nK\to {}_n\cF_1\to {}_nI\to K/n\to \cF_1/n\to I/n\to 0\\
0\to {}_nI\to {}_n\cF_2\to {}_nC\to I/n\to \cF_2/n\to C/n\to 0.
\end{gather*}

A standard diagram chase successively yields ${}_n K=0$, ${}_n\cF_1\iso {}_nI\iso {}_n\cF_2$, $\cF_1/n\iso I/n$, $K/n=0$ and ${}_n C=0$. By a), we find $K=0$ and $C$ a lattice, which is what we wanted.
\end{proof}

\subsection{Presenting $1$-motivic sheaves by group schemes}\label{3.7} In this subsection, we
give another description of the category $\Shv_1$; it will be used in the next subsection.

\begin{defn}\label{s1a} Let $\AbS$\index{${}^t\AbS$, $\AbS$} be the $\Z[1/p]$-linear category of commutative $k$-group schemes $G$ such that $\pi_0(G)$ is a lattice and $G^0$ is a semi-abelian variety (it is a full subcategory of ${}^t\AbS$). We denote by $S_1^\eff$ \index{$S_1$, $S_1^\eff$} the full subcategory of ${}^t\AbS^{[-1,0]}$ consisting of those complexes $F_\cdot=[F_1\to F_0]$ such that
\begin{thlist}
\item $F_1$ is discrete (\ie in ${}^t\cM_0$);
\item $F_0$ is of the form $L_0\oplus G$, with $L_0\in {}^t\cM_0$ and $G\in \SAb$;
\item $F_1\to F_0$ is a monomorphism;
\item $\ker(F_1\to L_0$) is free. 
\end{thlist}
We call $S_1^\eff$ the \emph{category of presentations}.
\end{defn}

We shall view $S_1^\eff$ as a full subcategory of $\Shv_1^{[-1,0]}$ via the functor $G\mapsto
\underline{G}$ which sends a group scheme to the associated representable sheaf. In this light,
$F_\cdot$ may be viewed as a
\emph{presentation} of
$\cF:=H_0(\underline{F_\cdot})$. In the next definition, quasi-isomorphisms are also understood
from this viewpoint.

\begin{defn}\label{s1b} We denote by $\Sigma$ the collection of quasi-iso\-morph\-isms
of $S_1^\eff$, by ${\bar S}_1^\eff$ the homotopy
category of $S_1^\eff$ (Hom groups quotiented by homotopies) and by
$S_1=\Sigma^{-1}{\bar S}_1^\eff$ the localisation of ${\bar S}_1^\eff$ with respect to (the
image of) $\Sigma$.
\end{defn}

The functor $F_\cdot\mapsto H_0(F_\cdot)$ induces a functor 
\begin{equation}\label{ftot}
h_0:S_1\to \Shv_1.
\end{equation}

Let $F_\cdot=(F_1,L_0,G)$ be a presentation of $\cF\in \Shv_1$. Let $L=\ker(F_1\to
L_0)$ and $E=\coker(F_1\to L_0)$. Then we clearly have an exact sequence
\begin{equation}\label{eq2.3}
0\to L\to \uG\to\cF\to E\to 0.
\end{equation}

\begin{lemma}\label{l2.3.1} Let $F_\cdot=(F_1,L_0,G)\in S_1^\eff$. Then, for any finite
Galois extension
$\ell/k$ such that $L_0$ is constant over $\ell$, there exists a \qi $\tilde F_\cdot\to
F_\cdot$, with $\tilde F_\cdot=[\tilde F_1\by{u_0} \tilde L_0\oplus G]$ such that $u_0$ is
diagonal and $\tilde L_0$ is a free $Gal(\ell/k)$-module.
\end{lemma}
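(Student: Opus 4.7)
The plan is to pull back $F_\cdot$ along a free $\Z[\Gamma]$-cover of $L_0$ (writing $\Gamma := Gal(\ell/k)$), and then exploit the freeness of $L := \ker(F_1 \to L_0)$ (condition (iv) of $F_\cdot \in S_1^\eff$) to rearrange the pullback into diagonal form.

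First, since $L_0$ is constant over $\ell$ it is a finitely generated $\Z[\Gamma]$-module, so I can choose a surjection $p: \tilde L_0 \twoheadrightarrow L_0$ from a finitely generated free $\Z[\Gamma]$-module $\tilde L_0$. I form the fibre product
\[
\tilde F_1^{(0)} := F_1 \times_{L_0} \tilde L_0,
\]
which I embed in $\tilde L_0 \oplus G$ by $(f, \tilde l) \mapsto (\tilde l, g(f))$, where $g:F_1 \to G$ is the projection induced by $F_1 \hookrightarrow L_0 \oplus G$. The complex $[\tilde F_1^{(0)} \to \tilde L_0 \oplus G]$ is equipped with a natural chain map to $F_\cdot$ (first projection in degree $-1$, $(p, \mathrm{id}_G)$ in degree $0$); a direct cokernel chase shows this is a \qi, using surjectivity of $p$ and the defining property of the fibre product to see that the induced map on $H_0$ is an isomorphism. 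Note $\ker(\tilde F_1^{(0)} \to \tilde L_0) = L$ is free.

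To achieve the diagonal form for $u_0$, I split the short exact sequence $0 \to L \to \tilde F_1^{(0)} \to \tilde F_1^{(0)}/L \to 0$ after a further enlargement: choose a free $\Z[\Gamma]$-module $P$ surjecting onto $\bar F_1 := \pi(F_1)$, lift this surjection to $\tilde\phi : P \to F_1$ using projectivity of $P$ and surjectivity of $F_1 \twoheadrightarrow \bar F_1$, and replace $\tilde L_0$ by the still-free $\Z[\Gamma]$-module $\tilde L_0 \oplus P$, with surjection to $L_0$ given by $(\tilde l, p_0) \mapsto p(\tilde l) + q(p_0)$ (where $q : P \to \bar F_1 \hookrightarrow L_0$). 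I then set $\tilde F_1 := L \oplus P$ and define the manifestly diagonal map
\[
u_0(l, p_0) := ((0, p_0), j(l)),
\]
where $j : L \hookrightarrow G$ comes from $F_1 \hookrightarrow L_0 \oplus G$. Properties (i)--(iv) of $S_1^\eff$ are then straightforward to check: $\tilde F_1$ is discrete, $u_0$ is injective because $j$ is, and $\ker(\tilde F_1 \to \tilde L_0 \oplus P) = L$ is free. The chain map to $F_\cdot$ takes $(l, p_0) \mapsto l + \tilde\phi(p_0)$ in degree $-1$; to restore commutativity against the $G$-component produced by $\tilde\phi$, I introduce a compensating $\Gamma$-equivariant homomorphism $h : \tilde L_0 \oplus P \to G$ by $(\tilde l, p_0) \mapsto -g(\tilde\phi(p_0))$ and modify the degree $0$ component accordingly.

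The main obstacle is the final verification that this modified diagonal $\tilde F_\cdot$ is still quasi-isomorphic to $F_\cdot$. The subtlety is that $H_0(\tilde F_\cdot)$ is, by the diagonal structure, naturally a direct sum, whereas $\cF = H_0(F_\cdot)$ is in general a nontrivial extension of $L_0/\bar F_1$ by $G/L$ (controlled by the ``section'' $\bar F_1 \to G/L$ coming from $F_1 \subset L_0 \oplus G$). The compensating map $h$ must be calibrated precisely so that the induced map on cokernels identifies the direct sum with the extension; this is where the freeness of $L$, the projectivity of $P$, and the flexibility afforded by inverting $p$ must all work together. Once this is arranged, conditions (i)--(iv) follow routinely and the lemma is proved.
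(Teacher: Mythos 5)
Your first paragraph reproduces the paper's entire proof: take a free $\Z[Gal(\ell/k)]$-module $\tilde L_0$ surjecting onto $L_0$ and set $\tilde F_1 = F_1\times_{L_0}\tilde L_0$, with the evident map to $\tilde L_0\oplus G$. A snake-lemma chase (or the one you sketch) shows this is a quasi-isomorphism, conditions (i)--(iv) of Definition \ref{s1a} are inherited, and $\ker(\tilde F_1\to\tilde L_0)$ is still the original free lattice $L$. The paper stops there.

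The ``further enlargement'' you then perform is both not in the paper and incorrect. After replacing $\tilde F_1^{(0)}$ by $L\oplus P$ with the block map $u_0(l,p_0)=((0,p_0),j(l))$, the image of $u_0$ in $(\tilde L_0\oplus P)\oplus G$ is $(0\oplus P)\oplus j(L)$, so $H_0(\tilde F_\cdot)\cong\tilde L_0\oplus (G/L)$. But $\cF=H_0(F_\cdot)$ sits in the canonical sequence $0\to G/L\to\cF\to E\to 0$ with $E=\coker(F_1\to L_0)$, which has strictly smaller rank than $\tilde L_0$ whenever $\tilde L_0\to L_0$ has a kernel or $F_1\to L_0$ is not injective. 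No choice of the degree-$0$ chain map (compensating map $h$ or otherwise) can fix this: any $\Gamma$-equivariant $\Phi:(\tilde L_0\oplus P)\oplus G\to L_0\oplus G$ compatible with the chain condition kills $\ker(\tilde L_0\to L_0)$, and the corresponding classes in $H_0(\tilde F_\cdot)$ are nonzero because they cannot lie in $\mathrm{im}(u_0)=(0\oplus P)\oplus j(L)$. So the induced map on $H_0$ is never injective once $\ker(\tilde L_0\to L_0)\ne 0$. You flag this as the ``main obstacle'' and assert it can be ``arranged''; it cannot, and your proof is incomplete precisely at the load-bearing step.

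The underlying misstep is the reading of ``$u_0$ is diagonal'' as demanding a direct-sum decomposition $\tilde F_1=L\oplus P$ with separate maps into $\tilde L_0$ and $G$. That would force $H_0$ to split, which is incompatible with $\cF$ being a nontrivial extension and also forces the rank mismatch above. Whatever ``diagonal'' is meant to encode, it must be a property already satisfied by the pull-back presentation $\tilde F_1=F_1\times_{L_0}\tilde L_0\hookrightarrow\tilde L_0\oplus G$, and in the two places the lemma is invoked (Steps 2 and 3 of the proof of Proposition \ref{p3.4.6}) only the $\Z[\Gamma]$-freeness of $\tilde L_0$ is actually used. You should drop the enlargement step entirely and retain only your first paragraph.
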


\begin{proof} Just take for $\tilde L_0$ a free module projecting onto $L_0$ and for
$\tilde F_1\to \tilde L_0$ the pull-back of $F_1\to L_0$.
\end{proof}

\begin{lemma}\label{l2.3.2} The set $\Sigma$ admits a calculus of right
fractions within $\bar S_1^\eff$ in the sense of (the dual of) \cite[Ch. I, \S 2.3]{GZ}.
\end{lemma}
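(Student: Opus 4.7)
The plan is to verify the three axioms of a right calculus of fractions (dual of \cite[Ch.~I, \S 2.3]{GZ}) for $(\bar S_1^\eff, \Sigma)$: (CF1) $\Sigma$ contains identities and is closed under composition; (CF2) Ore: any cospan $F'_\cdot\xrightarrow{f} F_\cdot\xleftarrow{s} F''_\cdot$ with $s\in\Sigma$ can be completed to a commutative square in $\bar S_1^\eff$ whose edge parallel to $s$ again lies in $\Sigma$; (CF3) cancellation: if $sf=sg$ in $\bar S_1^\eff$ for some $s\in\Sigma$, then $ft=gt$ for some $t\in\Sigma$. Axiom (CF1) is immediate, as quasi-isomorphisms are composition-closed.

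For (CF2) my plan is to form the termwise pullback $\tilde F_\cdot \df F'_\cdot\times_{F_\cdot}F''_\cdot$ in the ambient category of complexes of fppf sheaves, and then to correct it so that it lies in $S_1^\eff$. I would first invoke Lemma~\ref{l2.3.1} to replace the three presentations by quasi-isomorphic ones that are diagonal over a common finite Galois extension splitting their lattice parts. Since morphisms from semi-abelian varieties to discrete sheaves vanish (Proposition~\ref{pi0}\,a), all morphisms between such diagonal presentations are upper-triangular with respect to the decomposition $L_0\oplus G$; consequently the termwise pullback decomposes as
\[
\tilde F_0 \;=\; (L'_0\times_{L_0}L''_0)\,\oplus\,(G'\times_G G''),\qquad \tilde F_1 \;=\; F'_1\times_{F_1} F''_1.
\]
Conditions (i), (iii) and (iv) of Definition~\ref{s1a} then follow by direct diagram chases (pullbacks of monomorphisms of sheaves are monomorphisms, pullbacks of discrete sheaves are discrete, and freeness of the kernels of $F_1\to L_0$ is preserved by pullback). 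For condition (ii), I would peel off the finite component group $\pi_0(G'\times_G G'')$ and absorb it into the lattice summand, which does not affect the class in $\bar S_1^\eff$. To ensure that the projection $t:\tilde F_\cdot\to F'_\cdot$ is in $\Sigma$, I would first replace $s$ by a termwise surjective representative (obtained by enlarging $F''_0$ via a free lattice surjecting onto $F_0/s(F''_0)$); then $H_0(\tilde F_\cdot)=H_0(F'_\cdot)\times_{H_0(F_\cdot)}H_0(F''_\cdot)\iso H_0(F'_\cdot)$, because $s$ induces an iso on $H_0$.

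Axiom (CF3) is handled by a simpler move: if $h\df f-g$ satisfies $sh\simeq 0$ in $\bar S_1^\eff$, then since $s$ induces an iso on $H_0$, so does $h$ induce $0$ on $H_0$, and hence $h_0:F'_0\to F_0$ factors through $F_1\hookrightarrow F_0$. Replacing $F'_\cdot$ by a quasi-isomorphic presentation obtained by enlarging $F'_1$ so as to absorb a chosen witness of the null-homotopy $sh\simeq 0$, I obtain $t\in\Sigma$ along which $h\circ t$ is null-homotopic, whence $ft\simeq gt$ in $\bar S_1^\eff$.

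The principal obstacle is the cleanup at the end of (CF2): the termwise pullback \emph{a priori} sits only in ${}^t\AbS^{[-1,0]}$ rather than in $S_1^\eff$, because the split form $L_0\oplus G$ at degree zero may fail and because $G'\times_G G''$ can acquire a nontrivial finite component group. The systematic use of Lemma~\ref{l2.3.1} — passing to a common splitting field and diagonalising the differentials — is what lets the pullback decompose cleanly into lattice and semi-abelian parts and reduces the obstruction to a finite, manipulable correction.
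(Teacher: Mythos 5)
Your route---Ore completion by the \emph{ordinary} termwise pullback after a diagonalisation step, then cancellation by absorbing a homotopy into $F'_1$---is genuinely different from the paper's, which reduces directly to Lemma~\ref{lloc}: there the Ore square is filled by the truncated mapping fibre of $F'_\cdot\oplus F''_\cdot\to F_\cdot$ (a homotopy pullback, with degree-zero term $\ker(F''_0\oplus F'_0\oplus F_1\to F_0)$), cancellation by an iterated fibre product, and the paper then asserts that these constructions preserve $S_1^\eff$.

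There is, however, a gap in your Ore step. The decomposition $\tilde F_0=(L'_0\times_{L_0}L''_0)\oplus(G'\times_G G'')$ requires the degree-zero components of $f$ and $s$ to be block-\emph{diagonal} with respect to the splittings $L_0\oplus G$, but Proposition~\ref{pi0}\,a) kills only one off-diagonal block: it forces the $G'\to L_0$ component of a morphism $L'_0\oplus G'\to L_0\oplus G$ to vanish, while the $L'_0\to G$ component is in general nonzero (this is exactly the data of a nontrivial $1$-motive, and such terms occur explicitly in the constructions of Proposition~\ref{p3.4.6}; see the map $\tilde\psi:\tilde L_0\to L'_0\oplus\uG'$ in Step~3). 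Triangular is not diagonal. In the presence of these $L\to G$ cross terms the termwise pullback $\tilde F_0$ is an extension of a subgroup of $L'_0\times_{L_0}L''_0$ by $G'\times_G G''$, whose extension class is governed precisely by those cross terms, and there is no reason for it to split; peeling off the finite component group does not repair this, since the obstruction is an honest extension class and not merely a $\pi_0$. Nor does Lemma~\ref{l2.3.1} help: ``$u_0$ diagonal'' there refers to the differential of a \emph{single} presentation, and the freeness of $\tilde L_0$ over $Gal(\ell/k)$ is used for projectivity (lifting maps through epimorphisms), not for diagonalising morphisms between two presentations---which cannot in general be achieved by a change of splitting, since neither $L'_0\to L_0$ nor $G'\to G$ need be surjective. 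So condition~(ii) of Definition~\ref{s1a} is not established for $\tilde F_0$, and the Ore axiom remains unverified; the paper's use of the truncated mapping fibre instead of the ordinary pullback is designed to circumvent exactly this.
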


\begin{proof} The statement is true by Lemma \ref{lloc} if we replace $S_1^\eff$ by
$\Shv_1^{[-1,0]}$; but one easily checks that the constructions in the proof of Lemma
\ref{lloc} preserve $S_1^\eff$.
\end{proof}

\begin{propose}\label{p3.4.6} The functor $h_0$ of \eqref{ftot} is an equivalence of categories. In
particular, $S_1$ is abelian.
\end{propose}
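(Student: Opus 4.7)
The plan is to show $h_0: S_1 \to \Shv_1$ is essentially surjective and fully faithful; abelianness of $S_1$ then descends from that of $\Shv_1$ (Proposition \ref{p3.1} d)) via the equivalence. Since $\Sigma$ admits a calculus of right fractions by Lemma \ref{l2.3.2}, morphisms in $S_1$ from $F_\cdot$ to $F'_\cdot$ are represented by roofs $F_\cdot \xleftarrow{s} \tilde F_\cdot \to F'_\cdot$ with $s \in \Sigma$, and I aim to match these (modulo the usual equivalence) with sheaf maps $h_0(F_\cdot) \to h_0(F'_\cdot)$.

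For essential surjectivity, fix $\cF \in \Shv_1$ with normalised presentation $0 \to L \to \uG \to \cF \to E \to 0$ from Proposition \ref{p3.1}. The extension class of $0 \to \uG/L \to \cF \to E \to 0$, after choosing a surjection $L_0 \twoheadrightarrow E$ with kernel $K$, is represented by a morphism $\psi: K \to \uG/L$. Using the flexibility allowed by Definition \ref{s1a} (with $L_0$ an arbitrary discrete sheaf, not merely a lattice), I enlarge $L_0$—introducing summands $\Z_{k'}$ for finite Galois extensions $k'/k$ sufficient to produce sheaf-theoretic lifts—so that $\psi$ itself admits a lift $\tilde\psi: K \to \uG$. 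Then set $F_1 := K \oplus L$ and define $F_1 \hookrightarrow L_0 \oplus \uG$ by $(k, l) \mapsto (k,\, \tilde\psi(k) + l)$. A direct computation verifies conditions (i)--(iv) of Definition \ref{s1a}, and the resulting extension of $E$ by $\uG/L$ in $h_0(F_\cdot)$ is precisely the class $[\psi]$, so $h_0(F_\cdot) \cong \cF$.

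For full faithfulness, given $F_\cdot, F'_\cdot \in S_1^\eff$ and a sheaf map $\phi: h_0(F_\cdot) \to h_0(F'_\cdot)$, I construct a canonical roof lifting $\phi$. Form the pullback $\tilde F_0 := F_0 \times_{h_0(F'_\cdot)} F'_0$, sitting in the extension $0 \to F'_1 \to \tilde F_0 \to F_0 \to 0$. Since $F'_1$ is discrete, the identity component of $\tilde F_0$ remains $G$, and $\pi_0(\tilde F_0) =: \tilde L_0$ is a discrete extension of $L_0$ by $F'_1$. Proposition \ref{p3.1} b) applied to $\phi$ provides a canonical semi-abelian lift $\phi_G: G \to G'$, which yields a canonical splitting $\tilde F_0 \cong G \oplus \tilde L_0$ by sending $(g,\tilde l) \mapsto ((l,g),\, \phi_G(g) + f')$ where $\tilde l = (l,f')$. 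Setting $\tilde F_1 := \ker(\tilde F_0 \twoheadrightarrow h_0(F_\cdot))$, one checks $\tilde F_1 \cong F_1 \oplus F'_1$ is discrete with $\ker(\tilde F_1 \to \tilde L_0) = \ker(F_1 \to L_0) = L$ free, hence $\tilde F_\cdot \in S_1^\eff$. The first projection is a quasi-isomorphism (its kernel is the acyclic complex $[F'_1 \stackrel{=}{\to} F'_1]$), and the second projection $\tilde F_\cdot \to F'_\cdot$ realises $\phi$ on $h_0$. Well-definedness and injectivity of the map $\Hom_{\Shv_1}(h_0(F_\cdot), h_0(F'_\cdot)) \to \Hom_{S_1}(F_\cdot, F'_\cdot)$ follow from the calculus of fractions combined with the uniqueness portion of Proposition \ref{p3.1} b).

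The main obstacle is the essential surjectivity step: arranging a genuine lift $\tilde\psi$ of the extension class $\psi$. This is precisely why Definition \ref{s1a} permits $L_0$ and $F_1$ to be arbitrary discrete sheaves rather than lattices, while demanding freeness only on $\ker(F_1 \to L_0)$—the enlargement of $L_0$ by suitable induced modules is the essential flexibility. Everything beyond this reduces to routine bookkeeping: checking that pullback and kernel operations preserve membership in $S_1^\eff$, using connectedness of $G$ together with discreteness of the $F_1$-parts.
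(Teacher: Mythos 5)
Your essential surjectivity argument has a genuine gap. You want a lift $\tilde\psi : K \to \uG$ of $\psi : K \to \uG/L$, so as to set $F_1 = K \oplus L$. But the obstruction to that lift lives in $\Ext^1(K,L)$, which can be a nonzero torsion group even when $K$ is a lattice (already $\Ext^1(\Z^{-},\Z)\cong H^1(k,\Z^{-})$ can be $\Z/2$). Enlarging $L_0$ does not help: adding a summand $N$ mapping to $0$ in $E$ just replaces $K$ by $K\oplus N$ with the obstruction class sitting unchanged in the $K$-coordinate, and a more complicated $L_0'\twoheadrightarrow L_0$ does not in general make $K'=\ker(L_0'\to E)$ a permutation module (kernels of permutation covers are lattices, not permutation lattices). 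In the paper the construction of $F_1$ is accordingly different: a \emph{second} permutation module $\tilde F_1=\Z[\Gamma']^t$ is chosen with a map to $\uG$ defined directly on generators in $G(\ell')$ --- possible because $\Hom(\Z[\Gamma']^t,\uG)=G(\ell')^t$ and $G(\ell')\to(\uG/L)(\ell')$ is onto once $L$ is trivialised over $\ell'$ --- and then a piece of $\ker(\tilde F_1\to K)$ is divided out so that $\ker(F_1\to K)\cong L$. The resulting $F_1$ sits in a generally \emph{nonsplit} extension $0\to L\to F_1\to K\to 0$. The flexibility in Definition \ref{s1a} you should have invoked is therefore not that $L_0$ may have torsion, but that $F_1$ need not be the split extension $K\oplus L$: conditions (iii)--(iv) only require a monomorphism into $L_0\oplus G$ with free kernel over $L_0$.

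Your fullness argument via the pullback $\tilde F_0=F_0\times_{h_0(F'_\cdot)}F'_0$ is a genuinely different route from the paper's, and it does work: the section $s:G\to\tilde F_0$, $g\mapsto((0,g),\phi_G(g))$, is a morphism by Proposition \ref{p3.1} b), so $\tilde F_0\cong G\oplus\tilde L_0$ with $\tilde L_0:=\ker(\tilde F_0\to G)$ discrete, and the verification of (i)--(iv) and of the behaviour of the two projections goes through. The paper instead replaces $F_\cdot$ by a quasi-isomorphic presentation with $\tilde L_0$ free over a large Galois group (Lemma \ref{l2.3.1}) and lifts; your construction has the virtue of being choice-free. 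Faithfulness, however, is not dispatched by calculus of fractions together with the uniqueness in Proposition \ref{p3.1} b): having a section of $h_0$ on Hom-sets gives fullness, not injectivity, and one must still show that an effective $f$ with $h_0(f)=0$ is null-homotopic. (This is not hard: $h_0(f)=0$ forces $f_0(F_0)\subseteq\im(F'_1\hookrightarrow F'_0)$, and since $F'_1\to F'_0$ is a monomorphism by (iii), $f_0$ factors uniquely as $d'\circ h$ with $h:F_0\to F'_1$; then $d'h=f_0$ and $hd=f_1$ give the null-homotopy. The paper reaches the same conclusion via Lemma \ref{l2.3.1} once more.) Either way, the step has to be made explicit.
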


\begin{proof} {\it Step 1.} $h_0$ is essentially surjective. Let $\cF\in \Shv_1$ and let
\eqref{eq2.3} be the exact sequence attached to it by Proposition \ref{p3.1} b). We shall
construct a presentation of $\cF_\cdot$ from \eqref{eq2.3}. Choose elements $f_1,\dots,f_r\in
\cF(\bar k)$ whose images generate
$E(\bar k)$. Let
$\ell/k$ be a finite Galois extension such that all $f_i$ belong to $\cF(\ell)$, and let
$\Gamma=Gal(\ell/k)$. Let $\tilde L_0=\Z[\Gamma]^r$ and define a morphism of sheaves $\tilde
L_0\to \cF$ by mapping the $i$-th basis element to $f_i$. Then $\ker(\tilde L_0\to E)$ maps to
$\uG/L$. Let $M_0$ be the kernel of this morphism, and let $L_0= \tilde L_0/M_0$. Then $\tilde
L_0\onto E$ factors into a morphism $L_0\onto E$, whose kernel $K$ injects into $\uG/L$.

Pick now elements $g_1,\dots,g_s\in G(\bar k)$ whose image in $G(\bar k)/L(\bar k)$ generate
the image of $K(\bar k)$, and $g_{s+1},\dots,g_t\in G(\bar k)$ be generators of the image of
$L(\bar k)$. Let $\ell'/k$ be a finite Galois extension such that all the $g_i$ belong to
$G(\ell')$, and let $\Gamma'=Gal(\ell'/k)$. Let $\tilde F_1=\Z[\Gamma']^t$, and define a
map
$f:\tilde F_1 \to G$ by mapping the $i$-th basis element to $g_i$.  By construction,
$f^{-1}(L)=\ker(\tilde F_1\onto K)$ and $f':f^{-1}(L)\to L$ is onto. Let $M_1$
be the kernel of $f'$ and $F_1=\tilde F_1/M_1$: then $\tilde F_1\to K$ factors through
$F_1$ and
$\ker(F_1\onto K)=\ker(F_1\to L_0)\iso L$. In particular, condition (iii) of Definition
\ref{s1a} is verified.

{\it Step 2.} $h_0$ is faithful. Let $f:F_\cdot\to F'_\cdot$ be a map in $S_1$ such that
$h_0(f)=0$. By Lemma \ref{l2.3.2}, we may assume that $f$ is an effective map (\ie comes from
$S_1^\eff$). We have
$f(L_0\oplus G)\subseteq
\im(L'_1\to  L'_0\oplus G')$, hence $f_{|G} = 0$ and 
$f(L_0)$ is contained in $\im(L'_1\to L'_0\oplus G')$. Pick a finite Galois extension $\ell/k$
such that
$L_0$ and $L'_1$ are constant over $\ell$. By Lemma \ref{l2.3.1}, take a \qi $u:
[\tilde F_1\to \tilde L_0]\to [F_1\to L_0]$ such that $\tilde L_0$ is $Gal(\ell/k)$-free.
Then the composition
$\tilde L_0\to L_0\to \im(L'_1\to L'_0\oplus G')$ lifts to a map $s:\tilde L_0\to L'_1$,
which defines a homotopy between $0$ and $fu$.

{\it Step 3.} $h_0$ is full. Let $F_\cdot,F'_\cdot\in S_1$ and let $\phi:\cF\to \cF'$, where
$\cF=h_0(F_\cdot)$ and $\cF'=h_0(F'_\cdot)$. In particular, we get a map $\phi_G:\uG\to
\uG'$ and a map $\psi:L_0\to L'_0\oplus \uG'/F'_1$. Let
$\ell/k$ be a finite Galois extension such that $F'_1$ is constant over $\ell$. Pick a \qi
$u:\tilde F_\cdot\to F_\cdot$ as in Lemma
\ref{l2.3.1} such that $\tilde L_0$ is
$Gal(\ell/k)$-free. Then $\psi\circ u$ lifts to a map $\tilde\psi:\tilde L_0\to L'_0\oplus
\uG'$. The map
\[f=(\tilde\psi,\phi_G):\tilde L_0\oplus G\to L'_0\oplus G'\]
sends $\tilde F_1$ into $F'_1$ by construction, hence yields a map $f:\tilde F_\cdot\to
F'_\cdot$ such that $h_0(f u^{-1})=\phi$.
\end{proof}

\begin{cor} The obvious functor
\[S_1\to D^b(\Shv_1)\]
is fully faithful.
\end{cor}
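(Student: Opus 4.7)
The plan is to identify the ``obvious functor'' $\Phi: S_1 \to D^b(\Shv_1)$ with the composition
\[
S_1 \xrightarrow{h_0} \Shv_1 \xrightarrow{\iota} D^b(\Shv_1),
\]
where $\iota$ is the canonical embedding of the heart. Once this is done, fully faithfulness is immediate: $h_0$ is an equivalence by Proposition \ref{p3.4.6}, and $\iota$ is fully faithful by the standard general fact about embeddings of abelian categories into their derived categories.

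First I would verify that $\Phi$ is well-defined. Given a presentation $F_\cdot = [F_1 \to F_0]$ in $S_1^\eff$, its image $[\underline{F}_1 \to \underline{F}_0]$ lies in $C^b(\Shv_1)$: $\underline{F}_1$ is discrete (hence $1$-motivic), and $\underline{F}_0 = \underline{L}_0 \oplus \underline{G}$ is the sum of a discrete sheaf and a semi-abelian variety, which are both $1$-motivic. Homotopies in $S_1^\eff$ map to honest chain homotopies, so the resulting functor factors through $\bar S_1^\eff \to K^b(\Shv_1)$; since a quasi-isomorphism in $S_1^\eff$ is by definition (Definition \ref{s1b}) a map that becomes a quasi-isomorphism of sheaf complexes, it goes to an isomorphism in $D^b(\Shv_1)$, and $\Phi$ descends to $S_1$.

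Second, I would invoke condition (iii) of Definition \ref{s1a}: $F_1 \to F_0$ is a monomorphism, so the two-term complex $[\underline{F}_1 \to \underline{F}_0]$ has $H_{-1} = 0$ and $H_0 = \cF := h_0(F_\cdot)$. Hence the natural map $[\underline{F}_1 \to \underline{F}_0] \to \cF[0]$ is a quasi-isomorphism, providing a functorial isomorphism $\Phi(F_\cdot) \simeq \iota(h_0(F_\cdot))$ in $D^b(\Shv_1)$. The naturality is straightforward since both sides are obtained by taking $H_0$, and this fits together into a natural isomorphism $\Phi \simeq \iota \circ h_0$.

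I do not expect any real obstacle here; the statement is essentially a repackaging of Proposition \ref{p3.4.6} combined with the standard fact that an abelian category embeds fully faithfully into its bounded derived category. The only mild point to be careful about is the verification that $\Phi$ respects the localisation at $\Sigma$, which is built into the definitions.
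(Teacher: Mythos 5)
Your proof is correct and follows essentially the same route as the paper: both reduce to Proposition~\ref{p3.4.6} and the observation (from condition (iii) of Definition~\ref{s1a}) that image complexes are concentrated in degree $0$. The paper phrases it as ``$H_0\circ\Phi = h_0$ is an equivalence, and $H_0$ is faithful on the image'', whereas you package the same fact as the natural isomorphism $\Phi\simeq\iota\circ h_0$; these are two sides of the same coin.
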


\begin{proof} The composition of this functor with $H_0$ is the equivalence $h_0$ of
Proposition \ref{p3.4.6}. Therefore it suffices to show that the restriction of $H_0$ to the
image of $S_1$ is faithful. This is obvious, since the objects of this image are homologically
concentrated in degree $0$.
\end{proof}

\subsection{The transfer structure on $1$-motivic sheaves} Recall the category $\AbS$ from
Definition \ref{s1a}. Lemma 
\ref{l1.3} provides a functor
\[\rho:\AbS\to \HI_\et[1/p].\]

The aim of this subsection is to prove:

\begin{propose}\label{ptransf} This functor extends to a full embedding
\[\rho:\Shv_1\into\HI_\et[1/p]\]
where $\HI_\et$ is the category of Definitions \ref{hi} and
\ref{dD.1}. This functor is exact with thick image (\ie stable under extensions).
\end{propose}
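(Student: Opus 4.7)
The plan is to extend $\rho$ through the presentation description $\Shv_1 \simeq S_1$ of Proposition \ref{p3.4.6}. Given $\cF \in \Shv_1$ presented by $F_\cdot = [F_1 \to L_0 \oplus G] \in S_1^\eff$, both terms lie in $\AbS$, so I can apply the already-defined $\rho$ and declare $\rho(\cF) := H_0$ of the resulting length-$1$ complex, computed inside the abelian category $\HI_\et[1/p]$. Well-definedness on morphisms and up to \qi in $S_1$ follows from Lemma \ref{l2.3.2} (calculus of right fractions) exactly as in the proof of Proposition \ref{p3.4.6}. The crucial coherence I need is that the underlying \'etale sheaf of $\rho(\cF)$ recovers $\cF$; this reduces to the standard fact that the forgetful functor $\HI_\et[1/p] \to \Shv_\et(Sm(k))[1/p]$ is exact, so commutes with $H_0$.

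Next I verify that $\rho$ is fully faithful. Faithfulness is immediate, since forgetting transfers is faithful. For fullness I will use Proposition \ref{p3.1} b)--c): given $\phi : \rho(\cF_1) \to \rho(\cF_2)$ in $\HI_\et[1/p]$, its underlying \'etale sheaf map is a morphism in $\Shv_1$ and thus lifts canonically to a morphism of normalised presentations. Since the lifts live in $\AbS$, they automatically carry the transfer structure of Lemma \ref{l1.3}, and assembling these on $H_0$ gives back $\phi$ in $\HI_\et[1/p]$.

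For exactness, I exploit the thickness of $\Shv_1$ in $\Shv_\et(Sm(k))[1/p]$ (Theorem \ref{text}), which makes its inclusion exact; combined with the exactness of the forgetful functor $\HI_\et[1/p] \to \Shv_\et(Sm(k))[1/p]$ just used, this yields exactness of $\rho$. Thickness of the image is then straightforward: an extension $\cG$ in $\HI_\et[1/p]$ of $\rho(\cF'')$ by $\rho(\cF')$ has underlying sheaf an extension of $\cF''$ by $\cF'$ in $\Shv_\et(Sm(k))[1/p]$, hence $1$-motivic by Theorem \ref{text}; faithfulness of the forgetful functor then forces the transfer structure on $\cG$ to coincide with the canonical one $\rho$ assigns to this underlying $1$-motivic sheaf.

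The main obstacle I anticipate is the bookkeeping of transfer structures through the presentation: confirming that the transfer structure on $\rho(\cF)$ is independent of the chosen presentation, and that every sheaf morphism between $1$-motivic sheaves respects the canonical transfers coming from Lemma \ref{l1.3}. Both points are handled by the uniqueness assertions in Proposition \ref{p3.1} b)--c) combined with Lemma \ref{l2.3.2}, but they are the place where the only real work occurs; everything else is a formal consequence of the exactness of the forgetful functor and Theorem \ref{text}.
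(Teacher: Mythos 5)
Your construction and the verification of full faithfulness and exactness follow the paper's proof closely and are correct. The gap is in the thickness argument. After observing (correctly) that the underlying \'etale sheaf of an extension $\cG$ of $\rho(\cF'')$ by $\rho(\cF')$ is $1$-motivic by Theorem \ref{text}, you conclude that ``faithfulness of the forgetful functor then forces the transfer structure on $\cG$ to coincide with the canonical one.'' That inference does not hold: faithfulness of $f:\HI_\et\to\Shv_\et(Sm(k))$ says that two parallel \emph{morphisms} in $\HI_\et$ with equal images under $f$ are equal, but it says nothing about whether two \emph{objects} of $\HI_\et$ with the same underlying sheaf must be equal. A priori $\cG$ could carry a transfer structure different from the one $\rho$ assigns. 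Nor do Proposition \ref{p3.1} b)--c) help, since those assert uniqueness of the group-scheme presentation $(G,b)$ of the underlying sheaf, not uniqueness of transfers.

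The missing ingredient, which the paper isolates as a standalone lemma at precisely this point, is that the transfer structure on a sheaf $\cF\in\Shv_1$ is \emph{unique}. Its proof requires genuine input: for $Y$ irreducible smooth one uses $\cF(Y)\into\cF(k(Y))$ (a homotopy-invariance input, \cite[Cor.~4.19]{V2}), then $\cF(k(Y))\into\cF(\overline{k(Y)})$ via the \'etale sheaf condition together with Lemma \ref{linj}, and finally over the separably closed field $\overline{k(Y)}$ one has $c(\overline{k(Y)},X)=Z_0(X_{\overline{k(Y)}})$, so every finite correspondence is a $\Z$-linear combination of graphs of morphisms, on which any two transfer structures necessarily coincide. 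Once this lemma is in hand, thickness of the image of $\rho$ is indeed the formality you describe.
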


\begin{proof} By Proposition \ref{p3.4.6}, it suffices to construct a functor $\rho:S_1\to
\HI_\et[1/p]$. First define a functor $\tilde \rho:S_1^\eff\to \HI_\et[1/p]$ by
\[\tilde\rho([F_1\to F_0])= \coker(\rho(F_1)\to \rho(F_0)).\]

Note that the forgetful functor $f:\HI_\et\to \Shv_\et(Sm(k))$ is faithful and exact, hence
conservative. This first gives that
$\tilde\rho$ factors into the desired $\rho$.

Proposition \ref{p3.1} d) says that $f\rho$ is (fully faithful and) exact.
Since
$f$ is faithful, $\rho$ is fully faithful and exact. 

It remains to show that $\rho$ is thick. Recall that $\Shv_1$ is thick in $\Shv_\et(Sm(k))[1/p]$ by
Theorem \ref{text}. Since $f$ is exact, we are then left to show:

\begin{lemma} The transfer structure on a sheaf $\cF\in \Shv_1$ is unique.
\end{lemma}

\begin{proof} Let $\mu$ be the transfer structure on $\cF$ given by the beginning of the proof
of Proposition \ref{ptransf}, and let $\mu'$ be another transfer structure. Thus, for $X,Y\in
Sm(k)$, we have two homomorphisms
\[\mu,\mu':\cF(X)\otimes c(Y,X)\to \cF(Y)\]
and we want to show that they are equal. We may clearly assume that $Y$ is irreducible.

Let $F=k(Y)$ be the function field of $Y$. Since $\cF$ is a homotopy invariant Zariski sheaf
with transfers, the map $\cF(Y)\to \cF(F)$ is injective by \cite[Cor. 4.19]{V2}. Thus we may
replace $Y$ by $F$.

Moreover, it follows from the fact that $\cF$ is an \'etale sheaf and from Lemma \ref{linj}
that $\cF(F)\into \cF(\bar F)$, where $\bar F$ is an algebraic closure of $F$. Thus, we may even
replace $Y$ by $\bar F$.

Then the group $c(Y,X)$ is replaced by $c(\bar F,X) = Z_0(X_{\bar F})$. Since $\bar F$ is
algebraically closed, all closed points of $X_{\bar F}$ are rational, hence all finite
correspondences from $\Spec \bar F$ to $X$ are linear combinations of morphisms. Therefore $\mu$
and
$\mu'$ coincide on them.
\end{proof}

This concludes the proof of Proposition \ref{ptransf}.
\end{proof}

\subsection{$1$-motivic sheaves and $\DM$} Recall from Definition \ref{dD.1} the subcategory
$\HI_\et^s\subset \HI_\et$ \index{$\HI_\et^s$} of strictly homotopy invariant \'etale sheaves with transfers:
this is a full subcategory of $\DM_{-,\et}^\eff$. By Proposition \ref{pD.1.4}, we have
\[\HI_\et^s= \{\cF\otimes_\Z\Z[1/p]\mid \cF\in\HI_\et\}.\] 

The introduction of $\Shv_1$ is now made clear by the following 

\begin{sloppypar}
\begin{thm} \label{t3.2.3} Let $\Shv_1^s\subseteq \HI_\et^s$ be the full
subcategory image of $\Shv_1$ by the functor $\cF\mapsto \cF[1/p]$ of Lemma
\ref{lD.1.3}. Then $\Shv_1\to \Shv_1^s$ is an equivalence of categories. Moreover,
let
$M\in d\1 \DM_{\gm,\et}^{\eff}$. Then for all $i\in\Z$, $\sH_i(M)\in \Shv_1^s$. In particular, 
there is a $t$-structure on $d\1 \DM_{\gm,\et}^{\eff}$, with heart $\Shv_1^s$; it is induced by
the homotopy $t$-structure of Corollary \ref{cD.2} on
$\DM_{\fr,\et}^{\eff}$ (see Definition \ref{dfr} and Theorem \ref{tfr}).
\end{thm}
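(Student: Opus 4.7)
The plan is to prove the three claims of the theorem in sequence.

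First, the equivalence $\Shv_1\iso\Shv_1^s$ is essentially formal: by definition of $\Shv_1^s$ the functor is essentially surjective, so only full faithfulness is at stake. By Proposition~\ref{ptransf}, $\Shv_1$ embeds fully faithfully into $\HI_\et[1/p]$, and the blanket convention of Section~\ref{homotopy} makes Hom groups in the source already $\Z[1/p]$-linear; consequently the localisation functor $\cF\mapsto\cF[1/p]$ induces an isomorphism on Hom groups, which gives the claim.

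Second, for $M\in d\1\DM_{\gm,\et}^\eff$ I would use Theorem~\ref{t1.2.1} to write $M=T(C^\cdot)$ for some $C^\cdot\in C^b(\M[1/p])$. Under $T$ (Lemma~\ref{l2.2.1}) this is the total complex of the bicomplex $[L^\cdot\to \uG^\cdot]$ in $\HI_\et^s$, and the short exact sequence of rows $0\to \uG^\cdot[-1]\to \Tot\to L^\cdot\to 0$ together with its associated long exact sequence exhibits each $\sH_i(M)$ as an extension involving the cohomology sheaves of the two rows. Since $\Shv_1\subset \HI_\et[1/p]$ is abelian (Proposition~\ref{p3.1}) and closed under extensions (Theorem~\ref{text}), and each $L^q$ and $\uG^q$ lies in $\Shv_1$, so do the cohomology sheaves of $L^\cdot$ and $\uG^\cdot$; a final appeal to extension-closure gives $\sH_i(M)\in\Shv_1\simeq \Shv_1^s$.

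Third, to obtain the restricted $t$-structure I would invoke the standard criterion: a $t$-structure on a thick triangulated subcategory is inherited from the ambient one as soon as the truncation functors preserve the subcategory, and by induction on cohomological amplitude this reduces to showing that every object $\sH_i(M)$ for $M\in d\1\DM_{\gm,\et}^\eff$ belongs to $d\1\DM_{\gm,\et}^\eff$. By step two these are precisely the objects of $\Shv_1^s$. Given $\cF\in\Shv_1^s$ with normalised presentation $0\to L\to \uG\to \cF\to E\to 0$ from Proposition~\ref{p3.1}, the two short exact sequences $0\to L\to \uG\to \uG/L\to 0$ and $0\to \uG/L\to \cF\to E\to 0$ lie in the ambient heart $\HI_\et^s$ and hence give distinguished triangles in $\DM_{\fr,\et}^\eff$. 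The terms $L=T([L\to 0])$, $\uG=T([0\to G])$ and $E=T([E\to 0])$ all lie in $d\1\DM_{\gm,\et}^\eff$ --- the last via Theorem~\ref{ptors} identifying $D^b({}^t\M[1/p])$ with $D^b(\M[1/p])$, since $E$ may carry torsion --- and thickness of $d\1\DM_{\gm,\et}^\eff$ in $\DM_{\fr,\et}^\eff$ successively places $\uG/L$ and $\cF$ in the subcategory.

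The main obstacle is the third step. Care is required to deduce the distinguished triangles from the ambient homotopy $t$-structure on $\DM_{\fr,\et}^\eff$ rather than from the one being constructed (to avoid circularity), and to handle torsion in the cokernel $E$ by passing through $1$-motives with torsion. A subtle point is that $\uG/L$ need not be representable by an algebraic group, but is well-defined as a sheaf cokernel and is realised in $d\1\DM_{\gm,\et}^\eff$ as an appropriate shift of $T([L\to G])$ (the normalisation ensuring that $L\into\uG$ is injective).
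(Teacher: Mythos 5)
Your proposal is correct, and the overall structure matches the theorem's three assertions, but your route through the second assertion (the main content) is genuinely different from the paper's. For $\sH_i(M)\in\Shv_1^s$, you invoke the equivalence $T:D^b(\M[1/p])\iso d\1\DM_{\gm,\et}^\eff$ of Theorem \ref{t1.2.1} to write $M$ as the total complex of a double complex $[L^\cdot\to\uG^\cdot]$ and then use the long exact sequence of the short exact sequence of rows together with abelianness of $\Shv_1$ (Proposition \ref{p3.1} d)) and closure under extensions (Theorem \ref{text}). The paper instead observes that $d\1\DM_{\gm,\et}^\eff$ is by definition thickly generated by the $M(C)$ for smooth projective curves $C$, and verifies directly that $\sH^1(M(C))\cong R_{\pi_0(C)/k}\G_m[1/p]$ and $\sH^2(M(C))\cong\underline{\Pic}_{C/k}[1/p]$ lie in $\Shv_1^s$ by the computation in Proposition \ref{lcurve} b) and the case of Proposition \ref{p3.3.1}. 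Both use the same closure properties of $\Shv_1$ to propagate through the thick subcategory; your version is more uniform and avoids the explicit geometric computation, at the cost of leaning on the full force of Theorem \ref{t1.2.1}, whereas the paper's version is shorter given that $\Shv_1$ has already been shown thick.

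On the third assertion, you are right that the paper elides a nontrivial point with its ``In particular'': for the homotopy $t$-structure on $\DM_{\fr,\et}^\eff$ to \emph{restrict} to $d\1\DM_{\gm,\et}^\eff$ one needs not only that cohomology sheaves of objects of $d\1$ lie in $\Shv_1^s$, but also conversely that every sheaf $\cF\in\Shv_1^s$, placed in degree $0$, lies in $d\1$; otherwise the inductive truncation argument has no base case. Your verification of this via the normalised presentation $0\to L\to\uG\to\cF\to E\to 0$, the resulting two short exact sequences viewed as distinguished triangles in the ambient heart, and the identification of $L$, $\uG$, $E$ with objects in the essential image of $T$ (with $[E\to 0]$ handled via Theorem \ref{ptors}), is exactly what is needed; note it is a direct argument that does not circularly appeal to Lemma \ref{l4.3.1}, which appears later in the paper. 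The caveats you flag at the end (that the ambient $t$-structure and not the one under construction supplies the triangles, and that $\uG/L$ need only be a sheaf cokernel) are the right things to watch.

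The first assertion in your write-up is essentially the paper's: both rest on Proposition \ref{pD.1.4}, with your version spelling out that full faithfulness comes from $\Shv_1$ being a full subcategory of the already $\Z[1/p]$-linear $\Shv_\et(Sm(k))[1/p]$ and Proposition \ref{ptransf}.
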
\index{$\Shv_1^s$}
\end{sloppypar}

\begin{proof} The first assertion is clear since $\HI_\et^s\to \HI_\et\to
\HI_\et[1/p]$ is an equivalence of categories by Proposition \ref{pD.1.4}. By
Proposition
\ref{p3.1} d), we reduce to the case
$M=M(C)$, $C\by{p}\Spec k$ a smooth projective curve. By Proposition \ref{lcurve}, the
cohomology sheaves of $M(C)$ belong to $\Shv_1^s$: for $\cH^1$ this is clear and for $\cH^2$ it
is a (trivial) special case of Proposition \ref{p3.3.1}.
\end{proof}

Note that the functor $\M\to \HI_\et^{[0,1]}$ refines to an functor $\M\to \Shv_1^{[0,1]}$,
hence, using Lemma \ref{ltot} again, we get a composed triangulated functor
\begin{equation}\label{hts}
\tot:D^b(\M[1/p])\to D^b(\Shv_1^{[0,1]})\to D^b(\Shv_1)
\end{equation}\index{$\tot$}
refining the one from Lemma \ref{l2.2.1} (same proof). We then get:

\begin{cor} \label{c3.3.2} The two functors
\[\begin{CD}
D^b(\M[1/p])@>\tot>>D^b(\Shv_1)\to d\1\DM_{\gm,\et}^\eff
\end{CD}\]
are equivalences of categories. \end{cor}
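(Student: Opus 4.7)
The composition of the two displayed functors is naturally isomorphic to the functor $T$ of Theorem \ref{t1.2.1}: both are obtained by totalising the length-$1$ complex of \'etale sheaves with transfers attached to a complex of $1$-motives, the only refinement being that, by Proposition \ref{ptransf}, the values in fact lie in $\Shv_1\subseteq \HI_\et^s$. By Theorem \ref{t1.2.1} this composition is an equivalence onto $d\1\DM_{\gm,\et}^\eff$, so $\tot$ is automatically fully faithful and the second functor $r:D^b(\Shv_1)\to d\1\DM_{\gm,\et}^\eff$ is automatically essentially surjective.

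It is therefore enough to prove that $\tot$ is essentially surjective; then $\tot$ is an equivalence and so is $r=T\circ \tot^{-1}$. Let $\cE\subseteq D^b(\Shv_1)$ denote the essential image of $\tot$. Being the essential image of a fully faithful triangulated functor, $\cE$ is a strict triangulated subcategory of $D^b(\Shv_1)$: closure under cones comes from lifting a morphism between two objects of $\cE$ back to $D^b(\M[1/p])$ by fullness of $\tot$ and taking its cone there. The standard $t$-structure on $D^b(\Shv_1)$ being bounded, every object of $D^b(\Shv_1)$ is built by iterated cones from shifts of its cohomology sheaves; hence it is enough to check that $\Shv_1\subseteq \cE$.

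Given $\cF\in \Shv_1$, the normalised presentation of Proposition \ref{p3.1} a) gives an exact sequence
\[0\to L\to \uG\to \cF\to E\to 0\]
in $\Shv_1$, with $L$ a lattice, $G$ a semi-abelian variety and $E$ a discrete sheaf. Splitting this via $K=\im(\uG\to \cF)$ yields two short exact sequences in $\Shv_1$, hence two distinguished triangles in $D^b(\Shv_1)$. Now $L$ and $\uG$ lie in $\cE$ as they are, up to a shift, images under $\tot$ of the Deligne $1$-motives $[L\to 0]$ and $[0\to G]$. The discrete sheaf $E$ admits a length-$1$ resolution $0\to L_1\to L_0\to E\to 0$ by lattices (\emph{cf.}\ the proof of Theorem \ref{ptors} a)), so $E$ is isomorphic in $D^b(\Shv_1)$ to the image under $\tot$ of the two-term complex $[L_1\to L_0]$ of objects of $\M[1/p]$, and also lies in $\cE$. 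Applying closure of $\cE$ under cones to the two triangles produces successively $K\in \cE$ and then $\cF\in \cE$, completing the proof. The main obstacle is precisely this last manoeuvre, namely capturing discrete torsion in the cokernel $E$, which is exactly what forces the detour through Theorem \ref{ptors}.
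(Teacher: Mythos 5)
You take a route different from the paper's: the paper shows the second functor $r\colon D^b(\Shv_1)\to d\1\DM_{\gm,\et}^\eff$ is conservative by Theorem \ref{t3.2.3} (the homotopy $t$-structure, with heart $\Shv_1^s\simeq\Shv_1$) and then applies Lemma \ref{lA.2}, whereas you try to establish essential surjectivity of $\tot$ directly by devissage along the standard $t$-structure. The difficulty is your opening assertion that ``$\tot$ is automatically fully faithful'' because $T=r\circ\tot$ is an equivalence. This gives only that $\tot$ is \emph{faithful}; it does not give fullness. Already for plain categories a fully faithful composite $G\circ F$ need not make $F$ full: with $R=\Q[x]/(x^2)$, take $F=R\otimes_\Q(-)$ from $\Q$-vector spaces to $R$-modules and $G=(-)/x(-)$; then $G\circ F=\mathrm{id}$ while $\Hom_\Q(V,W)\to\Hom_R(R\otimes V,R\otimes W)\cong\Hom_\Q(V,W)^{\oplus 2}$ is not surjective. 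In the situation at hand, fullness of $\tot$ is equivalent to faithfulness of $r$, and that is precisely the non-formal content of the corollary.

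This unproven fullness is load-bearing: you invoke it to argue that the essential image $\cE$ of $\tot$ is closed under cones, and closure under cones is used twice --- once to land $\cF$ in $\cE$ from the triangle $K\to\cF\to E$, and again in the devissage reducing a general object of $D^b(\Shv_1)$ to its cohomology sheaves. Without it the argument is circular: you need full faithfulness of $\tot$ to run the devissage, but you were hoping the devissage would deliver the equivalence. Your intermediate identifications are otherwise sound (indeed $L$, $\uG$, $E$ and even $K\simeq\tot([L\to G])[1]$ visibly lie in $\cE$), but the last cone cannot be justified as written, and your concluding sentence ``then $\tot$ is an equivalence'' also silently presupposes $\tot^{-1}$. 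A non-circular version of your strategy would have to exhibit $\cF$ directly as $\tot$ of an explicit complex of $1$-motives with torsion (as in Step~1 of the proof of Proposition \ref{p3.4.6}, using $D^b(\M[1/p])\simeq D^b({}^t\M[1/p])$ from Theorem \ref{ptors}), and would still face the closure-under-cones problem for complexes of length greater than one. The role of Theorem \ref{t3.2.3} in the paper is exactly to break this circle by giving conservativity of $r$ independently.
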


\begin{proof} For the composition, this is Theorem \ref{t1.2.1}. This implies that the second functor is full and essentially surjective, and to conclude, it suffices by Lemma \ref{lA.2} to see that it is conservative. But this follows immediately from Theorem \ref{t3.2.3}. Note that by Proposition~\ref{pD.1.4} we have that $\Shv_1^s\simeq \Shv_1$ (see Theorem \ref{t3.2.3} for the definition of $\Shv_1^s$).  
\end{proof}

\begin{defn}\label{dhts} We call the $t$-structure defined on $D^b(\M[1/p])$ or on
$d\1\DM_{\gm,\et}^\eff$ by Corollary \ref{c3.3.2} the \emph{homotopy $t$-structure}.
\end{defn}

\begin{remark}[\cf \S \protect{\ref{alessandra1}}]\label{alessandra2}  In
\cite{alessandra}, A. Bertapelle defines an \emph{integral} version of
the category $\Shv_1^{\rm fppf}$ of Remark \ref{fppf} and constructs an
equivalence of categories 
\[D^b(\M)\simeq D^b(\Shv_1^{\rm fppf})\]
without inverting $p$ (not going via $\DM$). Hence the homotopy $t$-structure of Definition \ref{dhts} exists integrally.
\end{remark}

\subsection{Comparing $t$-structures}\label{3.6} In this subsection, we want
to compare the homotopy $t$-structure of Definition \ref{dhts} with the motivic $t$-structure of Theorem \ref{ptors} a).

Let $C\in D^b(\M[1/p])$. Recall from \ref{not} the notation ${}^tH_n(C)\in {}^t\M[1/p]$ for its homology relative to the torsion $1$-motivic
$t$-structure from Theorem \ref{ptors}. We also write $\sH^n(C)\allowbreak\in
\Shv_1$ for its cohomology objects of  relative to the homotopy
$t$-structure. \index{$\sH^n$, $\sH_n$}

Consider the functor $\tot$ of \eqref{hts}. Let $\cF$ be a $1$-motivic sheaf and
$(G,b)$ its associated normalised pair (see Proposition \ref{p3.1} a)). Let $L=\ker
b$ and $E=\coker b$. In $D^b(\M[1/p])$, we have an exact triangle
\[[L\to G][1]\to \tot^{-1}(\cF)\to [E\to 0]\by{+1}\]
(see Corollary \ref{c3.3.2}). This shows:

\begin{lemma} \label{l4.3.1} We have
\begin{align*}
{}^tH_0(\tot^{-1}(\cF))&=[E\to 0]\\
{}^tH_1(\tot^{-1}(\cF))&=[L\to G]\\
{}^tH_q(\tot^{-1}(\cF))&=0 \text{ for } q\ne 0,1.\qed
\end{align*}
\end{lemma}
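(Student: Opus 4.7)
The plan is to apply the cohomological functor associated to the $t$-structure on $D^b(\M[1/p])$ with heart ${}^t\M[1/p]$ to the exact triangle
\[ [L\to G][1] \to \tot^{-1}(\cF) \to [E\to 0] \by{+1} \]
exhibited just above the statement of the lemma. The whole content then reduces to computing the $t$-cohomology of the two outer terms of this triangle.

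First I would observe that both $[L\to G]$ and $[E\to 0]$ lie in the heart ${}^t\M[1/p]$ when placed in cohomological degree $0$. Since $b:\uG\to\cF$ is normalised (Proposition \ref{p3.1} a)), the kernel $L$ is torsion-free, so $[L\to G]$ is a genuine Deligne $1$-motive and lands in $\M[1/p]\subseteq {}^t\M[1/p]$ via the fully faithful embedding \eqref{fullem}. Likewise $E=\coker b$ is discrete, so $[E\to 0]$ belongs to ${}^t\M[1/p]$ (with zero semi-abelian part). Consequently ${}^tH^i([E\to 0])$ vanishes away from $i=0$, where it equals $[E\to 0]$, while ${}^tH^i([L\to G][1])$ vanishes away from $i=-1$, where it equals $[L\to G]$.

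The associated long exact sequence then collapses to give two isomorphisms ${}^tH^{-1}(\tot^{-1}(\cF))\simeq [L\to G]$ and ${}^tH^0(\tot^{-1}(\cF))\simeq [E\to 0]$, together with vanishing of ${}^tH^i(\tot^{-1}(\cF))$ for $i\notin\{-1,0\}$. Rewriting via the convention ${}^tH_n={}^tH^{-n}$ of Notation \ref{not} delivers the claimed values. There is no real obstacle here: once one has the triangle and the $t$-structure formalism in hand, the only substantive input is the elementary recognition that normalisation of $b$ forces both outer terms of the triangle into the heart.
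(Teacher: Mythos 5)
Your proposal is correct and coincides with the argument the paper implicitly uses: the \qed{} on the lemma statement signals that it is read off directly from the exact triangle $[L\to G][1]\to\tot^{-1}(\cF)\to[E\to 0]\by{+1}$ set up just beforehand, precisely by the long-exact-sequence argument you spell out. Your only addition is to make explicit that normalisation of $b$ forces both outer terms into the heart ${}^t\M[1/p]$, which is indeed the one small verification needed.
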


On the other hand, given a 1-motive (with torsion or cotorsion) $M=[L\by{f} G]$, we clearly have
\begin{align}
\sH^0(M)&=\ker f\notag\\
\sH^{1}(M)&=\coker f\label{eq3.9}\\
\sH^q(M)&=0 \text{ for } q\ne 0,1.\notag
\end{align}
by considering it as a complex of length $1$ of $1$-motivic sheaves.

In particular, ${}^t\M[1/p]\cap \Shv_1= \Shv_0$,
${}^t\M[1/p]\cap \Shv_1[-1]$ consists of quotients of semi-abelian
varieties by discrete subsheaves and ${}^t\M[1/p]\cap \Shv_1[q]\allowbreak=0$
for $q\ne 0,-1$.

Here is a more useful result relating $\sH^i$ with the two motivic $t$-structures:

\begin{propose}\label{p3.10} Let $C\in D^b(\M[1/p])$; write $[L_i\by{u_i} G_i]$ for ${}_tH_i(C)$ and $[L^i\by{u^i} G^i]$ for ${}^tH^i(C)$\footnote{Note that $(L_i,G_i)$ and $(L^i,G^i)$ are determined only up to the relevant \qi's.}. Then we have exact sequences in $\Shv_1$:
\begin{gather*}
\dots\to L_{i+1}\by{u_{i+1}} G_{i+1}\to \sH_i(C)\to L_{i}\by{u_{i}} G_{i}\to\dots\\
\dots\to L^{i-1}\by{u^{i-1}} G^{i-1}\to \sH^i(C)\to L^{i}\by{u^{i}} G^{i}\to\dots
\end{gather*}
\end{propose}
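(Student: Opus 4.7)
The plan is to deduce both exact sequences from a standard hypercohomology-type spectral sequence relating the motivic and homotopy $t$-structures on $D^b(\M[1/p])$.

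First I would record the key input \eqref{eq3.9}: any object $M=[L\by{u}G]$ of ${}^t\M[1/p]$ or of ${}_t\M[1/p]$, viewed inside $D^b(\M[1/p])\simeq d\1 \DM_{\gm,\et}^\eff$ via Corollary~\ref{c3.3.2}, is just a length-$1$ complex of $1$-motivic sheaves, so its homotopy-sheaf cohomology is concentrated in degrees $0$ and $1$, with $\sH^0(M)=\ker u$ and $\sH^1(M)=\coker u$.

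Next, for a bounded $C\in D^b(\M[1/p])$, I would invoke the standard machinery (\cf \cite[\S 3.1]{BBD}) producing the spectral sequence attached to two $t$-structures on the same triangulated category: the iterated truncation triangles
\[\tau^{\le q-1}C\to \tau^{\le q}C\to {}^tH^q(C)[-q]\by{+1}\]
for the motivic $t$-structure with heart ${}^t\M[1/p]$ assemble, after applying $\sH^*$, into a convergent spectral sequence in $\Shv_1$
\[E_2^{p,q}=\sH^p({}^tH^q(C))\Longrightarrow \sH^{p+q}(C),\]
boundedness of $C$ ensuring convergence. By the first step, $E_2^{p,q}=0$ unless $p\in\{0,1\}$, with $E_2^{0,q}=\ker u^q$ and $E_2^{1,q}=\coker u^q$. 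Since the differentials $d_r$ for $r\ge 2$ have bidegree $(r,1-r)$, each such differential has source or target outside the strip $\{p\in\{0,1\}\}$ and therefore vanishes; thus $E_2=E_\infty$.

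The two-row structure then yields short exact sequences
\[0\to \coker u^{n-1}\to \sH^n(C)\to \ker u^n\to 0,\]
which I would splice with the tautological sequences $0\to \ker u^n\to L^n\to \im u^n\to 0$ and $0\to\im u^n\to G^n\to \coker u^n\to 0$ in $\Shv_1$ to obtain the claimed long exact sequence in cohomology. The homological version is then obtained by running the identical argument on the ${}_t\M[1/p]$ $t$-structure. The only real technical point, and thus the (mild) main obstacle, is justifying the existence of the spectral sequence; this is routine given that the motivic truncation functors are cohomological on $D^b(\M[1/p])$ and that the homotopy $t$-structure of Definition~\ref{dhts} provides the target cohomological functors $\sH^*$.
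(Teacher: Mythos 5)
Your argument is correct and rests on the same two ingredients as the paper's proof: the computation \eqref{eq3.9} of $\sH^{\bullet}$ on a single $1$-motive, and the Postnikov tower of $C$ for the motivic $t$-structure. The paper packages these more lightly: it simply inducts on the motivic length of $C$, applying the cohomological functor $\sH^{\bullet}$ to each truncation triangle $\tau_{\le n-1}C\to C\to {}^tH^n(C)[-n]\by{+1}$ and reading off the desired long exact sequence directly from the two-degree concentration given by \eqref{eq3.9}; there is no spectral sequence to construct or degenerate. Your version does bring out the conceptual shape nicely---the statement is exactly the degeneration of a two-$t$-structures hypercohomology spectral sequence concentrated in rows $p\in\{0,1\}$, yielding the short exact sequences $0\to\coker u^{n-1}\to\sH^n(C)\to\ker u^n\to 0$ which one then splices---but it carries the overhead you yourself flag as the main technical point: the naive filtration spectral sequence of the Postnikov tower has $E_1^{p,q}=\sH^q({}^tH^p(C))$ with $d_1$ staying inside the two-row strip, so obtaining an $E_2$-page of the form $\sH^p({}^tH^q(C))$ with differentials of bidegree $(r,1-r)$ needs a Deligne-style r\'egrading (d\'ecalage), or else a hands-on check that the relevant $d_1$'s contribute the right thing. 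Once that is unwound, your proof is the paper's induction in a heavier wrapper; the paper's choice is the more elementary one, and requires nothing beyond the long exact sequence of a triangle.
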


\begin{proof} For the first one, argue by induction on the length of $C$ with respect to the motivic $t$-structure with heart ${}_t\M[1/p]$ (the case of length $0$ is \eqref{eq3.9}). For the second one, same argument with the other motivic $t$-structure.
\end{proof}

Note finally that the homotopy $t$-structure is far from being invariant under Cartier duality:
this can easily be seen by using Proposition \ref{p3.4.6}.

\section{Comparing two dualities}\label{dual}

In this section, we show that the classical Cartier duality for $1$-motives is compatible with a
``motivic Cartier duality" on triangulated motives, described in Definition \ref{cdef} below. 

\subsection{Biextensions of $1$-motives}\label{sbiext}This material is presumably well-known
to  experts, and the only reason why we write it up is that we could not find
it in the literature. Exceptionally, we put $1$-motives in degrees $-1$ and $0$ in this subsection and in the next one, for compatibility with Deligne's conventions in \cite{D}.

Recall (see \cite[\S 10.2]{D}) that for $M_1= [L_1\by{u_1} G_1]$ and 
$M_2= [L_2\by{u_2} G_2]$ two complexes of abelian sheaves over some site $\cS$, concentrated in
degrees
$-1$ and
$0$, a \emph{biextension} of
$M_1$ and
$M_2$ by an abelian sheaf $H$ is given by a (Grothendieck) biextension $P$
of $G_1$ and $G_2$ by $H$ and a pair of compatible trivializations of the
biextensions of
$L_1\times G_2$ and $G_1\times L_2$ obtained by pullbacks.
Let $\Biext (M_1,M_2;H)$ denote the group of isomorphism  classes
of biextensions. We have the following fundamental formula (see \cite[\S
10.2.1]{D}):\index{$\Biext$}
\begin{equation}\label{biextform}
\Biext (M_1,M_2;H) = \EExt^1_\cS(M_1\oo^L M_2,H).
\end{equation}

\begin{sloppypar}
Suppose now that $M_1$ and $M_2$ are two Deligne $1$-motives. Since $G_1$ and $G_2$ are smooth,
we may compute biextensions by using the \'etale topology. Hence, we shall take here
\[\cS=Sm(k)_\et.\]

Let $M_2^*$ denote the Cartier dual of $M_2$ as constructed by Deligne
(see  \cite[\S 10.2.11]{D} and \cite[\S 0]{BSAP}) along with the
Poincar\'e biextension $P_{M_2}\in \Biext (M_2,M_2^*;\G_m)$. We also
have  the transpose ${}^tP_{M_2}=P_{M_2^*}\in \Biext (M_2^*,M_2;\G_m)$.
Pulling back ${}^tP_{M_2}$ yields a map
\begin{align}
\gamma_{M_1,M_2} : \Hom (M_1, M_2^*)& \to \Biext (M_1,M_2;\G_m)\label{eqbiext}\\
\phi&\mapsto(\phi\times 1_{M_1})^*({}^tP_{M_2})\notag
\end{align}
which is clearly additive and natural in $M_1$.
\end{sloppypar}

\begin{propose} \label{biext} The map $\gamma_{M_1,M_2}$ yields a
natural equivalence of functors from $1$-motives to abelian groups, \ie
the functor 
$$M_1\mapsto \Biext (M_1,M_2;\G_m)$$
on $1$-motives is representable by the Cartier dual $M_2^*$. Moreover,
$\gamma_{M_1,M_2}$ is also natural in $M_2$. 
\end{propose}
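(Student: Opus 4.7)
The plan is to establish the representability by d\'evissage on the weight filtrations of $M_1$ and $M_2$, reducing to the Weil-Barsotti formula and character duality. Using the fundamental formula \eqref{biextform}, $\gamma_{M_1,M_2}$ becomes the map
\[\Hom(M_1, M_2^*) \longrightarrow \EExt^1(M_1 \otimes^L M_2, \G_m)\]
given by Yoneda-composition with ${}^tP_{M_2} \in \EExt^1(M_2^* \otimes^L M_2, \G_m)$; equivalently, it is the adjunction map that would be an isomorphism if $M_2^*$ were $\RHom(M_2, \G_m[1])$.

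First I would handle naturality. Naturality in $M_1$ is immediate from the pullback definition of $\gamma$. For naturality in $M_2$: given $f : M_2 \to M_2'$, Deligne's construction of $f^* : (M_2')^* \to M_2^*$ in \cite[\S 10.2.11]{D} is characterised precisely by the compatibility $(1 \times f)^* P_{M_2'} = (f^* \times 1)^* P_{M_2}$ of Poincar\'e biextensions; transposing and pulling back by $\phi \times 1$ for $\phi : M_1 \to (M_2')^*$ yields the required commutative square.

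For bijectivity, both functors are additive and cohomological in $M_1$: the right-hand side by the standard long exact sequence of hyperExt, the left-hand side by the fact that $M_2^*$ represents a cohomological functor in the derived category of $1$-motives (using Proposition \ref{iso1} rationally to control the higher $\Ext$ groups and reducing integrally via torsion/free dichotomy on each graded piece of the weight filtration). Applying the weight filtration $W_{-2} M_1 \subseteq W_{-1} M_1 \subseteq M_1$ and the five lemma reduces the problem to $M_1$ pure of weight $0$, $-1$, or $-2$; a symmetric argument in $M_2$, exploiting that Cartier duality swaps weights $0$ and $-2$ and fixes weight $-1$, further reduces to the nine pure/pure cases. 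Each such case is then a direct computation of $\EExt^1(M_1 \otimes^L M_2, \G_m)$ via the local-to-global spectral sequence, whose inputs are: the Weil-Barsotti isomorphism $\sext^1(A, \G_m) \simeq A^*$; the vanishings $\shom(A, \G_m) = 0$ and $\sext^i(A, \G_m) = 0$ for $i \ge 2$ (\cite[4.17]{MI}); character duality exchanging tori and lattices with $\sext^i$ vanishing for $i \ge 1$. In each case the resulting group matches $\Hom(M_1, M_2^*)$ under Deligne's explicit construction of $M_2^*$.

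The main obstacle will be identifying the cup-product map $\gamma$ with the isomorphism produced by these spectral-sequence computations in each pure/pure case, rather than merely exhibiting abstract bijections. This can be dispatched by the universal-element trick: evaluate $\gamma_{M_2^*, M_2}(\mathrm{id}_{M_2^*}) = {}^tP_{M_2}$, which is the universal Poincar\'e biextension essentially by construction of Cartier duality in \cite[\S 10.2.11]{D}. Then naturality of $\gamma$ in $M_1$, together with the identification of $\Biext(-, M_2; \G_m)$ as representable on the pure summands, forces $\gamma$ to coincide with the abstract isomorphism by Yoneda, completing the proof on pure/pure pairs and hence in general.
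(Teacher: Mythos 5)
Your approach is a genuine alternative to the paper's, but it has a real gap in the d\'evissage step. The paper does not argue via weight-filtration and the five lemma; it constructs an \emph{explicit inverse} to $\gamma_{M_1,M_2}$. After reducing to $k$ algebraically closed via a Galois-descent argument (Lemma \ref{kalg}, using the vanishing Lemma \ref{ld0}) and proving the abelian-variety case directly from the universal property of the Poincar\'e bundle (Lemma \ref{biexta}), the paper takes an arbitrary biextension $(P,\tau,\sigma)$ of $M_1,M_2$, uses $\Biext(G_1,G_2;\G_m)=\Biext(A_1,A_2;\G_m)$ to extract the map $f:A_1\to A_2'$ on abelian parts, and then extracts the lattice data $g,g'$ from the trivialisations $\tau,\sigma$ by a careful analysis of the symmetric avatar of $M_i$ (Lemma \ref{standard}), checking the compatibility \eqref{cond} at the end. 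The map $M_1\to M_2^*$ is literally assembled from the triple $(f,g,g')$; no comparison of Ext groups is required.

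The gap in your version is the claim that both sides are ``cohomological in $M_1$'' and that the five lemma applies across the weight filtration. The right-hand side $\Biext(M_1,M_2;\G_m)=\EExt^1(M_1\otimes^L M_2,\G_m)$ does sit in a long exact sequence of hyper-Ext groups, and by Lemma \ref{ld0} both rows are even left exact. But the continuation of the top row is $\Ext^1_{\M}(\gr_0 M_1,M_2^*)$, computed in the (non-abelian, or after inverting $p$ exact) category of $1$-motives, while the continuation of the bottom row is $\EExt^2(\gr_0 M_1\otimes^L M_2,\G_m)$, computed in the derived category of \'etale sheaves. The four lemma already needs a compatible map between these two groups, \emph{injective}, together with commutativity of the square involving the boundary maps. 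No such map is defined at this stage, and constructing one (and checking the boundary compatibility) amounts to comparing $\Ext^*$ in $D^b(\M[1/p])$ with $\EExt^*$ of complexes of sheaves --- which is essentially the content of Theorem \ref{teq} and Proposition \ref{ext=ext} proved later, and which themselves rely on Proposition \ref{biext}. Your appeal to Proposition \ref{iso1} controls $\Ext^i_{\M\otimes\Q}$ for $i\ge 2$, but that is silent on the identification of $\Ext^1_{\M}$ with an $\EExt^2$ of sheaves, and it does not furnish the integral statement. The lattice piece $\gr_0 M_1$ is also not projective in $\M$ (cf.\ Proposition \ref{proj}), so you cannot dodge the boundary term by a projectivity argument either. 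Your universal-element trick correctly pins down $\gamma$ on representables once you know both sides are isomorphic, but it does not repair the d\'evissage itself. If you want to salvage this route, the honest fix is to first \emph{prove} that $M_2^*$ computes a suitable truncation of $\srhom(M_2,\G_m[1])$ --- but that is the theorem, not a tool available in advance. The paper's explicit-inverse argument is the way to cut the knot.
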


\begin{proof}  We start with a few lemmas:

\begin{lemma}\label{ld0} For $q\le 0$, we have
\[\Hom_{\bar k}(M_1\oo^L M_2,\G_m[q])=0.\]
\end{lemma}

\begin{proof} For $q<0$ this is trivial and for $q=0$ this is \cite[Lemma 10.2.2.1]{D}.
\end{proof}

\begin{lemma} \label{kalg} Let $\bar k$ be an algebraic closure of $k$
and $G=Gal(\bar k/k)$. Then
\begin{align*}
\Hom_k (M_1, M_2^*)&\iso \Hom_{\bar k} (M_1, M_2^*)^G\\
\Biext_k (M_1,M_2;\G_m)&\iso \Biext_{\bar k} (M_1,M_2;\G_m)^G.
\end{align*}
\end{lemma}

\begin{sloppypar}
\begin{proof} The first isomorphism is obvious. For the second, thanks
to \eqref{biextform} we may use the spectral sequence
\[H^p(G,\Hom_{\bar k}(M_1\oo^L M_2,\G_m[q]))\Rightarrow \Hom_k(M_1\oo^L
M_2,\G_m[p+q]).\]

(This is the only place in the proof of Proposition \ref{biext} where we
shall use \eqref{biextform}.) The assertion then follows from Lemma \ref{ld0}. \end{proof}
\end{sloppypar}

Lemma \ref{kalg}, reduces the proof of Proposition \ref{biext} to the
case where \emph{$k$ is algebraically closed}, which we now assume. The following is a special
case of this proposition:

\begin{lemma} \label{biexta} The map $\gamma_{M_1,M_2}$ is an
isomorphism when $M_1$ and $M_2$ are abelian varieties $A_1$ and $A_2$,
and is natural in $A_2$.
\end{lemma}

Again this is certainly well-known and mentioned explicitly as such in
\cite[VII, p. 176, (2.9.6.2)]{sga7}. Unfortunately we have not been able to
find a proof in the literature, so we provide one for the reader's
convenience.

\begin{proof}  We shall use the universal property of the Poincar\'e
bundle \cite[Th. p. 125]{mumford}. Let $P\in \Biext(A_1,A_2)$. Then 
\begin{enumerate}
\item $P_{|A_1\times\{0\}}$ is trivial;
\item $P_{|\{a\}\times A_2}\in \Pic^0(A_2)$ for all $a\in A_1(k)$.
\end{enumerate}

Indeed, (1) follows from the multiplicativity of $P$ on the $A_2$-side.
For (2) we offer two proofs (note that they use multiplicativity on
different sides): 

\begin{sloppypar}
\begin{itemize}
\item By multiplicativity on the $A_1$-side, $a\mapsto P_{|\{a\}\times
A_2}$ gives a homomorphism $A_1(k)\to \Pic(A_2)$. Composing with the
projection to $\NS(A_2)$ gives a homomorphism from a divisible group to a
finitely generated group, which must be trivial.
\item (More direct but more confusing): we have to prove that
$T_b^*P_{|\{a\}\times A_2}=P_{|\{a\}\times A_2}$ for all $b\in A_2(k)$.
Using simply $a$ to denote the section $\Spec k\to A_1$ defined by $a$,
we have a commutative diagram
\[\begin{CD}
A_2@>a\times 1_{A_2}>> A_1\times A_2\\
@V{T_b}VV @VV{1_{A_1}\times T_b}V \\
A_2@>a\times 1_{A_2}>> A_1\times A_2.
\end{CD}\]

Let $\pi_1:A_1\to\Spec k$ and $\pi_2:A_2\to\Spec k$ be the two
structural maps. Then by multiplicativity on the $A_2$-side, an easy
computation gives
\[(1_{A_1}\times T_b)^*P=P\otimes \left(1_{A_1}\times (\pi_2\circ
b)\right)^*P.\]

Applying $(a\times 1_{A_2})^*$ to this gives the result since $ (a\times
1_{A_2})^*\circ\left(1_{A_1}\times (\pi_2\circ b)\right)^*P=\pi_{A_2}^*
P_{a,b}$ is trivial.
\end{itemize}
\end{sloppypar}

By the universal property of the Poincar\'e bundle, there exists a
unique morphism\footnote{For convenience we denote here by $A'$ the dual of an
abelian variety $A$ and by $f'$ the dual of a homomorphism $f$ of abelian
varieties.} $f:A_1\to A_2'$ such that $P\simeq (f\times
1_{A_2})^*({}^tP_{A_2})$. It remains to see that $f$ is a homomorphism:
for this it suffices to show that $f(0)=0$. But
\begin{multline*}
\sO_{A_2}\simeq P_{|\{0\}\times A_2}=(0\times 1_{A_2})^*\circ (f\times
1_{A_2})^*({}^t P_{A_2})\\ =(f(0)\times 1_{A_2})^*({}^t
P_{A_2})=(P_{A_2})_{|A_2\times\{f(0)\}}=f(0)
\end{multline*}
where the first isomorphism holds by multiplicativity of $P$ on the
$A_1$-side.

Finally, the naturality in $A_2$ reduces to the fact that, if $f:A_1\to A_2'$, then $(f\times
1_{A_2})^*({}^tP_{A_2})\simeq (1_{A_1}\times f')^* (P_{A_1})$. This follows from the
description of
$f'$ on $k$-points as the pull-back by $f$ of line bundles.
\end{proof}

We also have the following easier

 \begin{lemma}\label{kalg1} Let $L$ be a lattice and $A$ an abelian
variety. Then the natural map
\begin{align*}
\Hom(L,A')&\to \Biext(L[0],A[0];\G_m)\\
f&\mapsto (1\times f)^*({}^tP_A)
\end{align*}
is bjiective.
\end{lemma}

\begin{proof} Reduce to $L=\Z$; then the right hand side can be identified with $\Ext(A,\G_m)$
and the claim comes from the Weil-Barsotti formula.
\end{proof}

Let us now come back to our two $1$-motives $M_1,M_2$. We denote by $L_i,
T_i$ and
$A_i$ the discrete, toric and abelian parts of $M_i$ for $i= 1,2$. Let us
further denote by 
$u'_i:L_i'\to A_i'$ the map corresponding to $G_i$ under the isomorphism
$\Ext (A_i,T_i) \simeq\Hom (L_i',A_i')$ where $L_i' =\Hom (T_i,\G_m)$ and
$A_i' =\Pic^0 (A_i)$. 

We shall use the symmetric avatar  $(L_i,A_i,L_i',A_i',\psi_i)$ of $M_i$
(see \cite[10.2.12]{D} or \cite[p. 17]{BSAP}): recall that
$\psi_i$ denotes a certain section of the Poincar\'e biextension
$P_{A_i}\in \Biext (A_i,A_i';\G_m)$ over $L_i\times L_i'$. The symmetric
avatar of the Cartier dual is 
$(L_i',A_i',L_i,A_i,\psi^t_i)$. By \loccit a map of 1-motives 
$\phi: M_1 \to M_2^*$ is equivalent to a homomorphism $f: A_1\to
A_2'$ of abelian varieties and, if $f'$ is the dual of $f$, liftings $g$
and $g'$ of $fu_1$ and $f'u_2$ respectively, \ie to the following
commutative squares
\begin{equation}\label{squares}
\begin{CD}
L_1@>g>> L_2'\\
@V{u_1}VV  @V{u'_2}VV  \\
 A_1 @>f>>  A_2'
\end{CD}\qquad \text{and} \qquad
\begin{CD}
 L_2@>g'>> L_1'\\
@V{u_2}VV  @V{u_1'}VV  \\
 A_2 @>f'>>  A_1'\\[4pt]
\end{CD}
\end{equation}
under the condition that 
\begin{equation}\label{cond}
(1_{L_1}\times g')^*\psi_1= (g\times 1_{L_2})^*{}^t\psi_2
\text{ on } L_1\times L_2.
\end{equation}

Now let $(P,\tau, \sigma)$ be a biextension of $M_1$ and
$M_2$ by $\G_m$, \ie  a biextension $P\in\Biext (G_1,G_2;\G_m)$, a
section $\tau$ on $L_1\times G_2$ and a section $\sigma$ on $G_1\times
L_2$ such that 
\begin{equation}\label{eq7}
\tau\mid_{L_1\times L_2}= \sigma\mid_{L_1\times L_2}.
\end{equation} 

We have to show that $(P,\tau,\sigma)=(\phi\times 1)^*({}^tP_{A_2},\tau_2,\sigma_2)$
for a unique $\phi:M_1\to M_2^*$, where $\tau_2$ and $\sigma_2$ are the
universal trivializations.

Recall that $\Biext (G_1,G_2;\G_m)= \Biext (A_1,A_2;\G_m)$ (\cf
\cite[10.2.3.9]{D}) so that,  by Lemma \ref{biexta}, $P$ is the pull-back
to $G_1\times G_2$ of $(f\times 1_{A_2})^*({}^tP_{A_2})$ for a unique
homomorphism $f:A_1\to A_2'$. We thus have obtained the map $f$ and its
dual $f'$ in \eqref{squares}, and we now want to show that the extra data
$(\tau,\sigma)$ come from a pair $(g,g')$ in a unique way.

We may view $E=(fu_1\times 1_{A_2})^*({}^tP_{A_2})$ as an extension of
$L_1\otimes A_2$ by $\G_m$. Consider the commutative diagram of exact
sequences
\begin{equation}\label{diag}
\begin{CD}
&&0&& 0&& 0\\
&&@VVV @VVV @VVV \\
0@>>> 0@>>> L_1\otimes T_2@= L_1\otimes T_2@>>> 0\\
&&@VVV @VVV @V{1_{L_1}\otimes i_2}VV\\
0@>>> \G_m@>i>> Q@>\pi'>> L_1\otimes G_2@>>> 0\\
&&||&&@V{\pi}VV @V{1_{L_1}\otimes p_2}VV\\
0@>>> \G_m@>>> E@>>> L_1\otimes A_2@>>> 0\\
&&@VVV @VVV @VVV \\
&&0&& 0&& 0
\end{CD}
\end{equation}
where $i_2$  (\resp $p_2$) is the inclusion $T_2\into G_2$ (\resp the
projection $G_2\to A_2$). The section $\tau$ yields a retraction
$\tilde\tau:Q\to
\G_m$ whose restriction to $L_1\otimes T_2$ yields a homomorphism
\[\tilde g:L_1\otimes T_2\to \G_m\]
which in turn defines a homomorphism as in \eqref{squares}. We denote
the negative of this morphism by $g$.

\begin{lemma}\label{standard} With this choice of $g$, the left square of
\eqref{squares} commutes and $\tau=(g\times 1_{G_2})^*\tau_2$.
\end{lemma}

\begin{proof} To see the first assertion, we may apply $\Ext^*(-,\G_m)$
to \eqref{diag} and then apply \cite[Lemma 2.8]{bs} to the corresponding
diagram. Here is a concrete description of this argument: via the map of
Lemma \ref{kalg1}, $u'_2g$ goes to the following pushout
\[\begin{CD}
0@>>> L_1\otimes T_2@>{1\otimes i_2}>> L_1\otimes G_2@>{1\otimes p_2}>>
L_1\otimes A_2@>>> 0\\ 
&&@V{-\tilde g}VV @V{\pi\circ\tau}VV@V{||}VV\\
0@>>>\G_m@>>> E @>>> L_1\otimes A_2@>>> 0. 
\end{CD}\] 
because, due to the relation $i\tilde\tau + \tau \pi'=1$,  the left square
in this diagram commutes.

In particular, we have
\begin{multline*}
Q=(1\otimes p_2)^*(fu_1\otimes
1)^*{}^tP_{A_2}=(fu_1\otimes p_2)^*{}^tP_{A_2}\\
=(u'_2g\otimes
p_2)^*{}^tP_{A_2}=(g\otimes 1)^*(u'_2\otimes p_2^*){}^tP_{A_2}.
\end{multline*}

For the second assertion, since $\Hom(L_1\otimes A_2,\G_m)=0$ it suffices
to check the equality after restricting to $L_1\otimes T_2$. This is clear
because under the isomorphism $\Hom (L'_2\otimes T_2,\G_m)= \Hom
(L'_2,L'_2)$, the canonical trivialization ${}^t\psi_2$
corresponds to the identity.
\end{proof}

Note that if we further pullback we obtain that 
\begin{equation}\label{eq4}
\tau\mid_{L_1\times L_2}=\psi_2^t\mid_{L_1\times L_2}.
\end{equation} 

The same computation with $\sigma$ yields a map
\[g':L_2\to L'_1\]
and the same argument as in Lemma \ref{standard} shows that with this
choice of $g'$ the right square of \eqref{squares} commutes. We now use
that $P=(1_{A_1}\times f')^*(P_{A_1})$, which follows from the
naturality statement in Lemma \ref{biexta}. As in the proof of Lemma
\ref{standard}, this implies that its  trivialization
$\sigma$ on $G_1\times L_2$ is the pullback of the canonical
trivialization $\psi_1$ on $G_1\times L_1'$ along 
$1_{G_1}\times g': G_1\times L_2\to G_1\times L_1'$. In particular: 
\begin{equation}\label{eq5}
\sigma\mid_{L_1\times L_2}=\psi_1\mid_{L_1\times L_2}.
\end{equation}

Put together, \eqref{eq7}, \eqref{eq4} and \eqref{eq5} show that
Condition \eqref{cond} is verified: thus we get a morphism $\phi:M_1\to
M_2^*$. Let $h:G_1\to G'_2$ be its group component. It remains to check
that $\sigma= (h\times 1_{L_2})^*\sigma_2$. As in the proof of Lemma
\ref{standard} we only need to check this after restriction to
$T_1\otimes L_2$. But the restriction of $h$ to the toric parts is
the  Cartier dual of $g'$, so we conclude by the same argument.

Finally, let us show that $\gamma_{M_1,M_2}$ is natural in $M_2$. This
amounts to comparing two biextensions. For the bitorsors this follows from Lemma \ref{biexta} and for the sections we may argue again as in the proof of Lemma \ref{standard}.
\end{proof}

\subsection{Biextensions of complexes of $1$-motives}

Let $\cA$ be a category of abelian sheaves, and consider two bounded complexes $C_1,C_2$ of objects of $\cA^{[-1,0]}$. Let $H\in \cA$. We have a double complex
\[\underline{\Biext}(C_1,C_2;H)^{p,q}\df\Biext(C_1^p,C_2^q;H).\]

\begin{defn} A \emph{biextension of $C_1$ and $C_2$ by $H$} is an element of the group of cycles 
\[\Biext(C_1,C_2;H)\df Z^0(\mathbf{Tot}\underline{\Biext}(C_1,C_2;H)).\]
Here $\mathbf{Tot}$ denotes the total complex associated to a double complex.
\end{defn}
\index{$\mathbf{Tot}$}
\index{$\underline{\Biext}$}
Concretely: such a biextension $P$ is given by a collection of biextensions $P_p\in \Biext(C_1^p,C_2^{-p};H)$ such that, for any $p$,
\[(d_1^p\otimes 1)^* P_{p+1}= (1\otimes d_2^{-p-1})^*P_p
\]
where $d_1^\cdot$ (\resp $d_2^\cdot$) are the differentials of $C_1$ (\resp of $C_2$).

Now suppose that $\cA$ is the category of fppf sheaves, that $H=\G_m$ and that all the $C_i^j$ are Deligne $1$-motives. By Lemma \ref{ld0}, 
we have
\[\EExt^i(C_1^p,C_2^q;\G_m)=0\text{ for } i\le 0.\]

Therefore, a spectral sequence argument yields an edge homomorphism
\begin{equation}\label{edge}
\Biext(C_1,C_2;\G_m)\to \EExt^1(C_1\oo^L C_2,\G_m).
\end{equation}

Recall that Deligne's Cartier duality \cite{D} provides an exact functor
$$M\mapsto M^*: \M[1/p]\to \M[1/p]$$
yielding by Proposition \ref{pcd} a triangulated functor
\begin{equation}\label{dcartier}
(\ \ )^*: D^b(\M[1/p])\to D^b(\M[1/p]).
\end{equation}

Note that for a complex of $1$-motives
\[C=(\cdots \to M^{i}\to M^{i+1}\to \cdots) \]
we can compute $C^*$ by
means of the complex
\[C^*=(\cdots \to (M^{i+1})^*\to (M^{i})^*\to \cdots) \]
of Cartier duals here placed in degrees ..., $-i-1$, $-i$, etc.

Let us now take in \eqref{edge} $C_1=C$, $C_2 = C^*$. For each $p\in\Z$, we have the
Poincar\'e biextension $P_p\in \Biext(C^p,(C^p)^*;\G_m)$. By Proposition \ref{biext}, the
$\{P_p\}$ define a class in $\Biext(C,C^*;\G_m)$.

\begin{defn}\label{dpc} This class $P_C$ is the \emph{Poincar\'e biextension of the
complex $C$}.
\end{defn}

Let $C_1,C_2\in C^b(\M)$. As in Subsection \ref{sbiext}, pulling back ${}^tP_{C_1}=P_{C_1^*}\in \Biext(C_1,C_1^*;\G_m)$ yields a map generalising \eqref{eqbiext}:
\begin{align}
\gamma_{C_1,C_2} : \Hom (C_1, C_2^*)& \to \Biext (C_1,C_2;\G_m)\label{eqbiextc}\\
\phi&\mapsto(\phi\times 1_{C_1})^*({}^tP_{C_2}).\notag
\end{align}
which is clearly additive and natural in $C_1$. We then have the following trivial extension of the functoriality in Proposition \ref{biext}: 

\begin{propose}\label{nat} $\gamma_{C_1,C_2}$ is also natural in $C_2$.\qed
\end{propose}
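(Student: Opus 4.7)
The plan is to reduce the statement, level by level, to the naturality in $M_2$ already established in Proposition \ref{biext}. Recall that $\Biext(C_1,C_2;\G_m)$ is defined as the degree-zero cycles of the total complex of the double complex $\underline{\Biext}(C_1^p,C_2^q;\G_m)$, so that both sides of the naturality assertion reduce to comparisons in each bidegree, together with a cocycle/compatibility condition. First I would fix a morphism of complexes $\psi:C_2\to C_2'$ with components $\psi^p:C_2^p\to (C_2')^p$; its Cartier dual is $\psi^*:(C_2')^*\to C_2^*$ with components $(\psi^{-p})^*:((C_2')^{-p})^*\to (C_2^{-p})^*$. Given $\phi:C_1\to (C_2')^*$ with components $\phi^p:C_1^p\to((C_2')^{-p})^*$, I want to verify
\[
(1_{C_1}\times\psi)^*\,\gamma_{C_1,C_2'}(\phi)\;=\;\gamma_{C_1,C_2}(\psi^*\circ\phi)
\]
in $\Biext(C_1,C_2;\G_m)$.

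Unfolding the definitions, the $p$-th component of the left-hand side is $(\phi^p\times\psi^{-p})^*({}^tP_{(C_2')^{-p}})$, and the $p$-th component of the right-hand side is $(((\psi^{-p})^*\phi^p)\times 1_{C_2^{-p}})^*({}^tP_{C_2^{-p}})$. Factoring through $\phi^p\times 1_{C_2^{-p}}$, the required componentwise equality reduces to
\[
(1\times\psi^{-p})^*({}^tP_{(C_2')^{-p}})\;=\;((\psi^{-p})^*\times 1)^*({}^tP_{C_2^{-p}})
\]
as elements of $\Biext(((C_2')^{-p})^*,C_2^{-p};\G_m)$. This is exactly the content of the naturality in $M_2$ statement at the end of Proposition \ref{biext}, applied to the map $\psi^{-p}:C_2^{-p}\to (C_2')^{-p}$.

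Finally I would check that these componentwise identities fit together into a single equality in $Z^0(\mathbf{Tot}\,\underline{\Biext}(C_1,C_2;\G_m))$, \ie that the horizontal and vertical differentials of the double complex match on both sides. This is immediate from the fact that $\phi$ and $\psi$ are morphisms of complexes (so they commute with the Cartier-dualised differentials), combined with the naturality of $\gamma$ in its two arguments for each level already used above. The only slightly delicate point will be the bookkeeping of indices and the cocycle condition, which is routine once the framework of biextensions of complexes via the total double complex has been set up; no new geometric input beyond Proposition \ref{biext} is required.
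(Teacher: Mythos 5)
Your proposal is correct and follows exactly the route the paper has in mind: the paper offers no argument (the \qed is immediate after the statement) precisely because, as you work out, the total-complex description of $\Biext(C_1,C_2;\G_m)$ reduces the claim in each bidegree $(p,-p)$ to the naturality in $M_2$ already proved in Proposition~\ref{biext}, via the factorisation of $(\phi^p\times\psi^{-p})^*$ through $(\phi^p\times 1)^*$. One small tidying remark: the closing step is slightly misframed — once the componentwise equality is established, the two elements are already equal in $\prod_p\Biext(C_1^p,C_2^{-p};\G_m)$, and both automatically lie in $Z^0$ because $\gamma$ lands in $Z^0$ and $(1\times\psi)^*$ preserves cycles (as $\psi$ is a chain map); there is no separate compatibility to verify.
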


\subsection{Comparing two Ext groups} The aim of this subsection is to prove:

\begin{propose}\label{ext=ext} Let $C_1,C_2\in D^b(\M[1/p])$. Then the forgetful triangulated
functors
\[\DM_{-,\et}^{\eff}(k)\to D^{-}(\Shv_{\et}(SmCor(k)))\to
D^{-}(\Shv_{\et}(Sm(k)))\]
induce an isomorphism
\[\Hom_{\DM_{-,\et}^{\eff}}(\Tot C_1\otimes \Tot C_2,\Z(1)[q])\iso
\Hom_{Sm(k)_\et}(C_1\oo^L C_2,\G_m[q+1])\]
for any $q\in\Z$.
\end{propose}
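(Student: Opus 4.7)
My plan is to identify both sides with the group $\Hom_{D^b(\M[1/p])}(C_1, C_2^*[q])$, where $C_2^*$ denotes the Cartier dual (Proposition \ref{pcd}), and to check that these identifications are compatible with the natural transformation from LHS to RHS induced by the forgetful functor (which makes sense because $\Z_\et(1) = \G_m[1/p][-1]$ in $\DM_{-,\et}^\eff$ by Corollary \ref{cD.1}, accounting for the shift by $1$).

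Both functors of $(C_1,C_2) \in D^b(\M[1/p])^{\rm op} \times D^b(\M[1/p])^{\rm op}$ are cohomological in each variable and convert quasi-isomorphisms to isomorphisms; moreover Cartier duality is a triangulated anti-involution on $D^b(\M[1/p])$. So by standard d\'evissage on the exact structure of $\M[1/p]$ and the weight filtration, it suffices to treat the case where each $C_i = M_i$ is a pure $1$-motive of weight $0$, $-1$ or $-2$.

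For the right-hand side I would use the double complex set-up of \S \ref{sbiext} together with the edge homomorphism \eqref{edge}: since Lemma \ref{ld0} gives vanishing of negative Exts between pure $1$-motives, the relevant spectral sequence degenerates and $\Ext^{q+1}(M_1\oo^L M_2,\G_m)$ is identified with $\Biext^{q+1}(M_1,M_2;\G_m)$, and then Proposition \ref{biext} (and its extension to complexes via Proposition \ref{nat}) gives $\Biext(M_1,M_2;\G_m) \cong \Hom(M_1,M_2^*)$, with the shifts $q$ handled by applying the same reasoning to shifted versions. For the left-hand side I would use the tensor-hom adjunction in $\DM_{-,\et}^{\eff}$ to rewrite LHS as $\Hom_{\DM_{-,\et}^{\eff}}(\Tot C_1,\ihom_\et(\Tot C_2,\Z(1)[q]))$, and then compute $\ihom_\et(\Tot M_2,\Z(1)[q])$ in each weight case. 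The torus case is immediate from $\Z_\et(1)=\G_m[-1]$ plus character-group duality; the lattice case reduces via permutation-module resolutions (Proposition \ref{pperm}); the abelian variety case uses that $A[-1]$ is a direct summand of $M_\et(C)$ for $C$ a smooth projective curve (as in \S \ref{s2.4.1}) together with Proposition \ref{lcurve}~c) to get $\ihom_\et(A[-1],\Z(1)[q]) \simeq A^*[q]$. This identifies $\ihom_\et(\Tot M_2,\Z(1)[q])$ with $\Tot(M_2^*[q])$ inside $d_{\leq 1}\DM_{\gm,\et}^\eff$, and then fully faithfulness of $\Tot$ (Theorem \ref{t1.2.1}) converts LHS to $\Hom_{D^b(\M[1/p])}(M_1,M_2^*[q])$.

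The main obstacle is the last step: carrying out the identification $\ihom_\et(\Tot M_2,\Z(1)[q]) \simeq \Tot(M_2^*[q])$ naturally in $M_2$, and then verifying that the two identifications of LHS and RHS with $\Hom_{D^b(\M[1/p])}(C_1,C_2^*[q])$ are compatible with the forgetful natural transformation. For the latter, the Poincar\'e biextension $P_{C}$ of Definition \ref{dpc} should be the bridge: evaluating $\gamma_{C_1,C_2}$ on a class $\phi\in\Hom(C_1,C_2^*[q])$ produces a biextension whose image in $\Ext^{q+1}(C_1\oo^L C_2,\G_m)$ is the restriction of the motivic duality pairing via the forgetful functor. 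Once this compatibility is set up, the proof concludes by the two-out-of-three principle for isomorphisms of commuting triangles.
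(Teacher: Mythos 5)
Your proposal takes a fundamentally different and much more laborious route than the paper, and it contains a genuine gap at the step you yourself flag as ``the main obstacle.''

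The paper's proof is short and purely formal. Both forgetful functors have left adjoints: $RC$ for $\DM_{-,\et}^\eff \leftarrow D^-(\EST)$ (\cite[Prop.\ 3.2.3]{V}) and $\Phi$ for $D^-(\EST) \leftarrow D^-(\Shv_\et(Sm(k)))$ (the free sheaf with transfers). The key observations are: (i) $\Phi$ intertwines $\oo^L$ with $\oo^L_{tr}$, as one checks on the generators $\Z(X)$ using $\Phi\Z(X)=L(X)$ and $\Z(X)\otimes\Z(Y)=\Z(X\times Y)$; (ii) the tensor product on $\DM_{-,\et}^\eff$ is defined by descent of $\oo^L_{tr}$ via $RC$; (iii) the counit $RC\circ\Phi(C_i)\to\Tot(C_i)$ is an isomorphism because the components of $C_i$ are in $\HI_\et$. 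Applying the two adjunctions and (i)--(iii) immediately yields the claimed isomorphism. No d\'evissage, no biextensions, no computation of $\ihom_\et$ in each pure weight.

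Your plan, by contrast, proposes to identify each side separately with $\Hom_{D^b(\M[1/p])}(C_1,C_2^*[q])$ and then prove the comparison map is the right one. Aside from being far more work, this route has a logical problem: the ``compatibility'' you acknowledge as the main obstacle --- that the forgetful-functor map intertwines your two identifications, or equivalently that the evaluation pairing on $\ihom_\et(\Tot M_2,\Z(1))$ corresponds to the Poincar\'e biextension under your identifications --- is \emph{exactly} the content of Theorem \ref{teq}, which the paper proves \emph{after} and \emph{from} Proposition \ref{ext=ext}. Your appeal to ``the two-out-of-three principle for isomorphisms of commuting triangles'' does not help: you would first have to know the relevant triangle commutes, and that commutativity is precisely what is in question. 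So the plan either becomes circular (if you use Theorem \ref{teq}) or reproduces its entire proof in reverse, which is substantially harder than the statement you are trying to establish. Additionally, even the weight-by-weight computation of $\ihom_\et(\Tot M_2,\Z(1))$ you sketch, while plausible object-by-object, would need to be upgraded to a \emph{natural} isomorphism of functors in $M_2$ before it could be used in the d\'evissage --- another nontrivial step that the adjunction argument sidesteps entirely.
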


\begin{proof}
Each of the two functors in Proposition \ref{ext=ext} has a left adjoint. For the first
one see \cite[Prop. 3.2.3]{V}; we shall denote it by $RC$ as in \loccit
The second may be constructed using \cite[Remark 1 p. 202]{V}: we shall
denote it by
$\Phi$. In both cases, the construction is done for the Nisnevich
topology but carries over for the \'etale topology as well (see also
\cite{VL}). The tensor product ${\oo^L}_{tr}$ in
$D^{-}(\EST)$ is defined from the formula
\[L(X)\otimes L(Y)=L(X\times Y)\]
see \cite[p. 206]{V}. For $X\in Sm(k)$, let $\Z(X)$ be the $\Z$-free \'etale sheaf on the representable sheaf $Y\mapsto \Map_k(Y,X)$. It is clear that
\begin{gather*}
\Phi\Z(X)=L(X)\\
\Z(X)\otimes \Z(Y)=\Z(X\times Y).
\end{gather*}

From this it follows that one has natural isomorphisms
\[\Phi(A\oo^L B)=\Phi(A){\oo^L}_{tr} \Phi(B).\]

On the other hand, the tensor product in $\DM_{-,\et}^{\eff}$ is
defined by descent of ${\oo^L}_{tr}$ via $RC$ \cite[p. 210]{V}. Hence we get an adjunction
isomorphism
\begin{multline*}
\Hom_{Sm(k)_\et}(C_1\oo^L C_2,\G_m[q+1])\simeq \Hom_{\DM_{-,\et}^{\eff}}(RC\circ
\Phi(C_1\oo^L C_2),\Z(1)[q])\\
\simeq \Hom_{\DM_{-,\et}^{\eff}}(RC\circ
\Phi(C_1)\otimes RC\circ
\Phi(C_2)),\Z(1)[q]).
\end{multline*}

Now, since the components of $C_1$ and $C_2$ belong to $\HI_\et$, the counit maps $RC\circ
\Phi(C_1)\to \Tot(C_1)$ and $RC\circ\Phi(C_2)\to \Tot(C_2)$ are isomorphisms. This concludes the
proof.
\end{proof}

\subsection{Two Cartier dualities} Recall the internal Hom $\ihom_\et$ from \S \ref{2.5}. We
define
\begin{equation}\label{D1}
D\1^\et (M) \df\ihom_\et (M, \Z (1))
\end{equation}
for any object $M\in \DM_{\gm,\et}^{\eff}$.\index{$D\1^\et$, $D\1^\Nis$}

We now want to compare the duality \eqref{dcartier} with the following duality on triangulated
$1$-motives:

\begin{sloppypar}
\begin{propose}\label{cd} 
The functor $D\1^\et$
restricts to a self-duality $(\ )^\vee$ (anti-equivalence of categories) on
$d\1\DM_{\gm,\et}^\eff$.
\end{propose}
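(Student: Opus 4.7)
The plan is to establish two things on the subcategory $d\1\DM_{\gm,\et}^\eff$: first, that the contravariant triangulated functor $D\1^\et$ sends this subcategory into itself, and second, that the canonical biduality transformation $\eta_M\colon M \to D\1^\et D\1^\et(M)$ is an isomorphism for every $M$ in it. Any contravariant triangulated functor satisfying these two conditions is automatically an anti-equivalence, with itself as quasi-inverse, which is exactly the statement.

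First I would note that $D\1^\et$ is a well-defined contravariant triangulated endofunctor of $\DM_{\gm,\et}^\eff$, since the partial internal Hom $\ihom_\et(-,\Z(1))$ is defined whenever the first argument is geometric. For stability of $d\1\DM_{\gm,\et}^\eff$ under $D\1^\et$: by Definition \ref{d2.1.1} and the sentence following Theorem \ref{t1.2.1}, this is the thick subcategory generated by the motives $M_\et(C)$ of smooth projective curves $C$, so it suffices to check that $D\1^\et(M_\et(C))$ lies in $d\1\DM_{\gm,\et}^\eff$. This is immediate from Proposition \ref{lcurve} c), which provides a canonical isomorphism $M_\et(C) \iso \ihom_\et(M_\et(C),\Z(1)[2])$; rewriting, $D\1^\et(M_\et(C)) \simeq M_\et(C)[-2]$, clearly still in $d\1\DM_{\gm,\et}^\eff$.

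For biduality, let $\cT \subseteq d\1\DM_{\gm,\et}^\eff$ be the full subcategory of objects on which $\eta_M$ is an isomorphism. Since $\eta$ is a natural transformation between triangulated functors (the identity and the covariant $D\1^\et\circ D\1^\et$), the five lemma shows that $\cT$ is a triangulated subcategory; it is obviously closed under direct summands and hence thick. It therefore suffices to verify that $M_\et(C) \in \cT$ for every smooth projective curve $C$. Applying Proposition \ref{lcurve} c) twice gives $(D\1^\et)^2(M_\et(C)) \simeq D\1^\et(M_\et(C)[-2]) \simeq M_\et(C)[-2][2] \simeq M_\et(C)$, and one checks that this composed isomorphism coincides with $\eta_{M_\et(C)}$ because both arise from the same diagonal class $\Delta_C \in \Hom(M_\et(C)\otimes M_\et(C),\Z(1)[2])$, which is symmetric under the swap of factors.

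The main obstacle I anticipate is precisely this last compatibility: verifying that the biduality map constructed by iterating the isomorphism of Proposition \ref{lcurve} c) is literally the canonical $\eta_{M_\et(C)}$. This is a formal but not entirely automatic check, reducing to the symmetry of $\Delta_C$ together with the compatibility of the two adjunctions used in forming $\ihom_\et$. All remaining steps are formal consequences of triangulated-category yoga applied to the generators of $d\1\DM_{\gm,\et}^\eff$.
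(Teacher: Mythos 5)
Your proof is correct and follows essentially the same route as the paper's: the paper's two-sentence argument ("It suffices to compute on motives of smooth projective curves $M(C)$. Then it is obvious in view of Proposition~\ref{lcurve}~c).") is exactly the reduction-to-generators step you carry out, with your verification of stability under $D\1^\et$ and the biduality isomorphism on $M_\et(C)$ being the unpacking of what the paper leaves implicit. Your closing remark about checking that the iterated isomorphism from Proposition~\ref{lcurve}~c) agrees with the canonical biduality map $\eta$ is a genuine subtlety the paper does not spell out; your appeal to the symmetry of the diagonal class is the correct way to resolve it.
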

\end{sloppypar}

\begin{proof} It suffices to compute on motives of smooth
projective curves $M (C)$. Then it is obvious in view of Proposition \ref{lcurve} c).
\end{proof}

\begin{defn}\label{cdef} For $M\in d\1\DM_{\gm,\et}^{\eff}$, we say that $M^\vee$ is the
\emph{motivic Cartier dual} of $M$.
\end{defn}

Note that motivic Cartier duality exchanges Artin motives and Tate motives, \eg $\Z^{\vee} = \Z (1)$. We are going to compare
it with the Cartier duality on $D^b(\M[1/p])$ (see Proposition \ref{pcd}) via Theorem \ref{t1.2.1}.

For two complexes of $1$-motives
$C_1$ and 
$C_2$, by composing \eqref{eqbiextc} and \eqref{edge} and applying Proposition \ref{nat}, we get
a bifunctorial morphism
\begin{equation}\label{bimap}
\Hom(C_1,C_2^*) \to \Biext(C_1,C_2;\G_m)\to\Hom(C_1\oo^L C_2,\G_m[-1])
\end{equation}
where the right hand side is computed in the derived category of \'etale
sheaves. This natural transformation trivially factors through $D^b(\M[1/p])$.

From Proposition \ref{ext=ext}, it follows that the map \eqref{bimap} may be reinterpreted as a
natural transformation
\[\Hom_{D^b(\M[1/p])}(C_1,C_2^*)\to \Hom_{d\1\DM_{\gm,\et}^{\eff}}(\Tot(C_1),\Tot(C_2)^\vee).\]

Now we argue \`a la Yoneda: taking $C_1=C$ and $C_2=C^*$, the image of the identity
yields a canonical morphism of functors:
\[\eta_C:\Tot(C^*)\to \Tot(C)^\vee.\]

\begin{thm}\label{teq} The natural transformation $\eta$ is an isomorphism of functors.
\end{thm}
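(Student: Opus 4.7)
The plan is to reduce by \emph{d\'evissage} to a list of ``pure'' test cases and then to verify $\eta_C$ is an isomorphism on each of them. Both source and target of $\eta$ are triangulated functors in $C$ (the source $C\mapsto \Tot(C^*)$ is a composition of two triangulated functors by Proposition~\ref{pcd} and Definition~\ref{tot}; the target $C\mapsto \Tot(C)^\vee$ is triangulated by Proposition~\ref{cd}), so the full subcategory of those $C\in D^b(\M[1/p])$ for which $\eta_C$ is an isomorphism is a thick triangulated subcategory, by the $5$-lemma and closure under summands (using that $d\1\DM_{\gm,\et}^\eff$ is idempotent-complete by Corollary~\ref{nobox}). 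It therefore suffices to check $\eta_C$ on a set of thick generators of $D^b(\M[1/p])$.

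First I would reduce to $1$-motives (complexes concentrated in one degree) and, by the weight filtration, to pure $1$-motives of weight $0$, $-1$ and $-2$. For weight $0$, $C=[L\to 0]$ is a lattice; its Cartier dual is the group of multiplicative type $L^*=\Hom(L,\G_m)$, and $\eta_{[L\to 0]}$ is the statement that $L^*=\ihom_\et(L,\Z(1))$ in $\DM_{\gm,\et}^\eff$, which is immediate from $\Z_\et(1)=\G_m[1/p][-1]$ (Corollary~\ref{cD.1}) and the fact that $L$ is built out of $\Z[\Gamma]$-permutation modules (Proposition~\ref{pperm}), where the identity $\ihom_\et(R_{E/k}\Z,\G_m)=R_{E/k}\G_m$ is standard. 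The dual weight $-2$ case, $C=[0\to T]$ with $T$ a torus, follows symmetrically: the tautology is $\Biext(T,L^*;\G_m)\simeq \Hom(L^*,L^*)$ via Cartier duality for tori, and one sends $\mathrm{id}$ to $\eta$.

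The weight $-1$ case is the core. Here $C=[0\to A]$ with $A$ an abelian variety, $C^*=[0\to A^*]$ with $A^*=\Pic^0_{A/k}$, and the map to verify is
\[
\eta_A:\ \uA^*[-1]\longrightarrow \ihom_\et(\uA[-1],\Z(1)).
\]
Unwinding the Yoneda construction, $\eta_A$ is exactly the class in $\Hom(\uA\otimes^L \uA^*,\G_m[1])$ corresponding to the Poincar\'e biextension $P_A\in \Biext(A,A^*;\G_m)$ via \eqref{edge} and Proposition~\ref{ext=ext}. By Proposition~\ref{pperm} applied to $A\to A'$ an isogeny, the property ``$\eta_A$ is an isomorphism'' is isogeny-invariant, so by complete reducibility it suffices to treat $A=J(C)$ for $C/k$ a smooth projective curve with a rational point. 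But then $\uA[-1]$ is a direct summand of $M_\et(C)$ (the $h^1$ component cut out by the rational point), $\uA^*[-1]$ is the matching summand of $M_\et(C)$ via the autoduality of the Jacobian, and Proposition~\ref{lcurve}(c) says precisely that the ``diagonal'' map $M_\et(C)\to \ihom_\et(M_\et(C),\Z(1)[2])$ is an isomorphism. The key observation is that under the K\"unneth decomposition of the diagonal the $h^1\otimes h^1$ component is, up to signs, the Poincar\'e biextension of the Jacobian; so extracting the $h^1$-summand of Proposition~\ref{lcurve}(c) produces $\eta_A$ and shows it is an isomorphism.

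The main obstacle is the last identification: one must match the geometric Poincar\'e biextension of $J(C)$ with the summand of the class $\Delta_C$ of the diagonal that survives on $h^1(C)\otimes h^1(C)$. This is a compatibility between the biextension formalism of \S\ref{sbiext} and the $\DM$-duality of Proposition~\ref{lcurve}, ultimately coming from the standard fact that, via $A\simeq \Pic^0(C)$, the Poincar\'e biextension of $J(C)$ is the one cut out of the Poincar\'e line bundle on $C\times C^*$ through the Abel--Jacobi map. Once this is in place, Proposition~\ref{lcurve}(c) delivers the weight $-1$ case, completes the d\'evissage, and finishes the proof.
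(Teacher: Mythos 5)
Your proof takes a genuinely different route from the paper's, and it has a real gap in exactly the place you flag as the ``main obstacle''. The paper does not split into pure weights at all: it uses the representability statement Proposition~\ref{biext} (that $M_1\mapsto\Biext(M_1,M_2;\G_m)$ is represented by $M_2^*$, natural in both variables) to identify, for arbitrary 1-motives $N,M$, the groups
$\Hom(N,M^*)$, $\Biext(N,M;\G_m)$ and $\Hom(\Tot N,\Tot(M)^\vee)$, and then observes that under these identifications (together with full faithfulness of $\Tot$, Theorem~\ref{t1.2.1}, and Proposition~\ref{ext=ext}) the right-hand vertical of a commutative square of isomorphisms is exactly $\eta_*$. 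Because three sides of the square are isomorphisms, so is $\eta_*$, and Yoneda finishes it. All the hard work is in Proposition~\ref{biext}, which is established in \S\ref{sbiext} before the theorem; the theorem itself is then a diagram chase.

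The place where your argument fails is the assertion that the $h^1\otimes h^1$ component of the diagonal class $\Delta_C$ is the Poincar\'e biextension of $J(C)$. You rightly call this the crux, but you do not prove it, and it is not ``standard'' in the sense of being free: it is essentially a restatement of the Barsotti--Weil/biextension duality for abelian varieties (which the paper proves as Lemma~\ref{biexta}, and which it explicitly notes has no written proof in the literature) combined with the compatibility of the Abel--Jacobi map, the autoduality of $J(C)$, and the explicit form of Poincar\'e duality for curves in $\DM$. Moreover, even granting the biextension identification, you need a second compatibility: that the Chow--K\"unneth decomposition $M(C)=h^0\oplus h^1\oplus h^2$ is exchanged with its dual by the diagonal isomorphism of Proposition~\ref{lcurve}(c), so that ``restricting to $h^1$'' produces a morphism with source $\uA^*[-1]$ and target $\ihom_\et(\uA[-1],\Z(1))$ rather than merely some isomorphism between summands. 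Both points are plausible but substantive, and without them the weight $-1$ case is not proven. The d\'evissage framing, the weight $0$ and weight $-2$ cases, and the isogeny-invariance reduction to Jacobians are all fine; the weight $-1$ case is where you would ultimately have to reprove (a special case of) Proposition~\ref{biext}, which is precisely what the paper's Yoneda argument is designed to invoke once and for all.
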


\begin{proof} It suffices to check this on $1$-motives, since they are dense in the
triangulated category $D^b(\M[1/p])$. Using Yoneda again and the previous discussion, it then follows from
Theorem \ref{t1.2.1} and the isomorphisms \eqref{biextform} and \eqref{eqbiext} (the latter
being proven in Proposition \ref{biext}). The following diagram explains this:
\[\begin{CD}
\Hom(N,M^*)@>\text{Th. \ref{t1.2.1}}>\sim> \Hom(\Tot(N),\Tot(M^*))\\
@V{\wr}V{\eqref{biextform}+\eqref{eqbiext}}V @V{\eta_*}VV\\
\EExt^1_{Sm(k)_\et}(N\oo^L M,\G_m)@<\text{Prop. \ref{ext=ext}}<\sim<
\Hom(\Tot(N),\Tot(M)^\vee) .
\end{CD}\]
\end{proof}

\part{The functors $\LAlb$ and $\RPic$}

\section{A left adjoint to the universal realisation functor}\label{lalb}

The aim of this section is to construct the closest approximation to a left adjoint of the
full embedding $\Tot$ of Definition \ref{tot}: see Theorem \ref{ladj} (and Remark \ref{noleft}
for a caveat). In order to work it out, we first recollect some ideas from
\cite{V0}.

In Section \ref{qlalb}, we shall show that the functor $\LAlb$ of Theorem \ref{ladj} does
provide a left adjoint to $\Tot$ after we tensor Hom groups with $\Q$: this will provide a proof
of Pretheorems announced in \cite[Preth. 0.0.18]{V0} and \cite{V1}.

\subsection{Motivic Cartier duality} 

Recall the functor $D\1^\et:\DM_{\gm,\et}^{\eff}\to \DM_{-,\et}^\eff$ of \eqref{D1}. On the
other hand, by Corollary \ref{cD.2} and Theorem \ref{tfr}, we may consider truncation on
$\DM_{\gm,\et}^\eff$ with respect to the homotopy $t$-structure. We have:

\begin{lemma}\label{l2.0} Let $p:X\to \Spec k$ be a smooth variety. Then the truncated complex
$\tau_{\le 2}D\1^\et(M_\et(X))$ belongs to $d\1\DM_{\gm,\et}^\eff$ (here we set $M_\et(X)\df \alpha^* M(X)$).
\end{lemma}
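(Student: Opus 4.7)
My plan is to compute the homotopy cohomology sheaves $\sH^n(D\1^\et(M_\et(X)))$ for $n \le 2$, show they are $1$-motivic sheaves in the sense of Definition \ref{d3.1}, and conclude via the homotopy $t$-structure of Theorem \ref{t3.2.3} that the truncated complex lies in $d\1\DM_{\gm,\et}^\eff$.

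First I would compute the cohomology sheaves. Using $\Z(1) \simeq \G_m[1/p][-1]$ (Corollary \ref{cD.1}) and adjunction for $\ihom_\et$, the sections of $D\1^\et(M_\et(X))[n]$ over $U\in Sm(k)$ are given by $H^{n-1}_\et(U\times_k X,\G_m)[1/p]$. \'Etale sheafification on $Sm(k)$ then yields $\sH^n = 0$ for $n\le 0$, identifies $\sH^1$ with the \'etale sheafification of $U \mapsto \Gamma(U\times_k X,\G_m)[1/p]$, and identifies $\sH^2$ with $\underline{\Pic}_{X/k}[1/p]$.

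Next I would check that these two sheaves are $1$-motivic. Proposition \ref{p3.3.1} gives $\sH^2 \in \Shv_1$ directly. For $\sH^1$, I would reduce to $X$ geometrically integral over $k$ by treating connected components separately, then apply Rosenlicht's unit theorem over $\bar k$ to produce a Galois-equivariant splitting of $\cO((U\times X)_{\bar k})^\times$ which \'etale-sheafifies over $Sm(k)$ to a short exact sequence
\[
0 \to R_{\pi_0(X)/k}\G_m[1/p] \to \sH^1 \to \underline{U}_X[1/p] \to 0,
\]
where $\underline{U}_X$ is the discrete \'etale sheaf associated with the finitely generated torsion-free $Gal(\bar k/k)$-module $\cO(X_{\bar k})^\times/\bar k^\times$. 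Stability of $\Shv_1$ under extensions (Theorem \ref{text}) then delivers $\sH^1\in\Shv_1$.

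To conclude, by Theorem \ref{t3.2.3} (combined with $\Shv_1 \simeq \Shv_1^s$ from Proposition \ref{pD.1.4}) both $\sH^1[-1]$ and $\sH^2[-2]$ lie in $d\1\DM_{\gm,\et}^\eff$, and the distinguished triangle
\[
\sH^1[-1] \to \tau_{\le 2} D\1^\et(M_\et(X)) \to \sH^2[-2] \by{+1}
\]
together with the thickness of $d\1\DM_{\gm,\et}^\eff$ closes the argument. The main obstacle will be the identification of $\sH^1$: one must upgrade Rosenlicht's splitting from a statement about $\bar k$-points to one about \'etale sheaves on $Sm(k)$ with the correct Galois-module structure on the lattice quotient, and this is where the bulk of the verification lies.
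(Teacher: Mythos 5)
Your proof is correct and follows essentially the same line as the paper's: identify the homotopy cohomology sheaves $\sH^1$ and $\sH^2$ of $D\1^\et(M_\et(X))$, show they lie in $\Shv_1$ (the latter via Proposition~\ref{p3.3.1}), and conclude from Theorem~\ref{t3.2.3} and thickness of $d\1\DM_{\gm,\et}^\eff$. Your identification of $\sH^1$ as an extension of the unit lattice $\cO(X_{\bar k})^\times/\bar k^\times$ by $R_{\pi_0(X)/k}\G_m[1/p]$ via Rosenlicht's theorem is in fact more precise than what the paper's proof asserts, which takes $\sH^1=R_{\pi_0(X)/k}\G_m[1/p]$ as in Proposition~\ref{lcurve}\ --- a formula that is literally correct only when $X$ is proper (e.g.\ $X=\G_m$ yields an extra $\Z$); of course both versions lie in $\Shv_1$, so the paper's conclusion is unaffected.
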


\begin{proof} The same computation as in the proof of Proposition \ref{lcurve} yields that the
nonvanishing cohomology sheaves are $\sH^1=R_{\pi_0(X)/k}\G_m[1/p]$ and
$\sH^2=\Pic_{X/k}[1/p]$. Both belong to $\Shv_1$ (the latter by Proposition
\ref{p3.3.1}), hence the claim follows from Theorem \ref{t3.2.3}.
\end{proof}

Unfortunately, $\sH^i(D\1^\et(M_\et(X)))$ does not belong to $d\1\DM_{\gm,\et}^\eff$ for $i>2$
in general: indeed, it is well-known that this is a torsion sheaf of cofinite type, with
nonzero  divisible part in general (for $i\ge 3$ and in characteristic $0$, its corank is equal
to the
$i$-th Betti number of $X$). It might be considered as an ind-object of
$d_{\le 0}\DM_{\gm,\et}^\eff$, but this would take us too far. To get around this problem, we
shall restrict to the standard category of geometric triangulated motives of \cite{V},
$\DM_\gm^\eff$.

Let us denote by $D\1^\Nis$ \index{$D\1^\et$, $D\1^\Nis$} the same functor as $D\1^\et$ in the category $\DM_-^\eff$, defined
with the Nisnevich topology. Let as before
$\alpha^*:\DM_-^\eff\to \DM_{-,\et}^\eff$ denote the ``change of topology" functor.

\begin{lemma}\label{l2.1} a) For any smooth $X$ with motive $M(X)\in \DM_\gm^\eff$, we have 
\[\alpha^* D_{\le 1}^\Nis M(X) \iso \tau_{\le 2} 
D_{\le 1}^\et \alpha^* M(X).\]
b) The functor $\alpha^* D_{\le 1}^\Nis$ induces a triangulated functor 
\[\alpha^* D_{\le 1}^\Nis:\DM_\gm^\eff\to d\1\DM_{\gm,\et}^\eff.\]
\end{lemma}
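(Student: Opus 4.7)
My plan for part (a) is to construct a canonical comparison map between the two sides and to show it identifies the source with the given truncation of the target. Since the change-of-topology functor $\alpha^*$ is symmetric monoidal, adjunction yields a natural map
\[\alpha^*\ihom_\Nis(M(X),\Z(1)) \to \ihom_\et(M_\et(X),\Z(1)) = D\1^\et M_\et(X),\]
generalising the morphism \eqref{curves} used in the proof of Proposition \ref{lcurve}(c). To compute both sides I invoke the Nisnevich analogue of Proposition \ref{lcurve}(a), namely \cite[Prop. 3.2.8]{V}, which gives $\ihom_\Nis(M(X),\Z(1)[2])\simeq Rf_{*,\Nis}\G_m[1]$ for $f:X\to \Spec k$ structural, and the corresponding \'etale formula $\ihom_\et(M_\et(X),\Z(1)[2])\simeq Rf_{*,\et}\G_m[1/p][1]$, both valid for arbitrary smooth $X$.

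The essential input is then the computation of the cohomology sheaves $R^q_\Nis f_*\G_m$ on the big Nisnevich site of smooth $k$-schemes. For $q=0$ one obtains $R_{\pi_0(X)/k}\G_m$, and for $q=1$ the Picard sheaf $\Pic_{X/k}$, since every line bundle on a smooth scheme is Zariski (hence Nisnevich) locally trivial. For $q\ge 2$ these sheaves vanish: this is the classical fact that Zariski and Nisnevich cohomology with $\G_m$-coefficients on a regular scheme is concentrated in degrees $\le 1$, viewed either via the divisor exact sequence as a flasque resolution of length $1$, or equivalently via Gersten's conjecture for $K^M_1=\G_m$ on smooth $k$-schemes. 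Since $\alpha^*$ is exact on the underlying abelian categories of sheaves with transfers, it commutes with taking cohomology sheaves; after inverting $p$ (forced by $\Z[1/p]$-linearity of $\DM_{-,\et}^\eff$, see \cite[Prop. 3.3.3(2)]{V}), the cohomology sheaves of $\alpha^*\ihom_\Nis(M(X),\Z(1))$ are therefore $R_{\pi_0(X)/k}\G_m[1/p]$ in degree $1$, $\Pic_{X/k}[1/p]$ in degree $2$, and zero elsewhere. By the computation recalled in the proof of Lemma \ref{l2.0}, these match precisely the two nonzero cohomology sheaves of $\tau_{\le 2}D\1^\et M_\et(X)$, so the comparison map factors through $\tau_{\le 2}D\1^\et M_\et(X)$ and is an isomorphism onto it, proving (a).

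For (b), the composite $\alpha^* D\1^\Nis:\DM_-^\eff\to \DM_{-,\et}^\eff$ is triangulated. By (a) together with Lemma \ref{l2.0}, it carries each motive $M(X)$ of a smooth $k$-variety $X$ into the thick subcategory $d\1\DM_{\gm,\et}^\eff$. Since $\DM_\gm^\eff$ is by definition the thick subcategory of $\DM_-^\eff$ generated by such motives, and thick subcategories are closed under cones, shifts and direct summands, a standard d\'evissage shows that $\alpha^* D\1^\Nis$ restricted to $\DM_\gm^\eff$ factors through $d\1\DM_{\gm,\et}^\eff$, yielding the desired triangulated functor. The main obstacle lies in the cohomological step of (a), specifically the vanishing $R^q_\Nis f_*\G_m=0$ for $q\ge 2$ on the big Nisnevich site of smooth $k$-schemes and the verification that the canonical comparison map is indeed an isomorphism onto the $\tau_{\le 2}$-truncation; everything else is a formal consequence of monoidality of $\alpha^*$ and the structure of $\DM_\gm^\eff$.
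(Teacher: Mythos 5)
Your proof is correct, and it is a faithful unpacking of what the paper's one-sentence proof actually encodes. The paper justifies (a) by declaring it to be ``the weight $1$ case of the Beilinson--Lichtenbaum conjecture (here equivalent to Hilbert's theorem 90)'' and leaves the reader to decompress this. Your computation of the homotopy cohomology sheaves of both sides is precisely that decompression: the two inputs you use --- vanishing of $R^q_\Nis f_*\G_m$ for $q\ge 2$ on a smooth scheme (so the Nisnevich internal Hom is concentrated in degrees $\le 2$) and the agreement of the Nisnevich and \'etale sheafifications of the Picard presheaf after $\alpha^*$ --- are exactly the content of Beilinson--Lichtenbaum in weight $1$, the second being Hilbert's theorem 90 proper. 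The $t$-exactness of $\alpha^*$ for the homotopy $t$-structure (Corollary \ref{cD.2}, via Corollary \ref{cD.1}) is the clean way to say ``$\alpha^*$ commutes with cohomology sheaves,'' and you are right to invoke it.

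One place to be more explicit: when you write that $R^1_\Nis f_*\G_m$ ``is the Picard sheaf $\Pic_{X/k}$,'' note that at the Nisnevich level this is the Nisnevich sheafification of $U\mapsto \Pic(X\times U)$, whereas $\Pic_{X/k}$ in the paper's notation (Proposition \ref{p3.3.1}) is the \'etale sheafification. That these produce the same object after $\alpha^*$ is exactly Hilbert's theorem 90, and is the nontrivial identification on which the whole lemma rests; as written your argument leaves this step implicit. Spelling it out makes clear both that the comparison map is an isomorphism (not merely that the two cohomology sheaves are abstractly isomorphic) and where the paper's citation of Hilbert 90 lands. Part (b) is identical to the paper's: (a) plus Lemma \ref{l2.0} and a d\'evissage over the thick generation of $\DM_\gm^\eff$ by motives of smooth varieties.
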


\begin{proof} a) This is the weight $1$ case of the Beilinson-Lichtenbaum conjecture (here 
equivalent to Hilbert's theorem 90.) b) follows from a) and Lemma \ref{l2.0}.
\end{proof}

\begin{defn} We denote by $d\1:\DM_\gm^\eff\to d\1\DM_{\gm,\et}^\eff$ the composite functor $D\1^\et\circ \alpha^*\circ D\1^\Nis$.\index{$d\1$}
\end{defn}

Thus, for $M\in\DM_{\gm}^{\eff}$, we have
\begin{equation}\label{D2}
d\1(M)=\ihom_\et (\alpha^*\ihom_\Nis (M, \Z (1)), \Z (1)).
\end{equation}

The evaluation map $M \otimes\ihom_\Nis (M, \Z (1))\to \Z (1)$ then yields a canonical map
\begin{equation}\label{amap}
a_M :\alpha^*M \to d\1 (M)
\end{equation}
for any object $M\in \DM_{\gm}^{\eff}$.

\begin{propose}\label{cd2} 
The restriction of \eqref{amap} to $d\1\DM_{\gm}^{\eff}$ is an
isomorphism of functors. In particular, we have an equality
\[\alpha^*D\1(\DM_{\gm}^{\eff}) = \alpha^*d\1\DM_{\gm}^{\eff}.\]
\end{propose}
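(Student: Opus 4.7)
The plan is to establish that $a_M$ is an isomorphism on all of $d_{\le 1}\DM_\gm^\eff$ by a d\'evissage to motives of smooth projective curves, where the assertion will reduce to the biduality of Proposition~\ref{lcurve}(c). First I would note that $\alpha^*$ and $d_{\le 1}$ are triangulated functors $\DM_\gm^\eff \to \DM_{-,\et}^\eff$ and that $a$ is a morphism between them, so the full subcategory $\cD \subseteq \DM_\gm^\eff$ on which $a_M$ is an isomorphism is triangulated and closed under retracts, hence thick. Since $d_{\le 1}\DM_\gm^\eff$ is by definition thickly generated by motives of smooth curves, and since via the Gysin triangle any smooth curve is built from a smooth projective curve and a $0$-dimensional smooth scheme (for which $a_M$ is easily checked to be an isomorphism, e.g.\ by reducing via transfer to $M=\Z$, where both source and target are $\Z$ and $a_\Z$ is the identity), it suffices to treat $M=M(C)$ for $C$ smooth projective.

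For such $C$, I would combine Voevodsky's Nisnevich autoduality \cite[Th.~4.3.2]{V} (or \cite[App.~B]{hk} to avoid resolution of singularities) with Proposition~\ref{lcurve}. The former yields a canonical isomorphism $D_{\le 1}^\Nis M(C) \iso M(C)[-2]$, induced by the diagonal class $\Delta_C \in \Hom(M(C)\otimes M(C),\Z(1)[2])$. Applying $\alpha^*$ and using the isomorphism \eqref{curves} from the proof of Proposition~\ref{lcurve}, one obtains
\[
d_{\le 1}(M(C)) = \ihom_\et\bigl(\alpha^* D_{\le 1}^\Nis M(C),\,\Z(1)\bigr) \simeq \ihom_\et\bigl(\alpha^* M(C)[-2],\,\Z(1)\bigr) \simeq \alpha^* M(C),
\]
the last step being Proposition~\ref{lcurve}(c) (after the appropriate shift). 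In particular, source and target of $a_{M(C)}$ are abstractly isomorphic.

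The main obstacle is to identify $a_{M(C)}$ \emph{itself} with this chain of isomorphisms, rather than merely exhibiting both sides as abstractly isomorphic. Both $a_{M(C)}$ and the \'etale biduality isomorphism of Proposition~\ref{lcurve}(c) are constructed by Yoneda/adjunction from a pairing $\alpha^*M(C) \otimes \alpha^*M(C) \to \Z(1)[2]$: for $a_{M(C)}$, from the Nisnevich evaluation $M(C)\otimes D_{\le 1}^\Nis M(C)\to \Z(1)$ pulled back along the symmetric monoidal functor $\alpha^*$; for the \'etale biduality, from the \'etale diagonal class of $C$ directly. Since Voevodsky's autoduality realises the Nisnevich evaluation pairing precisely as $\Delta_C$, and since $\alpha^*$ is symmetric monoidal and preserves the unit $\Z(1)$, the two resulting \'etale pairings agree. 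A routine chase through the adjunctions then yields a commutative diagram identifying $a_{M(C)}$ with the \'etale biduality, which is an isomorphism by Proposition~\ref{lcurve}(c).

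The ``in particular'' equality of essential images follows formally: on $d_{\le 1}\DM_\gm^\eff$ the first part of the proposition identifies $\alpha^*$ with $D_{\le 1}^\et\circ\alpha^*\circ D_{\le 1}^\Nis$, so both $\alpha^*(d_{\le 1}\DM_\gm^\eff)$ and $\alpha^* D_{\le 1}^\Nis(\DM_\gm^\eff)$ land in $d_{\le 1}\DM_{\gm,\et}^\eff$ (by Lemma~\ref{l2.1}(b) for the second) and are carried to each other by the self-duality $D_{\le 1}^\et$ of Proposition~\ref{cd}.
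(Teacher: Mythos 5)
Your proof is correct and takes essentially the same route as the paper's: both reduce to $M=M(C)$ for $C$ smooth projective, then combine \eqref{curves} with the \'etale biduality of Proposition~\ref{lcurve}(c). The paper compresses this to a one-line appeal to Proposition~\ref{cd}, whereas you additionally spell out the d\'evissage to curves and the identification of $a_{M(C)}$ with the biduality isomorphism (both left implicit in the paper), which is a useful amplification rather than a different argument.
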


\begin{proof} For the first claim, we reduce to the case $M=M(C)$ where $C$ is a smooth proper
curve. The argument is then exactly the same as in Proposition \ref{cd}, using \eqref{curves}.
The other claim is then clear.
\end{proof}

\subsection{Motivic Albanese} 

\begin{thm}\label{ladj} Let $M\in \DM_{\gm}^{\eff}$. Then
 $a_M$ induces an isomorphism
\[\Hom (d\1 M, M')\iso \Hom (\alpha^*M, M')\]
for any $M'\in d\1\DM_{\gm,\et}^{\eff}$,
\end{thm}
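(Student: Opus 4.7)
The plan is to apply the Cartier duality of Proposition \ref{cd} to both sides, reducing the claim to showing that the canonical comparison morphism $c_M\colon \alpha^*D\1^\Nis M\to D\1^\et \alpha^* M$ induces an isomorphism after applying $\Hom((M')^\vee,-)$; the latter will follow from Lemma \ref{l2.1}(a) together with a $t$-structure amplitude estimate on $(M')^\vee$.

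First I would rewrite both Hom groups using self-duality. Setting $N:=\alpha^*D\1^\Nis M$, which lies in $d\1\DM_{\gm,\et}^\eff$ by Lemma \ref{l2.1}(b), one has $d\1 M = N^\vee$ by definition; Proposition \ref{cd} then yields
\[\Hom(d\1 M,M')=\Hom(N^\vee,M')\iso \Hom((M')^\vee,N^{\vee\vee})=\Hom((M')^\vee,\alpha^*D\1^\Nis M).\]
Symmetrically, the isomorphism $M'\iso (M')^{\vee\vee}$ combined with the $\otimes$--$\ihom_\et$ adjunction gives
\[\Hom(\alpha^* M,M')=\Hom(\alpha^*M\otimes (M')^\vee,\Z(1))=\Hom((M')^\vee,D\1^\et \alpha^* M).\]
Unwinding \eqref{amap}, one verifies that under these rewritings the map $a_M^*$ becomes the map on $\Hom((M')^\vee,-)$ induced by $c_M$. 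It therefore suffices to prove that $c_M$ induces an isomorphism after applying $\Hom((M')^\vee,-)$ for every $M'\in d\1\DM_{\gm,\et}^\eff$.

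For this, Lemma \ref{l2.1}(a) together with devissage in $M\in\DM_\gm^\eff$ (both functors are triangulated in $M$) implies that the cone of $c_M$ is $\tau_{>2}D\1^\et \alpha^* M$, which lies in $D^{>2}$ for the homotopy $t$-structure on $\DM_{-,\et}^\eff$ (Corollary \ref{cD.2}, Theorem \ref{tfr}). It thus remains to establish the vanishing
\[\Hom\bigl((M')^\vee,\tau_{>2}D\1^\et\alpha^*M\,[i]\bigr)=0\qquad (i=0,-1)\]
for $M'$ in a generating family of $d\1\DM_{\gm,\et}^\eff$. Taking $M'=\alpha^*M(C)$ for $C$ a smooth projective curve, Proposition \ref{lcurve}(c) gives $(M')^\vee\cong \alpha^*M(C)[-2]$. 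By Theorem \ref{t3.2.3}, $\alpha^*M(C)$ has homotopy-cohomology sheaves only in degrees $-1$ and $0$, so $(M')^\vee\in D^{[1,2]}\subseteq D^{\leq 2}$; the required vanishing for $i=0$ (target in $D^{>2}$) and for $i=-1$ (target in $D^{>3}$) then follows from the orthogonality $\Hom(D^{\leq n},D^{>n})=0$ of the $t$-structure. The most delicate step will be verifying the compatibility of the two dualities in the first paragraph, namely that $a_M^*$ is genuinely identified with $c_M^*$ under the rewritings; once that bookkeeping is pinned down, the remaining argument is a clean devissage combined with the homotopy $t$-structure estimate.
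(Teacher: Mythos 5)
Your first paragraph is the same reduction the paper makes: write $M'=D\1^\et(N)$ via Proposition \ref{cd}, push both Hom groups through the $\otimes$--$\ihom_\et$ adjunction, and identify $a_M^*$ with the map induced by $D\1^\et(a_M)\colon D\1^\et(d\1 M)\cong\alpha^*D\1^\Nis M\to D\1^\et\alpha^*M$ after biduality. From there you diverge: the paper finishes with a one-line retraction argument, observing that $D\1^\et(a_M)$ has $a_{D\1^\Nis M}$ as a section and that the latter is already an isomorphism by Proposition \ref{cd2}; you instead try to control the cone of this map via the homotopy $t$-structure. The trade-off is clear: the paper's ending is purely categorical and sidesteps the homotopy $t$-structure entirely (hence any friendliness/cohomological-dimension issues), whereas yours is more explicit and pins down the failure of $c_M$ to a truncation --- a genuinely different and instructive route, but heavier.

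There is, however, a real error in the key step. You assert that ``devissage in $M$ implies that the cone of $c_M$ is $\tau_{>2}D\1^\et\alpha^*M$'' for all $M\in\DM_\gm^\eff$. This cannot be deduced by devissage: $\tau_{>2}$ is not a triangulated functor (it does not commute with shifts or cones), and indeed for $M=M(X)[n]$ the cone is $(\tau_{>2}D\1^\et\alpha^*M(X))[n]$, which is not the $\tau_{>2}$-truncation of anything and certainly need not lie in $D^{>2}$. Lemma \ref{l2.1}(a) gives you the cone-equals-$\tau_{>2}$ description only for the generators $M=M(X)$. The salvage is to devisse the \emph{conclusion} rather than the cone description: establish $\Hom\bigl((M')^\vee,\mathrm{cone}(c_M)[i]\bigr)=0$ for $i=-1,0$ only when $M=M(X)$ and $M'=\alpha^*M(C)$, exactly where your $\tau_{>2}$ and amplitude $D^{[1,2]}$ estimates apply; this proves that $\Hom(d\1 M,M')\to\Hom(\alpha^*M,M')$ is an isomorphism for those $(M,M')$, and that last statement \emph{is} a triangulated condition in $M$ (for fixed $M'$) and in $M'$ (for fixed $M$), so a two-step thick-subcategory devissage over the generating families finishes the proof. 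With that reordering, your argument is correct, but as written the middle step is false.
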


\begin{proof}  By
Proposition \ref{cd}, $M'$ can be
written as $N^\vee=D\1^\et (N)$ for some
$N\in d\1\DM_{\gm,\et}^{\eff}$. We have the following commutative diagram
\[\begin{CD}
\Hom (\alpha^*M, D\1^\et (N)) &=& \Hom (\alpha^*M\otimes N, \Z (1))  &=&\Hom (N, D\1^\et (\alpha^*M))\\
@A{a_M^*}AA @A{(a_M\otimes 1_N)^*}AA  @A{D\1^\et(a_M)_*}AA\\
 \Hom (d\1M, D\1^\et (N)) &=& \Hom (d\1M\otimes N, \Z (1)) &=&\Hom (N,
D\1^\et(d\1 M)).
\end{CD}\]

But $D\1^\et(a_M)\circ a_{D\1^\Nis M}=1_{D\1^\et\alpha^*M}$ and $a_{D\1^\Nis M}$ is an isomorphism by Proposition
\ref{cd2}, which proves the claim.\end{proof}

\begin{defn} \label{LAlb} The \emph{motivic Albanese functor}
\[\LAlb : \DM_\gm^\eff\to D^b(\M[1/p])\]
is the composition of $d\1$ with a quasi-inverse to the equivalence of categories of  Theorem \ref{t1.2.1}. \end{defn}\index{$\LAlb$}

By Theorems \ref{t1.2.1} and \ref{ladj}, the relationship between $\LAlb$ and the functor
$\Tot$  of Definition \ref{tot} is as follows: for $M\in \DM_\gm^\eff$ and $N\in D^b(\M[1/p])$,
the map $a_M$ of \eqref{amap}  induces an isomorphism
\begin{equation}\label{lalbuniv}
\Hom (\LAlb M, N)\iso \Hom (\alpha^*M, \Tot(N)).
\end{equation}

We call $a_M$ the \emph{motivic Albanese map} associated to $M$ for reasons that will appear later.

\begin{remark}\label{noleft} The above suggests that $\LAlb$  might actually extend to a left
adjoint of $\Tot:D^b(\M[1/p])\to \DM_{\gm,\et}^\eff$.
Unfortunately this is not the case, and in fact \emph{$\Tot$ does not have a left adjoint}. 

Indeed, suppose that such a left adjoint exists, and let us denote it by $\LAlb^\et$. For
simplicity, suppose $k$ algebraically closed. Let $n\ge 2$.  For any $m>0$, the exact triangle
in
$\DM_{\gm,\et}^\eff$
\[\Z(n)\by{m}\Z(n)\to \Z/m(n)\by{+1}\]
must yield an exact triangle
\[\LAlb^\et \Z(n)\by{m}\LAlb^\et\Z(n)\to \LAlb^\et\Z/m(n)\by{+1.}\]

Since $\Tot$ is an equivalence on torsion objects, so must be $\LAlb^\et$.  Since $k$ is
algebraically closed, $\Z/m(n)\simeq \mu_m^{\otimes n}$ is constant, hence we must have $\LAlb^\et\Z/m(n)\allowbreak\simeq [\Z/m\to 0]$.
Hence, multiplication by $m$ must be bijective on the $1$-motives $H^q(\LAlb^\et(\Z(n)))$ for
all $q\ne 0,1$, which forces these $1$-motives to vanish. For $q=0,1$ we must have exact
sequences
\begin{multline*}
0\to H^0(\LAlb^\et(\Z(n)))\by{m}H^0(\LAlb^\et(\Z(n)))\to [\Z/m\to 0]\\
\to H^1(\LAlb^\et(\Z(n)))\by{m}H^1(\LAlb^\et(\Z(n)))\to 0
\end{multline*}
which force either $H^0=[\Z\to 0]$, $H^1=0$ or $H^0=0$, $H^1=[0\to\G_m]$. But both cases are
impossible as one easily sees by computing
\begin{multline*}
\Hom(M(\P^n),\Tot([\Z\to 0])[2n+1])= H^{2n+1}_\et(\P^n,\Z)[1/p]\\
\simeq H^{2n}_\et(\P^n,(\Q/\Z)')\simeq (\Q/\Z)'
\end{multline*}
via the trace map, where $(\Q/\Z)'=\bigoplus_{l\ne p}\Q_\ell/\Z_\ell$.

Presumably, $\LAlb^\et$ does exist with values in a suitable pro-category containing $D^b(\M[1/p])$, and
sends $\Z(n)$ to the complete Tate module of $\Z(n)$ for $n\ge 2$. Note that, by
\ref{tatetwist} below, $\LAlb(\Z(n))=0$ for $n\ge 2$, so that the natural transformation
$\LAlb^\et(\alpha^* M)\to \LAlb(M)$ will not be an isomorphism of functors in general.

We shall show in Section \ref{qlalb} that
$\LAlb$ does yield a left adjoint of $\Tot$ after Hom groups have been tensored  with $\Q$. 
\end{remark}

\subsection{Motivic Pic}

\begin{defn} \label{RPic}
The \emph{motivic Picard functor} (a contravariant functor) is the functor
\[\RPic : \DM_{\gm}^{\eff}\to D^b(\M[1/p])\]
given by $\Tot^{-1}\alpha^*D\1^\Nis$ (\cf Definition \ref{LAlb}).
\end{defn}\index{$\RPic$}

For $M\in \DM_{\gm}^{\eff}$ we then have the following
tautology
\[(\Tot \RPic (M))^{\vee} = \Tot \LAlb (M).\]

Actually, from Theorem \ref{teq} we deduce:

\begin{cor}\label{rpdla}
For $M\in \DM_{\gm}^{\eff}$ we have
\[\RPic (M)^* = \LAlb (M).\qed\]
\end{cor}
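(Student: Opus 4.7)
The plan is to verify the equality after applying the fully faithful universal realisation functor $\Tot$, since $\Tot: D^b(\M[1/p]) \to d_{\le 1}\DM_{\gm,\et}^\eff$ is an equivalence by Theorem \ref{t1.2.1}. It will therefore suffice to produce a natural isomorphism
\[
\Tot(\RPic(M)^*) \simeq \Tot(\LAlb(M))
\]
for $M \in \DM_\gm^\eff$, and then invoke this equivalence to descend back to $D^b(\M[1/p])$.

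The right-hand side is immediate from the definitions: unwinding Definition \ref{LAlb} and \eqref{D2} yields $\Tot(\LAlb(M)) = d\1(M) = \ihom_\et(\alpha^*\ihom_\Nis(M,\Z(1)),\Z(1))$. On the other hand, by Definition \ref{RPic} we have $\Tot(\RPic(M)) = \alpha^* D\1^\Nis(M) = \alpha^*\ihom_\Nis(M,\Z(1))$, and this object lies in $d_{\le 1}\DM_{\gm,\et}^\eff$ by Lemma \ref{l2.1} b), so that the motivic Cartier duality $(\ )^\vee = D\1^\et$ of Proposition \ref{cd} is defined on it. By construction one then has the tautological identification $(\Tot\RPic(M))^\vee = d\1(M) = \Tot\LAlb(M)$, which was already noted just before the corollary.

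It remains to compare the Cartier duality $*$ on $D^b(\M[1/p])$ with motivic Cartier duality $(\ )^\vee$ on $d_{\le 1}\DM_{\gm,\et}^\eff$ under $\Tot$; this is exactly the content of Theorem \ref{teq}. Applied to $C = \RPic(M)$, it yields a canonical isomorphism $\eta_{\RPic(M)}: \Tot(\RPic(M)^*) \iso \Tot(\RPic(M))^\vee$. Composing with the tautology of the previous paragraph produces the desired isomorphism $\Tot(\RPic(M)^*) \simeq \Tot(\LAlb(M))$, and the result follows by full faithfulness of $\Tot$. The only nontrivial input is Theorem \ref{teq}; the remainder is a formal manipulation of the definitions of $\LAlb$, $\RPic$, and $d\1$, so there is no genuine obstacle to overcome here.
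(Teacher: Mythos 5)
Your proof is correct and takes essentially the same route as the paper: the paper's proof is precisely the combination of the tautology $(\Tot\RPic(M))^{\vee}=\Tot\LAlb(M)$ (stated immediately before the corollary) with Theorem \ref{teq}, descended along the fully faithful $\Tot$. You have simply unpacked those definitional identifications more explicitly.
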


Therefore we get ${}^tH^i(\RPic (M)) = ({}_tH_i(\LAlb (M)))^*$.

\begin{comment}
\begin{remark} Note that $\RPic (M)$ can be represented by a
complex up to \qi of Deligne's 1-motives up to isogeny, and the
homology of $\RPic (M (X))$ would be related to the 1-motives
constructed in \cite{BRS}.
\end{remark}
\end{comment}

\section{$\LAlb$ and $\RPic$ with rational coefficients}\label{qlalb}

Throughout this section, we use the notations $\otimes\Q$ and $\boxtimes\Q$ from Definition
\ref{1mot}.
\subsection{Rational coefficients revisited}
To give a rational version of Theorem \ref{ladj}, we have to be a little careful. Let
$\DM_-^\eff(k;\Q)$ and $\DM_{-,\et}^\eff(k;\Q)$ denote the full subcategories of
$\DM_-^\eff(k)$ and $\DM_{-,\et}^\eff(k)$ formed of those complexes whose cohomology sheaves
are uniquely divisible. Recall that by \cite[Prop. 3.3.2]{V}, the change of topology functor
\[\alpha^*:\DM_-^\eff(k)\to \DM_{-,\et}^\eff(k)\]
induces an equivalence of categories
\[\DM_-^\eff(k;\Q)\iso \DM_{-,\et}^\eff(k;\Q).\]

Beware that in \loccit, these two categories are respectively denoted by
$\DM_-^\eff(k)\otimes\Q$ and $\DM_{-,\et}^\eff(k)\otimes\Q$, while this notation is used here
according to Definition \ref{1mot}. The composite functors (with our notation)
\begin{gather*}
\DM_-^\eff(k;\Q)\to \DM_-^\eff(k)\to \DM_-^\eff(k)\otimes\Q\\
\DM_{-,\et}^\eff(k;\Q)\to \DM_{-,\et}^\eff(k)\to \DM_{-,\et}^\eff(k)\otimes\Q
\end{gather*}
are fully faithful but not essentially surjective. The functor $\alpha^*\otimes\Q$ is not
essentially surjective, nor (a priori) fully faithful. Nevertheless, these two composite
functors have a left adjoint/left inverse $C\mapsto C\otimes\Q$, and

\begin{propose}\label{ptensq} a) The compositions
\begin{gather*}
\DM_\gm^\eff(k)\otimes\Q\to \DM_-^\eff(k)\otimes\Q\longby{\otimes\Q} \DM_-^\eff(k;\Q)\\
\DM_{\gm,\et}^\eff(k)\otimes\Q\to \DM_{-,\et}^\eff(k)\otimes\Q\longby{\otimes\Q}
\DM_{-,\et}^\eff(k;\Q)
\end{gather*}
are fully faithful.\\
b) Via these full embeddings, the functor $\alpha^*$ induces equivalences of categories
\begin{gather*}
\DM_\gm^\eff(k)\boxtimes\Q\iso \DM_{\gm,\et}^\eff(k)\boxtimes\Q\\
d\1\DM_\gm^\eff(k)\boxtimes\Q\iso d\1\DM_{\gm,\et}^\eff(k)\boxtimes\Q.
\end{gather*}
Here $d\1\DM_\gm^\eff(k)$ is the thick subcategory of $\DM_\gm^\eff(k)$ generated by motives of smooth curves.
\end{propose}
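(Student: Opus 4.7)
My plan proceeds in three stages, corresponding to (a), then full faithfulness in (b), then essential surjectivity in (b) together with the $d\1$ variant.

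\emph{Part (a).} I reduce fully faithfulness of the Nisnevich composite $\DM_\gm^\eff(k)\otimes\Q\to \DM_-^\eff(k;\Q)$ to a compactness computation. Since $(-)\otimes\Q$ is left adjoint to the inclusion of $\DM_-^\eff(k;\Q)$ into $\DM_-^\eff(k)\otimes\Q$, for $M,N\in \DM_\gm^\eff(k)$ we have
\[
\Hom_{\DM_-^\eff(k;\Q)}(M\otimes\Q,\, N\otimes\Q) = \Hom_{\DM_-^\eff(k)}(M,\, N\otimes\Q).
\]
Realising $N\otimes\Q$ as the homotopy colimit of $N\by{1!}N\by{2!}N\by{3!}\cdots$ inside $\DM_-^\eff(k)$ reduces the problem to the equality $\Hom(M,N)\otimes\Q = \Hom(M, N\otimes\Q)$, which holds whenever $M$ is compact. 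Compactness of $M(X)$ for $X\in Sm(k)$ follows from $\Hom(M(X),-) = \HH^0_\Nis(X,-)$ together with the fact that Nisnevich hypercohomology on a finite-dimensional Noetherian scheme commutes with filtered colimits; closure of the compact objects under triangles and direct summands propagates this to all of $\DM_\gm^\eff(k)$. The \'etale composite is treated verbatim, using the bounded \'etale cohomological dimension afforded by inverting $p$.

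\emph{Part (b), full faithfulness.} I exploit the commutative diagram
\[
\begin{CD}
\DM_\gm^\eff(k)\otimes\Q @>>> \DM_-^\eff(k;\Q)\\
@V{\alpha^*}VV @V{\alpha^*}V{\sim}V\\
\DM_{\gm,\et}^\eff(k)\otimes\Q @>>> \DM_{-,\et}^\eff(k;\Q),
\end{CD}
\]
whose horizontals are fully faithful by (a) and whose right vertical is an equivalence by Voevodsky's Proposition~3.3.2 of \cite{V} applied to uniquely divisible complexes. A trivial diagram chase forces the left vertical to be fully faithful as well, and full faithfulness is preserved on passing to the pseudo-abelian hulls $\boxtimes\Q$, yielding a fully faithful functor $\alpha^*:\DM_\gm^\eff(k)\boxtimes\Q\to \DM_{\gm,\et}^\eff(k)\boxtimes\Q$.

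\emph{Part (b), essential surjectivity, and the $d\1$ case.} By Definition~\ref{d2.1.1}, $\DM_{\gm,\et}^\eff(k)$ is the thick closure inside $\DM_{-,\et}^\eff(k)$ of the essential image $\alpha^*(\DM_\gm^\eff(k))$. Because $\alpha^*$ is triangulated, this essential image is already stable under shifts and cones, so the thick closure merely adjoins direct summands: every object of $\DM_{\gm,\et}^\eff(k)$ is a direct summand of some $\alpha^* N$ with $N\in \DM_\gm^\eff(k)$. Given $M\in \DM_{\gm,\et}^\eff(k)\boxtimes\Q$, I write $M$ as the image of an idempotent $e$ on some such $\alpha^* N$ inside $\DM_{\gm,\et}^\eff(k)\otimes\Q$; by the full faithfulness of the previous paragraph, $e$ lifts to an idempotent $\tilde e$ on $N$ in $\DM_\gm^\eff(k)\otimes\Q$, whose image $N'$ in the pseudo-abelian hull $\DM_\gm^\eff(k)\boxtimes\Q$ then satisfies $\alpha^* N'\simeq M$. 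The argument for $d\1$ is identical, since $d\1\DM_{\gm,\et}^\eff(k)$ is by construction the thick subcategory generated by $\alpha^*(M(C))$ for $C$ smooth projective of dimension $\leq 1$, which is precisely the image of $d\1\DM_\gm^\eff(k)$.

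\emph{Main obstacle.} All of the above is formal once two substantive inputs are granted: compactness of geometric motives in both the Nisnevich and \'etale settings, and Voevodsky's Nisnevich-\'etale equivalence on uniquely divisible complexes. The delicate check is \'etale compactness, which relies on bounded \'etale cohomological dimension after inverting $p$; the rest is idempotent and adjunction bookkeeping.
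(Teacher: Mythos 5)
Your part (a) is essentially the paper's argument: the adjunction for $(-)\otimes\Q$ plus the statement that $\Hom(M(X),-)$ commutes with the filtered colimit presenting $N\otimes\Q$, which the paper phrases via \cite[Prop.\ 3.2.8]{V} and commutation of Nisnevich hypercohomology with filtered colimits of sheaves. You are right that the \'etale case hides a delicacy (the paper's ``for the second one it is similar'' is terse, and the \'etale hypercohomology spectral sequence needs either bounded cohomological dimension or a boundedness argument to converge); flagging this is fair. The diagram chase for full faithfulness in (b) is also fine.

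The genuine gap is in your essential surjectivity step. You assert that ``because $\alpha^*$ is triangulated, this essential image is already stable under shifts and cones, so the thick closure merely adjoins direct summands,'' concluding that every object of $\DM_{\gm,\et}^\eff(k)$ is a direct summand of some $\alpha^*N$. This is false. The essential image of a triangulated functor is stable under cones only when the functor is \emph{full}: given $u\colon \alpha^*M_1\to\alpha^*M_2$ in $\DM_{-,\et}^\eff(k)$, one needs $u=\alpha^*(v)$ to identify $\operatorname{cone}(u)$ with $\alpha^*(\operatorname{cone}(v))$, and $\alpha^*$ is not full integrally. So $\DM_{\gm,\et}^\eff(k)$ is a genuine iterated-cone thick closure, not merely the idempotent completion of the image, and your idempotent-lifting argument does not get off the ground.

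The correct route --- which is the one the paper gestures at with ``essentially surjective by definition of the two categories'' --- is to pass to $\otimes\Q$ \emph{first} and then invoke your own full faithfulness result: the functor $\alpha^*\colon\DM_\gm^\eff(k)\boxtimes\Q\to\DM_{\gm,\et}^\eff(k)\boxtimes\Q$ is fully faithful, triangulated, and its source is idempotent complete, so its essential image is a thick (triangulated, idempotent-closed) full subcategory of $\DM_{\gm,\et}^\eff(k)\boxtimes\Q$. This subcategory contains all $\alpha^*M(X)$, and any object of $\DM_{\gm,\et}^\eff(k)\boxtimes\Q$ is, after rationalisation, obtained from these by shifts, cones and summands; hence the image is everything. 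The same works for $d_{\le1}$ with $C$ a smooth curve. Replace your ``direct-summand of $\alpha^*N$'' claim by this thickness argument and the rest of your write-up stands.
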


\begin{proof} a) We shall give it for the first composition (for the second one it is
similar). Let $M,N\in\DM_\gm^\eff(k)$: we have to prove that the obvious map
\[\Hom(M,N)\otimes\Q\to \Hom(M\otimes \Q,N\otimes\Q)\]
is an isomorphism. We shall actually prove this isomorphism for any $M\in \DM_\gm^\eff(k)$ and
any $N\in \DM_-^\eff(k)$. By adjunction, the right hand side coincides with $\Hom(M,N\otimes
\Q)$ computed in $\DM_-^\eff(k)$. We may reduce to $M=M(X)$ for $X$ smooth. By \cite[Prop.
3.2.8]{V}, we are left to see that the map
\[H^q_\Nis(X,N)\otimes \Q\to H^q_\Nis(X,N\otimes\Q)\]
is an isomorphism for any $q\in\Z$. By hypercohomology spectral sequences, we reduce to the
case where $N$ is a sheaf concentrated in degree $0$; then the assertion follows from the fact
that Nisnevich cohomology commutes with filtering direct limits of sheaves.

b) It is clear that the two compositions commute with $\alpha^*$, which sends
$\DM_\gm^\eff(k)\otimes\Q$ into $\DM_{\gm,\et}^\eff(k)\otimes\Q$. By a) and \cite[Prop.
3.3.2]{V}, this functor is fully faithful, and the induced functor on the $\boxtimes$
categories remains so and is essentially surjective by definition of the two categories. Similarly for the $d\1$ categories.
\end{proof}

\begin{remarks} 1) In fact, 
$d\1\DM_{\gm,\et}^\eff(k)\otimes\Q=d\1\DM_{\gm,\et}^\eff(k)\boxtimes\Q$ thanks to Corollary
\ref{nobox} and Theorem \ref{t1.2.1}. We don't know whether the same is true for the other
categories.\\
2) See \cite[A.2.2]{riou} for a different, more general approach to Proposition \ref{ptensq}. 
\end{remarks}

\subsection{The functor $\LAlb^\Q$} We now get a rational version of Theorem \ref{ladj} by taking  \eqref{D2} with rational coefficients according with the results collected in \ref{nobox} and \ref{ptensq}.

\begin{cor}\label{cet} The functor $d\1$ of \eqref{D2} induces a left adjoint to the embedding
$d\1\DM_\gm^\eff(k)\boxtimes\Q\into\DM_\gm^\eff(k)\boxtimes\Q$. The Voevodsky-Orgogozo full
embedding
$\Tot:D^b(\M\otimes\Q)\to \DM_\gm^\eff\boxtimes\Q$ has a left adjoint/left inverse
$\LAlb^\Q$.\qed
\end{cor}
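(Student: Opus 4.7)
\emph{Plan of proof.} The corollary is essentially formal given the results already in place; the nontrivial inputs are Theorem \ref{ladj} (which provides the reflection integrally on the \'etale side) and Proposition \ref{ptensq} b) (which makes the Nisnevich and \'etale categories agree after $\boxtimes\Q$). First I would address the statement about $d\1$. For any $M\in \DM_\gm^\eff$ and any $M'\in d\1\DM_{\gm,\et}^\eff$, Theorem \ref{ladj} gives a natural isomorphism
\[\Hom_{\DM_{\gm,\et}^\eff}(d\1 M, M')\iso \Hom_{\DM_{\gm,\et}^\eff}(\alpha^* M, M').\]
Tensoring Hom groups with $\Q$ yields the corresponding statement in $\DM_{\gm,\et}^\eff\boxtimes\Q$. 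By Proposition \ref{ptensq} b), the functor $\alpha^*$ induces equivalences
\[\DM_\gm^\eff\boxtimes\Q\iso \DM_{\gm,\et}^\eff\boxtimes\Q,\qquad d\1\DM_\gm^\eff\boxtimes\Q\iso d\1\DM_{\gm,\et}^\eff\boxtimes\Q,\]
so I can transport this adjunction back to the Nisnevich side: composing $d\1$ with a quasi-inverse of the first equivalence produces a functor $\DM_\gm^\eff\boxtimes\Q\to d\1\DM_\gm^\eff\boxtimes\Q$ left adjoint to the inclusion. Since the inclusion is fully faithful, the counit of this adjunction is an isomorphism, hence the adjoint is also a left inverse.

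For the second assertion, I would combine the first part with the rational form of Theorem \ref{t1.2.1}. By Corollary \ref{nobox}, the canonical functor $D^b(\M[1/p])\boxtimes\Q\to D^b(\M\otimes\Q)$ is an equivalence, and Theorem \ref{t1.2.1} after $\boxtimes\Q$ provides an equivalence
\[T_\Q:D^b(\M\otimes\Q)\iso d\1\DM_{\gm,\et}^\eff\boxtimes\Q.\]
Composing $T_\Q$ with a quasi-inverse of the equivalence $\alpha^*:d\1\DM_\gm^\eff\boxtimes\Q\iso d\1\DM_{\gm,\et}^\eff\boxtimes\Q$ identifies $\Tot:D^b(\M\otimes\Q)\to \DM_\gm^\eff\boxtimes\Q$ (up to the equivalence of Proposition \ref{ptensq} b)) with the inclusion $d\1\DM_\gm^\eff\boxtimes\Q\hookrightarrow \DM_\gm^\eff\boxtimes\Q$. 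Taking $\LAlb^\Q$ to be the left adjoint from the first part, conjugated by $T_\Q^{-1}$, finishes the proof.

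I do not expect any real obstacle. The only care needed is bookkeeping with idempotent completions: Corollary \ref{nobox} ensures that $D^b(\M[1/p])\otimes\Q$ is already idempotent-complete and coincides with $D^b(\M\otimes\Q)$, so the $\otimes$ versus $\boxtimes$ distinction causes no trouble on the $1$-motive side. All the naturality of adjunctions is formal once Theorem \ref{ladj} and Proposition \ref{ptensq} are in hand.
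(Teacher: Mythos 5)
Your proof is correct and follows essentially the same route as the paper's, which simply invokes Theorem \ref{ladj} after tensoring with $\Q$ together with Corollary \ref{nobox} and Proposition \ref{ptensq}; you have filled in exactly the bookkeeping the paper leaves implicit (extending the adjunction to idempotent completions, transporting it across the $\alpha^*$ equivalence of Proposition \ref{ptensq} b), and conjugating by the rational form of the full embedding $T$ of Theorem \ref{t1.2.1} via Corollary \ref{nobox}).
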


\begin{remark}\label{formq} In this case, the formula $d\1=D\1^\et\circ \alpha^*\circ D\1^\Nis$
collapses into the simpler formula
\[d\1=D\1^2\]
with $D\1=D\1^\Nis=D\1^\et$.
\end{remark}

\section{A tensor structure and an internal Hom on
$D^b(\M\otimes\Q$)}\label{tens}

In this section, coefficients are tensored with $\Q$ and we use the functor $\LAlb^\Q$ of
Corollary \ref{cet}.

\subsection{Tensor structure}

\begin{lemma}\label{biextrigid} Let $G_1,G_2$ be two semi-abelian varieties. Then, we have in
$\DM_{\gm,\et}^\eff\boxtimes \Q$:
\[\cH^q(D\1(\uG_1[-1]\otimes \uG_2[-1]))=
\begin{cases}
\Biext(G_1,G_2;\G_m)&\text{if $q=0$}\\
0&\text{else.}
\end{cases}
\]
\end{lemma}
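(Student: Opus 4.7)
My plan is to identify the cohomology sheaves of $\cF\df D\1(\uG_1[-1]\otimes\uG_2[-1])$ with \'etale sheaf $\sext$-groups into $\G_m$, and then to evaluate these using the biextension formula \eqref{biextform} together with Proposition \ref{biext} in degree zero, and Gersten's principle (Proposition \ref{pgersten}) combined with the cohomological dimension estimate of Proposition \ref{iso1} for vanishing elsewhere.

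The first step is a sheaf-theoretic reformulation. Combining the internal tensor-hom adjunction in $\DM_{-,\et}^\eff\boxtimes\Q$ with the adjunction between $\DM_{-,\et}^\eff$ and $D^-(\Shv_\et(Sm(k)))$ exploited in the proof of Proposition \ref{ext=ext}, and using $\Z(1)=\G_m[-1]$, I obtain for every smooth $X$
\[
\Hom_{\DM}(M_\et(X),\cF[q])\simeq\Hom_{D^-(\Shv_\et(Sm(k)))}(\Z(X)\oo^L\uG_1\oo^L\uG_2,\G_m[q+1]).
\]
\'Etale sheafification of both sides yields
\[
\cH^q(\cF)\simeq\sext^{q+1}_{\Shv_\et}(\uG_1\oo^L\uG_2,\G_m).
\]
For $q=0$, the sheafified version of \eqref{biextform} together with Proposition \ref{biext} identifies the right-hand side with $\underline{\Biext}(G_1,G_2;\G_m)\simeq\underline{\Hom}_\M([0\to G_1],[0\to G_2]^*)$, which is a discrete sheaf by rigidity of morphisms of $1$-motives.

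For $q\ne 0$, I would use Gersten's principle (Proposition \ref{pgersten}) to reduce vanishing of the $\sext$-sheaf to vanishing at stalks at separable closures $K$ of function fields of smooth varieties; over such $K$ the local-to-global spectral sequence collapses so that the stalk of $\sext^i$ agrees with the global $\EExt^i$ on $Sm(K)_\et$. Combining Proposition \ref{ext=ext}, Theorem \ref{teq} and the full faithfulness of $\Tot$ (Theorem \ref{t1.2.1}) then produces a canonical isomorphism
\[
\EExt^i_{Sm(K)_\et}(\uG_1\oo^L\uG_2,\G_m)\simeq\Ext^{i-1}_{\M(K)\otimes\Q}([0\to G_1],[0\to G_2]^*).
\]
This vanishes trivially for $i\le 0$, by Proposition \ref{iso1} for $i\ge 3$, and for $i=2$ by a weight computation: $[0\to G_1]$ has weights in $\{-2,-1\}$ whereas the Cartier dual $[0\to G_2]^*=[L_2'\to A_2']$ has weights in $\{-1,0\}$, so every weight pair $(w_1,w_2)$ of pure subquotients satisfies $w_1\le w_2$ and the first reduction recalled in the proof of Proposition \ref{iso1} yields $\Ext^1_{\M\otimes\Q}=0$.

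The main obstacle is conceptual rather than technical: everything hinges on the weight-based vanishing of $\Ext^1_{\M\otimes\Q}([0\to G_1],[0\to G_2]^*)$ for $i=2$. The argument succeeds precisely because semi-abelian varieties carry weights bounded above by $-1$, so their Cartier duals carry weights bounded below by $-1$; were either $G_i$ replaced by a mixed $1$-motive with a lattice part, the weight compatibility would fail and $\sext^2$ could well be non-zero.
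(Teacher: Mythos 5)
Your proof is correct and recovers all the assertions of the lemma, but the mechanism you use for the vanishing at $q\ne 0$ is genuinely different from the paper's. Both proofs handle $q=0$ the same way: reduce via Gersten's principle (Proposition \ref{pgersten}) to stalks at separably closed fields, then invoke the biextension formula \eqref{biextform} and the rigidity statement of Proposition \ref{biext}. For $q>0$, the paper also starts from Theorem \ref{teq} to write $D\1(\uG_1[-1]\otimes \uG_2[-1])\simeq\ihom(\uG_1[-1],\Tot([0\to G_2]^*))$, but then reduces to showing $\Hom_{\DM}(\uG_1,L_2[q+1])=0$ and $\Hom_{\DM}(\uG_1,\underline{A}_2[q])=0$ by a concrete geometric computation: up to isogeny $\uG_1$ is a direct summand of $M(\P^1)[-1]$ or of $M(C)$ for a smooth projective curve $C$, and the sheaves $L_2$ and $\underline{A}_2$ are flasque for the Zariski topology, so their higher Zariski cohomology on a curve vanishes. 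You instead translate the whole problem into $\Ext^q_{\M(K)\otimes\Q}([0\to G_1],[0\to G_2]^*)$ via Proposition \ref{ext=ext}, Theorem \ref{teq} and the full faithfulness of $\Tot$ (Theorem \ref{t1.2.1}), and then dispose of $q\ge 2$ by the cohomological dimension bound of Proposition \ref{iso1} and of $q=1$ by a weight d\'evissage, using that $[0\to G_1]$ has weights in $\{-2,-1\}$ while $[0\to G_2]^*=[L_2'\to A_2']$ has weights in $\{-1,0\}$, so that the pure-weight vanishing from the proof of Proposition \ref{iso1} applies termwise. Your route is more structural: it makes the weight-theoretic reason for the vanishing explicit (your closing remark about how the argument fails when a lattice part is introduced is exactly the right observation), and it leans on the already-established cohomological dimension bound rather than redoing the geometry. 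The paper's version is more self-contained at this point and stays closer to explicit curve computations. Both ultimately rest on the same full-faithfulness machinery through Theorem \ref{teq}.
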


\begin{proof} By Gersten's
principle (Proposition \ref{pgersten}), it is enough to show that the isomorphisms are valid
over function fields
$K$ of smooth $k$-varieties and that $\sH^0$ comes from the small \'etale site of $\Spec k$.
Since we work up to torsion, we may even replace $K$ by its perfect closure. Thus, without loss
of generality, we may assume $K=k$ and we have to show the lemma for sections over $k$.

For $q\le  0$, we use Proposition \ref{ext=ext}: for $q<0$ this follows from Lemma \ref{ld0}, while for $q=0$ it follows from the
isomorphisms \eqref{biextform} and \eqref{eqbiext} (see Proposition \ref{biext}), which show that $\Biext(G_1,G_2;\G_m)$ is rigid. 

For $q>0$, we use the formula
\[D\1(\uG_1[-1]\otimes\uG_2[-1])\simeq \ihom(\uG_1[-1],\Tot([0\to G_2]^*))\]
coming from Theorem \ref{teq}. Writing $[0\to G_2]^*=[L_2\to A_2]$ with $L_2$ a lattice and $A_2$ an abelian variety, we are left to show that
\begin{gather*}
\Hom_{\DM_{\gm,\et}^\eff\boxtimes \Q}(\uG_1,L_2[q+1])=0\text{ for } q>0\\
\Hom_{\DM_{\gm,\et}^\eff\boxtimes \Q}(\uG_1,\underline{A}_2[q])=0\text{ for } q> 0.
\end{gather*}

For this, we may reduce to the case where $G_1$
is either an abelian variety or $\G_m$.  If $G_1=\G_m$, $\uG_1$ is a direct summand of $M(\P^1)[-1]$ and the result follows. If $G_1$ is an abelian variety, it is isogenous to a direct summand of
$J(C)$ for
$C$ a smooth projective geometrically irreducible curve. Then $\uG_1$ is a direct
summand of
$M(C)$, and the result follows again since $L_2$ and $\underline{A}_2$ define locally constant
(flasque) sheaves for the Zariski topology.
\end{proof}

\begin{propose}\label{tens1} a) The functor $\LAlb^\Q:\DM_\gm^\eff\boxtimes\Q\to
D^b(\M\otimes\Q)$ is a localisation functor; it carries the tensor
structure $\otimes$ of $\DM_\gm^\eff\boxtimes\Q$ to a tensor structure $\otimes_1$ on
$D^b(\M\otimes\Q)$. \\
b) For $(M,N)\in \DM_\gm^\eff\boxtimes\Q\times D^b(\M\otimes\Q)$, we have
\[\LAlb^\Q(M\otimes \Tot(N))\simeq \LAlb^\Q(M)\otimes_1 N.\]
c) We have
\[
[\Z\to 0]\otimes_1 C = C\]
for any  $C\in D^b(\M\otimes\Q)$;
\[N_1\otimes_1 N_2 =[L\to G]\] 
for two Deligne $1$-motives  $N_1=[L_1\to G_1]$, $N_2=[L_2\to G_2]$, where
\[L = L_1\otimes L_2;\]
there is an extension
\[0\to \Biext(G_1,G_2;\G_m)^*\to G\to L_1\otimes G_2\oplus L_2\otimes G_1\to 0.
\]
d) The tensor product $\otimes_1$ is exact with respect
to the motivic $t$-structure and respects the weight filtration. Moreover, it is right exact
with respect to the homotopy $t$-structure.\\
e) For two $1$-motives $N_1,N_2$ and a semi-abelian variety $G$, we have
\[\Hom(N_1\otimes_1 N_2,[0\to G])\simeq \Biext(N_1,N_2;G)\otimes\Q.\]
\end{propose}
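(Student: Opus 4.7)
The plan is to chain together four isomorphisms. First, Proposition \ref{tens1}(b) applied with $M = \Tot(N_1)$, combined with the fact that $\LAlb^{\Q}\circ\Tot$ is the identity (Corollary \ref{cet}, which holds because $\Tot$ is fully faithful by Theorem \ref{t1.2.1}), yields
\[
N_1\otimes_1 N_2 \simeq \LAlb^{\Q}(\Tot(N_1)\otimes \Tot(N_2)).
\]
The adjunction $\LAlb^{\Q}\dashv \Tot$ then transforms the left-hand side of (e) into
\[
\Hom_{\DM_{\gm,\et}^\eff\boxtimes\Q}(\Tot(N_1)\otimes \Tot(N_2),\,\Tot([0\to G])),
\]
and $\Tot([0\to G]) = \uG[-1]$ since $G$ sits in degree $1$.

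Second, I will extend Proposition \ref{ext=ext} by replacing the target $\Z(1)[q]$ with $\uG[-1]$ for a general semi-abelian $G$. The original proof applies verbatim: the sheaf $\uG$ is strictly homotopy invariant after inverting $p$ by Lemma \ref{lD.1.3}, so $\uG[-1] \in \DM_{-,\et}^\eff$; the two adjunctions used in that proof (involving $\Phi$, the left adjoint to ``forget transfers'', and $RC$, the left adjoint to the embedding into $D^-(\EST)$) remain valid for this target; and the counit $RC\circ\Phi(\Tot(N_i))\to \Tot(N_i)$ is an isomorphism because the components of $N_i$ lie in $\HI_\et[1/p]$. This gives
\[
\Hom_{\DM_{-,\et}^\eff}(\Tot(N_1)\otimes \Tot(N_2),\,\uG[-1]) \simeq \Hom_{Sm(k)_\et}(N_1\otimes^L N_2,\,\uG[-1]).
\]

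Third, I invoke the biextension formula \eqref{biextform}. Since in Subsection \ref{sbiext} the $1$-motives are placed in Deligne's degrees $-1,0$ rather than the current $0,1$, one has $M_i^{\text{Deligne}} = N_i[1]$, so the formula $\Biext(M_1,M_2;G) = \EExt^1(M_1\otimes^L M_2, G)$ (with $G$ in sheaf degree $0$) translates, after the shift $[2]$ absorbed into the tensor product, into
\[
\Biext(N_1,N_2;G) \simeq \Hom_{Sm(k)_\et}(N_1\otimes^L N_2,\,\uG[-1]).
\]
Combining this with the preceding isomorphisms and rationalizing produces the claimed identification with $\Biext(N_1,N_2;G)\otimes\Q$.

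The only step that is not purely formal is the second one, and it is essentially a transcription of the proof of Proposition \ref{ext=ext} to a more general target; the essential input, strict homotopy invariance of $\uG$ after inverting $p$, is already at hand via Lemma \ref{lD.1.3}.
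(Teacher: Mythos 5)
The proposal addresses only part (e) of the proposition; parts (a)--(d) (the existence and properties of $\otimes_1$, its compatibility with the $t$-structures and weights, and the explicit description of $N_1\otimes_1 N_2$) are nowhere proven, and part (b) is in fact invoked as an ingredient. So as a proof of the stated proposition the attempt is incomplete.

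For part (e) itself, your argument is correct and follows the same route as the paper's: use (b) together with $\LAlb^\Q\circ\Tot\simeq\mathrm{id}$ to identify $N_1\otimes_1 N_2$ with $\LAlb^\Q(\Tot(N_1)\otimes\Tot(N_2))$, transport the Hom by adjunction, then convert to a $\Hom$ of sheaf complexes and finish with the biextension formula \eqref{biextform}. In one respect you are more careful than the paper: the paper cites Proposition \ref{ext=ext} directly, although that proposition is stated only with target $\Z(1)[q]=\G_m[q-1]$, whereas part (e) needs the same comparison with target $\uG[-1]$ for an arbitrary semi-abelian $G$. You correctly observe that this is a genuine (if easy) extension and supply the reasons the original proof transposes verbatim: the two adjunctions ($\Phi$ and $RC$) are independent of the target, the counit $RC\circ\Phi(\Tot(N_i))\to\Tot(N_i)$ is an isomorphism because the components of $N_i$ lie in $\HI_\et$, and $\uG[1/p]$ is indeed strictly homotopy invariant (Lemma \ref{lD.1.3}) so $\uG[-1]$ is a legitimate target. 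Your degree bookkeeping (the Deligne convention shift $M_i=N_i[1]$ absorbing a $[2]$ in the tensor product) is also right. So for part (e) your proposal is correct and essentially the paper's argument spelled out.
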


\begin{proof} a) The first statement is clear since $\LAlb$ is left adjoint
to the fully faithful functor $\Tot$. For the second, it suffices to see
that if $\LAlb^\Q(M)=0$ then $\LAlb^\Q(M\otimes N)=0$ for any $N\in
\DM_\gm^\eff\otimes\Q$. We may check this after applying $\Tot$. Note
that, by Proposition \ref{cd} and Remark \ref{formq}, $\Tot\LAlb^\Q(M)=d\1(M)=0$ is equivalent
to
$D\1(M)=0$.  We have:
\[
D\1(M\otimes N)=\ihom(M\otimes N,\Z(1))
=\ihom(N,\ihom(M,\Z(1)))=0.
\]

b) Let $M' = fibre(\Tot\LAlb^\Q(M)\to M)$: then $\LAlb^\Q(M')=0$. By definition of $\otimes_1$
we then have
\begin{multline*}
\LAlb^\Q(M)\otimes_1 N =\LAlb^\Q(\Tot\LAlb^\Q(M)\otimes \Tot(N))\\
\iso \LAlb^\Q(M\otimes\Tot(N)).
\end{multline*}

c) The first formula is obvious. For the second, we have an exact triangle
\begin{multline*}
G_1[-1]\otimes G_2[-1]\to \Tot(N_1)\otimes \Tot(N_2)\\
\to \Tot([L_1\otimes L_2\to
L_1\otimes G_2\oplus L_2\otimes G_1])\by{+1}
\end{multline*}
hence an exact triangle
\begin{multline*}
\ihom(\Tot([L_1\otimes L_2\to
L_1\otimes G_2\oplus L_2\otimes G_1],\Z(1))\\
\to \ihom(\Tot(N_1)\otimes
\Tot(N_2),\Z(1))
\to \ihom(G_1[-1]\otimes G_2[-1],\G_m) \by{+1}
\end{multline*}

By Lemma \ref{biextrigid}, the last term is $\Biext(G_1,G_2;\G_m)$, hence the claim.

d) Exactness and compatibility with weights follow from the second formula of b); right
exactness for the homotopy $t$-structure holds because it holds on $\DM_-^\eff\otimes\Q$.

e) We have:
\begin{multline*}
\Hom_{\M\otimes\Q}(N_1\otimes_1 N_2,[0\to G]) =
\Hom_{d\1\DM\otimes\Q}(\Tot(N_1\otimes_1 N_2),G[-1])\\
=\Hom_{\DM\otimes\Q}(\Tot(N_1)\otimes
\Tot(N_2),G[-1])=\Biext(N_1,N_2;G)\otimes\Q
\end{multline*}
by Proposition \ref{ext=ext} and formula \eqref{biextform}.
\end{proof}

\begin{remarks} 1) It would be interesting to try and define $\otimes_1$ a
priori, with integral coefficients, and to see whether it is compatible with the
tensor product of $\DM_\gm^\eff$ via the integral $\LAlb$.

2) It is likely that Proposition \ref{tens1} e) generalises to an isomorphism
\[\Hom_{\M\otimes\Q}(N_1\otimes_1 N_2,N) = \Biext(N_1,N_2;N)\otimes\Q\]
for three $1$-motives $N_1,N_2,N$, where the right hand side is the biextension group
introduced by Cristiana Bertolin \cite{bert}, but we have not tried to check it. This would put
in perspective her desire to interpret these groups as Hom groups in the (future) tannakian
category generated by
$1$-motives.

More precisely, one expects that $\DM_\gm\boxtimes\Q$ carries a motivic $t$-struct\-ure whose
heart $\MM$ would be the searched-for abelian category of mixed motives. Then $\M\otimes\Q$
would be a full subcategory of $\MM$ and we might consider the thick tensor subcategory
$\M^\otimes\subseteq \MM$ generated  by $\M\otimes\Q$ and the Tate motive (inverse to the
Lefschetz motive): this is the putative category Bertolin has in mind.

Since the existence of the abelian category of mixed Tate motives (to be contained in
$\M^\otimes$!) depends on the truth of the Beilinson-Soul\'e conjecture, this basic obstruction
appears here too.

Extrapolating from Corollary \ref{cet} and Proposition \ref{tens1}, it seems that the embedding
$\M\otimes\Q\into \MM^\eff$ (where $\MM^\eff$ is to be the intersection of $\MM$ with
$\DM_\gm^\eff\boxtimes\Q$) is destined to have a left adjoint/left inverse
$\Alb^\Q=H_0\circ \LAlb^\Q_{|\MM^\eff}$, which would carry the tensor product of $\MM^\eff$ to
$\otimes_1$. Restricting
$\Alb^\Q$ to $\M^\otimes\cap \MM^\eff$ would provide the link between Bertolin's ideas and
Proposition
\ref{tens1} e).
\end{remarks}

\subsection{Internal Hom}

\begin{propose} \label{ihom1} a) The formula
\[\ihom(C_1,C_2) = (C_1\otimes_1 C_2^*)^*\]
defines an internal $\Hom$ on $D^b(\M\otimes\Q)$, right adjoint to the tensor product of
Proposition \ref{tens1}.\\
b) This internal $\Hom$ is exact for the motivic $t$-structure.
\end{propose}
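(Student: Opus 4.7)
The plan for part (a) is to verify the tensor--Hom adjunction
\[\Hom_{D^b(\M\otimes\Q)}(C_0\otimes_1 C_1,C_2)\cong \Hom_{D^b(\M\otimes\Q)}(C_0,(C_1\otimes_1 C_2^*)^*)\]
by transporting both sides through the fully faithful functor $\Tot$ into $\DM_{\gm,\et}^\eff\boxtimes\Q$, where the question reduces to the tensor--Hom adjunction for $\ihom_\et$. The two workhorses are Theorem \ref{teq}, which converts Cartier duality on $D^b(\M\otimes\Q)$ into motivic Cartier duality $(\ )^\vee$ on $d\1\DM_{\gm,\et}^\eff\boxtimes\Q$, and the adjunction $\LAlb^\Q\dashv\Tot$ from Corollary \ref{cet}.

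Concretely, the right-hand side rewrites as $\Hom(\Tot C_0,(\Tot\LAlb^\Q(\Tot C_1\otimes \Tot C_2^*))^\vee)$ by Theorem \ref{teq} and the definition of $\otimes_1$ from Proposition \ref{tens1}~a). The partial internal Hom $\ihom_\et$ of Section \ref{2.5} is defined on this pair since the first argument is geometric, so tensor--Hom adjunction in $\DM_{-,\et}^\eff$ yields
\[\Hom(\Tot C_0\otimes \Tot\LAlb^\Q(\Tot C_1\otimes \Tot C_2^*),\Z(1)).\]
The central observation is a reflection lemma: for any $X\in d\1\DM_{\gm,\et}^\eff\boxtimes\Q$ and any $Y\in\DM_\gm^\eff\boxtimes\Q$, the unit $Y\to \Tot\LAlb^\Q Y$ induces an isomorphism $\Hom(X\otimes Y,\Z(1))\iso \Hom(X\otimes \Tot\LAlb^\Q Y,\Z(1))$. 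This is verified by re-expressing both sides as $\Hom(\cdot,X^\vee)$ and using that $X^\vee$ still lies in $d\1\DM_{\gm,\et}^\eff\boxtimes\Q$ by Proposition \ref{cd}, so that the $\LAlb^\Q\dashv\Tot$ adjunction applies. Applying this reflection with $X=\Tot C_0$ and $Y=\Tot C_1\otimes \Tot C_2^*$, using $(\Tot C_2^*)^\vee=\Tot C_2$ (Theorem \ref{teq} applied twice, together with $C_2^{**}=C_2$), and then reversing the preparatory steps, one reaches $\Hom(\LAlb^\Q(\Tot C_0\otimes \Tot C_1),C_2)=\Hom(C_0\otimes_1 C_1,C_2)$.

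The main obstacle is that $d\1\DM_{\gm,\et}^\eff\boxtimes\Q$ is not closed under the tensor product of $\DM$, so $\Tot$ is not a tensor functor and one cannot naively identify $\Tot\,\ihom(C_1,C_2)$ with $\ihom_\et(\Tot C_1,\Tot C_2)$. The reflection lemma above is precisely what lets one absorb the idempotent projection $\Tot\LAlb^\Q$ when pairing against a Tate-twisted object in $d\1$, and it is the step where the special behaviour of level $\leq 1$ really enters the game.

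For part (b), observe that in the formula $\ihom(C_1,C_2)=(C_1\otimes_1 C_2^*)^*$, all three constituents are exact for the motivic $t$-structure: Cartier duality is exact by Proposition \ref{pcd}, and $\otimes_1$ is exact in each argument by Proposition \ref{tens1}~d). Composing these yields exactness (contravariant in $C_1$, covariant in $C_2$) of $\ihom$ with respect to the motivic $t$-structure.
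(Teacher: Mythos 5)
Your proof is correct and uses exactly the same ingredients as the paper's (Theorem \ref{teq}, the adjunction $\LAlb^\Q\dashv\Tot$, the tensor--Hom adjunction for $\ihom_\et$, and the absorption of $\Tot\LAlb^\Q$ under $D\1$, which the paper handles via Proposition \ref{cd2}/\ref{formq} and you package as the ``reflection lemma''); the paper merely organises the computation by first reducing to the identity $\ihom(M_2,M)\simeq(M_2\otimes_1 M^\vee)^\vee$ in $d\1\DM_{\gm,\et}^\eff\boxtimes\Q$ and then applying bi-duality, whereas you unwind the adjunction directly through the triple tensor pairing $\Hom(\cdot\otimes\cdot\otimes\cdot,\Z(1))$. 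Part (b) is identical to the paper's.
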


\begin{proof} a) We have to get a natural isomorphism
\[\Hom(C_1\otimes_1 C_2,C_3)\simeq \Hom(C_1,\ihom(C_2,C_3))\]
for three objects $C_1,C_2,C_3\in D^b(\M\otimes \Q)$.

Let us still write $\otimes_1$ for the tensor product in
$d\1\DM_\gm^\eff\otimes\Q$ corresponding to the tensor product in $D^b(\M\otimes\Q)$. By
definition, we have for $M_1,M_2,M\in d\1\DM$
\begin{multline*}
\Hom_{d\1\DM}(M_1\otimes_1 M_2,M)\simeq \Hom_{\DM}(M_1\otimes M_2,M)\\
=\Hom_{\DM}(M_1, \ihom(M_2,M)).
\end{multline*}

In view of Theorem
\ref{teq}, we are left to show that
$\ihom(M_2,M)\simeq (M_2\otimes_1,M^\vee)^\vee$. By duality, we may replace $M$ by $M^\vee$.
Then:
\begin{multline*}\ihom(M_2,M^\vee)=\ihom(M_2,\ihom(M,\Z(1))\\
\simeq \ihom(M_2\otimes M,\Z(1))\simeq \ihom(M_2\otimes_1 M,\Z(1)) = (M_2\otimes_1 M)^\vee
\end{multline*}
where the second isomorphism follows from Proposition \ref{tens1} b).

b) This follows from the formula of a), Proposition \ref{tens1} d) and Proposition \ref{pcd}
c). 
\end{proof}

\part{Some computations}

\section{The Albanese complexes and their basic properties}\label{6}
We introduce homological and Borel-Moore Albanese complexes of an
algebraic variety providing a computation of their 1-motivic
homology.

We also consider a slightly more sophisticated cohomological
Albanese complex $\LAlb^* (X)$ which is only contravariantly
functorial for maps between schemes of the same dimension. All
these complexes coincide for smooth proper schemes.

\subsection{The homological Albanese complex}
 Let $p: X\to \Spec k$ be a smooth variety. Recall that $X$ has a motive
$M(X)\in
\DM_\gm^\eff$ \cite{V}: $M$ is a covariant functor from the
category $Sm(k)$ of smooth $k$-schemes of finite type to $\DM_\gm^\eff$.
The image of
$M(X)$ via the full embedding $\DM_\gm^\eff\to
\DM_-^\eff$ is given by the Suslin complex $C_*$ of the
representable Nisnevich sheaf with transfers $L(X)$ associated to $X$.

For $X$ an arbitrary $k$-scheme of fnite type, the formula
$M(X)=C_*(L(X))$ still defines an object of $\DM_-^\eff$; if
$\car k=0$, this object is in $\DM_\gm^\eff$ by
\cite[\S 4.1]{V}.

\noindent{\bf Convention.} In this subsection, ``scheme" means $k$-scheme
of finite type if
$\car k=0$ and smooth $k$-scheme of finite type if $\car k>0$.

\begin{defn}\label{LAlb-} We define the {\it homological Albanese
complex} of $X$ by
$$\LAlb(X)\df \LAlb (M (X)).$$
Define, for $i\in\Z$
$$\LA{i}(X)\df {}_tH_i(\LAlb(X))$$ the 1-motives with cotorsion (see Definition \ref{1cot} and
Notation \ref{not}) determined by the homology of the Albanese complex.
\end{defn}

The functor $\LAlb$ has the following properties, easily
deduced from \cite[2.2]{V}:

\subsubsection{Homotopy invariance} For any scheme $X$
the map
$$\LAlb(X\times \Aff^1)\to \LAlb(X)$$ is an isomorphism, thus
$$\LA{i}(X\times \Aff^1)\iso \LA{i}(X)$$
for all $i\in\Z$.

\subsubsection{Mayer-Vietoris} For a scheme $X$ and an open
covering
$X = U\cup V$ there is a distinguished triangle
\[\begin{CD}
\LAlb (U\cap V)  @>>> \LAlb(U)\oplus \LAlb(V)\\
{\scriptstyle +1}\nwarrow &&\swarrow\\
&\LAlb(X)&
\end{CD}\]
and therefore a long exact sequence of 1-motives
$$\cdots\to \LA{i}(U\cap V) \to\LA{i}(U)\oplus \LA{i}(V)\to
\LA{i}(X)\to \cdots$$

\subsubsection{Tate twists}\label{tatetwist} If $X$ is a
smooth scheme and
$n>0$, then
\[\Tot\LAlb(M(X)(n))=
\begin{cases}
0&\text{if $n>1$}\\
M(\pi_0(X))(1)&\text{if $n=1$}
\end{cases}
\]
where $\pi_0(X)$ is the scheme of constants of $X$, see Definition \ref{dpi0}. Indeed
\begin{multline*}
\Tot\LAlb(M(X)(n))=\ihom_\et\alpha^*(\ihom_\Nis(M(X)(n),\Z(1)),\Z(1))\\=
\ihom_\et\alpha^*(\ihom_\Nis(M(X)(n-1),\Z),\Z(1))
\end{multline*}
by the cancellation theorem \cite{voecan}. Now
\[\ihom_\Nis(M(X)(n-1),\Z)=
\begin{cases}
0&\text{if $n>1$}\\
\ihom_\Nis(M(\pi_0(X)),\Z)&\text{if $n=1$}.
\end{cases}
\]

The last formula should follow from \cite[Lemma 2.1 a)]{kmot} but
the formulation there is wrong; however, the formula
immediately follows from the argument in the proof of \loccit,
\ie considering the Zariski cohomology of $X$ with coefficients in
the flasque sheaf $\Z$.

This gives
\begin{equation}\label{eq8.1}
\LA{i}(M(X)(1))=
\begin{cases}
[0\to R_{\pi_0(X)/k}\G_m]&\text{if $i=0$}\\
0&\text{else}
\end{cases}
\end{equation}
where $R_{L/k}(-)$ is Weil's restriction of scalars.

From this computation we formally deduce:

\begin{propose}\label{ptate} For any $M\in \DM_{\gm}^{\eff}$, we have
\begin{itemize}
\item $\LAlb(M(n))=0$ for $n\ge 2$; 
\item $\LAlb(M(1))$ is a
complex of toric $1$-motives.\qed
\end{itemize}
\end{propose}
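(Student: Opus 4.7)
The plan is the standard dévissage: show that the full subcategory of $\DM_\gm^\eff$ on which both assertions hold is thick and contains $M(X)$ for every smooth $X$.

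More precisely, let $\cC\subseteq\DM_\gm^\eff$ be the full subcategory of those $M$ such that $\LAlb(M(n))=0$ for every $n\ge 2$ and such that $\LAlb(M(1))$ lies in the essential image in $D^b(\M[1/p])$ of the subcategory of $\M[1/p]$ spanned by toric $1$-motives (equivalently, all its homology $1$-motives with respect to the motivic $t$-structure are tori placed in degree $1$). The first point to verify is that $\cC$ is thick. The Tate twist $M\mapsto M(r)$ and the functor $\LAlb$ are both triangulated, so property (i) that $\LAlb(M(n))=0$ is obviously stable under distinguished triangles and direct summands. For property (ii), the toric $1$-motives form a Serre subcategory of the heart ${}_t\M[1/p]$ (a torus is stable under kernels, cokernels and extensions), so the class of objects of $D^b(\M[1/p])$ all of whose ${}_tH_i$ are toric is closed under triangles and summands; hence (ii) is likewise preserved.

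The second point is to check that $\cC$ contains $M(X)$ for every smooth $X$: this is exactly the content of the two computations in \S\ref{tatetwist}. For $n\ge 2$ the chain of isomorphisms given there, together with the cancellation theorem and the vanishing $\ihom_\Nis(M(X)(m),\Z)=0$ for $m\ge 1$, yields $\Tot\LAlb(M(X)(n))=0$, and since $\Tot$ is fully faithful (Theorem \ref{t1.2.1}) this means $\LAlb(M(X)(n))=0$. For $n=1$, \eqref{eq8.1} shows that $\LAlb(M(X)(1))$ is concentrated in homological degree $0$ and equal to the toric $1$-motive $[0\to R_{\pi_0(X)/k}\G_m]$ (note that $R_{E/k}\G_m$ is a $k$-torus whenever $\pi_0(X)=\Spec E$ is étale).

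Finally, by definition $\DM_\gm^\eff$ is the thick subcategory of $\DM_-^\eff$ generated by the motives of smooth $k$-schemes. Combining this with the thickness of $\cC$ and the inclusion $\{M(X):X\in Sm(k)\}\subseteq\cC$, we conclude $\cC=\DM_\gm^\eff$, which is the proposition. There is no genuine obstacle here; the only mild care needed is the verification that ``complex of toric $1$-motives'' is a property closed under triangles, for which the Serre property of the subcategory of tori in the heart is what one uses.
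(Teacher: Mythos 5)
Your argument is correct and is exactly what the paper's ``From this computation we formally deduce'' is pointing at: the explicit computation of $\Tot\LAlb(M(X)(n))$ in \S\ref{tatetwist} handles the generators, and both properties are visibly stable under shifts, cones, and retracts (the second because the toric $1$-motives form a Serre subcategory of the heart ${}_t\M[1/p]$), so thickness finishes the dévissage. The only care needed, which you correctly flag, is interpreting ``complex of toric $1$-motives'' as ``all ${}_tH_i$ are of multiplicative type,'' which is what the paper uses downstream (e.g.\ in Lemma~\ref{l9.2}).
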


\subsubsection{Gysin}\label{Gysin} Let $Z$ be a closed smooth subscheme purely of
codimension $n$ of a smooth scheme $X$, and let $U=X-Z$. Then
\[\LAlb(U)\iso \LAlb(X) \text{ if } n>1\]
and if $n=1$ we have an exact triangle
\[\LAlb(U)\to \LAlb(X)\to [0\to R_{\pi_0(Z)/k}\G_m][2]\to
\LAlb(U)[1]\]
hence a long exact sequence of $1$-motives
\begin{multline*}
0\to\LA{2}(U)\to \LA{2}(X)\to [0\to
R_{\pi_0(X)/k}\G_m]\\\to
\LA{1}(U)\to \LA{1}(X)\to 0
\end{multline*}
and an isomorphism $\LA{0}(U)\to \LA{0}(X)$.

\subsubsection{Blow ups}\label{blowups} If $X$ is a scheme and $Z\subseteq X$ is a
closed subscheme, denote by $p :\tilde X\to X$ a proper surjective morphism such that
$p^{-1}(X-Z)\to X-Z$ is an isomorphism, \eg the blow up of $X$ at
$Z$. Then there is a distinguished triangle
$$\begin{CD}
\LAlb(\tilde Z) @>>> \LAlb(\tilde X)\oplus \LAlb(Z)\\
{\scriptstyle +1}\nwarrow &&\swarrow\\
&\LAlb(X)&
\end{CD}$$
with $\tilde Z = p^{-1}(Z)$, yielding a long exact sequence of 1-motives
\begin{equation}\label{exres} 
\cdots\to \LA{i}(\tilde Z) \to\LA{i}(\tilde X)\oplus
\LA{i}(Z)\to \LA{i}(X)\to \cdots\end{equation}

If $X$ and $Z$ are smooth, we get (using \cite[Prop. 3.5.3]{V}
and the above)
\[\LAlb(\tilde X)=\LAlb(X)\oplus [0\to R_{\pi_0(Z)/k}\G_m][2]\]
and corresponding formulas for homology.

\subsubsection{Albanese map} If $X$ is a scheme we have
the natural map  \eqref{amap} in $\DM_{\gm,\et}^{\eff}$
$$a_X :\alpha^*M(X) \to \Tot\LAlb(X)$$
inducing isomorphisms on \'etale motivic cohomology
\[\Hom (M(X), \Z (1)[j]) = \Hom (\LAlb(X),\G_m[j-1]).\]

\subsection{Cohomological Pic} Dual to \ref{LAlb-} we set:

\begin{defn}\label{RPic+} Define the {\it cohomological Picard
complex} of $X$ by
$$\RPic (X)\df \RPic (M (X)).$$
Define, for $i\in\Z$
$$\RA{i}(X)\df {}^tH^i(\RPic (X))$$ 
the 1-motives with torsion
determined by the cohomology of the Picard complex (see Notation \ref{not}).
\end{defn}

The functor $\RPic$ has similar properties to $\LAlb$, deduced by
duality. Homotopical invariance, Mayer-Vietoris, Gysin and the distinguished triangle for abstract blow-ups are clear, and moreover we have
\[ \RPic (M (X)(n))=
\begin{cases}
0&\text{if $n>1$}\\
\relax [\Z\pi_0(X)\to 0]&\text{if $n=1$}.
\end{cases}
\]

We also have that $\RPic(X)=\LAlb(X)^\vee$, hence
$$\RA{i}(X) = \LA{i}(X)^\vee.$$

We shall complete \S \ref{Gysin} by

\subsubsection{$\RPic$ and $\LAlb$ with supports} \label{supports} Let
$X\in Sm(k)$, $U$ a dense open subset of $X$ and $Z=X-U$ (reduced
structure). In $\DM$, we have the \emph{motive with supports} $M^Z(X)$
fitting in an exact triangle
\[M(U)\to M(X)\to M^Z(X)\by{+1}\]
hence the cohomological complex with supports
\[\RPic_Z(X):=\RPic(M^Z(X))\] fitting in an exact triangle
\[\RPic_Z(X)\to \RPic(X)\to \RPic(U)\by{+1}.\]

Dually, we have $\LAlb^Z(X):=\LAlb(M^Z(X))$.

\begin{sloppypar}
\begin{lemma} \label{lsupports} If $\dim X=d$, $\RPic_Z(X)\simeq
[\underline{CH}_{d-1}(Z)\to 0][-2]$, where $\underline{CH}_{d-1}(Z)$ is
the lattice corresponding to the Galois module $CH_{d-1}(Z_{\bar k})$.
\end{lemma}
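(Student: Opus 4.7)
The plan is to apply $\Tot$ to the defining exact triangle
$\RPic_Z(X) \to \RPic(X) \to \RPic(U) \to +1$
and compute the cohomology sheaves of $\Tot\RPic_Z(X)$ with respect to the homotopy $t$-structure of Theorem \ref{t3.2.3}. By Lemma \ref{l2.0} and the vanishing of higher Nisnevich cohomology with $\G_m$-coefficients on regular schemes (Grothendieck), $\Tot\RPic(X) = \alpha^* D\1^{\Nis}(M(X))$ has homotopy sheaves concentrated in degrees $1$ and $2$, namely (the \'etale sheafifications of) the units sheaf $(p_X)_*\G_m$ and the relative Picard sheaf $\Pic_{X/k}$, where $p_X:X \to \Spec k$; the same analysis applies to $U$.

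The restriction maps of cohomology sheaves then extend to the classical sheafified five-term divisor exact sequence over $(Sm/k)_\et$:
\[0 \to (p_X)_*\G_m \to (p_U)_*\G_m \to \underline{\Div}_Z(X) \to \Pic_{X/k} \to \Pic_{U/k} \to 0,\]
where $\underline{\Div}_Z(X)$ sheafifies $Y \mapsto \Div_{Z\times Y}(X\times Y)$. Since $X$ is smooth, codimension $1$ irreducible closed subvarieties of $X \times Y$ supported on $Z \times Y$ are exactly products $W \times Y_j$ for $W$ a $(d-1)$-dimensional irreducible component of $Z$ in $X$ and $Y_j$ a connected component of $Y$; hence this middle sheaf is canonically identified with the discrete sheaf $\underline{CH}_{d-1}(Z)$.

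Splicing this identification into the long exact sequence of cohomology sheaves coming from the defining triangle forces $\sH^i(\Tot\RPic_Z(X)) = 0$ for $i \ne 2$ and $\sH^2(\Tot\RPic_Z(X)) \simeq \underline{CH}_{d-1}(Z)$. Since this unique nonvanishing homotopy sheaf is a lattice, Corollary \ref{c3.3.2} identifies the corresponding object of $D^b(\M[1/p])$ with the $1$-motive $[\underline{CH}_{d-1}(Z)\to 0]$ placed in cohomological degree $2$, yielding $\RPic_Z(X) \simeq [\underline{CH}_{d-1}(Z)\to 0][-2]$. The main technical obstacle is the sheaf-theoretic identification of $\underline{\Div}_Z(X)$ with $\underline{CH}_{d-1}(Z)$ and the verification that the classical divisor exact sequence sheafifies properly on $(Sm/k)_\et$, including the case where $Y$ is not connected.
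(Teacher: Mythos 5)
Your proposal takes essentially the same route as the paper: you compute the long exact sequence of homotopy sheaves arising from the defining triangle $\RPic_Z(X)\to\RPic(X)\to\RPic(U)$, identify the kernel/cokernel using the classical five-term divisor exact sequence, and conclude that $\Tot\RPic_Z(X)$ has a unique nonvanishing homotopy sheaf in degree $2$, which is the lattice $\underline{CH}_{d-1}(Z)$. The only difference is one of presentation: the paper evaluates the five-term exact sequence at $\bar k$-points and relies implicitly on the faithful exactness of $\cF\mapsto\cF(\bar k)$ on $\Shv_1$ (Proposition \ref{p4.2}), while you spell out the sheaf-theoretic version over $(Sm/k)_\et$ directly, including the observation that the divisor sheaf is the discrete sheaf $\underline{CH}_{d-1}(Z)$ because (by normality of the smooth $Y$) prime divisors of $X\times Y$ supported on $Z\times Y$ are products. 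Both are correct; the paper's formulation is shorter, yours makes the sheafification step you flag explicit rather than leaving it to the reader.
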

\end{sloppypar}

Note that $CH_{d-1}(Z_{\bar k})$ is simply the free abelian group with
basis the irreducible components of $Z_{\bar k}$ which are of
codimentsion $1$ in $X_{\bar k}$.

\begin{proof} This follows readily from the exact sequence
\begin{multline*}0\to \Gamma(X_{\bar k},\G_m)\to \Gamma(U_{\bar
k},\G_m)\to CH_{d-1}(Z_{\bar k})\\ H^1(X_{\bar k},\G_m)\to H^1(U_{\bar
k},\G_m)\to 0.
\end{multline*}
\end{proof}

\subsection{Relative $\LAlb$ and $\RPic$}\label{relative}

For $f: Y\to X$ a map of schemes we let $M(X, Y)$ denote the cone of $M(Y)\to
M (X)$.
Note that for a closed embedding $f$ in a proper scheme $X$ we have $M(X,
Y)=M^c(X-Y)$.

We denote by $\LAlb (X, Y)$ and $\RPic (X, Y)$ the resulting complexes of
1-motives.\\

\section{Borel-Moore variants}\label{6c}

\subsection{The Borel-Moore Albanese complex}
Let $p: X\to \Spec k$ be a scheme of finite type over a field $k$ which
admits resolution of singularities. Recall that the motive with
compact support of $X$, denoted $M^c (X)\in\DM_{-}^{\eff}$, has
also been defined in \cite[Sect. 4]{V}. It is the Suslin complex
$C_*$ of the representable presheaf with transfers $L^c (X)$ given
by quasi-finite correspondences.  Since finite implies quasi-finite
we have a canonical map
$M (X) \to M^c (X)$ which is an isomorphism if $X$ is proper over
$k$.

In general, $M^c : Sch^c(k) \to\DM_{\gm}^{\eff}$ is a covariant
functor from the category of schemes of finite type over $k$ and
proper maps between them.

\begin{defn}\label{LAlbc} We define the {\it Borel-Moore Albanese
complex} of $X$ by
$$\LAlb^c (X)\df \LAlb (M^c (X)).$$
Define, for $i\in\Z$
$$\LA{i}^c(X)\df {}_tH_i(\LAlb^c (X))$$ the 1-motivic homology of this complex.
\end{defn}\index{$\LAlb^c $}

Note that we have the following properties:

\subsubsection{Functoriality} The functor $X\mapsto \LAlb^c (X)$ is covariant
for proper maps and contravariant with respect to flat
morphisms of relative dimension zero, for example \'etale morphisms. We have a canonical, covariantly functorial map
$$\LAlb (X)\to \LAlb^c (X)$$
which is an isomorphism if $X$ is proper.

\subsubsection{Localisation triangle} For any closed subscheme
$Y$ of a scheme $X$ we have a triangle
\[\begin{CD}
\LAlb^c (Y)  @>>> \LAlb^c (X)\\
{\scriptstyle +1}\nwarrow &&\swarrow\\
&\LAlb^c(X-Y)&
\end{CD}\]
and therefore a long exact sequence of 1-motives
\begin{equation}\label{loc}
\dots\to \LA{i}^c(Y) \to\LA{i}^c(X)\to
\LA{i}^c(X-Y)\to \LA{i-1}^c(Y) \to\dots
\end{equation}

In particular, let $X$ be a scheme
obtained by removing a divisor $Y$ from a proper scheme $\bar X$,
\ie $X = \bar X -Y$. Then
\begin{multline*}
\cdots\to \LA{1}(Y) \to\LA{1}(\bar X)\to \LA{1}^c(X)\to
\LA{0}(Y)\\
\to\LA{0}(\bar X)\to \LA{0}^c(X)\to \dots.
\end{multline*}

\subsubsection{Albanese map} We have the following natural map \eqref{amap}
\[a_X^c :\alpha^*M^c (X) \to \Tot\LAlb^c(X)\]
which is an isomorphism if $\dim (X)\leq 1$. In general, for any
$X$, $a_X^c$ induces an isomorphism on motivic cohomology with
compact supports, \ie $H^j_c(X, \Q (1)) = \Hom (\LAlb^c
(X),\G_m[j-1]).$

\subsection{Cohomological Albanese complex} 

\begin{lemma}\label{leffe} Suppose $p=1$ (\ie $\car k=0$), and let $n\ge 0$.  For any $X$ of
dimension $\le n$, the motive $M (X)^*(n)[2n]$ is effective. (Here, contrary to the rest of the
paper, $M(X)^*$ denotes the ``usual" dual $\Hom(M(X),\Z)$ in $\DM_\gm$.)
\end{lemma}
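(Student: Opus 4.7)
The plan is to induct on $\dim X$ using resolution of singularities (hence the need for characteristic zero), reducing to the smooth case where Poincaré duality in $\DM_\gm$ gives the answer directly.

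First I would handle the smooth case. If $X$ is smooth of pure dimension $d\le n$, then by Voevodsky's duality in characteristic zero \cite[Th.~4.3.7]{V} we have an isomorphism in $\DM_\gm$
\[
M(X)^*(d)[2d]\simeq M^c(X).
\]
Twisting by $n-d\ge 0$ gives $M(X)^*(n)[2n]\simeq M^c(X)(n-d)[2(n-d)]$, which is effective since $M^c(X)$ is effective (it is the image in $\DM_\gm$ of the complex $C_*(L^c(X))$) and effective motives are stable under Tate twists. If $X$ is smooth but not equidimensional, one decomposes $X$ into its connected components and repeats the argument on each, using $\dim X_i\le n$ for all $i$.

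Next, for general (possibly singular) $X$ of dimension $\le n$, I would argue by induction on $d=\dim X$. The case $d=0$ reduces to $X=\Spec A$ with $A$ an étale $k$-algebra, so $M(X)$ is an Artin motive and $M(X)^*$ is also Artin (hence effective). For the inductive step, apply resolution of singularities to obtain a proper birational $p\colon \tilde X\to X$ with $\tilde X$ smooth, and let $Z\subset X$ be a closed subscheme containing the non-isomorphism locus of $p$, with $\tilde Z=p^{-1}(Z)$; then $\dim Z, \dim \tilde Z<d$ and $\dim \tilde X\le d\le n$. Voevodsky's abstract blow-up triangle \cite[Prop.~3.5.3, \S 4.1]{V} gives a distinguished triangle
\[
M(\tilde Z)\to M(\tilde X)\oplus M(Z)\to M(X)\xrightarrow{+1}
\]
in $\DM_\gm$. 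Dualising and twisting by $n$, shifting by $2n$ (a contravariant, triangulated operation) yields a distinguished triangle
\[
M(X)^*(n)[2n]\to M(\tilde X)^*(n)[2n]\oplus M(Z)^*(n)[2n]\to M(\tilde Z)^*(n)[2n]\xrightarrow{+1}.
\]
By the smooth case, the middle summand $M(\tilde X)^*(n)[2n]$ is effective; by induction on dimension (applied with the same $n$, since $\dim Z, \dim \tilde Z<d\le n$), the remaining two terms on the right are effective. Hence the third vertex $M(X)^*(n)[2n]$ is effective, because the essential image of $\DM_\gm^\eff\hookrightarrow \DM_\gm$ is a thick triangulated subcategory — this is exactly Voevodsky's cancellation theorem \cite{voecan}, which ensures that $\DM_\gm^\eff\to\DM_\gm$ is a fully faithful embedding onto a thick subcategory.

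The main obstacle is the reliance on resolution of singularities together with the effectivity-via-cancellation step: without the cancellation theorem one would only know that $M(X)^*(n)[2n]$ fits into a triangle whose other vertices are effective, which a priori would not suffice. With cancellation in hand, however, the reduction is formal, and the only genuine input is Voevodsky's Poincaré duality for smooth schemes in characteristic zero.
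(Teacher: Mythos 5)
Your proof is correct and follows essentially the same route as the paper: reduce to the smooth case via Poincaré duality $M(X)^*(d)[2d]\simeq M^c(X)$, then run a dimension induction using resolution of singularities and the dualised abstract blow-up triangle, with the full faithfulness of $\DM_\gm^\eff\hookrightarrow\DM_\gm$ (cancellation) supplying the two-out-of-three closure needed to conclude. The only cosmetic difference is that the paper handles reducibility by an explicit secondary induction on the number of irreducible components, whereas you absorb it into the choice of resolution and exceptional locus, and you spell out the cancellation step which the paper leaves tacit.
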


\begin{proof} First assume $X$ irreducible. Let $\tilde X\to X$ be a resolution of
singularities of $X$. With notation as in \S \ref{blowups}, we have an exact triangle
\[M(X)^*(n)\to M(\tilde X)^*(n)\oplus M(Z)^*(n)\oplus M(\tilde Z)^*(n)\by{+1}.\]

Since $\tilde X$ is smooth, $M(\tilde X)^*(n)\simeq M^c(X)[-2n]$ is effective by \cite[Th.
4.3.2]{V}; by induction on $n$, so are $M(Z)^*(n)$ and $M(\tilde Z)^*(n)$ and therefore
$M(X)^*(n)$ is effective.

In general, let $X_1,\dots,X_r$ be the irreducible components of $X$. Suppose $r\ge 2$ and
let $Y=X_2\cap\dots \cap X_n$: since $(X_1,Y)$ is a cdh cover of $X$, we have an exact triangle
\[M(X)^*(n)\to M(X_1)^*(n)\oplus M(Y)^*(n)\oplus M(X_1\cap Y)^*(n)\by{+1}.\]

The same argument then shows that $M(X)^*(n)$ is effective, by induction on $r$.
\end{proof}

We can therefore apply our functor $\LAlb$ and obtain another complex  $\LAlb (M (X)^*(n)[2n])$
of 1-motives. If $X$ is smooth this is just the Borel-Moore Albanese.

\begin{defn}\label{LAlb*} We define the {\it cohomological Albanese
complex} of a scheme $X$ of dimension $n$ by
$$\LAlb^* (X)\df \LAlb (M (X^{(n)})^*(n)[2n])$$
where $X^{(n)}$ is the union of the $n$-dimensional components of $X$. Define, for $i\in\Z$
$$\LA{i}^*(X)\df {}_tH_i(\LAlb^* (X))$$ the 1-motivic homology of this complex.
\end{defn}\index{$\LAlb^*$}

\begin{lemma}\label{l9.2} a) If $Z_1,\dots Z_n$ are the irreducible components of dimension $n$ of $X$, then the
cone of the natural map
\[\LAlb^*(X)\to \bigoplus \LAlb^*(Z_i)\]
is a complex of groups of multiplicative type.\\
b) If $X$ is integral and $\tilde X$ is a desingularisation of $X$, then the cone of the
natural map
\[\LAlb^*(X)\to \LAlb^*(\tilde X)\]
is a complex of groups of multiplicative type.
\end{lemma}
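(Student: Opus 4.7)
\emph{Strategy.} Both assertions reduce to the following key fact $(\ast)$: \emph{for any $k$-variety $W$ with $\dim W<n$, the motive $M(W)^*(n)[2n]$ belongs to the thick subcategory of $\DM_\gm^\eff$ generated by objects of the form $M'(1)$ with $M'\in\DM_\gm^\eff$.} Indeed, granting $(\ast)$, Proposition~\ref{ptate} ensures that $\LAlb$ applied to such an object is a complex of toric $1$-motives, in particular of multiplicative type, and this class of complexes is closed under cones and shifts in $D^b(\M[1/p])$.

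\emph{Reduction of (a) and (b) to $(\ast)$.} For (a), I proceed by induction on the number $r$ of top-dimensional components. With $Y=Z_1\cup\cdots\cup Z_{r-1}$ and $W=Z_r\cap Y$ (so $\dim W<n$, since the $Z_i$ are distinct irreducible components of dimension $n$), the abstract blow-up triangle $M(W)\to M(Z_r)\oplus M(Y)\to M(X^{(n)})\by{+1}$ dualises and twists to
\[M(X^{(n)})^*(n)[2n]\to M(Z_r)^*(n)[2n]\oplus M(Y)^*(n)[2n]\to M(W)^*(n)[2n]\by{+1,}\]
and an octahedral-axiom argument combining this with the inductive hypothesis for $Y$ and with $(\ast)$ applied to $W$ shows that the cone of $M(X^{(n)})^*(n)[2n]\to\bigoplus_i M(Z_i)^*(n)[2n]$ itself satisfies $(\ast)$. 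For (b), let $V\subset X$ be the singular locus and $\tilde V=p^{-1}(V)$: one has $\dim V<n$ (since $X$ is integral of dimension $n$) and $\dim\tilde V<n$ (as $\tilde V$ is a proper closed subscheme of the $n$-dimensional $\tilde X$). The blow-up triangle $M(\tilde V)\to M(\tilde X)\oplus M(V)\to M(X)\by{+1}$ dualises and twists; the octahedral axiom applied to the composition with the projection $M(\tilde X)^*(n)[2n]\oplus M(V)^*(n)[2n]\to M(\tilde X)^*(n)[2n]$ then identifies the cone of $M(X)^*(n)[2n]\to M(\tilde X)^*(n)[2n]$ as an extension of $M(V)^*(n)[2n]$ and a shift of $M(\tilde V)^*(n)[2n]$, both of which are in the subcategory of $(\ast)$.

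\emph{Proof of $(\ast)$ and main obstacle.} One proves $(\ast)$ by induction on $d=\dim W$. If $W$ is smooth of dimension $d$, Poincar\'e duality in $\DM_\gm$ gives $M(W)^*\simeq M^c(W)(-d)[-2d]$, so $M(W)^*(n)[2n]\simeq M^c(W)(n-d)[2(n-d)]$ is a Tate twist of order $n-d\ge 1$. If $W$ is singular of dimension $d$, resolve $W$ by $p':\tilde W\to W$ over its singular locus $V'\subsetneq W$: both $V'$ and $\tilde V'=(p')^{-1}(V')$ have dimension $<d$, hence satisfy $(\ast)$ by induction, while the smooth case gives $(\ast)$ for $\tilde W$, and the blow-up triangle for $W$ concludes. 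The argument is essentially formal; the only delicate point is the bookkeeping of cones and shifts in the octahedral steps, and verifying that the ``natural'' map in each statement of the lemma matches, via duality, with the map produced by the blow-up triangle rather than its inverse.
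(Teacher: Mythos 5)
Your proposal is correct and takes essentially the same route as the paper's one-line proof: dualise and Tate-twist the abstract blow-up triangles, observe that the error terms are motives $M(W)^*(n)[2n]$ with $\dim W<n$, and conclude via Proposition~\ref{ptate}. Your fact $(\ast)$ (established by Poincar\'e duality in the smooth case plus blow-up induction, exactly as in the paper's Lemma~\ref{leffe}) is just a tidy packaging of what the paper's proof leaves implicit, and your octahedral bookkeeping checks out.
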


\begin{proof} a) and b) follow from dualising the abstract blow-up exact triangles of \cite[2.2]{V} and applying Proposition \ref{ptate}.
\end{proof}

\subsection{Compactly supported and homological Pic} We now
consider the dual complexes of the Borel-Moore and cohomological
Albanese.
\begin{defn} Define the {\it compactly supported Picard
complex} of any scheme $X$ by
$$\RPic^c (X)\df \RPic (M^c (X))$$
and the {\it homological Picard complex} of an equidimensional
scheme $X$ of dimension $n$ by
$$\RPic^* (X)\df \RPic (M (X)^*(n)[2n]).$$
Denote $\RA{i}^c(X)\df {}^tH^i(\RPic^c (X))$ and $\RA{i}^*(X)\df
{}^tH^i(\RPic^* (X))$ the 1-motives with torsion determined by the
homology of these Picard complexes.
\end{defn}\index{$\RPic^c$, $\RPic^*$}

Recall that $\RPic^c (X) = \RPic (X)$ if $X$ is proper and
$\RPic^c (X) = \RPic^* (X)$ if $X$ is smooth.

\subsection{Topological invariance} To conclude this section and the previous one, we note the
following useful

\begin{lemma} \label{l12.3} Suppose that $f:Y\to X$ is a universal topological homeomorphism,
in the sense that $1_U\times f:U\times Y\to U\times X$ is a homeomorphism of topological spaces
for any smooth $U$ (in particular $f$ is proper). Then $f$ induces isomorphisms $\LAlb(Y)\iso
\LAlb(X)$, $\RPic(X)\iso\RPic(Y)$,  $\LAlb^c(Y)\iso \LAlb^c(X)$ and $\RPic^c(X)\iso
\RPic^c(Y)$. Similarly, $\LAlb^*(X)\iso \LAlb^*(Y)$ and $\RPic^*(Y)\iso \RPic^*(X)$. This
applies in particular to
$Y=$ the semi-normalisation of
$X$.
\end{lemma}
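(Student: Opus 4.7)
The plan is to reduce the lemma to showing that $f$ induces isomorphisms of the ambient motives $M(f):M(Y)\iso M(X)$ and $M^c(f):M^c(Y)\iso M^c(X)$ in $\DM_\gm^\eff$ (remembering the blanket convention of inverting $p$), and then to deduce the six conclusions from the functoriality of $\LAlb$, $\RPic$, and (in characteristic zero) of the duality $(-)^{*}$ on $\DM_\gm$.

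For the cycle-theoretic input, fix a smooth $U$ and consider the pushforward
\[f_*:c(U,Y)\to c(U,X),\qquad W\mapsto [k(W):k(f(W))]\cdot f(W)\]
on an integral $W\subset U\times Y$ that is finite surjective over a component of $U$. Since $1_U\times f$ is by hypothesis a topological homeomorphism, every integral $V\subset U\times X$ finite surjective over a component of $U$ admits a unique integral preimage $\tilde V\subset U\times Y$ having the same underlying topological space, and the induced morphism $\tilde V\to V$ is itself a universal topological homeomorphism of integral schemes (obtained by base change of $1_U\times f$). Such a morphism is bijective on points and induces purely inseparable residue field extensions, so in particular $[k(\tilde V):k(V)]$ is a power of $p$ and $f_*\tilde V=p^{e(V)}V$ with $e(V)\ge 0$. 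Conversely, any integral $W\in c(U,Y)$ satisfies $\widetilde{f(W)}=W$. Thus, after inverting $p$, the map $f_*$ becomes a bijection, functorially in $U$, so that the morphism of representable sheaves with transfers $L(Y)[1/p]\to L(X)[1/p]$ is an isomorphism. Applying $C_*$ delivers $M(f):M(Y)\iso M(X)$ in $\DM_\gm^\eff$, and the identical argument, applied to quasi-finite cycles, yields $M^c(f):M^c(Y)\iso M^c(X)$.

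The functors $\LAlb$ and $\RPic$ on $\DM_\gm^\eff$ now transport these isomorphisms to $\LAlb(Y)\iso \LAlb(X)$ and $\LAlb^c(Y)\iso \LAlb^c(X)$; the dual statements $\RPic(X)\iso \RPic(Y)$ and $\RPic^c(X)\iso \RPic^c(Y)$ follow via Corollary \ref{rpdla}. For $\LAlb^{*}$ and $\RPic^{*}$ we are in characteristic zero (so $p=1$), and the duality $(-)^{*}$ is defined on all of $\DM_\gm$; the isomorphism $M(Y^{(n)})\iso M(X^{(n)})$ dualises to an isomorphism $M(X^{(n)})^{*}(n)[2n]\iso M(Y^{(n)})^{*}(n)[2n]$ in $\DM_\gm^\eff$ (effectivity via Lemma \ref{leffe}), and applying $\LAlb$ or $\RPic$ concludes. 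Finally, the specific application to $Y=X^{sn}$ is immediate: semi-normalization is by construction a universal topological homeomorphism in the strong sense that $T\times_X X^{sn}\to T$ is a homeomorphism for every $T\to X$, and taking $T=U\times X$ for smooth $U$ supplies the product hypothesis demanded by the lemma. The one genuinely delicate point in the plan is the verification that $\tilde V\to V$ induces purely inseparable residue field extensions, but this is a classical property of universal topological homeomorphisms between reduced schemes.
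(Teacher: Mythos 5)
Your proof is correct and takes essentially the same route as the paper's. The paper's own argument is a one-liner: it observes that $f$ induces isomorphisms $L(Y)\iso L(X)$ and $L^c(Y)\iso L^c(X)$ ``since by definition these sheaves only depend on the underlying topological structures,'' and then concludes by functoriality. You have filled in the content behind that phrase: a careful identification of $L(f)$ with the cycle pushforward $(1_U\times f)_*$, the observation that a universal topological homeomorphism sets up a bijection between the relevant integral closed subschemes of $U\times Y$ and $U\times X$ (which is the ``only depends on topology'' point), and the explicit bookkeeping of the residue-field degrees. The one improvement over the paper's terse phrasing is your treatment of positive characteristic: the induced bijection of cycles is not literally $L(f)$ but differs from it by $p$-power multiplicities coming from purely inseparable residue extensions, so the map $L(Y)\to L(X)$ is an isomorphism only after inverting $p$ (or, as you note, automatically when $\car k=0$, which is the setting in which $\LAlb^c$, $\LAlb^*$ and the semi-normalisation example actually live). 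This is consistent with the paper's blanket $p$-inversion and with the target category $\DM_{\gm,\et}^\eff$ being $\Z[1/p]$-linear, but it is a genuine subtlety that the paper's stated justification elides, and you were right to call it out. The remaining steps---functoriality of $\LAlb$, $\RPic$, duality via Corollary \ref{rpdla}, effectivity via Lemma \ref{leffe}, and the observation that semi-normalisation satisfies the hypothesis---match the structure of the paper's proof.
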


\begin{proof} It suffices to notice that $f$ induces isomorphisms $L(Y)\iso L(X)$ and
$L^c(Y)\to L^c(X)$, since by definition these sheaves only depend on the underlying topological
structures.
\end{proof}

Lemma \ref{l12.3} implies that in order to compute $\LAlb(X)$, etc., we may always assume $X$
semi-normal if we wish so.

\section{Computing $\LAlb(X)$ and $\RPic(X)$ for smooth $X$}\label{comp}

\subsection{The Albanese scheme} Let $X$ be a reduced $k$-scheme of finite type. ``Recall"
(\cite[Sect. 1]{ram}, \cite{spsz})  the Albanese scheme
$\cA_{X/k}$ \index{$\cA_{X/k}$} fitting in the following extension
\begin{equation}\label{unext}
0\to \cA_{X/k}^0\to \cA_{X/k}\to \Z[\pi_0(X)]\to 0
\end{equation}
where $\cA_{X/k}^0$ is Serre's generalised Albanese semi-abelian
variety, and $\pi_0(X)$ is the scheme of constants of $X$ viewed as
an \'etale sheaf on $Sm(k)$.\footnote{In the said references,
$\cA_{X/k}^0$ is denoted by
$Alb_X$ and
$\cA_{X/k}$ is denoted by $\widetilde{Alb}_X$.} In particular,
\[\cA_{X/k}\in \AbS \text{ (see Definition \ref{s1a}).}
\]

There is a
canonical morphism
\begin{equation}\label{alb}
\bar a_X: X
\to\cA_{X/k}
\end{equation}
which is  universal for morphisms from $X$ to group schemes of
the same type. 

For the existence of $\cA_{X/k}$, the reference 
\cite[Sect. 1]{ram} is sufficient if $X$ is a variety (integral $k$-scheme of finite type), hence if $X$ is normal (for example smooth): this will be sufficient in this section. For the general case, see \S \ref{albs}.

We shall denote the object $\Tot^{-1}(\underline{\cA}_{X/k})\in D^b(\M[1/p])$
simply by $\cA_{X/k}$. As seen in Lemma \ref{l4.3.1}, we have 
\[H_i(\cA_{X/k})=
\begin{cases}
\relax [\Z[\pi_0(X)]\to 0]&\text{for $i=0$}\\
\relax [0\to \cA_{X/k}^0]&\text{for $i=1$}\\
0&\text{for $i\ne 0,1$.}
\end{cases}
\]

\subsection{The main theorem} Suppose $X$ smooth. Via \eqref{eq1.4}, \eqref{alb} induces  a
composite map 
\begin{equation}\label{precan}
M_\et(X) \to M_\et(\cA_{X/k})\to \cA_{X/k}.
\end{equation}

Theorem \ref{teq} gives:

\begin{lemma}\label{l7.2.1} We have an exact triangle
\[\Z[\pi_0(X)]^*[0]\to\ihom(\cA_{X/k},\Z(1))\to (\cA_{X/k}^0)^*[-2]\by{+1}\qed\] 
\end{lemma}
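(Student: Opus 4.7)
The plan is to apply the contravariant triangulated functor $\ihom(-,\Z(1))$ to the defining extension \eqref{unext} of the Albanese scheme, and then to identify the outer terms using Theorem~\ref{teq}.

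First, I would regard $0\to \cA_{X/k}^0\to \cA_{X/k}\to \Z[\pi_0(X)]\to 0$ as an exact triangle in $\DM_{\gm,\et}^{\eff}$. All three terms lie in $d_{\le 1}\DM_{\gm,\et}^{\eff}$: the permutation motive $\Z[\pi_0(X)]$ is an Artin motive (hence in the image of $\Tot$ by \S\ref{s2.4.1}), the semi-abelian variety $\cA_{X/k}^0$ is $1$-motivic by the results of \S\ref{homotopy}, and $\cA_{X/k}$ has already been placed in $d_{\le 1}\DM_{\gm,\et}^{\eff}$ via the identification $\cA_{X/k}=\Tot^{-1}(\underline{\cA}_{X/k})$. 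By Proposition~\ref{cd}, $\ihom(-,\Z(1))$ restricts to a (contravariant) self-duality on this thick subcategory, so applying it to the above triangle and rotating produces the exact triangle
\[\ihom(\Z[\pi_0(X)],\Z(1))\to \ihom(\cA_{X/k},\Z(1))\to \ihom(\cA_{X/k}^0,\Z(1))\by{+1}.\]

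Second, I would identify the two outer terms with the expressions in the statement by invoking Theorem~\ref{teq}, which transports motivic Cartier duality through $\Tot$ to Deligne's Cartier duality on $1$-motives. The lattice $\Z[\pi_0(X)]$, viewed as the $1$-motive $[\Z[\pi_0(X)]\to 0]$, has Cartier dual $[0\to R_{\pi_0(X)/k}\G_m]$, and this is precisely what the notation $\Z[\pi_0(X)]^*[0]$ abbreviates. For the semi-abelian piece, I would write $\cA_{X/k}^0$ as an extension of an abelian variety $A$ by a torus $T$ with character lattice $L_T$; by Lemma~\ref{brst} the Cartier-dual $1$-motive of $[0\to\cA_{X/k}^0]$ is $[L_T\to A^*]$. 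Combining Theorem~\ref{teq} with the cohomological shift coming from $\Z(1)=\G_m[-1]$ and the Weil--Barsotti identification $\sext^1(A,\G_m)\simeq A^*$ then gives $\ihom(\cA_{X/k}^0,\Z(1))\simeq (\cA_{X/k}^0)^*[-2]$ under the convention that $(\cA_{X/k}^0)^*$ denotes the Cartier-dual $1$-motive $[L_T\to A^*]$ placed in the appropriate motivic degree.

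The main (indeed only) obstacle is bookkeeping the degree shifts in the interplay between $\Tot$, the Tate twist $\Z(1)$, and the Deligne Cartier duality on $1$-motives; once these conventions are matched, the lemma is a direct consequence of the exactness of $\ihom(-,\Z(1))$ on $d_{\le 1}\DM_{\gm,\et}^{\eff}$ (Proposition~\ref{cd}) applied to the Albanese extension \eqref{unext}.
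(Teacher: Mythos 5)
Your proposal is correct and follows the same route as the paper, whose entire proof of this lemma is the single phrase ``Theorem \ref{teq} gives:'' — i.e.\ dualize the Albanese extension \eqref{unext} via $\ihom(-,\Z(1))$ and translate to Deligne Cartier duality using Theorem \ref{teq}. You correctly identify that the only real work is reconciling the degree conventions between $\Tot$, the Tate twist, and Cartier duality, which is precisely what the paper leaves implicit.
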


By Lemma \ref{l7.2.1}, the map
\[\ihom(\cA_{X/k},\Z(1))\to\ihom(M_\et(X),\Z(1))\]
deduced from \eqref{precan} factors into a map
\begin{equation}\label{eqdual}
\ihom(\cA_{X/k},\Z(1))\to\tau_{\le 2}\ihom(M_\et(X),\Z(1)).
\end{equation}

Applying Proposition \ref{cd} and Lemma \ref{l2.1}, we therefore get a canonical map in
$D^b(\M[1/p])$
\begin{equation}\label{can}
\LAlb (X) \to \cA_{X/k}.
\end{equation}

\begin{sloppypar}
\begin{thm} \label{trunc} Suppose $X$ smooth. Then the map \eqref{can} sits in an exact triangle
\[[0\to \NS_{X/k}^*][2]\to \LAlb (X) \to \cA_{X/k}\longby{+1}\]
where $\NS_{X/k}^*$ denotes the group of multiplicative type dual to $\NS_{X/k}$
(\cf Definition \ref{dNS} and Proposition \ref{belong}).
\end{thm}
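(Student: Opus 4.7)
The plan is to compute $\Tot\LAlb(X)$ in $\DM_{\gm,\et}^{\eff}$ by dualising the natural two-step filtration of $C\df \tau_{\le 2}\alpha^*\ihom_\Nis(M(X),\Z(1))$, then transfer the result to $D^b(\M[1/p])$ via $\Tot^{-1}$, and identify the map of the resulting triangle with \eqref{can} using the universal property \eqref{lalbuniv}.

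First, by Lemma \ref{l2.1}, $\Tot\LAlb(X)\simeq \ihom_\et(C,\Z(1))$. The object $C$ fits in the truncation triangle
\[R_{\pi_0(X)/k}\G_m[-1]\to C\to \Pic_{X/k}[-2]\by{+1},\]
and the right-hand term refines further, via the normalised presentation $0\to \underline{\Pic}^0_{X/k}\to \Pic_{X/k}\to \NS_{X/k}\to 0$ (Propositions \ref{p3.3.1} and \ref{belong}), into
\[\underline{\Pic}^0_{X/k}[-2]\to \Pic_{X/k}[-2]\to \NS_{X/k}[-2]\by{+1}.\]

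Next, I would apply $\ihom_\et(-,\Z(1))$ to both triangles. By Theorem \ref{teq}, which identifies motivic with Deligne's Cartier duality on $d_{\le 1}\DM_{\gm,\et}^{\eff}$ via $\Tot$, each outer piece is directly computable: the torus $R_{\pi_0(X)/k}\G_m$ dualises to $\Z[\pi_0(X)]$; the discrete sheaf $\NS_{X/k}$ dualises, after shifts, to $\Tot([0\to \NS_{X/k}^*][2])$; and the semi-abelian variety $\underline{\Pic}^0_{X/k}$ dualises to $\Tot$ of its Cartier dual Deligne $1$-motive $[L\to A^\ast]$. Organising these through the octahedral axiom produces an exact triangle
\[\Tot([0\to \NS_{X/k}^*][2]) \to \Tot\LAlb(X) \to \underline{K} \by{+1},\]
with $\underline{K}$ itself an extension of $\Z[\pi_0(X)]$ by $\Tot([L\to A^\ast])$.

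The main obstacle is then to identify $\underline{K}$ with $\underline{\cA}_{X/k}$. Since the defining extension \eqref{unext} of $\cA_{X/k}$ has precisely the same shape, the remaining work reduces to: (i) matching the Cartier dual $[L\to A^\ast]$ of the semi-abelian variety $\underline{\Pic}^0_{X/k}$ with the Deligne $1$-motive whose underlying semi-abelian group is Serre's $\cA^0_{X/k}$; and (ii) checking that the extension class of $\underline{K}$ reproduces that of \eqref{unext}. The key input for (i) is an enhanced Picard–Albanese duality for smooth varieties: classical when $X$ is smooth projective (where $\Alb(X)$ and $\Pic^0(X)$ are dual abelian varieties), and extended to the open smooth case by passing to a smooth compactification $\bar X\supset X$, with the lattice $L$ absorbing the toric part of $\cA^0_{X/k}$ contributed by the boundary divisor. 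Once $\underline{K}\simeq \underline{\cA}_{X/k}$ is established, the map $\Tot\LAlb(X)\to \underline{\cA}_{X/k}$ coming from the triangle agrees with $\Tot$ of \eqref{can} by the universal properties \eqref{lalbuniv} and \eqref{alb} applied to the Albanese morphism $\bar a_X$; passing through $\Tot^{-1}$ then yields the asserted exact triangle in $D^b(\M[1/p])$.
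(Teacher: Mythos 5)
Your strategy---dualise the two-step filtration of $C=\tau_{\le 2}\alpha^*\ihom_\Nis(M(X),\Z(1))$ and read off $\Tot\LAlb(X)$ as an extension---has a genuine gap at the point where $\underline{K}$ is identified with $\underline{\cA}_{X/k}$. The ``enhanced Picard--Albanese duality'' invoked in your step (i), namely that the Cartier dual of the $1$-motive attached to $\underline{\Pic}^0_{X/k}$ is (a $1$-motive with semi-abelian part) $\cA^0_{X/k}$, with the boundary divisor lattice producing the right torus, is not classical when $X$ is open: it is essentially equivalent to the statement being proved. Concretely you need both that the isogeny $A^*_{X/k}\to\gamma(\Pic_{X/k})=\underline{\Pic}^0_{X/k}$ of Lemma \ref{l6.1} is an isomorphism and that the resulting extension class of $\underline{K}$ coincides with that of \eqref{unext}; neither is a formality, and the second is exactly where the universal property of $\cA_{X/k}$ must be used, not merely quoted at the very end. (There is also a smaller slip: the normalised presentation of $\Pic_{X/k}$ is the four-term exact sequence $0\to L\to\underline{\Pic}^0_{X/k}\to\Pic_{X/k}\to\NS_{X/k}\to 0$ with a generally nonzero lattice kernel $L$; it is not a short exact sequence as you wrote.)

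The paper avoids the circularity by not dualising twice. It compares $\ihom_\et(\cA_{X/k},\Z(1))$ directly with $\tau_{\le 2}\ihom_\et(M_\et(X),\Z(1))$ via \eqref{eqdual}, takes cohomology sheaves to obtain the maps \eqref{homa} and \eqref{exta}, and then uses the faithfully exact evaluation functor of Proposition \ref{p4.2} to reduce the whole theorem to a statement about abelian groups over $\bar k$, Theorem \ref{t6.3}: the map $f:\Ext(\underline{\cA}_{X/k},\G_m)\to\Pic(X)$ is injective with cokernel $\NS(X)$. Injectivity is deduced from the universal property of $\cA_{X/k}$ (a trivialisation of $\bar a_X^*\cE$ over $X$ factors back through $\bar a_X$), and the image is computed from Lemma \ref{l6.1}, which is proved by reducing via de Jong's alterations \cite[Th.\ 4.1]{DJ} to the classical smooth projective case. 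Both inputs are indispensable, and neither appears in your outline; if you supply them you will have essentially reproduced the paper's Theorem \ref{t6.3}.
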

\end{sloppypar}

This theorem says in particular that, on the object $\LAlb (X)$, the
motivic $t$-structure and the homotopy $t$-structure are compatible in a
strong sense.

\begin{cor}\label{HLAlb} For $X$ smooth over $k$ we have
$$\LA{i}(X) =
\begin{cases}
\relax [\Z[\pi_0(X)]\to 0]& \text{if $i = 0$}\\
\relax [0\to\cA_{X/k}^0] & \text{if $i= 1$}\\
\relax [0\to \NS_{X/k}^*] & \text{if $i= 2$}\\
0 & \text{otherwise.}
\end{cases}$$
\end{cor}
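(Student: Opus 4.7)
The plan is to pass through the equivalence $\Tot$ of Theorem \ref{t1.2.1}, work in $d_{\leq 1}\DM_{\gm,\et}^\eff$, and exploit the Cartier duality relation $\LAlb(X) = \RPic(X)^*$ of Corollary \ref{rpdla} to reduce the computation of $\LAlb(X)$ to the more accessible $\RPic(X)$.

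First I would compute $\Tot\RPic(X) = \alpha^* D\1^\Nis M(X)$ explicitly. By Lemma \ref{l2.1} and the argument used in Proposition \ref{lcurve}, this identifies with the truncation $\tau_{\leq 2}(\alpha^* Rf_*\G_m[1/p])[-1]$ for $f : X \to \Spec k$, whose homotopy cohomology sheaves (Theorem \ref{t3.2.3}) are $\sH^1 = R_{\pi_0(X)/k}\G_m$ and $\sH^2 = \Pic_{X/k}$; both are $1$-motivic by Proposition \ref{p3.3.1}. The second sheaf further decomposes via Proposition \ref{belong} through the short exact sequence
\[0 \to \Pic^0_{X/k} \to \Pic_{X/k} \to \NS_{X/k} \to 0\]
in $\Shv_1$. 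Together with Corollary \ref{c3.3.2}, these data exhibit $\RPic(X)$ in $D^b(\M[1/p])$ as an iterated extension built from the Deligne $1$-motives $[0 \to R_{\pi_0(X)/k}\G_m]$, $[0 \to \Pic^0_{X/k}]$ and $[\NS_{X/k} \to 0]$ placed in the appropriate shifts (the shifts being dictated by the conversion between $\tot$ and the embedding $\M \to \Shv_1^{[0,1]}$).

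Next I would apply the exact Cartier duality of Proposition \ref{pcd} c) to this iterated extension. The three $1$-motives dualise respectively to $[\Z[\pi_0(X)] \to 0]$, to a Deligne $1$-motive $[0 \to \Pic^0_{X/k}]^*$ whose associated semi-abelian variety is the Serre–Ramachandran Albanese $\cA^0_{X/k}$ (by the Picard--Albanese duality of Lemma \ref{brst} and the biextension machinery of Section \ref{sbiext}), and to $[0 \to \NS^*_{X/k}]$. The first two dualised pieces reassemble precisely to the Albanese complex $\cA_{X/k}$, matching the extension $0 \to \cA^0_{X/k} \to \cA_{X/k} \to \Z[\pi_0(X)] \to 0$ defining $\cA_{X/k}$; the remaining piece, shifted into ${}_t$-degree $-2$, supplies the claimed fibre $[0 \to \NS^*_{X/k}][2]$. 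Finally, the canonical map \eqref{can} is pinned down by the universal property \eqref{lalbuniv}: it is the unique factorisation of the Albanese morphism $\alpha^* M(X) \to \underline{\cA_{X/k}}$ through $a_X$, and by inspection it agrees with the natural projection in the iterated extension above.

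The main obstacle is the reassembly: identifying the Cartier dual of $[0 \to \Pic^0_{X/k}]$ together with $[\Z[\pi_0(X)] \to 0]$ as exactly the Albanese complex $\cA_{X/k}$, and not just as some abstract object with the same ${}_t$-cohomology. This requires a careful tracking of extension classes, especially when $X$ is not proper and $\Pic^0_{X/k}$ carries a nontrivial toric part (encoding divisors at infinity) whose character lattice becomes the lattice part of the dual $1$-motive. The natural tool here is Lemma \ref{l7.2.1}, which provides the parallel triangle for $\ihom_\et(\underline{\cA_{X/k}}, \Z(1))$ and matches the dualised pieces to the two non-vanishing pieces of $\cA_{X/k}$. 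For Corollary \ref{HLAlb}, one takes ${}_t$-cohomology of the triangle: $[0 \to \NS^*_{X/k}][2]$ lives purely in ${}_t$-degree $-2$, while $\cA_{X/k}$ lives in ${}_t$-degrees $-1$ (value $[0 \to \cA^0_{X/k}]$) and $0$ (value $[\Z[\pi_0(X)] \to 0]$); the long exact sequence collapses immediately into the four displayed cases.
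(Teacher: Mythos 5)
Your proposal breaks down precisely at the point you flag as the main obstacle, and the breakdown is not a matter of bookkeeping but a substantive error that makes the argument fail whenever $X$ is not proper. You take $\sH^1(\Tot\RPic(X)) = R_{\pi_0(X)/k}\G_m$; but for smooth open $X$ this sheaf is $\pi_*\G_{m,X}$, i.e.\ $U\mapsto\Gamma(U\times X,\G_m)$, which carries a nontrivial lattice part coming from the units of $X_{\bar k}$ modulo scalars. (Already for $X=\G_m$ one gets $\G_m\oplus\Z$.) As a consequence, your ``iterated extension'' of $\RPic(X)$ out of the three Deligne $1$-motives $[0\to R_{\pi_0(X)/k}\G_m]$, $[0\to\Pic^0_{X/k}]$ and $[\NS_{X/k}\to 0]$ is missing exactly the lattice/boundary-divisor contributions, and the resulting ${}^tH^1 = [0\to\Pic^0_{X/k}]$ disagrees with the correct answer $\RA{1}(X)=\Pic^+(X)=[\Div^0_Y(\bar X)\to\Pic^0(\bar X)]$ (Corollary~\ref{HRPic}). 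The symptom reappears on the dual side: you assert that $[0\to\Pic^0_{X/k}]^*$ has semi-abelian part $\cA^0_{X/k}$, but for $X$ open $\Pic^0_{X/k}$ is the abelian variety $\Pic^0(\bar X)$, whose Cartier dual is the \emph{abelian} part $A_{X/k}$ of the Serre Albanese only; the torus part of $\cA^0_{X/k}$ is nowhere to be found in your reassembly. So the scheme only gives the right answer for $X$ proper, and in particular the computation of $\LA{1}(X)$ for an open curve or $\G_m$ comes out wrong.

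The missing content is what the paper isolates in Theorem~\ref{t6.3} and Lemma~\ref{l6.1}: one does not simply dualise a naive homotopy decomposition, one constructs the canonical morphism $\LAlb(X)\to\cA_{X/k}$ (dually, \eqref{eqdual}) from the universal property of the Albanese scheme, and then shows that the induced maps $\shom(\cA_{X/k},\G_m)\to p_*\G_{m,X}$ and $\sext(\cA_{X/k},\G_m)\to\underline{\Pic}_{X/k}$ are respectively an isomorphism and injective with cokernel $\NS_{X/k}$. This is where the torus part of $\cA^0_{X/k}$ is matched against the units/divisor lattices, via the universal property of $\cA_{X/k}$ (for the injectivity of $f$) and de Jong's theorem (Lemma~\ref{l6.1}, which identifies $A^*_{X/k}\iso\Pic^0_{X/k}$). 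Lemma~\ref{l7.2.1} is merely the formal computation of the other side of \eqref{eqdual}; it cannot replace the genuine geometric input of Theorem~\ref{t6.3}.
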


\begin{cor}\label{clalb} For $X$ smooth, $\LA{1}(X)$ is isomorphic to the homological Albanese
$1$-motive
$\Alb^- (X)$ of \cite{BSAP}.\qed
\end{cor}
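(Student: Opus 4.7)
The plan is essentially to reduce the statement to Corollary \ref{HLAlb} together with a direct identification of the relevant semi-abelian variety.

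First, I would invoke Corollary \ref{HLAlb}, which computes $\LA{1}(X) = [0 \to \cA_{X/k}^0]$ for smooth $X$, where $\cA_{X/k}^0$ is the semi-abelian variety appearing in the extension \eqref{unext} defining the Albanese scheme $\cA_{X/k}$. This is nothing but Serre's generalised Albanese variety of $X$ (as explicitly noted above Lemma \ref{l7.2.1}, following \cite{ram} and \cite{spsz}). So after the main work, what remains is to recognise this object as $\Alb^-(X)$.

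Next, I would recall the construction of $\Alb^-(X)$ from \cite{BSAP} in the smooth case. By definition there, $\Alb^-(X)$ for smooth $X$ is the $1$-motive $[0 \to G]$ whose semi-abelian part $G$ is Serre's generalised Albanese variety of $X$: indeed, the construction of \cite{BSAP} is set up precisely so as to extend both the classical Albanese variety (when $X$ is smooth projective) and Serre's generalised Albanese (when $X$ is merely smooth), with no lattice part in the discrete component in either case. Thus on the nose we have $\Alb^-(X) = [0 \to \cA_{X/k}^0]$.

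Combining the two identifications gives the desired isomorphism $\LA{1}(X) \simeq \Alb^-(X)$. The only real content one needs to make precise is the compatibility between the semi-abelian variety $\cA_{X/k}^0$ coming out of our construction (via the universal realisation functor and the motivic Albanese map \eqref{precan}) and the semi-abelian variety constructed in \cite{BSAP}; this is a matter of comparing universal properties, since both semi-abelian varieties are defined as the universal semi-abelian quotient receiving a canonical map from $X$ (rationally equivalent to a map from $M_\et(X)$ to a $1$-motive of weight $\le -1$). The main (and only) obstacle is therefore the bookkeeping of ensuring that this universal property is the same on both sides, but this is immediate from the defining properties of Serre's Albanese and from the construction of $\Alb^-(X)$ in \cite{BSAP} in the smooth case. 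A more substantive and canonical comparison, valid in the singular case, will be taken up in Section \ref{12}.
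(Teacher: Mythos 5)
Your argument is essentially the paper's own: the corollary is given without proof (a bare \texttt{\textbackslash qed}) precisely because, once Corollary \ref{HLAlb} identifies $\LA{1}(X)=[0\to\cA_{X/k}^0]$ with Serre's generalised Albanese, the statement is a matter of unwinding the definition of $\Alb^-(X)$ from \cite{BSAP} in the smooth case (namely that $\Alb^-(X)$ is the Cartier dual of $\Pic^+(X)$, which \cite{BSAP} shows is $\cA_{X/k}^0$). Your hedge at the end — that a canonical, not merely abstract, comparison requires more work and is postponed — is also exactly how the paper handles it, in Proposition \ref{c12.2.2} and Corollary \ref{c12.3}.
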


\subsection{Reformulation of Theorem \ref{trunc}}   

It is sufficient to get an exact triangle after application of $D\1\circ\Tot$, so that we have
to compute the cone of the morphism \eqref{eqdual} in
$\DM_{\gm,\et}^\eff$. We shall use:

\begin{lemma}\label{l4.1} For $\cF\in \HI_\et^s$, the morphism $b$ of Proposition \ref{p1} is
induced by \eqref{eq1.4}.
\end{lemma}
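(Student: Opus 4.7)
The plan is to interpret both sides in a derived-category framework and match them via the universal property of the connecting homomorphism.

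For $\cF\in \HI_\et^s$, Proposition \ref{pD.1.4} together with the standard Yoneda formula for $\DM_{-,\et}^\eff$ gives a natural isomorphism
\[
\Hom_{\DM_{-,\et}^\eff}(M_\et(G),\cF[i])\simeq H^i_\et(G,\cF), \qquad i\ge 0.
\]
Under this identification, precomposition with the morphism $\phi:M_\et(G)\to \uG$ of \eqref{eq1.4} yields a map
\[
\phi^*:\Hom_{\DM_{-,\et}^\eff}(\uG,\cF[1])\to H^1_\et(G,\cF).
\]
Since $\cF$ is strictly homotopy invariant, the forgetful comparison identifies $\Hom_{\DM_{-,\et}^\eff}(\uG,\cF[1])$ with $\Ext^1_{\Shv_\et(Sm(k))}(\uG,\cF)$ (this is where the transfer structure automatically extends, \cf Proposition \ref{ptransf} for the $1$-motivic case and \cite[\S 3]{V} in general), and $\phi^*$ sends the class of an extension $[\cE]\colon 0\to\cF\to\cE\to\uG\to 0$, viewed as a morphism $\uG\to\cF[1]$, to the composite $[\cE]\circ\phi$.

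To match this with $b([\cE])$, unravel the definition: $b([\cE])$ is the image of $1_G\in\uG(G)$ under the boundary $\uG(G)\to H^1(G,\cF)$. The ``boundary equals composition with extension class'' principle in the derived category says that, given any $s\in\Hom(X,\uG)$, its image under the boundary $\Hom(X,\uG)\to\Hom(X,\cF[1])$ is precisely $[\cE]\circ s$. Applied to $X=M_\et(G)$ and to the element $1_G\in\uG(G)=\Hom_{\DM_{-,\et}^\eff}(M_\et(G),\uG)$, it suffices to check that $1_G$ corresponds under this Yoneda identification to the map $\phi$ itself. But the Spie\ss--Szamuely morphism $\sigma:L_\et(G)\to\uG$, recalled just before \eqref{eq1.4} and constructed using the graph embedding, corresponds by Yoneda to the identity section $1_G\in\uG(G)$; passing to $C_*=M_\et$ gives $\phi$, and no information is lost because $\cF$ is strictly homotopy invariant. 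Hence $b([\cE])=[\cE]\circ\phi=\phi^*([\cE])$.

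The only real verification is the bookkeeping between boundary maps computed in $\Shv_\et(Sm(k))$ (which is what the statement of Proposition \ref{p1} refers to) and those computed in $\DM_{-,\et}^\eff$; this is handled by the full faithfulness result between $\HI_\et^s$ and the corresponding subcategory of sheaves (Proposition \ref{pD.1.4}), which guarantees the two long exact sequences coincide term by term. Everything else is a tautology once the Yoneda identification $\uG(G)=\Hom_{\DM_{-,\et}^\eff}(M_\et(G),\uG)$ is in place.
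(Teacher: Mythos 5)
Your argument is correct and is essentially the same as the paper's (extremely terse) proof: the paper simply observes that $\Hom_{\DM_{-,\et}^\eff}(M_\et(G),\cF[1])=H^1_\et(G,\cF)$ by the \'etale analogue of \cite[Prop. 3.2.3]{V}, and then declares the statement ``clear by construction.'' You have unpacked exactly the two facts lying underneath: the connecting homomorphism is composition with the classifying map $[\cE]:\uG\to\cF[1]$, and the morphism $\phi$ of \eqref{eq1.4} corresponds to $1_G\in\uG(G)$ under Yoneda (this last point coming from $\sigma\circ\gamma=\mathrm{id}$).

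One small imprecision worth flagging: you claim the forgetful functor identifies $\Hom_{\DM_{-,\et}^\eff}(\uG,\cF[1])$ with $\Ext^1_{\Shv_\et(Sm(k))}(\uG,\cF)$. That comparison (between Ext with and without transfers) is not obvious and is stronger than what is actually needed. The whole verification can be carried out in $D^-(\Shv_\et(Sm(k)))$: after forgetting transfers, $\phi$ becomes a morphism $M_\et(G)\to\uG$ there, the extension class $[\cE]$ is already a morphism $\uG\to\cF[1]$ there, and the identification $\Hom_{D^-(\Shv_\et)}(M_\et(G),\cF[1])\iso H^1_\et(G,\cF)$ holds because $\cF$ is $\Aff^1$-local. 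Then the computation $[\cE]\circ\phi\circ\eta=[\cE]\circ 1_G=b([\cE])$, with $\eta:\Z(G)\to M_\et(G)$ the canonical map, closes the argument without ever invoking fullness of the forgetful functor. Also note that the strict homotopy invariance of $\uG$ (not of $\cF$) is what makes $\phi$ correspond to $1_G$; the strict homotopy invariance of $\cF$ is what you need for the Yoneda formula in degree $1$.
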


\begin{proof} This is clear by construction, since
$\Hom_{\DM_\et}(M(G),\cF[1])=H^1_\et(G,\cF)$ \cite[Prop. 3.3.1]{V}.
\end{proof}

Taking the cohomology sheaves of \eqref{eqdual}, we get morphisms
\begin{gather}
\shom(\cA_{X/k},\G_m)\to p_*\G_{m,X}\label{homa}\\
f:\sext(\cA_{X/k},\G_m)\longby{b}\underline{\Pic}_{\cA_{X/k}/k}\longby{\bar a^*}
\underline{\Pic}_{X/k}\label{exta}
\end{gather}
where in \eqref{exta}, $b$ corresponds to the map of Proposition \ref{p1} thanks to Lemma
\ref{l4.1}. Thanks to Proposition \ref{p4.2}, Theorem \ref{trunc} is then equivalent to the
following

\begin{thm}\label{t6.3} Suppose $k$ algebraically closed; Then\\
a) \eqref{homa} yields an isomorphism $\Hom(\cA_{X/k},\G_m)\iso \Gamma(X,\G_m)$.\\
b) \eqref{exta} defines a short exact sequence
\begin{equation}\label{eq3}
0\to \Ext(\underline{\cA}_{X/k},\G_m)\longby{f} \Pic(X)\longby{e}\NS(X)\to 0
\end{equation}
where $e$ is the natural map.
\end{thm}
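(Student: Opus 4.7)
\emph{Proof plan.} My strategy is to compute the two $1$-motivic sheaves $\shom(\cA_{X/k},\G_m)$ and $\sext(\cA_{X/k},\G_m)$ from the canonical extension
\[0\to \cA^0_{X/k}\to \cA_{X/k}\to \Z[\pi_0(X)]\to 0,\]
and then to match them against $\underline{\Gamma(X,\G_m)}$ and the semi-abelian part $\Pic^0_{X/k}$ of the $1$-motivic sheaf $\Pic_{X/k}$ (see Proposition \ref{p3.3.1} and Proposition \ref{belong}).

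For (a), applying $\shom(-,\G_m)$ to the extension above and then taking global sections over $\bar k$, the contributions of $\Z[\pi_0(X)]$ to $\Ext^i$ vanish by freeness, yielding a short exact sequence with outer terms $(k^*)^{\pi_0(X)}$ and $\Hom(\cA^0_{X/k},\G_m)$. The latter is the character lattice $L$ of the toric part of $\cA^0_{X/k}$, and Rosenlicht's description of $\Gamma(X,\G_m)$ (decomposed over connected components) has exactly these two pieces. The universal property of $\bar a_X$ for morphisms from $X$ to objects of $\AbS$ then identifies \eqref{homa} with this splitting, giving (a).

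For (b), the same long exact sequence gives $\sext(\cA_{X/k},\G_m)\simeq \sext(\cA^0_{X/k},\G_m)$. Writing $\cA^0_{X/k}$ as an extension of an abelian variety $A$ by a split torus $T$ with character lattice $L$, the classical Weil--Barsotti formula and the vanishing $\sext(T,\G_m)=0$ over $\bar k$ yield a short exact sequence
\[0\to L\to \underline{A^*}\to \sext(\cA^0_{X/k},\G_m)\to 0,\]
in which $L\to A^*$ encodes the extension class of $\cA^0_{X/k}$. By Serre's construction of the dual of the Albanese, the cokernel $A^*/L$ is canonically $\Pic^0_{X/k}$. Since $\sext(\cA_{X/k},\G_m)\in\SAb$ has vanishing $\pi_0$, Proposition \ref{pi0}(a) shows that $e\circ f=0$; combined with the tautological sequence $0\to \Pic^0_{X/k}\to \Pic_{X/k}\to \NS_{X/k}\to 0$, this produces the exactness of \eqref{eq3} upon taking sections over $\bar k$.

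The main obstacle is the last step: showing that $f=\bar a_X^*\circ b$ of \eqref{exta} coincides with Serre's duality isomorphism $\sext(\cA^0_{X/k},\G_m)\iso \Pic^0_{X/k}$. Lemma \ref{l4.1} identifies the $b$ of Proposition \ref{p1} with the map induced by the motivic cycle map \eqref{eq1.4}, so the verification reduces to the compatibility between two incarnations of Serre--Rosenlicht duality. Once this is in place, both the injectivity of $f$ and the fact that its image lands in $\Pic^0_{X/k}$ are automatic.
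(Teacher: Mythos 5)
Part (a) of your plan is fine, though heavier than needed: since over $\bar k$ the functor $\Hom(-,\G_m)$ on $\AbS$ corepresents $\Gamma(X,\G_m)$ by the universal property of $\bar a_X$, the isomorphism is immediate (this is exactly what the paper says), and the d\'evissage via $0\to\cA^0_{X/k}\to\cA_{X/k}\to\Z[\pi_0(X)]\to 0$ is unnecessary.

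For (b), however, there is a genuine gap, and you have already put your finger on it. Your plan is to compute $\Ext(\cA_{X/k},\G_m)$ abstractly (Weil--Barsotti plus the d\'evissage of $\cA^0_{X/k}$), identify the answer with $\Pic^0_{X/k}$ via ``Serre's construction of the dual of the Albanese,'' and then check that the concrete map $f=\bar a_X^*\circ b$ of \eqref{exta} coincides with this abstract identification. You acknowledge this last compatibility as ``the main obstacle,'' reducing it to an unverified ``compatibility between two incarnations of Serre--Rosenlicht duality.'' But this compatibility is not a formality one can wave at: it is precisely the content of the theorem, and without it you have only shown that the source and target of $f$ are \emph{abstractly} isomorphic, not that $f$ itself is injective with image $\Pic^0(X)$. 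Your invocation of Proposition \ref{pi0}(a) to get $ef=0$ is a nice, cleaner alternative to the paper's translation argument via Lemma \ref{trans}, and the sheaf-theoretic identification of $\coker(L\to A^*)$ with the semi-abelian part of $\Pic_{X/k}$ is correct as an abstract statement (it is essentially the content of Lemma \ref{l6.1}, once upgraded from an isogeny to an isomorphism). But the step you defer is exactly the one the paper takes pains to avoid.

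The paper's route is quite different and bypasses the compatibility entirely: rather than identifying $\Ext(\cA_{X/k},\G_m)$ with $\Pic^0(X)$ a priori, it proves injectivity of $f$ directly. Given an extension $\cE$ of $\cA_{X/k}$ by $\G_m$ with $f(\cE)=\bar a_X^*\cE$ trivial as a torsor over $X$, the section $\sigma'$ of the pull-back torsor yields a morphism $X\to\cE$, and since $\cE$ is a locally semi-abelian scheme the universal property of $\cA_{X/k}$ produces a factorisation $\sigma:\cA_{X/k}\to\cE$ which (again by universality) splits $\pi:\cE\to\cA_{X/k}$, so $\cE$ is trivial. Exactness in the middle is then obtained ``cheaply'': Lemma \ref{l6.1} gives that $A^*_{X/k}\to\Pic^0_{X/k}$ is an \emph{isogeny}, and injectivity of $f$ upgrades this to surjectivity onto $\Pic^0(X)$ exactly as needed, without ever having to match two dualities. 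To repair your proof you would either need to carry out the compatibility verification (which likely amounts to re-proving Lemma \ref{l6.1} plus chasing the definition of $b$ in Proposition \ref{p1} back through \eqref{eq1.4}) or, better, replace your last step by the paper's direct universal-property argument for injectivity.
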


Before proving Theorem \ref{t6.3}, it is convenient to prove Lemma
\ref{l6.1} below. Let $A_{X/k}$  be the abelian part of $\cA_{X/k}^0$;
then the sheaf $\sext(\underline{A}_{X/k},\G_m)$ is represented by the dual
abelian variety $A^*_{X/k}$. Composing with the map $f$ of \eqref{exta},
we get a map of $1$-motivic sheaves
\begin{equation}\label{eqadj}
\underline{A}^*_{X/k}\to \underline{\Pic}_{X/k}.
\end{equation}

\begin{lemma}\label{l6.1} The map \eqref{eqadj}
induces an isogeny in $\SAb$
\[A^*_{X/k}\onto \gamma(\underline{\Pic}_{X/k})=\underline{\Pic}^0_{X/k}\]
where $\gamma$ is the adjoint functor appearing in Theorem \ref{t3.2.4}
a).
\end{lemma}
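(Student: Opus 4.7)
The plan is to reduce to classical Weil--Barsotti duality by compactifying. First observe that $\underline{A}^*_{X/k}$ belongs to the image of $\SAb\to\Shv_1$: Weil--Barsotti identifies it with $\sext(\underline{A}_{X/k},\G_m)$, so it is the representable sheaf attached to the abelian variety $A^*_{X/k}$. By Theorem \ref{t3.2.4} a) and the fact that $\gamma$ is faithful and exact up to isogeny, the lemma is equivalent to showing that the induced morphism of semi-abelian varieties $A^*_{X/k}\to \gamma(\underline{\Pic}_{X/k})=\underline{\Pic}^0_{X/k}$ is surjective with finite kernel.

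Next I would reduce to the case where $X$ admits a smooth projective compactification $\bar X$. In characteristic zero this is Hironaka; in positive characteristic (recall $p$ is inverted), the reduction proceeds via de Jong's alteration together with a \v Cech descent argument analogous to the one in the proof of Proposition \ref{p3.3.1}, using the exactness of $\gamma$ up to isogeny. Given $\bar X$ with boundary $D=\bigcup D_i$, Weil's extension theorem identifies $A_{X/k}$ with $\Alb(\bar X)$: any morphism from the smooth variety $X$ to an abelian variety $B$ extends uniquely to $\bar X$ because $\bar X$ is smooth and $B$ is proper, so the universal abelian quotient of $\cA_{X/k}^0$ coincides with $\Alb(\bar X)$. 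Correspondingly $A^*_{X/k}\simeq \Pic^0(\bar X)$ by Weil--Barsotti, and the composite $X\to \cA_{X/k}^0\to A_{X/k}$ extends to the classical Albanese map $\bar X\to \Alb(\bar X)$. Unravelling the construction of $f$ in \eqref{exta}, using the Yoneda interpretation of the map $b$ of Proposition \ref{p1} and Lemma \ref{trans}, the map \eqref{eqadj} then factors as
\[\underline{A}^*_{X/k}\simeq \Pic^0(\bar X)\longby{\sim}\underline{\Pic}^0_{\bar X/k}\longby{j^*}\underline{\Pic}_{X/k}\]
where the second arrow is the classical self-duality on $\bar X$ and $j:X\hookrightarrow\bar X$ is the open immersion.

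Finally I would read off the isogeny claim from the localisation sequence of $1$-motivic sheaves
\[\bigoplus_i\underline{\Z}_{D_i}\to\underline{\Pic}_{\bar X/k}\to\underline{\Pic}_{X/k}\to 0.\]
Let $K\subset\underline{\Pic}_{\bar X/k}$ be the image of the first arrow, a finitely generated discrete subsheaf. Applying $\gamma$ (exact up to isogeny by Theorem \ref{t3.2.4} a)) and using Proposition \ref{pi0}, the map $\Pic^0(\bar X)\to\underline{\Pic}^0_{X/k}$ is surjective, with kernel $K\cap \Pic^0(\bar X)$. Over $\bar k$ this last group is a finitely generated subgroup of the divisible group $\Pic^0(\bar X)(\bar k)$; such a subgroup is necessarily torsion (any element of infinite order would fail to admit $n$-th roots of all orders inside a finitely generated group), hence finite. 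This proves the isogeny.

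The main obstacle will be the second step: faithfully verifying that the abstractly constructed map $f$ of \eqref{exta}, when composed with $\sext(\underline{A}_{X/k},\G_m)\to\sext(\underline{\cA}_{X/k},\G_m)$ induced by the sequence $0\to T_{X/k}\to\cA^0_{X/k}\to A_{X/k}\to 0$, really does coincide up to sign with the classical pullback of translation-invariant line bundles, and (in positive characteristic) arranging the de Jong descent so that it preserves isogeny classes rather than just cohomological invariants.
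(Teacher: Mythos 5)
Your proposal has a fatal gap in the final step. The assertion that a finitely generated subgroup of a divisible group is necessarily torsion is simply false: $\Z\subset\Q$ is the basic counterexample, and the parenthetical argument conflates admitting $n$-th roots in the ambient group with admitting them inside the finitely generated subgroup. Worse, the group $K\cap\Pic^0(\bar X)$ can genuinely have positive rank. Take $\bar X=E\times E'$ an abelian surface and let $D_1=E\times\{0\}$, $D_2=E\times\{t\}$ with $t\in E'(\bar k)$ of infinite order; then $[D_1]-[D_2]\in\Pic^0(\bar X)(\bar k)$ has infinite order and lies in $K$, so $K\cap\Pic^0(\bar X)$ contains a copy of $\Z$. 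Hence your kernel computation cannot be salvaged by any finiteness argument. What actually saves the statement is that $\gamma$ is computed from the \emph{normalised} presentation of Proposition \ref{p3.1} a): normalising $b:\Pic^0(\bar X)\to\underline{\Pic}_{X/k}$ divides $\Pic^0(\bar X)$ only by the \emph{torsion} of $\ker b$ (which is finite, $K$ being discrete), while the free part of $\ker b$ persists as the lattice $L$ in the normalised exact sequence $0\to L\to \gamma(\underline{\Pic}_{X/k})\to\underline{\Pic}_{X/k}\to E\to 0$. In particular your identification $\gamma(\underline{\Pic}_{X/k})\simeq\Pic^0(\bar X)/(K\cap\Pic^0(\bar X))$ is not the right description of the target.

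Your reduction in positive characteristic is also glossed over and invokes the wrong mechanism. De Jong does not give a smooth compactification of $X$ itself; it produces a dense open $U\subset X$, a finite \'etale cover $p:\tilde U\to U$, and a smooth projective $\bar X\supset\tilde U$. The paper's proof therefore proceeds by three separate d\'evissage steps: the smooth projective case (which is literally classical Picard--Albanese duality, with no localisation computation needed), an open-immersion step ($\underline{\Pic}_{X/k}\to\underline{\Pic}_{U/k}$ is epi with discrete kernel, so Theorem \ref{t3.2.4} a) makes $\gamma$ of this map an isogeny, while $A^*_{X/k}\simeq A^*_{U/k}$ by Weil extension), and a transfer step for finite \'etale covers (existence of $p_*$ on both $A^*$ and $\underline{\Pic}$ commuting with the map \eqref{eqadj}, so that $p_*p^*$ is multiplication by the degree). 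The "\v Cech descent argument analogous to Proposition \ref{p3.3.1}" you propose is not the right tool: that spectral sequence was used to establish that a sheaf is $1$-motivic, not to propagate an isogeny statement, for which one needs a two-sided inverse up to a nonzero integer — exactly what the transfer provides.
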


\begin{proof} We proceed in 3 steps:
\begin{enumerate}
\item The lemma is true if $X$ is smooth projective: this follows from
the representability of $\Pic^0_X$ and the duality between the Picard and the Albanese varieties.
\item Let $j:U\to X$ be an open immersion: then the lemma is true for $X$
if and only if it is true for $U$. This is clear since $\underline{\Pic}_{X/k}\to\underline{\Pic}_{U/k}$ is an epimorphism with discrete kernel.
\item Let $\phi:Y\to X$ be an \'etale covering. If the lemma is true for
$Y$, then it is true for $X$. This follows from the existence of
transfer maps $\phi_*:A^*_{Y/k}\to A^*_{X/k}$, $\underline{\Pic}_{Y/k}\to \underline{\Pic}_{X/k}$
commuting with the map of the lemma, plus the usual transfer argument. 
\end{enumerate}
We conclude by de Jong's theorem \cite[Th. 4.1]{DJ}. 
\end{proof}

\subsection{Proof of Theorem \ref{t6.3}}

We may obviously suppose that $X$ is irreducible.

a) is obvious from the universal property of $\cA_{X/k}$. For b) we
proceed in two steps:

\begin{enumerate}
\item Verification of $ef=0$.
\item Proof that the sequence is exact.
\end{enumerate}

(1) As above, let $A=A_{X/k}$ be the abelian part of $\cA_{X/k}^0$. In the diagram
\[\Ext(A,\G_m)\to \Ext(\cA_{X/k}^0,\G_m)\leftarrow\Ext(\cA_{X/k},\G_m)\]
the first map is surjective and the second map is an isomorphism, hence
we get a surjective map
\[v:\Ext(A,\G_m)\to \Ext(\cA_{X/k},\G_m).\]

Choose a rational point $x\in X(\bar k)$.  We have a diagram
\[\begin{CD}
\Ext(A,\G_m)@>v>> \Ext(\cA_{X/k},\G_m)\\
@V{a}VV @V{f}VV\\
\Pic(A)@>x^*>> \Pic(X)\\
@V{c}VV @V{e}VV\\
\NS(A)@>x^*>> \NS(X)
\end{CD}\] 
in which
\begin{thlist}
\item $a$ is given by \cite[p. 170, prop. 5 and 6]{gacl} (or by
Proposition \ref{p1}).
\item $ca=0$ (ibid., p. 184, th. 6; ).
\item $x^*$ is induced by the ``canonical" map $X\to A$ sending
$x$ to $0$. 
\end{thlist}

Lemma \ref{trans} applied to $G=\cA_{X/k}$ implies that the top square
commutes (the bottom one trivially commutes too). Moreover,  since $v$ is
surjective and $ca=0$, we get $ef=0$.

(2) In the sequence \eqref{eq3}, the surjectivity of $e$ is clear. Let us prove the
injectivity of $f$: suppose that $f(\cE)$ is trivial. In the pull-back diagram
\[\begin{CD}
\bar a^*\cE@>\pi'>> X\\
@V{\bar a'}VV @V{\bar a}VV\\
\cE@>\pi>> \cA_{X/k}
\end{CD}\]
$\pi'$ has a section $\sigma'$. Observe that $\cE$ is a locally
semi-abelian scheme: by the universal property of $\cA_{X/k}$, the
morphism $\bar a'\sigma'$ factors canonically through $\bar a$. In other
words, there exists $\sigma:\cA_{X/k}\to \cE$ such that $\bar
a'\sigma'=\sigma\bar a$. Then
\[\pi\sigma\bar a=\pi \bar a'\sigma'=\bar a\pi'\sigma'=\bar a\]
hence $\pi\sigma=1$ by reapplying the universal property of $\cA_{X/k}$, and $\cE$ is trivial.
Finally, exactness in the middle follows immediately from Proposition \ref{belong} and Lemma
\ref{l6.1}.\qed

\begin{cor} The isogeny of Lemma \ref{l6.1} is an isomorphism.
\end{cor}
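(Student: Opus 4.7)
The plan is to revisit the three-step reduction in the proof of Lemma~\ref{l6.1} and show that, in fact, each step yields an \emph{isomorphism}, not merely an isogeny.

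Step~1 (smooth projective $X$) already gives an isomorphism: $\phi_G$ is the classical Picard--Albanese duality isomorphism between the dual of the Albanese variety and the Picard variety.

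For Step~2 (open immersion $j:U\hookrightarrow X$ of smooth $k$-varieties), the key point is that the kernel of $\underline{\Pic}_{X/k}\twoheadrightarrow \underline{\Pic}_{U/k}$ is a \emph{torsion-free} discrete sheaf --- namely, the free lattice on the codimension-$1$ components of $X\setminus U$, in the spirit of Lemma~\ref{lsupports}. Enlarging the $L$-component of a normalized presentation (Proposition~\ref{p3.1}) by a torsion-free lattice leaves the semi-abelian part $\gamma(-)$ unchanged, so $\gamma(\underline{\Pic}_{X/k})\iso\gamma(\underline{\Pic}_{U/k})$. Since also $A_{X/k}=A_{U/k}$ (morphisms from $X$ to an abelian variety extend across codimension-$\geq 1$ boundary by properness), the conclusion for $X$ is equivalent to that for $U$. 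For Step~3 (finite \'etale cover $\psi:Y\to X$), the transfer argument from Lemma~\ref{l6.1} yields $\psi_*\psi^*=\deg(\psi)$; if $\phi_G^Y$ is an isomorphism, any $k\in\ker\phi_G^X$ satisfies $\psi^*(k)\in\ker\phi_G^Y=0$, whence $\deg(\psi)\cdot k=\psi_*\psi^*(k)=0$. Invoking de~Jong's alteration theorem (as in Lemma~\ref{l6.1}) to produce covers whose degrees may be chosen coprime to the order of the finite kernel (this is allowed since $p$ is inverted throughout) forces $k=0$.

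The main delicate point lies in Step~2: one should verify that a torsion-free kernel leaves $\gamma(-)$ unchanged even when its projection to $\pi_0(\underline{\Pic}_{X/k})=\NS_{X/k}$ is non-zero. This amounts to a short diagram chase using the normalized-presentation procedure of Proposition~\ref{p3.1}: since the enlarged lattice $\tilde L$ remains torsion-free (an extension of torsion-free by torsion-free is torsion-free), the semi-abelian part $G$ stays intact. A secondary subtlety is ensuring Step~3 propagates triviality of the kernel, which reduces to a standard transfer-and-degree argument carried out prime-by-prime at $\ell\ne p$.
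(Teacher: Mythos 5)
Your proof takes a genuinely different route from the paper's, and one of the steps has a real gap. The paper disposes of the corollary in one line: once Theorem~\ref{t6.3}~b) establishes the injectivity of $f$ in \eqref{eq3} (a consequence of the universal property of $\cA_{X/k}$, not of de~Jong's theorem), the induced map $\gamma(f)$ is automatically injective, because its kernel sits inside the torsion-free lattice of the normalized presentation while at the same time being a finite subgroup scheme of a semi-abelian variety --- so it must vanish. Nothing about the de~Jong reduction in Lemma~\ref{l6.1} is revisited.

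You instead re-run the three-step reduction of Lemma~\ref{l6.1}, attempting to upgrade ``isogeny'' to ``isomorphism'' at each step. Steps~1 and~2 are essentially fine, and the key observation in Step~2 --- that enlarging the lattice of a normalized presentation by a torsion-free discrete sheaf leaves the semi-abelian part $\gamma(-)$ unchanged --- is exactly the kind of fact the paper's one-liner also relies on implicitly (it is the point made in the proof of Theorem~\ref{t3.2.4}~a), item~(1), and is why monomorphisms in $\Shv_1$ induce injections under $\gamma$). So you have identified a relevant ingredient.

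The gap is in Step~3. The transfer argument only gives you that the kernel of $\phi_G^X$ is killed by $\deg\psi$; to conclude that it is zero you would need the freedom to \emph{choose} $\deg\psi$ coprime to the order of that kernel. But de~Jong's alteration theorem, as cited in this paper (and as used in the proof of Lemma~\ref{l6.1} via Proposition~\ref{p3.3.1}), produces a single finite \'etale cover of some unspecified degree --- it offers no control whatsoever over that degree. The parenthetical ``(this is allowed since $p$ is inverted throughout)'' does not rescue this: inverting $p$ discards only $p$-primary information, while the kernel can have $\ell$-torsion for any $\ell\neq p$, and to avoid a given prime $\ell$ one would need Gabber's $\ell'$-alteration refinement of de~Jong --- a substantially stronger theorem nowhere invoked in this paper --- and would then have to treat separately each prime $\ell$ dividing the order of the kernel. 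This is a large amount of extra machinery, and it is entirely sidestepped by the paper's argument via the injectivity of $f$. You should use \eqref{eq3} directly.
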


\begin{proof} This follows from the injectivity of $f$ in \eqref{eq3}.
\end{proof}

\subsection{An application}

\begin{cor}\label{divzero} Let $X$ be a smooth $k$-variety of dimension $d$, $U$ a dense open
subset and $Z=X-U$ (reduced structure). Then the morphism $\cA_{U/k}\to \cA_{X/k}$ is epi; its
kernel $T_{X/U,k}$ is a torus whose character group fits into a short exact sequence
\[0\to T_{X/U,k}^*\to \underline{\CH}_{d-1}(Z)\to \NS_Z(X)\to 0\]
where $\underline{\CH}_{d-1}(Z)$ is as in Lemma \ref{lsupports} and $\NS_Z(X)=\ker(\NS(X)\to
\NS(U)$.
\end{cor}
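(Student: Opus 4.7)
The approach is to apply $\LAlb$ to the motivic localisation triangle with supports from \S\ref{supports} and read off the result from the long exact sequence in ${}_t$-homology, using Corollary~\ref{HLAlb} for the terms coming from $U$ and $X$ and Lemma~\ref{lsupports} for the contribution of $Z$.

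First, applying $\LAlb$ to the exact triangle $M(U)\to M(X)\to M^Z(X)\to +1$ in $\DM_\gm^\eff$ gives an exact triangle
\[
\LAlb(U)\to \LAlb(X)\to \LAlb^Z(X)\to +1
\]
in $D^b(\M[1/p])$. To identify the third term, I use Corollary~\ref{rpdla}: $\LAlb^Z(X)\simeq \RPic_Z(X)^*$. By Lemma~\ref{lsupports}, $\RPic_Z(X)\simeq [\underline{\CH}_{d-1}(Z)\to 0][-2]$, and since Cartier duality is contravariant exact (Proposition~\ref{pcd}), I get $\LAlb^Z(X)\simeq [0\to T_Z][2]$, where $T_Z$ is the torus with character group $\underline{\CH}_{d-1}(Z)$. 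In particular, ${}_tH_2(\LAlb^Z(X))=[0\to T_Z]$ and ${}_tH_i(\LAlb^Z(X))=0$ for $i\neq 2$.

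Next, I take the long exact sequence of ${}_t$-homology (Notation~\ref{not}). Noting that $U$ dense in the smooth variety $X$ forces $\pi_0(U)=\pi_0(X)$ (each irreducible component of $X$ is met in a dense open which is again irreducible, hence connected), Corollary~\ref{HLAlb} gives $\LA{0}(U)\iso\LA{0}(X)$ and reduces the long exact sequence to
\[
0\to \underline{\NS}^{*}_{U/k}\to \underline{\NS}^{*}_{X/k}\to T_Z\to \cA^0_{U/k}\to \cA^0_{X/k}\to 0
\]
(viewing all terms as \'etale sheaves after stripping the $[0\to -]$ notation). Combined with the extension \eqref{unext} and $\pi_0(U)=\pi_0(X)$, the surjectivity on the right shows that $\cA_{U/k}\to \cA_{X/k}$ is epi with kernel $T_{X/U,k}:=\ker(\cA^0_{U/k}\to \cA^0_{X/k})$, which by exactness equals the quotient $T_Z/\mathrm{im}(\NS^{*}_{X/k}\to T_Z)$.

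Finally, I identify the character group of $T_{X/U,k}$ by applying Cartier duality to the short exact sequence $0\to \NS^{*}_{X/k}/\NS^{*}_{U/k}\to T_Z\to T_{X/U,k}\to 0$. Using that the surjection $\NS_{X/k}\onto \NS_{U/k}$ (which holds for smooth $X$, as every line bundle on $U$ extends) dualises to the injection $\NS^{*}_{U/k}\hookrightarrow \NS^{*}_{X/k}$, Cartier duality identifies $(\NS^{*}_{X/k}/\NS^{*}_{U/k})^{*}$ with $\ker(\NS_{X/k}\to \NS_{U/k})=\NS_Z(X)$. This yields the desired sequence
\[
0\to T^{*}_{X/U,k}\to \underline{\CH}_{d-1}(Z)\to \NS_Z(X)\to 0,
\]
and since $T^{*}_{X/U,k}$ is a submodule of the free lattice $\underline{\CH}_{d-1}(Z)$, it is torsion-free, so $T_{X/U,k}$ is indeed a torus. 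The only subtle point is checking the identification $\LAlb^Z(X)\simeq[0\to T_Z][2]$ from Cartier duality and the compatibility of the connecting map with the geometric description as the ``cycle class'' map $\underline{\CH}_{d-1}(Z)\to \NS(X)$, which factors through $\NS_Z(X)$ by construction.
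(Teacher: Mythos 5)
Your proof is correct and follows the same skeleton as the paper's: both take the localisation triangle $M(U)\to M(X)\to M^Z(X)\to +1$, compute $\LAlb^Z(X)$ via Cartier duality and Lemma~\ref{lsupports}, and read off the result from the long exact sequence of ${}_t$-homology together with Corollary~\ref{HLAlb}. The one place where your route genuinely diverges is the claim that $\cA_{U/k}\to\cA_{X/k}$ is an epimorphism with kernel of multiplicative type: the paper establishes this \emph{before} consulting the long exact sequence, by observing that $\pi_0(\cA_{U/k})\iso\pi_0(\cA_{X/k})$ is obvious and that $A_{U/k}\iso A_{X/k}$ by a theorem of Milne (birational invariance of the Albanese abelian variety for smooth varieties). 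You instead deduce the epimorphism and the toric nature of the kernel entirely from the long exact sequence, using that $T_Z$ surjects onto the kernel. Both arguments work; yours is more self-contained (no appeal to Milne's theorem), while the paper's is shorter and makes the geometric content of the first assertion more transparent. You are right to flag the compatibility of the connecting map $\NS_{X/k}^*\to T_Z$ with the Cartier dual of the cycle class map $\underline{\CH}_{d-1}(Z)\to\NS_Z(X)$ as the point that would deserve a line of justification if written up fully; this follows from the naturality in Theorem~\ref{teq} and Proposition~\ref{cd2}, and the paper leaves it equally implicit (``is then an immediate consequence'').
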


\begin{proof} To see that $\cA_{U/k}\to \cA_{X/k}$ is epi with kernel of multiplicative type, it
is sufficient to see that $\pi_0(\cA_{U/k})\iso \pi_0(\cA_{X/k})$ and that $A_{U/k}\iso
A_{X/k}$. The first isomorphism is obvious and the second one follows from \cite[Th.
3.1]{milne}. The characterisation of $T_{X/U,k}$ is then an immediate consequence of Theorem
\ref{trunc} and Lemma \ref{lsupports}; in particular, it is a torus.
\end{proof}

\subsection{$\RPic(X)$} 

Recall that for $X$ smooth projective $\cA_{X/k}^0 = A_{X/k}$ is the classical Albanese
abelian variety $\Alb (X)$. In the case where $X$ is obtained by removing a divisor $Y$ from a
smooth proper scheme $\bar X$, $\cA_{X/k}^0$, can be described as follows (\cf \cite{BSAP}).
Consider the (cohomological Picard) 1-motive $\Pic^+ (X)\df [\Div_Y^0(\bar X)\to \Pic^0 (\bar
X)]$: its Cartier dual is $\cA_{X/k}^0$ which can be represented as a torus bundle 
\[0\to T_{\bar X/X,k} \to \cA_{X/k}^0\to \Alb (\bar X)\to 0\]
where $ T_{\bar X/X,k}$ has
character group $\Div_Y^0(\bar X)$ according to Corollary~\ref{divzero}.

From the previous remarks and Corollary~\ref{clalb}, we deduce:

\begin{cor}\label{HRPic} If $X$ is smooth,  $\RA{1}(X)$ is
isomorphic to the $1$-motive $\Pic^+(X)$ of \cite{BSAP} (the Cartier dual of $\Alb^-(X)$). If $\bar X$ is a smooth compactification of $X$, then
$$\RA{i}(X) =
\begin{cases}
\relax [0\to \Z[\pi_0(X)]^*]& \text{if $i = 0$}\\
\relax [\Div_Y^0(\bar X)\to\Pic^0 (\bar X)] & \text{if $i= 1$}\\
\relax [\NS (X)\to 0] & \text{if $i= 2$}\\
0 & \text{otherwise}
\end{cases}$$
where $Y=\bar X - X$. 
\end{cor}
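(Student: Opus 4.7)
The plan is to deduce Corollary \ref{HRPic} from Corollary \ref{HLAlb} via the duality $\RPic(X) = \LAlb(X)^\vee$ together with Corollary \ref{rpdla}, which yields
\[\RA{i}(X) = {}^tH^i(\RPic(X)) \simeq ({}_tH_i(\LAlb(X)))^* = \LA{i}(X)^*\]
for each $i\in\Z$. So the task reduces to a case-by-case application of Cartier duality (in the sense of Proposition \ref{pcd}) to the list computed in Corollary \ref{HLAlb}.

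First, I would dispose of the statement that $\RA{1}(X)\simeq \Pic^+(X)$ for smooth $X$. By Corollary \ref{clalb} we already know that $\LA{1}(X)\simeq \Alb^-(X)$. Since Cartier duality on $D^b(\M[1/p])$ extends the classical Cartier duality on $\M$ (Proposition \ref{pcd} c), and since by definition $\Pic^+(X)$ is the Cartier dual of $\Alb^-(X)$ (this is recalled just before the statement of the corollary, citing \cite{BSAP}), we conclude
\[\RA{1}(X)\simeq \LA{1}(X)^*\simeq \Alb^-(X)^*\simeq \Pic^+(X).\]
This part does not require the existence of a smooth compactification.

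Now fix a smooth compactification $\bar X$ of $X$ with complement $Y=\bar X - X$. For the remaining indices I would simply dualise the entries of Corollary \ref{HLAlb}. The case $i=0$ is immediate: the Cartier dual of the lattice $[\Z[\pi_0(X)]\to 0]$ is the group of multiplicative type $[0\to \Z[\pi_0(X)]^*]$. Similarly, the case $i=2$ uses that the Cartier dual of a group of multiplicative type $[0\to\NS_{X/k}^*]$ is the lattice $[\NS_{X/k}\to 0]$; by Proposition \ref{belong} and Remark \ref{r6.1}, this coincides (as a $1$-motive) with $[\NS(X)\to 0]$ in the notation of the corollary. Vanishing for $i\ne 0,1,2$ is automatic from the vanishing of $\LA{i}(X)$ in Corollary \ref{HLAlb}.

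The only nontrivial step is $i=1$: here I need to identify the Cartier dual of the semi-abelian variety $\cA_{X/k}^0$ with the $1$-motive $[\Div_Y^0(\bar X)\to \Pic^0(\bar X)]$. This is precisely the content of the description of $\cA_{X/k}^0$ recalled immediately before the corollary: for a smooth $X$ with smooth compactification $\bar X$ one has the torus bundle
\[0\to T_{\bar X/X,k}\to \cA_{X/k}^0\to \Alb(\bar X)\to 0\]
whose character group is $\Div_Y^0(\bar X)$ (\cf Corollary \ref{divzero}), so that Cartier duality (applied componentwise in the sense of Lemma \ref{brst}) identifies $[0\to\cA_{X/k}^0]^*$ with $[\Div_Y^0(\bar X)\to \Pic^0(\bar X)] = \Pic^+(X)$, consistently with the first part of the argument. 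This expected compatibility is the only place where any real work occurs, and it is essentially a reformulation of the classical Deligne duality between $\Alb^-$ and $\Pic^+$; I would cite \cite{BSAP} rather than redo it.
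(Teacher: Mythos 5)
Your proposal is correct and follows essentially the same route as the paper: the paper's proof is precisely the observation that the description of $\cA_{X/k}^0$ as the Cartier dual of $\Pic^+(X)=[\Div_Y^0(\bar X)\to\Pic^0(\bar X)]$ (recalled just before the corollary), combined with Corollary \ref{clalb} and the duality $\RA{i}(X)=\LA{i}(X)^*$ from Corollary \ref{rpdla}, yields the stated list by componentwise Cartier duality applied to Corollary \ref{HLAlb}. Your case-by-case dualisation, including the identification of the dual of the lattice at $i=0$ and of the multiplicative group at $i=2$, matches the paper's intended argument.
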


\section{1-motivic homology and cohomology of singular schemes}\label{comps}

\subsection{$\cA_{X/k}$ for $X\in Sch(k)$}\label{albs} In this subsection, we extend the
construction of $\cA_{X/k}$ to arbitrary reduced $k$-schemes of finite type, starting
from the case where $X$ is integral (which is treated in \cite[Sect. 1]{ram}). So far, $k$ may
be of any characteristic.

To make the definition clear:

\begin{defn} Let $X\in Sch(k)$. We say that \emph{$\cA_{X/k}$ exists} if the functor
\begin{align*}
\AbS&\to Ab\\
G&\mapsto G(X)
\end{align*}
is corepresentable.
\end{defn}

First note that $\cA_{X/k}$ does not exist (as a semi-abelian scheme, at least) if $X$ is not
reduced. For example, for $X=\Spec k[\epsilon]$ with $\epsilon^2=0$, we have an exact sequence
\[0\to \G_a(k)\to \Map_k(X,\G_m)\to \G_m(k)\to 0\]
which cannot be described by $\Hom(\cA,\G_m)$ for any semi-abelian scheme $\cA$.

On the other hand, $M(X)=M(X_\red)$ for any $X\in Sch(k)$, where $X_\red$ is the reduced
subscheme of $X$ (see proof of Lemma \ref{l12.3}), so we are naturally led to
neglect nonreduced schemes.

\begin{lemma}\label{lred} Let $Z\in Sch(k)$, $G\in \AbS$ and $f_1,f_2:Z\rightrightarrows G$
two morphisms which coincide on the underlying topological spaces (thus, $f_1 = f_2$ if $Z$ is
reduced). Then there exists a largest quotient $\bar G$ of $G$ such that $\pi_0(G)\iso
\pi_0(\bar G)$ and the two compositions
\[Z\rightrightarrows G\to \bar G\]
coincide.
\end{lemma}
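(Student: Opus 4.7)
The plan is to construct $\bar G$ as a quotient $G/H$, where $H$ is the smallest closed subgroup scheme of $G^0$ through which the ``difference morphism'' $\phi := f_2-f_1 : Z\to G$ factors. First I would form $\phi$ using the commutative group law on $G$. By the hypothesis that $f_1$ and $f_2$ coincide on underlying topological spaces, $\phi$ is topologically the constant morphism to the identity element $e\in G$; in particular the scheme-theoretic image $I$ of $\phi$ (which exists because $Z$ is qcqs as a finite-type $k$-scheme) is a closed subscheme of $G$ set-theoretically reduced to $\{e\}$, and hence lies in the identity component $G^0$.

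Next I would produce $H\subset G^0$ as the smallest closed subgroup scheme containing $I$. Its existence is a standard piece of algebraic-group theory: letting $I_n$ be the scheme-theoretic image of the iterated multiplication/inversion map $I^{\times n}\to G^0$, one gets an increasing chain of closed subschemes of the Noetherian scheme $G^0$ which stabilises at some $H := I_N$; this $H$ is a closed subgroup scheme and is visibly the smallest one containing $I$. I then set $\bar G := G/H$, a commutative algebraic $k$-group, using the existence of quotients of commutative algebraic groups by closed subgroup schemes. Because $H\subset G^0$, the connected component $(\bar G)^0 = G^0/H$ is connected (being the image of the connected scheme $G^0$), hence the composition $G\twoheadrightarrow \bar G\twoheadrightarrow G/G^0=\pi_0(G)$ identifies $\pi_0(\bar G)$ with $\pi_0(G)$.

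Then I would verify the universal property. By construction $\phi$ factors through $H=\ker(q)$ where $q:G\to \bar G$ is the projection, so $q\circ f_1=q\circ f_2$. Conversely, if $q':G\to \bar G'$ is any quotient with $\pi_0(G)\iso\pi_0(\bar G')$ satisfying $q'\circ f_1=q'\circ f_2$, then $N:=\ker(q')$ is a closed subgroup scheme of $G$, contained in $G^0$ (by the hypothesis on $\pi_0$) and containing $I$ (since $q'\circ\phi=0$). Minimality of $H$ forces $H\subset N$, so $q'$ factors uniquely through $\bar G$, showing that $\bar G$ is indeed the largest quotient satisfying the two required conditions.

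The main obstacle is the second step, i.e.\ the existence and construction of $H$: one must check that the iterates $I_n$ remain inside $G^0$, stabilise, and yield a genuine subgroup scheme. This is harmless because each $I_n$ is set-theoretically supported at $e\in G^0$ and the ascending chain $(I_n)$ in the Noetherian scheme $G^0$ must stabilise; the limit is then closed under multiplication and inversion by construction. Everything else reduces to formal categorical manipulations with kernels, images and quotients of commutative algebraic groups.
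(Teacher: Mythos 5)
Your overall strategy is sound and takes a genuinely different route from the paper's. You reduce the problem to finding the smallest closed subgroup scheme $H\subseteq G^0$ containing the scheme-theoretic image $I$ of the difference $\phi=f_2-f_1$, set $\bar G=G/H$, and check the universal property directly; the reduction and the universal-property verification are both correct. By contrast, the paper never constructs $I$ or $H$ explicitly: it observes that the set $S$ of closed subgroups $H^0\subseteq G^0$ whose quotients equalise $f_1$ and $f_2$ is nonempty ($G^0\in S$, giving $\bar G=\pi_0(G)$) and closed under pairwise intersection, and then applies the descending chain condition for closed subschemes of the Noetherian scheme $G^0$ to produce a minimum of $S$.

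There is, however, a genuine gap in your construction of $H$. You assert that the ascending chain $I_1\subseteq I_2\subseteq\cdots$ of scheme-theoretic images of iterated products must stabilise because $G^0$ is Noetherian. This is the wrong chain condition: Noetherianity gives the \emph{ascending} chain condition on ideal sheaves, hence the \emph{descending} chain condition on closed subschemes, and increasing chains of closed subschemes need not terminate. Concretely, take $k$ of characteristic $0$, $G=\G_m$, $Z=\Spec k[\epsilon]/(\epsilon^2)$, $f_1$ the unit section and $f_2$ given on coordinate rings by $x\mapsto 1+\epsilon$, so that $I=\Spec k[x]/((x-1)^2)$. Writing $u=x-1$, $v=y-1$ and working modulo $(u^2,v^2)$, one has $xy-1=u+v+uv$ with $(u+v+uv)^2=2uv\neq 0$ and $(u+v+uv)^3=0$, so the scheme-theoretic image of the multiplication $I\times I\to\G_m$ is $\Spec k[x]/((x-1)^3)$; inductively $I_n=\Spec k[x]/((x-1)^{n+1})$, a strictly increasing chain that never stabilises. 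Its ``limit'' is the formal group at the identity, not a closed subscheme, whereas the smallest closed subgroup of $\G_m$ containing $I$ in characteristic $0$ is all of $\G_m$.

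The repair is simple and makes the generation-by-products device unnecessary: define $H$ as the intersection of all closed subgroup schemes of $G^0$ containing $I$. An intersection of closed subschemes of a Noetherian scheme is a closed subscheme (the sum of the defining ideal sheaves is again a coherent ideal sheaf), an intersection of subgroup schemes is a subgroup scheme, and this $H$ is by construction the smallest one containing $I$. With that replacement your argument goes through; it then becomes essentially a repackaging of the paper's Artinian-poset argument, with the passage to the minimum encoded in the single subgroup $H$ rather than located inside the poset $S$.
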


\begin{proof} The set $S$ of such quotients $\bar G$ is in one-to-one correspondence with the
set of closed subgroups $H^0\subseteq G^0$. Clearly $\pi_0(G)\in S$, and if $\bar
G_1=G/H_1^0\in S$, $\bar G_2=G/H_2^0\in S$, then $G_3 = G/(H_1^0\cap H_2^0)\in S$. Therefore
$S$ has  a smallest element, since it is Artinian (compare proof of Proposition \ref{pladj}).
\end{proof}

\begin{propose}\label{albexists} $\cA_{X/k}$ exists for any reduced $X\in Sch(k)$.
\end{propose}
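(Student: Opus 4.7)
The plan is to induct on $\dim X$, reducing the general reduced case to the integral case handled in \cite[Sect.~1]{ram}.

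For the base case $\dim X = 0$, I write $X = \coprod_i \Spec \ell_i$ with each $\ell_i/k$ finite separable and take $\cA_{X/k} := \bigoplus_i R_{\ell_i/k}\Z \in \AbS$; this manifestly corepresents $G \mapsto G(X) = \prod_i G(\ell_i)$. For the inductive step, assume the statement for all reduced $k$-schemes of dimension strictly less than $\dim X$. Let $X_1,\dots,X_n$ be the irreducible components of $X$ (each integral since $X$ is reduced), and set $Z_{ij} := (X_i \cap X_j)_\red$ for $i<j$. Distinct irreducible components being mutually uncontained forces $\dim Z_{ij} < \min(\dim X_i,\dim X_j) \leq \dim X$, so $\cA_{Z_{ij}/k}$ is available by the inductive hypothesis while each $\cA_{X_i/k}$ is furnished by \cite[Sect.~1]{ram}.

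Since $X$ is reduced and covered (as a scheme) by its closed subschemes $X_i$, a morphism $X \to G$ to a separated $k$-scheme $G$ is the same datum as a tuple $(f_i: X_i \to G)_i$ of morphisms satisfying $f_i|_{Z_{ij}} = f_j|_{Z_{ij}}$ for all $i<j$. Denote by $u_{ij}: \cA_{Z_{ij}/k} \to \cA_{X_i/k}$ and $v_{ij}: \cA_{Z_{ij}/k} \to \cA_{X_j/k}$ the canonical maps induced by the inclusions $Z_{ij} \hookrightarrow X_i, X_j$. Translating the above compatibility via the known universal properties identifies, functorially in $G \in \AbS$, the group $G(X)$ with the equalizer of
\[\prod_i \Hom_{\AbS}(\cA_{X_i/k},G) \;\rightrightarrows\; \prod_{i<j} \Hom_{\AbS}(\cA_{Z_{ij}/k},G).\]
Thus $\cA_{X/k}$ is forced to be the coequalizer in $\AbS$ of the pair of parallel morphisms $\bigoplus_{i<j}\cA_{Z_{ij}/k} \rightrightarrows B := \bigoplus_i \cA_{X_i/k}$ whose difference is $\sum(u_{ij}-v_{ij})$.

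To build this coequalizer I set $Q := B/H$, where $H \subset B$ is the closed subgroup scheme generated by the images of $u_{ij}-v_{ij}$, this quotient being formed in the category of commutative $k$-group schemes of finite type. Inverting $p$ (per the blanket convention) makes $Q^0$ semi-abelian as a quotient of the semi-abelian $B^0$ by a closed subgroup. The main obstacle is to check that $\pi_0(Q)$ is again a lattice and not merely a finitely generated abelian group with torsion: by right-exactness of $\pi_0$, it equals the cokernel of the $G_k$-equivariant map $\bigoplus_{i<j}\Z[\pi_0(Z_{ij})(\bar k)] \to \bigoplus_i\Z[\pi_0(X_i)(\bar k)]$ sending a $\bar k$-component $W$ of $Z_{ij}$ to the difference of the irreducible components of $X_{i,\bar k}$ and $X_{j,\bar k}$ containing $W$. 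I recognize this cokernel as the free abelian group on the set-theoretic coequalizer $\coprod_{i<j}\pi_0(Z_{ij})(\bar k) \rightrightarrows \coprod_i \pi_0(X_i)(\bar k)$; this set identifies two irreducible components of $X_{\bar k}$ precisely when they intersect, so its elements are exactly the connected components of $X_{\bar k}$. Hence $\pi_0(Q) \cong \Z[\pi_0(X)(\bar k)]$ is a lattice, $Q \in \AbS$, and one sets $\cA_{X/k} := Q$.
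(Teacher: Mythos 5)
Your construction falls short at the crucial gluing step: the claim that a morphism from the reduced scheme $X$ to a separated $G$ is the same as a tuple $(f_i : X_i \to G)$ agreeing on the \emph{reduced} intersections $Z_{ij}=(X_i\cap X_j)_\red$ is false. Gluing requires agreement on the \emph{scheme-theoretic} intersections $X_i\cap X_j$, which may well be non-reduced even though $X$ itself is reduced (this is exactly the phenomenon behind the notion of ``strictly reduced'' in Definition~\ref{dsred}; see Example~2 there, a line tangent to a parabola). Concretely, take $X$ to be two copies of a smooth projective curve $C$ (with Jacobian $J$, base point $O$) glued along a first-order neighbourhood of $O$: then $\mathrm{id}_C$ and $2\cdot\mathrm{id}_C$ in $J(C)$ agree on the reduced intersection but differ on the double point, so $J(X)$ is strictly smaller than the kernel of $J(X_1)\oplus J(X_2) \to J(O)$. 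Your $Q$ therefore corepresents the wrong (strictly larger) functor, and in fact there need not exist a morphism $X\to Q$ at all: the composites $\coprod Z_{ij}\rightrightarrows\coprod X_i\to Q$ agree only topologically, not as scheme maps, so they do not automatically glue.

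This is precisely the issue the paper's proof addresses with the additional step you omit. After forming the cokernel $\cA = Q$ as you do, the paper observes that the two maps from $\coprod(X_i\cap X_j)$ (scheme-theoretic) to $\cA$ satisfy the hypothesis of Lemma~\ref{lred}, and passes to the largest quotient $\cA'$ of $\cA$ with the same $\pi_0$ on which they coincide. Only then does the map $\coprod X_i\to\cA'$ descend to $X\to\cA'$, and the universal property is checked against the left-exact sequence $0\to\Map_k(X,G)\to\bigoplus\Map_k(X_i,G)\to\bigoplus\Map_k(X_i\cap X_j,G)$ with the honest scheme-theoretic intersections on the right. Your identification of $\pi_0(Q)$ with $\Z[\pi_0(X)(\bar k)]$ is correct and does carry over to $\cA'$ (since the Lemma~\ref{lred} quotient preserves $\pi_0$), but without the extra quotient the argument does not close.
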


\begin{proof} When $X$ is integral, this is \cite[Sect. 1]{ram}. Starting from this case, we
argue by induction on $\dim X$. Let $Z_1,\dots, Z_n$ be the irreducible components of $X$ and
$Z_{ij} = Z_i\cap Z_j$. 

By induction, $\cA_{ij}\df\cA_{(Z_{ij})_\red/k}$ exists for any $(i,j)$. Consider
\[\cA=\coker\left(\bigoplus \cA_{ij}\to \bigoplus \cA_i\right)\]
with $\cA_i = \cA_{Z_i/k}$. Let $Z = \coprod Z_{ij}$ and $f_1,f_2:Z\rightrightarrows \coprod
Z_i$ the two inclusions: the compositions $f_1,f_2:Z\rightrightarrows \cA$ verify the
hypothesis of Lemma \ref{lred}. Hence there is a largest quotient $\cA'$ of $\cA$ with
$\pi_0(\cA)\iso \pi_0(\cA')$, equalising $f_1$ and $f_2$. Then the composition
\[\coprod Z_i\to \bigoplus \cA_i\to \cA'\]
glues down to a morphism $X\to \cA'$. It is clear that $\cA'=\cA_{X/k}$ since,  for any
commutative group scheme $G$, the sequence
\[0\to \Map_k(X,G)\to \bigoplus \Map_k(Z_i,G)\to \bigoplus \Map_k(Z_{ij},G)\]
is exact. 
\end{proof}

Unfortunately this result is only useful to understand $\LA{1}(X)$ for $X$ ``strictly reduced",
as we shall see below. In general, we shall have to consider Albanese schemes for the $\eh$
topology.

\subsection{The $\eh$ topology} In this subsection and the next ones, we assume that $k$ is of characteristic $0$. Recall that $\HI_\et=\HI_\et^s$ in this case by Proposition \ref{pD.1.4}.

The following \'etale analogue of the cdh topology was first considered by Thomas Geisser
\cite{geisser}:

\begin{defn}\label{deh} The \emph{$\eh$ topology} on $Sch(k)$ is the topology generated  by
the \'etale topology and coverings defined by abstract blow-ups (it is the same as \cite[Def.
4.1.9]{V} by replacing the Nisnevich topology by the \'etale topology).
\end{defn}

As in \cite[Th. 4.1.10]{V} (see \cite[Proof of Th. 14.20]{VL} for more details), one has:

\begin{propose}\label{peh} Let $C\in \DM_{-,\et}^\eff$. Then, for any $X\in Sch(k)$
and any $q\in\Z$ one has 
\[\Hom_{\DM_{-,\et}^\eff}(M_{\et}(X),C[q])\simeq H^q_\eh(X,C_\eh[q]).\]
In particular, if $X$ is smooth then $H^q_\et(X,C_\et)\iso H^q_\eh(X,C_\eh)$.\qed
\end{propose}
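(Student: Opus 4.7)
The plan is to adapt the proof of \cite[Th.~4.1.10]{V}, as carried out in more detail in \cite[Proof of Th.~14.20]{VL}, replacing the Nisnevich/cdh pair by the \'etale/$\eh$ pair throughout. Since we are in characteristic $0$, no inversion of $p$ is necessary and the comparison between $\HI_\et$ and $\HI_\et^s$ is trivial; this means the key technical issue in \loccit\ (homotopy invariance and strict homotopy invariance) carries over without any additional hypothesis.

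The first key step would be to check the statement when $X$ is smooth. One realises $C$ as a complex of objects of $\HI_\et^s$ (Proposition~\ref{pD.1.4}) and appeals to the \'etale analogue of \cite[Prop.~3.1.9]{V}, which identifies
\[\Hom_{\DM_{-,\et}^\eff}(M_\et(X),C[q])\simeq \HH^q_\et(X,C).\]
One then needs to compare $\HH^q_\et(X,C)$ with $\HH^q_\eh(X,C_\eh)$: the change-of-topology morphism $\pi:(Sch(k))_\eh\to Sm(k)_\et$ induces a Leray spectral sequence, and it suffices to verify that $(R^i\pi_*\mathcal{G})_{|Sm(k)_\et}=0$ for $i>0$ and every \'etale sheaf with transfers $\mathcal{G}$ of the form $\mathcal{H}^j(C)$. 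This in turn reduces, via Gersten's principle (Proposition~\ref{pgersten}), to the stalks at separable closures of function fields of smooth varieties, where abstract blow-ups of smooth local rings provide no new coverings. (Compare \cite[proof of Th.~14.20]{VL} for the Nisnevich/cdh analogue, which is formally identical after relabelling.)

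The second key step is to propagate the statement from smooth $X$ to general $X\in Sch(k)$ by induction on dimension, using abstract blow-ups. Given $X$, pick by Hironaka a proper birational morphism $p:\tilde X\to X$ with $\tilde X$ smooth and a closed $Z\subset X$ such that $p^{-1}(X-Z)\iso X-Z$, with $\dim Z<\dim X$. Voevodsky's blow-up distinguished triangle
\[M_\et(p^{-1}(Z))\to M_\et(\tilde X)\oplus M_\et(Z)\to M_\et(X)\by{+1}\]
in $\DM_{-,\et}^\eff$ (the \'etale analogue of \cite[Prop.~3.5.3, Prop.~4.1.3]{V}, obtained by sheafifying $L_\et(-)$ in the $\eh$ topology) yields a long exact sequence for the left-hand side of Proposition~\ref{peh}. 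The right-hand side fits into the analogous Mayer--Vietoris sequence coming from the $\eh$-covering $\{\tilde X\amalg Z\to X\}$, since by definition $\eh$-cohomology has descent for abstract blow-ups. Comparing the two sequences term-by-term using the inductive hypothesis and the smooth case yields the isomorphism for $X$.

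The main obstacle I expect is a clean verification that $M_\et$ sends abstract blow-up squares to distinguished triangles in $\DM_{-,\et}^\eff$. In the Nisnevich/cdh setting this is \cite[Prop.~4.1.3]{V} and uses Voevodsky's description of motives with compact support; one needs to check that his argument survives the passage to the \'etale topology (this is essentially automatic since $\alpha^*$ is exact and sends the Nisnevich distinguished triangle to the desired one, but one must be careful about singular schemes, where $M(X)$ is defined as $C_*L(X)$ and no smoothness is used). The particular case ``$X$ smooth'' at the end then follows by combining the general statement with the identification $\Hom_{\DM_{-,\et}^\eff}(M_\et(X),C[q])\simeq \HH^q_\et(X,C)$ recalled above.
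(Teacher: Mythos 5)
Your overall strategy — identify both sides as a ``cohomology theory'' on $Sch(k)$, check the smooth case, and propagate by blow-up induction — is in the right spirit, but the paper's ``proof'' is literally a citation of \cite[Th.~4.1.10]{V} and \cite[Th.~14.20]{VL}, and the argument there does not run quite as you describe. Voevodsky proves \cite[Th.~4.1.10]{V} by establishing an equivalence of triangulated categories between $\DM_-^\eff$ and the analogous cdh-version and then invoking the cdh analogue of \cite[Prop.~3.1.9]{V}; the blow-up descent is buried inside the proof of that equivalence rather than carried out as a direct induction on both sides. Your more hands-on blow-up induction could also work, and your dispatch of the blow-up triangle via exactness of $\alpha^*$ applied to \cite[Prop.~4.1.3]{V} is correct.

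The genuine gap is in your Step~1. The comparison $H^q_\et(X,C)\iso H^q_\eh(X,C_\eh)$ for smooth $X$ is the nontrivial technical input here (it is exactly Geisser's \cite[Th.~4.3]{geisser}, which the paper points to as an alternative proof of the ``In particular'' part), and your sketch does not establish it. Two specific problems. First, Gersten's principle (Proposition~\ref{pgersten}) applies to complexes of presheaves with transfers with homotopy invariant cohomology presheaves; to use it on the higher direct images $R^i\pi_*\cG_\eh$ you must first check that these are presheaves with transfers and homotopy invariant, which is itself a substantive claim that your sketch does not address. Second, the parenthetical ``abstract blow-ups of smooth local rings provide no new coverings'' is misleading as a justification: abstract blow-ups of smooth schemes are \emph{not} refined by \'etale covers (blowing up a smooth surface at a point already gives an $\eh$-cover of $\Aff^2$ that no \'etale cover dominates), so the $\eh$ topos is genuinely richer than the \'etale topos even over a smooth base. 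What saves the day is that after the Gersten reduction one is at \emph{spectra of fields}, which being zero-dimensional admit no nontrivial abstract blow-ups, so $\eh$-cohomology there collapses to \'etale cohomology of a separably closed field and vanishes in positive degrees — but reaching that stage requires a continuity argument to pass from stalks (limits over smooth models) to Spec of the field, plus the transfer-and-homotopy-invariance facts above. As it stands you have pointed at the right ingredients without actually proving the smooth-case comparison; citing \cite[Th.~4.3]{geisser} or the relevant chapters of \cite{VL} would be both cleaner and more honest than the sketch given.
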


(See \cite[Th. 4.3]{geisser} for a different proof of the second statement.)

The following lemma will be used many times:

\begin{lemma}[Blow-up induction] \label{lbl} Let $\cA$ be an abelian category.\\
a) Let $\cB\subseteq \cA$ be a thick subcategory and $H^*:Sch(k)^{op}\to \cA^{(\N)}$ a functor with the following property: given an abstract blow-up  as in \S \ref{blowups}, we have a long exact sequence
\[\dots\to H^i(X)\to H^i(\tilde X)\oplus H^i(Z)\to H^i(\tilde X)\to H^{i+1}(X)\to\dots\]
Let $n\ge 0$, and assume that $H^i(X)\in \cB$ for $i\le n$ and $X\in Sm(k)$. Then $H^i(X)\in \cB$ for $i\le n$ and all $X\in Sch(k)$.\\
b) Let $H_1^*,H_2^*$ be two functors as in a) and $\phi^*:H_1^*\to H_2^*$ be a natural transformation. Let $n\ge 0$, and suppose that $\phi^i_X$ is an isomorphism for all $X\in Sm(k)$ and $i\le n$. Then $\phi^i_X$ is an isomorphism for all $X\in Sch(k)$ and $i\le n$.\\
We get the same statements as a) and b) by replacing ``$i\le n$" by ``$i\ge n+\dim X$".
\end{lemma}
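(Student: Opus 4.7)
The plan is a straightforward induction on $\dim X$, invoking Hironaka's resolution of singularities (available since $\car k=0$ throughout this section) to reduce every singular case to the smooth case plus strictly lower-dimensional pieces. For smooth $X$ the statements are the given hypotheses.

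For part (a), given $X$ of dimension $d$, choose an abstract blow-up $(\tilde X, Z)$ with $\tilde X \in Sm(k)$ and $\tilde Z = p^{-1}(Z)$ arising from a resolution of singularities; by construction $\dim Z < d$ and one may arrange $\tilde Z$ to be a divisor in $\tilde X$, so $\dim \tilde Z = d - 1 < d$. For each $i \le n$ the hypothesized long exact sequence contains the four-term piece
\[ H^{i-1}(\tilde Z) \to H^i(X) \to H^i(\tilde X)\oplus H^i(Z)\to H^i(\tilde Z),\]
whose outer three terms lie in $\cB$ by the smoothness hypothesis (for $H^i(\tilde X)$) and by the inductive hypothesis on dimension (for $H^{i-1}(\tilde Z)$, $H^i(Z)$, $H^i(\tilde Z)$). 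Thickness of $\cB$ --- closure under subobjects, quotients and extensions in $\cA$ --- then forces $H^i(X)\in\cB$.

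Part (b) runs the identical induction with the five-lemma replacing thickness: the natural transformation $\phi^*$ furnishes a morphism between the two blow-up long exact sequences, and iso-ness of $\phi$ on $H^{i-1}(\tilde Z)$, $H^i(\tilde X)\oplus H^i(Z)$ and $H^i(\tilde Z)$, combined with surjectivity on the preceding term $H^{i-1}(\tilde X)\oplus H^{i-1}(Z)$ and injectivity on $H^{i+1}(X)$, forces iso-ness of $\phi^i_X$. The dual statements --- substituting ``$i \ge n + \dim X$'' for ``$i \le n$'' --- follow by the same induction once one notes that any $i \ge n + d$ verifies $i \ge n + \dim Z$, $i \ge n + \dim \tilde Z$ and $i - 1 \ge n + \dim \tilde Z$ (immediate from $\dim Z, \dim \tilde Z \le d - 1$), so the inductive hypothesis delivers membership in $\cB$ (respectively iso-ness) for the three lower-dimensional terms while the smoothness hypothesis handles $H^i(\tilde X)$ at $i \ge n + d = n + \dim \tilde X$.

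The main obstacle is not conceptual but a matter of bookkeeping: one must verify in each case that the range of $i$ covered by the inductive hypothesis suffices for all the terms appearing in the five-term Mayer--Vietoris segment, including --- for the dual of part (b) --- the slightly delicate position $H^{i-1}(\tilde X)$ which sits exactly at the edge of the allowed range and requires either the refined five-lemma (demanding only surjectivity at that position) or a secondary downward induction on $i$. Beyond this dimensional bookkeeping, everything reduces to a formal chase of long exact sequences in an abelian category, with Hironaka's theorem supplying the only non-formal input.
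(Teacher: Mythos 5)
Your argument is essentially the same one the paper gives --- induction on $\dim X$, using an abstract blow-up square arising from resolution of singularities, thickness of $\cB$ (for part (a)) and the five-lemma (for part (b)). The only organizational difference is that you collapse the paper's explicit two-step reduction (step 1: integral $X$, desingularize; step 2: general $X$, take $\tilde X = \coprod Z_i$ the disjoint union of irreducible components and $Z = \bigcup_{i\ne j} Z_i\cap Z_j$) into a single blow-up square by desingularizing each component at once and throwing both the singular loci and the pairwise intersections into $Z$. That is a perfectly valid variant and arguably slightly cleaner.

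You are right to flag the boundary position $H^{i-1}(\tilde X)$ in the ``$i\ge n+\dim X$'' form of part (b), but neither of your proposed fixes actually resolves it. In the five-term piece
$H^{i-1}(\tilde X)\oplus H^{i-1}(Z)\to H^{i-1}(\tilde Z)\to H^i(X)\to H^i(\tilde X)\oplus H^i(Z)\to H^i(\tilde Z)$,
the four-lemma for epi uses only positions $2,4,5$ and so gives surjectivity of $\phi^i_X$ at the edge $i=n+\dim X$ without trouble; but the four-lemma for \emph{injectivity} genuinely requires $\phi$ to be \emph{epi} at position 1, hence on $H^{n+\dim X-1}(\tilde X)$ for $\tilde X$ smooth of dimension $\dim X$, which falls one step outside the hypothesis. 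The ``refined five-lemma'' does not help --- it still demands exactly that surjectivity --- and a downward secondary induction on $i$ does not help either, because $\tilde X$ is a fixed \emph{smooth} auxiliary variety to which only the original smooth-case hypothesis applies, not the inductive claim being established for singular schemes. The honest fix is to note that $\phi^i_X$ is surjective at $i=n+\dim X$ and an isomorphism for $i>n+\dim X$, and that a genuine isomorphism at the edge requires the additional hypothesis that $\phi^{n+\dim X-1}_Y$ is surjective for $Y$ smooth of dimension $\dim X$. The paper's one-line proof is equally silent on this point; in practice the paper only invokes the dual statement through part (a), where the edge case is covered by thickness and no surjectivity at position 1 is needed.
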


\begin{proof} Induction on $\dim X$ in two steps: 1) if $X$ is integral, choose a resolution of singularities $\tilde X\to X$; 2) in general, if $Z_1,\dots, Z_r$ are the irreducible components of $X$, choose $\tilde X=\coprod Z_i$ and $Z=\bigcup_{i\ne j} Z_i\cap Z_j$.
\end{proof}

\begin{examples}\label{ex11.1} 1) Thanks to \cite[Th. 4.1.10]{V} and Proposition \ref{peh}, cdh
or
$\eh$ cohomology with coefficients in an object of $\DM_-^\eff$ or $\DM_{-,\et}^\eff$ satisfy the hypothesis of a) (here $\cA=$ abelian groups).
(See also \cite[Prop. 3.2]{geisser} for a different proof.)

2) \'Etale cohomology with torsion coefficients satisfies the hypothesis of a) by \cite[Prop.
2.1]{pw} (recall that the proof of \loccit relies on the proper base change theorem).
\end{examples}

Here is a variant of Lemma \ref{lbl}:

\begin{propose}\label{pdescent} a) Let $X\in Sch(k)$ and $\Xs \to X$ be a hyperenvelope in the sense of Gillet-Soul\'e \cite[1.4.1]{gs}. Let $\tau = \cdh$ or $\eh$. Then, for any (bounded below) complex of sheaves $C$ over $Sch(X)_\tau$, the augmentation map
\[H^*_\tau(X,C)\to H^*_\tau(\Xs,C)\]
is an isomorphism.\\
b) Suppose that $X_0$ and $X_1$ are smooth and $\cF$ is a homotopy invariant Nisnevich  (if $\tau=\cdh$) or \'etale  (if $\tau=\eh$) sheaf with transfers. Then we have
\[\cF_\tau(X) = \ker(\cF(X_0)\to \cF(X_1)).\]
\end{propose}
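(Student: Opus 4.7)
My plan is to deduce (b) from (a), and to prove (a) by combining the classical cohomological descent theorem for hypercovers with the observation that a hyperenvelope is a hypercover for the topologies $\cdh$ and $\eh$.

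For (a), recall from Gillet--Soul\'e \cite[1.4.1]{gs} that a hyperenvelope $\Xs \to X$ is an augmented simplicial scheme each of whose matching maps $X_n \to (\mathrm{cosk}_{n-1}\Xs)_n$ is an envelope, in particular a proper surjection. Since proper surjections are coverings in both the $\cdh$ and $\eh$ topologies (which by construction contain abstract blow-ups, and any proper surjection becomes Nisnevich or \'etale locally a composition of a blow-up and a Nisnevich/\'etale cover, by the usual argument of \cite[\S 4.1]{V} in the cdh case and its \'etale variant for eh), the augmentation $\Xs \to X$ is a hypercover for the topology $\tau$ in the sense of SGA 4. The cohomological descent theorem for hypercovers (SGA 4, V\textsuperscript{bis}) then yields a canonical quasi-isomorphism
\[\RG_\tau(X,C) \iso \RG_\tau(\Xs,C)\]
for any bounded below complex of $\tau$-sheaves $C$, giving the claimed isomorphism on cohomology.

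For (b), I would apply (a) in degree zero to the sheaf $\cF_\tau$ viewed as a complex concentrated in degree $0$. The resulting hypercover spectral sequence
\[E_1^{p,q} = H^q_\tau(X_p, \cF_\tau) \Rightarrow H^{p+q}_\tau(\Xs, \cF_\tau) = H^{p+q}_\tau(X, \cF_\tau)\]
identifies $\cF_\tau(X)$ with the equaliser $\ker(\cF_\tau(X_0) \to \cF_\tau(X_1))$. It then remains to replace $\cF_\tau(X_i)$ by $\cF(X_i)$ for $i=0,1$, using the hypothesis that $X_0$ and $X_1$ are smooth. For $\tau=\cdh$ this is a consequence of \cite[Th. 4.1.10]{V} (cited in Example \ref{ex11.1}), and for $\tau=\eh$ it follows from the second statement of Proposition \ref{peh}, in both cases applied to $C=\cF[0]$.

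The main obstacle is the verification that a hyperenvelope is genuinely a hypercover for the topology $\tau$: this is essentially implicit in the Gillet--Soul\'e formalism and rests on the fact that proper surjective maps can be refined, locally for the Nisnevich or \'etale topology, by compositions of abstract blow-ups and Nisnevich/\'etale covers. Granting this, (a) is a direct application of the SGA 4 descent theorem and (b) is purely formal, given the well-known agreement between $\tau$-sections and Nisnevich/\'etale sections of a homotopy invariant sheaf with transfers on smooth schemes.
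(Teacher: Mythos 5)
Your structure matches the paper's: show the hyperenvelope is a proper $\tau$-hypercovering, invoke cohomological descent for (a), then read (b) off the $E_2^{0,0}$ term of the descent spectral sequence together with the comparison $\cF_\tau(X_i)=\cF(X_i)$ for smooth $X_i$ from Proposition~\ref{peh}. Part (b) is correct and coincides with the paper's argument.

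There is, however, a genuine error in your verification of the hypercover property in (a). You write that ``proper surjections are coverings in both the cdh and eh topologies'' and can be refined Nisnevich/\'etale-locally by blow-ups; this is false. For instance, $\Spec L\to\Spec k$ for a nontrivial finite field extension $L/k$ is a proper surjection, but it is not a cdh or eh cover of the smooth scheme $\Spec k$: no refinement can produce a $k$-rational point out of $\Spec L$. What makes the refinement argument work is the \emph{envelope} condition --- every point lifts to a point with the same residue field --- which is precisely the Nisnevich-type lifting hypothesis needed to factor the map through a composition of abstract blow-ups and Nisnevich/\'etale covers (using characteristic $0$ and resolution of singularities). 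You correctly observe at the outset that the matching maps of a hyperenvelope are envelopes, but you then discard that property in favour of ``proper surjection,'' and from there the argument breaks. The missing step is exactly what \cite[Lemma~12.26]{VL} supplies, which the paper cites, together with \cite[p.~46]{bondarko} for the translation into the language of proper $\tau$-hypercoverings. Once you retain the envelope hypothesis (or simply cite these references), your proof agrees with the paper's.
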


\begin{proof} a) By \cite[Lemma 12.26]{VL}, $\Xs$ is a proper $\tau$-hypercovering (\cf \cite[p. 46]{bondarko}). Therefore the proposition follows from the standard theory of cohomological descent.

b) Let us take  $C=\cF_\tau[0]$.  By a) we have a descent spectral sequence which gives a short exact sequence
\[0\to\cF_\tau(X)\to \cF_\tau(X_0)\to \cF_\tau(X_1)\]
and the conclusion now follows from \ref{peh}.
\end{proof}

\subsection{$\LA{i}(X)$ for $X$ singular} 
The following is a general method for computing the $1$-motivic homology of $\LAlb^\Q (X)$:

\begin{propose} \label{cdhex}
If $\car k=0$ and $X\in Sch(k)$ consider cdh cohomology groups $\HH^i_\cdh(X,\pi^*(N))_{\sQ}$,
where
$\pi:Sch(k)_\cdh\to Sch(k)_\Zar$ is the canonical map from the cdh
site to the big Zariski site. Then we have short exact sequences,
for all $i\in \Z$
\begin {multline*}
0\to \Ext^1 (\LA{i-1}^\Q(X),N)\to \HH^i_\cdh(X,\pi^*(N))_{\sQ}\\
\to\Hom (\LA{i}^\Q(X),N)\to 0
\end{multline*}
\begin{multline*}
0\to\Ext^1 (N, \LA{i+1}^\Q(X))\to\EExt^{-i} (N, \LAlb^\Q (X))\\ \to\Hom (N,
\LA{i} (X))\to 0.
\end{multline*}
\end{propose}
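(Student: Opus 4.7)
The plan is to reduce both short exact sequences to a single spectral-sequence argument based on the cohomological dimension bound for $\M\otimes\Q$ (Proposition \ref{iso1}), after rewriting cdh cohomology as a Hom group in $D^b(\M\otimes\Q)$ via the adjunction $\LAlb^\Q\dashv\Tot$ of Corollary \ref{cet}.

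\emph{Step 1: rewriting cdh cohomology as a motivic Hom group.} Given $N\in \M\otimes\Q$, the object $\Tot(N)\in \DM_{\gm,\et}^\eff\otimes\Q$ can be computed equally well in $\DM_{-}^\eff\otimes\Q$ by Proposition \ref{ptensq}. Since $\car k=0$, the methods of \cite[Th. 4.1.10]{V} (compare Proposition \ref{peh} for the \'etale analogue) together with the rational equivalence between the Nisnevich and \'etale motivic categories yield a functorial isomorphism
\[
\HH^i_\cdh(X,\pi^*(N))_{\sQ}\iso \Hom_{\DM_{-,\et}^\eff\otimes\Q}(M_\et(X),\Tot(N)[i])
\]
for every $X\in Sch(k)$ (one uses that $M(X)\in \DM_\gm^\eff$ because $\car k=0$). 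Applying the adjunction \eqref{lalbuniv} (rationalised, Corollary \ref{cet}) transforms the right-hand side into
\[
\Hom_{D^b(\M\otimes\Q)}(\LAlb^\Q(X),N[i]).
\]

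\emph{Step 2: the spectral sequence for a category of cohomological dimension $\leq 1$.} For any $C\in D^b(\M\otimes\Q)$ and any $N\in \M\otimes\Q$ there is the standard hyperext spectral sequence
\[
E_2^{p,q}=\Ext^p_{\M\otimes\Q}(H^{-q}(C),N)\Longrightarrow \Hom_{D^b(\M\otimes\Q)}(C,N[p+q]).
\]
By Proposition \ref{iso1}, $\M\otimes\Q$ has cohomological dimension $\le 1$, so $E_2^{p,q}=0$ for $p\ge 2$ and the spectral sequence degenerates into short exact sequences
\[
0\to \Ext^1(H^{-(i-1)}(C),N)\to \Hom(C,N[i])\to \Hom(H^{-i}(C),N)\to 0.
\]
Applying this to $C=\LAlb^\Q(X)$ and using the identification $H^{-i}(C)=\LA{i}^\Q(X)$ from Definition \ref{LAlb-} (together with Step 1) delivers the first short exact sequence of the proposition.

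\emph{Step 3: the dual sequence.} For the second short exact sequence, apply the symmetric hyperext spectral sequence
\[
E_2^{p,q}=\Ext^p_{\M\otimes\Q}(N,H^q(C))\Longrightarrow \Hom_{D^b(\M\otimes\Q)}(N,C[p+q]).
\]
Again the vanishing $E_2^{p,\cdot}=0$ for $p\ge 2$ (Proposition \ref{iso1}) yields short exact sequences
\[
0\to \Ext^1(N,H^{-(i+1)}(C))\to \Hom(N,C[-i])\to \Hom(N,H^{-i}(C))\to 0,
\]
which for $C=\LAlb^\Q(X)$ is exactly the desired sequence, once one observes that $\EExt^{-i}(N,\LAlb^\Q(X))=\Hom_{D^b(\M\otimes\Q)}(N,\LAlb^\Q(X)[-i])$.

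The only nonroutine point is the compatibility in Step 1: one must check that the cdh-cohomology comparison of \cite[Th. 4.1.10]{V} applies to the complex of sheaves $\pi^*(N)$ coming from a rational $1$-motive, which is guaranteed by Lemma \ref{l1.3} and the rational identification of Nisnevich with \'etale $\DM$. All other steps are formal consequences of the adjunction and the cohomological dimension bound.
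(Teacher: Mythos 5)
Your proof is correct and follows essentially the same route as the paper's: both rest on the adjunction between $\LAlb^\Q$ and $\Tot$ (Corollary \ref{cet}) to identify $\HH^i_\cdh(X,\pi^*(N))_{\sQ}$ with $\EExt^i(\LAlb^\Q(X),N)$, then degenerate the hyperext spectral sequence using the cohomological-dimension-$\le 1$ bound of Proposition \ref{iso1}. The only presentational difference is that you start from the cdh identification and end with the spectral sequence, while the paper proceeds in the opposite order; the ingredients and their roles are identical.
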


\begin{proof}
For any 1-motive $N\in \M$ we have a spectral sequence
\begin{equation}\label{eqss}
E^{p,q}_2 = \Ext^p (\LA{q}(X),N)\ \implies\  \EExt^{p+q}
(\LAlb(X), N)
\end{equation} 
yielding the following short exact sequence
\begin{multline*}
0\to \Ext^1 (\LA{i-1}^\Q(X),N)\to \EExt^{i} (\LAlb^\Q(X),N)\to \\
\Hom (\LA{i}^\Q(X),N)\to 0
\end{multline*}
because of Proposition~\ref{iso1}. By adjunction we
also obtain
$$\EExt^{i} (\LAlb^\Q(X),N) = \Hom (\LAlb^\Q(X),N[i])\cong
\Hom (M (X),\Tot N[i]).$$

Now from \cite[Thm. 3.2.6 and Cor. 3.2.7]{V}, for $X$ smooth
we have
\[\Hom (M (X),\Tot N[i])\cong \HH^i_\Zar(X, N)_{\sQ}.\]

If $k$ is of characteristic $0$ and $X$ is arbitrary, we get the same
isomorphism with cdh hypercohomology by \cite[Thm. 4.1.10]{V}.

The proof for the second short exact sequence is similar.
\end{proof}

One gets similar computations integrally by replacing the cdh topology by the \'eh topology,
but here the spectral sequence does not degenerate. In any case we shall obtain the following
integral results directly (except for Proposition \ref{pl1}).

The following proposition folllows readily by blow-up induction (Lem\-ma \ref{lbl}) from Corollary \ref{HLAlb} and the exact sequences \eqref{exres}:

\begin{propose}\label{c3.1} For any $X\in Sch(k)$ of dimension $d$ in characteristic $0$, we
have\\
a) $\LA{i}(X) = 0$ if   $i <0$.\\
b) $\LA{0}(X) = [\Z[\pi_0(X)]\to 0]$.\\
c) $\LA{i}(X) = 0$ for $i >\max(2,d+1)$.\\
d) $\LA{d+1}(X)$ is a group of multiplicative type.
\qed
\end{propose}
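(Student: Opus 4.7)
The plan is exactly the blow-up induction on $\dim X$ suggested by the authors. The smooth case is handed to us by Corollary~\ref{HLAlb}, which shows that for smooth $X$ of dimension $d$ one has $\LA{i}(X)=0$ for $i\notin\{0,1,2\}$, with $\LA{0}(X)=[\Z[\pi_0(X)]\to 0]$ and $\LA{2}(X)=[0\to\NS_{X/k}^*]$ of multiplicative type. In particular all four assertions hold, and in fact one gets the stronger smooth vanishing $\LA{i}(X)=0$ for $i\ge 3$ independently of $d$ (the bound $\max(2,d+1)$ enters only through singular contributions). For the induction mechanism I would use the long exact sequence \eqref{exres}, feeding it the two standard kinds of abstract blow-ups in the spirit of Lemma~\ref{lbl}: a resolution $p:\tilde X\to X$ of an integral $X$ (with $\tilde X$ smooth of dimension $\le d$ and $Z,\tilde Z$ of dimension $<d$), and the ``separate components'' move with $\tilde X=\coprod Z_i$ and $Z=\bigcup_{i\ne j}Z_i\cap Z_j$ (strictly simpler in both dimension and number of components).

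Given this setup, (a) is immediate: for $i<0$ every flanking term in \eqref{exres} is zero (smooth base case for $\tilde X$, induction for $Z$ and $\tilde Z$). For (b), the same sequence identifies $\LA{0}(X)$ with the cokernel of $\LA{0}(\tilde Z)\to \LA{0}(\tilde X)\oplus\LA{0}(Z)$, which by induction becomes the cokernel of $[\Z[\pi_0(\tilde Z)]\to 0]\to [\Z[\pi_0(\tilde X)\sqcup\pi_0(Z)]\to 0]$; this is $[\Z[\pi_0(X)]\to 0]$ because $\pi_0$ satisfies descent along abstract blow-ups (equivalently, $\pi_0$ is an $\eh$-sheaf). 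For (c), under the hypothesis $i>\max(2,d+1)$ one checks that $\LA{i}(\tilde X)=0$ by the smooth case (since $i\ge 3$), while $\LA{i}(Z)$ and $\LA{i-1}(\tilde Z)$ vanish by the inductive hypothesis applied in dimension $\le d-1$; the elementary inequality check is the only thing to do, and it goes through even at the boundary $d=1$ because $\dim\tilde Z\le 0$ forces $\tilde Z$ smooth so that one may invoke Corollary~\ref{HLAlb} directly rather than the inductive (c).

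Assertion (d) is the subtlest piece and will be the main obstacle. Here I would feed (c) back into \eqref{exres} at the critical index $i=d+1$. Since $\dim Z,\dim\tilde Z<d$, the inductive (c) kills $\LA{d+1}(\tilde Z)$ and $\LA{d+1}(Z)$, so one obtains a short exact sequence
\[0\longrightarrow \LA{d+1}(\tilde X)\longrightarrow \LA{d+1}(X)\longrightarrow \ker\bigl(\LA{d}(\tilde Z)\to \LA{d}(\tilde X)\oplus\LA{d}(Z)\bigr)\longrightarrow 0.\]
The outer object $\LA{d+1}(\tilde X)$ is of multiplicative type by the smooth case (it is either $0$ or $[0\to\NS_{\tilde X/k}^*]$), while $\LA{d}(\tilde Z)$ is of multiplicative type by the inductive step of (d) applied in dimension $\le d-1$. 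Hence the displayed kernel is a subobject of a multiplicative-type $1$-motive with cotorsion, forcing its lattice component to be zero and yielding a group of multiplicative type again; and $\LA{d+1}(X)$ is then an extension of one multiplicative-type $1$-motive by another.

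The remaining technical point is to confirm that inside ${}_t\M[1/p]$ the class of objects of the form $[0\to\Gamma]$, with $\Gamma$ a (commutative) group of multiplicative type, is closed both under passage to subobjects and under extensions. Closure under subobjects is formal: a subobject of $[0\to\Gamma]$ in ${}_t\M[1/p]$ cannot acquire a lattice part, so it is of the form $[0\to\Gamma']$ with $\Gamma'\subseteq\Gamma$, automatically of multiplicative type. Closure under extensions reduces, via the structure of ${}_t\M[1/p]$ recalled in \S\ref{1.8}, to the classical fact (in characteristic $0$) that an extension of one commutative algebraic group of multiplicative type by another is again of multiplicative type. This closure step, together with the careful bookkeeping at $d\le 1$ where the interval $\max(2,d+1)$ collapses, is the only delicate point; everything else is formal bookkeeping in \eqref{exres}.
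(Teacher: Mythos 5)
Your proposal is correct and follows the same route the paper indicates: blow-up induction via Lemma~\ref{lbl} and the exact sequence \eqref{exres}, anchored on the smooth case from Corollary~\ref{HLAlb}, with (d) handled by exhibiting $\LA{d+1}(X)$ as an extension with multiplicative-type inputs and invoking that $[0\to\Gamma]$'s form a Serre-type subcategory of ${}_t\M[1/p]$. The extra care you take at the $d=1$ boundary and the explicit closure-under-subobjects/extensions argument are just the details behind the paper's terse ``follows readily''.
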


\begin{comment}
\begin{proof}
1) Suppose that $X$ is irreducible. Choose a resolution of singularities $p : \tilde X\to X$;
let
$Z=\text{Sing}(X)$ and $\tilde Z= p^{-1} (Z)$. We obtain from \eqref{exres} a long
exact sequence
\[\dots\to \LA{i}(\tilde X)\oplus \LA{i}(Z)\to \LA{i}(X) \to\LA{i-1}(\tilde Z) \to\dots\]

a) and b) follow from this exact sequence, Corollary~\ref{HLAlb} and an induction on $d$.
For c), by Corollary \ref{HLAlb} we have $\LA{i}(\tilde X)=0$ for $i>2$, hence the claim still
follows inductively.

d) Again we argue by induction on $d$. For $d=0$ the statement is trivial. For $d=1$, the long
exact sequence of c) reduces us to the smooth case where this follows from Corollary
\ref{HLAlb}. For $d\ge 2$, this exact sequence together with c) yields a monomorphism
\[\LA{d+1}(X)\into \LA{d}(\tilde Z)\]
and we conclude.

e)  is also proven inductively by reduction to the smooth case.

2) In general, we argue similarly by replacing the \'eh cover $Z\coprod \tilde X\to X$
of 1) by the \'eh cover $\coprod Z_i\to X$, where the $Z_i$ are the irreducible components of
$X$, and $\tilde Z$ by $\coprod Z_{ij}$.
\end{proof}
\end{comment}

\subsection{The cohomological 1-motives $\RA{i}(X)$}

If $X\in Sch(k)$, we quote the following variant of Proposition \ref{cdhex}:

\begin{lemma} \label{cdhexp}
Let $N\in \M\otimes\Q$ and $X\in Sch(k)$. We have a short exact
sequence, for all $i\in \Z$
\begin {multline*}
0\to \Ext (N, \RA{i-1}(X))\to \HH^i_\cdh(X,\pi^*(N^*))_{\sQ}\\\to \Hom (N, \RA{i}(X))\to 0
\end{multline*}
here $\pi : Sch(k)_\cdh\to Sch(k)_\Zar$ and $N^*$ is the
Cartier dual.
\end{lemma}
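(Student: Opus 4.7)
The plan is to run the dual of the spectral-sequence argument used in Proposition~\ref{cdhex}, swapping $\LAlb$ with $\RPic$, and then transport back to $\LAlb$ via Cartier duality so that the existing adjunction for $\LAlb^\Q$ and the cdh-hypercohomology identification from \cite[Thm.~4.1.10]{V} become directly applicable.

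First I would set up the hypercohomology-type spectral sequence
\[E_2^{p,q}=\Ext^p(N,\RA{q}(X))\Rightarrow \EExt^{p+q}(N,\RPic^\Q(X))\]
obtained by truncating $\RPic^\Q(X)\in D^b(\M\otimes\Q)$ relative to the standard $t$-structure with heart $\M\otimes\Q$ (rationally, the ${}^t\M$- and ${}_t\M$-structures coincide, so $\RA{q}(X)$ is just the $q$th cohomology object). By Proposition~\ref{iso1} we have $\Ext^p=0$ for $p\ge 2$, so the sequence degenerates into the desired three-term exact sequence with $\EExt^i(N,\RPic^\Q(X))$ as middle term.

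Next I would identify that middle term with $\HH^i_\cdh(X,\pi^*(N^*))_\Q$. Using the triangulated Cartier self-duality on $D^b(\M\otimes\Q)$ from Proposition~\ref{pcd} together with $\RPic^\Q(X)^*=\LAlb^\Q(X)$ from Corollary~\ref{rpdla}, one rewrites
\[\EExt^i(N,\RPic^\Q(X))=\Hom(N,\RPic^\Q(X)[i])\cong \Hom(\LAlb^\Q(X),N^*[i])=\EExt^i(\LAlb^\Q(X),N^*).\]
The rational version of the universal property \eqref{lalbuniv} (Corollary~\ref{cet}) then identifies this with $\Hom_{\DM_{\gm,\et}^\eff\otimes\Q}(\alpha^*M(X),\Tot(N^*)[i])$; Proposition~\ref{ptensq} converts the Hom into one in $\DM_\gm^\eff\otimes\Q$, and \cite[Thm.~4.1.10]{V} finally identifies it with $\HH^i_\cdh(X,\pi^*(N^*))_\Q$, exactly as at the end of the proof of Proposition~\ref{cdhex}.

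The only delicate step is the duality manipulation in the middle paragraph: one must verify that the contravariant triangulated functor $(-)^*$ sends $[i]$ on the target to $[-i]$ on the source so the shifts match correctly, and that $N^*$ lies again in $\M\otimes\Q$ (which is immediate after inverting $p$, since Cartier duality preserves $\M\otimes\Q$ by Proposition~\ref{pcd}). Once these book-keeping checks are in place, the proof is formal, every ingredient having already been established in Sections~\ref{dual}--\ref{qlalb} and in Proposition~\ref{cdhex}.
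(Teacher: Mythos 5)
Your proof follows exactly the same route as the paper's: the spectral sequence $E_2^{p,q}=\Ext^p(N,\RA{q}(X))\Rightarrow\EExt^{p+q}(N,\RPic(X))$, degeneration via cohomological dimension $\le 1$, and then the chain of identifications $\EExt^i(N,\RPic(X))\cong\Hom(\LAlb(X),N^*[i])\cong\Hom(M(X),N^*[i])\cong\HH^i_\cdh(X,\pi^*(N^*))_\Q$ via Cartier duality, the universal property of $\LAlb^\Q$, and \cite[Thm.~4.1.10]{V}. You are simply a bit more explicit than the paper about which results from Sections~\ref{dual}--\ref{qlalb} license each step; no substantive difference.
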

\begin{proof} The spectral sequence
$$E^{p,q}_2 = \Ext^p (N, \RA{q}(X))\ \implies\  \EExt^{p+q}
(N, \RPic (X))$$ yields the following short exact sequence
\begin{multline*}
0\to \Ext (N, \RA{i-1}(X))\to \EExt^{i} (N,\RPic (X))\to \\
\Hom (N, \RA{i}(X))\to 0
\end{multline*}
and by Cartier duality, the universal property and \cite[Thm.
4.1.10]{V} we obtain:
\begin{multline*}
\EExt^{i} (N,\RPic (X)) = \Hom (N,\RPic (X)[i])\cong \Hom (\LAlb
(X), N^*[i])=\\ \Hom (M (X),N^*[i])\cong \HH^i_\cdh(X,
\pi^*(N^*))_{\sQ}.
\end{multline*}
\end{proof}

On the other hand, here is a dual to Proposition \ref{c3.1}:

\begin{propose}\label{c3.1*} For any $X\in Sch(k)$ of dimension $d$ in characteristic $0$, we
have\\
a) $\RA{i}(X) = 0$ if   $i <0$.\\
b) $\RA{0}(X) = [0\to \Z[\pi_0(X)]^*]$.\\
c) $\RA{i}(X) = 0$ for $i >\max(2,d+1)$.\\
d) $\RA{d+1}(X)$ is discrete.
\qed
\end{propose}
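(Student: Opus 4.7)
The strategy is to deduce Proposition \ref{c3.1*} directly from Proposition \ref{c3.1} by Cartier duality, bypassing any new blow-up induction. The key input is Corollary \ref{rpdla}, which provides a functorial isomorphism
$$\RA{i}(X) \simeq \LA{i}(X)^{*},$$
relating the cohomology $1$-motives of $\RPic(X)\in D^b({}^t\M[1/p])$ to the Cartier duals of the homology $1$-motives of $\LAlb(X)\in D^b({}_t\M[1/p])$. By Proposition \ref{pcd} and Theorem \ref{tstr}, the functor $(\ )^{*}$ is an exact anti-equivalence between ${}^t\M[1/p]$ and ${}_t\M[1/p]$; in particular, it sends $0$ to $0$, it sends a pure lattice $[L\to 0]$ to $[0\to L^{*}]$ (so Galois lattices such as $\Z[\pi_0(X)]$ are exchanged with the multiplicative-type groups $R_{\pi_0(X)/k}\G_m$), and it exchanges objects $[0\to \Gamma]\in {}_t\M[1/p]$ with $\Gamma$ of multiplicative type with objects $[L\to 0]\in {}^t\M[1/p]$ with $L$ discrete.

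Granting these identifications, the four statements fall out by direct inspection of Proposition \ref{c3.1}: (a) and (c) come from dualising the vanishing of $\LA{i}(X)$ in the same ranges; (b) comes from dualising $\LA{0}(X)=[\Z[\pi_0(X)]\to 0]$, whose Cartier dual is $[0\to \Z[\pi_0(X)]^{*}]$; and (d) comes from dualising the fact that $\LA{d+1}(X)$ is a group of multiplicative type (a pure object of weight $-2$), whose Cartier dual is discrete of weight $0$.

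The only step that requires care is verifying that the isomorphism of Corollary \ref{rpdla} really does identify the ${}^t\M$-valued functor ${}^tH^{i}\circ \RPic$ with the Cartier dual of the ${}_t\M$-valued functor ${}_tH_i\circ \LAlb$, compatibly with the exchange of the two motivic $t$-structures. This is precisely the content of the last assertion of Theorem \ref{tstr} applied to the complex $\LAlb(X)$, combined with Definition \ref{RPic+} and Corollary \ref{rpdla}; once this compatibility is recorded, no further argument is needed and the proposition is proved.
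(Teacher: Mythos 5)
Your argument is exactly the one the paper intends: the proposition is stated as the Cartier dual of Proposition \ref{c3.1}, and the identification $\RA{i}(X)\simeq\LA{i}(X)^*$ from Corollary \ref{rpdla} together with the exactness of Cartier duality (Proposition \ref{pcd}, Theorem \ref{tstr}) makes each item follow by dualising the corresponding item of Proposition \ref{c3.1}. Your spelling-out of what the duality does on the relevant pure objects (lattices $\leftrightarrow$ multiplicative-type groups) is correct and fills in the details the paper leaves implicit.
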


\subsection{Borel-Moore variants}

\begin{defn}\label{d11.5} For $X\in Sch(k)$, we denote by $\pi_0^c(X)$ the disjoint union of $\pi_0(Z_i)$
where $Z_i$ runs through the \emph{proper} connected components of $X$: this is the
\emph{scheme of proper constants}.
\end{defn}

\begin{propose}\label{c3.1c} Let $X\in Sch(k)$ of dimension $d$. Then:\\
a) $\LA{i}^c(X) = 0$ if   $i <0$.\\
b) $\LA{0}^c(X) = [\Z[\pi_0^c(X)]\to 0]$. In particular, $\LA{0}^c(X)=0$ if no connected
component is proper.\\ 
c) $\LA{i}^c(X) = 0$ for $i >\max(2,d+1)$.\\
d) $\LA{d+1}^c(X)$ is a group of multiplicative type.
\end{propose}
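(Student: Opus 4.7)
The plan is to argue by induction on $d=\dim X$, using Nagata compactification in combination with the localisation triangle of the Borel-Moore Albanese complex. By Lemma \ref{l12.3} I may assume $X$ reduced. The base case $d=0$ is immediate: a reduced $0$-dimensional scheme of finite type is proper, so $\pi_0^c(X)=\pi_0(X)$ and $\LAlb^c(X)=\LAlb(X)$, and the four claims follow from Proposition \ref{c3.1}, where moreover $\LAlb(X)=[\Z[\pi_0(X)]\to 0]$ is concentrated in degree $0$ so that (c) and (d) are trivial.

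For the inductive step, pick a compactification $X\subset\bar X$ with $\dim\bar X=d$ (Nagata, or Hironaka in characteristic $0$; then reduce and take the union of the closures of the components of $X$), and set $W=\bar X\setminus X$ with its reduced structure, so $\dim W<d$. The localisation triangle \eqref{loc} combined with $\LAlb^c(\bar X)=\LAlb(\bar X)$ (since $\bar X$ is proper) yields a long exact sequence in ${}_t\M[1/p]$
\[\cdots\to \LA{i}^c(W)\to \LA{i}(\bar X)\to \LA{i}^c(X)\to \LA{i-1}^c(W)\to \cdots\]
in which both $\LA{i}(\bar X)$ (by Proposition \ref{c3.1} applied to $\bar X$) and $\LA{i}^c(W)$ (by the inductive hypothesis, plus the direct observation that $\LAlb^c(W)=\LAlb(W)=[\Z[\pi_0(W)]\to 0]$ is concentrated in degree $0$ when $\dim W=0$) are controlled in the relevant ranges.

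Claims (a) and (c) then follow at once: the two neighbouring terms vanish in the stated ranges (the low-dimensional edge cases $d=0,1$ are handled by the direct computation of $\LAlb^c(W)$ for $\dim W\le 0$ just mentioned). For (d), both $\LA{d+1}(\bar X)$ and $\LA{d}^c(W)$ are of multiplicative type (or zero), and one checks easily that multiplicative-type objects form a full subcategory of ${}_t\M[1/p]$ stable under subobjects, quotients and extensions (an extension of $[0\to T'']$ by $[0\to T']$ has vanishing lattice part since both do, and its connected part is an extension of tori/mult-type groups). Hence $\LA{d+1}^c(X)$ is multiplicative type. For (b), the tail of the long exact sequence reads $\LA{0}^c(W)\to \LA{0}(\bar X)\to \LA{0}^c(X)\to 0$, and both left terms are known lattices $[\Z[\pi_0(W)]\to 0]$ and $[\Z[\pi_0(\bar X)]\to 0]$. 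The induced map of lattices sends $[W_i]$ to the class of the unique component of $\bar X$ containing it, so its cokernel is freely generated by those components $\bar X_j$ of $\bar X$ that do not meet $W$, i.e.\ those components $\bar X_j$ for which $X\cap\bar X_j$ is itself proper; these are precisely $\pi_0^c(X)$.

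The main obstacle is organising the low-dimensional edge cases in (c) and (d), where $d\le 1$ forces $\dim W\le 0$ and one must rely on the sharper direct computation of $\LAlb^c(W)$ rather than on the a priori weaker inductive bound; and verifying stability of multiplicative-type objects under extensions in ${}_t\M[1/p]$, which I expect to follow formally from Definition \ref{1cot}.
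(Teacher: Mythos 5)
Your proof is correct and follows the paper's approach exactly: the paper, too, reduces to Proposition~\ref{c3.1} by choosing a compactification $\bar X$ of $X$, using $\LAlb^c(\bar X)=\LAlb(\bar X)$, the localisation triangle \eqref{loc}, and induction on $\dim(\bar X - X)<\dim X$. The paper's proof is only two sentences long, so your contribution is to have correctly filled in the details it leaves to the reader — the dimension-$0$ base case, the $\pi_0^c$ identification in (b), the edge cases $d\le 1$ in (c), and the closure of multiplicative-type $1$-motives under subobjects and extensions in ${}_t\M[1/p]$ needed for (d).
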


\begin{proof} If $X$ is proper, this is Proposition \ref{c3.1}. In
general, we may choose a compactification $\bar X$ of $X$; if $Z = \bar X -X$, with $\dim
Z<\dim X$, the claim follows inductively by the long exact sequence \eqref{loc}.
\end{proof}

We leave it to the reader to formulate the dual of this proposition for $\RPic^c(X)$.

\section{1-motivic homology and cohomology of curves}\label{scurves}

\subsection{``Chow-K\"unneth" decomposition for a curve} Note that for any curve $C$, the map $a_C$ is an isomorphism by Proposition \ref{cd2}. Moreover, since the category of 1-motives up to isogeny is of cohomological dimension 1 (see Prop.~\ref{iso1}), the complex $\LAlb^\Q(C)$ can be represented by a complex with zero differentials. Using Proposition \ref{c3.1} c), we then have:

\begin{cor} If $C$ is a curve then the motive $M(C)$ decomposes in $\DM_\gm^\eff\otimes\Q$ as
\[M (C) = M_0 (C) \oplus M_1 (C)\oplus M_2 (C)\] 
where $M_i (C)\df \Tot\LA{i}^\Q(C)[i]$.
\end{cor}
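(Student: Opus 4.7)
The plan is to exploit two facts: (i) for any curve $C$, the motivic Albanese map $a_C:\alpha^*M(C)\to \Tot\LAlb(C)$ is an isomorphism by Proposition \ref{cd2}, since $M(C)\in d\1\DM_\gm^\eff$; and (ii) the abelian category $\M\otimes\Q$ has cohomological dimension $\le 1$ by Proposition \ref{iso1}. Combined with the vanishing $\LA{i}^\Q(C)=0$ for $i\notin\{0,1,2\}$ coming from Proposition \ref{c3.1} c), the statement reduces to a formal property of derived categories of hereditary abelian categories, which is essentially what the paragraph preceding the corollary already hints at.

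The first step is to record the general \emph{formality principle}: in the derived category $D^b(\cA)$ of an abelian category $\cA$ of cohomological dimension $\le 1$, every object $K$ is (non-canonically) isomorphic to $\bigoplus_i H_i(K)[i]$. I would prove this by induction on the amplitude using the truncation triangle
\[\tau_{>i}K\to K\to \tau_{\le i}K\xrightarrow{+1},\]
whose splitting obstruction lies in $\Hom_{D^b(\cA)}(\tau_{\le i}K,\tau_{>i}K[1])$. Combining the inductive hypothesis (which replaces $\tau_{\le i}K$ and $\tau_{>i}K$ by direct sums of shifts of objects of $\cA$) with the hypercohomology spectral sequence reduces this group to $\Ext^{p}_{\cA}(H_a(K),H_b(K))$ with $p\ge 2$; all such groups vanish by hypothesis, so the triangle splits.

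Next I would apply this principle to $K=\LAlb^\Q(C)\in D^b(\M\otimes\Q)$. Proposition \ref{c3.1} c) guarantees that the only possibly nonzero homology objects are $\LA{0}^\Q(C)$, $\LA{1}^\Q(C)$, $\LA{2}^\Q(C)$, hence the formality principle supplies an isomorphism
\[\LAlb^\Q(C)\simeq \LA{0}^\Q(C)\oplus \LA{1}^\Q(C)[1]\oplus \LA{2}^\Q(C)[2]\]
in $D^b(\M\otimes\Q)$. Applying the triangulated functor $\Tot$ and using the identification of $\DM_\gm^\eff\boxtimes\Q$ with $\DM_{\gm,\et}^\eff\boxtimes\Q$ from Proposition \ref{ptensq}~b), this yields a corresponding direct sum decomposition of $\alpha^*M(C)$ in $\DM_\gm^\eff\otimes\Q$. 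The isomorphism $a_C$ from Proposition \ref{cd2} then transports this decomposition back to $M(C)$, giving exactly $M(C)=M_0(C)\oplus M_1(C)\oplus M_2(C)$ with the stated definition of $M_i(C)$.

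The only delicate point, and the main thing to verify carefully rather than a real obstacle, is the interplay between $\otimes\Q$ and $\boxtimes\Q$ on the various triangulated categories, together with the fact that $\LAlb^\Q$ (Corollary \ref{cet}) is genuinely a left adjoint/left inverse of $\Tot$ after rationalisation so that the decomposition produced in $D^b(\M\otimes\Q)$ can be transferred back faithfully. Once the formality lemma is in hand, the corollary is really a one-line consequence of Propositions \ref{iso1}, \ref{cd2}, and \ref{c3.1} c).
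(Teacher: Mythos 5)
Your argument is precisely the one the paper gives in the short paragraph preceding the corollary: $a_C$ is an isomorphism by Proposition~\ref{cd2}, cohomological dimension $\le 1$ of $\M\otimes\Q$ (Proposition~\ref{iso1}) makes $\LAlb^\Q(C)$ formal, and Proposition~\ref{c3.1}~c) bounds the homological range. You have merely written out the standard formality lemma for hereditary abelian categories and the $\otimes\Q$ versus $\boxtimes\Q$ bookkeeping, both of which the paper leaves implicit.
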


\subsection{$\LA{i}$ and $\RA{i}$ of curves} Here we shall complete the computation of
Proposition \ref{c3.1} in the case of a curve $C$.

Let $\tilde C$ denote the normalisation of
$C$. Let $\bar C$ be a smooth compactification of $\tilde C$ so
that $F = \bar C -\tilde C$ is a finite set of closed points.
Consider the following cartesian square
\[ \begin{CD}
\tilde S @>>> \tilde C\\
@V{}VV  @V{}VV  \\
S@>>> C
\end{CD}\]
where $S$ denote the singular locus. Let $\bar S$ denote $\tilde
S$ regarded in $\bar C$. Note that $S = \pi_0(S)$, $\tilde S =
\pi_0(\tilde S)$ and $\pi_0(\tilde S) \by{=} \pi_0(S)$ if $\tilde C\to C$ is
radicial, yielding $M (\tilde C)\by{=} M(C)$ in this case. In
general, we have the following.
\begin{thm}\label{lasc}
 Let $C$, $\tilde C$, $\bar C$, $S$, $\tilde S$, $\bar S$ and
$F$ as above. Then
$$\LA{i}(C) =
\begin{cases}
\relax [\Z[\pi_0(C)]\to 0]& \text{if $i = 0$}\\
\relax [\Div_{\bar S/S}^0(\bar C, F)\by{u}\Pic^0(\bar C, F)] & \text{if
$i= 1$}\\ 
\relax [0\to\NS_{\tilde C/k}^*] & \text{if $i= 2$}\\
0 & \text{otherwise}
\end{cases}$$
where: $\Div_{\bar S/S}^0(\bar C, F) =  \Div_{\tilde S/S}^0(\tilde C)$ here is the free group
of degree zero divisors generated by $\tilde S$ having trivial push-forward on $S$ and the map
$u$ is the canonical map (\cf \cite[Def. 2.2.1]{BSAP}); $\NS_{\tilde C/k}$ is the sheaf
associated to the free abelian group on the proper irreducible components of $\tilde C$.
\end{thm}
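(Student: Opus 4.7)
The plan is to apply blow-up induction to the normalization square, which is an abstract blow-up in the sense of \S\ref{blowups} since $\tilde C \to C$ is finite, surjective, and restricts to an isomorphism over $C\setminus S$. This yields a distinguished triangle
\[
\LAlb(\tilde S) \to \LAlb(\tilde C)\oplus \LAlb(S) \to \LAlb(C) \by{+1}
\]
and hence a long exact sequence of $1$-motives with cotorsion relative to the $t$-structure with heart ${}_t\M[1/p]$. Since $S$ and $\tilde S$ are zero-dimensional, Corollary~\ref{HLAlb} gives $\LA{i}(S) = \LA{i}(\tilde S) = 0$ for $i\geq 1$, with $\LA{0}$ being $[\Z[S]\to 0]$ and $[\Z[\tilde S]\to 0]$ respectively. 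Combined with the values of $\LA{i}(\tilde C)$ provided by Corollary~\ref{HLAlb} for the smooth curve $\tilde C$, we get $\LA{i}(C)=0$ for $i<0$ or $i\geq 3$, together with $\LA{2}(C) \iso \LA{2}(\tilde C) = [0\to\NS_{\tilde C/k}^*]$ and $\LA{0}(C) = [\Z[\pi_0(C)]\to 0]$ (the latter by reading off the right tail of the exact sequence).

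The remaining portion of the long exact sequence collapses to a short exact sequence in ${}_t\M[1/p]$
\[
0 \to [0\to \cA^0_{\tilde C/k}] \to \LA{1}(C) \to [L\to 0] \to 0
\]
where $L = \ker\bigl(\Z[\tilde S]\to \Z[\pi_0(\tilde C)]\oplus \Z[S]\bigr)$. Identifying the first factor with $\Pic^0(\bar C, F)$ via the discussion following Corollary~\ref{HRPic} (the semi-abelian variety $\cA^0_{\tilde C/k}$ is precisely the Cartier dual of Deligne's $\Pic^+(\tilde C) = [\Div_F^0(\bar C)\to \Pic^0(\bar C)]$), and identifying the second factor with $\Div^0_{\bar S/S}(\bar C, F)$ via the combinatorial description of its kernel, we see that $\LA{1}(C)$ is an extension of the lattice $L$ by the semi-abelian variety $\Pic^0(\bar C, F)$ in ${}_t\M[1/p]$.

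The main obstacle is to identify the gluing homomorphism of this extension with the canonical map $u$ of \cite[Def.~2.2.1]{BSAP}. Once this is done the theorem follows, because extensions of $[L\to 0]$ by $[0\to G]$ in ${}_t\M[1/p]$ are classified by homomorphisms $L\to G$ (represented by the $1$-motive $[L\to G]$ itself), so the extension class determines the $1$-motive. To pin down $u$, the cleanest route is to run the parallel argument for $\RPic$ using the contravariant blow-up triangle, producing $\RA{1}(C)$ as an extension of $[\Div_F^0(\bar C)\to \Pic^0(\bar C)]$ by the torus with character group $\Div^0_{\bar S/S}(\bar C,F)$; then Cartier duality (Proposition~\ref{pcd} and Corollary~\ref{rpdla}) transports this back to $\LA{1}(C)$. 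Alternatively, one can appeal to the comparison $\LA{1}(C)\simeq \Alb^-(C)$ for $C$ proper (which is the subject of Section~\ref{12}), reducing the open case to the localization sequence relating $\LA{1}(C)$ to $\LA{1}(\bar C)$ via a boundary coming from $\Div_F^0(\bar C)$, and matching it with Barbieri-Viale--Srinivas's explicit construction in \cite{BSAP}.
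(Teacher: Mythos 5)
Your overall structure agrees with the paper's: both proceed by blow-up induction on the normalisation square $(\tilde C, \tilde S, S, C)$, use $\LA{i}(S)=\LA{i}(\tilde S)=0$ for $i\ge 1$ (zero-dimensional, smooth) to collapse the long exact sequence, read off $\LA{0}$ and $\LA{i}$ for $i\ge 2$, and finally identify $\LA{1}(C)$ with a class in $\Ext_{{}_t\M}\bigl([\Lambda\to 0],\,[0\to\cA^0_{\tilde C/k}]\bigr)\cong\Hom_k(\Lambda,\cA^0_{\tilde C/k})$. The identifications $\cA^0_{\tilde C/k}\simeq \Pic^0(\bar C,F)$ and $\Lambda\simeq\Div^0_{\bar S/S}(\bar C,F)$ are also the same.

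Where you diverge from the paper is exactly the step you flag as ``the main obstacle'': pinning down the extension class with the canonical map $u$ of \cite[Def.~2.2.1]{BSAP}. The paper resolves this directly and cheaply: it observes that $\varphi_{\tilde C}:\tilde C\to \Pic(\bar C,F)$, $P\mapsto(\cO_{\bar C}(P),1)$, realises $\cA_{\tilde C/k}=\Pic(\bar C,F)$; it notes that under the identification $\Z[\pi_0(\tilde S)]=\Div_{\tilde S}(\tilde C)$ the two components of the map to $\Z[\pi_0(\tilde C)]\oplus\Z[\pi_0(S)]$ are the degree map and proper push-forward, so $\Lambda=\Div^0_{\bar S/S}(\bar C,F)$; and then the connecting homomorphism is just $\varphi_{\tilde C}$ restricted to $\Lambda$, which is the canonical lifting $D\mapsto(\cO_{\bar C}(D),1)$ (landing in $\Pic^0$ because $D$ has degree $0$ on each component and support disjoint from $F$), cf.\ \cite[Lemma 3.1.3]{BSAP}. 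In other words, the connecting map is forced by the universal property of the Albanese scheme and the explicit description of $\varphi_{\tilde C}$; no detour is needed.

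By contrast, neither of your two alternative routes closes the gap. Route (a) — computing $\RA{1}(C)$ from the dual blow-up triangle and transporting by Cartier duality — only relocates the problem: you still have to identify the gluing homomorphism of the extension presenting $\RA{1}(C)$, and that is no easier. Route (b) — invoking $\LA{1}(C)\simeq\Alb^-(C)$ for proper $C$ and then ``localizing'' — relies on Corollary~\ref{c12.3}, which is proved only later (Section~\ref{12}) by an entirely different method (representability of $\Pic_{X/k}$ for proper $X$); moreover, there is no straightforward localization sequence comparing $\LAlb(C)$ to $\LAlb(\bar C)$ when $C$ is singular and non-proper, since $\bar C$ is a compactification of $\tilde C$, not of $C$, so the open case is not reduced cleanly. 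The direct argument via $\varphi_{\tilde C}$ is both simpler and self-contained.
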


\begin{proof} We use the long exact sequence \eqref{exres}
\[\dots\to\LA{i}(\tilde S)\to \LA{i}(\tilde C)\oplus \LA{i}(S)\to \LA{i}(C)\to\LA{i-1}(\tilde
S)\to\dots\]

Since $S$ and $\tilde S$ are $0$-dimensional we have $\LA{i}(\tilde S)=
\LA{i}(S)=0$ for $i>0$, therefore
\[\LA{i}(C)= \LA{i}(\tilde C) \text{ for } i\ge 2\]
and by~\ref{HLAlb} we get the claimed vanishing and
description of $\LA{2}(C)$. For $i=0$ see Corollary~\ref{c3.1}.
If $i=1$ then $\LA{1}(C)$ is here represented as an element of $\Ext ([\Lambda \to 0] ,
\LA{1}(\tilde C))$ where $\Lambda \df \ker (\Z[\pi_0(\tilde S)]\allowbreak\to
\Z[\pi_0(\tilde C)]\oplus \Z[\pi_0(S)]$.
Recall, see~\ref{HLAlb}, that $\LA{1}(\tilde C)=
[0\to \cA_{\tilde C/k}^0]$ thus $\Ext (\Lambda , \LA{1}(\tilde
C))= \Hom_k (\Lambda, \cA_{\tilde C/k}^0)$ and
$$\LA{1}(C) =[\Lambda \by{u} \cA_{\tilde C/k}^0].$$
Now $\Lambda = \Div_{\bar S/S}^0(\bar C, F)$,
$\cA_{\tilde C/k}^0 =\Pic^0(\bar C, F)$ and the map $u$ is induced
by the following canonical map.

Consider $\varphi_{\tilde C} : \tilde C \to \Pic (\bar C, F)$
where $\varphi_{\tilde C}(P)\df (\cO_{\bar C}(P), 1)$ yielding
$\cA_{\tilde C/k}= \Pic (\bar C, F)$ and such that
$$0\to \Div_F^0(\bar C)^*\to \Pic^0(\bar C, F)\to \Pic^0(\bar C)\to 0.$$
Thus $\LA{1}(\tilde C)= [0\to \Pic^0(\bar C, F)]$. Note that
$\Z[\pi_{0}(\tilde S)]=\Div_{\tilde S}(\tilde C) = \Div_{\bar
S}(\bar C, F)$, the map $\Z[\pi_{0}(\tilde S)]\to
\Z[\pi_{0}(\tilde C)]$ is the degree map and the following map
$\Z[\pi_{0}(\tilde S)]\to \Z[\pi_{0}(S)]$ is the proper
push-forward of Weil divisors, \ie $\Lambda = \Div_{\bar
S/S}^0(\bar C, F)$. The map $\varphi_{\tilde C}$ then induces the
mapping $u\in \Hom_k (\Lambda, \Pic^0(\bar C, F))$ which also is
the canonical lifting of the universal map $D\mapsto\cO_{\bar C}(D)$ as the
support of $D$ is disjoint from $F$ (\cf \cite[Lemma 3.1.3]{BSAP}).
\end{proof}

\begin{remark} We remark that $\LA{1}(C)$ coincides with the homological
Albanese 1-motive $\Alb^-(C)$ ($=\Pic^-(C)$ for curves, see
\cite{BSAP}). The $\LA{i}(C)$ also coincide with Lichtenbaum-Deligne motivic
homology $h_i (C)$ of the curve $C$, \cf \cite{LI}.
\end{remark}

Note that $\RA{1}(C)= \LA{1}(C)^*$ is Deligne's motivic cohomology
$H^1_m(C)(1)$ of the singular curve $C$ by \cite[Prop.
3.1.2]{BSAP}. Hence:

\begin{cor}\label{rasc}
 Let $C$ be a curve, $C'$ its seminormalisation, $\bar C'$  a compactification of $C'$, and $F = \bar C' - C'$. Let further $\tilde C$ denote the normalisation of $C$. Then
$$\RA{i}(C) =
\begin{cases}
\relax [0\to \G_m[\pi_0(C)]]& \text{if $i = 0$}\\
\relax [\Div_{F}^0(\bar C')\to\Pic^0(\bar C')] & \text{if $i= 1$}\\
\relax [\NS (\tilde C)\to 0] & \text{if $i= 2$}\\
0 & \text{otherwise}
\end{cases}$$
where $\NS (\tilde C)= \Z [\pi_0^c(\tilde C)]$ and $\pi_0^c(\tilde
C)$ is the scheme of proper constants.
\end{cor}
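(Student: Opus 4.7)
\medskip
\noindent\textbf{Proof plan.} The plan is to deduce Corollary \ref{rasc} from Theorem \ref{lasc} by an application of Cartier duality, combined with the topological invariance of $\RPic$. First I would invoke Corollary \ref{rpdla}, which gives the identity $\RA{i}(C) = \LA{i}(C)^*$. Next I would use Lemma \ref{l12.3}: since the seminormalisation map $C' \to C$ is a universal topological homeomorphism, it induces an isomorphism $\RPic(C) \iso \RPic(C')$ (and similarly for $\LAlb$). Thus, without loss of generality, I may assume $C = C'$ is seminormal, so that the data $\bar C'$, $F$ in the statement coincide with the data $\bar C$, $F$ of Theorem \ref{lasc} applied to $C$.

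The cases $i = 0$ and $i \geq 3$ are immediate: Cartier duality sends $[\Z[\pi_0(C)] \to 0]$ to $[0 \to \G_m[\pi_0(C)]]$, and the vanishing $\LA{i}(C) = 0$ for $i \geq 3$ (Theorem \ref{lasc}) dualises to $\RA{i}(C) = 0$. The case $i = 2$ is also straightforward: the Cartier dual of $[0 \to \NS_{\tilde C/k}^*]$, a group of multiplicative type in degree $1$, is the lattice $[\NS(\tilde C) \to 0]$, where $\NS(\tilde C) = \Z[\pi_0^c(\tilde C)]$ is the free abelian group on the proper irreducible components of $\tilde C$.

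The main content lies in the case $i = 1$. By Theorem \ref{lasc} we have $\LA{1}(C') = [\Div_{\bar S/S}^0(\bar C', F) \xrightarrow{u} \Pic^0(\bar C', F)]$, where $\Pic^0(\bar C', F)$ is an extension of the Jacobian $J(\tilde C) = \Pic^0(\bar C)$ of the normalisation of $\bar C'$ by the torus $T = \Div_F^0(\bar C)^*$. I would then compute the Cartier dual weight-by-weight: the toric part $T$ of the semi-abelian variety dualises to the lattice $T^* = \Div_F^0(\bar C')$ sitting in degree $0$; the abelian part $J(\tilde C)$ dualises to itself by autoduality of Jacobians; and the lattice $\Div_{\bar S/S}^0(\bar C', F)$ in degree $0$ dualises to a group of multiplicative type contributing to the semi-abelian part. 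The resulting extension of $J(\tilde C)$ by this group of multiplicative type is precisely $\Pic^0(\bar C')$, the Picard variety of the seminormal compactification $\bar C'$ (when $\bar C' = C'$ is already proper so $F = \emptyset$, this is just Prop.~3.1.2 of \cite{BSAP}; in general one combines this with the Weil--Barsotti type computation for the torus part via Lemma \ref{brst}).

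The hard part is the careful bookkeeping in the case $i = 1$: identifying the semi-abelian variety obtained by Cartier duality with $\Pic^0(\bar C')$. This is essentially the content of \cite[Prop. 3.1.2]{BSAP}, which identifies $\LA{1}(C)^*$ with Deligne's motivic cohomology $H^1_m(C)(1)$; I would quote that reference (or reprove the needed part using Lemma \ref{brst} and the Weil--Barsotti formula) to complete the identification.
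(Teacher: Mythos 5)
Your approach is the same as the paper's: combine the Cartier‑duality identity $\RA{i}(C)=\LA{i}(C)^*$ (from Corollary~\ref{rpdla}) with the term‑by‑term dualisation of Theorem~\ref{lasc}, handing off the $i=1$ identification to~\cite[Prop.~3.1.2]{BSAP}. The cases $i=0$, $i=2$, $i\ge 3$ are handled correctly.

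However, the reduction step contains a false assertion that should be corrected. You claim that after replacing $C$ by its seminormalisation (which Lemma~\ref{l12.3} does legitimately permit), ``the data $\bar C'$, $F$ in the statement coincide with the data $\bar C$, $F$ of Theorem~\ref{lasc}.'' They do not: in Theorem~\ref{lasc}, $\bar C$ is a \emph{smooth} compactification of the normalisation $\tilde C$ and $F=\bar C-\tilde C$ is a subset of $\bar C$; whereas in Corollary~\ref{rasc}, $\bar C'$ is a compactification of the seminormalisation $C'$ (so $\bar C'$ is singular whenever $C'$ is) and $F=\bar C'-C'$ lives on $\bar C'$. Even when $C=C'$, $\bar C$ and $\bar C'$ remain distinct unless $C$ is already smooth, and the respective boundary sets sit in different curves. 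In particular, Theorem~\ref{lasc} gives $\LA{1}(C)=[\Div^0_{\bar S/S}(\bar C,F)\to\Pic^0(\bar C,F)]$ with $\bar C$, not $\bar C'$. The passage from this normalisation‑based datum, via Cartier duality, to the seminormalisation‑based datum $[\Div^0_F(\bar C')\to\Pic^0(\bar C')]$ is precisely the nontrivial content of \cite[Prop.~3.1.2]{BSAP}, not a consequence of any identification of the two compactifications. Your final paragraph does in effect delegate to that reference, so the argument closes; but the ``coincide'' step as written would, if taken at face value, short‑circuit the very comparison that BSAP establishes.
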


\subsection{Borel-Moore variants}

\begin{thm} \label{bmlac}
Let $C$ be a smooth curve, $\bar C$ a smooth compactification of
$C$ and $F =\bar C - C$ the finite set of closed points at
infinity. Then
$$\LA{i}^c(C) =
\begin{cases}
\relax [\Z[\pi_0^c(C)]\to 0]& \text{if $i = 0$}\\
\relax [\Div_F^0(\bar C)\to\Pic^0(\bar C)] & \text{if $i= 1$}\\
\relax [0\to\NS_{\bar C/k}^*] & \text{if $i= 2$}\\
0 & \text{otherwise}
\end{cases}$$
where $\NS (\bar C)=\Z [\pi_0 (\bar C)]$ and $\pi_0^c(C)$ is the
scheme of proper constants.
\end{thm}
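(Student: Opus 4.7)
The plan is to apply the localisation triangle \eqref{loc} to the closed embedding $F \hookrightarrow \bar C$, which reads
\[
\LAlb^c(F) \to \LAlb^c(\bar C) \to \LAlb^c(C) \overset{+1}{\to}
\]
and extract the resulting long exact sequence of $1$-motives. Since $\bar C$ is smooth and proper we have $\LAlb^c(\bar C) = \LAlb(\bar C)$, which by Corollary~\ref{HLAlb} is concentrated in degrees $0,1,2$ with $\LA{0}(\bar C) = [\Z[\pi_0(\bar C)] \to 0]$, $\LA{1}(\bar C) = [0 \to \Pic^0(\bar C)]$ (using that $\cA_{\bar C/k}^0 = \Pic^0(\bar C)$ for a smooth proper curve), and $\LA{2}(\bar C) = [0 \to \NS_{\bar C/k}^*]$. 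Since $F$ is a $0$-dimensional proper scheme we similarly get $\LA{0}^c(F) = [\Z[F] \to 0]$ and $\LA{i}^c(F) = 0$ for $i \ne 0$.

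Feeding this into the long exact sequence immediately yields $\LA{i}^c(C) = 0$ for $i \ge 3$, an isomorphism $\LA{2}^c(\bar C) \iso \LA{2}^c(C)$ (giving the claim in degree $2$), and a four-term exact sequence
\[
0 \to [0 \to \Pic^0(\bar C)] \to \LA{1}^c(C) \to [\Z[F] \to 0] \overset{\delta}{\to} [\Z[\pi_0(\bar C)] \to 0] \to \LA{0}^c(C) \to 0.
\]
The map $\delta$ is induced by the inclusion $F \hookrightarrow \bar C$, sending each point of $F$ to its connected component. Its kernel is by definition $\Div_F^0(\bar C)$, while its cokernel is $\Z[\pi_0^c(C)]$: a component of $\bar C$ appears iff it meets no point of $F$, i.e.\ iff the corresponding component of $C = \bar C \setminus F$ is already proper. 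This yields the formula for $\LA{0}^c(C)$ and reduces the computation of $\LA{1}^c(C)$ to the identification of the extension class
\[
0 \to [0 \to \Pic^0(\bar C)] \to \LA{1}^c(C) \to [\Div_F^0(\bar C) \to 0] \to 0,
\]
an element of $\Ext^1([\Div_F^0(\bar C)\to 0],[0\to \Pic^0(\bar C)]) = \Hom(\Div_F^0(\bar C),\Pic^0(\bar C))$.

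The main obstacle will be to identify this connecting homomorphism with the canonical map $D \mapsto \cO_{\bar C}(D)$. The argument will parallel the last paragraph of the proof of Theorem~\ref{lasc}: the boundary map in the localisation triangle for $\LAlb^c$ is forced by the universal property of the generalised Albanese (equivalently, by Cartier duality and the universal property of $\Pic^0$) to coincide with the pullback along $\bar C \to \Pic(\bar C,F)$, $P\mapsto (\cO_{\bar C}(P),1)$, restricted to degree-zero divisors supported on $F$; these factor through $\Pic^0(\bar C)$ since their support is disjoint from $F$. This identifies the extension and completes the computation.
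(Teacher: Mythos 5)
Your proof is correct and follows essentially the same route as the paper: apply the localisation triangle $\LAlb(F)\to\LAlb(\bar C)\to\LAlb^c(C)\to{+1}$, plug in Corollary~\ref{HLAlb} for the smooth proper schemes $F$ and $\bar C$, and read off the long exact sequence of $1$-motives with cotorsion. The only addition is that you make explicit the identification of the extension class in $\Ext^1([\Div_F^0(\bar C)\to 0],[0\to\Pic^0(\bar C)])\simeq\Hom(\Div_F^0(\bar C),\Pic^0(\bar C))$ with the canonical map $D\mapsto\cO_{\bar C}(D)$, a point the paper's terse proof leaves implicit; that is a welcome amplification, not a deviation.
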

\begin{proof} It follows from the distinguished triangle
\[\begin{CD}
\LAlb (F)  @>>> \LAlb (\bar C)\\
{\scriptstyle +1}\nwarrow &&\swarrow\\
&\LAlb^c(C)&
\end{CD}\]
and Corollary~\ref{HLAlb}, yielding the claimed description:
$\LA{0}^c(C)= \coker (\LA{0}(F) \to\LA{0}(\bar C))$ moreover we
have
$$[\Div_F^0(\bar C)\to 0]= \ker (\LA{0}(F) \to\LA{0}(\bar C))$$ and the
following extension
$$0\to \LA{1}(\bar C)\to \LA{1}^c(C)\to
[\Div_F^0(\bar C)\to0]\to 0.$$ Finally,  $\LA{i}(\bar C)=
\LA{i}^c(C)$ for $i\geq 2$.
\end{proof}

\begin{cor} Let $C$ be a smooth curve, $\bar C$ a smooth compactification of
$C$ and $F =\bar C - C$ the finite set of closed points at
infinity. Then
$$\RA{i}^c(C) =
\begin{cases}
\relax [0\to\G_m[\pi_0^c(C)]]& \text{if $i = 0$}\\
\relax [0\to\Pic^0(\bar C, F)] & \text{if $i= 1$}\\
\relax [\NS (\bar C)\to 0] & \text{if $i= 2$}\\
0 & \text{otherwise}
\end{cases}$$
where $\NS (\bar C)=\Z [\pi_0 (\bar C)]$ and $\pi_0^c(C)$ is the
scheme of proper constants.
\end{cor}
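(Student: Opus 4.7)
The plan is to derive this corollary from Theorem \ref{bmlac} purely by Cartier duality, exactly as Corollary \ref{rasc} was deduced from Theorem \ref{lasc}. By definition $\RPic^c(C)=\RPic(M^c(C))$, and Corollary~\ref{rpdla} gives $\RPic(M)^* = \LAlb(M)$ in $D^b(\M[1/p])$, whence
\[
\RA{i}^c(C) \;=\; {}^tH^i(\RPic^c(C)) \;\simeq\; {}_tH_i(\LAlb^c(C))^* \;=\; \LA{i}^c(C)^*.
\]
So the task reduces to computing the Cartier dual of each of the four $1$-motives listed in Theorem \ref{bmlac}.

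For $i=0$ and $i=2$ the computation is immediate from the extension of Cartier duality to $1$-motives with (co)torsion (Lemma~\ref{dualt} and Proposition~\ref{pcd}): the dual of the pure lattice $[\Z[\pi_0^c(C)]\to 0]$ is the pure torus $[0\to \G_m[\pi_0^c(C)]]$, and dually the Cartier dual of the group of multiplicative type $[0\to \NS_{\bar C/k}^*]$ is the lattice $[\NS(\bar C)\to 0]$, since $\NS(\bar C)=\Z[\pi_0(\bar C)]$ is finitely generated free.

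The one non-trivial case is $i=1$, where $\LA{1}^c(C)=[\Div_F^0(\bar C)\by{u}\Pic^0(\bar C)]$. Here I would apply the explicit construction of the Cartier dual recalled in the proof of Lemma~\ref{dualt}: for a Deligne $1$-motive $[L\by{u}A]$ with $A$ an abelian variety, the dual is $[0\to G^u]$ with $G^u$ the extension of $A^*$ by $L^*=\Hom(L,\G_m)$ classified by $u$ under the Weil--Barsotti isomorphism $\Ext(A,\G_m)\simeq A^*(k)$ applied to each character (Lemma~\ref{brst}). Taking $A=\Pic^0(\bar C)$, which is self-dual via the autoduality of the Jacobian, and $L=\Div_F^0(\bar C)$, whose Cartier dual is the torus $T_{\bar C/C,k}$ (with character group $\Div_F^0(\bar C)$) appearing in the defining extension
\[
0\to T_{\bar C/C,k}\to \Pic^0(\bar C,F)\to \Pic^0(\bar C)\to 0,
\]
the universal property of this extension identifies it with the dual 1-motive, hence $[\Div_F^0(\bar C)\to\Pic^0(\bar C)]^* = [0\to\Pic^0(\bar C,F)]$.

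The ``main obstacle'' is really no obstacle at all: it is simply the bookkeeping in step $i=1$, matching Deligne's convention for Cartier duality with the explicit presentation of $\Pic^0(\bar C,F)$ as a semi-abelian variety, which is classical and already used implicitly in \cite{BSAP} and in Corollary~\ref{HRPic}; assembling the pieces yields the displayed formula for $\RA{i}^c(C)$.
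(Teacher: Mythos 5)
Your proposal is correct and matches the route the paper implicitly takes: the corollary is stated without proof immediately after Theorem~\ref{bmlac} and is clearly meant to follow by Cartier duality (via Corollary~\ref{rpdla}), exactly as Corollary~\ref{rasc} follows from Theorem~\ref{lasc}. Your case-by-case dualization, including the classical identification of $[\Div_F^0(\bar C)\to\Pic^0(\bar C)]^*$ with $[0\to\Pic^0(\bar C,F)]$ via the explicit construction in Lemma~\ref{dualt} and the autoduality of the Jacobian, is exactly the intended reasoning.
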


Here we have that $\RA{1}^c(C)=\RA{1}^*(C)$ is also the Albanese
variety of the smooth curve.

Note that $\LA{1}^c(C)$ (= $\Pic^+ (C) = \Alb^+(C)$ for curves,
see \cite{BSAP}) coincide with Deligne's motivic $H^1_m(C)(1)$ of
the smooth curve $C$. This is due to the Poincar\'e duality
isomorphism $M^c (C)= M (C)^*(1)[2]$.

\section{Comparison with $\Pic^+,\Pic^-,\Alb^+$ and $\Alb^-$}\label{12}

In this section, we want to study $\LA{1}(X)$ and its variants in more detail. In particular,
we show in Proposition \ref{p11.3a} c) that it is always a Deligne $1$-motive, and show in
Corollaries \ref{c12.2.2} and \ref{c12.3} that, if $X$ is normal or proper, it is canonically isomorphic to the
$1$-motive $\Alb^-(X)$ of \cite{BSAP}. Precise descriptions of $\LA{1}(X)$ are given in
Proposition \ref{pl1} and Corollary \ref{c12.2.1}.  

We also describe $\LA{1}^c(X)$ in Proposition \ref{c3.1d}; more precisely, we prove in Theorem
\ref{t12.9} that its dual $\RA{1}^c(X)$ is canonically isomorphic to $\Pic^0(\bar X,Z)/\cU$, where $\bar
X$ is a compactification of $X$ with complement $Z$ and $\cU$ is the unipotent radical of the
commutative algebraic group $\Pic^0(\bar X,Z)$. 
Finally, we prove in Theorem \ref{*=-}
that $\LA{1}^*(X)$ is abstractly isomorphic to the $1$-motive $\Alb^+(X)$ of \cite{BSAP}.

We start with some comparison results between $\eh$ and \'etale cohomology for non smooth
schemes. 

Let $\epsilon:Sch_\eh\to Sch_\et$ be the obvious morphism of sites. If
$\cF$ is an \'etale sheaf on $Sch$, we denote by $\cF_\eh$ its $\eh$ sheafification (that is,
$\cF_\eh=\epsilon_*\epsilon^*\cF$). We shall abbreviate $H^*_\eh(X,\cF_\eh)$ to
$H^*_\eh(X,\cF)$. 

\subsection{Torsion sheaves} The first basic result is a variant of \cite[Cor.
7.8 and Th. 10.7]{suvo}: it follows from Proposition \ref{peh} and Examples
\ref{ex11.1} via Lemma \ref{lbl} b).

\begin{propose}\label{p11.2} Let $C$ be a bounded below complex of torsion
sheaves on $(\Spec k)_\et$. Then, for any $X\in Sch$ and any $n\in\Z$,
$H^n_\et(X,C)\iso H^n_\eh(X,C)$.\qed
\end{propose}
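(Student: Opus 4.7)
The plan is to apply Lemma \ref{lbl}(b) to the natural transformation
\[
\phi^n_X : H^n_\et(X,C) \to H^n_\eh(X,C)
\]
induced by the morphism of sites $\epsilon: Sch_\eh \to Sch_\et$, viewing both members as functors $Sch(k)^{op} \to Ab^{(\N)}$. Once the two hypotheses of that lemma are verified (blow-up long exact sequences on both sides, and an isomorphism on smooth $X$), the conclusion is immediate.

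For the abstract blow-up long exact sequence: on the \'etale side this is precisely Example \ref{ex11.1}(2) (relying on \cite[Prop. 2.1]{pw}), and on the $\eh$ side it is immediate from the very definition of the $\eh$ topology. Indeed, if $p:\tilde X\to X$ and $i:Z\hookrightarrow X$ form an abstract blow-up, then $(\tilde X,Z)\to X$ is an $\eh$-covering, the resulting augmented \v Cech simplicial scheme is an $\eh$-hypercovering, and cohomological descent applied to any bounded below complex of $\eh$-sheaves produces the desired Mayer--Vietoris long exact sequence (compare Proposition \ref{pdescent}).

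It remains to show that $\phi^n_X$ is an isomorphism whenever $X$ is smooth. Pulling back $C$ along $Sm(k)\to \Spec k$ produces a bounded below complex of locally constant torsion \'etale sheaves on $Sm(k)$; each term is representable by a finite \'etale group scheme, hence carries canonical transfers (Yoneda), and its cohomology sheaves are strictly homotopy invariant by Proposition \ref{pD.1.4}. Thus $C$ defines in a canonical way an object of $\DM_{-,\et}^\eff$, and the second statement of Proposition \ref{peh} yields $H^n_\et(X,C)\iso H^n_\eh(X,C)$ for all smooth $X$ and all $n$. Lemma \ref{lbl}(b) then extends this to arbitrary $X\in Sch(k)$.

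The only non-formal point is the identification, for smooth $X$, of the ``big-site'' cohomology $H^n_\et(X,C)$ computed via the forgetful functor $\EST\to \Shv_\et(Sm(k))$ with the intrinsic \'etale cohomology used in Proposition \ref{peh}; this agreement however is by construction, since transfers are simply extra structure on the underlying \'etale sheaf and cohomology is computed in $\Shv_\et$.
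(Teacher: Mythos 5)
Your proposal follows the same route as the paper: reduce to the smooth case by blow-up induction (Lemma \ref{lbl} b)), use Proposition \ref{peh} for smooth $X$, and invoke Example \ref{ex11.1}(2) for the \'etale side of the blow-up long exact sequence. The paper's own proof is exactly this, with Example \ref{ex11.1}(1) handling the $\eh$ side.

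Where you depart from the paper is in the justification of the $\eh$-side blow-up long exact sequence, and that step is not correct as written. The \v Cech nerve of the $\eh$-cover $\tilde X\sqcup Z\to X$ has, already in simplicial degree one, the component $\tilde X\times_X\tilde X$ in addition to $\tilde Z$, $\tilde Z$ and $Z$, so the descent spectral sequence for this hypercovering does not telescope into the two-term Mayer--Vietoris complex $\cF(\tilde X)\oplus\cF(Z)\to\cF(\tilde Z)$; Proposition \ref{pdescent} likewise only yields the full descent spectral sequence, not the MV triangle. The fact that abstract blow-up squares give rise to Mayer--Vietoris exact sequences in $\eh$ cohomology is a genuine (if standard) theorem about the cd-structure defining the $\eh$ topology; the paper sidesteps this by going through the motivic interpretation: Example \ref{ex11.1}(1) invokes the distinguished triangle for abstract blow-ups in $\DM$ ([Th.\ 4.1.10]\cite{V}) together with Proposition \ref{peh}, which turns $\eh$ cohomology into $\Hom$-groups in $\DM_{-,\et}^\eff$. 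Your reduction of $C$ to an object of $\DM_{-,\et}^\eff$ (transfers, strict homotopy invariance) is exactly what makes that citation legitimate, so the cleanest fix is simply to replace your \v Cech argument by a citation of Example \ref{ex11.1}(1), or by an explicit appeal to the cd-structure formalism.
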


(See \cite[Th. 3.6]{geisser} for a different proof.)

\subsection{Discrete sheaves}

\begin{lemma}\label{l11.2} a) If $\cF$ is discrete, then $\cF\iso \cF_\eh$. More precisely, for
any $X\in Sch$, $\cF(\pi_0(X))\iso \cF(X)\iso \cF_\eh(X)$.\\
b) If $f:Y\to X$ is surjective with geometrically connected fibres, then $\cF_\eh\iso f_*
f^*\cF_\eh$.
\end{lemma}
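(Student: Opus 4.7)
The plan is to prove a) by showing that the sheaf $\cF$, viewed on $Sch_\et$, already satisfies $\eh$-descent, and to prove b) by reducing to the invariance of $\pi_0$ under morphisms with geometrically connected fibres. The first isomorphism $\cF(\pi_0(X))\iso \cF(X)$ is immediate: a discrete sheaf $\cF$ corresponds by Definition \ref{d1.1.1} to a continuous $\mathrm{Gal}(\bar k/k)$-module, equivalently to a pro-\'etale $k$-scheme, and every morphism $X\to \cF$ factors uniquely through the largest \'etale quotient $\pi_0(X)$ (Definition \ref{dpi0}).

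For the second isomorphism $\cF(X)\iso \cF_\eh(X)$, I would verify that $\cF$ itself is already a sheaf for the $\eh$-topology. The \'etale sheaf condition being known, only the sheaf condition for an abstract blow-up $(\tilde X, Z)\to X$, $\tilde Z = \tilde X\times_X Z$ needs verification, namely
\[\cF(X) = \cF(\tilde X)\times_{\cF(\tilde Z)} \cF(Z).\]
Using $\cF(-) = \cF(\pi_0(-))$, this is equivalent to the statement that $\pi_0$ sends the abstract blow-up datum to a pushout in the category of (pro-)\'etale $k$-schemes: any morphism $X\to E$ into an \'etale $k$-scheme $E$ is uniquely determined by its restrictions to $\tilde X$ and $Z$ subject to agreement on $\tilde Z$. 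This follows from the topological fact that $\tilde X\coprod Z\onto X$ is surjective and an isomorphism over $X\setminus Z$, so any $\bar k$-connected path in $X$ breaks into pieces lying in $\tilde X$ and $Z$ with transitions only across $\tilde Z$.

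For part b), I restrict to the discrete case carried over from a), which is what the sequel needs (the statement would fail for a non-discrete sheaf such as $\G_a$, since $\G_a(\Aff^1) = k[t] \neq k = \G_a(\Spec k)$). By a), $\cF_\eh = \cF$, and the morphism $\cF \to f_*f^*\cF$ evaluated at $U\to X$ reads $\cF(U)\to \cF(Y\times_X U)$. Since surjectivity and geometric connectedness of fibres are preserved under base change, it suffices to verify $\cF(X)\iso \cF(Y)$; invoking a) once more this rewrites as $\cF(\pi_0(X))\iso \cF(\pi_0(Y))$, so the whole assertion reduces to $\pi_0(Y)\iso \pi_0(X)$. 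This last fact is standard: a morphism with geometrically connected fibres induces a Galois-equivariant bijection on connected components after base change to $\bar k$. The main obstacle I foresee is the scheme-theoretic verification of the pushout property of $\pi_0$ in part a); the topological picture is transparent, but one must check that the natural comparison map of (pro-)\'etale $k$-schemes is an isomorphism, using crucially that $\tilde X\to X$ is proper and a bijection over $X\setminus Z$ to transport the Galois action.
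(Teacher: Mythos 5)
Your approach is correct in outline but takes a genuinely different route from the paper's. You propose to prove the stronger statement that the presheaf $X \mapsto \cF(\pi_0(X))$ is already an $\eh$-sheaf, by verifying the sheaf axiom for abstract blow-up squares and reducing it to the assertion that $\pi_0$ takes such a square to a pushout of finite \'etale $k$-schemes. The paper instead runs a blow-up induction: it records the comparison diagram
\[\begin{CD}
0@>>> \cF_\eh(X)@>>> \cF_\eh(\tilde X)\oplus \cF_\eh(Z)@>>> \cF_\eh(\tilde Z)\\
&&@AAA @AAA @AAA\\
0@>>> \cF(\pi_0(X))@>>> \cF(\pi_0(\tilde X))\oplus \cF(\pi_0(Z))@>>> \cF(\pi_0(\tilde Z))
\end{CD}\]
and concludes ``as in Proposition \ref{p11.2},'' i.e.\ by induction on dimension with the smooth base case supplied by Proposition \ref{peh}. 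Note that both arguments ultimately hinge on the very fact you isolate: exactness of the lower row at the middle term is precisely the pushout property of $\pi_0$ for abstract blow-ups, which the paper implicitly absorbs into the phrase ``commutative diagram of exact sequences.'' Your route buys a cleaner conceptual statement ($\cF=\cF_\eh$ as $\eh$-sheaves, not merely at the level of sections); the paper's route stays within its uniform blow-up-induction machinery and never needs to invoke cd-structure generalities. Part b) is handled identically in both.

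Two points in your write-up need to be tightened before the argument is complete. First, the reduction of ``\'etale sheaf plus pullback condition for abstract blow-up squares $\Rightarrow$ $\eh$-sheaf'' is Voevodsky's theorem on complete regular bounded cd-structures, not a formal observation; it deserves an explicit citation. Second, the ``topological fact'' you invoke should be argued without appealing to ``$\bar k$-connected paths,'' which schemes do not literally possess. The correct chain of reasoning is: $p:\tilde X\sqcup Z\to X$ is proper and surjective between Noetherian schemes, hence a topological quotient map; its kernel-pair equivalence relation is generated by the two maps $\tilde Z\rightrightarrows\tilde X\sqcup Z$ because $p$ is an isomorphism over $X\setminus Z$ while $p^{-1}(Z)=\tilde Z\sqcup Z$; and $\pi_0$, being a left adjoint, preserves this coequalizer. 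Galois descent from $\bar k$ to $k$ then proceeds as you indicate. With these two supplements your proof is sound.
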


\begin{proof} a) We may assume $X$ reduced. Clearly it suffices to prove that $\cF(\pi_0(X))\iso
\cF_\eh(X)$ for any $X\in Sch$.  In the situation of \S \ref{blowups}, we have a commutative
diagram of exact sequences
\[\begin{CD}
0@>>> \cF_\eh(X)@>>> \cF_\eh(\tilde X)\oplus \cF_\eh(Z)@>>> \cF_\eh(\tilde Z)\\
&&@AAA @AAA @AAA\\
0@>>> \cF(\pi_0(X))@>>> \cF(\pi_0(\tilde X))\oplus \cF(\pi_0(Z))@>>> \cF(\pi_0(\tilde Z)).
\end{CD}\]

The proof then goes exactly as the one of Proposition \ref{p11.2}. b) follows from a).
\end{proof}

It is well-known that $H^1_\et(X,\cF)=0$ for any geometrically unibranch scheme $X\in Sch$ if
$\cF$ is constant and torsion-free (\cf \cite[IX, Prop. 3.6 (ii)]{sga4}). The following lemma
shows that this is also true for the $\eh$ topology, at least if $X$ is normal.

\begin{lemma}[compare \protect{\cite[Ex. 12.31 and 12.32]{VL}}]\label{l11.3} Let $\cF$ be a constant torsion-free sheaf on $Sch(k)$.\\
a) For any $X\in Sch$, $H^1_\eh(X,\cF)$ is torsion-free. It is finitely generated if $\cF$ is a
lattice.\\ 
b) Let $f:\tilde X\to X$ be a surjective morphism. Then $H^1_\eh(X,\cF)\to
H^1_\eh(\tilde X,\cF)$ is injective in the following cases:
\begin{thlist}
\item The geometric fibres of $f$ are connected.
\item $f$ is finite and flat.
\end{thlist} 
c) If $X$ is normal, $H^1_\eh(X,\cF)=0$.
\end{lemma}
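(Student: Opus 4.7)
The plan is to prove the three parts in the logical order (a), (b), (c), since part (c) invokes part (b)(i), and part (b)(ii) invokes part (a). I will repeatedly reduce non-smooth statements to the smooth case, where $H^1_\eh(X,\cF) = H^1_\et(X,\cF)$ by Proposition \ref{peh}, and to the well-known vanishing $H^1_\et(X,\cF)=\Hom_{cont}(\pi_1^\et(X),\cF)=0$ for $X$ smooth and $\cF$ constant torsion-free (any continuous morphism from a profinite group to a discrete torsion-free group is trivial).

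For part (a), torsion-freeness follows from the short exact sequence $0 \to \cF \stackrel{n}{\to} \cF \to \cF/n \to 0$: the connecting map $H^0_\eh(X,\cF/n) \to H^1_\eh(X,\cF)$ vanishes because, by Lemma \ref{l11.2} a), $H^0_\eh(X,\cF) = \cF(\pi_0(X))$ and $H^0_\eh(X,\cF/n) = (\cF/n)(\pi_0(X))$, and the natural map between these is surjective since $\cF$ is constant and $\pi_0(X)$ is a finite disjoint union of spectra of finite separable extensions of $k$. For finite generation when $\cF$ is a lattice, I would apply the blow-up induction of Lemma \ref{lbl} a) with target category the category of finitely generated abelian groups (a thick subcategory of abelian groups): the hypothesis for smooth $X$ is the stronger vanishing $H^1_\eh(X,\cF)=0$ recalled above, and the exact sequence arising from abstract blow-ups then transports finite generation from the smooth, lower-dimensional, and fewer-component cases.

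For part (b)(i), Lemma \ref{l11.2} b) gives $\cF_\eh \iso f_*f^*\cF_\eh$, so the Leray spectral sequence for $f$ (computed in the $\eh$ topology) yields the five-term exact sequence
\[
0 \to H^1_\eh(X,\cF) \to H^1_\eh(\tilde X,\cF) \to H^0_\eh(X,R^1f_*\cF) \to \cdots
\]
whose first arrow is the required injection. For part (b)(ii), the hypothesis that $f$ is finite flat produces, locally on the target, a trace map $f_*\cF \to \cF$ whose composition with the unit $\cF \to f_*f^*\cF$ equals multiplication by $d := \deg f$. Applying $H^1_\eh(X,-)$, the pullback $f^*$ is therefore split up to the nonzero integer $d$, and since $H^1_\eh(X,\cF)$ is torsion-free by part (a), injectivity of $f^*$ follows.

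Finally, for part (c), since $\car k=0$ I would pick a resolution of singularities $f:\tilde X \to X$ with $\tilde X$ smooth. As $X$ is normal and $f$ is proper birational, Zariski's main theorem gives $f_*\cO_{\tilde X}=\cO_X$, so the Stein factorisation is trivial and the geometric fibres of $f$ are connected. Part (b)(i) then gives an injection $H^1_\eh(X,\cF) \hookrightarrow H^1_\eh(\tilde X,\cF)$, and the right-hand side vanishes as recalled in the first paragraph, concluding the proof. The most delicate step is part (a)'s finite generation, because blow-up induction only terminates cleanly once one has, in parallel, both the smooth vanishing and a well-behaved reduction to lower-dimensional (and, at fixed dimension, to fewer-component) schemes; here Lemma \ref{lbl} packages exactly what is needed.
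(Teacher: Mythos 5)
Your proof is correct and follows essentially the same route as the paper: part (a) via multiplication by $n$ and Lemma~\ref{l11.2} plus blow-up induction against the smooth vanishing from Proposition~\ref{peh}; part (b) via Leray (using Lemma~\ref{l11.2}~b)) and the trace map combined with (a); and part (c) from (b)(i), Zariski's main theorem, and the smooth vanishing. One small imprecision in (b)(ii): the trace splits $H^1_\eh(X,\cF)\to H^1_\eh(X,f_*\cF)$ up to $d$, not $f^*$ itself, so you should still invoke the Leray injection $H^1_\eh(X,f_*\cF)\hookrightarrow H^1_\eh(\tilde X,\cF)$ to conclude (as in (b)(i)) — which is exactly how the paper phrases it.
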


\begin{proof} a) The first assertion follows immediately from Lemma \ref{l11.2} (consider the
exact sequence of multiplication by $n$ on $\cF$). The second assertion follows by blow-up induction from the fact that $H^1_\eh(X,\cF)=0$ if $X$ is
smooth, by Proposition \ref{peh}. 

b) In the first case, the Leray spectral sequence yields an injection
\[H^1_\eh(X,f_*\cF)\into H^1_\eh(\tilde X,\cF)\]
and $f_*\cF=\cF$ by Lemma \ref{l11.2} b). In the second case, the theory of trace \cite[XVII, Th.
6.2.3]{sga4} provides $\cF$, hence $\cF_\eh$, with a morphism
$Tr_f:f_*\cF\to \cF$ whose composition with the natural morphism is (on each connected component
of $X$) multiplication by some nonzero integer. This shows that the kernel of $H^1_\eh(X,\cF)\to
H^1_\eh(X,f_*\cF)$ is torsion, hence $0$ by a).

c) follows from b) with $\tilde X$ a desingularisation of $X$: by Proposition \ref{p11.2},
$H^1_\et(\tilde X,\cF)\iso H^1_\eh(\tilde X,\cF)$ and it is well-known that the first group is
$0$; on the other hand, the fibres of $f$ are geometrically connected by Zariski's main theorem.
\end{proof}

The following is a version of \cite[Lemma 5.6]{weibel}:

\begin{lemma} \label{lweibel} Let $f:\tilde X\to X$ be a finite birational morphism, $i:Z\into
X$ a closed subset and $\tilde Z=p^{-1}(Z)$. Then, for any discrete sheaf, we have a long 
exact sequence:
\begin{multline*}
\dots\to H^i_\et(X,\cF)\to H^i_\et(\tilde X,\cF)\oplus H^i_\et(Z,\cF)\\
\to H^i_\et(\tilde Z,\cF)\to H^{i+1}_\et(X,\cF)\to \dots
\end{multline*}
\end{lemma}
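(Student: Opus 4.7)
The plan is to establish a Mayer--Vietoris distinguished triangle in $D^+(X_\et)$
$$\cF \to Rf_* f^* \cF \oplus Ri_* i^* \cF \to R(fi')_* (fi')^* \cF \xrightarrow{+1}$$
(where $i' : \tilde Z \hookrightarrow \tilde X$ is the closed immersion) with differentials coming from the adjunction units and their differences; the desired long exact sequence is then obtained by applying $R\Gamma(X,-)$.

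Because $f$ is finite, $Rf_* = f_*$ on the \'etale site, and closed immersions $i,i'$ similarly have no higher direct images. Thus the triangle is equivalent to the exactness of the sheaf sequence
$$0 \to \cF \to f_* f^* \cF \oplus i_* i^* \cF \to (fi')_* (fi')^* \cF \to 0.$$
I would check this on geometric stalks at $\bar x \to X$, using that $(f_* \cG)_{\bar x} = \prod_{\bar y \mapsto \bar x} \cG_{\bar y}$ for any \'etale sheaf $\cG$ on $\tilde X$ when $f$ is finite. Write $A = \cF_{\bar x}$. If $\bar x \notin Z$, then the two terms involving $i$ or $i'$ have zero stalk; since $f$ is an isomorphism over $X\setminus Z$ (the standing assumption implicit in ``finite birational'' together with the understanding that $Z$ contains the conductor of $f$), the fiber $f^{-1}(\bar x)$ is a single geometric point, so the sequence reduces to $0\to A\xrightarrow{\sim} A\to 0\to 0$. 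If $\bar x \in Z$ with $f^{-1}(\bar x) = \{\bar y_1,\dots,\bar y_r\}$, the sequence becomes
$$0 \to A \xrightarrow{(\Delta,\,1)} A^r \oplus A \xrightarrow{(1,\,-\Delta)} A^r \to 0,$$
which is exact by a direct inspection (the second coordinate of the first map is the identity, and the first coordinate of the second map is the identity).

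Finally, taking $R\Gamma(X,-)$ of the distinguished triangle and using Leray together with the vanishing of higher direct images gives
$R\Gamma(X, Rf_* f^* \cF) \simeq R\Gamma(\tilde X,\cF)$, $R\Gamma(X, Ri_* i^* \cF) \simeq R\Gamma(Z,\cF)$, and $R\Gamma(X, R(fi')_* (fi')^* \cF) \simeq R\Gamma(\tilde Z,\cF)$, yielding the asserted long exact sequence. The main delicacy is the stalk analysis at points of $Z$: one must identify the differentials correctly as the diagonal and its negative, and for that it is crucial that $\cF$ be discrete so that pullback to a strictly henselian local scheme produces a constant sheaf with the same stalk $A$.
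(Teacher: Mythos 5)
Your argument is essentially the same as the paper's: both reduce, via exactness of pushforward along finite maps and closed immersions, to exactness of the sheaf sequence $0\to\cF\to f_*f^*\cF\oplus i_*i^*\cF\to (fi')_*(fi')^*\cF\to 0$, and then verify it \'etale-locally over a strictly henselian base. Your stalk analysis is a bit more careful than the paper's (which asserts all four schemes are strictly local, hence connected, whereas the fibre of $\tilde X$ over $\bar x\in Z$ can have $r>1$ components when $X$ is not unibranch there --- your explicit check of $0\to A\to A^r\oplus A\to A^r\to 0$ is the correct version), and you are right to flag the hypothesis that $f$ be an isomorphism over $X\setminus Z$: it is implicit in the statement but essential (otherwise the case $\bar x\notin Z$ gives $0\to A\to A^r\to 0\to 0$, which fails for $r>1$), and automatic in the paper's only application where $\tilde X$ is the normalization and $Z$ the non-normal locus.
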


\begin{proof} Let $g:\tilde Z\to Z$ be the induced map. Then $f_*,i_*$ and $g_*$ are exact for
the \'etale topology. Thus it suffices to show that the sequence of sheaves
\[0\to \cF\to f_*f^*\cF\oplus i_*i^*\cF\to (ig)_*(ig)^*\cF\to 0\]
is exact. The assertion is local for the \'etale topology, hence we may assume that $X$ is
strictly local. Then $Z,\tilde X$ and $\tilde Z$ are strictly local as well, hence connected,
thus the statement is obvious.
\end{proof}

We can now prove:

\begin{propose}\label{l11.2.6} For any $X\in Sch(k)$ and any discrete sheaf $\cF$, the map
$H^1_\et(X,\cF)\to H^1_\eh(X,\cF)$ is an isomorphism.
\end{propose}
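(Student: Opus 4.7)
The strategy is to reduce to the smooth case (Proposition \ref{peh}) via blow-up induction, after first handling the torsion part of the coefficients separately using Proposition \ref{p11.2}.

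First, any discrete sheaf $\cF$ fits in a short exact sequence $0\to \cF_{\tor}\to \cF\to L\to 0$ with $\cF_{\tor}$ finite (hence torsion) and $L$ a lattice. Proposition \ref{p11.2} gives the isomorphism $H^i_\et(X,\cF_{\tor})\iso H^i_\eh(X,\cF_{\tor})$ for all $i$, and Lemma \ref{l11.2} gives the isomorphism on $H^0$ for any discrete sheaf. A five-lemma argument on the two long exact cohomology sequences thus reduces the statement to the case $\cF=L$.

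Next, I would apply Lemma \ref{lbl} b) to the natural transformation $\phi^i_X: H^i_\et(X,L)\to H^i_\eh(X,L)$ in degrees $i\le 1$. The base case for $X$ smooth is Proposition \ref{peh}. For the inductive step, both functors must satisfy the long exact sequence associated to an abstract blow-up; for the \'eh topology this is automatic. For the \'etale side, Lemma \ref{lweibel} provides exactly this sequence when the blow-up is a finite birational morphism (in particular a normalization). Thus, given an abstract blow-up $p:\tilde X\to X$ with center $Z$, one factors $p$ as the normalization $X^n\to X$ followed by a proper birational morphism of normal schemes, and for the second factor invokes resolution of singularities (in characteristic zero) to further decompose into blow-ups along smooth centers, for which the Gysin triangle supplies the required \'etale Mayer-Vietoris with arbitrary coefficients.

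The main obstacle is establishing the \'etale blow-up Mayer-Vietoris for lattice coefficients under those proper birational morphisms that are not finite, since proper base change applies only to torsion coefficients. A cleaner alternative, which avoids any factorization argument, is to work directly with the Leray spectral sequence for the morphism of sites $\epsilon:X_\eh\to X_\et$. Since Lemma \ref{l11.2} identifies $\epsilon_*L_\eh$ with $L$ as an \'etale sheaf, the low-degree edge sequence reads
\[0\to H^1_\et(X,L)\to H^1_\eh(X,L)\to H^0_\et(X,R^1\epsilon_*L_\eh)\to H^2_\et(X,L),\]
so injectivity is automatic and one is reduced to showing $R^1\epsilon_*L_\eh=0$. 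This is a stalk-wise computation on strict henselizations $A=\mathcal{O}^{sh}_{X,\bar x}$, for which Lemma \ref{l11.3} c) (combined with a transfer argument along a finite Galois cover trivialising the monodromy on $L$) gives the desired vanishing.
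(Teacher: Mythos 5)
The opening reduction (split $\cF$ into $\cF_{\tor}$ and a lattice $L$, dispose of the torsion part via Proposition \ref{p11.2} and of $H^0$ via Lemma \ref{l11.2}) is fine and matches what the paper uses implicitly. Both subsequent routes you propose, however, have problems.

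Your first route misreads what the induction requires. You try to feed the natural transformation into Lemma \ref{lbl}~b), realise that the \'etale side lacks the abstract blow-up sequence for lattice coefficients, and then propose to factor a general abstract blow-up as normalisation followed by a chain of smooth-centre blow-ups. None of that factoring is needed. The paper does not invoke Lemma \ref{lbl}: it argues by a direct induction on $\dim X$, using as the \emph{single} blow-up square the one given by the normalisation $f:\tilde X\to X$ and its non-normal locus $Z$. Because $f$ is finite birational, Lemma \ref{lweibel} already supplies the Mayer--Vietoris sequence on the \'etale side for \emph{discrete} coefficients, and the $\eh$ side always has it by Proposition \ref{peh}. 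In the five-lemma comparison one then needs the statement for $Z$ and $\tilde Z$ (dimension $<\dim X$, so inductive hypothesis) and for $\tilde X$ (normal, so $H^1$ vanishes on both sides for the lattice $L$ by Lemma \ref{l11.3}~c) and the \'etale analogue [SGA4, IX 3.6], after a trace/Galois reduction to the constant case as in Lemma \ref{l11.3}~b)(ii)). That is the whole proof; it never touches resolutions of singularities or non-finite blow-ups.

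Your second route has a genuine gap. After invoking Lemma \ref{l11.2} for $\epsilon_*L_\eh\simeq L$ and the low-degree Leray sequence (this part is correct and gives injectivity), you assert that $R^1\epsilon_*L_\eh=0$ because Lemma \ref{l11.3}~c) applies stalkwise on strict henselizations. But the stalk of $R^1\epsilon_*L_\eh$ at a geometric point $\bar x$ of $X$ is $\varinjlim_U H^1_\eh(U,L)$ over the \'etale neighborhoods $U$ of $\bar x$, and if $X$ is not normal at $\bar x$ then no such $U$ (nor $\mathcal{O}^{sh}_{X,\bar x}$ in the limit) is normal, since normality is an \'etale-local property. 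Lemma \ref{l11.3}~c) therefore does not apply, and the asserted vanishing is exactly what needs to be proved; establishing it would in effect re-run the same dimension induction on the normalisation square, so the Leray reformulation is not an actual shortcut. As stated, this step is a hole in the proof.
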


\begin{proof} Let $f:\tilde X\to X$ be the normalisation of $X$, and take for $Z$ the
non-normal locus of $X$ in Lemma \ref{lweibel}. The result now follows from comparing the exact
sequence of this lemma with the one for $\eh$ topology, and using Lemma \ref{l11.3} c).
\end{proof}

\begin{cor} The exact sequence of Lemma \ref{lweibel} holds up to $i=1$ for a general abstract
blow-up.\qed
\end{cor}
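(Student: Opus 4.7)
The strategy is to import the Mayer--Vietoris sequence that is available for abstract blow-ups in $\eh$-cohomology and then descend it to \'etale cohomology in low degrees, using Lemma~\ref{l11.2} and Proposition~\ref{l11.2.6}.

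More precisely, let $p : \tilde X \to X$ be an abstract blow-up with centre $Z$ and exceptional locus $\tilde Z = p^{-1}(Z)$, and let $\cF$ be a discrete \'etale sheaf. By the very definition of the $\eh$ topology (Definition~\ref{deh}), the family $\{\tilde X \to X,\; Z \to X\}$ is an $\eh$-cover of $X$ whose associated distinguished square yields, for any $\eh$-sheaf $\cG$ on $Sch(k)$, a Mayer--Vietoris long exact sequence
\[\cdots \to H^i_\eh(X,\cG) \to H^i_\eh(\tilde X,\cG) \oplus H^i_\eh(Z,\cG) \to H^i_\eh(\tilde Z,\cG) \to H^{i+1}_\eh(X,\cG) \to \cdots\]
I would justify this exactly as in the analogous cdh statement used for Example~\ref{ex11.1}(1) (see \cite[Prop.\ 3.2]{geisser} or the blow-up induction machinery already invoked before Lemma~\ref{lbl}); alternatively one invokes cohomological descent for the proper $\eh$-hypercover associated to the abstract blow-up, as is done in Proposition~\ref{pdescent} a).

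Apply this to $\cG = \cF_\eh$. For $i = 0$, Lemma~\ref{l11.2} a) identifies each $H^0_\eh(Y,\cF_\eh)$ with $\cF(\pi_0(Y)) = H^0_\et(Y,\cF)$, and for $i = 1$, Proposition~\ref{l11.2.6} identifies each $H^1_\eh(Y,\cF_\eh)$ with $H^1_\et(Y,\cF)$, for $Y \in \{X, \tilde X, Z, \tilde Z\}$. Splicing these identifications into the first six terms of the $\eh$ Mayer--Vietoris sequence produces the desired exact sequence
\[0 \to H^0_\et(X,\cF) \to H^0_\et(\tilde X,\cF) \oplus H^0_\et(Z,\cF) \to H^0_\et(\tilde Z,\cF) \to H^1_\et(X,\cF) \to H^1_\et(\tilde X,\cF) \oplus H^1_\et(Z,\cF) \to H^1_\et(\tilde Z,\cF),\]
which is the content of the corollary. (The sequence cannot be extended further without further information, since the comparison between \'etale and $\eh$ cohomology of discrete sheaves is not known in degrees $\ge 2$ in general.)

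The only step requiring care is the $\eh$-Mayer--Vietoris triangle for an abstract blow-up; everything else is a bookkeeping identification. That step is, however, a direct analogue of the cdh version already used tacitly in the paper (in the proof of Proposition~\ref{c3.1} and implicit in the blow-up induction scheme of Lemma~\ref{lbl}), so the main point is really to record that the same formalism carries over from the cdh to the $\eh$ topology, which it does because abstract blow-ups are $\eh$-covers by definition.
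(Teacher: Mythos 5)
Your proof is correct and is exactly the argument the authors leave implicit (the corollary carries no written proof, only \texttt{\textbackslash qed}): one applies Mayer--Vietoris in the $\eh$ topology to the abstract blow-up square, then uses Lemma~\ref{l11.2}~a) in degree~$0$ and Proposition~\ref{l11.2.6} in degree~$1$ to replace $\eh$ cohomology by \'etale cohomology through the term $H^1_\et(\tilde X,\cF)\oplus H^1_\et(Z,\cF)$. The functoriality of the comparison maps ensures the MV differentials match, and your observation that the sequence cannot be extended past $H^1_\et(\tilde Z,\cF)$ without a degree-$2$ comparison is precisely why the statement is limited to ``up to $i=1$''.
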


\subsection{Strictly reduced schemes} 

If $G$ is a commutative $k$-group scheme, the associated presheaf $\uG$ is an \'etale sheaf on
reduced $k$-schemes of finite type. However, $\uG(X)\to \uG_\eh(X)$ is not an isomorphism in
general if $X$ is not smooth. Nevertheless we have some nice results in Lemma \ref{leh} below.

\begin{defn}\label{dsred} A $k$-scheme of finite type $X$ is \emph{strictly reduced} (a
recursive definition) if it is reduced and
\begin{thlist}
\item If $X$ is irreducible: $X_{sing}$, considered with its reduced structure, is strictly reduced.
\item If $Z_1,\dots, Z_n$ are the irreducible components of $X$: all $Z_i$ are strictly
reduced and the scheme-theoretic intersection $Z_i\cap Z_j$ is reduced for any $i\ne j$.  
\end{thlist}
\end{defn}

\begin{examples} 1) If $\dim X=0$, $X$ is strictly reduced.\\
2) The union of a line and a tangent parabola is not strictly reduced.\\
3) If $X$ is normal and of dimension $\le 2$, it is strictly reduced.\\
4) M. Ojanguren provided the following example of a normal $3$-fold which is not strictly reduced: take the affine hypersurface with equation $uv = x^2(y^2-x)^2$.
\end{examples}

\begin{lemma}\label{l11.2.1} Let $G$ be an affine group scheme and $f:Y\to X$ a proper surjective map with geometrically connected fibres. Then $\uG(X)\allowbreak\iso \uG(Y)$, and $H^0_\tau(X,\uG)\iso H^0_\tau(Y,\uG)$ for any Grothendieck topology $\tau$ stronger than the Zariski topology.
\end{lemma}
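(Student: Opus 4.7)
The plan is to reduce both assertions to the single isomorphism $f^*\colon \Gamma(X,\mathcal{O}_X)\iso\Gamma(Y,\mathcal{O}_Y)$, and then deduce this from Stein factorization together with the strict-reducedness hypothesis implicit in the surrounding subsection. First I would use that $G$ is affine of finite type to close-embed it in some $\operatorname{GL}_n \subset \Aff^{n^2}$. For any scheme $Z$, $\Hom(Z, G)$ is then identified with the subset of $\Gamma(Z,\mathcal{O}_Z)^{n^2}$ cut out by the Zariski-local defining equations of $G$ together with the invertibility of the determinant; in particular the map $f^*\colon\uG(X)\to\uG(Y)$ will be an isomorphism once $f^*\colon\Gamma(X,\mathcal{O}_X)\to\Gamma(Y,\mathcal{O}_Y)$ is.

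Next I would apply the Stein factorization: write $f = f'\circ g$, with $X' := \Spec_X f_*\mathcal{O}_Y$, so that $g\colon Y \to X'$ satisfies $g_*\mathcal{O}_Y=\mathcal{O}_{X'}$ (whence $\Gamma(Y,\mathcal{O}_Y)=\Gamma(X',\mathcal{O}_{X'})$) and $f'\colon X'\to X$ is finite. The geometric connectedness of the fibres of $f$ forces those of $f'$ to have a unique (possibly thickened) point with residue field purely inseparable over $\kappa(x)$, so $f'$ is finite and radicial. Since we are in characteristic zero, the residue extensions must be trivial; combined with the strict-reducedness assumption on $X$ prevailing in this subsection, this should force $f'$ to be an isomorphism, giving $\Gamma(X,\mathcal{O}_X)\iso\Gamma(Y,\mathcal{O}_Y)$ as desired.

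For the second assertion I would observe that $\uG$, being represented by the separated $k$-scheme $G$, is a sheaf for every subcanonical topology — in particular for any $\tau$ between the Zariski and fpqc topologies — so $H^0_\tau(X,\uG)=\uG(X)$ and the statement reduces to the first. For stronger topologies such as the \'eh or cdh, where $f$ is itself a cover, the sheaf property of $\uG_\tau$ combined with geometric connectedness of the fibres handles the equalizer condition $\uG_\tau(Y)\rightrightarrows\uG_\tau(Y\times_X Y)$, since both projections become identified once one checks constancy along fibres.

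The main obstacle is verifying that the Stein factor $X'\to X$ is genuinely an isomorphism: finite radicial bijective morphisms onto a reduced base need not be isomorphisms (the normalisation of a cuspidal curve is a classical counterexample), so one really needs the full strength of some seminormality-type hypothesis. Careful bookkeeping of what strict-reducedness affords, and perhaps a preliminary reduction (via Lemma~\ref{l12.3}) to the case where $f$ is a universal homeomorphism onto a seminormal base, will be required to conclude.
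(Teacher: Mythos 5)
Your Stein‐factorization route for the first statement is a sensible way to try to make precise what the paper dismisses as ``clear,'' and the obstacle you flag at the end is a genuine one: with $G=\G_a$ the first statement literally fails for the normalisation $f\colon \Aff^1\to X$ of the affine cuspidal cubic $X=\Spec k[t^2,t^3]$ (finite, surjective, geometrically connected fibres, yet $\G_a(X)=k[t^2,t^3]\subsetneq k[t]=\G_a(Y)$). The paper's one‐line proof does not address this; the lemma is only ever invoked (in Step~4 of Theorem~\ref{tnormal}) with $X$ normal and $f$ a resolution, where the Stein factor is trivial by normality, so the gap does not bite in practice. Note, however, that your appeal to ``the strict-reducedness assumption on $X$ prevailing in this subsection'' is a misreading: there is no such standing hypothesis, and strict reducedness (Definition~\ref{dsred}) is a notion used in Lemma~\ref{leh}~b), not in Lemma~\ref{l11.2.1}. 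What one actually needs to make the Stein factor an isomorphism is seminormality (or normality) of $X$; reducedness alone, strict or otherwise, is insufficient.

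For the second statement your proposal diverges from the paper and is weaker. You split into two cases, but the key cases here (cdh, \'eh) are precisely the non-subcanonical ones, and the claim that ``$f$ is itself a cover'' there is not correct for an arbitrary proper surjective $f$: those topologies are generated by Nisnevich/\'etale covers and abstract blow-up squares, and a general $f$ as in the hypothesis need not factor as such. The paper's argument is cleaner and uniform: the hypothesis on $f$ is stable under base change, so the first statement applied to $Y\times_X U\to U$ for every $U$ in a $\tau$-cover of $X$ identifies the \v Cech complexes of the cover and its pullback to $Y$ term by term; this handles every topology $\tau$ refining Zariski at once, with no case distinction. You should replace your equalizer sketch by this base-change argument.
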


\begin{proof} The first statement is clear, and the second folllows because the hypothesis on $f$ is stable under any base change.
\end{proof}

\begin{lemma}\label{leh} a) If $X$ is reduced, then the map 
\begin{equation}\label{eq12.2}
\uG(X)\to \uG_\eh(X)
\end{equation}
 is injective for any semi-abelian $k$-scheme $G$.\\
b) If $X$ is strictly reduced, \eqref{eq12.2} is an isomorphism.\\
c) If $X$ is proper and $G$ is a torus, the maps $\uG(\pi_0(X))\to\uG_\et(X)\to \uG_\eh(X)$ are isomorphisms. If moreover $X$ is reduced, \eqref{eq12.2} is an isomorphism.
\end{lemma}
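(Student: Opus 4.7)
For part (a), I would argue as follows. Let $s\in\uG(X)$ be a section whose image in $\uG_\eh(X)$ vanishes. Then there exists an $\eh$-covering $\{U_i\to X\}$ on which $s$ restricts to zero; by taking $Y=\coprod U_i$, we get a single surjective morphism $f:Y\to X$ (which we may assume has $Y$ reduced, since $X$ is) with $f^*s=0$. Since $G$ is separated over $k$, the equaliser $E\hookrightarrow X$ of $s$ and the zero section is a closed subscheme of $X$ whose underlying topological space contains $f(|Y|)=|X|$; because $X$ is reduced, this forces $E=X$, whence $s=0$.

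For part (b), the plan is to run a double induction on the dimension of $X$ and on the number of irreducible components, exploiting the structure imposed by Definition \ref{dsred}. The zero-dimensional case is obvious since $X$ is then étale over $k$, so $\uG_\et(X)=\uG_\eh(X)=\uG(X)$. In the inductive step, when $X$ has several irreducible components $Z_1,\dots,Z_n$, I would take $\tilde X=\coprod Z_i$ and $Z=\coprod_{i<j}(Z_i\cap Z_j)$ with its reduced structure; by strict reducedness, $Z$ and $\tilde Z=\tilde X\times_X Z$ are strictly reduced and of smaller complexity. When $X$ is irreducible, I would replace $\tilde X$ by a resolution of singularities (available in characteristic zero) and $Z$ by a suitable strictly reduced closed subscheme containing $X_{\rm sing}$. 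In both cases one obtains a blow-up square to which one applies the induction hypothesis on $\tilde X$, $Z$, $\tilde Z$, while noting that $\tilde X$ can be chosen smooth (or at least such that the isomorphism is already known) so that Proposition \ref{peh} handles it. The conclusion at $X$ then follows by comparing the $\uG$-Mayer-Vietoris sequence
\[0\to\uG(X)\to\uG(\tilde X)\oplus\uG(Z)\to\uG(\tilde Z)\]
with its $\eh$-analogue, the latter being exact by the blow-up axiom of the $\eh$ topology.

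The main obstacle will be establishing the displayed $\uG$-Mayer-Vietoris exactness in the irreducible case, which amounts to saying that the blow-up square is a pushout in a sufficiently strong sense that morphisms to the semi-abelian scheme $G$ glue. For the multi-component step this follows essentially from the scheme-theoretic fact that $X$ is the union of its reduced components glued along their reduced intersections, which is precisely what strict reducedness guarantees. For the irreducible step one is forced to invoke a Ferrand-style pushout result or to argue directly using the universal property of $G$ together with the separatedness input already used in (a); this is the delicate point where the strict reducedness hypothesis is genuinely needed (and where a merely reduced or normal $X$ would fail, in line with Ojanguren's example).

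For part (c), the argument is softer. Since $X$ is proper and $G$ is an affine (torus) $k$-group scheme, every morphism $X\to G$ is constant on connected components, which gives $\uG(X)=\uG(\pi_0(X))$; because $G$ is representable, $\uG$ is already an \'etale sheaf on $Sch(k)$ and hence $\uG_\et(X)=\uG(X)=\uG(\pi_0(X))$. For the comparison $\uG_\et(X)\iso\uG_\eh(X)$, I would take an $\eh$-section represented on a cover $Y\to X$ that refines to a composition of an étale map with a proper surjective one; Lemma \ref{l11.2.1} applies to the proper surjective part since $G$ is affine, and the étale descent is trivial since $\uG$ is an \'etale sheaf, giving descent to $X$. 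The last assertion of (c) combines this surjectivity with the injectivity proved in (a) applied under the additional reducedness hypothesis.
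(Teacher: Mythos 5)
Your part (a) is correct but takes a genuinely different and more direct route than the paper. The paper decomposes $X$ into irreducible components $Z_i$, picks resolutions of singularities $\tilde Z_i\to Z_i$, and reduces to the smooth case via Proposition \ref{peh}, using injectivity of the bottom row in a comparison square. You argue in one step: the kernel of $\uG(X)\to\uG_\eh(X)$ consists of sections vanishing on some $\eh$-cover, $\eh$-covers are topologically surjective, the equaliser of $s$ and the zero section is closed because $G$ is separated, and a closed subscheme of a reduced scheme with full support is the whole scheme. This avoids resolution of singularities entirely and isolates reducedness as the only hypothesis used.

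For part (b) your strategy coincides with the paper's (double induction over dimension and number of components, blow-up squares, a Mayer--Vietoris comparison), and you correctly pinpoint the delicate step: exactness at the middle of $0\to\uG(X)\to\uG(\tilde X)\oplus\uG(Z)\to\uG(\tilde Z)$. But you leave it as a sketch, and the sketch has a problem. Ferrand-style pushout theorems apply to \emph{finite} morphisms $\tilde X\to X$, which a resolution of singularities of an irreducible $X$ of dimension $\ge 2$ never is, so the irreducible step cannot be closed by that reference; one has to argue directly (the paper asserts this exactness without elaboration, so you are flagging a point the paper treats tersely, but flagging is not filling). The place where strict reducedness is doing work is the multi-component step: it is precisely the reducedness of the pairwise scheme-theoretic intersections $Z_i\cap Z_j$ that makes the closed-cover sequence exact, and your passing remark that this ``follows essentially from the scheme-theoretic fact'' should be turned into a proof. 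As written, (b) is a correct plan with a hole exactly where the substance lies.

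For part (c), the opening observation (properness plus affineness of $G$ forces factorization through $\pi_0(X)$) is right, but the descent mechanism you propose is not available: an arbitrary $\eh$-cover does not refine into a single proper surjective morphism followed by an \'etale one, and Lemma \ref{l11.2.1} requires geometrically connected fibres, which you have not arranged. The paper instead proves (c) by the same blow-up induction as in Lemma \ref{l11.2} a) for discrete sheaves, reducing to the smooth proper case via the abstract-blow-up exact sequences; this is cleaner and does not require any structure theorem for $\eh$-covers. You should switch to that route.
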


\begin{proof} a) Let $Z_i$ be the irreducible components of $X$, and for each $i$ let
$p_i:\tilde Z_i\to Z_i$ be a resolution of singularities. We have a commutative diagram
\[\begin{CD}
\uG_\eh(X)@>>> \bigoplus \uG_\eh(Z_i)@>(p_i^*)>>\bigoplus \uG_\eh(\tilde Z_i)\\
@AAA @AAA @AAA\\
\uG(X)@>>> \bigoplus \uG(Z_i)@>(p_i^*)>>\bigoplus \uG(\tilde Z_i).
\end{CD}\]

The bottom horizontal maps are injective; the right vertical map is an isomorphism by
Proposition \ref{peh}. The claim follows.

b) We argue by induction on $d=\dim X$. If $d=0$ this is trivial. If $d>0$, we first assume $X$
irreducible. Let
$Z$ be its singular locus, and choose a desingularisation $p:\tilde X\to X$ with $p$
proper surjective, $\tilde X$ smooth, $\tilde Z = p^{-1}(Z)$ a divisor with
normal crossings (in particular reduced) and $p_{|\tilde X-\tilde Z}$ an isomorphism. We now
have a commutative diagram
\[\begin{CD}
0@>>> \uG_\eh(X)@>>> \uG_\eh(Z)\oplus \uG_\eh(\tilde X)@>>>\bigoplus \uG_\eh(\tilde Z)\\
&& @AAA @AAA @AAA\\
0@>>> \uG(X)@>>> \uG(Z)\oplus \uG(\tilde X)@>>>\bigoplus \uG(\tilde Z)
\end{CD}\]
where the lower sequence is exact, the middle vertical map is bijective by induction on $d$ and
the smooth case (Proposition \ref{peh}) and the right vertical map is injective by a). It
follows that the left vertical map is surjective. 

In general, write $Z_1,\dots,Z_n$ for the irreducible components of $X$: by assumption, the
two-fold intersections
$Z_{ij}$ are reduced. The commutative diagram
\[\begin{CD}
0@>>> \uG_\eh(X)@>>> \bigoplus \uG_\eh(Z_i)@>>>\bigoplus \uG_\eh(Z_{ij})\\
&& @AAA @AAA @AAA\\
0@>>> \uG(X)@>>> \bigoplus \uG(Z_i)@>>>\bigoplus \uG(Z_{ij})
\end{CD}\]
then has the same formal properties as the previous one, and we conclude.

For c), same proof as for Lemma \ref{l11.2} a). (The second statement of c) is true because $\uG(\pi_0(X))\iso \uG(X)$ if $X$ is proper and reduced.)
\end{proof}

\begin{comment}
\begin{remarks} 1) We shall see in Corollary \ref{c12.4} that $\uG(X)\iso \uG_\eh(X)$ for any
semi-abelian $G$ also when $X$ is normal and proper.

2) It is easy to deduce from Lemma \ref{leh} that, if $X$ is normal and strictly reduced and
if $\cF\in \Shv_1$, then $\cF(X)\iso \cF_\eh(X)$. We leave this to the interested reader.
\end{remarks}
\end{comment}

\subsection{Normal schemes} The main result of this subsection is:

\begin{thm}\label{tnormal} Let $X$ be normal. Then, for any $\cF\in \Shv_1$, the map $\cF(X)\to \cF_\eh(X)$ is bijective and the map $H^1_\et(X,\cF)\to H^1_\eh(X,\cF_\eh)$ is injective (with torsion-free cokernel by Proposition \ref{p11.2}).
\end{thm}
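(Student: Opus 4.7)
The plan is to reduce, via the normalised presentation
\[0 \to L \to \uG \to \cF \to E \to 0\]
of $\cF \in \Shv_1$, to the two separate cases $\cF = L$ discrete (with $L \in {}^t\cM_0$) and $\cF = \uG$ with $G$ semi-abelian. I would split the above into $0 \to L \to \uG \to \cF' \to 0$ and $0 \to \cF' \to \cF \to E \to 0$, and compare the associated long exact cohomology sequences in the \'etale and \eh topologies via the five lemma. Since the discrete cases are already known (Lemma \ref{l11.2} a) for sections and Proposition \ref{l11.2.6} for $H^1$), the entire reduction succeeds provided the analogous statements are established for $\cF = \uG$ with $G$ semi-abelian: then one first obtains the theorem for $\cF'$ (handling $0 \to L \to \uG \to \cF' \to 0$), and next for $\cF$ (handling $0 \to \cF' \to \cF \to E \to 0$).

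For the sections statement $\uG(X) \iso \uG_\eh(X)$, injectivity is already Lemma \ref{leh} a). For surjectivity, I would use a resolution of singularities $p: \tilde X \to X$ in characteristic $0$: $p$ is proper birational, and since $X$ is normal its fibres are geometrically connected by Zariski's main theorem. Because $\tilde X$ is smooth (hence strictly reduced), Lemma \ref{leh} b) yields $\uG(\tilde X) = \uG_\eh(\tilde X)$. By a smooth hyperenvelope built from $p$ and Proposition \ref{pdescent} b), a section in $\uG_\eh(X)$ amounts to a morphism $\tilde s: \tilde X \to G$ satisfying descent along $\tilde X \times_X \tilde X$. Decomposing $G$ via $0 \to T \to G \to A \to 0$, descent of $\tilde s$ to a morphism $X \to G$ splits in two: for the abelian part $A$, the proper connected fibres of $p$ are contracted because abelian varieties contain no rational curves, and normality of $X$ lets the induced set-theoretic map be upgraded to a morphism; for the toric part $T$, I would apply Lemma \ref{l11.2.1} directly to the affine group $T$.

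For the injectivity of $H^1_\et(X,\uG) \to H^1_\eh(X,\uG)$, an element in the kernel corresponds to an \'etale $\uG$-torsor $\cT$ admitting an \eh-section. Pulling back along $p$, on the smooth $\tilde X$ one has $H^1_\et(\tilde X, \uG) \iso H^1_\eh(\tilde X,\uG)$ (Proposition \ref{peh} together with the agreement of \'et and \eh torsor data from Lemma \ref{leh} b)); hence $\cT|_{\tilde X}$ is \'etale-trivial, and a choice of \'etale trivialisation over $\tilde X$ descends to an \'etale trivialisation of $\cT$ on $X$ by the same argument used in Step~2 applied to a torsor rather than to a section (after an \'etale refinement on which $\cT$ is trivialised). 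This shows $\cT = 0$ in $H^1_\et(X, \uG)$.

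The main obstacle will be the descent step for $H^1$: while sections descend cleanly via the universal property of the Albanese scheme $\cA_{X/k}$ (which exists for the normal $X$ by Proposition \ref{albexists} and \cite[Sect.~1]{ram}), descending a torsor along the resolution is more delicate. The toric part is the subtle case, since tori do contain rational curves and hence the naive ``contract the fibres of $p$'' argument used for the abelian part fails; one likely needs to combine blow-up Mayer--Vietoris exact sequences for $H^1$ in both topologies with a Noetherian induction on $\dim X$ applied to the lower-dimensional exceptional locus, using the thickness of $\Shv_1$ (Theorem \ref{text}) to keep the argument inside the category of $1$-motivic sheaves at every step.
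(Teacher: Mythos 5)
Your devissage into discrete and semi-abelian pieces, and the use of a smooth hyperenvelope with Proposition~\ref{pdescent}~b) for the sections statement, are essentially what the paper does (Steps~2--5). However, there are two genuine gaps.

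First, for the abelian-variety case of the sections isomorphism, your justification ``the proper connected fibres of $p$ are contracted because abelian varieties contain no rational curves'' does not work: the fibres of a resolution $p\colon\tilde X\to X$ of a normal $X$ need not be rationally connected (take $X$ the cone over an elliptic curve; the exceptional fibre of the minimal resolution is the elliptic curve itself). What actually forces $\tilde s\colon\tilde X\to G$ to be constant on fibres is the descent datum $\tilde s p_0=\tilde s p_1$ on $\tilde X\times_X\tilde X$, not the geometry of $G$. The paper isolates this as Lemma~\ref{ldescent}, whose key point is purely commutative-algebraic: normality gives $\cO_{X,x}=\bigcap\cO_{X_0',x'}$ over the valuation overrings, and the valuative criterion of properness lets each valuation ring be reached from $\tilde X$, so the set-theoretic factorisation promotes to a morphism of locally ringed spaces. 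No case distinction between the toric and abelian parts is needed.

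Second, and more seriously, your treatment of $H^1$ is a large detour that you rightly flag as unfinished. The paper's Step~1 makes the $H^1$-injectivity a \emph{formal consequence} of the sections bijectivity: the Leray spectral sequence for $\epsilon\colon Sch(k)_\eh\to Sch(k)_\et$ gives an injection $H^1_\et(X,\epsilon_*\cF_\eh)\hookrightarrow H^1_\eh(X,\cF_\eh)$, and any scheme \'etale over a normal $X$ is again normal \cite[Exp.~I, Cor.~9.10]{SGA1}, so the already-proved bijectivity $\cF(U)\iso\cF_\eh(U)$ for each such $U$ shows $\cF\iso\epsilon_*\cF_\eh$ on the small \'etale site of $X$. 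This disposes of $H^1$ in one stroke, without torsors, without Noetherian induction on the exceptional locus, and without having to separately worry about tori. Your proposed torsor-descent argument, by contrast, would have to confront precisely the problem you name (tori contain rational curves), and you offer no mechanism for resolving it; the Leray reduction sidesteps it entirely.
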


\begin{proof} In several steps:

{\it Step 1.} The first result implies the second for a given sheaf $\cF$: let $\epsilon :Sch(k)_\eh\to Sch(k)_\et$ be the projection morphism. The associated Leray spectral sequence gives an injection
\[H^1_\et(X,\epsilon_*\cF_\eh)\into H^1_\eh(X,\cF_\eh).\]

But any scheme \'etale over $X$ is normal \cite[Exp. I, Cor. 9.10]{SGA1}, therefore $\cF\to \epsilon_*\cF_\eh$ is an isomorphism over the small \'etale site.

{\it Step 2.} Let $0\to \cF'\to \cF\to \cF''\to 0$ be a short exact sequence in $\Shv_1$. If the theorem is true for $\cF'$ and $\cF''$, it is true for $\cF$. This follows readily from {\it Step 1} and a diagram chase.

{\it Step 3.} Given the structure of $1$-motivic sheaves, {\it Step 1 - 2} reduce us to prove that $\cF(X)\iso \cF_\eh(X)$ separately when $\cF$ is discrete, a torus or an abelian variety. The discrete case follows from Lemma \ref{l11.2} a).

{\it Step 4.} If $G$ is a torus, let $\pi:\tilde X\to X$ be a desingularisation of $X$. We have a commutative diagram
\[\begin{CD}
\uG_\eh(X)@>>>  \uG_\eh(\tilde X)\\
@AAA @AAA\\
\uG(X)@>>> \uG(\tilde X).
\end{CD}\]

Here the right vertical map is an isomorphism because $\tilde X$ is smooth and the two
horizontal maps are also  isomorphisms by Lemma \ref{l11.2.1} applied to $\pi$ (Zariski's main
theorem). The result follows.

{\it Step 5.} Let finally $G$ be an abelian variety. This time, it is not true in general that $\uG(X)\iso \uG(\tilde X)$ for a smooth desingularisation $\tilde X$ of $X$. However, we get the result from Proposition \ref{pdescent} b) and the following general lemma.
\end{proof}

\begin{lemma}\label{ldescent} Let $X\in Sch(k)$ be normal, $p:X_0\to X$ a proper surjective map such that the restriction of $p$ to a suitable connected component $X'_0$ of $X_0$ is birational. Let $X_1\begin{smallmatrix}p_0\\\rightrightarrows\\p_1\end{smallmatrix} X_0$ be two morphisms such that $pp_0 = pp_1$ and that the induced map $\Psi:X_1\to X_0\times_X X_0$ is proper surjective. Let $Y\in Sch(k)$ and let $f:X_0\to Y$ be such that $fp_0=fp_1$.
\[\xymatrix{
X_1
\ar@<-.7ex>[d]_{p_0} \ar@<.7ex>[d]^{p_1}\\
X_0
\ar[d]_p\ar[dr]^f\\
X&Y
}\]
Then there exists a unique morphism $\bar f:X\to Y$ such that $f=\bar f p$.
\end{lemma}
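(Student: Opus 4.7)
Uniqueness of $\bar f$ is immediate: since $Y$ is separated (everything in $Sch(k)$ is, by our conventions) and $X$ is reduced (being normal), the equalizer of two factorizations $X \to Y$ of $f$ is a closed reduced subscheme of $X$; it contains the underlying space of the image of $p$, which is all of $X$ by surjectivity of $p$, so it must equal $X$.

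For existence, I would build $\bar f$ in two stages. First, define $\bar f : |X| \to |Y|$ on underlying sets by $\bar f(x) := f(\tilde x)$ for any choice of preimage $\tilde x \in p^{-1}(x)$. This is well-defined: given two preimages $x_1, x_2 \in p^{-1}(x)$, surjectivity of $\Psi$ on points yields $y \in X_1$ with $p_0(y) = x_1$ and $p_1(y) = x_2$, so the hypothesis $f p_0 = f p_1$ gives $f(x_1) = f(x_2)$. Continuity of $\bar f$ holds because $p$ is closed: for an open $V \subseteq Y$, the complement of $\bar f^{-1}(V)$ in $X$ equals $p(X_0 \setminus f^{-1}(V))$, which is closed.

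Next, I would upgrade $\bar f$ to a morphism of schemes by descent along $p$. Consider the coherent $\cO_X$-algebra
\[
\cO_X'' := \ker\bigl(p_*\cO_{X_0} \rightrightarrows (p p_0)_*\cO_{X_1}\bigr)
\]
inside $p_*\cO_{X_0}$. The identity $f p_0 = f p_1$ forces the sheaf map $f^\sharp:\cO_Y \to f_*\cO_{X_0} = \bar f_* p_*\cO_{X_0}$ to factor through $\bar f_*\cO_X''$, so it suffices to prove $\cO_X \iso \cO_X''$. Over the dense open $U \subseteq X$ where $p|_{X'_0}$ is an isomorphism (which exists by Zariski's main theorem applied to the proper birational morphism $X'_0 \to X$, using normality of $X$), the surjectivity of $\Psi$ forces any section of $\cO_X''$ to be determined by its restriction to the $X'_0$-component, giving $\cO_X''|_U = \cO_U$. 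Reducing to the case of $X$ integral (the irreducible components of a normal scheme are disjoint), $\cO_X''$ is a finite $\cO_X$-algebra which is isomorphic to $\cO_X$ over the dense open $U$; it thus embeds into the function field $K(X)$, is integral over $\cO_X$, and equals $\cO_X$ by the very definition of normality.

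The main obstacle is the final step: rigorously verifying that $\cO_X''$ is torsion-free as an $\cO_X$-module so that it actually embeds into $K(X)$. This amounts to checking that the stalk of $\cO_X''$ at every generic point of $X$ equals $K(X)$; it will follow from using the $X'_0$-component and surjectivity of $\Psi$, possibly after replacing $X_0$ and $X_1$ by their reductions (which is harmless in the motivic applications). A subtlety in the non-irreducible setting is that the cleanest reading of the hypothesis on $X'_0$ is componentwise: for each integral normal component of $X$, a connected component of its preimage in $X_0$ maps birationally onto it; the argument then runs separately on each component.
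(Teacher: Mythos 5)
Your construction of $\bar f$ as a continuous map is the same as the paper's, but the way you produce the sheaf map is genuinely different, and that is where the trouble is. The paper argues purely locally: fixing $x \in X$ with image $y$, it identifies $\cO_{X,x}$ with $\bigcap_{x'\in p^{-1}(x)\cap X'_0}\cO_{X'_0,x'}$ inside the common function field $L$ — using that the normal local ring $\cO_{X,x}$ is the intersection of the valuation rings of $L$ dominating it, and the valuative criterion of properness to find, for each such valuation ring, a point $x'$ of $X'_0$ over $x$ with $\cO_{X'_0,x'}$ contained in it — and then $f^\sharp(\cO_{Y,y})\subseteq\cO_{X'_0,x'}$ for every $x'$ forces $f^\sharp(\cO_{Y,y})\subseteq\cO_{X,x}$; locality of $\bar f^\sharp$ is then inherited from that of $f^\sharp$. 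You instead introduce the global object $\cO_X''=\ker\bigl(p_*\cO_{X_0}\rightrightarrows(pp_0)_*\cO_{X_1}\bigr)$ and try to prove $\cO_X''=\cO_X$ by commutative algebra.

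The gap you flag at the end is real, and there is a second one slightly earlier. To get $\cO_X''|_U=\cO_U$ you say surjectivity of $\Psi$ ``forces any section of $\cO_X''$ to be determined by its restriction to the $X'_0$-component,'' but what surjectivity of $\Psi$ actually gives is agreement of \emph{residue-field values} of a section along fibres, not of section germs: if $\zeta\in X_1$ lies over $(x'_0,x_0)$ with $x'_0\in X'_0$, the hypothesis identifies the images of $s_{x'_0}$ and $s_{x_0}$ in $\cO_{X_1,\zeta}$, but $\cO_{X_0,x_0}\to\cO_{X_1,\zeta}$ need not be injective. Promoting value-agreement to section-agreement already requires $X_0$ reduced, which the lemma does not assume. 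The torsion-freeness step then additionally needs $X'_0$ integral (so that multiplication by the pullback of a nonzero $g\in\cO_X$ is injective on $\cO_{X'_0}$) and $p|_{X'_0}$ surjective (so the determination reaches every point of $X_0$); both do follow from ``proper birational onto the integral $X$,'' but neither is argued, and you say only that torsion-freeness ``will follow.'' The paper's stalk-level argument never has to confront these issues: it only needs a containment of subrings of $L$, never the global identification $\cO_X''=\cO_X$, so torsion and reducedness simply never enter. If you want to keep your kernel-sheaf route, pass to reductions of $X_0$, $X_1$ at the outset and establish the three points above explicitly before invoking normality; as written, the final paragraph is a plan rather than a proof.
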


\begin{proof} We may assume $X$ connected. Since $\Psi$ is proper surjective, the
hypothesis is true by replacing $X_1$ by $X_0\times_X X_0$, which we shall assume
henceforth. Let $x\in X$ and $K=k(x)$. Base-changing by the morphism $\Spec K\to
X$, we find (by faithful flatness) that $f$ is constant on $p^{-1}(x)$. Since $p$
is surjective, this defines $\bar f$ as a set-theoretic map, and this map is
continuous for the Zariski topology because $p$ is also proper.

It remains to show that $\bar f$ is a map of locally ringed spaces. Let $x\in X$,
$y=\bar f(x)$ and $x'\in p^{-1}(x)\cap X'_0$. Then $f^\sharp(\cO_{Y,y})\subseteq
\cO_{X'_0,x'}$. Note that $X$ and $X'_0$ have the same function field $L$, and
$\cO_{X,x}\subseteq \cO_{X'_0,x'}\subseteq L$. Now, since $X$ is normal,
$\cO_{X,x}$ is the intersection of the valuation rings containing it. 

Let $\cO$ be such a valuation ring, so that $x$ is the centre of $\cO$ on $X$. By
the valuative criterion of properness, we may find $x'\in p^{-1}(x)\cap X'_0$ such
that $\cO_{X_0',x'}\subseteq \cO$. This shows that
\[\cO_{X,x} =\bigcap_{x'\in p^{-1}(x)\cap X'_0}\cO_{X_0',x'}\]
and therefore that $f^\sharp(\cO_{Y,y})\subseteq \cO_{X,x}$. Moreover, the
corresponding map $\bar f^\sharp:\cO_{Y,y}\to \cO_{X,x}$ is local since $f^\sharp$
is. 

(Alternatively, observe that $f$ and its topological factorisation induce a map
\[f^\#: \cO_Y\to f_*\cO_{X_0}\simeq \bar f_* p_*\cO_{X_0}= \bar f_*\cO_X.)\]
\end{proof}

\subsection{Some representability results}

\begin{propose}\label{p11.3} Let $\pi^X$ be the structural morphism of $X$ and $(\pi_*^X)^\eh$
the induced direct image  morphism on the $\eh$ sites. For any $\cF\in \HI_\et$, let
us denote the restriction of $R^q(\pi_*^X)^\eh \cF_\eh$ to $Sm$ by
$\uR^q\pi_*^X\cF$ (in other words, $R^q(\pi_*^X)^\eh \cF_\eh$ is the sheaf on
$Sm(k)_\et$ associated to the presheaf $U\mapsto H^q_\eh(X\times U,\cF_\eh)$): it is an object
of $\HI_\et$. Then\\   
a) For any lattice $L$, $\uR^q\pi_*^X L$ is a
ind-discrete sheaf for all $q\ge 0$; it is a lattice for $q=0,1$.\\ 
b) For any torus $T$, $\uR^q\pi_*^X \uT$ is $1$-motivic for $q= 0,1$.
\end{propose}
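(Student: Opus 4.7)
The plan is to reduce both assertions to the case where $X$ is smooth by blow-up induction (Lemma \ref{lbl}), and then to compute. Indeed, by Example \ref{ex11.1}, $\eh$-cohomology with coefficients in a bounded-below complex of \'etale sheaves satisfies Mayer-Vietoris for abstract blow-ups; evaluating uniformly in a smooth parameter $U$, this yields long exact sequences in $\Shv_\et(Sm(k))[1/p]$
\[\dots \to \uR^q\pi_*^X \cF \to \uR^q\pi_*^{\tilde X}\cF \oplus \uR^q\pi_*^Z \cF \to \uR^q\pi_*^{\tilde Z}\cF \to \uR^{q+1}\pi_*^X \cF \to \dots\]
for every abstract blow-up $p: \tilde X \to X$ with centre $Z$ and $\tilde Z = p^{-1}(Z)$. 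Since ind-discrete sheaves and lattices are thick in discrete sheaves, and $\Shv_1$ is thick in $\Shv_\et(Sm(k))[1/p]$ by Theorem \ref{text}, Lemma \ref{lbl} reduces everything to the smooth case.

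Now suppose $X$ is smooth. For any smooth $U$, the product $X \times U$ is smooth, so by Proposition \ref{peh}, $\uR^q\pi_*^X\cF$ is the \'etale sheafification of the presheaf $U \mapsto H^q_\et(X \times U, \cF)$. For part a) with $q=0$, Lemma \ref{l11.2} identifies this with $U \mapsto L(\pi_0(X \times U))$, represented by the Weil restriction $R_{\pi_0(X)/k} L$, which is a lattice. For $q=1$, after \'etale localisation on $U$ one may assume $L$ constant, and then $H^1_\et(X \times U, L) = \mathrm{Hom}_{\mathrm{cts}}(\pi_1^\et(X \times U), L) = 0$ since $L$ is torsion-free and $\pi_1^\et$ profinite, so $\uR^1\pi_*^X L = 0$ (hence is a lattice). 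For higher $q$, one writes $L$ as an iterated extension in the category of discrete sheaves and uses that \'etale cohomology with torsion coefficients on smooth schemes is an ind-discrete sheaf, the non-torsion part being captured in degrees $0$ and $1$.

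For part b), one treats first $T = \G_m$: then $\uR^0\pi_*^X \G_m$ is $U \mapsto \Gamma(X \times U, \G_m)^{\mathrm{sh}}$, an extension of a lattice (Cartier divisors supported at infinity in a smooth compactification) by a semi-abelian sheaf, hence $1$-motivic; and $\uR^1\pi_*^X \G_m = \underline{\Pic}_{X/k}[1/p]$, which is $1$-motivic by Proposition \ref{p3.3.1}. For a general torus $T$, Proposition \ref{pperm} reduces (via thickness of $\Shv_1$) to permutation tori $T = R_{\ell/k} \G_m$, where a base-change argument along the finite \'etale extension $\ell/k$ combined with the $\G_m$ case yields the claim.

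The main obstacle is part b) with $q = 1$: identifying the $\eh$-sheafification of the relative Picard presheaf with $\underline{\Pic}_{X/k}[1/p]$ as a genuinely $1$-motivic sheaf, and extending from $\G_m$ to arbitrary tori while preserving $1$-motivicity. This essentially amounts to re-running, in the presence of a smooth parameter $U$, the de Jong-plus-\v Cech argument of Proposition \ref{p3.3.1}; the finite-generation part of Proposition \ref{p3.3.1} and Lemma \ref{l11.3} a) ensure that the outputs stay within $\Shv_1$ rather than escaping into an ind-object.
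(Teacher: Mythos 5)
Your overall strategy — blow-up induction (Lemma \ref{lbl}) to reduce to the smooth case, then direct computation — is exactly the paper's, and most of your smooth-case computations are fine (the $\pi_0$ formula from Lemma \ref{l11.2} for $q=0$, $\underline{\Pic}_{X/k}$ for $T=\G_m$, $q=1$ via Proposition \ref{p3.3.1}). The reduction of a general torus to permutation tori via Proposition \ref{pperm} and Shapiro is also a reasonable way to fill in what the paper leaves implicit.

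However, there is a genuine gap in part a): you assert that ``lattices are thick in discrete sheaves'' in order to run the blow-up induction with $\cB =$ lattices. This is false. Lattices are not closed under cokernels inside $\Shv_0$ (e.g.\ $\Z\xrightarrow{2}\Z$ has cokernel $\Z/2$), so Lemma \ref{lbl} a) cannot be applied with that $\cB$. Concretely: from your smooth-case computation ($\uR^1\pi_*^X L=0$ for $X$ smooth) and blow-up induction with $\cB=\Shv_0$, all you can conclude for general $X$ is that $\uR^1\pi_*^X L$ is discrete — and indeed the Mayer--Vietoris sequence exhibits it as squeezed between a quotient of $\uR^0\pi_*^{\tilde Z}L$ and a subsheaf of $\uR^1\pi_*^{\tilde X}L\oplus\uR^1\pi_*^{Z}L$, so torsion could in principle appear from the quotient on the left. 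The torsion-freeness is a separate statement that must be proved directly for \emph{all} $X$, not just the smooth ones; this is precisely the content of Lemma \ref{l11.3} a) (whose proof uses passage to the normalisation together with a trace argument). The paper's route is: blow-up induction with $\cB=\Shv_0$ (or ind-$\Shv_0$ for the all-$q$ statement) to get discreteness, and then Lemmas \ref{l11.2} and \ref{l11.3} a), applied to $X\times U$ uniformly in smooth $U$, to upgrade to a lattice for $q=0,1$. You should replace your ``lattices are thick'' step by this two-step argument.

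A smaller imprecision: for the ``ind-discrete for all $q$'' claim, the thick subcategory to induct with is ind-$\Shv_0$, not $\Shv_0$ — already for $X$ a smooth projective curve, $\uR^2\pi_*^X\Z$ has $(\Q/\Z)'$-type pieces and is not finitely generated.
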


\begin{proof} We apply Lemma \ref{lbl} a) in the following situation: $\cA=\HI_\et$,
$\cB=\Shv_0$, $H^i(X)=R^i\pi_*^XL$ in case a), $\cB=\Shv_1$, $H^i(X)=R^i\pi_*^X\uT$ in case b).
The smooth case is trivial in a)  and the lattice
assertions follow from lemmas \ref{l11.2} and \ref{l11.3} a).  In b), the smooth case follows
from Proposition \ref{p3.3.1}.
\end{proof}

\subsection{$\LA{1}(X)$ and the Albanese schemes} We now compute the $1$-motive $\LA{1}(X)=[L_1\to G_1]$ in important special cases This is done
in the following three propositions; in particular, we shall show that it always ``is" a Deligne $1$-motive. Note that, by definition of a $1$-motive with cotorsion, the pair $(L_1,G_1)$ is determined only up to a \qi: the last sentence means that we may choose this pair such that $G_1$ is connected.

\begin{propose} \label{c3.1bis} Let $X\in Sch(k)$. Then\\
a) $\cH_i(\LAlb(X))=0$ for $i<0$.\\
b) Let $\cF_X=\cH_0(\Tot\LAlb(X))$. Then $\cF_X$ corepresents the functor
\begin{align*}
\Shv_1&\to Ab\\
\cF&\mapsto \cF_\eh(X) \text{ (see Def. \ref{deh})}
\end{align*}
via the composition
\[\alpha^* M(X)\to \Tot\LAlb(X)\to \cF_X[0].\]
Moreover, we have an exact sequence, for any representative $[L_1\by{u_1}G_1]$ of $\LA{1}(X)$ :
\begin{equation}\label{eq11.1}
L_1\by{u_1} G_1\to \cF_X\to \Z\pi_0(X)\to 0.
\end{equation}
c) Let $\cA_{X/k}^\eh:=\Omega(\cF_X)$ (\cf Proposition \ref{pladj}). Then $\cA_{X/k}^\eh$ corepresents the functor \index{$\cA_{X/k}^\eh$}
\begin{align*}
{}^t\AbS&\to Ab\\
G&\mapsto \uG_\eh(X).
\end{align*}
Moreover we have an epimorphism
\begin{equation}\label{compalb}
 \cA_{X/k}^\eh\onto \cA_{X_\red/k}.
\end{equation}
d) If $X_\red$ is strictly reduced (Def. \ref{dsred}) or normal, \eqref{compalb} is an isomorphism.
\end{propose}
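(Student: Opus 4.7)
The plan is to deduce the four parts in sequence, relying throughout on the adjunction \eqref{lalbuniv} and the homotopy $t$-structure of Section \ref{homotopy}.

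For (a), I would run blow-up induction (Lemma \ref{lbl}): in the smooth case, Theorem \ref{trunc} places $\LAlb(X)$ cohomologically in degrees $0,1,2$ for the homotopy $t$-structure, and the abstract blow-up triangle of \S\ref{blowups} propagates the vanishing $\cH_{i<0}=0$ to all $X\in Sch(k)$ by induction on $\dim X$ and on the number of irreducible components. Given (a), part (b) follows quickly: $\LAlb(X)$ sits in non-negative homological degrees, so $\Hom_{D^b(\M[1/p])}(\LAlb(X),\cF[0])=\Hom_{\Shv_1}(\cF_X,\cF)$ for any $\cF\in\Shv_1$, and combining this with \eqref{lalbuniv} (at $N=\cF[0]$, where $\Tot(\cF[0])=\cF$) and Proposition \ref{peh} gives
\[\Hom_{\Shv_1}(\cF_X,\cF)\iso\Hom_{\DM_{-,\et}^\eff}(\alpha^*M(X),\cF)\iso\cF_\eh(X).\]
The sequence \eqref{eq11.1} then drops out of Proposition \ref{p3.10} applied to $\LAlb(X)$ in degree zero, using (a) to kill $\cH_{-1}$ and Proposition \ref{c3.1} (b) to identify $\LA{0}(X)=[\Z\pi_0(X)\to 0]$.

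For (c), the corepresentability statement is immediate from (b) combined with the adjunction of Proposition \ref{pladj}. To build \eqref{compalb}, I would first use Lemma \ref{l12.3} to reduce to $X$ reduced ($\eh$-sections are insensitive to nilpotents, and $\cA_{X_\red/k}$ is already a functor of $X_\red$). For reduced $X$, Lemma \ref{leh} (a) gives an injection $\uG(X)\into\uG_\eh(X)$, and taking $G=\cA_{X_\red/k}$ transports the universal map $\bar a_X$ into $\uG_\eh(X)=\Hom_{{}^t\AbS}(\cA_{X/k}^\eh,\cA_{X_\red/k})$, yielding \eqref{compalb}. Epi-ness follows because the image in $\cA_{X_\red/k}$ contains the image of $X_\red\to\cA_{X_\red/k}$, which generates the target by the universal property of the Albanese. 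Moreover, \eqref{eq11.1} identifies $\pi_0(\cA_{X/k}^\eh)=\Z\pi_0(X)=\pi_0(\cA_{X_\red/k})$, so in fact $\cA_{X/k}^\eh\in\AbS$ and the map is the identity on $\pi_0$.

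For (d), after the same reduction, Lemma \ref{leh} (b) or Theorem \ref{tnormal} gives $\uG(X)\iso\uG_\eh(X)$ for $G$ semi-abelian. Since both $\cA_{X/k}^\eh$ and $\cA_{X_\red/k}$ lie in $\AbS$ with identical $\pi_0$, it suffices by Yoneda to test \eqref{compalb} on $\Hom(-,G)$ for $G\in\AbS$; the extension $0\to G^0\to G\to \pi_0(G)\to 0$, together with the automatic comparison $\pi_0(G)(X)\iso\pi_0(G)_\eh(X)$ from Lemma \ref{l11.2}, reduces the computation via a five-lemma to the semi-abelian case already in hand. The main obstacle I expect is precisely this bookkeeping between $\AbS$ and ${}^t\AbS$ in (c) and (d): one must carefully exploit \eqref{eq11.1} to know that $\cA_{X/k}^\eh$ actually lies in $\AbS$ so that the Yoneda comparison can be done inside $\AbS$; moreover, the five-lemma reducing $\Hom(-,G)$ to the semi-abelian case requires an $H^1$-injection $H^1_\et(X,G^0)\into H^1_\eh(X,G^0)$, which is provided directly by Theorem \ref{tnormal} in the normal case but requires a more delicate torsor analysis in the strictly reduced case.
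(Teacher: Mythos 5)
Your proof follows the paper's strategy closely for (a) and (b): blow-up induction for the vanishing, adjunction $\eqref{lalbuniv}$ combined with Propositions \ref{ptransf}, \ref{peh} and \ref{pD.1.4} for the corepresentability, and Proposition \ref{p3.10} plus $\LA{0}(X)=[\Z\pi_0(X)\to 0]$ for the exact sequence; all this matches the paper.

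Where you diverge is the surjectivity argument in (c). You argue that the image of $\cA_{X/k}^{\eh}\to\cA_{X_\red/k}$ contains the image of the Albanese map $\bar a_X$, which generates $\cA_{X_\red/k}$. This can be made to work (restricting the universal $\eh$-section to closed points, which are smooth, one does get honest scheme-points), but it is more delicate than necessary, since a priori there is no scheme morphism $X\to\cA_{X/k}^{\eh}$ through which $\bar a_X$ factors. The paper's route is cleaner and purely formal: Lemma \ref{leh}~a) gives pointwise injectivity of the natural transformation $G(X_\red)\to G_\eh(X)$ (which propagates easily from $\SAb$ to $\AbS$ by the left-exactness of $\Hom$ against the extension $0\to G^0\to G\to\pi_0(G)\to 0$), and by Yoneda a pointwise monomorphism of corepresented functors is the same as an epimorphism of corepresenting objects. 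No Albanese-map factorisation is needed.

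In (d), the worry you raise about an $H^1_\et(X,G^0)\into H^1_\eh(X,G^0)$ injection is real if one insists on running a five-lemma dévissage along $0\to G^0\to G\to\pi_0(G)\to 0$ after fixing $G\in\AbS$; Theorem \ref{tnormal} does supply that injectivity in the normal case but Lemma \ref{leh}~b) does not in the strictly reduced one. However, the dévissage is avoidable: the argument proving Lemma \ref{leh}~b) (induction on dimension via the abstract blow-up diagram, with the smooth case handled by Proposition \ref{peh}) applies verbatim to $G\in\AbS$, not only to $G\in\SAb$, because Proposition \ref{peh} compares $\eh$- and \'etale cohomology of smooth schemes with coefficients in any object of $\DM_{-,\et}^\eff$ and $\uG$ is still a homotopy invariant \'etale sheaf with transfers. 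So one gets $G(X_\red)\iso G_\eh(X)$ directly for all $G\in\AbS$ in the strictly reduced case, without touching torsors or $H^1$. This is presumably what the paper means by invoking ``Lemma \ref{leh}~b) \dots and the universal property of $\cA_{X/k}$,'' and it is simpler than the route you sketch.
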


\begin{proof} a) is proven as in Proposition \ref{c3.1} by blow-up induction (reduction to the
smooth case). If $\cF\in \Shv_1$, we have 
\[\Hom_{\DM_{-,\et}^\eff}(\alpha^*M(X),\cF)=\cF_\eh(X)\] 
by Propositions \ref{ptransf}, \ref{peh} and  \ref{pD.1.4}. The latter group coincides with
$\Hom_{\Shv_1}(\cF_X,\cF)$ by \eqref{lalbuniv} and a), hence b); the exact sequence follows from Proposition \ref{p3.10}.  The sheaf $\cA_{X/k}^\eh$ clearly corepresents the said functor; the map then comes from the obvious
natural transformation in $G$: $G(X_\red)\to G_\eh(X)$ and its surjectivity follows from Lemma
\ref{leh} a), hence c). d) follows from Lemma \ref{leh} b), Theorem \ref{tnormal} and the universal property of $\cA_{X/k}$.
\end{proof}

\begin{remark}\label{r11.1} One could christen
$\cF_X$ and
$\cA_{X/k}^\eh$ the
\emph{universal $1$-motivic sheaf} and the
\emph{$\eh$-Albanese scheme} of $X$.
\end{remark}

\begin{propose}\label{p11.3a}
a) The sheaves $\cF_X$ and $\cA_{X/k}^\eh$ have $\pi_0$ equal to $\Z[\pi_0(X)]$; in particular,
$\cA_{X/k}^\eh\in \AbS$.\\
b) In \eqref{eq11.1}, the composition $L_1\by{u_1}G_1\to \pi_0(G_1)$ is surjective.\\
c) One may choose $\LA{1}(X)\simeq [L_1\to G_1]$ with $G_1$ connected (in other words, $\LA{1}(X)$ is a Deligne $1$-motive).
\end{propose}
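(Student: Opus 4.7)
The plan is to prove the three statements in order: (a) will come essentially for free from Yoneda and the universal properties already proved in Proposition~\ref{c3.1bis}, (b) will be extracted from the long exact sequence of Proposition~\ref{p3.10} using the normalised presentation of $\cF_X$, and (c) will follow formally from (b) by a kernel-of-the-component-map construction inside the ambient category $\anM^\eff[1/p]$. The main technical point I expect to have to check is the last step in (c), where one needs to see that the natural inclusion of a Deligne $1$-motive into the given representative of $\LA{1}(X)$ is actually a quasi-isomorphism.

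For (a), I would test $\pi_0(\cF_X)$ against an arbitrary $L\in\Shv_0$. Combining Proposition~\ref{c3.1bis}~b), the adjunction in Theorem~\ref{t3.2.4}~b), and Lemma~\ref{l11.2}~a) gives the natural isomorphisms
\[\Hom_{\Shv_0}(\pi_0(\cF_X),L) = \Hom_{\Shv_1}(\cF_X,L) = L_\eh(X) = L(\pi_0(X)) = \Hom_{\Shv_0}(\Z[\pi_0(X)],L),\]
so Yoneda yields $\pi_0(\cF_X)\simeq\Z[\pi_0(X)]$. The same chain of isomorphisms, now with the adjunction of $\Omega$ with the inclusion ${}^t\AbS\hookrightarrow\Shv_1$ of Proposition~\ref{pladj} inserted at the first step, computes $\pi_0(\cA_{X/k}^\eh)\simeq\Z[\pi_0(X)]$, which is a lattice, so $\cA_{X/k}^\eh$ lies in $\AbS$.

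For (b), I would apply Proposition~\ref{p3.10} to $C=\LAlb(X)$. Writing $\LA{1}(X)=[L_1\by{u_1}G_1]$ and $\LA{0}(X)=[\Z[\pi_0(X)]\to 0]$, and using $\sH_{-1}(\LAlb(X))=0$ from Proposition~\ref{c3.1bis}~a), the tail of the long exact sequence in $\Shv_1$ reads
\[L_1\by{u_1}\uG_1\to\cF_X\to\Z[\pi_0(X)]\to 0,\]
so that $\coker(u_1)$ identifies with $\ker(\cF_X\to\Z[\pi_0(X)])$. In a normalised presentation $0\to L\to\uG\to\cF_X\to\pi_0(\cF_X)\to 0$ from Proposition~\ref{p3.1}~a), this kernel equals $\uG/L$ with $G$ semi-abelian; since $\pi_0$ is a left adjoint and so preserves cokernels, $\pi_0(\uG/L)=\coker(L\to\pi_0(\uG))=0$. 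Right exactness of $\pi_0$ applied to $L_1\to\uG_1\to\coker(u_1)\to 0$ then forces the map $L_1\to\pi_0(G_1)$ to be surjective.

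For (c), set $L'_1=\ker(L_1\to\pi_0(G_1))$: this is a sublattice of $L_1$, by (b) we have $L_1/L'_1\simeq\pi_0(G_1)$, and $u_1$ restricts to $u'_1\colon L'_1\to G_1^0$. The resulting Deligne $1$-motive $[L'_1\to G_1^0]\in\M[1/p]$ includes naturally into $[L_1\to G_1]$, with term-by-term quotient the complex $[\pi_0(G_1)\by{=}\pi_0(G_1)]$ in $\anM^\eff[1/p]$. The hard part will be to verify that this inclusion is a quasi-isomorphism: the quotient has vanishing cohomology sheaves in $\Shv_1$, hence is acyclic in the sense of Proposition~\ref{pexact}, so the inclusion is a quasi-isomorphism in $\anM^\eff[1/p]$ and a fortiori in ${}_t\M^\eff[1/p]$, inducing an isomorphism in ${}_t\M[1/p]$. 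Transporting through the equivalence $D^b(\M[1/p])\simeq D^b({}_t\M[1/p])$ of Theorem~\ref{tstr}, we conclude that $\LA{1}(X)$ is represented by the Deligne $1$-motive $[L'_1\to G_1^0]$.
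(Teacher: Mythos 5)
Your proposal is correct and its overall architecture matches the paper's: part (a) via the universal property of $\cF_X$ and Lemma~\ref{l11.2}~a), and part (c) via exactly the same pull-back $L'_1 = u_1^{-1}(G_1^0)$ (which is the paper's $L_1^0$), whose inclusion into $[L_1\to G_1]$ has acyclic cokernel $[\pi_0(G_1)=\pi_0(G_1)]$ and is therefore a \qi in ${}_t\M^\eff[1/p]$.

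The genuine difference is in part (b). The paper runs a three-row diagram argument, inserting an auxiliary complex $[\overline{u_1(L_1)}\to\bar G_1]$ with $\overline{u_1(L_1)}=u_1(L_1)/F$ and $\bar G_1=G_1/F$ for $F$ the torsion of $u_1(L_1)$, compares it with the normalised presentation of $\cF_X$ by an $\Ext$-vanishing and uniqueness argument, and finishes with two diagram chases. You instead isolate the key fact conceptually: $\coker(u_1)=\ker(\cF_X\to\Z[\pi_0(X)])$ coincides with the ``connected part'' $\uG/L$ of the normalised presentation, which has trivial $\pi_0$ by right-exactness of $\pi_0$ applied to $L\to\uG\to\uG/L\to 0$, and a second application of right-exactness to $L_1\to\uG_1\to\coker(u_1)\to 0$ gives the surjectivity. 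This is cleaner and avoids the auxiliary torsion quotient entirely. One point both you and the paper leave implicit, and which is worth spelling out: the surjection $\cF_X\to\Z[\pi_0(X)]$ coming from the long exact sequence of Proposition~\ref{p3.10} has the same kernel as the canonical map $\cF_X\to\pi_0(\cF_X)$. Since $\Z[\pi_0(X)]$ is discrete, the former factors through the latter, and the induced endomorphism of the lattice $\Z[\pi_0(X)]\iso\pi_0(\cF_X)$ (part (a)) is surjective, hence an isomorphism; this justifies the identification $\coker(u_1)=\uG/L$ on which your argument (and the paper's diagram) rests.
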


\begin{proof}
In a), it suffices to prove the first assertion for $\cF_X$: then it follows from
its universal property and Lemma \ref{l11.2} a). The second assertion of a) is obvious. 

b) Let $0\to L'_1\to G'_1\to \cF_X\to \Z[\pi_0(X)]\to 0$ be the normalised presentation of $\cF_X$ given by Proposition \ref{p3.1}. We have a commutative diagram
\[\begin{CD}
0@>>> L'_1@>>> G'_1@>>> \cF_X@>>> \Z[\pi_0(X)]@>>> 0\\
&&@VVV @VVV ||& &||&\\
0@>>> \overline{u_1(L_1)}@>\bar u_1>> \bar G_1@>>> \cF_X@>>> \Z[\pi_0(X)]@>>> 0\\
&&@AAA @AAA ||& &||&\\
&&L_1@>u_1>> G_1@>>> \cF_X@>>> \Z[\pi_0(X)]@>>> 0
\end{CD}\]
with $\overline{u_1(L_1)}=u_1(L_1)/F$ and $\bar G_1=G_1/F$, where $F$ is the torsion subgroup of $u_1(L_1)$. Indeed, $\Ext(G'_1,\overline{u_1(L_1)})=0$ so we get the downwards vertical maps as in the proof of Proposition \ref{p3.1}. By uniqueness of the normalised presentation, $G'_1$ maps onto $\bar G_1^0$. A diagram chase then shows that the composition
\[\overline{u_1(L_1)}\by{\bar u_1} \bar G_1\to \pi_0(\bar G_1)\]
is onto, and another diagram chase shows the same for $u_1$.

c) The pull-back diagram
\[\begin{CD}
L_1^0@>>> G_1^0\\
@VVV @VVV\\
L_1 @>>> G_1
\end{CD}\]
is a quasi-isomorphism in ${}_t\M^\eff$, thanks to b).
\end{proof}

\begin{propose}\label{pl1}   Let $[L_1\by{u_1} G_1]$ be the Deligne $1$-motive that lies in the \qi class of $\LA{1}(X)$, thanks to Proposition \ref{p11.3a} c).\\
a) We have an isomorphism 
\[L_1\iso\shom(\uR^1\pi_*\Z,\Z)\]
(\cf Proposition \ref{p11.3}).\\ 
b) We have a canonical isomorphism
\[G_1/(L_1)_\Zar\iso (\cA_{X/k}^\eh)^0\]
where $(L_1)_\Zar$ is the Zariski closure of the image of $L_1$ in $G_1$ and
$\cA_{X/k}^\eh$ was defined in Proposition \ref{c3.1bis} ($(\cA_{X/k}^\eh)^0$ corepresents the functor $\SAb\ni G\mapsto G_\eh(X)$).
\end{propose}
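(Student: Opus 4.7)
The plan is to extract the Deligne representation $[L_1 \by{u_1} G_1]$ of $\LA{1}(X)$ piece by piece: the semi-abelian part $G_1$ and the quotient $G_1/(L_1)_\Zar$ are recovered from the normalised presentation of the $1$-motivic sheaf $\cF_X = \cH_0(\Tot\LAlb(X))$ (part (b)), while the lattice part $L_1$ is recovered from $\eh$-cohomology of $X$ with $\Z$-coefficients (part (a)).

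For part (b), I would start from the four-term exact sequence \eqref{eq11.1}. Since $G_1$ is connected by our choice of representation (Proposition \ref{p11.3a} c)), the sheaf $\coker u_1 = G_1/u_1(L_1)$ admits the normalised presentation
\[0 \to L_1/\ker u_1 \longrightarrow G_1 \longrightarrow \coker u_1 \to 0,\]
since $L_1/\ker u_1$ is torsion-free and injects into $G_1$. Quotienting \eqref{eq11.1} then yields the short exact sequence $0 \to \coker u_1 \to \cF_X \to \Z[\pi_0(X)] \to 0$ in $\Shv_1$. The uniqueness in Proposition \ref{p3.1} c) forces $G^{\mathrm{norm}}_{\cF_X} = G_1$ and $L^{\mathrm{norm}}_{\cF_X} = L_1/\ker u_1$, because the sub-datum $G^{\mathrm{norm}}_{\cF_X}/L^{\mathrm{norm}}_{\cF_X} = \coker u_1$ pins the normalised presentation down. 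Applying Proposition \ref{pladj} gives $(\cA_{X/k}^\eh)^0 = G_1/(L^{\mathrm{norm}}_{\cF_X})_\Zar$; since $L^{\mathrm{norm}}_{\cF_X}$ and $L_1$ both have image $u_1(L_1)$ in $G_1$, their Zariski closures as closed subgroup schemes coincide, which is (b).

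For part (a), the universal property \eqref{lalbuniv} provides, for each $U \in Sm(k)$, a canonical identification
\[\Hom_{D^b(\M[1/p])}(\LAlb(X\times U),\, [\Z\to 0][1]) \iso H^1_\eh(X\times U,\Z).\]
Combining the motivic $t$-structure truncation triangles $\tau^{\le -2}\LAlb(X) \to \LAlb(X) \to \tau^{\ge -1}\LAlb(X)$ and $\LA{1}(X)[1] \to \tau^{\ge -1}\LAlb(X) \to \LA{0}(X)$ with the identification $\Hom_{{}_t\M[1/p]}(\LA{1}(X), [\Z\to 0]) = L_1^\vee$, one extracts a natural edge homomorphism $\phi_U : H^1_\eh(X\times U, \Z) \to L_1^\vee(U)$; sheafifying in $U$ yields a morphism of lattice sheaves $\phi : \uR^1\pi_*\Z \to L_1^\vee$. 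To verify that $\phi$ is an isomorphism, I would invoke blow-up induction (Lemma \ref{lbl} b)): both functors satisfy Mayer--Vietoris for abstract blow-ups --- the source by Proposition \ref{peh} and Examples \ref{ex11.1}, and the target because the weight-filtration functor $\gr^W_0 : {}_t\M \to \Shv_0$ is exact, so the blow-up long exact sequence for $\LA{\ast}$ (an instance of \eqref{exres}) descends to an exact sequence of lattice parts --- and for smooth $X$ both vanish (by Corollary \ref{HLAlb} and $H^1_\et(X_{\mathrm{smooth}}, \Z) = 0$). Biduality of lattices then converts $\phi$ into the desired isomorphism $L_1 \iso \shom(\uR^1\pi_*\Z, \Z)$.

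The main obstacle is ensuring that the edge homomorphism $\phi_U$ is well-defined and captures all of $H^1_\eh(X\times U, \Z)$, despite the higher $\Ext^p_{{}_t\M[1/p]}$-contributions of $\LA{i}(X)$ for $i \ge 2$ --- integrally the category ${}_t\M[1/p]$ is \emph{not} of cohomological dimension $1$ (Proposition \ref{iso1} is only rational). The cleanest workaround is to verify the isomorphism stalkwise at geometric points $\Spec \bar K$: there $\Shv_0|_{\bar K}$ is just the category of finitely generated abelian groups, so all $\Ext^i_{\Shv_0|_{\bar K}}$ of permutation modules into $\Z$ vanish for $i \ge 1$, which collapses the spectral sequence in total degree $1$ and identifies $\phi_{\bar K}$ with the full isomorphism $H^1_\eh(X_{\bar K}, \Z) \iso L_1(\bar K)^\vee$; since both source and target of $\phi$ are étale sheaves representable by lattices, stalkwise bijectivity suffices.
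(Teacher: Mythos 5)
Part (b) is essentially the paper's intended argument (unwinding the definition of $\cA_{X/k}^\eh$ via the normalised presentation of $\cF_X$ and Proposition \ref{pladj}), but your intermediate claim that $L_1/\ker u_1$ is torsion-free is false in general: $u_1$ may send a lattice element to a torsion point of $G_1$ (e.g.\ a root of unity in $\G_m$). The conclusion survives, because $(L_1)_\Zar$ contains the torsion of $u_1(L_1)$, so $G_1/(L_1)_\Zar$ still coincides with the quotient of the normalised semi-abelian variety of $\cF_X$ by the Zariski closure of its normalised kernel; but the step as written must be corrected.

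Part (a) contains a genuine gap, and is in any case far more circuitous than necessary. First, your worry about ``higher $\Ext^p_{{}_t\M[1/p]}$-contributions of $\LA{i}(X)$ for $i\ge 2$'' is a red herring: in the spectral sequence \eqref{eqss} those terms sit in $E_2^{p,q}$ with $q\ge 2$ and feed only into $\EExt^j$ for $j\ge 2$, never into $\EExt^1$. So the failure of Proposition \ref{iso1} integrally is irrelevant, and the low-degree exact sequence
\begin{multline*}
0\to \Ext^1(\LA{0}(X),L)\to \Hom_{D^b(\M[1/p])}(\LAlb(X),L[1])\\
\to\Hom(\LA{1}(X),L)\to \Ext^2(\LA{0}(X),L)
\end{multline*}
involves only $\LA{0}$ and $\LA{1}$. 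Second --- and this is the actual gap --- your stalkwise workaround derives the collapse from vanishing of $\Ext^i_{\Shv_0|_{\bar K}}$, but the Ext groups in \eqref{eqss} are Yoneda Ext groups in the abelian category ${}_t\M[1/p]$, not in $\Shv_0$; these are computed differently (cf.\ Proposition \ref{ext}). What one needs, and what the paper uses, is that over $\bar k$ the groups $\Ext^1_{{}_t\M[1/p]}(\LA{0}(X),L)$ and $\Ext^2_{{}_t\M[1/p]}(\LA{0}(X),L)$ vanish, which follows from Cartier duality and the identification of Theorem \ref{t1.2.1} with maps between Tate twists $\Z(1)^r$ in $\DM_{\gm,\et}^\eff$ over $\bar k$. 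With that vanishing, one gets $\Hom_{D^b(\M[1/p])}(\LAlb(X),L[1])\iso\Hom(L_1,L)$ at once, with no blow-up induction, no auxiliary natural transformation $\phi$, and no need to argue that the target satisfies an abstract blow-up long exact sequence (the latter is not automatic for $\gr^W_0\LA{1}$ anyway, since the sequence \eqref{exres} interleaves $\LA{0}$, which is entirely of weight $0$, with $\LA{1}$). Your approach could probably be repaired, but the direct spectral-sequence argument is both shorter and avoids the issues above.
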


\begin{proof} For the computations, we may assume $k$ algebraically closed.

a) Let $L$ be a lattice. We compute:
\[
H^1_\eh(X,L) = \Hom_{\DM_{\gm,\et}^\eff}(M_\et(X),L[1]) =
\Hom_{D^b(\M)}(\LAlb(X),L[1]).
\]

From the spectral sequence \eqref{eqss}, we get an exact sequence
\begin{multline*}
0\to \Ext^1(\LA{0}(X),L)\to \Hom_{D^b(\M)}(\LAlb(X),L[1])\\
\to\Hom(\LA{1}(X),L)\to
\Ext^2(\LA{0}(X),L).
\end{multline*}

Since the two Ext are $0$, we get an isomorphism
\[\Hom_{D^b(\M)}(\LAlb(X),L[1])\iso\Hom(\LA{1}(X),L).\]

Since $[L_1\to G_1]$ is a Deligne $1$-motive, the last group is isomorphic to $\Hom(L_1,L)$. This gives a), since we obviously have  
$H^1_\eh(X,L)=H^0_\et(k,\uR^1\pi_* L)=\uR^1\pi_*\Z\otimes L$ by
Proposition \ref{p11.3} a).

b) This follows directly from the definition of $\cA_{X/k}^\eh$.
\end{proof}

\begin{cor}\label{c12.2.1} Let $\LA{1}(X)=[L_1\to G_1]$, as a Deligne $1$-motive.\\
a) If $X$ is proper, then $G_1$ is an abelian variety.\\
b)  If $X$ is normal, then $\LA{1}(X)=[0\to \cA_{X/k}^0]$.\\
c)  If $X$ is normal and proper then $\RA{1}(X)=[0\to \Pic^0_{X/k}]$ is an abelian
variety with dual the Serre Albanese $\LA{1}(X)\allowbreak=[0\to \cA_{X/k}^0]$.
\end{cor}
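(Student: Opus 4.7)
My strategy will be to prove (a) and (b) essentially independently, and then derive (c) from their conjunction via classical Picard--Albanese duality.

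For (a), I plan to work on the Picard side, using Cartier duality (Proposition \ref{pcd}) to translate the vanishing of the toric part of $G_1$ into the vanishing of the free lattice part of $\RA{1}(X)$. Applying Proposition \ref{p3.10} to $C=\RPic(X)$, with $\RA{i}(X)=[L^i\by{u^i}G^i]$, yields an exact sequence in $\Shv_1$:
\[
0\to G^0\to \sH^1(\Tot\RPic(X))\to L^1\to G^1\to\sH^2(\Tot\RPic(X))\to\cdots
\]
because $\RA{0}(X)=[0\to \Z[\pi_0(X)]^*]$ (the dual of Proposition \ref{c3.1} b)), so $L^0=0$ and $G^0=\Z[\pi_0(X)]^*=R_{\pi_0(X)/k}\G_m$. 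Now the same computation as in the proof of Proposition \ref{lcurve} yields $\sH^1(\Tot\RPic(X))=\sH^1(\tau_{\le 2}\ihom_\et(M_\et(X),\Z(1)))\simeq R^0\pi^X_*\G_m[1/p]$; for $X$ proper, Lemma \ref{leh} c) forces this to equal $R_{\pi_0(X)/k}\G_m$, and the canonical map $G^0\to \sH^1$ is the identity by construction. Exactness then gives $L^1=0$. Consequently $\RA{1}(X)=[0\to G^1]$, and the Cartier dual of a semi-abelian scheme with trivial lattice part has trivial toric part: hence $G_1$ is an abelian variety.

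For (b), the vanishing of $L_1$ and the identification of $G_1$ with $\cA^0_{X/k}$ follow directly from Proposition \ref{pl1}. By part a) of that proposition, $L_1\simeq \shom(\uR^1\pi^X_*\Z,\Z)$; if $U\in Sm(k)$ and $X$ is normal then $X\times U$ is again normal (since $k$ is perfect and $U\to\Spec k$ is geometrically regular), so Lemma \ref{l11.3} c) gives $H^1_\eh(X\times U,\Z)=0$. Hence $\uR^1\pi^X_*\Z=0$, so $L_1=0$. By part b) of Proposition \ref{pl1}, $G_1=G_1/(L_1)_\Zar=(\cA^\eh_{X/k})^0$, and Proposition \ref{c3.1bis} d) identifies the right-hand side with $\cA^0_{X/k}$ (since a normal scheme is reduced). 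This gives (b).

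Finally (c) is essentially formal: if $X$ is normal and proper, then by (b), $\LA{1}(X)=[0\to\cA^0_{X/k}]$, and by (a) the semi-abelian variety $\cA^0_{X/k}$ is an abelian variety. Applying Cartier duality (Proposition \ref{pcd}) gives $\RA{1}(X)=[0\to(\cA^0_{X/k})^\vee]$ where $(-)^\vee$ denotes the dual abelian variety. The identification $(\cA^0_{X/k})^\vee\simeq \Pic^0_{X/k}$ is the classical Albanese--Picard duality; it is well-known for $X$ smooth projective and extends to normal proper $X$ by comparing with a resolution of singularities together with the invariance of both $\cA^0_{X/k}$ and $\Pic^0_{X/k}$ (up to isogeny) under the contraction of exceptional divisors onto a normal base.

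The main obstacle I anticipate is verifying the identification of the connecting map $G^0\to\sH^1(\Tot\RPic(X))$ in (a) with the identity when $X$ is proper; this requires some unpacking of the construction of the exact sequence in Proposition \ref{p3.10} and checking it is compatible with the natural structural maps coming from truncation of $\ihom_\et(M_\et(X),\Z(1))$. The rest is essentially bookkeeping once this is in place.
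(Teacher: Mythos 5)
Your proofs of (b) and (c) track the paper's own proof: (b) combines Proposition \ref{pl1} a), b) with Lemma \ref{l11.3} c) and Proposition \ref{c3.1bis} d), and (c) then follows formally from (a) and (b) via Cartier duality.

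For (a) you take a genuinely different route from the paper (which does blow-up induction, reducing to the smooth projective case of Corollary \ref{HLAlb}), by dualising and trying to extract $L^1=0$ from the long exact sequence of Proposition \ref{p3.10} applied to $\RPic(X)$. That last deduction is where the argument breaks down. Unwinding Proposition \ref{p3.10}, the sequence
$$\cdots\to L^0\by{u^0}G^0\to\sH^1(\Tot\RPic(X))\to L^1\by{u^1}G^1\to\cdots$$
is obtained by splicing the short exact sequences $0\to\coker u^{i-1}\to\sH^i\to\ker u^i\to 0$ through the maps $u^i$; in particular $\sH^1\to L^1$ is the composite $\sH^1\onto\ker u^1\into L^1$. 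Thus once $L^0=0$ and $G^0\to\sH^1(\Tot\RPic(X))$ is surjective, exactness yields only that $\sH^1\to L^1$ is zero, hence that $u^1\colon L^1\to G^1$ is \emph{injective}, i.e.\ $\ker u^1=0$. It does \emph{not} yield $L^1=0$: a nonzero lattice can perfectly well embed in a semi-abelian variety (e.g.\ $[\Z\to A]$ for $A$ an abelian variety and a non-torsion rational point). Since $L^1$ is precisely the character group of the toric part $T_1$ of $G_1$, what you have actually shown is that $\RA{1}(X)$ is a torsion-free $1$-motive, which is strictly weaker than $T_1=0$. Nor can you salvage this by pushing the sequence to $\sH^2$ and invoking Theorem \ref{t12.8} to identify $G^1$ inside $\sH^2(\Tot\RPic(X))$ with $\Pic^0_{X/k}/\cU$: that theorem's proof explicitly uses the present Corollary \ref{c12.2.1} a), so the reasoning would be circular. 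You need a separate argument to kill the toric (weight $-2$) part of $\LA{1}(X)$ for $X$ proper, and the paper's blow-up induction is doing exactly that work.
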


\begin{proof} a) is seen easily by blow-up induction, by reducing to the smooth
projective case (Corollary \ref{HLAlb}). b) follows from Proposition \ref{pl1} a),
b), Lemma \ref{l11.3} c) and Proposition \ref{c3.1bis} d). c) follows immediately
from a) and b).
\end{proof}

\subsection{$\LA{1}(X)$ and $\Alb^-(X)$ for $X$ normal}

In this subsection, we prove that these two $1$-motives are isomorphic  in \ref{c12.2.2}. We begin with a slight improvement of Theorem \ref{tnormal} in the case of semi-abelian schemes:

\begin{lemma} \label{simpehalb} Let $X$ be normal, and let $\Xs$ be a smooth hyperenvelope (\cf Lemma \ref{ldescent}). Then we have
 $$\cA_{X/k} = \coker (\cA_{X_1}\longby{(p_0)_*-(p_1)_*}\cA_{X_0} )$$ and
 $$ (\cA_{X/k})^0 = \coker (\cA_{X_1}^0\longby{(p_0)_*-(p_1)_*}\cA_{X_0}^0 ).$$
\end{lemma}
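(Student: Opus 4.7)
The plan is to prove both equalities by Yoneda, after identifying the functors corepresented by each side. For the first, test against $G \in \AbS$; for the second, against $G \in \SAb$. By the universal property of the Albanese scheme (\S\ref{albs}), for each smooth $Y \in \{X_0, X_1\}$ one has $\Hom_{\AbS}(\cA_{Y/k}, G) = \uG(Y)$. For $Y = X$, which is normal but possibly singular, Proposition~\ref{c3.1bis}(c,d) shows that the $\eh$-Albanese $\cA_{X/k}^\eh$ corepresents $G \mapsto \uG_\eh(X)$ on ${}^t\AbS$ and coincides with $\cA_{X/k}$. Restricting to semi-abelian targets, the analogous identities hold with $(\cA_{Y/k})^0$ in place of $\cA_{Y/k}$ and $G \in \SAb$, since any such morphism kills the discrete quotient $\Z[\pi_0(Y)]$.

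Next, since $\uG$ lies in $\Shv_1$ and is a homotopy invariant \'etale sheaf with transfers (Lemma~\ref{l1.3}), and since $X_0, X_1$ are smooth, Proposition~\ref{pdescent}(b) applied to the hyperenvelope $\Xs \to X$ gives
$$\uG_\eh(X) = \ker\bigl(\uG(X_0) \longby{p_0^* - p_1^*} \uG(X_1)\bigr).$$
Theorem~\ref{tnormal}, applicable because $X$ is normal, identifies $\uG_\eh(X)$ with $\uG(X)$. Combining the two steps, for every $G \in \AbS$ (\resp $G \in \SAb$) the sequence
$$0 \to \Hom(\cA_{X/k}, G) \to \Hom(\cA_{X_0}, G) \to \Hom(\cA_{X_1}, G)$$
is exact, with the last map induced by $(p_0)_* - (p_1)_*$, and similarly on the connected components.

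By Yoneda, this expresses $\cA_{X/k}$ as the cokernel in $\AbS$ of $(p_0)_* - (p_1)_*$, and $(\cA_{X/k})^0$ as the analogous cokernel in $\SAb$. The only point requiring verification is that the cokernel exists in the required category: images of morphisms of semi-abelian varieties are closed semi-abelian subschemes and the quotients remain semi-abelian, while in $\AbS$ the cokernel of the induced map on $\pi_0$-parts is $\Z[\pi_0(X_0)]/\mathrm{im}(\Z[\pi_0(X_1)]) = \Z[\pi_0(X)]$, which is already torsion-free. This is the main (but minor) technical obstacle; the rest is a formal consequence of representability, descent, and the normality-based comparison established earlier in the section.
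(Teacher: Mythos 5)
Your argument for the first isomorphism is essentially valid and mirrors the paper's proof in substance, though packaged differently: the paper invokes Lemma \ref{ldescent} directly (a hands-on descent statement for maps $X_0 \to Y$ with $Y$ a torsor under a semi-abelian variety), while you route through the $\eh$-sheafification $\uG_\eh$, Proposition \ref{pdescent}(b), and Theorem \ref{tnormal}. Either way one obtains the natural identification $\Hom(\cA_{X/k}, G) \simeq \ker\bigl(\Hom(\cA_{X_0}, G) \to \Hom(\cA_{X_1}, G)\bigr)$ for $G \in \AbS$, and the first equality follows by Yoneda.

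The argument for the second isomorphism, however, has a genuine gap. You write that for $G \in \SAb$ the same identity holds with $(\cA_{Y/k})^0$ in place of $\cA_{Y/k}$, ``since any such morphism kills the discrete quotient $\Z[\pi_0(Y)]$.'' This is false. A morphism $\cA_Y \to G$ does not factor through $(\cA_Y)^0$; rather, the restriction map $\Hom(\cA_Y, G) \to \Hom((\cA_Y)^0, G)$ is surjective with kernel $\Hom(\Z[\pi_0(Y)], G)$, and this kernel is usually nonzero — for instance $\Hom(\Z, \G_m) = \G_m(k) \neq 0$. So $\Hom((\cA_Y)^0, G)$ is not $\uG(Y)$, and the Yoneda argument does not transport to the connected parts just by restricting the test objects to $\SAb$.

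This is exactly where the paper has to work. It takes the short exact sequence of simplicial objects $0 \to \cA_{\Xs/k}^0 \to \cA_{\Xs/k} \to \Z[\pi_0(\Xs)] \to 0$ and examines the resulting long exact sequence in $H_0, H_1$; the difference between $\coker(\cA_{X_1}^0 \to \cA_{X_0}^0)$ and $(\cA_{X/k})^0$ is controlled by a connecting map out of $H_1(\Z[\pi_0(\Xs)])$. The paper identifies this group with the $\Z$-dual of $H^1_\eh(X \times_k \bar k, \Z)$ via the simplicial spectral sequence and Propositions \ref{peh}, \ref{pdescent}, and then uses Lemma \ref{l11.3}(c) — valid because $X$ is \emph{normal} — to conclude $H^1_\eh(X\times_k \bar k, \Z) = 0$, hence the connecting term vanishes. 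Your proposal never confronts this connecting map, and the normality of $X$ must be used at precisely this point (above and beyond its use in Theorem \ref{tnormal}). Relatedly, your claim that $\Z[\pi_0(X_0)]/\mathrm{im}(\Z[\pi_0(X_1)]) = \Z[\pi_0(X)]$ is also not automatic; the paper derives it from $H^0_\eh(X\times_k\bar k,\Z)$ via Lemma \ref{l11.2}(a), not by inspection.
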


\begin{proof} The first isomorphism follows from Lemma \ref{ldescent} applied with $Y$ running through torsors under semi-abelian varieties. To deduce the second isomorphism, consider the short  exact sequence of complexes
\[0\to \cA_{\Xs/k}^0\to \cA_{\Xs/k}\to \Z[\pi_0(\Xs)]\to 0\] 
and the resulting long exact sequence
\begin{equation}\label{eq13.7}
H_1(\Z[\pi_0(\Xs)])\to H_0(\cA_{\Xs/k}^0)\to H_0(\cA_{\Xs/k})\to H_0(\Z[\pi_0(\Xs)])\to 0.
\end{equation}

For any $i\ge 0$, $\Z[\pi_0(X_i)]$ is $\Z$-dual to the Galois module
$E_1^{i,0}$,
where $E_1^{p,q}= H^q_\et(X_p\times_k \bar k,\Z)$ is the $E_1$-term associated to the simplicial spectral sequence for $\Xs\times_k \bar k$. Since $H^1_\et(X_p\times_k \bar k,\Z)=0$ for all $p\ge 0$, we get
\[H_i(\Z[\pi_0(\Xs)])\simeq (H^i_\et(\Xs\times_k \bar k,\Z))^\vee\text{ for } i=0,1.\]

By Proposition \ref{peh}, these \'etale cohomology groups may be replaced by $\eh$ cohomology groups. By Proposition \ref{pdescent}, we then have
\[H^i_\et(\Xs\times_k \bar k,\Z)\simeq H^i_\eh(X\times_k \bar k,\Z).\]

Now, by Lemma \ref{l11.2} a), $H^0_\eh(X\times_k \bar k,\Z)$ is $\Z$-dual to $\Z[\pi_0(X)]$, and by Lemma \ref{l11.3} c), $H^1_\eh(X\times_k \bar k,\Z)=0$ because $X$ is normal. Hence \eqref{eq13.7} yields a short exact sequence
\[0\to  H_0(\cA_{\Xs/k}^0)\to \cA_{X/k}\to \Z[\pi_0(X)]\to 0\]
which identifies $H_0(\cA_{\Xs/k}^0)$ with $\cA_{X/k}^0$.
\end{proof}

\begin{propose} \label{c12.2.2} If $X$ is normal, $\RA{1}(X)$ and
$\LA{1}(X)$ are isomorphic,  respectively, to the $1$-motives $\Pic^+(X)$
and $\Alb^- (X)$ defined in \cite[Ch. 4-5]{BSAP}.
\end{propose}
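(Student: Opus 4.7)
The plan is to reduce everything to the smooth case (Corollary \ref{clalb}) by means of smooth hyperenvelopes, invoking the description of $\cA^0_{X/k}$ provided by Corollary \ref{c12.2.1} and Lemma \ref{simpehalb}.

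First I would dispose of the statement for $\RA{1}(X)$ by Cartier duality. Indeed, by Corollary \ref{rpdla} we have $\RA{1}(X) = \LA{1}(X)^*$, while by construction in \cite{BSAP} we have $\Pic^+(X) = \Alb^-(X)^*$. An isomorphism $\LA{1}(X)\simeq \Alb^-(X)$ compatible with Cartier duality will therefore dualise to the desired isomorphism $\RA{1}(X)\simeq\Pic^+(X)$, and thanks to Proposition \ref{pcd} this compatibility is automatic.

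Next I would exploit that, since $X$ is normal, both $\LA{1}(X)$ and $\Alb^-(X)$ are purely semi-abelian $1$-motives. On our side this is Corollary \ref{c12.2.1} b), which gives $\LA{1}(X) = [0\to \cA^0_{X/k}]$. On the \cite{BSAP} side it is a direct check using the construction there (the lattice component of $\Alb^-(X)$ measures the failure of $X$ to be unibranch, hence vanishes for normal $X$). So the problem reduces to identifying the underlying semi-abelian varieties canonically.

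Now pick a smooth hyperenvelope $\Xs \to X$ (available since $\car k=0$). Lemma \ref{simpehalb} presents
\[
\cA^0_{X/k} \;=\; \coker\bigl(\cA^0_{X_1/k}\xrightarrow{(p_0)_\ast-(p_1)_\ast}\cA^0_{X_0/k}\bigr).
\]
By Corollary \ref{clalb}, for each smooth $X_i$ the semi-abelian variety $\cA^0_{X_i/k}$ equals the underlying group of $\Alb^-(X_i)$, and the maps $(p_j)_\ast$ agree with the functorial pushforwards used by \cite{BSAP} (by the universal property of Serre's Albanese). On the \cite{BSAP} side, $\Alb^-(X)$ for normal $X$ is produced by the very same descent along a smooth hyperenvelope, as the cokernel in $\SAb$ of the analogous map. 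Term by term the two presentations coincide, yielding the desired canonical isomorphism $\LA{1}(X)\simeq \Alb^-(X)$.

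The main obstacle I expect is the careful verification that the \cite{BSAP} construction of $\Alb^-(X)$ really produces the same cokernel as the one furnished by Lemma \ref{simpehalb}, together with independence of the choice of hyperenvelope. For the first point one compares the universal properties: both corepresent morphisms from $X$ to semi-abelian varieties trivialised on the equaliser of $X_1\rightrightarrows X_0$, appealing to Lemma \ref{ldescent} on our side and to the explicit definition in \cite{BSAP} on the other. For independence one passes between two hyperenvelopes through a common refinement, using the functoriality of $\cA^0$ under proper surjections; the resulting diagram commutes by the universal property already invoked.
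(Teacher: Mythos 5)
Your approach is essentially the same as the paper's: both reduce to identifying $\cA^0_{X/k}$, invoke Corollary \ref{c12.2.1}~b) to get $\LA{1}(X)=[0\to\cA^0_{X/k}]$, and use Lemma \ref{simpehalb} to present $\cA^0_{X/k}$ as a cokernel over a smooth hyperenvelope. The one place where your sketch is thinner than the paper's proof is the assertion that $\Alb^-(X)$ in \cite{BSAP} is ``produced by the very same descent as a cokernel'': the paper establishes this concretely, not via universal properties, by observing that for normal $\bar X$ the simplicial $\Pic^+$-data collapses to the kernel presentation $[\Div^0_{Y_\cdot}(\bar X_\cdot)\to\Pic^0_{\bar X/k}]$, whose Cartier dual is precisely the cokernel $\coker(\cA^0_{X_1/k}\to\cA^0_{X_0/k})$; this also sidesteps the hyperenvelope-independence issue you flag, since the comparison is carried out with the specific hyperenvelope from the \cite{BSAP} construction.
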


\begin{proof} Let $\bar X$ be a normal compactification of $X$; choose a
smooth hyperenvelope $X_{\d}$ of $X$ along with $\bar X_{\d}$ a smooth
compactification with normal crossing boundary $Y_{\d}$ such that  $\bar X_{\d}\to
\bar X$ is an hyperenvelope. Now we have, in the notation of \cite[4.2]{BSAP}, a
commutative diagram with exact rows
\[
\begin{CD}
0@>{}>>\Pic^0_{\bar X/k} @>{}>>\Pic^0_{\bar X_0/k}@>{}>>  \Pic^0_{\bar
X_1/k}\\
& & @A{}AA @A{}AA @A{}AA  \\
0@>{}>>\Div_{Y_{\d}}^0(\bar X_{\d})@>{}>>\Div_{Y_{0}}^0(\bar X_{0})@>{}>>
\Div_{Y_{1}}^0(\bar X_{1})
\end{CD}
\]
where $\Pic^+(X) =[\Div_{Y_{\d}}^0(\bar X_{\d})\to \Pic^0_{\bar X/k}]$ since $\bar X$ is normal. Taking Cartier duals we get an exact sequence of $1$-motives
\[
[0\to \cA_{X_1/k}^0]\longby{(p_0)*-(p_1)_*}[0\to \cA_{X_0/k}^0]\to
\Alb^-(X)\to 0.
\]

Thus $\Alb^-(X)=[0\to \cA_{X/k}^0]$ by Lemma \ref{simpehalb}. We conclude by Corollary \ref{c12.2.1} b) since $X$ is normal and $\LA{1}(X)= [0\to \cA_{X/k}^0]$.
\end{proof}

%\begin{cor} \label{c12.2.2} If $X$ is normal, $\LA{1}(X)$ is isomorphic to the
%$1$-motive $\Alb^+(X)$ defined in \cite[Ch. 3]{BSAP}.
%\end{cor}

%\begin{proof} This follows from Corollary \ref{c12.2.1} b) and \cite[Prop.
%3.1.5]{BSAP}.
%\end{proof}

\begin{remarks} 1) Note that, while $\LA{0}(X)$ and $\LA{1}(X)$ are Deligne $1$-motives, the same is not true of $\LA{2}(X)$ in general, already for $X$ smooth projective (see Corollary \ref{HLAlb}).\\
2) One could make use of Proposition~\ref{cdhex} to compute
$\LA{i} (X)$ for singular $X$ and $i>1$. However, $H^i_\eh(X, \G_m)_{\sQ}$
can be non-zero also for $i\geq 2$, therefore a precise computation for $X$ singular and higher
dimensional appears to be difficult. We did completely the case of curves in Sect. \ref{scurves}.
\end{remarks}

\subsection{$\RPic(X)$ and $H^*_\eh(X,\G_m)$}

By definition of $\RPic$, we have a morphism in $\DM_{-,\et}^\eff$
\begin{multline*}
\Tot\RPic(X)=\alpha^*\Hom_\Nis(M(X),\Z(1))\\
\to \Hom_\et(M_\et(X),\Z(1))=\uR\pi_*^X \G_m[-1].
\end{multline*}

This gives homomorphisms
\begin{equation}\label{eq12.1}
\sH^i(\Tot\RPic(X))\to \uR^{i-1}\pi_*^X \G_m,\quad i\ge 0.
\end{equation}

\begin{propose} \label{p12.6}For $i\le 2$, \eqref{eq12.1} is an isomorphism.
\end{propose}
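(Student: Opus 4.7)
The plan is to apply blow-up induction (Lemma \ref{lbl} b)) to the natural transformation \eqref{eq12.1}, in the truncated range $i\le 2$. For this I need two ingredients: (i) both sides, viewed as functors of $X\in Sch(k)$, satisfy a Mayer--Vietoris long exact sequence with respect to abstract blow-ups; (ii) the morphism \eqref{eq12.1} is an isomorphism for $X$ smooth and $i\le 2$.

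For (i), on the left hand side, $\RPic$ is a triangulated functor and Voevodsky's motive functor $M$ sends abstract blow-ups in $Sch(k)$ to exact triangles in $\DM_\gm^\eff$ (here we use $\car k=0$). Hence $X\mapsto \Tot\RPic(X)$ sends them to exact triangles in $\DM_{-,\et}^\eff$, and taking $\sH^i$ yields the required long exact sequence of \'etale sheaves. On the right hand side, Example \ref{ex11.1}\,1) (or \cite[Prop. 3.2]{geisser}) provides a long exact sequence for $\eh$-cohomology with coefficients in $\G_m[1/p]$; applying it after base change along any smooth $U\in Sm(k)$ and sheafifying in $U$ produces a long exact sequence of the sheaves $\uR^{i-1}\pi_*^X\G_m$, as desired.

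For (ii), suppose $X\in Sm(k)$. By Lemma \ref{l2.1} a), the canonical morphism
\[
\alpha^* D_{\le 1}^\Nis M(X)\;\iso\;\tau_{\le 2}D_{\le 1}^\et \alpha^* M(X)
\]
is an isomorphism, i.e. $\Tot\RPic(X)\simeq \tau_{\le 2}\uR\pi_*^X\G_m[-1]$ in $\DM_{-,\et}^\eff$. Under this identification, \eqref{eq12.1} is the composition
\[
\sH^i\bigl(\tau_{\le 2}\uR\pi_*^X\G_m[-1]\bigr)\to \sH^i\bigl(\uR\pi_*^X\G_m[-1]\bigr)=\uR^{i-1}\pi_*^X\G_m,
\]
which is an isomorphism exactly for $i\le 2$. (For $i=0$ both sides vanish; for $i=1,2$ they are $R_{\pi_0(X)/k}\G_m[1/p]$ and $\Pic_{X/k}[1/p]$ respectively, by the computation in the proof of Lemma \ref{l2.0}.)

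With (i) and (ii) in hand, Lemma \ref{lbl} b) (applied with $\cA=\HI_\et$ and $n=2$) immediately gives the result. The only real point to check is the compatibility of the natural transformation \eqref{eq12.1} with the blow-up triangles/long exact sequences on both sides: this is a formal consequence of the fact that \eqref{eq12.1} arises from a morphism of functors $\Tot\RPic\to \uR\pi_*^{(-)}\G_m[-1]$ in the triangulated category $\DM_{-,\et}^\eff$, which is automatically compatible with the exact triangles coming from abstract blow-ups on both sides. This naturality is the one place where some bookkeeping is needed, but no genuine difficulty arises.
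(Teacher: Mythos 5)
Your proposal is correct and follows the same route as the paper's (one-line) proof: blow-up induction (Lemma~\ref{lbl}~b)) reduces to the smooth case, and there the conclusion is Lemma~\ref{l2.1}~a), i.e.\ Hilbert's theorem~90. You have merely spelled out the bookkeeping (naturality of \eqref{eq12.1} with respect to abstract blow-up triangles, and the explicit identification of $\sH^i$ in the smooth case from the computation of Lemma~\ref{l2.0}) that the paper leaves implicit.
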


\begin{proof} By blow-up induction, we reduce to the smooth case, where it follows from Hilbert's theorem 90.
\end{proof}

\subsection{$H^1_\et(X,\G_m)$ and $H^1_\eh(X,\G_m)$}

In this subsection, we assume $\pi^X:X\to \Spec k$ \emph{proper}. Recall that, then, the \'etale sheaf associated to the presheaf 
\[U\mapsto \Pic(X\times U)\]
is representable by a $k$-group scheme $\Pic_{X/k}$ locally of finite type (Gro\-then\-dieck-Murre \cite{murre}). Its connected component $\Pic^0_{X/k}$ is an extension of a semi-abelian variety by a unipotent subgroup $\cU$. By homotopy invariance of $\uR^1\pi_*^X\G_m$, we get a map
\begin{equation}\label{eq12.5}
\Pic_{X/k}/\cU\to \uR^1\pi_*^X\G_m.
\end{equation}

Recall that the right hand side is a $1$-motivic sheaf by Proposition \ref{p11.3}. We have:

\begin{propose}\label{p12.7} This map is injective with lattice cokernel.
\end{propose}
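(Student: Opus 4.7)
The plan is to apply Proposition \ref{p3.6}. Since both sides are invariant under $X \rightsquigarrow X_\red$ (for the source, the difference is absorbed by $\cU$; for the target, apply Lemma \ref{l12.3} to the universal topological homeomorphism $X_\red \to X$), I may assume that $X$ is reduced and proper, so that $X \times U$ is reduced for every $U \in Sm(k)$. The target $\uR^1\pi_*^X\G_m$ is $1$-motivic by Proposition \ref{p11.3} b), and the source is also $1$-motivic as it is the extension of the discrete N\'eron-Severi sheaf by the semi-abelian variety $\Pic^0_{X/k}/\cU$.

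Next I record that, since $\cU$ is a connected unipotent $k$-group scheme and $p$ is invertible in all our Hom groups, multiplication by any $n$ prime to $p$ is an isomorphism on $\cU$. Hence ${}_n(\Pic_{X/k}/\cU) = {}_n\Pic_{X/k}$ and $(\Pic_{X/k}/\cU)/n = \Pic_{X/k}/n$. The verification of the hypotheses of Proposition \ref{p3.6} then proceeds by comparing the Kummer six-term sequences in the \'etale and \'eh topologies on $X \times U$:
\begin{gather*}
0 \to \cO(X\times U)^\times/n \to H^1_\et(X\times U,\mu_n) \to {}_n\Pic(X\times U) \to 0,\\
0 \to H^0_\eh(X\times U,\G_m)/n \to H^1_\eh(X\times U,\mu_n) \to {}_n H^1_\eh(X\times U,\G_m) \to 0,
\end{gather*}
linked by the canonical isomorphism $H^i_\et(X\times U,\mu_n) \iso H^i_\eh(X\times U,\mu_n)$ of Proposition \ref{p11.2}.

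For the $n$-cotorsion injectivity, continue each Kummer sequence one step further: both $\Pic_{X/k}/n$ and $\uR^1\pi_*^X\G_m/n$ embed into $H^2(X\times U,\mu_n)$, and these $H^2$'s agree by Proposition \ref{p11.2}; the composition through $H^2$ is the \'etale Kummer embedding, so $\Pic_{X/k}/n \to \uR^1\pi_*^X\G_m/n$ is injective. For the $n$-torsion bijectivity, it suffices to show that the canonical map $\cO(X\times U)^\times \to H^0_\eh(X\times U,\G_m)$ is surjective, since then the two right-hand terms in the Kummer diagram coincide. Using Proposition \ref{pdescent} b) applied to a smooth proper hyperenvelope $X_\bullet \to X$ (which exists since $X$ is proper), $H^0_\eh(X\times U,\G_m)$ is realised as the equalizer of $\cO(X_0 \times U)^\times \rightrightarrows \cO(X_1\times U)^\times$, and the relative analogue of Lemma \ref{leh} c) identifies this equalizer with $\cO(X\times U)^\times$ (both describe ``locally on $U$, constants on connected components of the fibres''). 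This gives the required surjectivity.

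With both hypotheses of Proposition \ref{p3.6} verified, we conclude that $\Pic_{X/k}/\cU \to \uR^1\pi_*^X\G_m$ is injective with lattice cokernel. The main obstacle is the last point, namely the analysis of $\cO(X\times U)^\times \to H^0_\eh(X\times U, \G_m)$ in the relative situation: the absolute case $U = \Spec k$ is covered by Lemma \ref{leh} c), but promoting surjectivity to arbitrary smooth $U$ requires some care with hyperenvelope descent (and eventually passing to stalks at geometric points of $U$ where the assertion reduces cleanly to the absolute case).
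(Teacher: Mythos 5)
Your approach is the same as the paper's at heart: compare the Kummer sequences in the \'etale and $\eh$ topologies via Proposition~\ref{p11.2}, and conclude with Proposition~\ref{p3.6}. The difference---and the source of the gap you yourself flag at the end---is that you try to verify the hypotheses of Proposition~\ref{p3.6} on sections over an arbitrary smooth $U$, whereas those hypotheses are statements about a morphism of $1$-motivic sheaves and, by Proposition~\ref{p4.2}, can be checked by evaluating at $\bar k$. Once you work at $\bar k$ the relative difficulty disappears: $X_{\bar k}$ is still proper, so Lemma~\ref{leh}~c) gives $H^0_\et(X_{\bar k},\G_m)\iso H^0_\eh(X_{\bar k},\G_m)$ directly, and then the Kummer comparison runs exactly as in your $n$-cotorsion argument. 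You even hint at this remedy in your last sentence (``eventually passing to stalks at geometric points of $U$''), but you present it as an afterthought rather than the actual mechanism; as written, the appeal to an unproved ``relative analogue of Lemma~\ref{leh}~c)'' leaves a genuine gap at the $n$-torsion step.

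Two further remarks. First, the opening reduction to $X$ reduced is not needed for the argument (the subsection works with reduced $X$ throughout, and in any case the ingredients only require $X_{\bar k}$ proper); moreover the claim that the non-reduced defect ``is absorbed by $\cU$'' is itself a nontrivial assertion---true, but it requires observing that the kernel and cokernel of $\Pic_{X/k}\to\Pic_{X_\red/k}$ are unipotent, which you do not argue. Second, your displayed Kummer sequences silently identify $\cO(X\times U)^\times$ with a subgroup of $H^0_\eh(X\times U,\G_m)$; that injectivity does hold (Lemma~\ref{leh}~a) for the reduced scheme $X\times U$), but it is worth saying---and again it becomes a non-issue once you pass to $\bar k$-points.
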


\begin{proof} Consider multiplication by an integer $n>1$ on both sides. Using the Kummer exact sequence, Proposition \ref{p11.2} and Lemma \ref{leh} c), we find that \eqref{eq12.5} is an isomorphism on $n$-torsion and injecitve on $n$-cotorsion. The conclusion then follows from Proposition \ref{p3.6}.
\end{proof}

\subsection{$\RA{1}(X)$ and $\Pic^+(X)$ for $X$ proper}

\begin{thm}\label{t12.8} For $X$ proper, the composition
\[\Pic_{X/k}/\cU\to \uR^1\pi_*^X\G_m\to \sH^2(\Tot\RPic(X))\]
where the first map is \eqref{eq12.5} and the second one is the inverse of the isomorphism \eqref{eq12.1}, induces an isomorphism
\[\Pic^+(X)\iso \RA{1}(X)\]
where $\Pic^+(X)$ is the $1$-motive defined in \cite[Ch. 4]{BSAP}.
\end{thm}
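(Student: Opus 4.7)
The plan is to mimic the strategy of Proposition~\ref{c12.2.2}, reducing the identification to a smooth proper hyperenvelope of $X$ where both $1$-motives become explicit. First, I would make the target of the composition transparent: by Proposition~\ref{p12.6}, $\sH^2(\Tot\RPic(X))$ equals $\uR^1\pi_*^X\G_m$, which is a $1$-motivic sheaf by Proposition~\ref{p11.3}, and Proposition~\ref{p12.7} shows that $\Pic_{X/k}/\cU$ sits inside it with lattice cokernel. Feeding this into the second exact sequence of Proposition~\ref{p3.10} identifies the semi-abelian part $G^1$ of $\RA{1}(X)=[L^1\to G^1]$ with $\Pic^0_{X/k}/\cU$, which matches the semi-abelian part of $\Pic^+(X)$; the lattice $L^1$ is then identified from the N\'eron--Severi-like cokernel of \eqref{eq12.5}, showing that the composition of the theorem factors through a well-defined morphism of $1$-motives $\Pic^+(X)\to \RA{1}(X)$.

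Next, by resolution of singularities in characteristic~$0$, I would choose a smooth proper hyperenvelope $\Xs\to X$. On one side, Proposition~\ref{pdescent} together with Proposition~\ref{peh} yield (up to the ranges of $\sH^i$ with $i\le 2$ controlled by Proposition~\ref{p12.6}) an exact sequence
\[
\RA{1}(X)\to \RA{1}(X_0)\to \RA{1}(X_1)
\]
arising from $\eh$ descent on $\uR^1\pi_*^{\Xs}\G_m$. On the other side, the very definition of $\Pic^+$ in \cite{BSAP} is expressed through such a hyperenvelope, producing a parallel exact sequence
\[
\Pic^+(X)\to \Pic^+(X_0)\to \Pic^+(X_1).
\]
For each smooth proper $X_j$, Corollary~\ref{HRPic} gives $\RA{1}(X_j)=[0\to\Pic^0_{X_j/k}]=\Pic^+(X_j)$, and the morphism of the theorem manifestly reduces to the identity in this case.

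The main obstacle will be verifying that the motivically defined comparison morphism is truly compatible, at the simplicial level, with the one built into the BSAP construction; once this is established, the five-lemma gives the result. A cleaner alternative, avoiding explicit simplicial bookkeeping, is to invoke Proposition~\ref{p3.6}: it suffices to check that $\Pic^+(X)\to \RA{1}(X)$ is bijective on $n$-torsion and on $n$-cotorsion for every $n$ prime to $p$. Via the Kummer sequence together with Proposition~\ref{p11.2}, both sides compute the same truncations of $H^1_\et(X,\mu_n)$ and $H^2_\et(X,\mu_n)$, so the question reduces to matching two natural isomorphisms arising from the cycle and Chern class maps---a compatibility already built into the construction of $\Pic^+$ in \cite{BSAP}. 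Proposition~\ref{p3.6} then promotes this to an isomorphism of $1$-motives.
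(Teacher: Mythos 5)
Your opening observations are on the right track: Propositions \ref{p12.6}, \ref{p12.7}, and the exact sequence of Proposition \ref{p3.10} do indeed combine to identify the semi-abelian part $G^1$ of $\RA{1}(X)=[L^1\to G^1]$ with $\Pic^0_{X/k}/\cU$. But there is a genuine gap after that, and it leads you to over-engineer the rest.

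The missing observation is that \emph{for $X$ proper the lattice part $L^1$ is zero}. This follows by Cartier duality from Corollary \ref{c12.2.1}~a): $\LA{1}(X)=[L_1\to A_1]$ with $A_1$ an abelian variety (no torus), so its dual $\RA{1}(X)=\LA{1}(X)^*$ has trivial lattice. Your sentence ``the lattice $L^1$ is then identified from the N\'eron--Severi-like cokernel of \eqref{eq12.5}'' conflates two different lattices: the cokernel of \eqref{eq12.5} contributes to $L^2$ (the lattice of $\RA{2}(X)$, i.e.\ the N\'eron--Severi part), not to $L^1$. Once one knows $L^1=0$, the companion fact $\Pic^+(X)\simeq[0\to\Pic^0_{X/k}/\cU]$ from \cite[Lemma 5.1.2 and Remark 5.1.3]{BSAP} shows both $1$-motives have zero lattice, and the identification $G^1\simeq\Pic^0_{X/k}/\cU$ that you already have completes the proof in one step. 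No hyperenvelopes, no simplicial descent, no five-lemma, and no torsion/cotorsion computation via Proposition \ref{p3.6} are needed. The hyperenvelope route is not wrong in principle, but it brings in exactly the ``simplicial bookkeeping'' compatibility you flag as the main obstacle, and that compatibility is nontrivial to pin down; the paper avoids it entirely by the observation above.
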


\begin{proof}  Proposition \ref{p3.10}  yields an exact sequence
\[L^1\to G^1\to \sH^2(\Tot\RPic(X))\to L^2\]
where we write $\RA{i}(X)=[L^i\to G^i]$. Propositions \ref{p12.6} and \ref{p12.7} then imply that the map of Theorem \ref{t12.8} induces an isomorphism $\Pic^0_{X/k}/\cU\iso G^1$. The conclusion follows, since on the one hand $\Pic^+(X)\simeq [0\to \Pic^0_{X/k}/\cU]$ by \cite[Lemma 5.1.2 and Remark 5.1.3]{BSAP}, and on the other hand the dual of Corollary \ref{c12.2.1} a) says that $L^1=0$.
\end{proof}

\begin{cor}\label{c12.3} For $X$ proper there is a canonical isomorphism 
\[\LA{1}(X)\iso \Alb^-(X).\qed\]
\end{cor}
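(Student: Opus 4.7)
The plan is to deduce this immediately by applying Cartier duality to Theorem~\ref{t12.8}. Recall from Corollary~\ref{rpdla} that for any $M\in \DM_\gm^\eff$ we have the canonical identification $\RPic(M)^* = \LAlb(M)$, and hence passing to cohomology with respect to the appropriate $t$-structures gives the relation $\RA{i}(X)^* = \LA{i}(X)$ via Cartier duality on $D^b(\M[1/p])$ (see Proposition~\ref{pcd} and the discussion after Notation~\ref{not}). On the other hand, by definition in \cite[Ch.~4--5]{BSAP}, the $1$-motive $\Alb^-(X)$ is precisely the Cartier dual of $\Pic^+(X)$.

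Thus, taking the Cartier dual of the canonical isomorphism
\[
\Pic^+(X)\iso \RA{1}(X)
\]
produced by Theorem~\ref{t12.8}, we obtain a canonical isomorphism
\[
\LA{1}(X) = \RA{1}(X)^* \iso \Pic^+(X)^* = \Alb^-(X),
\]
which is the desired statement. The only point that requires minor care is compatibility: one must check that the Cartier duality on $D^b(\M[1/p])$ of \S\ref{1.8} restricts, on the heart ${}^t\M[1/p]$, to Deligne's Cartier duality used to define $\Alb^-$ from $\Pic^+$ in \cite{BSAP}. But this is built into the construction of the duality functor (Lemma~\ref{dualt} and Proposition~\ref{pcd}) and is precisely the reason for the compatibility result Theorem~\ref{teq}; there is no further content beyond what has already been established.

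The main (and essentially only) substantive step has therefore already been carried out in Theorem~\ref{t12.8}; the corollary is a formal consequence and no additional obstacle arises.
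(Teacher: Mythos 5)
Your proof is correct and takes essentially the same approach as the paper, which simply states the corollary with \qed as an immediate consequence of Theorem~\ref{t12.8} via the duality $\RA{i}(X) = \LA{i}(X)^*$ (equivalently $\LA{i}(X) = \RA{i}(X)^*$) recorded after Corollary~\ref{rpdla}. Your remark on compatibility of dualities is a reasonable sanity check and is indeed built into the constructions of \S\ref{1.8}; no further justification is required.
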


\begin{comment}
\begin{cor} \label{c12.4} For $X$ proper and normal, $(\cA_{X/k}^\eh)^0\iso \cA_{X/k}^0$ (an abelian variety). Equivalently, $\uG(X)\iso \uG_\eh(X)$ for any semi-abelian $G$.
\end{cor}

\begin{proof} By Corollaries \ref{c12.3} and \ref{c12.2.1}, $(\cA_{X/k}^\eh)^0\simeq (\Pic^0_{X/k}/\cU)^*$; now $\Pic^0_{X/k}$ is an abelian variety with dual $\cA_{X/k}^0$ by \cite[Cor. 3.2 and Th. 3.3 (iii)]{FGA}.
\end{proof}

\begin{remark} We don't know if this corollary remains true for $X$ normal open.
\end{remark}
\end{comment}

\subsection{The Borel-Moore variant}

Let $X\in Sch$ be provided with a compactification $\bar X$ and closed complement $Z\by{i} \bar X$. The relative Picard functor is then representable by a $k$-group scheme locally of finite type $\Pic_{\bar X,Z}$, and we shall informally denote by $\cU$ its unipotent radical. Similarly to \eqref{eq12.1} and \eqref{eq12.5}, we have two canonical maps
\begin{equation}\label{eq12.9}
\sH^2(\Tot\RPic^c(X))\to \Pic_{\bar X,Z}^\eh\leftarrow \Pic_{\bar X,Z}/\cU
\end{equation}
where $\Pic_{\bar X,Z}^\eh$ is by definition the $1$-motivic sheaf associated to the presheaf $U\mapsto H^1_\eh(\bar X\times U,(\G_m)_{\bar X\times U}\to i_*(\G_m)_{Z\times U})$ (compare \cite[2.1]{BSAP}). Indeed, the latter group is canonically isomorphic to
\[\Hom_{\DM_{-,\et}^\eff}(M^c(X\times U),\Z(1)[2])\] 
via the localisation exact triangle. From Theorem \ref{t12.8} and Proposition \ref{c3.1*} b), we then deduce:

\begin{thm}\label{t12.9} The maps \eqref{eq12.9} induce an isomorphism
\[\RA{1}^c(X)\simeq [0\to\Pic^0(\bar X,Z)/\cU].\qed\]
\end{thm}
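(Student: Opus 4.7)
The argument should run parallel to that of Theorem \ref{t12.8}: write $\RA{1}^c(X) = [L^1\to G^1]$ in ${}^t\M[1/p]$, and show separately that $L^1 = 0$ and that $G^1 \cong \Pic^0(\bar X, Z)/\cU$.

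For the vanishing of $L^1$, the plan is to apply the contravariant functor $\RPic$ to the localization triangle $M(Z)\to M(\bar X)\to M^c(X)\to M(Z)[1]$ and take cohomology with respect to the $t$-structure with heart ${}^t\M[1/p]$, yielding a long exact sequence
\[
\RA{0}(\bar X)\to \RA{0}(Z)\to \RA{1}^c(X)\to \RA{1}(\bar X)\to \RA{1}(Z).
\]
Both $\bar X$ and $Z$ are proper, so Theorem \ref{t12.8} together with the dual of Corollary \ref{c12.2.1} a) gives $\RA{1}(\bar X) = [0\to \Pic^0_{\bar X/k}/\cU]$ and $\RA{1}(Z) = [0\to \Pic^0_{Z/k}/\cU]$, both with trivial lattice part, while Proposition \ref{c3.1*} b) gives $\RA{0}(\bar X)$ and $\RA{0}(Z)$ pure of weight $-2$. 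Since the functor $\gr^W_0$ on ${}^t\M[1/p]$ is exact and vanishes on all four surrounding terms, $\gr^W_0\RA{1}^c(X) = 0$, so $L^1 = 0$.

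For the identification of $G^1$, Proposition \ref{p3.10} together with $L^1 = 0$ yields an exact sequence in $\Shv_1$,
\[
0\to G^1\to \sH^2(\Tot\RPic^c(X))\to L^2,
\]
so $G^1$ injects into $\sH^2(\Tot\RPic^c(X))$. One then needs compact-support analogues of Propositions \ref{p12.6} and \ref{p12.7}. The analogue of Proposition \ref{p12.6} asserts that the first map in \eqref{eq12.9} is an isomorphism on $\sH^i$ for $i\le 2$: this follows from a five-lemma applied to the localization triangles for $\Tot\RPic$ and $\uR\pi_*\G_m[-1]$, comparing with Proposition \ref{p12.6} for the proper schemes $\bar X$ and $Z$. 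The analogue of Proposition \ref{p12.7} asserts that $\Pic_{\bar X, Z}/\cU \hookrightarrow \Pic^\eh_{\bar X, Z}$ with lattice cokernel, and is proved by exactly the same Kummer sequence plus Proposition \ref{p3.6} argument, invoking Proposition \ref{p11.2} and Lemma \ref{l11.3} c) on $\bar X$ and $Z$ separately. Combining these, $G^1$ lies inside the subsheaf $\Pic_{\bar X, Z}/\cU$ (its image has no lattice quotient) and, being connected semi-abelian, inside the connected component $\Pic^0(\bar X, Z)/\cU$; the reverse inclusion follows from the characterisation of the semi-abelian part of a $1$-motivic sheaf via Theorem \ref{t3.2.4} a), exactly as in Theorem \ref{t12.8}.

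The main technical obstacle will be verifying the naturality needed for the five-lemma in the compact-support version of Proposition \ref{p12.6}: one must check that the comparison morphism from $\Tot\RPic$ to $\uR\pi_*\G_m[-1]$ is compatible with the boundary maps of the two localization triangles. This reduces to unwinding the definition of $M^c(X)$ and of $\ihom_\et(-,\Z(1))$ on such triangles and is essentially formal, but requires some care. Beyond this point the argument is a direct repetition of the steps in the proof of Theorem \ref{t12.8}, combined with standard weight-filtration bookkeeping in ${}^t\M[1/p]$.
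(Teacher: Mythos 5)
Your approach is correct and matches what the paper intends but does not spell out: apply $\RPic$ and ${}^tH^*$ to the localisation triangle $M(Z)\to M(\bar X)\to M^c(X)\to M(Z)[1]$, use the proper case (Theorem \ref{t12.8} and Proposition \ref{c3.1*} b)) for the outer terms, and transport the identification of the semi-abelian part via the compact-support analogues of Propositions \ref{p12.6} and \ref{p12.7}, which is exactly what the paper's phrase ``similarly to \eqref{eq12.1} and \eqref{eq12.5}'' is pointing at.

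One technical correction to your vanishing argument for $L^1$: the functor $\gr_0^W$ is \emph{not} exact on ${}^t\M[1/p]$ integrally. The paper's own Remark \ref{r1.8.6}~1) gives the counterexample $0\to [\Z\to 0]\by{m}[\Z\by{f}A]\to [0\to B]\to 0$ (a short exact sequence in the sense of Definition \ref{dexact}), on which $\gr_0^W$ yields $\Z\by{m}\Z\to 0$, not exact for $m>1$. What you actually need is the weaker observation that the $1$-motives with $L_{\fr}=0$ (i.e.\ weights $\le -1$) form a Serre subcategory of ${}^t\M[1/p]$: by Corollary \ref{corexseq} every short exact sequence can be represented by a short exact sequence of complexes $0\to [L'\to G']\to[L\to G]\to[L''\to G'']\to 0$, and $L$ is torsion if and only if $L'$ and $L''$ are. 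Since $\RA{1}^c(X)$ is an extension of a subobject of $\RA{1}(\bar X)=[0\to\Pic^0_{\bar X/k}/\cU]$ by a quotient of $\RA{0}(Z)=[0\to \Z[\pi_0(Z)]^*]$, both with trivial $L_{\fr}$, we get $L^1 = 0$. With this adjustment the rest of your argument goes through as outlined.
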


The following is a sequel of Proposition \ref{c3.1c}:

\begin{cor}\label{c3.1d} Let $X\in Sch(k)$ of dimension $d$. Then:\\
a) $\LA{1}^c(X)=[L_1\to A_1]$, where $A_1$ is an abelian
variety. In particular, $\LA{1}^c(X)$ is a Deligne $1$-motive.\\
b) If $X$ is normal connected and not proper, let $\bar X$ be a normal
compactification of $X$. Then $\rank L_1=\#\pi_0(\bar X - X)-1$.
\end{cor}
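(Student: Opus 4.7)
Plan. Both parts follow from Theorem \ref{t12.9}, which identifies $\RA{1}^c(X)$ with $[0 \to \Pic^0(\bar X, Z)/\cU]$ for a compactification $\bar X$ of $X$ with complement $Z$, combined with Cartier duality (Corollary \ref{rpdla}): $\LA{1}^c(X) = (\RA{1}^c(X))^*$.

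For part (a), which holds for arbitrary $X\in Sch(k)$: the semi-abelian variety $G' := \Pic^0(\bar X, Z)/\cU$ fits into an extension $0 \to T' \to G' \to A' \to 0$ with $T'$ a torus and $A'$ an abelian variety. Applying Deligne's explicit Cartier duality construction (Lemma \ref{dualt} and the discussion in \S \ref{1.8}), the dual $[0 \to G']^*$ is realised as the Deligne 1-motive $[L_1 \to A_1]$, with $L_1 = \Hom(T', \G_m)$ the character lattice of $T'$ and $A_1 = (A')^*$ the dual abelian variety. This exhibits $\LA{1}^c(X)$ as a Deligne 1-motive with abelian semi-abelian part.

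For part (b), I take $\bar X$ a normal compactification of the normal connected non-proper $X$, and compute $\rank L_1$ most cleanly via the localization triangle $M(Z) \to M(\bar X) \to M^c(X) \xrightarrow{+1}$ (available in characteristic $0$). Applying $\LAlb$ and extracting the long exact sequence of ${}_tH_*$-objects yields
$$\LA{1}(Z) \to \LA{1}(\bar X) \to \LA{1}^c(X) \to \LA{0}(Z) \to \LA{0}(\bar X) \to \LA{0}^c(X) \to 0.$$
The relevant inputs are: $\LA{0}(\bar X) = [\Z[\pi_0(\bar X)] \to 0]$; $\LA{0}(Z) = [\Z[\pi_0(Z)] \to 0]$; $\LA{0}^c(X) = 0$ by Proposition \ref{c3.1c} b) (since $X$ is connected and not proper); and $\LA{1}(\bar X) = [0 \to \cA^0_{\bar X/k}]$ with $\cA^0_{\bar X/k}$ an abelian variety, by Corollary \ref{c12.2.1} b), c) applied to the normal proper $\bar X$. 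The map $\LA{0}(Z) \to \LA{0}(\bar X)$ is the augmentation (surjective since $X$ is connected, so every component of $Z$ lies in the unique component of $\bar X$), with kernel $I$ a lattice of rank $\#\pi_0(Z) - 1$. The map $\LA{1}(Z) \to \LA{1}(\bar X)$ kills the lattice part of $\LA{1}(Z)$ (target has no lattice component), and the image of its semi-abelian part is a connected subgroup of the abelian variety $\cA^0_{\bar X/k}$, hence an abelian subvariety. The cokernel is thus $[0 \to A]$ with $A$ an abelian variety, and the long exact sequence becomes
$$0 \to [0 \to A] \to \LA{1}^c(X) \to [I \to 0] \to 0,$$
which forces $\LA{1}^c(X) = [I \to A]$ with $L_1 \cong I$ of rank $\#\pi_0(Z) - 1$ and $A_1 \cong A$ abelian.

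The main technical point is reading off the structure of the middle 1-motive from this short exact sequence in ${}_t\M[1/p]$: inspecting lattice parts gives $0 \to 0 \to L_1 \to I \to 0$, forcing $L_1 \cong I$ (a lattice); inspecting semi-abelian parts gives $0 \to A \to A_1 \to 0 \to 0$, forcing $A_1 \cong A$ (an abelian variety). This simultaneously recovers part (a) for normal $X$. A minor subtlety is that $\LA{1}(Z)$ has the form $[L'' \to G'']$ with $G''$ possibly containing a finite multiplicative-type component (per Definition \ref{1cot}), but after inverting $p$ the image of any such part in the abelian variety $\cA^0_{\bar X/k}$ is absorbed, yielding an abelian quotient.
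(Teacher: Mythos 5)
Your proof is correct and takes essentially the same approach as the paper: part a) via Theorem \ref{t12.9} plus Cartier duality, and part b) via the localisation triangle and the induced sequence of lattice parts, using $\LA{1}(\bar X)=[0\to\cA^0_{\bar X/k}]$ (from normality), $\LA{0}(Z)=[\Z[\pi_0(Z)]\to 0]$, $\LA{0}(\bar X)=[\Z\to 0]$ and $\LA{0}^c(X)=0$. The only caveat is that the paper is careful to claim only that the complex of discrete parts is \emph{almost exact} (homology finite), since passing from a short exact sequence in ${}_t\M[1/p]$ to exact sequences on lattice and semi-abelian components only holds up to finite ambiguity coming from the choice of representatives modulo quasi-isomorphism; your stronger claim ``$L_1\cong I$'' should be read as an isogeny, which is all that is needed for the rank statement.
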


\begin{proof} a) follows immediately from Theorem \ref{t12.9}. For b), consider
the complex of discrete parts associated to the exact sequence \eqref{loc}: we
get with obvious notation an almost exact sequence
\[L_1(\bar X)\to L_1(X)\to L_0(\bar X-X)\to L_0(\bar X)\to L_0(X)\]
where ``almost exact" means that its homology is finite. The last group is $0$
and $L_0(\bar X)=\Z$; on the other hand,
$L_1(\bar X)=0$ by Corollary \ref{c12.2.1} b). Hence the claim.
\end{proof}

\begin{remark} As a consequence we see that in b), the number of connected
components of $\bar X-X$ only depends on $X$. Here is an elementary proof of this
fact: let $\bar X'$ be another normal compactification and $\bar X''$ the closure
of $X$ in $\bar X\times \bar X'$. Then the two maps
$\bar X''\to \bar X$ and $\bar X''\to \bar X'$ have connected fibres by Zariski's main theorem,
thus
$\bar X-X$ and $\bar X'-X$ have the same number of connected components as $\bar X''-X$. (The
second author is indebted to Marc Hindry for a discussion leading to this proof.)
\end{remark}

We shall also need the following computation in the next subsection.

\begin{thm}\label{t12.9.2} Let $\bar X$ be smooth and proper, $Z\subset \bar X$ a divisor with normal crossings and $X= \bar X-Z$. Let $Z_1,\dots,Z_r$ be the irreducible components of $Z$ and set
\[Z^{(p)} =
\begin{cases}
\bar X &\text{if $p=0$}\\
\coprod\limits_{i_1<\dots<i_p} Z_{i_1}\cap\dots \cap Z_{i_p}&\text{if $p>0$.}
\end{cases}\]
Let $\NS^{(p)}_c(X)$ (\resp $\Pic^{(p)}_c(X)$, $T^{(p)}_c(X)$ be the cohomology (\resp the connected component of the cohomology) in degree $p$ of the complex
\[\dots \to\NS(Z^{(p-1)})\to\NS(Z^{(p)})\to\NS(Z^{(p+1)})\to\dots\]
(\resp 
\[\dots \to\Pic^0(Z^{(p-1)})\to\Pic^0(Z^{(p)})\to\Pic^0(Z^{(p+1)})\to\dots\]
\[\dots \to\R_{\pi_0(Z^{(p-1)})/k} \G_m\to\R_{\pi_0(Z^{(p)})/k} \G_m\to\R_{\pi_0(Z^{(p+1)})/k} \G_m\to\dots).\]
Then, for all $n\ge 0$, $\RA{n}^c(X)$ is of the form $[\NS^{(n-2)}_c(X)\by{u^n}G^{(n)}_c]$, where $G^{(n)}_c$ is an extension of $\Pic^{(n-1)}_c(X)$ by $T^{(n)}_c(X)$.
\end{thm}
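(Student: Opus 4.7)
The plan is to combine the localisation triangle for $M^c(X)$ with a \v{C}ech-type resolution of $M(Z)$ coming from the decomposition of $Z$ into its smooth components. Starting from the exact triangle
\[
M(Z) \to M(\bar X) \to M^c(X) \to M(Z)[1]
\]
in $\DM_{\gm,\et}^\eff$ and iterating the Mayer-Vietoris triangles of \S\ref{blowups} along pairs of components of $Z$ (which applies because, for a normal crossings divisor, each intersection $Z_{i_1} \cap \dots \cap Z_{i_p}$ is smooth), I would apply the contravariant functor $\RPic$ to obtain a spectral sequence in $D^b(\M[1/p])$
\[
E_1^{p,q} = \RA{q}(Z^{(p)}) \Rightarrow \RA{p+q}^c(X), \qquad p \ge 0,
\]
with the convention $Z^{(0)} := \bar X$ in column zero.

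Since each $Z^{(p)}$ is smooth projective, Corollary~\ref{HRPic} gives the $E_1$-page concentrated in three rows: tori $[0 \to R_{\pi_0(Z^{(p)})/k}\G_m]$ in $q = 0$, abelian varieties $[0 \to \Pic^0(Z^{(p)})]$ in $q = 1$, and lattices $[\NS(Z^{(p)}) \to 0]$ in $q = 2$. The differentials $d_1$ respect the weight structure and hence stay within each row, and the $p$-th cohomology of the $q$-th row matches exactly the complex defining $T^{(p)}_c(X)$, $\Pic^{(p)}_c(X)$, $\NS^{(p)}_c(X)$ respectively; thus
\[
E_2^{p,0} = [0 \to T^{(p)}_c(X)], \quad E_2^{p,1} = [0 \to \Pic^{(p)}_c(X)], \quad E_2^{p,2} = [\NS^{(p)}_c(X) \to 0].
\]

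The main obstacle is showing the spectral sequence degenerates at $E_2$. For $r \ge 3$, the differentials $d_r$ are morphisms of derived degree $r - 1 \ge 2$ between pure $1$-motives, so they vanish rationally by Proposition~\ref{iso1} ($\M \otimes \Q$ has cohomological dimension $\le 1$), and the integral vanishing follows because the $E_2$-terms are torsion-free after inverting $p$. The $d_2$-differentials lie in $\Ext^1$ between pure $1$-motives of consecutive weights and could a priori be nonzero; their vanishing is equivalent to the degeneration at $E_2$ of the weight spectral sequence for $H^*_c(X, \Q(1))$, which is Deligne's theorem (Hodge II), and is imported either through the Hodge or $\ell$-adic realisation of $\RPic^c(X)$, or through Bondarko's Chow weight structure on $\DM_\gm^\eff \otimes \Q$.

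Granted degeneration, the three-row $E_\infty$-filtration of $\RA{n}^c(X)$ has graded pieces $E_\infty^{n-2, 2}$, $E_\infty^{n-1, 1}$, $E_\infty^{n, 0}$, equal respectively to $[\NS^{(n-2)}_c(X) \to 0]$, $[0 \to \Pic^{(n-1)}_c(X)]$, $[0 \to T^{(n)}_c(X)]$. Assembling them into a single $1$-motive in ${}^t\M[1/p]$ via the weight filtration (and Proposition~\ref{p3.10} for the gluing) recovers $\RA{n}^c(X) \simeq [\NS^{(n-2)}_c(X) \by{u^n} G^{(n)}_c]$, where $G^{(n)}_c$ is an extension of $\Pic^{(n-1)}_c(X)$ by $T^{(n)}_c(X)$; the map $u^n$ and the extension class for $G^{(n)}_c$ are the extension data read off from the filtration.
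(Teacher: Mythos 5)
Your overall strategy is the same as the paper's: build a \v Cech-type spectral sequence
$E_1^{p,q} = \RA{q}^c(Z^{(p)}) \Rightarrow \RA{p+q}^c(X)$
from the normal-crossings decomposition of $Z$ (with $Z^{(0)}=\bar X$), identify the $E_2$-page via Corollary \ref{HRPic} (your use of $\RA{q}$ rather than $\RA{q}^c$ is harmless since each $Z^{(p)}$ is proper), and conclude after establishing degeneration.

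However, your degeneration argument contains a genuine conceptual error. The spectral sequence is a spectral sequence of \emph{objects in the abelian category} ${}^t\M[1/p]$, so every differential $d_r\colon E_r^{p,q}\to E_r^{p+r,q-r+1}$ is an actual morphism in ${}^t\M[1/p]$, i.e.\ an element of a $\Hom$-group, not of any $\Ext$-group. There is no ``derived degree $r-1$'' to speak of, Proposition~\ref{iso1} on cohomological dimension is not relevant, and $d_2$ does not live in $\Ext^1$; in particular, invoking Deligne's Hodge~II or Bondarko's weight structure to kill $d_2$ is misplaced and much heavier machinery than needed. The paper's argument is elementary: since the $E_2$-page lives in three rows consisting of pure $1$-motives of weights $-2$ (tori), $-1$ (abelian varieties), $0$ (lattices), the only potentially nonzero differentials are $d_2\colon E_2^{p,2}\to E_2^{p+2,1}$ (lattice to abelian variety), $d_2\colon E_2^{p,1}\to E_2^{p+2,0}$ (abelian variety to torus), and $d_3\colon E_3^{p,2}\to E_3^{p+3,0}$ (lattice to torus); each of these is a morphism from a pure $1$-motive of strictly higher weight to one of strictly lower weight, which vanishes directly by Proposition~\ref{hom}~(b) and~(c). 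Your conclusion is correct, but the route you took to it would not survive scrutiny: you were trying to prove vanishing of morphisms that are already seen to be zero by simply computing $\Hom$ in ${}^t\M[1/p]$.
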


\begin{proof} A standard argument (compare e.g. \cite[3.3]{eklv}) yields a spectral sequence of cohomological type in ${}^t\M$:
\[E_1^{p,q} = \RA{q}^c(Z^{(p)})\Rightarrow \RA{p+q}^c(X).\]

By Corollary \ref{HRPic}, we have $E_2^{p,2}=[\NS^{(p)}_c(X)\to 0]$, $E_2^{p,1}=[0\to\Pic^{(p)}_c(X)]$ and $E_2^{p,0}=[0\to T^{(p)}_c(X)]$. By Proposition \ref{hom}, all $d_2$ differentials are $0$, hence the theorem.
\end{proof}

\begin{cor}\label{c12.9} With notation as in Theorem \ref{t12.9.2}, the complex $\RPic(M^c(X)(1)[2])$ is \qi to
\[\dots\to [\Z^{\pi_0(Z^{(p-2)})}\to 0]\to \dots\]
In particular, $\RA{0}(M^c(X)(1)[2])=\RA{1}(M^c(X)(1)[2])=0$ and $\RA{2}(M^c(X)(1)[2])=[\Z^{\pi_0^c(X)}\to 0]$ (see Definition \ref{d11.5}).
\end{cor}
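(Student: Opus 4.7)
The plan is to imitate the spectral sequence argument used for Theorem~\ref{t12.9.2}, but applied to $\RPic$ of the Tate-twisted motive $M^c(X)(1)[2]$.

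First, I construct the analogous spectral sequence. Since $\otimes\Z(1)[2]$ is exact on $\DM_\gm^\eff$ and the simplicial hyperenvelope attached to $Z^{(p)}$ (with $Z^{(0)}=\bar X$) resolves $M^c(X)$ by the standard argument from \cite[3.3]{eklv} recalled in the proof of Theorem~\ref{t12.9.2}, I obtain a cohomological spectral sequence in ${}^t\M[1/p]$
\[E_1^{p,q} = \RA{q}(M^c(Z^{(p)})(1)[2])\Rightarrow \RA{p+q}(M^c(X)(1)[2]),\]
with $d_1$ the alternating sum of restrictions. The $Z^{(p)}$ are smooth and proper, so $M^c(Z^{(p)})=M(Z^{(p)})$.

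Next I compute the $E_1$-page. For any smooth $Y$, the formula \eqref{eq8.1} together with a shift gives $\LA{q}(M(Y)(1)[2])=[0\to R_{\pi_0(Y)/k}\G_m]$ if $q=2$, and $0$ otherwise. Applying Cartier duality via Corollary~\ref{rpdla} exchanges this toric $1$-motive with its character lattice:
\[\RA{q}(M(Y)(1)[2])=\begin{cases}[\Z^{\pi_0(Y)}\to 0]&\text{if }q=2,\\ 0&\text{otherwise.}\end{cases}\]
Only the row $q=2$ of $E_1$ is therefore nonzero, so $d_r=0$ for all $r\ge 2$ and the spectral sequence degenerates at $E_2$. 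Consequently $\RPic(M^c(X)(1)[2])$ is quasi-isomorphic to the single-row complex
\[\cdots\to [\Z^{\pi_0(Z^{(p-2)})}\to 0]\to [\Z^{\pi_0(Z^{(p-1)})}\to 0]\to\cdots,\]
supported in cohomological degrees $\ge 2$ and with differentials given (up to sign) by the Cartier duals of the restriction maps on the characters of $R_{\pi_0(Z^{(\cdot)})/k}\G_m$.

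Finally I extract the specific values. Vanishing of the complex in degrees $\le 1$ immediately yields $\RA{0}(M^c(X)(1)[2])=\RA{1}(M^c(X)(1)[2])=0$. In degree $2$, $\RA{2}(M^c(X)(1)[2])$ equals the kernel of $\Z^{\pi_0(\bar X)}\to \Z^{\pi_0(Z^{(1)})}$: a basis element attached to a component $\bar X_\alpha$ of $\bar X$ is sent to the sum of the basis elements of components of $Z^{(1)}=\coprod Z_i$ meeting $\bar X_\alpha$, hence lies in the kernel iff $\bar X_\alpha\cap Z=\emptyset$, iff $\bar X_\alpha\subseteq X$, iff $\bar X_\alpha$ is a proper connected component of $X$. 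Thus the kernel is $\Z^{\pi_0^c(X)}$, giving $\RA{2}(M^c(X)(1)[2])=[\Z^{\pi_0^c(X)}\to 0]$. The only genuine obstacle is bookkeeping: checking that the identification of the $d_1$ via Cartier duality genuinely agrees with the signed restriction maps appearing in the statement; once that is verified, everything is formal from \eqref{eq8.1} and Corollary~\ref{rpdla}.
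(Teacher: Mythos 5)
Your argument is correct, but it takes a genuinely different route from the paper's. You rerun the spectral sequence of Theorem~\ref{t12.9.2} directly on the twisted motive $M^c(X)(1)[2]$, compute its $E_1$-page from \eqref{eq8.1} plus Cartier duality (getting $\RA{q}(M(Z^{(p)})(1)[2])=[\Z^{\pi_0(Z^{(p)})}\to 0]$ for $q=2$ and $0$ otherwise), and conclude by degeneration because there is a single nonzero row. The paper instead avoids reconstructing the spectral sequence: it invokes the projective bundle formula $M^c(X\times\P^1)=M^c(X)\oplus M^c(X)(1)[2]$, notes that $\bar X\times\P^1$ compactifies $X\times\P^1$ with normal crossings boundary $\coprod Z_i\times\P^1$, and then reads off the extra summand $\Z^{\pi_0(Z^{(p)})}$ appearing in $\NS(Z^{(p)}\times\P^1)=\NS(Z^{(p)})\oplus\Z^{\pi_0(Z^{(p)})}$ when Theorem~\ref{t12.9.2} is applied to $X\times\P^1$. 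Your approach is more direct and self-contained once the $E_1$-term is identified, and it isolates cleanly why only the toric/lattice part contributes; the paper's approach is shorter on the page because the degeneration has already been established in Theorem~\ref{t12.9.2} and the $\P^1$-trick converts the twist into an honest geometric input. One small terminological caution: the $Z^{(p)}$ (with $Z^{(0)}=\bar X$) are the strata of the normal crossings compactification, not a hyperenvelope in the Gillet--Soul\'e sense used in Proposition~\ref{pdescent}; what you actually need is the same filtration/Postnikov tower that underlies the proof of Theorem~\ref{t12.9.2}, and since $\otimes\,\Z(1)[2]$ is a triangulated functor, tensoring that tower gives the spectral sequence you want. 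With that caveat the identification of the $d_1$ and the kernel computation yielding $\Z^{\pi_0^c(X)}$ are both sound.
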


\begin{proof} This follows from Theorem \ref{t12.9.2} via the formula $M^c(X\times\P^1) =M^c(X)\oplus M^c(X)(1)[2]$, noting that $\bar X\times \P^1$ is a smooth compactification of $X\times\P^1$ with $\bar X\times \P^1-X\times \P^1$ a divisor with normal crossings with components $Z_i\times \P^1$, and
\begin{align*}
\NS(Z^{(p)}\times\P^1)&=\NS(Z^{(p)})\oplus \Z^{\pi_0(Z^{(p)})}\\
 \Pic^0(Z^{(p)}\times \P^1)&=\Pic^0(Z^{(p)})\\ 
 \pi_0(Z^{(p)}\times\P^1)&=\pi_0(Z^{(p)}).
\end{align*}
\end{proof}

\begin{remark} Let $X$ be arbitrary, and filter it by its successive singular loci, \ie
\[X=X^{(0)}\supset X^{(1)}\supset\dots\]
where $X^{(i+1)}=X^{(i)}_{sing}$. Then we have a spectral sequence of cohomological type in ${}^t\M$
\[E_2^{p,q} =\RA{p+q}^c(X^{(q)}-X^{(q+1)})\Rightarrow \RA{p+q}^c(X) \]
in which the $E_2$-terms involve smooth varieties. This qualitatively reduces the computation of $\RA{*}^c(X)$ to the case of smooth varieties, but the actual computation may be complicated; we leave this to the interested reader.
\end{remark}

\subsection{$\LA{1}^*$ and $\Alb^+$}

\begin{lemma}\label{l11.6} Let $n>0$ and $Z\in Sch$ of dimension $<n$; then 
\begin{gather*}
\RA{i}(M(Z)^*(n)[2n])=0 \text{ for } i\le 1\\
\RA{2}(M(Z)^*(n)[2n])=[\Z^{\pi_0^c(Z^{[n-1]})}\to 0]
\end{gather*}
where $Z^{[n-1]}$ is the disjoint union of the irreducible components of $Z$ of dimension $n-1$.
\end{lemma}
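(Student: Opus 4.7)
The plan is to argue by induction on $\dim Z\le n-1$, with a nested induction on the number of irreducible components when $\dim Z=n-1$. The pivotal preliminary observation is that
\[M(Z)^*(n)[2n]=\bigl(M(Z)^*(n-1)[2(n-1)]\bigr)\otimes\Z(1)[2],\]
where the first factor is effective by Lemma \ref{leffe}. If $\dim Z\le n-2$, rewriting this as an effective motive twisted by $\Z(2)[4]$ gives $\LAlb(M(Z)^*(n)[2n])=0$ by Proposition \ref{ptate}, hence $\RPic(M(Z)^*(n)[2n])=0$ by Corollary \ref{rpdla}. As $Z^{[n-1]}=\emptyset$ in this range, the formula holds trivially in all degrees. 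By Lemma \ref{l12.3} I may assume $Z$ reduced; the remaining case is $\dim Z=n-1$.

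For $Z$ smooth and irreducible of dimension $n-1$, Poincaré duality yields $M(Z)^*(n)[2n]\simeq M^c(Z)(1)[2]$, and Corollary \ref{c12.9} gives directly $\RA{i}=0$ for $i\le 1$ and $\RA{2}=[\Z^{\pi_0^c(Z)}\to 0]$; since $Z^{[n-1]}=Z$ for such $Z$ the formula matches. For $Z$ irreducible singular of dimension $n-1$, I take a resolution $p\colon \tilde Z\to Z$ with $W=Z_{\text{sing}}$ and $\tilde W=p^{-1}(W)$; because $p$ is proper birational between varieties of the same dimension $n-1$, the exceptional locus has codimension $\ge 1$ so $\dim\tilde W\le n-2$, and obviously $\dim W\le n-2$. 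Applying $(-)^*(n)[2n]$ to the abstract blow-up triangle and then $\RPic$ yields a long exact sequence
\[\dots\to \RA{i}(\tilde W)\to \RA{i}(\tilde Z)\oplus\RA{i}(W)\to \RA{i}(Z)\to \RA{i+1}(\tilde W)\to\dots\]
(with the shorthand $\RA{i}(-):=\RA{i}(M(-)^*(n)[2n])$). The preliminary vanishing applied to $W$ and $\tilde W$ forces $\RA{i}(Z)\cong \RA{i}(\tilde Z)$ in every degree, reducing to the smooth case. The identification $\pi_0^c(\tilde Z)=\pi_0^c(Z^{[n-1]})=\pi_0^c(Z)$ follows from $\tilde Z$ being irreducible with the same function field (hence the same field of constants) as $Z$, and from $p$ being proper (so $\tilde Z$ is proper iff $Z$ is).

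For the reducible case of dimension $n-1$, first split $Z=A\cup B$ with $A$ the union of $(n-1)$-dimensional components and $B$ the union of lower-dimensional ones. The Mayer--Vietoris triangle $M(A\cap B)\to M(A)\oplus M(B)\to M(Z)\xrightarrow{+1}$ together with the preliminary vanishing for $B$ and $A\cap B$ (both of dimension $\le n-2$) gives $\RA{i}(Z)\cong \RA{i}(A)$. Then a nested induction on the number $r$ of $(n-1)$-dimensional components, using $A=Z_1\cup(Z_2\cup\cdots\cup Z_r)$ whose intersection has dimension $\le n-2$ (since $Z_1$ meets any distinct irreducible component of the same dimension in a proper closed subscheme), decomposes $\RA{i}(A)\cong\bigoplus_{j}\RA{i}(Z_j)$. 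The irreducible case supplies $\RA{2}(Z_j)=[\Z^{\pi_0^c(Z_j)}\to 0]$, and $\bigsqcup_j \pi_0^c(Z_j)=\pi_0^c(Z^{[n-1]})$ by the very definition of the disjoint union $Z^{[n-1]}$.

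The chief technical point — and the only place where one might go wrong — is ensuring at each reduction step that the auxiliary schemes ($W$, $\tilde W$, and intersections $Z_i\cap Z_j$) have dimension \emph{strictly} less than $n-1$, so that the preliminary vanishing applies in \emph{all} cohomological degrees and the long exact sequences collapse to isomorphisms. This is the reason why a Hironaka-style resolution (or any proper birational model between equidimensional varieties of dimension $n-1$) must be invoked in the irreducible singular step, and why the disjoint-union convention in the definition of $Z^{[n-1]}$ is essential for matching the combinatorics of iterated Mayer--Vietoris with the output of the irreducible case.
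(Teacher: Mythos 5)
Your proof is correct and uses the same essential ingredients as the paper's: the vanishing coming from the $\Z(2)$-twist (Lemma \ref{leffe} plus Proposition \ref{ptate}, transferred to $\RPic$ via Corollary \ref{rpdla}), abstract blow-up triangles to kill the contributions of dimension $\le n-2$, and Corollary \ref{c12.9} (via Poincar\'e duality $M(\tilde Z)^*(n)[2n]\simeq M^c(\tilde Z)(1)[2]$) for the smooth case. The paper compresses all of this into a single abstract blow-up square, taking $\tilde Z$ a desingularisation of $Z^{[n-1]}$ and $T$ the union of the singular locus and the components of dimension $<n-1$, whereas you unfold the same reduction as a several-step induction on dimension and number of components.
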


\begin{proof} Let 
\[\begin{CD}
\tilde T@>>> \tilde Z\\
@VVV @VVV\\
T@>>> Z
\end{CD}\]
be an abstract blow-up square, with $\tilde Z$ smooth and $\dim T,\dim \tilde T<\dim Z$. By
Lemma \ref{leffe}, $M(T)^*(n-2)[2n-4]$ and $M(\tilde T)^*(n-2)[2n-4]$ are effective, so by
Proposition \ref{ptate}, the exact triangle
\begin{multline*}
\RPic(M(\tilde T)^*(n)[2n])\to \RPic(M(\tilde Z)^*(n)[2n])\oplus \RPic(M(T)^*(n)[2n])\\
\to \RPic(M(Z)^*(n)[2n])\by{+1}
\end{multline*}
degenerates into an isomorphism
\[\RPic(M(\tilde Z)^*(n)[2n])\iso \RPic(M(Z)^*(n)[2n]).\]

The lemma now follows from Corollary \ref{c12.9} by taking for $\tilde Z$ a desingularisation
of $Z^{[n-1]}$ and for $T$ the union of the singular locus of $Z$ and its irreducible
components of dimension $<n-1$ (note that $M(\tilde Z)^*(n)[2n]\simeq M^c(\tilde Z)(1)[2]$).
\end{proof}

\begin{lemma} \label{relPicplus} Let $\bar X$ a proper smooth scheme with a
pair $Y$ and $Z $ of disjoint  closed (reduced) subschemes of pure
codimension $1$ in $\bar X$. We then have 
$$\RA{1}(\bar X- Z, Y) \cong\Pic^+(\bar X- Z, Y)$$ 
(see \cite[2.2.1]{BSAP} for the definition of
relative $\Pic^+$). 
\end{lemma}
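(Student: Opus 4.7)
The plan is to apply the contravariant triangulated functor $\RPic$ to the defining distinguished triangle
\[M(Y)\to M(\bar X-Z)\to M(\bar X-Z,Y)\by{+1}\]
which, via the cohomological $t$-structure with heart ${}^t\M[1/p]$ (Theorem \ref{ptors}), yields a long exact sequence in ${}^t\M[1/p]$:
\[\dots\to\RA{i-1}(Y)\to\RA{i}(\bar X-Z,Y)\to\RA{i}(\bar X-Z)\to\RA{i}(Y)\to\dots\]
The goal is to extract $\RA{1}(\bar X-Z,Y)$ from this sequence by identifying the adjacent terms and the maps between them.

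First I would compute the terms involving $\bar X-Z$: by Corollary \ref{HRPic}, $\RA{0}(\bar X-Z)=[0\to \G_m[\pi_0(\bar X-Z)]]$ and $\RA{1}(\bar X-Z)=[\Div_Z^0(\bar X)\to \Pic^0(\bar X)]$, since $\bar X$ is a smooth compactification of $\bar X-Z$. Next, since $Y$ is disjoint from $Z$, $Y$ is closed in the proper scheme $\bar X$, hence $Y$ is itself proper; applying Theorem \ref{t12.8} (or directly Corollary \ref{c12.3}) I get $\RA{0}(Y)=[0\to\G_m[\pi_0(Y)]]$ and $\RA{1}(Y)=\Pic^+(Y)=[0\to\Pic^0_{Y/k}/\cU_Y]$. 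The maps $\RA{i}(\bar X-Z)\to \RA{i}(Y)$ are those induced by the closed immersion $Y\hookrightarrow \bar X-Z$, i.e.\ restriction of sections of $\G_m$ for $i=0$ and pullback of line bundles for $i=1$.

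Feeding this into the long exact sequence, $\RA{1}(\bar X-Z,Y)$ is assembled from (i) the lattice obtained as the kernel of the restriction map $\Div_Z^0(\bar X)\to \RA{2}(\ldots)$ coming from the connecting map, and (ii) a semi-abelian part which is an extension of $\ker(\Pic^0(\bar X)\to \Pic^0_{Y/k}/\cU_Y)$ by the cokernel of $\G_m[\pi_0(\bar X-Z)]\to \G_m[\pi_0(Y)]$. Since $Y$ and $Z$ are disjoint, divisors supported on $Z$ restrict trivially to $Y$, so the lattice part matches exactly the $\Div^0_Z(\bar X,Y)$ occurring in the definition \cite[2.2.1]{BSAP} of $\Pic^+(\bar X-Z,Y)$; the semi-abelian part is the relative Picard $\Pic^0(\bar X,Y)/\cU$, which by its universal property as representing line bundles on $\bar X$ rigidified along $Y$ sits in an extension
\[0\to \G_m[\pi_0(Y)]/\G_m[\pi_0(\bar X)]\to \Pic^0(\bar X,Y)/\cU\to \Pic^0(\bar X)\to 0\]
matching the one produced by the long exact sequence.

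The main obstacle will be step (iii): checking that the extension class of the semi-abelian part produced by the long exact sequence agrees with the canonical one defining $\Pic^0(\bar X,Y)/\cU$, and that the boundary map in the long exact sequence lifts the divisor map $\Div^0_Z(\bar X,Y)\to \Pic^0(\bar X,Y)/\cU$ of \cite[2.2.1]{BSAP}. For the extension, I would invoke Theorem \ref{t12.8} and the naturality of the isomorphism there in the closed immersion $Y\hookrightarrow \bar X$, so that the extension is forced to coincide with the one arising from $\G_m$ on the pair $(\bar X,Y)$. For the divisor map, I would trace through Proposition \ref{p1} and Lemma \ref{l4.1}, which identify the connecting map in $\Ext^1$ with the class associated via the normalised presentation of the $1$-motivic sheaf $\underline{\Pic}_{\bar X/k}$; together with Lemma \ref{trans}, this pins down the map $\Div_Z^0(\bar X)\to \Pic^0(\bar X,Y)/\cU$ as the canonical one $D\mapsto (\cO_{\bar X}(D),1_Y)$, using crucially that $D$ is supported away from $Y$ so that the trivialization on $Y$ is canonical.
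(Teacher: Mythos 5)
Your plan takes a genuinely different route from the paper's proof. You work exclusively with the relative triangle $M(Y)\to M(\bar X-Z)\to M(\bar X-Z,Y)$, whereas the paper's primary tool is the Gysin/localisation sequence
\[
0\to\RA{1}(\bar X,Y)\to \RA{1}(\bar X - Z,Y)\to \RA{2}_Z(\bar X, Y),
\]
which, combined with Theorem~\ref{t12.9} ($\RA{1}(\bar X, Y)=\RA{1}^c(\bar X-Y)\cong [0\to \Pic^0(\bar X, Y)]$, with $\cU=0$ because $\bar X$ is smooth) and Lemma~\ref{lsupports} ($\RA{2}_Z(\bar X,Y)\cong[\Div_Z(\bar X)\to 0]$), pins down the weight filtration at once: the semi-abelian part of $\RA{1}(\bar X-Z,Y)$ is exactly $\Pic^0(\bar X,Y)$ and the lattice is a sublattice $D\subseteq\Div_Z(\bar X)$. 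The paper then uses your relative triangle only as a secondary step, to identify $D$ and the map $u$, via a weight argument ($\RA{i}(Y)$ has weight $<0$ for $i\le 1$). The advantage of the paper's order of attack is that the identification of the semi-abelian part with the relative Picard scheme comes for free; your plan must reassemble this object from scratch.

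There is, moreover, a concrete error in your step (iii). You assert that $\Pic^0(\bar X,Y)/\cU$ sits in an extension
\[
0\to \G_m[\pi_0(Y)]/\G_m[\pi_0(\bar X)]\to \Pic^0(\bar X,Y)/\cU\to \Pic^0(\bar X)\to 0,
\]
i.e.\ that $\Pic^0(\bar X,Y)/\cU\to\Pic^0(\bar X)$ is surjective. This is true for curves (where $\Pic^0(Y)=0$), but it fails in higher dimensions: the image of $\Pic(\bar X,Y)\to\Pic(\bar X)$ is $\ker\bigl(\Pic(\bar X)\to\Pic(Y)\bigr)$, and the restriction map $\Pic^0(\bar X)\to\Pic^0(Y)$ is nonzero in general. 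For instance, with $\bar X=E_1\times E_2$ and $Y=E_1\times\{0\}$ one finds $\Pic^0(\bar X,Y)\cong E_2^\vee$, which does not surject onto $\Pic^0(\bar X)=E_1^\vee\times E_2^\vee$. Note that this displayed extension actually contradicts your own step (ii), where you correctly compute the abelian part of the assembled group as $\ker\bigl(\Pic^0(\bar X)\to\Pic^0_{Y/k}/\cU_Y\bigr)$. Beyond fixing this inconsistency, completing your plan would still require showing that the semi-abelian $1$-motivic sheaf obtained from the kernel and cokernel in your long exact sequence, together with its extension class, agrees with the relative Picard scheme $\Pic^0(\bar X,Y)$; this is a non-trivial identification which the paper obtains directly from Theorem~\ref{t12.9} rather than by reassembly.
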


\begin{proof} The following exact sequence provides the weight filtration
$$0\to\RA{1}(\bar X,Y)\to \RA{1}(\bar X - Z,Y)\to \RA{2}_Z(\bar X, Y)$$
where $\RA{1}(\bar X, Y) \cong \RA{1}^c(\bar X-  Y)\cong [0\to \Pic^0(\bar X, Y)]$ by
Theorem \ref{t12.9} (here $\cU=0$ since $\bar X$  is smooth). Also
$\RA{2}_Z(\bar X, Y)\cong \RA{2}_Z(\bar X)=[\Div_Z(\bar X)\to 0]$ from
\ref{lsupports}: thus the discrete part of $\RA{1}(\bar X- Z, Y)$ is given by a subgroup $D$ of
$\Div_Z(\bar X)=\Div_Z(\bar X,Y)$. 

It remains to identify the map $u:D\to \Pic^0(\bar X,Y)$. Using now the exact sequence
$$\RA{0}(Y)\to \RA{1}(\bar X - Z,Y)\to \RA{1}(\bar X - Z)\to \RA{1}(Y)$$
where $\RA{i}(Y)$ is of weight $<0$ for $i\leq 1$ (\ref{c3.1} and \ref{c12.2.1}), we get that
$u$ is the canonical lifting of the map of the $1$-motive $\RA{1}(\bar X
- Z)$ described in \ref{HRPic}. Thus  $D=\Div_Z^0(\bar X, Y)$ and the claimed isomorphism is
clear.
\end{proof}

This proof also gives:

\begin{cor}\label{algeqzero} We have
$$[\Div_Z^0(\bar X, Y)\to 0]=\ker (\RA{2}_Z(\bar X, Y)\to \RA{2}(\bar X, Y)).$$
\end{cor}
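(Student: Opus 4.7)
The proof will be a direct consequence of the long exact sequence of $1$-motives (with torsion) coming from the localisation triangle for $\RPic$ with supports (\S \ref{supports}), combined with the identifications already established in the proof of Lemma \ref{relPicplus}.

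Concretely, the plan is as follows. First, I would invoke the localisation triangle
\[\RPic_Z(\bar X,Y)\to \RPic(\bar X,Y)\to \RPic(\bar X-Z,Y)\by{+1}\]
and pass to the associated long exact sequence with respect to the $t$-structure with heart ${}^t\M[1/p]$. The relevant piece reads
\[\RA{1}(\bar X,Y)\to \RA{1}(\bar X - Z,Y)\to \RA{2}_Z(\bar X, Y)\to \RA{2}(\bar X,Y),\]
so by exactness in ${}^t\M[1/p]$, the kernel of the last map is the image of the preceding one, which is the cokernel of the first map.

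Next, I would substitute the identifications of $\RA{1}(\bar X,Y)$ and $\RA{1}(\bar X-Z,Y)$ from the proof of Lemma \ref{relPicplus}: the first is $[0\to \Pic^0(\bar X,Y)]$ and the second is $[\Div_Z^0(\bar X,Y)\to \Pic^0(\bar X,Y)]$, and crucially the morphism between them is an isomorphism on the semi-abelian parts (since the discrete part of $\RA{1}(\bar X,Y)$ vanishes). Hence in the abelian category ${}^t\M[1/p]$ we have the short exact sequence
\[0\to [0\to \Pic^0(\bar X,Y)]\to [\Div_Z^0(\bar X,Y)\to \Pic^0(\bar X,Y)]\to [\Div_Z^0(\bar X,Y)\to 0]\to 0,\]
so the cokernel in question is precisely $[\Div_Z^0(\bar X,Y)\to 0]$.

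Combining the two paragraphs gives the claimed identification. There is essentially no obstacle: the only small point of care is to verify that the connecting map $\RA{1}(\bar X,Y)\to \RA{1}(\bar X-Z,Y)$ induces the identity on $\Pic^0(\bar X,Y)$, but this is exactly what is built into the description of $\RA{1}(\bar X-Z,Y)$ from Lemma \ref{relPicplus} (via the $t$-structure weight argument ${}^tH^i(Y)$ of weight $<0$ for $i\le 1$). Everything else is formal manipulation in the abelian category ${}^t\M[1/p]$.
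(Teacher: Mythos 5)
Your proposal is correct and follows essentially the same route as the paper. The paper's own proof of Corollary \ref{algeqzero} is implicit in the proof of Lemma \ref{relPicplus} (the paper says "This proof also gives:"), where the exact sequence $0\to\RA{1}(\bar X,Y)\to \RA{1}(\bar X - Z,Y)\to \RA{2}_Z(\bar X, Y)$ is extended one term further and the cokernel of the injection $[0\to \Pic^0(\bar X,Y)]\hookrightarrow [\Div_Z^0(\bar X,Y)\to \Pic^0(\bar X,Y)]$ is read off to be $[\Div_Z^0(\bar X,Y)\to 0]$, exactly as you do.
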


We shall need:

\begin{thm}[Relative duality]\label{treldual} Let $\bar X$,  $Y$ and $Z $ be
as above and further assume that $\bar X$ is $n$-dimensional. Then
$$M (\bar X- Z, Y)^*(n)[2n]\cong M(\bar X- Y, Z)$$
and therefore
$$\RPic^*(\bar X- Z, Y)\cong \RPic (\bar X- Y, Z)$$
and dually for $\LAlb$.
\end{thm}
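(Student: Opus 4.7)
The plan is to exploit Voevodsky's duality in characteristic zero, which gives $D_d M(W):=\Hom(M(W),\Z(d)[2d])\cong M^c(W)$ for any $W$ of pure dimension $d$ (\cite[Thm.~4.3.7]{V}). Set $U_Z=\bar X-Z$, $U_Y=\bar X-Y$, $U=U_Y\cap U_Z$; these are smooth of pure dimension $n$, while $Y,Z\subset\bar X$ are proper of dimension $n-1$, so $M(Y)=M^c(Y)$ and $M(Z)=M^c(Z)$. Writing $D=\Hom(-,\Z(n)[2n])$, we thus have $DM^c(U_Y)=M(U_Y)$ and $D(M(Z)(1)[2])=M(Z)$, and symmetrically for $Z$. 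Applying $D$ to the localization triangles $M(Y)\to M(\bar X)\to M^c(U_Y)\to+$ and $M(Z)\to M(\bar X)\to M^c(U_Z)\to+$ yields the Gysin-type triangles
\[M(U_Z)\to M(\bar X)\to M(Z)(1)[2]\to+\quad\text{and}\quad M(U_Y)\to M(\bar X)\to M(Y)(1)[2]\to+\]
which are valid even for singular $Y,Z$ by virtue of this duality.

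The next step is to apply the $3\times3$-lemma (octahedral axiom) to the commutative square of canonical inclusions
\[\begin{CD}
M(Y) @>>> M(U_Z)\\
@VVV @VVV\\
M(\bar X) @= M(\bar X).
\end{CD}\]
Using the two localization/Gysin triangles above to identify the cones of the vertical maps with $M^c(U_Y)$ and $M(Z)(1)[2]$ respectively, and observing that the horizontal cones are $M(\bar X-Z,Y)$ and $0$, we obtain a distinguished triangle
\begin{equation}
M(\bar X-Z,Y)\to M^c(U_Y)\to M(Z)(1)[2]\to M(\bar X-Z,Y)[1].\tag{$\star$}
\end{equation}

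Now apply $D$ to $(\star)$. Using $D M^c(U_Y)=M(U_Y)$ and $D(M(Z)(1)[2])=M(Z)$, the triangle becomes
\[M(Z)\to M(U_Y)\to D\bigl(M(\bar X-Z,Y)\bigr)\to M(Z)[1].\]
This has the same shape as the defining triangle of $M(\bar X-Y,Z)$, namely $M(Z)\to M(U_Y)\to M(\bar X-Y,Z)\to +$. The map $M(Z)\to M(U_Y)$ appearing in the dualized triangle is $D$ of the connecting map $M^c(U_Y)\to M(Z)(1)[2]$ in $(\star)$, and by naturality of the $3\times 3$ construction and of Voevodsky duality it agrees with the map induced by the inclusion $Z\hookrightarrow U_Y$ (both arise functorially from this inclusion). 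By the (TR3) axiom, one then obtains a distinguished isomorphism $D(M(\bar X-Z,Y))\cong M(\bar X-Y,Z)$, i.e. $M(\bar X-Z,Y)^*(n)[2n]\cong M(\bar X-Y,Z)$.

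The assertions for $\RPic$ and $\LAlb$ are immediate consequences: applying $\RPic$ to both sides and using the definitions $\RPic^*(W)=\RPic(M(W)^*(n)[2n])$ and $\RPic(W)=\RPic(M(W))$ gives $\RPic^*(\bar X-Z,Y)\cong\RPic(\bar X-Y,Z)$, and similarly for $\LAlb$ via Corollary~\ref{rpdla}. The main technical obstacle is the functoriality check in Step~3 identifying the two maps $M(Z)\to M(U_Y)$; alternatively one can avoid this by constructing the isomorphism by hand using the Yoneda-type argument via biextensions in the spirit of \S\ref{dual}, exploiting Proposition~\ref{ext=ext} to compute $\Hom(M(\bar X-Y,Z),M(\bar X-Z,Y)^*(n)[2n])$ directly.
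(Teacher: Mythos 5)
The paper defers the proof to the external note \cite{rel_duality}; a commented-out block in the source reveals the intended strategy: build a pairing $\cap_{(\bar X,Z)}\colon M^Z(\bar X)\otimes M(Z)\to\Z(n)[2n]$ compatible with the diagonal pairing on $M(\bar X)$, use it to produce a \emph{morphism} from the triangle $M(\bar X-Z,Y)\to M(\bar X,Y)\to M^Z(\bar X)$ to the $\ihom(-,\Z(n)[2n])$-dual of $M(Z)\to M(\bar X-Y)\to M(\bar X-Y,Z)\to+$, and conclude by the five-lemma, the middle vertical being Voevodsky's duality for the smooth $\bar X-Y$ and the outer one the Poincar\'e duality with supports $M^Z(\bar X)\cong M(Z)^*(n)[2n]$, established by the same diagram with $Y=\emptyset$. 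Your octahedron-and-dualize route is a genuinely different decomposition of the problem.

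It has one small slip and one real gap. The slip: the cone of $M(U_Z)\to M(\bar X)$ is $D\,M(Z)=\ihom(M(Z),\Z(n)[2n])$, not $M(Z)(1)[2]$ --- these coincide only when $Z$ is smooth, which is not assumed --- so the third term of $(\star)$ is misnamed, and the subsequent identity should be $DD\,M(Z)=M(Z)$ (biduality), not $D(M(Z)(1)[2])=M(Z)$ (Poincar\'e duality for a possibly singular $Z$); the conclusion is unaffected, but the intermediate object should be corrected. The gap: you need the first arrow $M(Z)\to M(U_Y)$ of the dualized triangle, namely $D$ of the octahedral filling map $v\colon M^c(U_Y)\to D\,M(Z)$, to coincide with the map induced by the inclusion $Z\into U_Y$, and you dismiss this as following ``by naturality of the $3\times3$ construction.'' But TR4 only asserts the \emph{existence} of a filling, with no canonicity: $v$ is determined only up to an element of $\Hom(M(Y)[1],D\,M(Z))\cong\Hom(M(Y\times Z),\Z(n)[2n-1])$, and accordingly $D(v)$ and the inclusion agree only after composing with $M(U_Y)\to M(\bar X)$, their difference living in $\Hom(M(Z),D\,M(Y)[-1])\cong\Hom(M(Z\times Y),\Z(n)[2n-1])$, a group with no cheap vanishing when $\dim Y=\dim Z=n-1$. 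Until this is settled, the cone of $D(v)$ need not be $M(\bar X-Y,Z)$. The paper's pairing route avoids the issue entirely: the morphism of triangles is built from explicit compatible pairings, so the squares commute by construction and the five-lemma applies. Your fallback --- compute the relevant Hom group directly --- is the right instinct, but the appropriate tool in $\DM_{\gm,\et}^\eff$ is the geometric pairing on motives with supports, not the biextension machinery of Section~\ref{dual}, which lives over $D^b(\M[1/p])$.
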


\begin{proof} See \cite{rel_duality}.
\end{proof}

\begin{cor}\label{cycle} Let $Z$ be a divisor in $\bar X$ such that $Z\cap Y
=\emptyset$. There exists a  ``cycle class'' map $\eta$ fitting in the
following commutative diagram
$$
\begin{CD}
\RA{2}(M(Z)^*(n)[2n])@>{\eta}>>  \RA{2}^c(\bar X - Y)\\
 @V{||}VV @V{||}VV \\
 \RA{2}_Z(\bar X, Y)@>{}>> \RA{2}(\bar X, Y)
\end{CD}
$$
Writing $Z= \cup Z_i$ as union of its irreducible components  we have that
$\eta$ on $Z_i$  is the ``fundamental class'' of $Z_i$ in $\bar X$ modulo
algebraic equivalence.\end{cor}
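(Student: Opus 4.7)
The plan is to construct $\eta$ via relative duality, identify both vertical arrows explicitly, and then use Corollary~\ref{algeqzero} to read off the image of each component.

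First, I would construct the duality isomorphism $M^Z(\bar X, Y)\cong M(Z)^*(n)[2n]$ in $\DM_{\gm,\et}^\eff$ by dualising the localisation triangle
\[
M(\bar X-Z, Y)\to M(\bar X, Y)\to M^Z(\bar X, Y)\by{+1}
\]
and twisting by $(n)[2n]$. Poincar\'e duality applied to the smooth open $\bar X-Y$ gives $M(\bar X, Y)^*(n)[2n] = M^c(\bar X-Y)^*(n)[2n]\cong M(\bar X-Y)$, while Theorem~\ref{treldual} applied to the pair $(\bar X-Z, Y)$ gives $M(\bar X-Z, Y)^*(n)[2n]\cong M(\bar X-Y, Z)$. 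Comparing the resulting triangle with the defining triangle $M(Z)\to M(\bar X-Y)\to M(\bar X-Y, Z)\by{+1}$ yields $M^Z(\bar X, Y)^*(n)[2n]\cong M(Z)$. I would then define $\eta$ as the composite
\[
\RA{2}(M(Z)^*(n)[2n])\cong \RA{2}_Z(\bar X, Y)\longrightarrow \RA{2}(\bar X, Y)\cong \RA{2}^c(\bar X-Y),
\]
where the outer isomorphisms come from the duality just constructed and from the equality $M(\bar X, Y) = M^c(\bar X-Y)$, and the middle arrow is functorially induced by the localisation triangle; the stated square commutes by construction.

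Next, I would identify both sides of the top row by their discrete descriptions. Since $Z$ is a divisor in $\bar X$, it is proper and equidimensional of dimension $n-1$, so $Z^{[n-1]}=Z$ and $\pi_0^c(Z)=\pi_0(Z)$; Lemma~\ref{l11.6} then gives $\RA{2}(M(Z)^*(n)[2n])=[\Z^{\pi_0(Z)}\to 0]$, free on the irreducible components $Z_i$. On the other side, $Z\cap Y=\emptyset$ implies $\Div_Z(\bar X, Y)=\Div_Z(\bar X)$, and Lemma~\ref{lsupports} gives $\RA{2}_Z(\bar X, Y)=[\Div_Z(\bar X, Y)\to 0]$, also free on the $Z_i$. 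The last step of the corollary is then an application of Corollary~\ref{algeqzero}: the kernel of $\RA{2}_Z(\bar X, Y)\to \RA{2}(\bar X, Y)$ is $[\Div_Z^0(\bar X, Y)\to 0]$, the subgroup of divisors algebraically equivalent to zero, so the image of the generator $[Z_i]$ in $\RA{2}^c(\bar X-Y)$ is the fundamental class of $Z_i$ taken modulo algebraic equivalence.

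The main obstacle I expect is the \emph{matching of generators}: one must verify that under the isomorphism $M^Z(\bar X, Y)\cong M(Z)^*(n)[2n]$ constructed above, the element of $\RA{2}(M(Z)^*(n)[2n])$ attached to an irreducible component $Z_i$ via Lemma~\ref{l11.6} corresponds to the divisor $[Z_i]\in\Div_Z(\bar X)$ under Lemma~\ref{lsupports}, without any sign, multiplicity, or twist discrepancy. This is essentially a Gysin/fundamental-class compatibility for the duality in Theorem~\ref{treldual}. I would check it first when $Z$ is smooth, where the Gysin map is classical, and then reduce the general case by a blow-up induction (resolving the singular locus of $Z$) together with the naturality of the duality triangle used to define $\eta$.
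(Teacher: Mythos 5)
Your proof is correct and takes essentially the same approach as the paper's: define $\eta$ by applying $\RA{2}$ to the $(n)[2n]$-dual of $M(Z)\to M(\bar X-Y)$, identifying the two sides via relative duality (Theorem~\ref{treldual}) and triangle comparison. The paper's own proof is in fact terser than yours---it merely exhibits the commutative square of motives, asserting the vertical arrows are ``given by relative duality,'' and leaves both the triangle-comparison step you spell out and the generator-matching compatibility you rightly flag to the reader.
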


\begin{proof} We have a map $M(Z)\to M(\bar X -Y)$, and the vertical isomorphisms
in the following commutative square
$$
\begin{CD}
M(\bar X- Y)^*(n)[2n]@>{}>>  M(Z)^*(n)[2n]\\
 @A{||}AA @A{||}AA \\
M(\bar X, Y)@>{}>> M^Z(\bar X, Y)
\end{CD}
$$ are given by relative duality.
\end{proof}

\begin{thm}\label{*=-} For $X\in Sch$ we have
$$\RA{1}^*(X)\cong \Pic^-(X).$$
\end{thm}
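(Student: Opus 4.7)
The strategy is to reduce the computation of $\RA{1}^*(X)$ to pieces that we already understand by means of the abstract blow-up triangle built from a resolution--compactification of $X$, then identify the resulting 1-motive with the explicit description of $\Pic^-(X)$ in \cite{BSAP}. As preliminary reductions, Lemma \ref{l12.3} allows us to assume $X$ reduced, while Lemma \ref{l9.2} and Proposition \ref{ptate} allow a blow-up induction reducing to the case of $X$ equidimensional of dimension $d$ (and in fact the case of $X$ integral suffices, by a further induction on the number of components).

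Choose a compactification $\bar X_0 \supset X$ and (using resolution of singularities in characteristic $0$) a proper birational map $\pi: \bar X \to \bar X_0$ with $\bar X$ smooth projective of dimension $d$, such that $Z := \pi^{-1}(\bar X_0 - X)$ and $\tilde Y := \pi^{-1}(X_{sing})$ are \emph{disjoint} normal crossing divisors on $\bar X$ (possible because $X_{sing}$ and $\bar X_0 - X$ are disjoint in $\bar X_0$) and $\pi$ restricts to an isomorphism away from $Z \cup \tilde Y$. Writing $Y = X_{sing}$, the induced map $p : \bar X - Z \to X$ is a proper surjection with $p^{-1}(Y) = \tilde Y$, and furnishes an abstract blow-up distinguished triangle
\[M(\tilde Y) \to M(\bar X - Z) \oplus M(Y) \to M(X) \longby{+1}.\]

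Dualising this triangle via $(-)^*(d)[2d]$ and applying the contravariant functor $\RPic$, then passing to the long exact sequence for ${}^tH^*$ gives
\[\cdots \to \RA{i}^*(\tilde Y) \to \RA{i}^*(\bar X - Z) \oplus \RA{i}^*(Y) \to \RA{i}^*(X) \to \RA{i+1}^*(\tilde Y) \to \cdots\]
Since $\dim Y, \dim \tilde Y < d$, Lemma \ref{l11.6} kills $\RA{i}^*(Y)$ and $\RA{i}^*(\tilde Y)$ for $i \le 1$ and identifies each $\RA{2}^*$ with the lattice of classes of $(d-1)$-dimensional components. Since $\bar X - Z$ is smooth with $M(\bar X - Z)^*(d)[2d] \simeq M^c(\bar X - Z)$, Theorem \ref{t12.9} (trivial unipotent radical because $\bar X$ is smooth and $Z$ an NCD) yields $\RA{1}^*(\bar X - Z) = [0 \to \Pic^0(\bar X, Z)]$, and the analogue of Corollary \ref{HRPic} computes $\RA{2}^*(\bar X - Z) = [\NS(\bar X - Z) \to 0]$. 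By Corollary \ref{cycle} the map $\RA{2}^*(\tilde Y) \to \RA{2}^*(\bar X - Z)$ is the cycle class map on components, and $\RA{2}^*(\tilde Y) \to \RA{2}^*(Y)$ is the proper push-forward on divisors. Extracting the relevant piece of the long exact sequence produces a short exact sequence
\[0 \to [0 \to \Pic^0(\bar X, Z)] \to \RA{1}^*(X) \to [\Div^0_{\tilde Y/Y}(\bar X, Z) \to 0] \to 0,\]
where $\Div^0_{\tilde Y/Y}(\bar X, Z)$ denotes divisors supported on $\tilde Y$ whose class in $\NS(\bar X - Z)$ vanishes and whose push-forward to $Y$ is zero. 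Thus $\RA{1}^*(X) \simeq [\Div^0_{\tilde Y/Y}(\bar X, Z) \to \Pic^0(\bar X, Z)]$, and this is the description of $\Pic^-(X)$ given in \cite[Ch.~5]{BSAP}.

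The main obstacle is identifying the connecting homomorphism $\Div^0_{\tilde Y/Y}(\bar X, Z) \to \Pic^0(\bar X, Z)$ arising motivically from the extracted sequence with the Abel--Jacobi type lifting $D \mapsto \cO_{\bar X}(D)$ used to define $\Pic^-(X)$ in \cite{BSAP}; the cleanest way seems to be to apply relative duality (Theorem \ref{treldual}) to the pair $(\bar X - Z, \tilde Y)$, invoke Lemma \ref{relPicplus} to recognise $\RA{1}^*(\bar X - Z, \tilde Y)$ as $\Pic^+(\bar X - \tilde Y, Z)$, and then compare the two extensions using the naturality statement in Corollary \ref{cycle} and the canonical lifting identified in the proof of Lemma \ref{relPicplus}. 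A final independence check of the isomorphism from the chosen resolution/compactification reduces to a cofinality argument.
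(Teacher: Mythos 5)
Your proposal follows the same overall route as the paper's proof: one abstract blow-up square (into a smooth desingularisation, with the singular locus as centre), killing the low-degree contributions via Lemma~\ref{l11.6}, identifying $\RA{1}^*$ of the smooth open piece via Theorem~\ref{t12.9}, using Corollary~\ref{cycle} to interpret the lattice map, and then deducing the identification of $u$ through relative duality (Theorem~\ref{treldual}) and Lemma~\ref{relPicplus}. That is indeed the paper's strategy. The comment that one may ``further reduce to $X$ integral by induction on the number of components'' is however extraneous and somewhat misleading: $\Pic^-$ does not split as a direct sum over irreducible components, and the correct book-keeping is already done by the single blow-up square with centre $Y=X_{\rm sing}$ (which contains the pairwise intersections), exactly as the paper does with its locus $S$.

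There is, though, a genuine computational error at the crux of the argument. You write $\RA{2}^*(\bar X - Z)=[\NS(\bar X-Z)\to 0]$, citing ``the analogue of Corollary~\ref{HRPic}''. Corollary~\ref{HRPic} computes the \emph{ordinary} $\RA{2}(X)$ for $X$ smooth, not $\RA{2}^*(X)=\RA{2}^c(X)$; by Theorem~\ref{t12.9.2} the latter is of the form $[\NS^{(0)}_c\by{u^2}G^{(2)}_c]$, whose group part $G^{(2)}_c$ (an extension of $\Pic^{(1)}_c$ by $T^{(2)}_c$) is in general nonzero, and whose lattice $\NS^{(0)}_c$ is a \emph{subgroup} of $\NS(\bar X)$ (classes restricting to zero on the boundary), not the quotient $\NS(\bar X - Z)$. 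This matters precisely where your argument needs to be sharpest: the identification of the lattice $\ker\alpha$. The paper extracts it with Corollary~\ref{algeqzero}, which shows that
\[
\ker\bigl(\RA{2}_Z(\bar X,Y)\to \RA{2}(\bar X,Y)\bigr)=[\Div^0_Z(\bar X,Y)\to 0],
\]
i.e.\ the correct $\Div^0$ lattice from the definition of $\Pic^-$ in \cite[2.2.1]{BSAP}. Your phrasing ``class in $\NS(\bar X - Z)$ vanishes'' is not visibly the same condition, and without Corollary~\ref{algeqzero} (or an explicit argument substituting for it) the identification of $\RA{1}^*(X)$ with $\Pic^-(X)$ at the level of lattices is not established. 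Once that is fixed, the final step — injecting $\RA{1}^*(X)$ into $\RA{1}^*(\bar X - Z,\tilde Y)\cong\RA{1}(\bar X-\tilde Y,Z)\cong\Pic^+(\bar X-\tilde Y,Z)$ by relative duality and Lemma~\ref{relPicplus}, then recognising $\Pic^-(X)$ as a sub-$1$-motive — is the same as the paper's, but the intermediate chain of relative $\RA{1}^*$-isomorphisms (via Lemma~\ref{l11.6} for the abstract blow-up invariance) should be spelled out rather than deferred to a vague ``comparison of extensions''.
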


\begin{proof} We are left to consider $X\in Sch$ purely of dimension $n$
with the following associated set of data and conditions.

For the irreducible components $X_1,\dots,X_r$ of $X$ we let $\tilde X$ be a
desingularisation of $\coprod X_i$, $S\df X_{sing}\cup \bigcup _{i\ne j}
S_i\cap S_j$ and $\tilde S$ the inverse image of $S$ in $\tilde X$. We let
$\bar X$ be a smooth proper compactification with normal crossing divisor
$Y$. Let $\bar S$ denote the Zariski closure of $\tilde S$ in $\bar X$.
Assume that $Y+\bar S$ is a reduced normal crossing divisor  in $\bar X$.
Finally denote by $Z$ the union of all compact components of divisors in
$\tilde S$ (\cf \cite[2.2]{BSAP}).

We have an exact sequence coming from the abstract blow-up square
associated to the above picture: 
\begin{multline*}
\dots\to\RA{1}^*(\tilde X)\oplus \RA{1}(M(S)^*(n)[2n])\to\RA{1}^*(X)\\
\to
\RA{2}(M(\tilde S)^*(n)[2n])\to
\RA{2}^*(\tilde X)\oplus\RA{2}(M(S)^*(n)[2n])
\end{multline*}

Now:
\begin{itemize}
\item the first map is injective (Lemma \ref{l11.6}),
\item $\RA{1}^*(\tilde X)=\RA{1}^c(\tilde
X)=[0\to\Pic^0(\bar X,Y)]$ since $\tilde X$ is smooth (Theorem \ref{t12.9}; note that $\cU=0$
by the smoothness of $\tilde X$),
\item $\RA{1}(M(S)^*(n)[2n])=0$ (Lemma
\ref{l11.6}),
\item $\RA{2}(M(\tilde S)^*(n)[2n])= [\Z^{\pi_0^c(\tilde S^{[n-1]})}\to
0]\df [\Div_{\bar S}(\bar X, Y)\to 0]$ is given by the free abelian group on compact
irreducible components of $\tilde S$ (Lemma \ref{l11.6}),
\item $\RA{2}^*(\tilde X)=\RA{2}^c(\tilde X)= \RA{2}(\bar X, Y)=[\NS^{(0)}_c(\tilde
X)\by{u^2}G^{(2)}_c]$ (Theorem \ref{t12.9.2}),
\item $\RA{2}(M(S)^*(n)[2n])=[\Z^{\pi_0^c( S^{[n-1]})}\to 0]=  [\Div_{S}(X)\to 0]$ (Lem\-ma
\ref{l11.6}).
\end{itemize}

We may therefore rewrite the above exact sequence as follows:
\begin{multline*}
0\to[0\to\Pic^0(\bar X,Y)]\to\RA{1}^*(X)\to [\Div_{\bar S}(\bar X, Y)\to 0]\\
\by{\alpha} [\NS^{(0)}_c(\tilde X)\by{u^2}G^{(2)}_c]\oplus [\Div_{S}(X)\to 0].
\end{multline*}

The map $\Div_{\bar S}(\bar X, Y)\to \Div_{ S}(X)$  induced from $M (\tilde S)\to M(S)$ is
clearly the proper push-forward of Weil divisors. The map 
\[[\Div_{\bar S}(\bar X, Y)\to 0]\to
[\NS^{(0)}_c(\tilde X)\by{u^2}G^{(2)}_c]\] 
is the cycle class map described in Corollary
\ref{cycle}. By Corollary \ref{algeqzero} we then get
\[
\ker  \alpha=[\Div_{\bar S/S}^0(\bar X, Y)\to 0] 
\]
where the lattice $\Div_{\bar S/S}^0(\bar X, Y) $ is from the definition of
$\Pic^-$ (see \cite[2.2.1]{BSAP}).
In other terms, we have
\[\RA{1}^*(X)=[\Div_{\bar S/S}^0(\bar X, Y) \by{u}\Pic^0(\bar X,Y)]\]
and we are left to check that the mapping $u$ is the one described in
\cite{BSAP}. Just observe that, by Lemma \ref{l11.6} and Theorem \ref{treldual},
\begin{multline*}\RA{1}^*(X) \into \RA{1}^*(X, S) \cong \RA{1}^*(\tilde X,
\tilde S)\cong \RA{1}^* (\tilde X, Z) \\= \RA{1}^*(\bar X -Y, Z)
\cong\RA{1}(\bar X -Z, Y)
\end{multline*} and the latter is isomorphic to $\Pic^+(\bar X-Z,Y)$ by
\ref{relPicplus}.
Since, by construction, $\Pic^-(X)$ is a sub-1-motive of $\Pic^+(\bar
X-Z,Y)$ the isomorphism of \ref{relPicplus} restricts to the claimed one.
\end{proof}

\section{Generalisations of Ro\v\i tman's theorem}\label{rovi}

In this section we give a unified treatment of Ro\v\i tman's theorem on torsion $0$-cycles on a smooth projective variety and its various generalisations. 

\subsection{Motivic and classical Albanese} 

Let $X\in Sch(k)$; we assume $X$ smooth if $p>1$ and $X$ semi-normal (in particular reduced)
if $p=1$, see Lemma \ref{l12.3}. Recall that Suslin's singular algebraic homology is
\[H_j(X)\df \Hom_{\DM_-^\eff}(\Z[j], M(X))= \HH^{-j}_\Nis(k,C_*(X)) \]
for any scheme $p: X\to k$. On the other hand, we may define
\[H^\et_j(X)\df \Hom_{\DM_{-,\et}^\eff}(\Z[j], M_\et(X))= \HH^{-j}_\et(k,\alpha^*C_*(X)).\]

We also have versions with torsion coefficients:
\[H_j^\et(X,\Z/n)=\Hom_{\DM_{-,\et}^\eff}(\Z[j], M_\et(X)\otimes \Z/n), (n,p)=1\] 
(this convention is chosen so that we have the usual long exact sequences.) 

We shall use the following notation throughout:

\begin{notation} For any $M\in \DM_\gm^\eff$ and any abelian group $A$, we write 
$H_j^{(1)}(M,A)$ for the abelian group \index{$H_j^{(1)}$}
\[\Hom_{\DM_{-,\et}^\eff}(\Z[j],\Tot\LAlb(M)\otimes
A)\simeq \HH^{-j}_\et(k,\Tot\LAlb(M)\oo^L A).\] 
This is \emph{Suslin $1$-motivic homology
of $M$ with coefficients in $A$}. If $M=M(X)$, we write $H_j^{(1)}(X,A)$ for $H_j^{(1)}(M,A)$.
We drop $A$ in the case where $A=\Z$.
\end{notation}

The motivic Albanese map
\eqref{amap} then gives  maps
\begin{align}
H_j^\et(X)& \to H_j^{(1)}(X)\label{eqalb}\\
H_j^\et(X,\Z/n)& \to H_j^{(1)}(X,\Z/n).\label{eqalbt}
\intertext{hence in the limit}
H_j^\et(X,(\Q/\Z)')& \to H_j^{(1)}(X,(\Q/\Z)')\label{eqalbt2}
\end{align}
where, as usual, $(\Q/\Z)'\df \bigoplus_{l\ne p} \Q_\ell/\Z_\ell$.

\begin{propose} \label{sus}
If $X$ is a smooth curve (or any curve in characteristic $0$), the maps \eqref{eqalb},
\eqref{eqalbt} and
\eqref{eqalbt2} are isomorphisms for any $j$. 
\end{propose}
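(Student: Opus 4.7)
The plan is to reduce the statement to a single assertion: the motivic Albanese map $a_X : \alpha^* M(X) \to \Tot\LAlb(X)$ is an isomorphism in $\DM_{\gm,\et}^{\eff}$. Once this is granted, all three maps \eqref{eqalb}, \eqref{eqalbt} and \eqref{eqalbt2} are obtained from $a_X$ by applying $\Hom_{\DM_{-,\et}^{\eff}}(\Z[j], -)$ or $\Hom_{\DM_{-,\et}^{\eff}}(\Z[j], -\otimes A)$ for $A = \Z/n$ or $(\Q/\Z)'$, and an isomorphism of motives induces an isomorphism on these Hom groups. So the whole content is the isomorphism $a_X \simeq 1$.

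The key tool is Proposition \ref{cd2}: the natural transformation $a_M : \alpha^* M \to d_{\le 1}(M) = \Tot\LAlb(M)$ is an isomorphism of functors on the subcategory $d_{\le 1}\DM_\gm^\eff$. Therefore it suffices to exhibit $M(X) \in d_{\le 1}\DM_\gm^\eff$. For $X$ a smooth curve this is true by the very definition of $d_{\le 1}\DM_\gm^\eff$ as the thick subcategory generated by motives of smooth curves.

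For $X$ a (possibly singular) curve in characteristic $0$, I would argue by an abstract blow-up. Let $\pi : \tilde X \to X$ be the normalisation, which is a resolution of singularities since $X$ is one-dimensional, and set $S = X_{\sing}$, $\tilde S = \pi^{-1}(S)$, both $0$-dimensional. The blow-up distinguished triangle in $\DM_\gm^\eff$ (\cite[\S 4.1]{V} or \S \ref{blowups})
\[ M(\tilde S) \to M(\tilde X) \oplus M(S) \to M(X) \by{+1} \]
shows that $M(X)$ lies in $d_{\le 1}\DM_\gm^\eff$ provided each of the other three terms does. Now $M(\tilde X)$ is the motive of a smooth curve and so is a generator; for the $0$-dimensional pieces, any $\Spec E$ with $E/k$ finite separable is a direct summand of $M(\P^1_E)$ viewed as a smooth $k$-curve of dimension one, so $M(\Spec E)$, hence $M(S)$ and $M(\tilde S)$, belongs to $d_{\le 1}\DM_\gm^\eff$. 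Applying Proposition \ref{cd2} now gives that $a_X$ is an isomorphism, and the proposition follows.

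No part of this is really an obstacle: the whole argument is formal once one has Proposition \ref{cd2} and the abstract blow-up triangle. The only mildly delicate point is the observation that Artin motives are contained in $d_{\le 1}\DM_\gm^\eff$, which is handled by the $\P^1_E$ trick above (or alternatively by the Chow--Künneth decomposition of Corollary \ref{HLAlb} applied to any smooth projective curve with a rational point over each residue field of $S$).
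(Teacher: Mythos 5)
Your proof is correct and follows essentially the same route as the paper, which simply says ``This follows immediately from Proposition \ref{cd2}.''\ You have merely unpacked the immediacy: that $M(X)\in d_{\le 1}\DM_\gm^\eff$ (trivially for smooth curves, and via the abstract blow-up triangle plus the $\P^1_E$ splitting for Artin motives in the singular characteristic-zero case), so that $a_X$ is an isomorphism and hence induces isomorphisms on the relevant $\Hom(\Z[j],-)$, $\Hom(\Z[j],-\otimes\Z/n)$ and colimit groups.
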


\begin{proof} This follows immediately from Proposition \ref{cd2}.
\end{proof}

Note that if $X =\bar X -Y$ is a smooth curve obtained by removing
a finite set of closed points from a projective smooth curve $\bar
X $ then $\cA_{X/k}= \Pic_{(\bar X, Y)/k}$ is the relative Picard
scheme (see \cite{BSAP} for its representability) and the Albanese
map just send a point $P\in X$ to $(\cO_{\bar X}(P), 1)$ where 1
is the tautological section, trivialising $\cO_{\bar X}(P)$ on
$X$. We then have the following result  (\cf \cite[Lect. 7, Th.
7.16]{VL}).

\begin{cor}\label{relpic} If $X =\bar X -Y$ is a smooth curve,
$$H_0^\et(X)\to \Pic (\bar X, Y)[1/p]$$
is an isomorphism.
\end{cor}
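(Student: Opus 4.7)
The plan is to combine Proposition \ref{sus} with the explicit description of $\LAlb(X)$ for a smooth curve. By Proposition \ref{sus} applied to $j=0$, the motivic Albanese map \eqref{amap} induces an isomorphism
\[
H_0^\et(X) \xrightarrow{\sim} H_0^{(1)}(X) = \Hom_{\DM_{-,\et}^\eff}(\Z, \Tot\LAlb(X)),
\]
so it remains to identify the right-hand side with $\Pic(\bar X, Y)[1/p]$ in a way compatible with the Albanese map $P \mapsto (\cO_{\bar X}(P), 1)$ recalled just before the corollary.

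For the explicit computation, I would appeal to Corollary \ref{HLAlb} (equivalently Theorem \ref{lasc}), which gives $\LA{0}(X) = [\Z[\pi_0(X)]\to 0]$, $\LA{1}(X) = [0\to\cA_{X/k}^0]$, and $\LA{2}(X) = [0\to\NS_{X/k}^*]$. When $Y \neq \emptyset$ (so that no connected component of $X$ is proper), the description in Corollary \ref{rasc} gives $\NS_{X/k} = \Z[\pi_0^c(X)] = 0$, so $\LA{2}(X)=0$ and the truncation triangle of Theorem \ref{trunc} collapses to $\LAlb(X) \simeq \cA_{X/k}$ in $D^b(\M[1/p])$. Applying $\Tot$ and using the identification $\cA_{X/k} = \Pic(\bar X, Y)$ of the relative Picard scheme, we obtain $\Tot\LAlb(X) \simeq \underline{\Pic}(\bar X, Y)$ as a single \'etale sheaf placed in degree zero. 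The final identification is then
\[
\Hom_{\DM_{-,\et}^\eff}(\Z, \underline{\Pic}(\bar X, Y)) = H^0_\et(\Spec k, \underline{\Pic}(\bar X, Y)) = \Pic(\bar X, Y)[1/p],
\]
the $[1/p]$ coming from the standing convention that $p$ is inverted in all Hom groups. Compatibility of the resulting composition with the geometric Albanese map follows by tracing the construction of $a_X$ in \eqref{amap} through the universal property of $\cA_{X/k}$.

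The main obstacle is really just the vanishing $\LA{2}(X) = 0$, which is the one point that genuinely uses both the smooth-curve hypothesis and the non-properness of $X$; everything else is formal given earlier results. If one wants to handle the proper case $Y=\emptyset$ as well, the $\NS_{X/k}^*[2]$ piece is nonzero and one must separately check that the connecting map $\Pic(\bar X)[1/p] \to \Hom_{\DM_{-,\et}^\eff}(\Z,\underline{\NS_{X/k}^*}[2])$ vanishes (morally because line bundles have trivial Brauer classes, so every class in $\Pic(\bar X)$ lifts compatibly). This is a small additional verification rather than a serious difficulty, and for genuinely open smooth curves it is unnecessary.
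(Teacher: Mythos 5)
Your route---reduce via Proposition \ref{sus} to computing $H_0^{(1)}(X)=\Hom_{\DM_{-,\et}^\eff}(\Z,\Tot\LAlb(X))$, then feed in the computation of $\LAlb$ for a smooth curve---is a legitimate internal reconstruction; the paper itself offers no proof, simply pointing to \cite[Lect.~7, Th.~7.16]{VL}. Two remarks on the details.

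First, the vanishing $\NS_{X/k}=0$ that makes the triangle of Theorem \ref{trunc} collapse is controlled by $\pi_0^c(X)$, not by $Y$: as one sees from Corollary \ref{rasc} (or directly from Proposition \ref{belong}), $\NS_{X/k}=\Z[\pi_0^c(X)]$ for a smooth curve, so it vanishes exactly when $X$ has no proper component. This is equivalent to $Y\neq\emptyset$ only under the (intended, but worth stating) assumption that $\bar X$ is connected. You also quietly use that the $k$-points of the representing scheme coincide with the relative Picard group, $\Pic_{(\bar X,Y)/k}(k)=\Pic(\bar X,Y)$; this is true because a rigidification along the nonempty $Y$ kills the Brauer obstruction, but it is worth flagging since the full embedding \eqref{eq8} and the Hom formula only give you the scheme's $k$-points.

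Second, the suggested verification for the proper case is not quite aimed at the right target. Applying $\Hom_{\DM_{-,\et}^\eff}(\Z,-)$ to $\Tot$ of the triangle of Theorem \ref{trunc} gives an exact sequence
\[
H^1_\et(k,\NS_{X/k}^*)[1/p]\to H_0^{(1)}(X)\to \cA_{X/k}(k)[1/p]\to H^2_\et(k,\NS_{X/k}^*)[1/p].
\]
The first term does vanish (Hilbert~90 after Weil restriction, $\NS_{X/k}^*\simeq R_{E/k}\G_m$ with $E$ the field of constants), so $H_0^{(1)}(X)\hookrightarrow\cA_{X/k}(k)[1/p]$, but the cokernel lands in $H^2_\et(k,\NS_{X/k}^*)$ and need not vanish; moreover, over a non-closed $k$ the inclusion $\Pic(\bar X)\subseteq\Pic_{\bar X/k}(k)$ can be strict, with the same $H^2_\et(k,\G_m)$-class measuring both discrepancies. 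So there is a genuine matching to do, not merely a vanishing to observe. The clean way to handle $Y=\emptyset$ is to use Proposition \ref{lcurve}~c) directly: the diagonal class gives $M_\et(\bar X)\iso\ihom_\et(M_\et(\bar X),\Z(1)[2])$, and so
\[
H_0^\et(\bar X)=\Hom(\Z,M_\et(\bar X))=\Hom(M_\et(\bar X),\Z(1)[2])=H^2_\et(\bar X,\Z(1))=\Pic(\bar X)[1/p],
\]
using $\Z_\et(1)=\G_m[1/p][-1]$ (Corollary \ref{cD.1}). In fact one can run the same duality argument uniformly in $Y$ (twist the localisation triangle $M(Y)\to M(\bar X)\to M^c_\et(X)$ by $\ihom_\et(-,\Z(1)[2])$ and compare with the Gysin triangle), which bypasses $\LAlb$ altogether and is presumably the computation \cite[Th.~7.16]{VL} has in mind. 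For genuinely open $X$ with connected $\bar X$, your argument is correct.
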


Now let $\cA_{X/k}^\eh$ be as in Proposition \ref{c3.1bis} and Remark \ref{r11.1}: recall that
$\cA_{X/k}^\eh=\cA_{X/k}$ if $X$ is strictly reduced. The map
$\Tot\LAlb(X)\to
\cA_{X/k}^\eh$ of
\loccit induces a homomorphism
\begin{equation}\label{eqhom}
H_0^{(1)}(X)\to \cA_{X/k}^\eh(k)[1/p]
\end{equation}
which is not an isomorphism in general (but see Lemma \ref{l7.3.1}).

Finally, composing \eqref{eqhom}, \eqref{eqalb} and the obvious map $H_0(X)[1/p]\to H_0^\et(X)$,
we get a map
\begin{equation}\label{classalb}
H_0(X)[1/p]\to \cA_{X/k}^\eh(k)[1/p].
\end{equation}

We may further restrict to parts of degree $0$, getting a map
\[H_0(X)^0[1/p]\to (\cA_{X/k}^\eh)^0(k)[1/p].\]

If $X$ is smooth, this is the $\Z[1/p]$-localisation of the generalised Albanese map of
Spie\ss-Szamuely \cite[(2)]{spsz}.

Dually to Lemma \ref{ltlexact}, the functor 
\begin{align*}
D^b(\M[1/p])&\to \DM_{-,\et}^\eff\\
C&\mapsto \Tot(C)\otimes (\Q/\Z)'
\end{align*}
is exact with respect to the ${}_t\M$ t-structure on the left and the homotopy t-structure on
the right; in other words:

\begin{lemma}\label{l12.1} For any $C\in D^b(\M[1/p])$, there are canonical isomorphisms of
sheaves
\[\sH_j(\Tot(C)\otimes (\Q/\Z)')\simeq \Tot({}_t H_j(C))\otimes(\Q/\Z)'\]
(note that the right hand side is a single sheaf!) In particular, for $C=\LAlb(M)$ and $k$
algebraiclly closed:
\[H_j^{(1)}(M, (\Q/\Z)')\simeq \Gamma(k,\Tot(\LA{j}(M))\otimes(\Q/\Z)').\]
\end{lemma}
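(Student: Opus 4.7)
The plan is to reduce the statement to a $t$-exactness claim on the hearts of the relevant $t$-structures and then apply a standard dévissage, mirroring the proof of the dual Lemma~\ref{ltlexact}. First I would verify the key assertion on the heart: for every $N\in{}_t\M[1/p]$ represented as $[L\xrightarrow{u}\underline G]$ with $L$ a lattice and $G$ an extension of an abelian variety by a group of multiplicative type, the object $\Tot(N)\oo^L(\Q/\Z)'$ is concentrated in a single cohomological degree for the homotopy $t$-structure on $\DM^\eff_{-,\et}$. Using the short exact sequence of complexes $0\to \underline G[-1]\to \Tot(N)\to L\to 0$, tensored with $\Z/n$ for $n$ prime to $p$: since $L$ is torsion-free, $L\oo^L\Z/n=L/n$ sits in cohomological degree $0$; and since multiplication by $n$ is étale-surjective on $\underline G$ (using that $p\nmid n$ and that $G^0$ is semi-abelian), $\underline G\oo^L\Z/n\simeq{}_nG[1]$, whence $\underline G[-1]\oo^L\Z/n\simeq {}_nG$ in degree $0$. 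The resulting triangle
\[
{}_nG\longrightarrow \Tot(N)\oo^L\Z/n\longrightarrow L/n\stackrel{+1}{\longrightarrow}
\]
exhibits $\Tot(N)\oo^L\Z/n$ as an extension of sheaves in degree $0$; taking the direct limit over $n$ gives the single-sheaf statement for $(\Q/\Z)'$.

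Next, the triangulated functor $\Tot(-)\oo^L(\Q/\Z)'$ carries the truncation triangles for the ${}_t\M$-$t$-structure on $D^b(\M[1/p])$ into exact triangles in $\DM^\eff_{-,\et}$, yielding a convergent hyperhomology spectral sequence
\[
E_2^{p,q}=\sH^p\bigl(\Tot({}_tH^{-q}(C))\oo^L(\Q/\Z)'\bigr)\;\Longrightarrow\; \sH^{p+q}\bigl(\Tot(C)\oo^L(\Q/\Z)'\bigr).
\]
The first step ensures that $E_2^{p,q}=0$ for $p\neq 0$, so the spectral sequence degenerates at $E_2$, and re-indexing via $\sH_j=\sH^{-j}$ and ${}_tH_j={}_tH^{-j}$ gives precisely the claimed isomorphism. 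For the ``in particular'' clause, take $C=\LAlb(M)$, so that ${}_tH_j(C)=\LA{j}(M)$. Over algebraically closed $k$, the étale hypercohomology of a bounded-below complex of sheaves reduces to global sections of its cohomology sheaves (since $\HH^{>0}_\et(k,-)=0$ on sheaves over a separably closed field), so
\[
H_j^{(1)}(M,(\Q/\Z)')=\Gamma\!\bigl(k,\sH_j(\Tot(\LAlb(M))\oo^L(\Q/\Z)')\bigr)=\Gamma\!\bigl(k,\Tot(\LA{j}(M))\oo^L(\Q/\Z)'\bigr),
\]
which is the desired formula.

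The main obstacle is the first step when $G$ contains a disconnected multiplicative-type factor (e.g.\ a $\mu_m$-piece): for $n$ with $\gcd(m,n)>1$ the map $n\colon\underline{\mu_m}\to\underline{\mu_m}$ fails to be étale-surjective, so a naïve computation of $\underline G\oo^L\Z/n$ acquires a second nonzero cohomology sheaf and the single-sheaf conclusion appears to break. I expect the correct resolution to pass through Cartier duality and invoke Lemma~\ref{ltlexact} on the ${}^t\M[1/p]$-side (where the extra contributions from the discrete torsion of $L$ are handled analogously), or equivalently to exploit that in the colimit $(\Q/\Z)'=\varinjlim_n\Z/n$ with $p$ inverted, the spurious cohomology from disconnected mult-type parts cancels by a compatibility between the $\mathrm{Tor}$- and $\otimes$-contributions at each prime $\ell\neq p$. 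Making this cancellation canonical and functorial is the principal technical hurdle; once verified, the dévissage of the middle paragraph runs smoothly.
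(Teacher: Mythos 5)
Your outline is correct, and your handling of the d\'evissage and of the ``in particular'' over $\bar k$ is right. The obstacle you flag at the end is genuine as you have phrased your first step, but it is an artefact of arguing one $n$ at a time and disappears once $\uG\oo^L(\Q/\Z)'$ is computed directly. The problematic sentence is ``multiplication by $n$ is \'etale-surjective on $\uG$'': for $N=[L\to G]\in{}_t\M[1/p]$, $G$ is an extension of an abelian variety by a group of \emph{multiplicative type}, which may be disconnected, and if $\pi_0(G)\neq 0$ then $n\cdot$ is not an epimorphism of sheaves for suitable $n$, so $\uG\oo^L\Z/n$ really does spread over two degrees. This, however, does not matter. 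For each prime $\ell\ne p$ and any abelian \'etale sheaf $\cF$ one has $\mathrm{Tor}_1(\cF,\Q_\ell/\Z_\ell)=\cF[\ell^\infty]$ with no divisibility hypothesis; and $\uG\otimes\Q_\ell/\Z_\ell=0$, because $\uG$ is an extension of the finite prime-to-$p$ sheaf $\underline{\pi_0(G)}$ by the $\ell$-divisible sheaf $\underline{G^0}$ (here one uses that $\ell\ne p$ and $G^0$ is semi-abelian, so $\ell\colon G^0\to G^0$ is a finite \'etale surjection), and a finite prime-to-$p$ sheaf tensored with $\Q_\ell/\Z_\ell$ vanishes. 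Summing over $\ell\ne p$ gives the unconditional, canonical isomorphism $\uG\oo^L(\Q/\Z)'\simeq\uG\{p'\}[1]$, where $\uG\{p'\}=\bigoplus_{\ell\ne p}\uG[\ell^\infty]$ is the prime-to-$p$ torsion subsheaf. Feeding $\uG[-1]\oo^L(\Q/\Z)'\simeq\uG\{p'\}$ and $L\oo^L(\Q/\Z)'\simeq L\otimes(\Q/\Z)'$ (both in degree $0$) into the triangle $\uG[-1]\to\Tot(N)\to L\xrightarrow{+1}$ shows that $\Tot(N)\oo^L(\Q/\Z)'$ is a single sheaf in degree $0$, namely an extension of $L\otimes(\Q/\Z)'$ by $\uG\{p'\}$. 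There is no cancellation to organise and no need to route through Cartier duality or Lemma~\ref{ltlexact}; the computation is manifestly functorial in $N$ and invariant under quasi-isomorphism, so the d\'evissage of your middle paragraph then finishes the proof.

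For comparison with the source: the paper gives no proof of this lemma beyond the remark that the exactness is dual to that of $T_\ell$ in Lemma~\ref{ltlexact}, so your argument, once repaired as above, is not a different route so much as the intended one written out.
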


\subsection{A proof of Ro\v\i tman's and Spie\ss-Szamuely's theorems}

\emph{Until the end of this section, we assume that $k$ is algebraically closed.} In this subsection, we only deal with smooth schemes and the characteristic is arbitrary: we shall
show how the results of Section \ref{comp} allows us to recover the
classical  theorem of Ro\v\i tman on torsion $0$-cycles up to $p$-torsion, as well as its generalisation to
smooth varieties by Spie\ss-Szamuely \cite{spsz}. The reader should compare our argument with
theirs (\loccit, \S 5).

Since $k$ is algebraically closed, Corollary \ref{cD.1} implies

\begin{lemma}\label{l7.3} For any $j\in\Z$, $H^\et_j(X)=H_j(X)[1/p]$; similarly with finite or divisible coefficients.\qed
\end{lemma}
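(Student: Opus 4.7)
The plan is to reduce everything to the comparison between Nisnevich and étale hypercohomology on $\Spec k$, using Corollary~\ref{cD.1}, which tells us that, for smooth inputs and after inverting $p$, the change-of-topology functor $\alpha^{*}$ induces an isomorphism on motivic (hyper)cohomology. More precisely, by definition
\[
H_j(X) = \HH^{-j}_{\Nis}(\Spec k, C_*(X)), \qquad H_j^\et(X) = \HH^{-j}_\et(\Spec k, \alpha^* C_*(X)),
\]
and since $k$ is algebraically closed, both sides compute the $(-j)$-th cohomology of the respective complexes of global sections (there are no higher étale or Nisnevich cohomology groups over a point). Applied levelwise to the strictly homotopy invariant \'etale sheaves with transfers forming $C_*(X)$, Corollary~\ref{cD.1} gives that $\alpha^*\cF(\Spec k)=\cF(\Spec k)[1/p]$ for each such $\cF$; a hypercohomology spectral sequence argument (or simply passing to cohomology in a filtered complex, since over $\Spec k$ the global sections functor is exact) then yields the first claim.

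For finite coefficients $\Z/n$ with $(n,p)=1$, consider the distinguished triangles
\[
M(X) \by{n} M(X) \to M(X) \otimes \Z/n \by{+1} \qquad \text{and} \qquad M_\et(X) \by{n} M_\et(X) \to M_\et(X) \otimes \Z/n \by{+1}.
\]
Applying $\Hom_{\DM_-^\eff}(\Z[j],-)$ and $\Hom_{\DM_{-,\et}^\eff}(\Z[j],-)$ respectively, one gets long exact sequences fitting in a ladder, in which the integral columns agree after inverting $p$ by the first part; the five lemma gives $H_j^\et(X,\Z/n) = H_j(X,\Z/n)[1/p]$. Since $\Z/n$ is already $\Z[1/p]$-linear when $(n,p)=1$, the right-hand side equals $H_j(X,\Z/n)$ (or rather $H_j(X)\otimes\Z/n$ in the appropriate sense), and the asserted isomorphism follows. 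Passing to the filtered colimit over $n$ prime to $p$ gives the analogous statement for $(\Q/\Z)'=\bigoplus_{\ell\ne p}\Q_\ell/\Z_\ell$-coefficients.

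There is no genuine obstacle here: the content is essentially packaged into Corollary~\ref{cD.1}, and the remaining work is a routine five-lemma and colimit argument. The only point requiring a modicum of care is ensuring that the comparison map from Nisnevich to étale hypercohomology agrees with the one induced by the motivic Albanese map \eqref{amap}, but this is tautological from the construction of $\alpha^{*}$.
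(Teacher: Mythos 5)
Your proposal is correct and follows the same route as the paper's (extremely terse) proof, which just cites Corollary~\ref{cD.1} and the hypothesis that $k$ is algebraically closed. Two small points of precision are worth flagging. First, the terms of the complex $C_*(X)$ are Nisnevich sheaves with transfers that are \emph{not} homotopy invariant; it is the \emph{cohomology sheaves} $\underline H^q(C_*(X))$ that are homotopy invariant. So ``applied levelwise to the strictly homotopy invariant \'etale sheaves with transfers forming $C_*(X)$'' should read ``applied to the cohomology sheaves of $C_*(X)$''. The honest way to make the reduction precise is to invoke the $t$-exactness of $\alpha^*$ for the homotopy $t$-structures (Corollary~\ref{cD.2}, applicable since $\cd_\et(k)=0$), which identifies $\underline H^q(\alpha^*C_*(X))$ with $\underline H^q(C_*(X))_\et[1/p]$, and then note that for $k$ separably closed and $F$ a Nisnevich sheaf, $F_\et(\Spec k)=F(\Spec k)$; the degenerate hypercohomology spectral sequence over $\Spec k$ (no higher Nisnevich or \'etale cohomology of a separably closed point) finishes the integral case, and the five-lemma/colimit argument handles $\Z/n$ and $(\Q/\Z)'$ exactly as you say. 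Second, your closing remark about compatibility with the motivic Albanese map \eqref{amap} is a red herring: Lemma~\ref{l7.3} compares $H_j(X)$ and $H^\et_j(X)$ along the change-of-topology functor $\alpha^*$ alone, and the Albanese map only enters later in \eqref{eqalb} and Lemma~\ref{l7.2}.
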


Moreover, it is easy to evaluate
$H_j^{(1)}(X)=\sH_j(\Tot\LAlb(X))(k)$ out of Theorem \ref{trunc}: if
$\LA{n}(X)=[L_n\to G_n]$, we have a long exact sequence coming from Proposition \ref{p3.10}
\begin{multline}\label{eq13.1}
L_{n+1}(k)[1/p]\to G_{n+1}(k)[1/p]\to H_j^{(1)}(X)\\
\to L_n(k)[1/p]\to G_n(k)[1/p]\to \dots
\end{multline}

Thus:

\begin{lemma}\label{l7.3.1} For $X$ smooth, \eqref{eqhom} is an
isomorphism and we have
\begin{align}
H_1^{(1)}(X)&\simeq \NS_{X/k}^*(k)[1/p]\label{eqhom1}\\
H_j^{(1)}(X)&=0 \text{ if } j\ne 0,1.\notag
\end{align}
\end{lemma}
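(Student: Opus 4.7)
The plan is to compute $H_j^{(1)}(X)=\sH_j(\Tot\LAlb(X))(k)$ by feeding the exact triangle
\[[0\to \NS_{X/k}^*][2]\to \LAlb(X)\to \cA_{X/k}\by{+1}\]
of Theorem \ref{trunc} through $\Tot:D^b(\M[1/p])\to \DM_{\gm,\et}^\eff$ and then taking the long exact sequence of homology sheaves with respect to the homotopy $t$-structure of Theorem \ref{t3.2.3}. Since $X$ is smooth (hence normal), Proposition \ref{c3.1bis}(d) identifies $\cA_{X/k}^\eh$ with $\cA_{X/k}$, so the map \eqref{eqhom} will be recovered from the $k$-points of the edge map on $\sH_0$.

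First I would compute the two outer terms separately. For $\Tot(\cA_{X/k})=\underline{\cA_{X/k}}$, Lemma \ref{l4.3.1} applied to the normalised presentation $0\to \underline{\cA_{X/k}^0}\to \underline{\cA_{X/k}}\to \Z[\pi_0(X)]\to 0$ shows that $\sH_j$ vanishes for $j\ne 0$ and $\sH_0(\Tot\cA_{X/k})=\underline{\cA_{X/k}}$. For the other term, formula \eqref{eq3.9} applied to $[0\to \NS_{X/k}^*]$ gives $\sH^0=0$ and $\sH^1=\underline{\NS_{X/k}^*}$; shifting by $[2]$ leaves only $\sH_1=\underline{\NS_{X/k}^*}$.

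The long exact sequence of homology sheaves attached to the triangle then degenerates into isomorphisms
\[\sH_0(\Tot\LAlb(X))\iso \underline{\cA_{X/k}}\quad\text{and}\quad \sH_1(\Tot\LAlb(X))\iso \underline{\NS_{X/k}^*},\]
with $\sH_j=0$ otherwise. Taking sections over the algebraically closed field $k$ yields the stated formulas; the factor $[1/p]$ appears automatically because we are working in $D^b(\M[1/p])$, so all Hom and section groups are already $\Z[1/p]$-modules.

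The one point that still needs care, and which I regard as the main (very mild) obstacle, is checking that the isomorphism $H_0^{(1)}(X)\iso \cA_{X/k}(k)[1/p]$ produced this way really coincides with the map \eqref{eqhom} of Proposition \ref{c3.1bis}(b). This is a formal compatibility: the canonical map $\Tot\LAlb(X)\to \Tot(\cA_{X/k})=\underline{\cA_{X/k}}$ appearing in the Theorem \ref{trunc} triangle is constructed from the Albanese morphism $X\to \cA_{X/k}$ via \eqref{eq1.4} and \eqref{precan}, and so agrees by universality with the map of \loccit once $\cA_{X/k}^\eh$ is identified with $\cA_{X/k}$. Everything else is straightforward homological bookkeeping in the homotopy $t$-structure.
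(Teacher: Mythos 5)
Your argument is correct and is essentially the same as the paper's: the paper computes $H_j^{(1)}(X)=\sH_j(\Tot\LAlb(X))(k)$ by feeding Corollary \ref{HLAlb} (which is itself a restatement of Theorem \ref{trunc}) into the long exact sequence \eqref{eq13.1} coming from Proposition \ref{p3.10}, and your use of the triangle from Theorem \ref{trunc} directly together with the homotopy long exact sequence is the same computation with the $\LA{0},\LA{1}$-part bundled into $\cA_{X/k}$. One small misattribution worth fixing: Lemma \ref{l4.3.1} computes ${}^tH_i$ (the motivic $t$-structure), not the homotopy sheaves $\sH_j$; the vanishing $\sH_j(\Tot\cA_{X/k})=0$ for $j\ne 0$ is simply because $\Tot(\cA_{X/k})=\underline{\cA_{X/k}}$ is a single sheaf, so no lemma is needed. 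Your discussion of the compatibility with \eqref{eqhom} is accurate and, as you observe, for $X$ smooth $\cF_X=\underline{\cA_{X/k}}$ is already a semi-abelian scheme, so $\Omega(\cF_X)=\cA_{X/k}$ and the map \eqref{eqhom} is literally the $\sH_0$-truncation map, which the Theorem \ref{trunc} triangle exhibits as an isomorphism.
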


Here is now the main lemma:

\begin{lemma}\label{l7.2} The map \eqref{eqalbt} is an isomorphism for $j=0,1$ and surjective
for $j=2$.
\end{lemma}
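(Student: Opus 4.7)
The strategy is to identify both sides as global sections of cohomology sheaves and match them with classical maps. Since $k$ is algebraically closed and the complexes involved are torsion of order prime to $p$, higher Galois cohomology vanishes, so $\Hom(\Z[j],-)$ reduces to evaluating the $(-j)$-th cohomology sheaf at $k$ (compare Lemma \ref{l12.1}). In particular $H_j^\et(X,\Z/n) = \sH^{-j}(M_\et(X)\otimes\Z/n)(k)$ and $H_j^{(1)}(X,\Z/n) = \sH^{-j}(\Tot\LAlb(X)\otimes\Z/n)(k)$.

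For the target, Theorem \ref{trunc} (together with the exact triangle it provides) yields $\sH^{-1}(\Tot\LAlb(X))=NS_{X/k}^\ast$ and $\sH^0(\Tot\LAlb(X))=\underline{\cA}_{X/k}$, zero elsewhere. Tensoring with $\Z/n$, the universal coefficient sequence combined with the $n$-divisibility of $\cA_{X/k}^0(\bar k)$ for $(n,p)=1$ gives explicit formulas: $H_0^{(1)}(X,\Z/n)=(\Z/n)^{\pi_0(X)(k)}$, an extension computing $H_1^{(1)}(X,\Z/n)$ in terms of ${}_n\cA_{X/k}^0(k)$ and $k$-sections of $NS_{X/k}^\ast/n$, and $H_2^{(1)}(X,\Z/n)\simeq{}_nNS_{X/k}^\ast(k)$.

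For the source, I would invoke Suslin--Voevodsky rigidity (\cite[Thm 7.8]{suvo}), applicable since $n$ is invertible in $k$, which identifies $H_j^\et(X,\Z/n)$ with the relevant \'etale (co)homology of $X$. The map induced by $a_X\otimes\Z/n$ then corresponds to classical constructions: the degree map for $j=0$ (trivially an iso); the Kummer--Albanese map for $j=1$, which is an iso by the universal property of $\cA_{X/k}^0$ together with the compatibility of the rigidity isomorphism with the Kummer sequence on $\cA_{X/k}^0$ (this being the content of Spie\ss--Szamuely's key argument in \cite[\S 3]{spsz}); and the cycle class map for $j=2$, which surjects onto ${}_nNS_{X/k}^\ast(k)$ by classical cycle class theory (Cartier dual of the natural epimorphism $NS_{X/k}\twoheadrightarrow NS_{X/k}/n$).

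The main obstacle is identifying $a_X\otimes\Z/n$ with these classical maps. This reduces to careful diagram-chasing through the construction of $\LAlb$ in Section \ref{lalb}, using the adjunction \eqref{lalbuniv} to characterize $a_X$ by a universal property shared with the classical Albanese; once the identification is in place, the iso/surjectivity statements follow from the standard theory of generalized Albanese varieties. In this light, the lemma may be regarded as the motivic translation of the geometric argument of \cite{spsz}.
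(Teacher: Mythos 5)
Your approach is genuinely different from the paper's, and the step you defer to ``careful diagram-chasing'' is precisely where the argument would break down. The paper never computes either side of \eqref{eqalbt} and never identifies $a_X\otimes\Z/n$ with a classical map; instead it applies the exact contravariant functor $\Hom(-,\Z/n(1))$ (finite group duality, legitimate because $k=\bar k$ and all groups involved are finite $n$-torsion) to \eqref{eqalbt}, turning ``iso for $j=0,1$ / surjective for $j=2$'' into ``iso / injective''. The dual map is then recognized, essentially by definition of $\RPic$ and of $\LAlb=\Tot^{-1}(D_{\le1}^\et\alpha^*D_{\le1}^\Nis)$, as the comparison
\[H^j(\alpha^*\ihom_\Nis(M(X),\Z(1))\otimes\Z/n)=H^j_\Nis(X,\Z/n(1))\to H^j_\et(X,\Z/n(1)),\]
whose behavior in weight $1$ (iso for $j\le 1$, injective for $j=2$) is Hilbert 90, i.e.\ the weight-one Beilinson--Lichtenbaum conjecture already invoked in Lemma \ref{l2.1}. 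The duality move makes the dual of $a_X$ \emph{tautologically} the change-of-topology map, so there is nothing to identify.

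By contrast, your proposal must show that $a_X\otimes\Z/n$, defined through the adjunction \eqref{lalbuniv} and the internal-Hom formula \eqref{D2}, induces the classical Kummer--Albanese map on $\sH^0$ and the Cartier-dual cycle class map on $\sH^{-1}$. This is not a formal consequence of the universal property: \eqref{lalbuniv} characterizes $a_X$ as initial among maps to objects of $d_{\le1}\DM_{\gm,\et}^\eff$, but converting that into the statement that the induced map on $n$-torsion $k$-points of $\cA^0_{X/k}$ agrees with the Kummer--Albanese map requires unwinding the biextension comparison of Theorem \ref{teq}, the normalization of $\eqref{can}$ via \eqref{precan}, and the compatibility of all of this with the Kummer triangles. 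That is substantially the hard content of Spie\ss--Szamuely's \S 5, to which the paper explicitly compares its proof as the streamlined alternative. Your computations of the two sides via Theorem \ref{trunc} and rigidity are correct, but the bridge between them is absent, and the bridge is where the work lies.
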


\begin{proof} It is sufficient to show that $\Hom(\eqref{eqalbt},\Z/n(1))$ is an
isomorphism (\resp injective). Since $k$ is algebraically closed, by finite group duality (!)
this is nothing else than the map
\[H^j(\Tot\RPic(X)\otimes\Z/n)\to H^j_\et(X,\Z/n(1)).\]

By definition, the left hand side is 
\[H^j(\alpha^*\ihom_\Nis(M(X),\Z(1))\otimes\Z/n)=H^j_\Nis(X,\Z/n(1))\]
(using again that $k$ is algebraically closed), hence the result follows from Hilbert's theorem
90.
\end{proof}

From this and Lemma \ref{l12.1} we deduce:
\begin{cor}\label{c12.2} The homomorphism \eqref{eqalbt2}
\[H_j(X,(\Q/\Z)')\to H_j^{(1)}(X,(\Q/\Z)')\]
(see Lemma \ref{l7.3}) is bijective for $j=0,1$ and surjective for $j=2$.
\end{cor}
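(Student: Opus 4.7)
The plan is to deduce Corollary~\ref{c12.2} from Lemma~\ref{l7.2} by passing to a filtered colimit, with Lemma~\ref{l12.1} serving to make the sheaf-theoretic picture explicit.

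First I would write $(\Q/\Z)' = \colim_n \Z/n$, where $n$ ranges over positive integers prime to $p$ ordered by divisibility. Since $\Z[j]$ and $M_\et(X)$ lie in $\DM_{\gm,\et}^\eff$ and are therefore compact in $\DM_{-,\et}^\eff$, and since the derived tensor product with abelian groups is a left adjoint (hence commutes with filtered colimits), the functors
\[ A \mapsto H_j^\et(X,A) = \Hom_{\DM_{-,\et}^\eff}(\Z[j], M_\et(X) \oo^L A), \]
\[ A \mapsto H_j^{(1)}(X,A) = \Hom_{\DM_{-,\et}^\eff}(\Z[j], \Tot\LAlb(X) \oo^L A) \]
both commute with filtered colimits in the coefficient variable. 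Lemma~\ref{l12.1} gives the concrete version of this on the $1$-motivic side: over the separably closed field $k$, $H_j^{(1)}(X,(\Q/\Z)')$ is the group of sections of the single sheaf $\Tot(\LA{j}(X))\otimes(\Q/\Z)'$, which is manifestly the filtered colimit of the sheaves $\Tot(\LA{j}(X))\otimes \Z/n$ (one can read off the finite-$n$ analogue from the same lemma up to a $\Tor$-term, which is absorbed by the colimit). It follows that the map~\eqref{eqalbt2} is the filtered colimit of the maps~\eqref{eqalbt}.

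Lemma~\ref{l7.2} asserts that each map~\eqref{eqalbt} is bijective for $j = 0,1$ and surjective for $j=2$. Filtered colimits in the category of abelian groups are exact, so they preserve both bijectivity and surjectivity; the desired conclusion for~\eqref{eqalbt2} follows at once. Finally, the corollary phrases the source as $H_j(X,(\Q/\Z)')$ rather than $H_j^\et(X,(\Q/\Z)')$; these are identified via Lemma~\ref{l7.3}, since inverting $p$ is invisible on $(\Q/\Z)'$-coefficients. There is really no substantive obstacle here---the only thing to check is the standard compatibility of $\Hom(\Z[j],-\oo^L A)$ with filtered colimits in $A$, which I sketched above.
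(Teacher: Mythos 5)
Your proof is correct and follows essentially the same route the paper takes: the paper's own ``proof'' of Corollary~\ref{c12.2} is the single sentence ``From this [Lemma~\ref{l7.2}] and Lemma~\ref{l12.1} we deduce,'' i.e.\ pass to the filtered colimit of the mod-$n$ statements. You have simply spelled this out.

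One caveat: the justification ``$\Z[j]$ and $M_\et(X)$ lie in $\DM_{\gm,\et}^\eff$ and are therefore compact in $\DM_{-,\et}^\eff$'' is not something this paper establishes, and compactness of geometric motives in the \'etale version of $\DM$ is in fact a delicate matter (it is \emph{not} simply inherited from the Nisnevich category via $\alpha^*$, and one should not assert it casually). Fortunately you do not need it. Since the coefficients $\Z/n$ are torsion prime to $p$ and $k=\bar k$ in this subsection, the objects $M_\et(X)\oo^L\Z/n$ live in the torsion part of $\DM_{-,\et}^\eff$, which by the argument recalled in \S\ref{2.3} is identified with $D^-(\Shv_\et(\Spec k))$; over $\bar k$ this is just the derived category of abelian groups, where $\HH^{-j}$ visibly commutes with filtered colimits. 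Equally, Lemma~\ref{l12.1}, which you do invoke, already packages this on the $1$-motivic side. So the compactness claim can (and should) simply be dropped; the rest of your argument stands.
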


The following theorem extends in particular \cite[Th. 1.1]{spsz} to all smooth
varieties\footnote{In \loccit, $X$ is supposed to be admit an open embedding in a smooth
projective variety.}.

\begin{thm}\label{troitclas} a) The map \eqref{classalb} is an isomorphism on torsion prime
to $p$.\\ 
b) $H_1(X)\otimes (\Q/\Z)'=0$.\\
c) The map \eqref{eqalb} yields a surjection
\[H_1(X)\{p'\}\onto \NS_{X/k}^*(k)\{p'\}\]
where $M\{p'\}$ denotes the torsion prime to $p$ in an abelian group $M$.
\end{thm}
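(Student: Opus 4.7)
The plan is to exploit the Bockstein long exact sequences in mod-$N$ coefficients (for $N$ prime to $p$) on both sides of the map \eqref{classalb}, and then to use the strong constraint that when $k$ is algebraically closed, $\NS_{X/k}^*(k)$ is $N$-divisible. First I would factor \eqref{classalb} as
\[H_0(X)[1/p] \iso H_0^\et(X) \longrightarrow H_0^{(1)}(X) \iso \cA_{X/k}^\eh(k)[1/p]\]
via Lemma \ref{l7.3} and Lemma \ref{l7.3.1}; the same lemma also records $H_1^{(1)}(X) = \NS_{X/k}^*(k)[1/p]$ and $H_j^{(1)}(X) = 0$ for $j \ne 0,1$. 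Since $k$ is algebraically closed and $\NS_{X/k}^*$ is of multiplicative type with finitely generated character module, $\NS_{X/k}^*(k)$ is $N$-divisible for every $N$ prime to $p$; in particular $H_1^{(1)}(X)/N = 0$.

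Next I would apply the exact triangle $\Z \by{N} \Z \to \Z/N \by{+1}$ to both $M(X)$ and $\Tot\LAlb(X)$ (using Lemma \ref{l7.3} to pass from $\et$ to Nisnevich cohomology) and obtain compatible Bockstein short exact sequences
\[0 \to H_j(X)/N \to H_j(X,\Z/N) \to {}_N H_{j-1}(X) \to 0\]
(and analogously for $H_j^{(1)}$), using that $H_{-1}(X) = H_{-1}^{(1)}(X) = 0$. By Lemma \ref{l7.2}, the comparison map on the middle term is an isomorphism for $j = 0,1$ and a surjection for $j = 2$.

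For parts (a) and (b), the case $j=1$ gives a commutative diagram whose middle vertical arrow is an isomorphism $H_1(X,\Z/N) \iso H_1^{(1)}(X,\Z/N) = {}_N \cA_{X/k}^0(k)$, and whose bottom row reads $0 \to 0 \to {}_N\cA_{X/k}^0(k) \to {}_N\cA_{X/k}^\eh(k) \to 0$ (using $H_1^{(1)}(X)/N = 0$). A five-lemma chase forces $H_1(X)/N = 0$ and ${}_N H_0(X) \iso {}_N \cA_{X/k}^\eh(k)$. Passing to the direct limit over $N$ coprime to $p$ yields (b) directly, while (a) follows because $\cA_{X/k}^\eh(k)[1/p]$ has no $p$-torsion, so its torsion subgroup is precisely $\cA_{X/k}^\eh(k)\{p'\}$. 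For part (c), the case $j=2$ gives a surjection $H_2(X,\Z/N) \onto H_2^{(1)}(X,\Z/N) \iso {}_N \NS_{X/k}^*(k)$ (the isomorphism coming from $H_2^{(1)}(X) = 0$); by functoriality of the Bockstein this surjection factors through ${}_N H_1(X)$, yielding a surjection ${}_N H_1(X) \onto {}_N \NS_{X/k}^*(k)$, and the direct limit gives (c).

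The main ``obstacle'' here is really just organisational, namely arranging the Bockstein diagrams correctly and keeping careful track of which terms vanish because of the algebraic closedness of $k$. The genuine mathematical content has already been invested in Lemma \ref{l7.2} (essentially Hilbert 90 routed through motivic Cartier duality) and in the computation of $H_*^{(1)}(X)$ given by Lemma \ref{l7.3.1} via Theorem \ref{trunc}; given these, no further geometric input is required and the proof reduces to diagram chasing.
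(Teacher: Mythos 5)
Your proof is correct and follows essentially the same route as the paper: you run the Bockstein comparison diagram for $H_j(-)$ versus $H_j^{(1)}(-)$ using Lemmas \ref{l7.2}, \ref{l7.3} and \ref{l7.3.1}, the only cosmetic difference being that you work one modulus $N$ at a time and then pass to the direct limit, whereas the paper writes the diagram \eqref{eq7.7} directly with $(\Q/\Z)'$ coefficients. The limiting step is harmless, so the two arguments coincide.
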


\begin{proof} Lemmas \ref{l7.3} and \ref{l7.3.1} reduce us to show that \eqref{eqalb} is an
isomorphism on torsion for $j=0$. We have commutative diagrams with exact
rows:
\begin{equation}\label{eq7.7}
\begin{CD}
0&\to&H_j(X)\otimes(\Q/\Z)'&\to& H_j(X,(\Q/\Z)')&\to& H_{j-1}(X)\{p'\}&\to& 0\\
&&@VVV @VVV @VVV\\
0&\to&H_j^{(1)}(X)\otimes (\Q/\Z)'&\to&H_j^{(1)}(X,(\Q/\Z)') &\to&
H_{j-1}^{(1)}(X)\{p'\}&\to& 0 
\end{CD}
\end{equation}

For $j=1$, the middle vertical map is an isomorphism by Lemma \ref{l7.2} or Corollary
\ref{c12.2} and
$H_1^{(1)}(X)\otimes
\Q/\Z=0$ by \eqref{eqhom1}, which gives a) and b). For $j=2$, the middle map is surjective by
the same lemma and corollary, which gives c). The proof is complete.
\end{proof}

\begin{remark} If $X$ is smooth projective of dimension $n$, $H_j(X)$ is isomorphic to the
higher Chow group $CH^n(X,j)$. In \eqref{eq7.7} for $j=2$, the lower left term is $0$ by Lemma
\ref{l7.3.1}. The composite map
\[
H_2(X,(\Q/\Z)')\to H_1(X)\{p'\}\to H_1^{(1)}(X)\{p'\}=\NS_{X/k}^*(k)\{p'\}
\] 
is ``dual" to the map
\[\NS(X)\otimes (\Q/\Z)'\to H^2_\et(X,\Q/\Z(1))\]
whose cokernel is $Br(X)\{p'\}$. Let
\[Br(X)^D=\varinjlim_{(n,p)=1} \Hom({}_n
Br(X),\mu_n):\] 
a diagram chase in \eqref{eq7.7} for $j=2$ then yields an exact sequence
\begin{multline*}
0\to CH^n(X,2)\otimes (\Q/\Z)'\to Br(X)^D\\
\to CH^n(X,1)\{p'\}\to
\NS_{X/k}^*(k)\{p'\}\to 0.
\end{multline*}
Together with $CH^n(X,1)\otimes (\Q/\Z)'=0$, this should be considered as a natural complement
to Ro\v\i tman's theorem.
\end{remark}

\subsection{Generalisation to singular schemes} We now assume $\car k\allowbreak=0$, and show
how the results of Section \ref{comps} allow us to extend the results of the previous
subsection to singular schemes. By blow-up induction and the 5 lemma, we get:

\begin{propose} \label{p12.3} The isomorphisms and surjection of Lemma \allowbreak\ref{l7.2} and
Corollary
\ref{c12.2} extend to all $X\in Sch$.\qed
\end{propose}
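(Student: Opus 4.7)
The plan is to apply blow-up induction on $\dim X$, using a covariant version of Lemma \ref{lbl}. The two functors $X\mapsto H_j^\et(X,C)$ and $X\mapsto H_j^{(1)}(X,C)$, for $C=\Z/n$ or $C=(\Q/\Z)'$, both fit into long exact sequences associated to any abstract blow-up square as in \S \ref{blowups}: for the first this comes from Voevodsky's blow-up triangle for $M(-)$ in $\DM_\gm^\eff$ (applied to $M_\et=\alpha^*M$, using resolution of singularities in characteristic $0$); for the second it follows because $\LAlb$ is a composition of triangulated functors and so sends blow-up triangles to blow-up triangles in $D^b(\M[1/p])$, which $\Tot$ then carries to $\DM_{-,\et}^\eff$. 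The motivic Albanese map $a_X$ of \eqref{amap} is natural in $X$, giving a morphism between the two long exact sequences.

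First I would handle the case where $X$ is integral. Choose a resolution of singularities $p:\tilde X\to X$ with center $Z=X_{\mathrm{sing}}$ and exceptional locus $\tilde Z=p^{-1}(Z)$, so that $\dim Z,\dim\tilde Z<\dim X$; by the inductive hypothesis the statement holds for $Z$ and $\tilde Z$, and by Lemma \ref{l7.2} and Corollary \ref{c12.2} it holds for the smooth $\tilde X$. A standard 5-lemma chase on the Mayer--Vietoris long exact sequence then yields the isomorphism statements at $j=0,1$. For reducible $X$ I would use the abstract blow-up $\coprod_i X_i\to X$, whose gluing locus $\bigcup_{i\ne j}X_i\cap X_j$ has strictly smaller dimension, and argue in the same way.

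The mildly delicate point, and the only real obstacle, is the surjection at $j=2$, which is not immediately covered by the 5-lemma. Here one performs a direct four-term diagram chase on the Mayer--Vietoris square
\[
\begin{CD}
H_2(\tilde X,C)\oplus H_2(Z,C) @>>> H_2(X,C) @>>> H_1(\tilde Z,C) @>>> H_1(\tilde X,C)\oplus H_1(Z,C)\\
@V{\alpha}VV @V{\beta}VV @V{\gamma}VV @V{\delta}VV\\
H_2^{(1)}(\tilde X,C)\oplus H_2^{(1)}(Z,C) @>>> H_2^{(1)}(X,C) @>>> H_1^{(1)}(\tilde Z,C) @>>> H_1^{(1)}(\tilde X,C)\oplus H_1^{(1)}(Z,C)
\end{CD}
\]
in which $\alpha$ is surjective (by Lemma \ref{l7.2} on $\tilde X$ and induction on $Z$), $\gamma$ is an isomorphism (by induction on $\tilde Z$), and $\delta$ is an isomorphism (by Lemma \ref{l7.2} on $\tilde X$ and induction on $Z$). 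Given $y\in H_2^{(1)}(X,C)$, lift its image in $H_1^{(1)}(\tilde Z,C)$ via $\gamma^{-1}$ to some $x\in H_1(\tilde Z,C)$; commutativity plus injectivity of $\delta$ forces $x$ to die in $H_1(\tilde X,C)\oplus H_1(Z,C)$, so by exactness $x$ is the boundary of some $w\in H_2(X,C)$. The difference $\beta(w)-y$ then maps to $0$ in $H_1^{(1)}(\tilde Z,C)$, hence lifts to $H_2^{(1)}(\tilde X,C)\oplus H_2^{(1)}(Z,C)$, and by surjectivity of $\alpha$ can be corrected by an element of $H_2(\tilde X,C)\oplus H_2(Z,C)$. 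This shows $y\in\mathrm{im}\,\beta$, completing the induction.
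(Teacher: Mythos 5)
Your proposal is correct and matches the paper's proof exactly: the paper dispatches this with the one-line remark ``by blow-up induction and the 5 lemma,'' and your diagram chase at $j=2$ is precisely the surjectivity half of the four/five-lemma made explicit. The only cosmetic quibble is that you call the abstract blow-up exact sequence a ``Mayer--Vietoris'' sequence; the mathematics is the same.
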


Let $\LA{1}(X)=[L_1\by{u_1} G_1]$. Proposition \ref{p12.3}, the exact sequence \eqref{eq13.1}
and the snake chase in the proof of Theorem \ref{troitclas} give:

\begin{cor} For $X\in Sch$, we have exact sequences
\begin{gather*}
0\to H_1(X)\otimes \Q/\Z\to \ker(u_1)\otimes \Q/\Z\to H_0(X)_\tors\to \coker(u_1)_\tors\to 0\\
0\to H_1(X)\otimes \Q/\Z\to H_1^{(1)}(X,\Q/\Z)\to H_0(X)_\tors\to 0.
\end{gather*}
\end{cor}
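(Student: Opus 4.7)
The plan is to run the snake-lemma argument from the proof of Theorem~\ref{troitclas}, with the identifications that were transparent in the smooth case now supplied by \eqref{eq13.1}. First I would produce the two universal coefficient short exact sequences for $H_*(X,-)$ and $H_*^{(1)}(X,-)$, obtained by tensoring $\alpha^* C_*(X)$ (resp.\ $\Tot\LAlb(X)$) with the triangle $\Z\to\Q\to\Q/\Z\by{+1}$ and splitting off the divisible $\Q$-coefficient term. Using Lemma~\ref{l7.3} to identify $H^\et_* = H_*$ (valid since $k$ is algebraically closed of characteristic zero), this gives
\begin{gather*}
0\to H_1(X)\otimes\Q/\Z\to H_1(X,\Q/\Z)\to H_0(X)_\tors\to 0,\\
0\to H_1^{(1)}(X)\otimes\Q/\Z\to H_1^{(1)}(X,\Q/\Z)\to H_0^{(1)}(X)_\tors\to 0.
\end{gather*}

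Second, I would read off the outer terms of the bottom row from \eqref{eq13.1}. Writing $\LA{j}(X)=[L_j\by{u_j}G_j]$ and using $\LA{0}(X)=[\Z[\pi_0(X)]\to 0]$ (Proposition~\ref{c3.1}), the degree-zero portion of \eqref{eq13.1} reads $0\to\coker u_1(k)\to H_0^{(1)}(X)\to\Z[\pi_0(X)]\to 0$; since $\Z[\pi_0(X)]$ is torsion-free this yields $H_0^{(1)}(X)_\tors=(\coker u_1)_\tors$. In degree one it reads $0\to\coker u_2(k)\to H_1^{(1)}(X)\to\ker u_1(k)\to 0$, and because $\LA{2}(X)\in{}_t\M[1/p]$ the group $G_2$ is an extension of an abelian variety by a group of multiplicative type; over the algebraically closed $k$ in characteristic zero this forces $G_2(k)\otimes\Q/\Z=0$ (the abelian and toric parts are divisible, and the finite $\pi_0$ of the multiplicative-type part is killed by $\otimes\Q/\Z$). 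Hence $\coker u_2(k)\otimes\Q/\Z=0$; since $\ker u_1(k)$ is torsion-free (a subgroup of $L_1(k)$) the associated $\mathrm{Tor}$ term also vanishes, and we obtain $H_1^{(1)}(X)\otimes\Q/\Z\iso\ker u_1(k)\otimes\Q/\Z$.

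Third, I would assemble the two sequences into the commutative diagram
\[\begin{CD}
0 @>>> H_1(X)\otimes\Q/\Z @>>> H_1(X,\Q/\Z) @>>> H_0(X)_\tors @>>> 0\\
@. @VVV @V{\wr}VV @VVV @.\\
0 @>>> \ker u_1\otimes\Q/\Z @>>> H_1^{(1)}(X,\Q/\Z) @>>> (\coker u_1)_\tors @>>> 0,
\end{CD}\]
whose middle vertical is an isomorphism by Proposition~\ref{p12.3}. The snake lemma then forces the left vertical to be injective, the right vertical to be surjective, and produces an isomorphism $\coker(\text{left})\iso\ker(\text{right})$; splicing the two resulting short exact sequences yields the first four-term sequence of the corollary, while the second sequence is obtained by rewriting the top row via the middle isomorphism.

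The main technical point I expect to be nontrivial is verifying the commutativity of the diagram---namely, that the composite $H_1(X,\Q/\Z)\iso H_1^{(1)}(X,\Q/\Z)\to(\coker u_1)_\tors$ factors as the top Bockstein followed by the natural map $H_0(X)_\tors\to(\coker u_1)_\tors$ induced by $a_X$. This should follow from functoriality of the Bockstein triangle in its sheaf argument, combined with naturality of the identification $H_0^{(1)}(X)_\tors=(\coker u_1)_\tors$ from \eqref{eq13.1} with respect to the comparison $M_\et(X)\to\Tot\LAlb(X)$; granting this, everything else reduces to routine diagram chasing.
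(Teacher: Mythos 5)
Your argument is correct and follows the paper's intended route: the paper's terse justification (``Proposition~\ref{p12.3}, the exact sequence~\eqref{eq13.1} and the snake chase in the proof of Theorem~\ref{troitclas}'') refers precisely to the diagram~\eqref{eq7.7}/snake-lemma chase and the substitutions from~\eqref{eq13.1} that you carry out. Your additional verification that $\coker u_2\otimes\Q/\Z=0$ (so that $H_1^{(1)}(X)\otimes\Q/\Z\simeq\ker u_1\otimes\Q/\Z$) is correct and supplies a detail the paper leaves implicit.
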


The second exact sequence is more intrinsic than the first, but note that it does not give
information on $H_1(X)\otimes \Q/\Z$.

\begin{cor}\label{c13.3} If $X$ is normal, $H_1(X)\otimes \Q/\Z=0$ and there is an isomorphism
\[\cA_{X/k}(k)_\tors\iso H_0(X)_\tors.\]
\end{cor}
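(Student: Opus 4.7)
The plan is to deduce Corollary~\ref{c13.3} as a direct application of the preceding corollary (with its first exact sequence), once we specialise the shape of $\LA{1}(X)$ to the normal case. Write $\LA{1}(X)=[L_1\by{u_1} G_1]$ as a Deligne $1$-motive, using Proposition~\ref{p11.3a} c). For $X$ normal, Corollary~\ref{c12.2.1} b) tells us that this representative can be taken to be $[0\to\cA_{X/k}^0]$, that is $L_1=0$, $G_1=\cA_{X/k}^0$ and $u_1=0$. In particular $\ker(u_1)=0$ and $\coker(u_1)=\cA_{X/k}^0$ as sheaves.

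Feeding this into the first exact sequence of the corollary immediately preceding \ref{c13.3}, namely
\[
0\to H_1(X)\otimes\Q/\Z\to \ker(u_1)\otimes\Q/\Z\to H_0(X)_\tors\to \coker(u_1)_\tors\to 0,
\]
the second term vanishes, forcing $H_1(X)\otimes\Q/\Z=0$ and providing a canonical isomorphism $H_0(X)_\tors\iso \coker(u_1)_\tors=\cA_{X/k}^0(k)_\tors$ (recall $k$ is algebraically closed throughout this section).

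To convert this into an isomorphism with $\cA_{X/k}(k)_\tors$, we invoke the short exact sequence \eqref{unext}
\[
0\to \cA_{X/k}^0\to \cA_{X/k}\to \Z[\pi_0(X)]\to 0.
\]
Taking $k$-points and torsion, the quotient $\Z[\pi_0(X)]$ is a lattice and hence torsion-free, so $\cA_{X/k}^0(k)_\tors\iso \cA_{X/k}(k)_\tors$. Composing gives the asserted isomorphism.

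The only nontrivial input is that $\LA{1}(X)=[0\to\cA_{X/k}^0]$ for $X$ normal, which is already in hand from Corollary~\ref{c12.2.1} b); no further obstacle arises, so this is essentially a bookkeeping argument combining Corollary~\ref{c12.2.1}, the preceding torsion sequence, and the torsion-freeness of $\pi_0(\cA_{X/k})$.
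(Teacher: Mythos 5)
Your proof is correct and follows essentially the same route as the paper: specialise the first torsion exact sequence from the corollary preceding \ref{c13.3} using the shape of $\LA{1}(X)$ in the normal case, then pass from $\cA_{X/k}^0$ to $\cA_{X/k}$ via torsion-freeness of $\Z[\pi_0(X)]$. One minor remark: the paper's proof cites Corollary~\ref{c12.2.1}~c) (the normal and proper case), but your citation of part~b) (the normal case, giving $\LA{1}(X)=[0\to\cA_{X/k}^0]$) is the more appropriate reference for the stated hypotheses, and the paper's reference to c) appears to be a slip.
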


\begin{proof} This follows from the previous corollary and Corollary \ref{c12.2.1} c).
\end{proof}

\begin{remark} Theorem \ref{c12.3} shows that the second isomorphism of Proposition
\ref{p12.3} coincides with the one of Geisser in \cite[Th. 6.2]{geisser2} when $X$ is proper.
When $X$ is further normal, the isomorphism of Corollary \ref{c13.3} also coincides with the
one of his Theorem 6.1.
\end{remark}

\begin{remarks} Note that the reformulation of ``Roitman's theorem'' involving $\ker u_1$ is
the best possible!\\
1) Let X be a proper scheme such that $\Pic^0(X)/\cU =
\G_m^r$ is a torus (more likely such that $\Alb (X_0) = 0$ where $X_0\to X$
is a resolution, according with the description in \cite[pag. 68]{BSAP}).
Then $\RA{1} (X)^*=\LA{1} (X) = [\Z^r\to 0]$ is the character group (\cf
\cite[5.1.4]{BSAP}).
For example, take a nodal projective curve $X$ with resolution $X_0=\P^1$.
In this case the map \eqref{eqalb} is an isomorphism for all $j$ and  thus
$\ker (u_1)\otimes \Q/\Z = H_1(X)\otimes  \Q/\Z=(\Q/\Z)^r$.\\
2) For Borel-Moore and $\LA{1}^c (X) = \LA{1}^*(X)$ for $X$ smooth open is
Cartier dual of $\Pic^0 (\bar X, Y)$ then (\cf \cite[pag. 47]{BSAP} the
$\ker u_1^c$ can be non-zero take $\bar X = \P^1$ and $Y =$ finite number of
points.
\end{remarks} 

\subsection{Borel-Moore Ro\v\i tman} We are still in characteristic $0$. Recall that the
Borel-Moore motivic homology group 
\[H_j^c(X,\Z)\df \Hom(\Z[j],M^c(X))\] 
is canonically isomorphic to Bloch's higher Chow group $CH_0(X,j)$.  Similarly to the previous
sections, we have maps
\begin{gather*}
H_j^c(X,\Z)\to H_j^{(1)}(M^c(X))=:H_j^{c,(1)}(X)\\
H_j^c(X,\Q/\Z)\to H_j^{(1)}(M^c(X),\Q/\Z)=:H_j^{c,(1)}(X,\Q/\Z)
\end{gather*}
and

\begin{propose}\label{p13.4} The second map is an isomorphism for $j=0,1$ and surjective for
$j=2$.
\end{propose}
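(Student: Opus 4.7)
The plan is to mimic the proof of Lemma~\ref{l7.2} and its blow-up extension Proposition~\ref{p12.3}, replacing $M(X)$ throughout by the Borel-Moore motive $M^c(X)$. First, one reduces to the statement with $\Z/n$ coefficients for every $n \ge 1$, the $\Q/\Z$ version following by passage to the filtering direct limit. Both functors $X \mapsto H_j^c(X, \Z/n)$ and $X \mapsto H_j^{c,(1)}(X, \Z/n)$ are covariant for proper maps and send abstract blow-up squares to distinguished triangles; the motivic Albanese map $a_X^c$ induces a natural transformation $\varphi_j(X)$ between them that is compatible with both kinds of triangles.

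The core case is when $X$ is smooth. I would choose, by resolution of singularities, a smooth proper compactification $\bar X$ of $X$ such that $Z := \bar X - X$ is a divisor with normal crossings, so that the strata $Z^{(p)}$ (disjoint unions of $p$-fold intersections of the irreducible components of $Z$) are smooth proper of dimension $< \dim X$. The localization triangle
\[M^c(Z) \to M^c(\bar X) \to M^c(X) \by{+1}\]
combined with the standard descent spectral sequence
\[E_1^{p, q} = H_{p+q}^c(Z^{(p)}, \Z/n) \Rightarrow H_{p+q}^c(Z, \Z/n)\]
and its image under the triangulated functor $\LAlb$ reduce the required iso/surjectivity for $X$ to the same statement on the smooth proper pieces $\bar X$ and $Z^{(p)}$. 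On these $M^c = M$, so Lemma~\ref{l7.2} applies directly; a five-lemma chase along the localization long exact sequences then yields the claim for smooth $X$.

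To extend from smooth to arbitrary $X \in Sch(k)$, I would invoke the Borel-Moore analogue of Lemma~\ref{lbl}, using the distinguished triangle attached to an abstract blow-up. Bijectivity of $\varphi_j$ for $j = 0, 1$ then propagates to all $X$ by blow-up induction on $\dim X$, and the surjectivity at $j = 2$ follows from the same induction by a diagram chase exploiting the bijectivity at $j = 1$ already established, exactly as in the dévissage from Lemma~\ref{l7.2} to Proposition~\ref{p12.3}.

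The principal technical obstacle lies in the bookkeeping of ranges through these two five-lemma arguments (one for the normal-crossing localization in the smooth case, one for the abstract blow-up in the general case). The surjectivity at $j = 2$ is not directly packaged by Lemma~\ref{lbl}, so it must be tracked by hand at each inductive step; the delicate point is to set up the inductive hypothesis strongly enough to feed back both the bijectivity at $j \le 1$ and the surjectivity at $j = 2$ consistently across all three terms $\bar X$, $Z$ and $X$ in the localization sequence, and across $\tilde Z, \tilde X, Z$ and $X$ in the blow-up sequence.
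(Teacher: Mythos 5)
Your plan would eventually close, but it rebuilds machinery that is already in hand. The paper's proof is a single line: by localisation induction on $\dim X$, reduce to $X$ proper, then apply Proposition~\ref{p12.3}. The two facts you have not exploited are that Proposition~\ref{p12.3} holds for \emph{all} $X\in Sch(k)$ (not just smooth ones) and that $M^c(Y)=M(Y)$ for $Y$ proper. Given these, one picks any compactification $\bar X$ of $X$ with complement $Z$ of strictly smaller dimension; the localisation triangle $M^c(Z)\to M^c(\bar X)\to M^c(X)\by{+1}$ and the resulting long exact sequences reduce the claim to $\bar X$ (already covered by Proposition~\ref{p12.3} since $\bar X$ is proper) and to $Z$ (covered by the dimension induction), and the chase you sketch handles the passage: five lemma at $j\le 1$, a short hand argument for surjectivity at $j=2$. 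Your proposal instead re-derives the $M^c$-analogue of Proposition~\ref{p12.3} essentially from scratch, handling the smooth case via a normal-crossing compactification plus a descent spectral sequence for the boundary, then extending by blow-up induction. This can be made to work --- the abstract blow-up triangle for $M^c$ does hold, being a consequence of two localisation triangles --- but it is considerably longer, and surjectivity at $j=2$ does not propagate automatically through a spectral-sequence comparison: you would need to unwind the filtration by hand, exactly the bookkeeping you flag at the end. (Your displayed $E_1$-page indexing is also off: the abutment degree cannot already be the total degree of the $E_1$-terms.) The moral is that the Borel--Moore case needs exactly one new ingredient --- localisation --- with everything else delegated to the already-proven Proposition~\ref{p12.3}.
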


\begin{proof} By localisation induction, reduce to $X$ proper and use Proposition \ref{p12.3}.
\end{proof}

\begin{cor} For $X\in Sch$, we have exact sequences
\begin{multline*}
0\to CH_0(X,1)\otimes \Q/\Z\to \ker(u_1^c)\otimes \Q/\Z\\
\to CH_0(X)_\tors\to \coker(u_1^c)_\tors\to 0
\end{multline*}
\[
0\to CH_0(X,1)\otimes \Q/\Z\to H_j^{c,(1)}(X,\Q/\Z)\to CH_0(X)_\tors\to 0
\]
where we write $\LA{1}^c(X)=[L_1^c\by{u_1^c} G_1^c]$.
\end{cor}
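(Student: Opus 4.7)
The plan is to transcribe the proof of the preceding corollary into the Borel--Moore setting, replacing $\LAlb$ by $\LAlb^c$, Proposition~\ref{p12.3} by Proposition~\ref{p13.4}, and the identification of $\LA{0}(X)$ by that of $\LA{0}^c(X)$ given in Proposition~\ref{c3.1c}.

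First, I would form the two Bockstein short exact sequences by applying $\Hom_{\DM_{-,\et}^\eff}(\Z[1],-)$ to the colimit over $n$ of $C\to C\to C\otimes^L\Z/n\to[1]$ for $C=M^c(X)$ and for $C=\Tot\LAlb^c(X)$:
\[
0\to CH_0(X,1)\otimes\Q/\Z\to H_1^c(X,\Q/\Z)\to CH_0(X)_\tors\to 0,
\]
\[
0\to H_1^{c,(1)}(X)\otimes\Q/\Z\to H_1^{c,(1)}(X,\Q/\Z)\to H_0^{c,(1)}(X)_\tors\to 0,
\]
using Bloch's identifications $H_0^c(X,\Z)=CH_0(X)$ and $H_1^c(X,\Z)=CH_0(X,1)$. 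By Proposition~\ref{p13.4} the natural comparison between the two middle terms is an isomorphism, which directly delivers the second exact sequence of the statement once the right term of the bottom sequence is computed as below.

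Next, I would identify the extremes of the bottom sequence in terms of $u_1^c$. By Proposition~\ref{c3.1c} we have $\LA{0}^c(X)=[\Z[\pi_0^c(X)]\to 0]$, so $G_0^c=0$, and the tail of the long exact sequence of Proposition~\ref{p3.10} reads
\[
L_1^c\xrightarrow{u_1^c} G_1^c\to H_0^{c,(1)}(X)\to L_0^c\to 0,
\]
which, since $L_0^c$ is torsion-free, yields $H_0^{c,(1)}(X)_\tors=\coker(u_1^c)_\tors$. For the left term, Lemma~\ref{l12.1} reduces the computation of $H_1^{c,(1)}(X,\Q/\Z)$ to the single $1$-motive $\LA{1}^c(X)=[L_1^c\to G_1^c]$; unwinding the Bockstein triangle for the length-$1$ complex of sheaves $\Tot([L_1^c\to G_1^c])$ produces a short exact sequence that matches the Bockstein sequence on the right and forces $H_1^{c,(1)}(X)\otimes\Q/\Z\cong\ker(u_1^c)\otimes\Q/\Z$. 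The first exact sequence of the statement then emerges from a snake-lemma chase on the commutative square of the two Bockstein sequences: the middle vertical being an isomorphism forces both $\ker$ and $\coker$ at the middle to vanish, and the snake output collapses exactly to the four-term exact sequence
\[
0\to CH_0(X,1)\otimes\Q/\Z\to \ker(u_1^c)\otimes\Q/\Z\to CH_0(X)_\tors\to \coker(u_1^c)_\tors\to 0,
\]
with surjectivity on the right coming from the vanishing $\coker(C\to C')=0$.

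The main obstacle is the identification $H_1^{c,(1)}(X)\otimes\Q/\Z\cong \ker(u_1^c)\otimes\Q/\Z$. A direct reading of Proposition~\ref{p3.10} only produces a surjection $H_1^{c,(1)}(X)\twoheadrightarrow\ker(u_1^c)$ whose kernel is a quotient of $G_2^c$, and $\LA{2}^c(X)$ need not be a Deligne $1$-motive in general (contrary to $\LA{1}^c(X)$, \cf Corollary~\ref{c3.1d}). The point is that this extra contribution is killed after tensoring with $\Q/\Z$, by virtue of the exactness property of Lemma~\ref{l12.1} which localises the $\Q/\Z$-coefficient invariant on $\LA{1}^c$ alone; once this is in hand, compatibility of the two Bockstein sequences on their extremes is a routine check.
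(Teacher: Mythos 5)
Your proposal is correct and follows essentially the same route as the paper: transport the Bockstein diagram \eqref{eq7.7} to the compactly supported setting, invoke Proposition~\ref{p13.4} for the middle isomorphism, run the snake chase, and identify the outer terms via the long exact sequence of Proposition~\ref{p3.10} together with $\LA{0}^c(X)=[\Z[\pi_0^c(X)]\to 0]$ from Proposition~\ref{c3.1c}. Your identification $H_0^{c,(1)}(X)_\tors=\coker(u_1^c)_\tors$, using that $L_0^c$ is torsion-free, is fine.

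The one place your justification is looser than necessary is the isomorphism $H_1^{c,(1)}(X)\otimes\Q/\Z\cong\ker(u_1^c)\otimes\Q/\Z$. You rightly observe that Proposition~\ref{p3.10} only yields a surjection $H_1^{c,(1)}(X)\twoheadrightarrow\ker(u_1^c)$ whose kernel $Q$ is a quotient of $G_2^c(k)$, and that $\LA{2}^c(X)$ need not be a Deligne $1$-motive. But resolving this by appealing to Lemma~\ref{l12.1} together with ``a routine compatibility of two Bockstein sequences'' leaves a genuine commutation check unaddressed (the two middle terms agree as abstract groups, but identifying the two quotient maps to $\coker(u_1^c)_\tors$ is not automatic). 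The argument the paper has in mind is the one already used for the smooth case in the proof of Theorem~\ref{troitclas}, where $H_1^{(1)}(X)\otimes\Q/\Z=0$ because $H_1^{(1)}(X)=\NS_{X/k}^*(k)$: here $G_2^c$ is an algebraic $k$-group whose identity component is semi-abelian, so over $k=\bar k$ the group $G_2^c(k)$ is an extension of a finite group by a divisible group, whence $G_2^c(k)\otimes\Q/\Z=0$ and a fortiori $Q\otimes\Q/\Z=0$. This kills the error term directly, without invoking Lemma~\ref{l12.1}.
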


\subsection{``Cohomological" Ro\v\i tman}

We are still in characteristic $0$.

\begin{lemma}\label{lvan} Let $0< r\le n$. Then for any $Z\in Sch$ of dimension $\le n-r$ and
any $i> 2(n-r)$, we have $H^{i}_\cdh(Z,\Q/\Z(n))=0$.
\end{lemma}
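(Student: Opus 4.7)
My strategy is to combine blow-up induction with the Beilinson-Lichtenbaum theorem and an étale cohomological dimension estimate; the bulk of the work is the reduction to smooth schemes, while the actual vanishing is then transported from étale cohomology where it is classical.

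First I would reduce to the smooth case. Since $\car k=0$, resolution of singularities is available, and by \cite[Thm.~4.1.10]{V} (see also Example~\ref{ex11.1}) cdh cohomology with coefficients in a bounded below complex satisfies Mayer-Vietoris for abstract blow-ups. I would then induct on $\dim Z\le n-r$ following exactly the pattern in Lemma~\ref{lbl}: if $Z$ is integral, use an abstract blow-up square with a desingularisation $\tilde Z\to Z$ of dimension $d=\dim Z$, with singular locus $Z_0$ and exceptional locus $\tilde Z_0$, both of dimension $<d\le n-r$; apply the long exact sequence and induction to conclude that the contribution of $Z_0$ and $\tilde Z_0$ vanishes in the range $i>2(n-r)$. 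For reducible $Z$, strip off one irreducible component at a time, the intersections having strictly smaller dimension. This reduces everything to checking the statement for $Z$ smooth of dimension $d\le n-r$, in the range $i>2(n-r)\ge 2d$.

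For the smooth case, note that $r\ge 1$ gives $n\ge d+1>d$, and that for smooth $Z$ cdh and Nisnevich hypercohomology with motivic coefficients coincide. The Beilinson-Lichtenbaum theorem (Voevodsky-Rost) identifies
\[\Z/m(n)\;\simeq\;\tau_{\le n}R\pi_\ast\mu_m^{\otimes n}\]
as complexes of Nisnevich sheaves on $Sm(k)$, where $\pi:Sm(k)_\et\to Sm(k)_\Nis$. The key point is that, for our range $n\ge d$, the truncation $\tau_{\le n}$ is redundant over $Z$: a Nisnevich stalk of $R^q\pi_\ast\mu_m^{\otimes n}$ at a point $z$ of codimension $c$ is $H^q_\et(\Spec \cO^h_{Z,z},\mu_m^{\otimes n})$, and since $k=\bar k$ is of characteristic zero, the residue field $\kappa(z)$ has cohomological dimension equal to its transcendence degree $d-c$ over $k$, so the Henselian local ring $\cO^h_{Z,z}$ has étale cohomological dimension $\le c+(d-c)=d$. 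Hence $R^q\pi_\ast\mu_m^{\otimes n}=0$ for $q>d\le n$, and therefore
\[H^i_\cdh(Z,\Z/m(n))\;=\;H^i_\Nis(Z,R\pi_\ast\mu_m^{\otimes n})\;=\;H^i_\et(Z,\mu_m^{\otimes n}).\]
Taking the colimit over $m$ yields $H^i_\cdh(Z,\Q/\Z(n))\simeq H^i_\et(Z,\Q/\Z(n))$, which vanishes for $i>2d$ because the étale cohomological dimension of a $d$-dimensional variety over an algebraically closed field of characteristic zero is at most $2d$.

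The main obstacle, or rather the most technical step, is the justification that $\tau_{\le n}$ is trivial when $n\ge d$: one has to pin down the cohomological dimension of Nisnevich Henselizations of smooth varieties over $\bar k$. An alternative, if one prefers to avoid this direct estimate, is to invoke the Suslin-Voevodsky rigidity theorem together with Geisser's comparison between cdh and étale cohomology with finite coefficients prime to the characteristic, which directly yields $H^i_\cdh(Z,\Q/\Z(n))\simeq H^i_\et(Z,\Q/\Z(n))$ for $Z$ of dimension $d<n$ and reduces the proof to the étale cohomological dimension bound; the blow-up induction step is then not even needed, since the comparison already holds for arbitrary $Z\in Sch$.
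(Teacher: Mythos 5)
Your proof is correct and follows essentially the same route as the paper: blow-up induction to reduce to the smooth case, a comparison between motivic and \'etale cohomology in the range $n\ge\dim Z$, and then the \'etale cohomological dimension bound over $\bar k$. The only difference is that the paper invokes Suslin's theorem \cite{suslin} directly for the comparison $H^i_\Nis(Z,\Z/m(n))\simeq H^i_\et(Z,\mu_m^{\otimes n})$ when $n\ge\dim Z$, whereas you rederive this from Beilinson--Lichtenbaum by showing the truncation $\tau_{\le n}$ is vacuous over $Z$ — a somewhat heavier hammer for the same nail. (Incidentally, your bound on the cohomological dimension of $\cO^h_{Z,z}$ should read $\operatorname{cd}(\kappa(z))=d-c$, not $c+(d-c)$, since for a Henselian local ring the small \'etale topos agrees with that of the residue field; the conclusion $\le d\le n$ is unaffected.)
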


\begin{proof} By blow-up induction we reduce to the case where $Z$ is smooth of pure dimension
$n-r$; then $H^i_\cdh(Z,\Q/\Z(n))=H^i_\Nis(Z,\Q/\Z(n))$. Since $k$ is algebraically closed, and
$n\ge \dim Z$, $H^i_\Nis(Z,\Q/\Z(n))\simeq H^i_\et(Z,\Q/\Z(n))$ by Suslin's theorem
\cite{suslin} and the vanishing follows from the known bound for \'etale cohomological
dimension.
\end{proof}

Now consider the $1$-motive $\LA{1}^*(X)$ for $X$ of
dimension $n$. This time, we have  maps
\begin{gather}
H^{2n-j}_\cdh(X,\Z(n))\to H_j^{(1)}(M(X)^*(n)[2n])=:H^{2n-j}_{(1)}(X,\Z(n))\notag\\
H^{2n-j}_\cdh(X,\Q/\Z(n))\to
H_j^{(1)}(M(X)^*(n)[2n],\Q/\Z)=:H^{2n-j}_{(1)}(X,\Q/\Z(n)).\label{corot}
\end{gather}

\begin{lemma} \label{lsmallis} Let $Z\in Sch$ be of dimension $<n$. Then the map
\[H^{2n-2}_\cdh(Z,\Q/\Z(n))\to H_2^{(1)}(M(Z)^*(n)[2n],\Q/\Z)\]
is an isomorphism.
\end{lemma}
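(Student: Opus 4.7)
My plan is to run a blow-up induction on $\dim Z$ (as formalised in Lemma \ref{lbl}), reducing to the case where $Z$ is smooth of pure dimension $n-1$. Both sides of the map in question are functorial for abstract blow-ups: the left hand side by Examples \ref{ex11.1}, and the right hand side because dualising the standard blow-up triangle in $\DM_\gm$ and then multiplying by $(n)[2n]$ yields an exact triangle inside $\DM_\gm^\eff$ (all terms being effective by Lemma \ref{leffe}), to which the triangulated functor $\LAlb$ and then $H_j^{(1)}(-,\Q/\Z)$ may be applied. Thus a five-lemma argument reduces the claim to the case where $Z$ is smooth and irreducible.

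Next, I would dispose of the case $\dim Z \leq n-2$: the left hand side vanishes by Lemma \ref{lvan}, while on the right hand side Poincar\'e duality gives $M(Z)^*(n)[2n]\simeq M^c(Z)(n-\dim Z)[2(n-\dim Z)]$ with $n-\dim Z\geq 2$, so Proposition \ref{ptate} forces $\LAlb$ of this to vanish. This leaves the key case $Z$ smooth of pure dimension $n-1$, where Poincar\'e duality specialises to $M(Z)^*(n)[2n]\simeq M^c(Z)(1)[2]$, so the target becomes $H_0^{(1)}(M^c(Z)(1),\Q/\Z)$ and the source becomes $H^{2n-2}_{\Nis}(Z,\Q/\Z(n))=H^{2n-2}_\et(Z,\Q/\Z(n))$ (the latter equality by Proposition \ref{peh} and Suslin's theorem for $n\geq\dim Z$).

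I would then compute the two sides independently. For the source, \'etale Poincar\'e duality in top degree over the algebraically closed $k$ identifies $H^{2(n-1)}_\et(Z,\mu_{\ell^\infty}^{\otimes n})$ with $\mu_{\ell^\infty}^{\pi_0^c(Z)}$ for every prime $\ell$, so passing to the colimit gives $H^{2n-2}_\et(Z,\Q/\Z(n))\simeq \mu_\infty(k)^{\pi_0^c(Z)}$. For the target, Corollary \ref{c12.9} (dualised via $\LA{i}=(\RA{i})^*$) gives $\LA{0}(M^c(Z)(1))\simeq [0\to T]$ with $T=\G_m^{\pi_0^c(Z)}$, while $\LA{i}(M^c(Z)(1))=0$ for $i<0$. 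Applying Lemma \ref{l12.1} (which is exact for the motivic $t$-structure, so sees only $\LA{0}$ in degree $0$) and using that the derived tensor $T[-1]\otimes^L\Q/\Z$ is concentrated in degree $0$ with value $T(k)_\tors=\mu_\infty(k)^{\pi_0^c(Z)}$, one obtains the same group on the right.

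The hard part will be step four, namely checking that the map \eqref{corot} induces the \emph{canonical} identification of these two copies of $\mu_\infty(k)^{\pi_0^c(Z)}$, rather than merely a homomorphism between abstractly isomorphic groups. I plan to treat this by naturality: restricting to a single proper connected component reduces to the case $r=1$, where both functors are one-dimensional over $\mu_\infty$, and it then suffices to exhibit one smooth projective example (e.g.\ $Z$ a smooth projective variety of dimension $n-1$, using the trace pairing $M^c(Z)(1)\otimes M(Z)\to \Z(n)[2(n-1)]$ that underlies Poincar\'e duality) in which the composite is nonzero. A cleaner alternative, which I would try first, is to compare the motivic Albanese map $a_M$ on $M^c(Z)(1)$ with the edge map of the hypercohomology spectral sequence for $\Tot\LAlb(M^c(Z)(1))$: the projection onto $\sH^1(\Tot\LAlb(M^c(Z)(1)))=T$ is, by construction of $\LAlb$ via $d_{\leq 1}=D\1^\et\alpha^*D\1^{\Nis}$, dual to the cycle class map of Corollary \ref{cycle}, and standard compatibility of cycle classes with \'etale Poincar\'e duality then forces the identification to be the canonical one.
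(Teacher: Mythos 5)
Your reduction and both endpoint computations are correct and run essentially parallel to the paper's. Blow-up induction (via Lemma \ref{lbl}, or equivalently the specific square from Lemma \ref{l11.6} together with Proposition \ref{ptate} on the Tate-twisted duals of the lower-dimensional pieces) reduces the statement to $Z$ smooth quasiprojective of dimension $n-1$; the lower-dimensional case is disposed of by Lemma \ref{lvan} and Proposition \ref{ptate} as you say. The identification $M(Z)^*(n)[2n]\simeq M^c(Z)(1)[2]$, the use of Corollary \ref{c12.9} and Lemma \ref{l12.1} to get $F_2(Z)\simeq\mu_\infty'(k)^{\pi_0^c(Z)}$, and the trace isomorphism on the source are all exactly what the paper does.

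The gap is in what you correctly identify as "the hard part": showing that the comparison map is the canonical identification and not some nonzero multiple of it, i.e.\ that it is a genuine isomorphism. You propose either exhibiting one example where the composite is nonzero, or a compatibility argument between the edge map of the $\LAlb$ spectral sequence and cycle classes. Neither is carried out, and the first one begs the question: to check nonvanishing in an example you still need some concrete handle on the map, which is precisely what is missing. The paper supplies the missing idea: after reducing to $Z$ projective and connected, take a chain of smooth connected subvarieties $Z=Z_1\supset Z_2\supset\dots\supset Z_n$ with $\dim Z_i=n-i$ (iterated hyperplane sections, then a point). The Gysin exact triangles give commutative squares
\[\begin{CD}
H^{2n-2i-2}(Z_{i+1},n-i)@>>> H^{2n-2i}(Z_i,n-i+1)\\
@VVV @VVV\\
F_2(Z_{i+1})@>>> F_2(Z_i)
\end{CD}\]
in which the horizontal maps are isomorphisms, so the map for $Z$ is an isomorphism if and only if it is for the $0$-dimensional $Z_n$; that base case is Proposition \ref{sus} applied to $X=\P^1$ (whose reduced motive realises $\Z(1)[2]$). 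You should fill in your final step with this hyperplane-section/Gysin descent to the zero-dimensional case; without it, the argument is incomplete.
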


\begin{proof} For notational simplicity, write $H^*(Y,n)$ for $H^*_\cdh(Y,\Q/\Z(n))$ and
$F_j(Y)$ for $H_j^{(1)}(M(Y)^*(n)[2n],\Q/\Z))$, where $Y$ is a scheme of dimension $\le
n$. Let $\tilde Z,T,\tilde T$ be as in the proof of Lemma \ref{l11.6}. Then Lemma
\ref{lvan} and proposition \ref{ptate} yield a commutative diagram
\[\begin{CD}
H^{2n-2}(Z,n)@>>> H^{2n-2}(\tilde Z,n)\\
@VVV @VVV\\
F_2(Z)@>>> F_2(\tilde Z)
\end{CD}\]
in which both horizontal maps are isomorphisms. Therefore, it suffices to prove the lemma when
$Z$ is smooth quasiprojective of dimension $n-1$. 

The motive $\RA{2}(M(Z)^*(n)[2n])\simeq\RA{2}(M^c(Z)(1)[2])$ was computed in Corollary
\ref{c12.9}: it is $[\Z^{\pi_0^c(Z)}\to 0]$. Therefore, we get
\[F_2(Z)\simeq \Q/\Z(1)[\pi_0^c(Z)].\]

On the other hand, the trace map defines an isomorphism 
\[H^{2n-2}(Z,n)\iso \Q/\Z(1)[\pi_0^c(Z)]\]
and the issue is to prove that the vertical map in the diagram is this isomorphism. For this,
we first may reduce to $Z$ projective and connected. Now we propose the following argument: take
a chain of smooth closed subvarieties $Z\supset Z_2\supset\dots \supset Z_{n}$, with
$Z_i$ of dimension $n-i$ and connected (take multiple hyperplane sections up to $Z_{n-1}$ and
then a single point of $Z_{n-1}$ for $Z_n$. The Gysin exact triangles give commutative diagrams
\[\begin{CD}
H^{2n-2i-2}(Z_{i+1},n-i)@>>> H^{2n-2i}(Z_i,n-i+1)\\
@VVV @VVV\\
F_2(Z_{i+1})@>>> F_2(Z_i)
\end{CD}\]
in which both horizontal maps are isomorphisms: thus we are reduced to the case $\dim Z=0$,
where it follows from Proposition \ref{sus} applied to $X=\P^1$.
\end{proof}

\begin{thm} The map \eqref{corot} is an isomorphism for $j=0,1$.
\end{thm}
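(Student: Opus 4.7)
My plan is to proceed by blow-up induction on $X$ (Lemma~\ref{lbl}), with $X$ smooth of pure dimension $n$ as the base case. For such $X$, Poincar\'e duality identifies $M(X)^*(n)[2n]\simeq M^c(X)$, while on the source $H^{2n-j}_\cdh(X,\Q/\Z(n))=H^{2n-j}_\Nis(X,\Q/\Z(n))$ is identified with Borel--Moore motivic homology $H_j^c(X,\Q/\Z)$ via the analogous duality in motivic cohomology. Under these identifications, \eqref{corot} becomes the Borel--Moore motivic Albanese map, which is an isomorphism for $j=0,1$ by Proposition~\ref{p13.4}.

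For general $X$ of dimension $n$, I choose a resolution $\tilde X\to X$ with singular locus $Z\subset X$ and preimage $\tilde Z\subset\tilde X$, both of dimension $<n$. Dualising the abstract blow-up triangle for $M(-)$ and twisting by $(n)[2n]$ yields an exact triangle of effective geometric motives, giving compatible long exact sequences on both sides of \eqref{corot}. A five-lemma argument, using Lemma~\ref{lsmallis} at $j=2$ and Proposition~\ref{p13.4} (which is an isomorphism at $j=0,1$ and an epimorphism at $j=2$) for $\tilde X$, reduces the inductive step to the vanishing, for every $W\in Sch(k)$ with $\dim W<n$, of both $H^{2n-j}_\cdh(W,\Q/\Z(n))$ and $H_j^{(1)}(M(W)^*(n)[2n],\Q/\Z)$ at $j=0,1$. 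The cohomological vanishing is immediate from Lemma~\ref{lvan}, since $2n-j>2(n-1)\geq 2\dim W$.

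For the $1$-motivic vanishing, a second blow-up induction on $W$ reduces to $W$ smooth of pure dimension $d<n$. When $d\leq n-2$, $M(W)^*(n)[2n]\simeq M^c(W)(n-d)[2(n-d)]$ with $n-d\geq 2$, so $\LAlb$ of it vanishes by Proposition~\ref{ptate}. When $d=n-1$, Corollary~\ref{c12.9} combined with Cartier duality gives $\LAlb(M^c(W)(1)[2])\simeq [0\to T][2]$ with $T=\G_m^{\pi_0^c(W)}$, whence $\Tot\LAlb(M^c(W)(1)[2])\simeq T[1]$; the Kummer triangle yields $\G_m\oo^L\Q/\Z\simeq(\Q/\Z)'(1)[1]$ in $\DM_{-,\et}^\eff$, so $T[1]\oo^L\Q/\Z$ sits in cohomological degree $-2$. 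This contributes only to $H_2^{(1)}$ (recovering Lemma~\ref{lsmallis}), while $H_0^{(1)}$ and $H_1^{(1)}$ vanish as required. The main delicate point I anticipate is this single shift $[1]$ in the Kummer computation: it is precisely what moves the nonzero $d=n-1$ contribution out of the $j=0,1$ range and aligns the $1$-motivic vanishing with the cohomological one of Lemma~\ref{lvan}.
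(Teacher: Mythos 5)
Your proposal is correct and follows essentially the same route as the paper: blow-up induction with the smooth case handled by Poincar\'e duality and the Borel--Moore Ro\v\i tman isomorphism of Proposition~\ref{p13.4}, a five-lemma argument in the inductive step using Lemma~\ref{lsmallis} at $j=2$, and vanishing of both sides at $j\le 1$ for schemes of dimension $<n$ (Lemma~\ref{lvan} cohomologically, and Lemma~\ref{l11.6} on the $1$-motivic side). The only departure is cosmetic: rather than citing Lemma~\ref{l11.6} directly, you re-derive it via a second blow-up induction through Proposition~\ref{ptate} and Corollary~\ref{c12.9}, and your observation about the Kummer shift $[1]$ is a nice conceptual explanation of why the degree bookkeeping on the two sides matches, but it recomputes what Lemma~\ref{l12.1} combined with $\LA{0}=\LA{1}=0$ already gives.
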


\begin{proof} This is easy and left to the reader for $j=0$. For $j=1$, we argue as usual by
blowup induction.  In the situation of
\ref{blowups}, we then have a commutative diagram of exact sequences
\[\begin{CD}
\scriptstyle H^{2n-2}(\tilde X,n)\oplus H^{2n-2}(Z,n)& \to& \scriptstyle
H^{2n-2}(\tilde Z,n)&\to&\scriptstyle  H^{2n-1}(X,n)& \to&
\scriptstyle H^{2n-1}(\tilde X,n)\oplus H^{2n-1}(Z,n)\\ 
@VVV @VVV @VVV @VVV \\
\scriptstyle F_2(\tilde X)\oplus F_2(Z)& \to&\scriptstyle   F_2(\tilde
Z)& \to&\scriptstyle  F_1(X)& \to&\scriptstyle  F_1(\tilde X)\oplus
F_1(Z).
\end{CD}\]

In this diagram, we have $F_1(Z)=0$ by Lemma \ref{l11.6} and $H^{2n-1}(Z,n)\allowbreak=0$ by
Lemma
\ref{lvan}, and the same lemmas imply that both rightmost horizontal maps are surjective. The
rightmost vertical map is now an isomorphism by Proposition \ref{p13.4}, which also gives the
surjectivity of $H^{2n-2}(\tilde X,n)\to F_2(\tilde X)$. Finally, Lemma \ref{lsmallis} implies
that $H^{2n-2}(\tilde Z,n)\to F_2(\tilde Z)$ and $H^{2n-2}(Z,n)\to F_2(Z)$ are isomorphisms,
and the conclusion follows from the 5 lemma.
\end{proof}

\begin{cor} \label{c14.4} For $X\in Sch$ of dimension $n$, we have exact sequences
\begin{multline*}
0\to H^{2n-1}(X,\Z(n))\otimes \Q/\Z\to \ker(u^*_1)\otimes \Q/\Z\\
\to H^{2n}(X,\Z(n))_\tors\to\coker(u^*_1)_\tors\to 0
\end{multline*}
\begin{multline*}
0\to H^{2n-1}(X,\Z(n))\otimes \Q/\Z\to H^{2n-1}_{(1)}(X,\Q/\Z(n))\\
\to H^{2n}(X,\Z(n))_\tors\to 0
\end{multline*}
where $u^*_1$ is the map involved in the $1$-motive $\LA{1}^*(X)$ (which is isomorphic to $\Alb^+(X)$ by the dual of Theorem \ref{*=-}).
\end{cor}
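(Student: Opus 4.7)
The plan is to follow the template of Theorem \ref{troitclas} and of the corollary to Proposition \ref{p13.4}, now transcribed from homological to cohomological degree. The decisive input is the preceding theorem, which provides the isomorphism
\[H^{2n-j}(X,\Q/\Z(n))\iso H^{2n-j}_{(1)}(X,\Q/\Z(n))\]
for $j=0,1$; this is the cohomological analogue of Proposition \ref{p12.3}.

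First I would set up, for each integer $n\ge 1$, the comparison of Bockstein short exact sequences obtained by applying $\Hom_{\DM_{-,\et}^\eff}(\Z[1],-)$ (resp.\ $\Hom_{\DM_{-,\et}^\eff}(\Z[1],\Tot\LAlb(-))$) to the triangle $\Z(n)\by{n}\Z(n)\to\Z/n(n)\by{+1}$ evaluated on the motive $M(X)^*(n)[2n]$, then passing to the direct limit over $n$. Writing them as two rows of a diagram, they read
\begin{gather*}
0\to H^{2n-1}(X,\Z(n))\otimes\Q/\Z\to H^{2n-1}(X,\Q/\Z(n))\to H^{2n}(X,\Z(n))_\tors\to 0,\\
0\to H^{2n-1}_{(1)}(X,\Z(n))\otimes\Q/\Z\to H^{2n-1}_{(1)}(X,\Q/\Z(n))\to H^{2n}_{(1)}(X,\Z(n))_\tors\to 0,
\end{gather*}
and by the preceding theorem the middle vertical comparison map is an isomorphism. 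The second short exact sequence of the corollary is then obtained directly by substituting this isomorphism into the top sequence, while a snake-lemma chase (identical to the one in Theorem \ref{troitclas}) yields the four-term sequence
\[0\to H^{2n-1}(X,\Z(n))\otimes\Q/\Z\to H^{2n-1}_{(1)}(X,\Z(n))\otimes\Q/\Z\to H^{2n}(X,\Z(n))_\tors\to H^{2n}_{(1)}(X,\Z(n))_\tors\to 0.\]

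It then remains to identify the two ``$(1)$-side'' groups with those appearing in the statement, namely
\[H^{2n-1}_{(1)}(X,\Z(n))\otimes\Q/\Z\simeq\ker(u_1^*)\otimes\Q/\Z,\qquad H^{2n}_{(1)}(X,\Z(n))_\tors\simeq(\coker u_1^*)_\tors.\]
For this I would invoke Lemma \ref{l12.1}, combined with the fact (from Theorem \ref{*=-}) that $\LA{1}^*(X)=[L_1^*\by{u_1^*}G_1^*]$ is a Deligne $1$-motive: the Postnikov triangle $\ker(u_1^*)\to\Tot\LA{1}^*(X)\to(\coker u_1^*)[-1]\by{+1}$ of the two-term complex in $D^b(\Shv_1^s)$, after tensoring with $\Q/\Z$ and evaluating global sections over the algebraically closed $k$, produces the desired identifications.

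The main obstacle I anticipate is this last step: one must control the potential contribution of higher $\LA{i}^*(X)$ to $H^{2n-1}_{(1)}(X,\Z(n))$, which requires an analogue of the qualitative vanishing of Proposition \ref{c3.1c} and Lemma \ref{l11.6} for the cohomological complexes $\LAlb^*$. Once this is in place, the universal property of $\LAlb$ ensures that the natural comparison map $H^{2n-1}(X,\Z(n))\otimes\Q/\Z\to\ker(u_1^*)\otimes\Q/\Z$ in the four-term sequence agrees with the map extracted from the snake chase, and the corollary follows.
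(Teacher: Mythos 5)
Your strategy is the right one and matches what the paper intends: set up the two Bockstein sequences coming from $\Z(n)\by{m}\Z(n)\to\Z/m(n)$ evaluated against $M(X)^*(n)[2n]$ (once on the motivic side, once on the $\LAlb$ side), pass to the limit, apply the theorem immediately preceding for the middle isomorphism, and run the snake lemma exactly as in Theorem \ref{troitclas}. This reproduces the second (three-term) sequence of the corollary directly and the four-term sequence by the snake chase; that much is exactly the pattern of the corollaries following Propositions \ref{p12.3} and \ref{p13.4}.

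Where the proposal goes astray is in the final identification step. You reach for Lemma \ref{l12.1} plus \emph{ad hoc} vanishing results (\ref{c3.1c}, \ref{l11.6}), but neither is the operative tool, and you would find yourself stuck because Lemma \ref{l12.1} computes $H_j^{(1)}(M,\Q/\Z)$ directly rather than the tensor/torsion terms $H_j^{(1)}(M)\otimes\Q/\Z$ and $H_{j-1}^{(1)}(M)_\tors$ that appear after the snake chase. The clean route is Proposition \ref{p3.10} (the long exact sequence \eqref{eq13.1} in $\Shv_1$, evaluated at $\bar k=k$ via the exact functor $ev$ of Proposition \ref{p4.2}), which yields short exact sequences
\[0\to \coker(u_2^*)\to H_1^{(1)}(M)\to \ker(u_1^*)\to 0,\qquad 0\to \coker(u_1^*)\to H_0^{(1)}(M)\to \ker(u_0^*)\to 0,\]
where $\LA{j}^*(X)=[L_j^*\by{u_j^*}G_j^*]$. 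What does the rest of the work is not any vanishing of higher $\LA{i}^*$ but only basic structural facts about $1$-motives with cotorsion: since $G_2^*$ is a connected commutative algebraic group (an extension of an abelian variety by a group of multiplicative type), $\coker(u_2^*)$ is divisible, so tensoring the first sequence with $\Q/\Z$ gives $H_1^{(1)}(M)\otimes\Q/\Z\iso\ker(u_1^*)\otimes\Q/\Z$; and since $L_0^*$ is a lattice, $\ker(u_0^*)$ is torsion-free, so taking torsion in the second sequence gives $(\coker u_1^*)_\tors\iso (H_0^{(1)}(M))_\tors$. With these two identifications substituted into your four-term snake output, the first exact sequence of the corollary drops out. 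So the obstacle you flag is real but its resolution is both simpler and more elementary than the lemmas you propose.
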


\begin{cor}\label{c14.5} If $X$ is a \emph{proper}\/ scheme of dimension $n$ we then get $H^{2n-1}(X,\Z(n))\otimes \Q/\Z=0$ and an isomorphism
\[\Alb^+(X)(k)_\tors\iso H^{2n}(X,\Z(n))_\tors.\]
\end{cor}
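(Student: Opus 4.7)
The plan is to derive this corollary directly from Corollary \ref{c14.4} by working out the shape of the $1$-motive $\LA{1}^*(X) \simeq \Alb^+(X)$ in the proper case. Both exact sequences of that corollary reduce the two assertions to the vanishing of $\ker(u_1^*)$ and $\coker(u_1^*)_\tors$, where we write $\LA{1}^*(X) = [L_1^* \by{u_1^*} G_1^*]$.

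First, I would invoke Theorem \ref{*=-} together with the explicit formula derived in its proof: one obtains
\[\RA{1}^*(X) \cong [\Div_{\bar S/S}^0(\bar X, Y) \by{u} \Pic^0(\bar X, Y)],\]
where $\bar X$ is the smooth proper compactification produced by the inductive construction and $Y = \bar X - X$. When $X$ is already proper, one may take $Y = \emptyset$ (no additional compactification beyond the desingularisation $\tilde X$ is needed), and then $\Pic^0(\bar X, \emptyset) = \Pic^0(\bar X)$ is an abelian variety because $\bar X$ is smooth and proper. In other words, $\RA{1}^*(X) \simeq \Pic^-(X)$ has no toric component in this situation.

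Dualising, I use the standard Cartier duality formula: the dual of a $1$-motive $[L' \to A]$ whose semi-abelian target is an abelian variety (zero torus part) is a $1$-motive of the form $[0 \to G]$, where $G$ is the extension of $A^*$ by the torus Cartier-dual to $L'$. Applied here, this yields $\Alb^+(X) \cong \LA{1}^*(X) \cong [0 \to G_1^*]$, so $L_1^* = 0$, and hence trivially $\ker(u_1^*) = 0$ and $\coker(u_1^*) = G_1^*(k)[1/p]$, which agrees with $\Alb^+(X)(k)_\tors$ after passing to torsion. Plugging these into the first exact sequence of Corollary \ref{c14.4} collapses it to
\[0 \to H^{2n-1}(X,\Z(n)) \otimes \Q/\Z \to 0 \to H^{2n}(X,\Z(n))_\tors \to \Alb^+(X)(k)_\tors \to 0,\]
immediately yielding both statements.

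I do not foresee any serious obstacle. The only point that needs to be checked explicitly is that the compactification data $(\bar X, Y)$ in the proof of Theorem \ref{*=-} can indeed be chosen with $Y = \emptyset$ when $X$ is proper; this is essentially tautological from the inductive construction (the resolution of the irreducible components is already proper, so no boundary needs to be added). The rest is just a matter of reading off the structure of $\Alb^+(X)$ from the Picard side via Cartier duality.
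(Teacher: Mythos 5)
Your proof is correct and takes essentially the same approach as the paper; the paper simply asserts directly (citing \cite{BSAP} implicitly) that $\Alb^+(X)$ is semi-abelian for $X$ proper and then deduces the conclusion from Corollary~\ref{c14.4}, whereas you re-derive the semi-abelianness by working through the structure of $\Pic^-(X)$ from the proof of Theorem~\ref{*=-} and dualising.
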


\begin{proof} If $X$ is proper then $\LA{1}^*(X)\cong \Alb^+(X)$ is semiabelian and the claim follows from the previous corollary.\end{proof}

\begin{remark}\label{r14.5} Marc Levine outlined us how to construct a ``cycle map" $ c\ell^\cdh$ from $CH^n_{LW}(X)$ to
$H^{2n}_\cdh(X,\Z(n))$, where $CH^n_{LW}(X)$ is the Levine-Weibel cohomological Chow group of zero cycles. This gives a map
$$ c\ell^\cdh_\tors :CH^n_{LW}(X)_\tors\to H^{2n}_\cdh(X,\Z(n))_\tors $$
which most likely fits in a commutative diagram (for $X$ projective)
\[\begin{CD}
CH^n_{LW}(X)_\tors@>c\ell^\cdh_\tors>> H^{2n}_\cdh(X,\Z(n))_\tors\\
@V{a^+_\tors}VV @V{\wr}VV\\
\Alb^+(X)(k)_\tors@>\sim>> \LA{1}^*(X)(k)_\tors
\end{CD}\]
where: the horizontal bottom isomorphism is that induced by Theorem \ref{*=-} and the right vertical one comes from the previous Corollary \ref{c14.4}; the left vertical map is the one induced, on torsion, by the universal regular homomorphism $a^+:CH^n_{LW}(X)_{\deg 0}\to \Alb^+(X)(k) $ constructed in \cite[6.4.1]{BSAP}. This would imply that 

\begin{center}
$c\ell^\cdh_\tors$  is an isomorphism $\iff$ $a^+_\tors$  is an isomorphism.
\end{center}

If $X$ is normal and $k = \bar k$ or for any $X$ projective if $k= \C$ then
$a^+_\tors$ is known to be an isomorphism, see \cite{KVS}. For $X$ projective over
any algebraically closed field, see Mallick \cite{mallick}.

We expect that Levine's ``cycle map" $ c\ell^\cdh$ is surjective with uniquely divisible kernel (probably representable by a unipotent group).
\end{remark}

\part*{Appendices}

\appendix

\section{Homological algebra}\label{appendixA}
\begin{comment}
\subsection{The derived category of an exact category}

Let $\A$ be an additive category. Denote $C(\A)$ (resp. $C^b(\A)$) the 
category of complexes (resp. bounded complexes) of objects in $\A$. Let $K 
(\A)$ (resp. $K^b(\A)$) be the homotopy category of complexes (resp. 
bounded complexes). If $\A$ is abelian we let $D (\A)$ (resp. $D^b(\A)$) 
denote the usual Verdier's derived category of $\A$ obtained from $K (\A)$ 
(resp. $K^b(\A)$) by formally inverting quasi-isomorphisms. 

The derived category of an exact category, in the sense of Quillen 
\cite{Q}, is defined in \cite{BBD} as follows. Let $\E \into \A$ be an 
exact sub-category of an abelian category $\A$. Here exact means that $\E$ 
is closed under central extensions in $\A$ and a sequence in $\E$ is exact 
if and only if it is exact in $\A$ (see \cite{Q}). Further assume that 
$\E$ has kernels. Recall that a complex $E^{\d}\in C^b(\E)$, 
 $$E^{\d}: \cdots \to E^{p-1}\by{d^{p-1}}E^{p}\by{d^{p}}E^{p+1}\to\cdots,$$
is called contractible if $K^p \df \ker (d^p)$, $K^p\in \E$, are producing 
the following exact sequences $$0\to K^p\to E^p\to K^{p+1}\to 0$$
in $\E$, for all $p$. Say that a morphism $f\in K^b(\E)$ is a 
quasi-isomorphism if the cone of $f$ is contractible. 
\end{comment}

\subsection{Some comparison lemmas}

The following lemma is probably well-known:

\begin{lemma}\label{lA.2} Let $T:\cT\to\cT'$ be a full triangulated
functor between two triangulated categories. Then $T$ is conservative if
and only if it is faithful.
\end{lemma}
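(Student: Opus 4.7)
The plan is to prove both directions by completing the morphism in question to a distinguished triangle and exploiting the standard fact that in a triangle $X\xrightarrow{f} Y\xrightarrow{u} Z\xrightarrow{v} X[1]$, the map $f$ vanishes if and only if $u$ is a split monomorphism (which follows from the long exact sequence of Hom groups applied to arbitrary test objects).

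For the direction ``faithful $\Rightarrow$ conservative,'' suppose $T(f)$ is an isomorphism for some $f\colon X\to Y$. Complete $f$ to a distinguished triangle $X\xrightarrow{f} Y\to Z\xrightarrow{+1}$. Then $T(Z)$ fits into a triangle with two of its three terms connected by an isomorphism, so $T(Z)\simeq 0$. In particular $T(1_Z)=1_{T(Z)}=0$. Faithfulness forces $1_Z=0$, hence $Z=0$, and therefore $f$ is an isomorphism.

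For the direction ``conservative $\Rightarrow$ faithful,'' suppose $T(f)=0$ for some $f\colon X\to Y$, and complete $f$ to a distinguished triangle $X\xrightarrow{f} Y\xrightarrow{u} Z\xrightarrow{v} X[1]$. Applying $T$ gives a distinguished triangle in $\cT'$ whose first map vanishes; by the basic fact recalled above, $T(u)$ is a split monomorphism, so there exists $s\colon T(Z)\to T(Y)$ with $s\circ T(u)=1_{T(Y)}$. Using that $T$ is full, lift $s$ to a morphism $\tilde s\colon Z\to Y$ in $\cT$, and consider the composition $h:=\tilde s\circ u\colon Y\to Y$. Then $T(h)=s\circ T(u)=1_{T(Y)}$ is an isomorphism, so by conservativity $h$ itself is an isomorphism in $\cT$. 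Consequently $h^{-1}\circ\tilde s$ is a left inverse to $u$, showing that $u$ is a split monomorphism, which by the standard fact gives $f=0$.

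There is no real obstacle here; the only small subtlety is recognising that one should lift the retraction $s$ rather than attempt to lift $T(u)^{-1}\circ T(u)=1$ directly, and then leverage conservativity to turn ``$T$ of a morphism is the identity'' into ``the morphism is an isomorphism.'' The whole argument is essentially two applications of the triangle criterion for $f=0$ combined with the hypotheses.
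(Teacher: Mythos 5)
Your proof is correct and follows essentially the same line as the paper's: for the nontrivial direction you lift the retraction of $T(u)$ by fullness, post-compose with $u$, and invoke conservativity to obtain an honest retraction of $u$ in $\cT$, which is precisely the paper's argument with $r$ playing the role of your $\tilde s$ and $rg$ that of your $h$. The ``faithful $\Rightarrow$ conservative'' direction is handled in the standard way (cone of an isomorphism is zero), which the paper simply declares obvious.
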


\begin{proof} ``If" is obvious. For ``only if", let $f:X\to X'$ be a
morphism of
$\cT$ such that
$T(f)=0$. Let $g:X'\to X''$ denote a cone of $f$. Then $T(g)$ has a
retraction
$\rho$. Applying fullness, we get an equality $\rho=T(r)$. Applying
conservativity, $u=rg$ is an isomorphism. Then $r'=u^{-1}r$ is a
retraction of $g$, which implies that $f=0$.
\end{proof}

\begin{propose}\label{derxact} a) Let $i: \E \into \A$ be an exact full 
sub-category of an abelian category $\A$, closed under kernels.
Assume further that for each
$A^{\d}\in C^b(\A)$ there exists
$E^{\d}\in  C^b(\E)$ and a quasi-isomorphism  $i (E^{\d})\to A^{\d}$
in $K^b(\A)$.  Then  $i: D^b(\E)\to D^b(\A)$ is an equivalence
of categories.\\
b) The hypothesis of a) is granted when every object in $\A$ has a
finite left resolution by objects in $\E$.
\end{propose}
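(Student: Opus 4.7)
For part (a), the plan is to show fully faithful plus essentially surjective. Essential surjectivity is immediate from the assumption: any bounded complex $A^\cdot$ over $\A$ receives a quasi-isomorphism from some $i(E^\cdot)$, hence $A^\cdot\simeq i(E^\cdot)$ in $D^b(\A)$. Before anything else, I would record the basic compatibility between the two notions of quasi-isomorphism: a morphism $f:E_1^\cdot\to E_2^\cdot$ in $K^b(\E)$ is a quasi-isomorphism in the exact-category sense of $\E$ if and only if its cone is $\A$-acyclic. One direction is definitional; the other uses that $\E$ is closed under kernels in $\A$, so that the cycles of an $\A$-acyclic complex in $C^b(\E)$ lie in $\E$, and the resulting short exact sequences of cycles in $\A$ are already exact in $\E$.

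For fullness, given a morphism $f:i(E_1^\cdot)\to i(E_2^\cdot)$ in $D^b(\A)$, represent it by a roof $E_1^\cdot \stackrel{s}{\leftarrow} A^\cdot \stackrel{g}{\to} E_2^\cdot$ with $s$ a quasi-isomorphism in $K^b(\A)$. Apply the hypothesis to $A^\cdot$ to obtain a quasi-isomorphism $\phi:i(E^\cdot)\to A^\cdot$ with $E^\cdot\in C^b(\E)$. Then $s\circ\phi$ and $g\circ\phi$ are honest morphisms in $K^b(\E)$ (here I use that $\E$ is full in $\A$), and by the compatibility above $s\circ\phi$ is a quasi-isomorphism in $K^b(\E)$. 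This exhibits a preimage of $f$ in $\Hom_{D^b(\E)}(E_1^\cdot,E_2^\cdot)$. For faithfulness, suppose $f:E_1^\cdot\to E_2^\cdot$ is an effective map killed by $i$. Then there is a quasi-isomorphism $s:A^\cdot\to E_1^\cdot$ in $K^b(\A)$ with $f\circ s=0$ in $K^b(\A)$. Pick again $\phi:i(E^\cdot)\to A^\cdot$ as above; fullness of $\E\subset \A$ transports the nullhomotopy $f\circ s\circ\phi\sim 0$ from $K^b(\A)$ to $K^b(\E)$ (the homotopy components are morphisms in $\A$ between objects of $\E$). Since $s\circ\phi$ is a quasi-isomorphism in $K^b(\E)$, this shows $f=0$ in $D^b(\E)$.

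For part (b), I would argue by induction on the amplitude of $A^\cdot\in C^b(\A)$. Concentrated complexes $A[0]$ are handled directly by the assumed finite left resolution. For the inductive step, given $A^\cdot$ with top term $A^b$, consider the termwise split short exact sequence of complexes
\[
0\to \sigma_{\le b-1}A^\cdot \to A^\cdot \to A^b[-b]\to 0,
\]
yielding a triangle in $D^b(\A)$ whose outer vertices have strictly smaller amplitude and for which the hypothesis is available by the induction hypothesis, respectively by the assumption. Choose quasi-isomorphisms $F^\cdot\to \sigma_{\le b-1}A^\cdot$ and $G^\cdot\to A^b[-b]$ with $F^\cdot,G^\cdot\in C^b(\E)$. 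The connecting morphism $A^b[-b]\to \sigma_{\le b-1}A^\cdot[1]$ in $D^b(\A)$, transported through these quasi-isomorphisms, becomes a morphism in $D^b(\A)$ between objects of $C^b(\E)$; by the fullness step of part (a) (applied inductively — note that part (a) for smaller amplitudes needs only the inductive validity of the hypothesis, so no circularity), this morphism lifts to $K^b(\E)$. Taking the mapping cone in $C^b(\E)$ produces the required $E^\cdot\in C^b(\E)$ together with a quasi-isomorphism to $A^\cdot$.

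The main technical obstacle is in part (a): ensuring that a calculus of left fractions is actually available inside $K^b(\E)$, so that roofs can be manipulated as above. The key point making this go through is precisely the closure of $\E$ under kernels in $\A$, which forces cones of $K^b(\E)$-quasi-isomorphisms to remain $\E$-acyclic. Once this compatibility is established, the rest is formal, and part (b) becomes a clean induction on amplitude using the length-zero case of the hypothesis together with the already-established part (a) for shorter complexes.
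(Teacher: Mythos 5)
Part (a) is correct and follows essentially the paper's route. Essential surjectivity is immediate from the hypothesis; fullness is obtained by replacing the apex of a roof by a complex of $\E$-objects and observing, via closure under kernels (which identifies $\E$-acyclicity with $\A$-acyclicity on $C^b(\E)$), that the resulting left leg is a quasi-isomorphism in $K^b(\E)$. Your direct nullhomotopy argument for faithfulness is a small variant of the paper's appeal to Lemma~\ref{lA.2}; both work.

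Part (b) has a genuine gap. Beyond a direction slip in the termwise-split sequence (for cochain complexes, $\sigma_{\le b-1}A^\cdot$ is a quotient, not a subcomplex, of $A^\cdot$; the sequence should read $0\to A^b[-b]\to A^\cdot\to\sigma_{\le b-1}A^\cdot\to 0$), the critical claim is that the transported connecting morphism ``lifts to $K^b(\E)$''. Fullness of $D^b(\E)\to D^b(\A)$ only produces a roof $G^\cdot\leftarrow H^\cdot\to F^\cdot[1]$ in $K^b(\E)$, not a chain map $G^\cdot\to F^\cdot[1]$, and no projectivity hypothesis is available here to promote a morphism in the derived category to one in the homotopy category. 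Taking the cone of the roof's right leg yields an object of $C^b(\E)$ isomorphic to $A^\cdot$ \emph{in} $D^b(\A)$, but the hypothesis of (a) requires an actual quasi-isomorphism $i(E^\cdot)\to A^\cdot$ in $K^b(\A)$, and upgrading an isomorphism in $D^b(\A)$ to such a quasi-isomorphism is precisely what one is trying to establish: the argument is circular. (Your parenthetical assertion that only the inductive validity of the hypothesis is needed is also not true, since the apex of a roof used in the fullness step, and the resolutions $F^\cdot,G^\cdot$, can have larger amplitude than $A^\cdot$.) The argument the paper points to (Hartshorne, \emph{Residues and Duality}, Lemma~I.4.6) proceeds differently: by descending induction on the degree one builds a bounded-above complex $E^\cdot$ of objects of $\E$ together with a termwise surjective quasi-isomorphism $\alpha:E^\cdot\to A^\cdot$, at stage $n$ surjecting an object of $\E$ onto the fibre product $A^n\times_{A^{n+1}}\ker d_E^{n+1}$; then, if $a$ denotes the lowest degree of $A^\cdot$, one uses $\ker d_E^{a-1}\in\E$ (closure under kernels) and $H^i(E^\cdot)=0$ for $i<a$ to replace $E^\cdot$ by the bounded complex
\[
0\to\ker d_E^{a-1}\to E^{a-1}\to E^a\to\cdots\to E^b\to 0,
\]
which still maps quasi-isomorphically to $A^\cdot$.
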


\begin{proof} a) Clearly, the functor $D^b(\E)\to D^b(\A)$ is
conservative. The assumption implies that
$i$ is essentially surjective: thanks to Lemma \ref{lA.2}, in order to
conclude it remains to see that $i$ is full.

Let $f\in D^b(\A)(i (D^{\d}), i (E^{\d}))$. Since
$D^b(\A)$ has left calculus of fractions there  exists a
quasi-isomorphism $s$ such that $f = f^{\prime} s^{-1}$ where 
$f^{\prime} : A^{\d}\to i (E^{\d})$ is a map in $K^b(\A)$, which
then  lifts to a map in $C^b(\A)$. By hypothesis there exists $F^{\d}\in
C^b(\E)$ and a  quasi-isomorphism  $s^{\prime} :i (F^{\d})\to
A^{\d}$. Set 
$f^{\prime\prime}\df f^{\prime} s^{\prime}: i (F^{\d})\to i (E^{\d})$. 
Then $f = f^{\prime\prime} (s s^{\prime})^{-1}$ where $s s^{\prime} : i 
(F^{\d})\to i (D^{\d})$ is a quasi-isomorphism. By conservativity of $i$,
we are reduced to check fullness for effective maps, \ie arising from true
maps in $C^b(\A)$: this easily follows from the fullness of the
functor $C^b(\E)\into C^b(\A)$. 

b) This follows  by adapting the argument in \cite[I, Lemma~4.6]{HA}.
\end{proof}

\begin{propose}\label{triexact} Let $\E \into \A$ be an exact category. Let
$D$ be a triangulated category and let
$T : D^b(\E)\to D$ be a triangulated functor such that
$$\Hom_{D^b(\E)}(E', E[i]) \longby{\cong} \Hom_{D}(T(E'), T(E[i])) $$
for all $E', E\in \E$ and $i\in \Z$. Then $T$ is fully faithful.
\end{propose}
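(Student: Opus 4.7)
The strategy is a standard dévissage argument: reduce the full faithfulness of $T$ from arbitrary objects of $D^b(\E)$ to objects of $\E$ placed in degree zero by means of the naive (stupid) truncation of bounded complexes.

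First I would observe that every object $X\in D^b(\E)$ is isomorphic to a bounded complex $E^\bullet$ with $E^i\in \E$ concentrated in some interval $[a,b]$; set $\ell(E^\bullet)=b-a+1$. When $\ell=1$ we have $E^\bullet\cong E[-a]$ for a single $E\in\E$, and the hypothesis (together with the shift-invariance of Hom in a triangulated category) gives full faithfulness of $T$ on the pairs $(E'[j],E[i])$ for all $E,E'\in\E$ and $i,j\in\Z$. When $\ell>1$, the decomposition
$$0\to \sigma^{>a}E^\bullet\to E^\bullet\to E^a[-a]\to 0$$
is a short exact sequence in $C^b(\E)$ which is split in each degree (one of the three terms is zero), so it gives a distinguished triangle in $K^b(\E)$, hence in $D^b(\E)$, both of whose outer terms have length strictly less than $\ell(E^\bullet)$. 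Since $T$ is triangulated, applying $T$ yields a distinguished triangle in $D$.

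Next, I would fix $E'\in\E$ and prove by induction on the length of a representative of $Y\in D^b(\E)$ that
$$T_*:\Hom_{D^b(\E)}(E',Y)\longby{\cong}\Hom_D(TE',TY).$$
The base case $\ell=1$ is the hypothesis (plus shifts). For the inductive step, apply the cohomological functors $\Hom_{D^b(\E)}(E',-)$ and $\Hom_D(TE',T(-))$ to the triangle displayed above, obtain two long exact sequences of abelian groups compatible via $T_*$, and conclude by the five-lemma. Shift-invariance then extends this to every $X=E'[j]$ with $j\in\Z$.

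Finally, with $Y\in D^b(\E)$ now arbitrary, I would prove by a symmetric induction on the length of a representative of $X$ that $\Hom_{D^b(\E)}(X,Y)\to \Hom_D(TX,TY)$ is an isomorphism: the base case $\ell(X)=1$ is the previous paragraph, and the inductive step is again a five-lemma argument applied to the long exact sequences obtained from the naive truncation triangle for $X$. There is no essential obstacle; the only point requiring some care is the verification that the naive truncation sequence splits degree-wise, so that it lifts to a genuine distinguished triangle in $K^b(\E)$ without any need to pass through the ambient abelian category $\A$, and that the five-lemma bookkeeping is carried out consistently on both sides.
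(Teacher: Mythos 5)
Your proof is correct and takes essentially the same route the paper intends: the paper merely sketches the argument as ``we argue by induction on the lengths of $C$ and $C'$,'' and you have supplied exactly the details that sketch leaves implicit, namely the degree-wise split stupid-truncation triangles and the five-lemma comparison of long exact sequences, first with the source fixed of length one and then by a symmetric induction on the source.
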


\begin{proof} Let $C,C'\in C^b(\E)$: we want to show that the map
\[\Hom_{D^b(\E)}(C, C') \to \Hom_{D}(T(C), T(C'))
\]
is bijective. We argue by induction on the lengths of $C$ and $C'$.
\end{proof}

Finally, we have the following very useful criterion for a full embedding of derived
categories, that we learned from Pierre Schapira.

\begin{propose}[\protect{\cite[p. 329, Th. 13.2.8]{KS}}]\label{pschapira} Let $\cA\into \cB$
be an exact full embedding of abelian categories. Assume that, given any mo\-no\-mor\-ph\-ism
$X'\into X$ in $\cB$, with $X'\in \cA$, there exists a morphism $X\to X''$, with $X''\in \cA$,
such that the composite morphism $X\to X''$ is a monomorphism. Then the functor
\[D^*(\cA)\to D^*(\cB)\]
is fully faithful for $*=+,b$.
\end{propose}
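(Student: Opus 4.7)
The plan is to follow the strategy of Kashiwara–Schapira (Categories and Sheaves, Th.~13.2.8). The result is a Waldhausen-type cofinality criterion for the derived category, and the underlying idea is to verify that, modulo quasi-isomorphism, every ``roof'' in $D^*(\cB)$ between complexes of $\cA$ can be replaced by one with apex in $C^*(\cA)$.

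First I would reduce to the following key lemma: for every quasi-isomorphism $s\colon Y^\bullet \to W^\bullet$ in $C^*(\cB)$ with $Y^\bullet \in C^*(\cA)$, there exists a quasi-isomorphism $t\colon W^\bullet \to V^\bullet$ in $C^*(\cB)$ such that $V^\bullet \in C^*(\cA)$ and the composite $t\circ s$ lies in $C^*(\cA)$. Granting this lemma, full faithfulness is formal from the calculus of right fractions. Indeed, every morphism $X^\bullet \to Y^\bullet$ in $D^*(\cB)$ between objects of $K^*(\cA)$ is represented by a roof $X^\bullet \xrightarrow{f} W^\bullet \xleftarrow{s} Y^\bullet$; applying the lemma to $s$ yields an equivalent roof whose apex lies in $K^*(\cA)$, hence a preimage in $D^*(\cA)$. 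Faithfulness is checked by the same device: if a morphism of $D^*(\cA)$ becomes zero in $D^*(\cB)$, then the corresponding null-homotopy diagram can be replaced by one in $K^*(\cA)$.

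To prove the key lemma I would proceed by induction on the amplitude of $W^\bullet$, building $V^\bullet$ one degree at a time. At each stage the relevant monomorphism $Y^n/\ker(d^n_Y) \hookrightarrow W^n$ (or, in the bounded above case, the corresponding one on cycles) is a mono from an object of $\cA$ into an object of $\cB$; the hypothesis then produces a monomorphism $W^n \hookrightarrow V^n$ into an object of $\cA$ through which the given map factors. After pushing out along the previously constructed $V^{n-1}$ and reapplying the hypothesis to the resulting mono, one obtains the next term $V^n$ equipped with a differential $V^{n-1}\to V^n$ making everything commute and keeping the map $W^\bullet \to V^\bullet$ a quasi-isomorphism.

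The main obstacle is the bookkeeping in this inductive construction: one must choose the $V^n$ so that a genuine differential on $V^\bullet$ exists, that the map from $W^n$ is compatible with it, and that the map $Y^\bullet \to V^\bullet$ remains a quasi-isomorphism. This is arranged by iterating the hypothesis on pushouts: the pushout of $V^{n-1}$-level data along $W^{n-1}\to W^n$ lives a priori only in $\cB$, and one reembeds it into an object of $\cA$ via the hypothesis, so that the differential takes values in $\cA$. The bounded-below case $*=+$ is handled by this degree-by-degree procedure starting from the bottom; the bounded case $*=b$ is then deduced by a symmetric truncation argument, since the hypothesis guarantees that the construction terminates at the top in finitely many steps.
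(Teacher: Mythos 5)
The paper does not prove this statement --- it only cites \cite[Th.\ 13.2.8]{KS} --- so you are reconstructing the Kashiwara--Schapira argument. (Note also that the paper's phrase ``the composite morphism $X\to X''$'' is a misprint: the intended composite is $X'\hookrightarrow X\to X''$, which is the hypothesis in \cite{KS}.) Your skeleton is right: reduce full faithfulness to the cofinality statement that every quasi-isomorphism $s\colon Y^\bullet\to W^\bullet$ with $Y^\bullet\in C^*(\cA)$ admits $t\colon W^\bullet\to V^\bullet$ with $V^\bullet\in C^*(\cA)$ and $ts$ a quasi-isomorphism, and prove it degree by degree. But you then misread the hypothesis in a way that erases the main difficulty. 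You claim the hypothesis ``produces a monomorphism $W^n\hookrightarrow V^n$ into an object of $\cA$'' and later that one can ``reembed'' a pushout into $\cA$. It produces no such embedding: the morphism $W^n\to V^n$ it supplies has no reason to be injective; only its restriction to the prescribed $\cA$-subobject of $W^n$ is guaranteed to be a monomorphism. Were the hypothesis to embed every object of $\cB$ into one of $\cA$, $\cA$ would cogenerate $\cB$ and the theorem would be routine. It does not, and the whole point of the proof is to make do with the weaker guarantee.

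With the correct reading, the inductive step you outline does not close. After the cylinder reduction one may assume $s$ is a termwise monomorphism. Having produced $a^{n-1}\colon W^{n-1}\to V^{n-1}$ with $a^{n-1}|_{Y^{n-1}}$ mono, the chain-map constraint forces $a^n$ and $d^{n-1}_V$ to factor through the pushout $P^n:=V^{n-1}\sqcup_{W^{n-1}}W^n$, and one must then apply the hypothesis to $Y^n\to P^n$ --- but this map has kernel $Y^n\cap d^{n-1}_W(\ker a^{n-1})$ inside $W^n$, and there is no a priori reason for it to vanish. It does vanish in degree $1$, because acyclicity of $W/Y$ in degree $0$ gives $(d^0_W)^{-1}(Y^1)=Y^0$ and $\ker a^0\cap Y^0=0$; but the analogous verification at degree $n\ge 2$ naturally uses $a^{n}$ itself, so the naive induction is circular, and one must carry a sharper inductive invariant (or choose the auxiliary objects more cleverly) to break the cycle. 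You also assert, without argument, that the resulting $W^\bullet\to V^\bullet$ is a quasi-isomorphism, and the claim that the construction ``terminates at the top'' for $*=b$ is not justified (the clean route is rather to observe that $D^b\to D^+$ is always fully faithful and deduce the bounded case formally). These are not cleanup items: together they are the content of the theorem.
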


\subsection{The $\Tot$ construction}

\begin{lemma}\label{ltot} Let $\cA$ be an abelian category and let
$\cA^{[0,1]}$ be the (abelian) category of complexes of length $1$ of
objects of $\cA$. Then the ``total complex" functor induces a
triangulated functor
\[D^*(\cA^{[0,1]})\to D^*(\cA)\]
for any decoration $^*$.
\end{lemma}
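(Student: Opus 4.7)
The plan is to construct $\Tot$ first at the chain level as a functor $C^*(\cA^{[0,1]})\to C^*(\cA)$, then check it descends successively to homotopy and derived categories.

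First I would unpack the definition: an object $C^\bullet\in C^*(\cA^{[0,1]})$ is nothing but a bicomplex $(C^{p,q})$ with $q\in\{0,1\}$ (with differentials $d_1^{p,q}:C^{p,q}\to C^{p+1,q}$ and $d_2^{p,0}:C^{p,0}\to C^{p,1}$, satisfying the usual compatibility). The associated total complex is
\[
\Tot(C^\bullet)^n = C^{n,0}\oplus C^{n-1,1},
\]
with differential $d = d_1 + (-1)^p d_2$ in the standard way. This defines a functor $\Tot:C^*(\cA^{[0,1]})\to C^*(\cA)$ that manifestly commutes with translations and carries the mapping cone of a morphism of bicomplexes to the mapping cone of the corresponding morphism of total complexes.

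Next I would pass to homotopy categories. A homotopy $h:C^\bullet\to D^\bullet[-1]$ in $C^*(\cA^{[0,1]})$ is the data of two componentwise homotopies $h^{p,q}:C^{p,q}\to D^{p-1,q}$ compatible with the internal differential $d_2$; assembling these via the same sign conventions as above gives a homotopy between $\Tot(f)$ and $\Tot(g)$ for any two homotopic morphisms $f,g$. Hence $\Tot$ factors through $K^*(\cA^{[0,1]})\to K^*(\cA)$, and this factorisation is triangulated since it commutes with shifts and cones.

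Finally I would check that $\Tot$ sends quasi-iso\-morph\-isms to quasi-iso\-morph\-isms, so that by the universal property of Verdier localisation it descends to $D^*(\cA^{[0,1]})\to D^*(\cA)$. Since $\cA^{[0,1]}$ is abelian with kernels and cokernels computed componentwise, a complex $C^\bullet$ is acyclic in $C^*(\cA^{[0,1]})$ iff both ``rows'' $C^{\bullet,0}$ and $C^{\bullet,1}$ are acyclic in $C^*(\cA)$. The canonical short exact sequence of complexes in $\cA$
\[
0\to C^{\bullet,1}[-1]\to \Tot(C^\bullet)\to C^{\bullet,0}\to 0
\]
then produces a long exact cohomology sequence showing that $\Tot(C^\bullet)$ is acyclic as well; applied to mapping cones, this gives the preservation of quasi-iso\-morph\-isms.

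There is no real obstacle: the only mildly delicate point is the bookkeeping of signs in the total complex, but once the two-row short exact sequence above is written down, the descent to $D^*$ is automatic and the triangulated structure is inherited for free. The construction is manifestly compatible with the decorations $*=+,-,b,\varnothing$ since $\Tot$ preserves boundedness above, below or on both sides.
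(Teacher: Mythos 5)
Your proof is correct and follows the same three-step scheme as the paper's: define $\Tot$ at the chain level, descend to $K^*$ by checking compatibility with homotopies and cones, then descend to $D^*$ by preservation of acyclic objects. The single point where you diverge is the last one: the paper contents itself with saying that $\Tot$ of an acyclic complex is acyclic ``by a spectral sequence argument,'' while you instead exhibit the explicit two-term filtration
\[0\to C^{\bullet,1}[-1]\to \Tot(C^\bullet)\to C^{\bullet,0}\to 0\]
and close the argument with the resulting long exact cohomology sequence, after observing that acyclicity in $C^*(\cA^{[0,1]})$ is precisely acyclicity of both rows. This is the double-complex spectral sequence unrolled in the only nontrivial way possible when one direction has just two columns; it has the advantage of being entirely elementary and sidestepping any convergence discussion. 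The only things you leave implicit (and say so) are the sign bookkeeping in the total differential and the verification that $\Tot$ carries cones to cones up to the usual isomorphism — both routine.
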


\begin{proof}
We may consider a complex of objects of $\cA^{[0,1]}$ as a double complex
of objects of $\cA$ and take the associated
total complex. This yields a functor
\[\Tot:C^*(\cA^{[0,1]})\to D^*(\cA).\]

(Note that if we consider a complex of objects of $\cA^{[0,1]}$
$$M^{\d} = [L^{\d}\by{u{\d}}G^{\d}]$$
as a map $u\d: L^{\d}\to G^{\d}$
of complexes of $\cA$, then
$\Tot (M^{\d})$ coincides with the cone of $u\d$.)

This functor factors through a triangulated functor from
$D^*(\cA^{[0,1]})$: indeed it is easily checked that a) $\Tot$ preserves
homotopies, b) the induced functor on $K^*(\cA^{[0,1]})$ is
triangulated; c) $\Tot$ of an acyclic complex is
$0$ (which follows from a spectral sequence argument). 
\end{proof}

\begin{lemma}\label{lloc} With notation as in Lemma \ref{ltot}, the set of \qi of $\cA^{[0,1]}$
enjoys a calculus of left and right fractions within the homotopy category
$K(\cA^{[0,1]})$ (same objects, morphisms modulo the homotopy relation).
\end{lemma}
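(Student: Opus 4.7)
The plan is to reduce to a classical theorem of Verdier. First, observe that $\cA^{[0,1]}$, the category of morphisms of $\cA$, is itself abelian with kernels, cokernels and images all computed componentwise from those of $\cA$. In particular, if we view an object of $C(\cA^{[0,1]})$ as a pair of column complexes $(L^\cdot \to G^\cdot)$ connected by a natural transformation, then a morphism is a \qi in the sense of the abelian structure on $\cA^{[0,1]}$ if and only if both induced morphisms $L^\cdot\to L'^\cdot$ and $G^\cdot\to G'^\cdot$ in $C(\cA)$ are \qi. The lemma then follows from the standard fact (see \cite[\S 1.1]{BBD} or \cite[Prop.~13.1.8 and Th.~13.2.8]{KS}) that in the homotopy category $K(\cB)$ of any abelian category $\cB$, the class of \qi forms a multiplicative system admitting both left and right calculus of fractions in the sense of \cite[Ch.~I, \S 2.3]{GZ}.

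For applications of this lemma --- most notably the proof of Lemma \ref{l2.3.2}, where one must verify that the constructions preserve the subcategory $S_1^\eff\subseteq \Shv_1^{[-1,0]}$ --- it is useful to recall the explicit constructions. Given a \qi $s:M'\to M$ and an arbitrary morphism $f:N\to M$ in $K(\cA^{[0,1]})$, one replaces $s$ by the mapping cylinder $\mathrm{Cyl}(s)$, which is a complex in $\cA^{[0,1]}$ quasi-isomorphic to $M$ and receiving $M'$ via a degreewise split monomorphism $\tilde s$. Forming the termwise fibre product $N':=N\times_{\mathrm{Cyl}(s)}M'$ yields a projection $g:N'\to N$, which is a \qi since its kernel equals (up to a shift) the cone of $\mathrm{id}_{M'}$, hence is acyclic, together with a morphism $h:N'\to M'$ satisfying $\tilde s\, h = \tilde f\, g$ in $C(\cA^{[0,1]})$, and therefore $sh=fg$ in $K(\cA^{[0,1]})$ after identifying $\mathrm{Cyl}(s)$ with $M$. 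This verifies the right Ore condition; the left Ore condition is proved dually using the mapping cocylinder, and the two cancellation axioms reduce, after passage to cones, to the characterisation of \qi in $K(\cA^{[0,1]})$ via the acyclicity of the cone.

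There is no genuine obstacle: the mapping cylinder, mapping cocylinder, cone and termwise fibre/cofibre products of a morphism in $C(\cA^{[0,1]})$ are manifestly again complexes in $\cA^{[0,1]}$, and their components are direct sums and shifts of the components of the original morphism --- this is simply the functoriality of these constructions in $\cA$. The substantive use of the lemma lies not in its statement but in the fact that these explicit constructions respect additional closure conditions one may impose on the components, as will be exploited in the proof of Lemma~\ref{l2.3.2}.
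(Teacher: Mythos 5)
You have misread the statement of the lemma, and your argument as written proves a different (well-known, but irrelevant) assertion. In the paper, $K(\cA^{[0,1]})$ is \emph{not} the homotopy category of complexes valued in the abelian category $\cA^{[0,1]}$. The parenthetical ``(same objects, morphisms modulo the homotopy relation)'' means: same objects as $\cA^{[0,1]}$ itself --- that is, length-one complexes $[C^0\to C^1]$ in $\cA$ --- with morphisms being chain maps between such complexes, taken modulo chain homotopy. This is confirmed by the way Lemma~\ref{lloc} is used in Lemma~\ref{l2.3.2}: there $\bar S_1^\eff$ is ``the homotopy category of $S_1^\eff$ (Hom groups quotiented by homotopies),'' whose objects are single length-one complexes in $\AbS$, not complexes of them.

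Once you see this, your reduction to the ``standard fact'' that quasi-isomorphisms in $K(\cB)$ admit a calculus of fractions for any abelian $\cB$ no longer applies: that fact concerns the homotopy category of \emph{unbounded} (or bounded in one direction) complexes, where cones and cylinders stay inside the category. Here the category is rigidly $\cA^{[0,1]}$, and the cone of a map of length-one complexes, or its mapping fibre or cylinder, lives in degrees $[0,2]$ or $[-1,1]$ and so \emph{leaves} $\cA^{[0,1]}$. This is precisely the obstacle the lemma must overcome, and it is why the paper gives an actual proof rather than citing a reference. The paper's argument forms the mapping fibre $F$ of $\tilde C\oplus D\to C$, which is concentrated in degrees $[0,1,2]$, and then observes that because $F\to D$ is a quasi-isomorphism one has $H^2(F)=0$, so the canonical truncation $\tau_{<2}F$ --- which \emph{is} of length one --- is quasi-isomorphic to $F$ and can serve as the required object $\tilde D$. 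A similar ad-hoc fibre-product construction handles the two-sided cancellation axiom. None of this appears in your proposal; indeed your closing claim that ``the mapping cylinder, mapping cocylinder, cone and termwise fibre/cofibre products \dots are manifestly again complexes in $\cA^{[0,1]}$'' is exactly what fails for the category the lemma is actually about. You need to redo the argument working directly with length-one complexes and the truncation device.
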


\begin{proof} It is enough to show the calculus of right fractions (for left fractions, replace
$\cA$ by $\cA^{op}$).

a) Let 
\[\begin{CD}
&& \tilde C\\
&&@VuVV\\
D@>f>> C
\end{CD}\]
be a diagram in $\cA^{[0,1]}$. Consider the mapping fibre (= shifted mapping cone) $F$ of the
map $\tilde C\oplus D\to C$. The complex $F$ is in general concentrated in degrees $[0,1,2]$;
however, since $F\to D$ is a \qi, the truncation $\tau_{< 2} F$ is \qi to $F$; then $\tilde
D=\tau_{<2} F$ fills in the square in  $K(\cA^{[0,1]})$.

b) Let 
\[[C^0\to C^1]\by{f} [D^0\to D^1]\by{u} [\tilde D^0\to \tilde D^1]\]
be a chain of maps of $\cA^{[0,1]}$ such that $u$ is a \qi and $uf$ is homotopic to $0$. Let
$s:C^1\to \tilde D^0$ be a corresponding homotopy. Define $\tilde C^1$ as the fibre product of
$C^1$ and $D^0$ over $\tilde D^0$ (via $s$ and $u^0$), $\tilde s:\tilde C^1\to D^0$ the
corresponding map and $\tilde C^0$ the fibre product of $C^0$ and $\tilde C^1$ over $C^1$. One
then checks that $v:[\tilde C^0\to \tilde C^1]\to [C^0\to C^1]$ is a \qi and that $\tilde s$
defines a homotopy from $fv$ to $0$.
\end{proof}

\begin{remark} One can probably extend these two lemmas to complexes of a fixed length $n$ by
the same arguments: we leave this to the interested reader.
\end{remark}

\section{Torsion objects in additive categories} 

\subsection{Additive categories}
\begin{defn}\label{dA.2} Let $\cA$ be an additive category, and let $A$ be
a subring of $\Q$.\\
a) We write $\cA\otimes A$ \index{$\cA\otimes R$} for
the category with the same objects as $\cA$ but morphisms
\[(\cA\otimes A)(X,Y)\df\cA(X,Y)\otimes A.\]
b) We denote by $\cA\{A\}$ the full subcategory of $\cA$:
\[\{X\in \cA\mid \exists n>0 \text{ invertible in } A, n 1_X=0\}.\]
For $A=\Q$, we write $\cA\{A\} =\cA_\tors$. We say that $X\in \cA\{A\}$ is an
\emph{$A$-torsion object} (a torsion object if $A=\Q$).\\ 
c) A morphism $f:X\to Y$ in $\cA$ is an
\emph{$A$-isogeny} (an \emph{isogeny} if $A=\Q$) if there exists a morphism $g:Y\to X$ and an integer $n$ invertible
in $A$ such that $fg= n 1_Y$ and $gf=n1_X$. We denote by $\Sigma_A(\cA)$ the collection of
$A$-isogenies of $\cA$.\\ d) We say that two objects $X,Y\in \cA$ are \emph{$A$-isogenous} if they can be linked by a chain of $A$-isogenies (not necessarily pointing in the same direction).
\end{defn}

\begin{propose}\label{pB.1.2} a) The subcategory $\cA\{A\}$ is additive and closed under direct
summands.\\ b) The $A$-isogenies $\Sigma_A(\cA)$ form a multiplicative system of morphisms in
$\cA$, enjoying calculi of left and right fractions. The corresponding localisation of $\cA$ is
isomorphic to
$\cA\otimes A$.
\end{propose}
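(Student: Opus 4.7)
The plan is a series of elementary verifications, all resting on the single key observation that for every object $X$ and every integer $n$ invertible in $A$, multiplication by $n$ gives an $A$-isogeny $n 1_X: X \to X$ (taking $n 1_X$ as its own partial inverse, since $(n 1_X)(n 1_X) = n^2 1_X$ and $n^2$ is invertible in $A$).

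For part (a): the zero object lies in $\cA\{A\}$; if $n 1_X = 0 = m 1_Y$ with $n, m$ invertible in $A$, then $nm \cdot 1_{X \oplus Y} = 0$, so $\cA\{A\}$ is closed under finite direct sums. Closure under direct summands is immediate: if $X = Y \oplus Z$ with $n 1_X = 0$, composing with the structural inclusion $\iota_Y: Y \to X$ and projection $\pi_Y: X \to Y$ yields $n 1_Y = \pi_Y \circ n 1_X \circ \iota_Y = 0$.

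For part (b): multiplicativity of $\Sigma_A(\cA)$ is a direct computation --- given $A$-isogenies $f: X \to Y$, $f': Y \to Z$ with partial inverses $g, g'$ and integers $n, m$ invertible in $A$ such that $fg = n 1_Y$, $gf = n 1_X$, $f'g' = m 1_Z$, $g'f' = m 1_Y$, the composite $gg': Z \to X$ serves as a partial inverse to $f'f$ with $(f'f)(gg') = nm \cdot 1_Z$ and $(gg')(f'f) = nm \cdot 1_X$. The right Ore condition reduces to a canonical pattern: given $f: X \to Y$ and an $A$-isogeny $s: Z \to Y$ with partial inverse $t$ satisfying $st = n 1_Y$, one takes $g := n 1_X \in \Sigma_A(\cA)$ and $h := tf: X \to Z$, so that $sh = stf = nf = fg$. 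The simplifiability axiom (if $uf = ug$ for an $A$-isogeny $u$, then $fv = gv$ for some $v \in \Sigma_A(\cA)$) follows by composing with the partial inverse of $u$ to deduce $nf = ng$, then setting $v = n 1_X$. The left Ore conditions are dual.

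For the final identification, I would argue via universal properties rather than a direct computation with roofs. The canonical functor $\cA \to \cA \otimes A$ sends every $A$-isogeny to an isomorphism (from $fg = n 1_Y$ with $n$ invertible in $A$, one gets $f \cdot (n^{-1} g) = 1_Y$ in $\cA \otimes A$), hence factors through $\cA[\Sigma_A^{-1}]$, yielding $\Phi: \cA[\Sigma_A^{-1}] \to \cA \otimes A$, identity on objects. Conversely, in $\cA[\Sigma_A^{-1}]$ every $n 1_X$ (for $n$ invertible in $A$) is invertible, so each Hom group carries a canonical $A$-module structure, producing an inverse functor $\cA \otimes A \to \cA[\Sigma_A^{-1}]$ by $A$-linear extension. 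The main (if mild) obstacle is verifying that these two functors are mutually inverse on Hom groups, which reduces to the assertion that every morphism $X \to Y$ in $\cA[\Sigma_A^{-1}]$ is of the form $n^{-1} f$ for some $f \in \cA(X, Y)$ and $n$ invertible in $A$ --- and this follows from the right calculus of fractions, since any roof $X \xleftarrow{s} W \xrightarrow{g} Y$ with $st = n 1_X$ is equivalent to $X \xleftarrow{n 1_X} X \xrightarrow{gt} Y$. The rest is pedestrian bookkeeping with integer factors.
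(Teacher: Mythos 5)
Your proof is correct and rests on the same key observation the paper uses, namely that the homotheties $n1_X$ (for $n$ invertible in $A$) are $A$-isogenies and are cofinal in $\Sigma_A(\cA)$, which makes every Ore-type verification and every roof collapse to a homothety. The paper packages this slightly differently — it first characterizes $\Sigma_A(\cA)$ as exactly the morphisms inverted by $P:\cA\to\cA\otimes A$ and then invokes the standard colimit description of $\Hom$ in a localisation, whereas you verify the fraction axioms directly and close with a universal-property argument — but the underlying mechanism is the same.
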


\begin{proof} a) is clear. For b), consider the obvious functor $P:\cA\to \cA\otimes A$. We
claim that
\[\Sigma_A(\cA)=\{f\mid P(f)\text{ is invertible.}\}\]

One inclusion is clear. Conversely, let $f:X\to Y$ be such that $P(f)$ is invertible. This
means that there exists $\gamma\in (\cA\otimes A)(Y,X)$ such that $P(f)\gamma =1_Y$ and $\gamma
P(f) = 1_X$. Choose an integer $m\in A -\{0\}$ such that $m\gamma = P(g_1)$ for some $g_1$.
Then there is another integer $n\in A-\{0\}$ such that
\[n(fg_1 - m1_Y) = 0\text{ and } n(g_1 f - m1_X) = 0.\]

Taking $g = ng_1$ shows that $f\in \Sigma_A(\cA)$.

It is also clear that homotheties by nonzero integers of $A$ form a cofinal system in
$\Sigma_A(\cA)$. This shows immediately that we have calculi of left and right fractions.

It remains to show that the induced functor
\[\Sigma_A(\cA)^{-1}\cA\to \cA\otimes A\]
is an isomorphism of categories; but this is immediate from the well-known formula, in the
presence of calculus of fractions:
\[\Sigma_A(\cA)^{-1}\cA(X,Y) = \varinjlim_{X'\by{f} X\in \Sigma} \cA(X,Y) =\varinjlim_{X\by{n}
X, n\in A-\{0\}} \cA(X,Y).\]
\end{proof}

The following lemma is clear.

\begin{lemma}\label{lB.1.3} Let $\cB$ be a full additive subcategory of
$\cA$, and suppose that every object of $\cB$ is $A$-isogenous to an object of $\cA$. Then
$\cB\otimes A\iso \cA\otimes A$.\qed
\end{lemma}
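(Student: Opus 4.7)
The plan is to establish that the inclusion $\cB \hookrightarrow \cA$ induces a functor $\cB \otimes A \to \cA \otimes A$ that is both fully faithful and essentially surjective. (We read the hypothesis as saying that every object of $\cA$ is $A$-isogenous to some object of $\cB$, since the literal reading is vacuous.)

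First I would dispose of fullness and faithfulness, which are immediate from the definition of $\otimes A$: for $X, Y \in \cB$, one has $(\cB \otimes A)(X,Y) = \cB(X,Y) \otimes A$ and $(\cA \otimes A)(X,Y) = \cA(X,Y) \otimes A$, and these agree because $\cB$ is a \emph{full} subcategory of $\cA$. Tensoring with the flat $\Z$-module $A$ preserves this equality, so the induced map on morphism groups is a bijection.

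The substantive step is essential surjectivity. Let $X \in \cA$, viewed as an object of $\cA \otimes A$. By hypothesis there is a finite chain of $A$-isogenies
\[
X = X_0 \longleftrightarrow X_1 \longleftrightarrow \cdots \longleftrightarrow X_n = Y,
\]
with $Y \in \cB$, where each $\longleftrightarrow$ is an $A$-isogeny in one direction or the other. By Proposition \ref{pB.1.2} b), every $A$-isogeny becomes invertible in $\cA \otimes A$ (indeed, $\Sigma_A(\cA)$ is precisely the class of morphisms inverted by the localisation functor $\cA \to \cA \otimes A$). Consequently each arrow in the chain becomes an isomorphism in $\cA \otimes A$, so composing them (and their inverses) yields an isomorphism $X \simeq Y$ in $\cA \otimes A$, witnessing that $X$ lies in the essential image of $\cB \otimes A$.

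There is no real obstacle: the lemma is purely a packaging of Proposition \ref{pB.1.2} b). The only point requiring a little care is the interpretation of ``$A$-isogenous'' as the equivalence relation generated by existence of a single $A$-isogeny in either direction (Definition \ref{dA.2} d)), but this creates no difficulty since the class of isomorphisms in $\cA \otimes A$ is closed under inversion and composition.
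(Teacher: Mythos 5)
Your proof is correct, and it follows exactly the argument the paper leaves implicit (the lemma carries no written proof, only a \qed, so the author regarded it as immediate from Definition \ref{dA.2} and Proposition \ref{pB.1.2}). You also correctly diagnose the typo in the statement: the hypothesis must be read as ``every object of $\cA$ is $A$-isogenous to an object of $\cB$,'' which is confirmed by the lemma's use in the proof of Proposition \ref{isoab}, where $\cB=\M$ sits inside the larger $\cA=\anM^\eff$.
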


\subsection{Triangulated categories} (See \cite[A.2.1]{riou} for a different treatment.)

\begin{propose}\label{p1.1} Let $\cT$ be a triangulated category. Then\\
a) The subcategory $\cT\{A\}$ is triangulated and thick.\\ 
b) Let $X\in \cT$ and $n\in A-\{0\}$. Then ``the" cone $X/n$ of multiplication by $n$
on
$X$ belongs to $\cT\{A\}$.\\   
c) The localised category
$\cT/\cT\{A\}$ is canonically isomorphic to
$\cT\otimes A$. In particular, $\cT\otimes A$ is triangulated.\\
d) A morphism $f$ of $\cT$ belongs to $\Sigma_A(\cT)$ if and only if $cone(f)\in
\cT\{A\}$.
\end{propose}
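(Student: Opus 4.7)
\smallskip

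My plan is to handle the four parts in the logical order (a), (b), (d), (c), since (c) is really a formal consequence of (d) combined with Verdier's localisation and Proposition \ref{pB.1.2}.

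For (a), the point is that $\cT\{A\}$ is obviously closed under shifts (since $n 1_{X[i]}=(n 1_X)[i]$) and under direct summands (as for any $\cA\{A\}$, by the same argument as Proposition \ref{pB.1.2} a)); the only substantive point is stability under extensions. Given a triangle $X\by{u} Y\by{v} Z\by{w} X[1]$ with $n 1_X=0$ and $m 1_Y=0$, I would use the standard trick: the naturality relation $nw=(n 1_X)[1]\circ w=0$ tells us $nw=0$, and since $mn 1_Z\circ v=v\circ mn 1_Y=0$, the map $mn 1_Z$ factors as $v\circ h$ for some $h:Z\to Y$; composing with $m$ gives $m^2 n 1_Z=v\circ m 1_Y\circ h=0$. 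So $Z\in\cT\{A\}$ and thickness follows.

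For (b), applied to the defining triangle $X\by{n 1_X} X\by{v} X/n\by{w} X[1]$, the same mechanism works: from $v\circ n 1_X=0$ one gets $n 1_{X/n}\circ v=0$, hence $n 1_{X/n}=g\circ w$ for some $g:X[1]\to X/n$; and rotating the triangle yields $n w= (n 1_X)[1]\circ w=0$, so $n^2 1_{X/n}=g\circ nw=0$.

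The main work is (d); I expect the $\Leftarrow$ direction to be the delicate point. The $\Rightarrow$ direction is immediate by the same style of argument applied to the cone: if $fg=n 1_Y$ and $gf=n 1_X$, then $nw=(gf)[1]\circ w=g[1]\circ(f[1]\circ w)=0$ (by the rotated triangle), and $n 1_C\circ v=v\circ fg=0$, giving $n^2 1_C=0$. For the converse, suppose $cone(f)=C$ with $n 1_C=0$. From $nv=n 1_C\circ v=0$, $n 1_Y$ factors through $f$: $fg=n 1_Y$ for some $g$. The natural candidate $gf$ does \emph{not} a priori equal $n 1_X$; rather, using $f\circ w[-1]=0$ from the rotation, one sees $gf\equiv n 1_X$ modulo a map factoring through $w[-1]$. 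The obstacle is to produce a single scalar realising $f$ as an $A$-isogeny. My plan is to replace $g$ by $g'\df gfg$: then $fg'=(fg)(fg)=n^2 1_Y$, and $g'f=g\circ(fg)\circ f=n\cdot gf$, but $nw[-1]=(nw)[-1]=0$ kills the error term, so $g'f=n^2 1_X$ exactly. Hence $f\in\Sigma_A(\cT)$.

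Finally (c) is formal. By (a), $\cT\{A\}$ is thick, so Verdier's theorem produces the triangulated localisation $\cT/\cT\{A\}$, which inverts precisely the morphisms whose cone lies in $\cT\{A\}$. By (d), this set is $\Sigma_A(\cT)$. Now Proposition \ref{pB.1.2} b) identifies $\Sigma_A(\cT)^{-1}\cT$ with $\cT\otimes A$, yielding the canonical isomorphism $\cT/\cT\{A\}\simeq \cT\otimes A$ and endowing $\cT\otimes A$ with the induced triangulated structure.
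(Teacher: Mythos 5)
Your proof is correct, and it reorganizes the argument in a way that is arguably cleaner than the paper's: you prove both directions of (d) by direct triangle manipulations and then deduce (c) formally as an identity of localisations, whereas the paper proves (c) first (constructing the functor $\cT/\cT\{A\}\to\cT\otimes A$ and checking by hand that it is bijective on objects and fully faithful) and then obtains the forward implication of (d) as a corollary of (c), via the standard fact that an object of $\cT$ dies in a Verdier quotient by a thick subcategory iff it belongs to that subcategory. The substantive computations are the same in both treatments. You correctly put your finger on the one genuinely delicate point, which the paper also handles but only implicitly: from $n\,1_C=0$ one gets a $g$ with $fg=n\,1_Y$, but $gf$ need not equal $n\,1_X$; the fix is to replace $g$ by $g'=gfg$ (note that $g'=gfg=g\cdot n\,1_Y=ng$, so this is the same as passing from $n$ to $n^2$), and then $nw[-1]=0$ kills the defect. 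The paper does essentially the same thing when it writes ``there exist factorisations $n=ff'=f''f$, hence that $f\in\Sigma_A(\cT)$'': getting from a pair $(f',f'')$ to a single $g$ with $fg=n^2\,1_Y$, $gf=n^2\,1_X$ requires exactly this rescaling. One small benefit of your route is that (c) reduces to ``both sides are the localisation of $\cT$ at $\Sigma_A(\cT)$'' plus Proposition \ref{pB.1.2} b), so one never has to verify full faithfulness by hand; one small benefit of the paper's route is that the forward direction of (d) comes for free once (c) is in place. Either way the content is the same.
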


\begin{proof} a) It is clear that $\cT\{A\}$ is stable under direct
summands; it remains to see that it is triangulated. Let $X,Y\in
\cT\{A\}$, $f:X\to Y$ a morphism and $Z$ a cone of $f$. We may assume
that $n1_X=n1_Y=0$. The commutative diagram
\[\begin{CD}
Y@>>> Z@>>> X[1]\\
@V{n=0}VV @V{n}VV @V{n=0}VV \\
Y@>>> Z@>>> X[1]
\end{CD}\]
show that multiplication by $n$ on $Z$ factors through $Y$; this implies
that $n^2 1_Z=0$.

b) Exactly the same argument as in a) shows that multiplication by $n$ on
$X/n$ factors through $X$, hence that $n^2 1_{X/n}=0$.

c) Let $f\in \cT$ be such that $C:=cone(f)\in \cT\{A\}$, and let $n>0$
be such that $n 1_C=0$. The same trick as in a) and b) shows that there
exist factorisations $n=ff'=f''f$, hence that $f\in \Sigma_A(\cT)$. In particular, $f$ becomes
invertible under the canonical (additive) functor $\cT\to \cT\otimes A$. Hence an
induced (additive) functor
\[\cT/ \cT\{A\}\to \cT\otimes A\]
which is evidently bijective on objects; b) shows immediately that it is
fully faithful.

d) One implication has been seen in the proof of c). For the other, if $f\in \Sigma_A(\cT)$,
then $f$ becomes invertible in $\cA\otimes \Q$, hence $cone(f)\in \cT\{A\}$ by c).
\end{proof}

\begin{remark} As is well-known, the stable homotopy category gives a
counterexample to the expectation that in fact $n 1_{X/n}=0$ in b)
($X=S^0$,
$n=2$).
\end{remark}

\begin{lemma}\label{lB.2.3} Let $0\to \cT'\to \cT\to \cT''\to 0$ be a short exact sequence of
triangulated categories (by definition, this means that $\cT'$ is thick in $\cT$ and that
$\cT''$ is equivalent to $\cT/\cT'$). Then the sequence
\[0\to \cT'\otimes A\to \cT\otimes A\to \cT''\otimes A\to 0\]
is exact.
\end{lemma}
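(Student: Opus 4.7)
The plan is to prove the three requirements of exactness: the functor $i_A: \cT'\otimes A \to \cT\otimes A$ induced by the inclusion is fully faithful, its image is a thick subcategory, and the induced functor on quotients $\cT\otimes A/\cT'\otimes A \to \cT''\otimes A$ is an equivalence. The computations rest on Proposition~\ref{p1.1} c), which gives $\cT\otimes A = \cT/\cT\{A\}$ (and similarly for $\cT'$ and $\cT''$), so that the whole situation is governed by Verdier quotients.

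For full faithfulness, I would use the cofinality statement at the end of the proof of Proposition~\ref{pB.1.2} b): in both categories the Hom groups are computed as filtered colimits along multiplication by integers $n\in A\cap \N^*$, giving
\[(\cT'\otimes A)(X,Y) = \cT'(X,Y)\otimes_\Z A \quad\text{and}\quad (\cT\otimes A)(X,Y) = \cT(X,Y)\otimes_\Z A\]
for $X,Y\in \cT'$, and the two agree because $\cT'\subset \cT$ is full. Essential surjectivity of $\pi_A:\cT\otimes A\to \cT''\otimes A$ is immediate since $\pi:\cT\to \cT''$ already is.

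For the quotient identification, the key step is to show that the thick subcategory $\hat\cT'$ of $\cT$ generated by $\cT'$ and $\cT\{A\}$ coincides with $\pi^{-1}(\cT''\{A\})$. The inclusion $\hat\cT'\subset \pi^{-1}(\cT''\{A\})$ is immediate. Conversely, if $\pi(X)$ is killed by some $n\in A\cap\N^*$, then $n\cdot 1_X$ factors through some $Y\in \cT'$ in $\cT$, say $n\cdot 1_X = hg$ with $X\by{g}Y\by{h}X$; applied in $\cT\otimes A$ this identifies $X$ as a retract of $Y$, so after further projection to $\cT/\hat\cT'$ the object $X$ becomes a retract of $0$ and hence is $0$, forcing $X\in \hat\cT'$. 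The standard composition of Verdier quotients then yields the chain of equivalences
\[\cT\otimes A\big/\overline{\cT'\otimes A}\;\simeq\; \cT/\hat\cT'\;\simeq\; \cT''/\cT''\{A\}\;=\;\cT''\otimes A,\]
where $\overline{\cT'\otimes A}$ denotes the thick closure of $\cT'\otimes A$ in $\cT\otimes A$.

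Finally, thickness of $\cT'\otimes A$ itself follows from the fact that $\ker \pi_A$ is automatically thick and the identification just established shows $\ker \pi_A = \overline{\cT'\otimes A}$. The remaining and genuinely delicate step is to show that every object of this kernel is isomorphic in $\cT\otimes A$ to an actual object of $\cT'$, i.e., $\overline{\cT'\otimes A} = \cT'\otimes A$. The main obstacle lies precisely here: given $X\in \cT$ realised as a retract of $Y\in \cT'$ in $\cT\otimes A$ via the octahedral triangle $C_g\to X/n\to C_h\to C_g[1]$ attached to the factorisation $n\cdot 1_X = hg$, one must use the thickness of $\cT'$ in $\cT$ (closure under direct summands) to select an object of $\cT'$ representing $X$ up to $A$-isogeny. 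This is the only place where the hypothesis ``thick'' rather than merely ``full triangulated subcategory'' on $\cT'\subset\cT$ is really invoked.
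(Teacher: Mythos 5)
Your route is genuinely different from the paper's and, for the parts you prove, correct. The paper's argument is a one-liner based on universal properties: it constructs the natural functor $a\colon (\cT\otimes A)/(\cT'\otimes A)\to \cT''\otimes A$, then observes that, since $(\cT\otimes A)/(\cT'\otimes A)$ is $A$-linear, the localisation functor $\cT/\cT'\to(\cT\otimes A)/(\cT'\otimes A)$ extends to a functor $b\colon (\cT/\cT')\otimes A\to(\cT\otimes A)/(\cT'\otimes A)$, and checks by uniqueness that $a$ and $b$ are mutually inverse. You instead work entirely through the identification $\cT\otimes A\simeq \cT/\cT\{A\}$ of Proposition~\ref{p1.1}~c), compute $\hat\cT'=\pi^{-1}(\cT''\{A\})$ and then compose Verdier quotients. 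Your argument is more concrete and hands-on; the paper's argument is shorter and sidesteps all thick-closure bookkeeping by never mentioning $\cT\{A\}$ or $\cT''\{A\}$.

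That said, your write-up has an acknowledged gap that the paper's universal-property route simply avoids. Your chain
\[\cT\otimes A\big/\overline{\cT'\otimes A}\;\simeq\; \cT/\hat\cT'\;\simeq\; \cT''\otimes A\]
establishes the quotient equivalence, but you then remark that showing $\overline{\cT'\otimes A}=\cT'\otimes A$ (i.e.\ thickness of $\cT'\otimes A$ in $\cT\otimes A$) is ``genuinely delicate'' and you only sketch how one ``must'' use thickness of $\cT'$ in $\cT$ — you do not actually produce, from the factorisation $n\cdot 1_X=hg$, an object of $\cT'$ to which $X$ becomes isomorphic in $\cT\otimes A$. The octahedron on $n=hg$ gives a triangle $C_g\to X/n\to C_h$ with $X/n\in\cT\{A\}$ and $Y\cong X\oplus C_g$ in $\cT\otimes A$, but neither $C_g$ nor $C_h$ is known to lie in $\cT'$, so thickness of $\cT'$ in $\cT$ cannot be invoked directly to conclude. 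Until this is filled in, you have only shown that $\cT''\otimes A$ is the quotient of $\cT\otimes A$ by the thick closure of $\cT'\otimes A$, which is strictly weaker than the exactness claim in the sense of the lemma's stated definition (where the kernel must be $\cT'\otimes A$ itself). It is worth noting that the paper's own proof also does not address this point explicitly; but since you have chosen the concrete route, the obligation to close it falls on you, and at present it remains open.
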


\begin{proof} We have to show that the functor
\[a:\frac{\cT\otimes A}{\cT'\otimes A}\to \cT''\otimes A\]
is an equivalence of categories. Since the left hand side is $A$-linear, the natural functor
\[\cT/\cT'\to \cT\otimes A/\cT'\otimes A\]
canonically extends to a functor
\[b:(\cT/\cT')\otimes A\to \cT\otimes A/\cT'\otimes A.\]

It is clear that $a$ and $b$ are inverse to each other. 
\end{proof}

\begin{propose}\label{p1.2} Let $T:\cS\to \cT$ be a triangulated functor
between triangulated categories. Then $T$ is fully faithful if and only
if the induced functors $T\{A\}:\cS\{A\}\to\cT\{A\}$ and
$T\otimes A:\cS\otimes A\to
\cT\otimes A$ are fully faithful. 
\end{propose}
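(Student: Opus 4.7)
\emph{Only if} is immediate: full faithfulness passes to full subcategories and to localisations. So the real content is the converse. Assuming $T\{A\}$ and $T\otimes A$ fully faithful, one must show that $T_{X,Y}:\Hom_\cS(X,Y)\to\Hom_\cT(TX,TY)$ is bijective for every $X,Y\in\cS$. The plan is first to strengthen the hypothesis on $T\{A\}$ to a statement with torsion source and arbitrary target, then to extract two ``half-isomorphism'' conclusions for every pair $(X,Y)$ via snake diagrams, and finally to combine these with $T\otimes A$ being an isomorphism to force $\ker T_{X,Y}=\coker T_{X,Y}=0$.

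Step 1 is the strengthening: for $Z\in\cS\{A\}$ and any $Y\in\cS$, the map $T:\Hom(Z,Y[i])\to\Hom(TZ,TY[i])$ is an isomorphism for every $i$. Choosing $n$ invertible in $A$ with $n\,1_Z=0$, multiplication by $n$ acts as zero on $\Hom(Z,-)$, so applying $\Hom(Z,-)$ to the triangle $Y\by{n}Y\to Y/n\to Y[1]$ yields the short exact sequence
\[0\to \Hom(Z,Y)\to \Hom(Z,Y/n)\to \Hom(Z,Y[1])\to 0,\]
whose middle term involves only objects of $\cS\{A\}$ and is therefore an isomorphism under $T$ by hypothesis; the five lemma together with a shift on $Y$ yields the claim.

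Step 2: for arbitrary $X,Y\in\cS$ and $n$ invertible in $A$, applying $\Hom(X,-)$ to the same triangle gives
\[0\to \Hom(X,Y)/n\to \Hom(X,Y/n)\to {}_n\Hom(X,Y[1])\to 0\]
and its analogue for $TX,TY$, whose middle vertical map is an isomorphism by Step~1. The snake/five lemma extracts: $T_{X,Y}$ is injective modulo $n$; $T_{X,Y}$ is surjective on $n$-torsion (after shifting $Y$ by $-1$); and one obtains a natural isomorphism between the cokernel of the ``mod $n$'' map for $(X,Y[-1])$ and the $n$-torsion of $\ker T_{X,Y}$.

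Step 3: full faithfulness of $T\otimes A$ forces $K_{X,Y}:=\ker T_{X,Y}$ and $C_{X,Y}:=\coker T_{X,Y}$ to be $A$-torsion. Surjectivity of $T_{X,Y}$ comes first: surjectivity on $n$-torsion gives ${}_n\im T_{X,Y}={}_n\Hom(TX,TY)$, and injectivity mod $n$ gives $\im T_{X,Y}/n\hookrightarrow\Hom(TX,TY)/n$; feeding both into the snake lemma for $0\to\im T_{X,Y}\to\Hom(TX,TY)\to C_{X,Y}\to 0$ forces ${}_n C_{X,Y}=0$, hence $C_{X,Y}=0$. Once $T_{X,Y}$ is surjective, injectivity mod $n$ forces $K_{X,Y}\subseteq n\Hom(X,Y)$, so $\Hom(TX,TY)/n=\Hom(X,Y)/n$ and ``$T$ mod $n$'' is in fact an isomorphism on every pair. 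Applying this upgrade to $(X,Y[-1])$ together with the snake isomorphism from Step~2 yields ${}_n K_{X,Y}=0$ for every $n$ invertible in $A$, and the $A$-torsion property of $K_{X,Y}$ then closes the argument. The main subtlety is that surjectivity of $T$ must be proved before its injectivity, since the promotion of ``$T$ mod $n$ injective'' to ``$T$ mod $n$ iso'' genuinely requires $T$ to be surjective.
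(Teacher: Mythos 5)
Your overall strategy is the same as the paper's (reduce first to the case where the torsion object sits in the Hom group, then feed the resulting mod-$n$/$n$-torsion information into a snake-lemma argument together with the $\otimes A$ hypothesis), and Step~3 is a correct and reasonably pretty hands-on variant of the paper's limit argument: keeping $n$ fixed and proving surjectivity before injectivity works, where the paper instead passes to the limit over $n$ and finishes with one application of the five lemma. However, there is a genuine variance slip between your Step~1 and Step~2 that leaves a gap as written.

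In Step~1 you prove: for $Z$ $A$-torsion and $Y$ arbitrary, $T:\Hom(Z,Y[i])\to\Hom(TZ,TY[i])$ is an isomorphism. That is \emph{torsion source, arbitrary target}. But in Step~2 the middle vertical map is
\[
T:\Hom(X,Y/n)\longrightarrow\Hom(TX,T(Y/n)),
\]
where $X$ is arbitrary and $Y/n$ is the $A$-torsion object. This is \emph{arbitrary source, torsion target}, which is not what Step~1 asserts; in a general triangulated category Hom groups have no source/target symmetry, so the two statements are genuinely different. What Step~2 actually needs is the dual lemma (the one the paper proves): for $Y$ $A$-torsion with $n\,1_Y=0$, the map $T:\Hom(X,Y)\to\Hom(TX,TY)$ is an isomorphism for all $X$. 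The proof is the mirror image of your argument: apply $\Hom(-,Y)$ to the triangle $X\by{n}X\to X/n\to X[1]$; since $n$ kills $\Hom(-,Y)$, the long exact sequence breaks into
\[
0\to\Hom(X[1],Y)\to\Hom(X/n,Y)\to\Hom(X,Y)\to0,
\]
whose middle term has both entries $A$-torsion, so full faithfulness of $T\{A\}$ applies, and the shift trick finishes. With this replacement your Step~2 becomes valid, and the rest of the argument goes through as you wrote it.

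Two small points worth noting. First, in Step~3 the claim that $K_{X,Y}\otimes A=0$ uses flatness of $A\subset\Q$ over $\Z$ (so $\otimes A$ is exact and commutes with kernels), which you should say explicitly rather than leaning only on ``$T\otimes A$ fully faithful''. Second, your observation that surjectivity must be established before injectivity (because only then can ``$T$ mod $n$ injective'' be promoted to ``$T$ mod $n$ iso'') is a real subtlety; the paper sidesteps it by taking the colimit over $n$ and finishing with the four-column diagram, where the five lemma handles both directions at once. Both routes are fine; yours is more elementary but has to keep that ordering straight.
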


\begin{proof} ``Only if" is obvious; let us prove ``if'. Let $X,Y\in
\cS$: we have to prove that $T:\cS(X,Y)\to \cT(T(X),T(Y))$ is bijective.
We do it in two steps:

1) $Y$ is torsion, say $n 1_Y=0$. The claim follows from the commutative
diagram with exact rows
\[\begin{CD}
\scriptstyle\cS(X/n[1],Y)&\to&\scriptstyle \cS(X,Y)@>n=0>>\scriptstyle
\cS(X,Y)&\to&
\scriptstyle\cS(X/n,Y)\\ 
 @V{T}V{\wr}V @V{T}VV @V{T}VV @V{T}V{\wr}V \\
\scriptstyle\cT(T(X)/n[1],T(Y))&\to&
\scriptstyle\cT(T(X),T(Y))@>n=0>>\scriptstyle\cT(T(X),T(Y))&\to&
\scriptstyle\cT(T(X)/n,T(Y))
\end{CD}\]
and the assumption (see Proposition \ref{p1.1} b)).

2) The general case. Let $n>0$. We have a commutative diagram with exact
rows
\[\begin{CD}
\scriptstyle 0 &\to&\scriptstyle \cS(X,Y)/n&\to&
\scriptstyle\cS(X,Y/n)&\to& \scriptstyle{}_n\cS(X,Y[1])&\to& \scriptstyle
0 \\  && @V{T}VV @V{T}V{\wr}V @V{T}VV \\
\scriptstyle 0 &\to& \scriptstyle\cT(T(X),T(Y))/n&\to&
\scriptstyle\cT(T(X),T(Y)/n)&\to& \scriptstyle {}_n\cT(T(X),T(Y)[1])&\to&
\scriptstyle 0 
\end{CD}\]
where the middle isomorphism follows from 1). The snake lemma yields an
exact sequence
\begin{multline*}
0\to \cS(X,Y)/n\longby{T} \cT(T(X),T(Y))/n\\
\to
{}_n\cS(T(X),T(Y)[1])\longby{T}{}_n\cT(T(X),T(Y)[1])\to 0.
\end{multline*}

Passing to the limit over $n$, we get another exact sequence
\begin{multline}\label{eqB.1}
0\to \cS(X,Y)\otimes A/\Z\longby{T} \cT(T(X),T(Y))\otimes A/\Z\\
\to
\cS(T(X),T(Y)[1])\{A\}\longby{T}\cT(T(X),T(Y)[1])\{A\}\to 0.
\end{multline}

Consider now the commutative diagram with exact rows
\[\begin{CD}
\scriptstyle 0&\to& \scriptstyle \cS(X,Y)\{A\}&\to&
\scriptstyle \cS(X,Y)&\to&\scriptstyle
\cS(X,Y)\otimes A&\to&\scriptstyle \cS(X,Y)\otimes A/\Z&\to& 0\\
&& @V{T}V\underline{1}V @V{T}V\underline{2}V @V{T}V{\wr}V @V{T}V\underline{4}V\\
\scriptstyle 0&\to& \scriptstyle \cT(T(X),T(Y))\{A\}&\to&
\scriptstyle \cT(T(X),T(Y))&\to&\scriptstyle
\cT(T(X),T(Y))\otimes A&\to&\scriptstyle \cT(T(X),T(Y))\otimes A/\Z&\to&
0
\end{CD}\]
where the isomorphism is by assumption. By this diagram and \eqref{eqB.1}, $\underline{4}$ is
an isomorphism. Using this fact in \eqref{eqB.1} applied with $Y[-1]$, we get that
$\underline{1}$ is an isomorphism; then $\underline{2}$ is an isomorphism by the 5
lemma, as desired.
\end{proof}

\subsection{Torsion objects in an abelian category} The proof of the following proposition is
similar to that of Proposition \ref{p1.1} and is left to the reader.

\begin{propose}\label{p1.1ab} Let $\cA$ be an abelian category. Then\\
a) The full subcategory $\cA\{A\}$
is thick (a Serre subcategory, in another terminology).\\ 
b) Let $X\in \cA$ and $n>0$
invertible in $A$. Then the kernel and cokernel of multiplication by
$n$ on $X$ belong to $\cA\{A\}$.\\  
c) The localised category
$\cA/\cA\{A\}$ is canonically isomorphic to
$\cA\otimes A$. In particular, $\cA\otimes A$ is abelian.\\
d) A morphism $f\in \cA$ is in $\Sigma_A(\cA)$ if and only if $\ker f\in \cA\{A\}$ and
$\coker f\in \cA\{A\}$.\qed
\end{propose}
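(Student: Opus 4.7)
\smallskip

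My plan is to follow the pattern of Proposition \ref{p1.1} while replacing triangulated arguments by elementary diagram chases, and to invoke Gabriel's theorem on Serre quotients of abelian categories for the part concerning the quotient.

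First I would establish (a) and (b) directly. For (a), given a short exact sequence $0\to X'\to X\to X''\to 0$: if $n1_X=0$ with $n$ invertible in $A$, then $n1_{X'}=0$ (since $X'\hookrightarrow X$ is mono) and $n1_{X''}=0$ (since $X\twoheadrightarrow X''$ is epi), so $\cA\{A\}$ is stable under subobjects and quotients. Conversely, if $m1_{X'}=0$ and $n1_{X''}=0$, then $n1_X$ vanishes after composing with $X\to X''$, so it factors through $X'\hookrightarrow X$ as $X\to X'\to X$; composing with $m$ yields $mn1_X=0$, hence $X\in\cA\{A\}$. This is exactly the abelian analogue of the triangulated trick used in Proposition \ref{p1.1} a). Part (b) is immediate: the restriction of $n1_X$ to $\ker n$ is $0$ by definition, and similarly $n$ kills $\coker n$.

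For (c) and (d) I would deduce both simultaneously. By Proposition \ref{pB.1.2}, $\cA\otimes A$ is the localisation $\Sigma_A(\cA)^{-1}\cA$. By Gabriel's theorem, $\cA/\cA\{A\}$ is abelian, the quotient functor $Q\colon\cA\to\cA/\cA\{A\}$ is exact, and the set $\Sigma$ of morphisms inverted by $Q$ consists of those $f$ with $\ker f,\coker f\in\cA\{A\}$. It is therefore enough to show $\Sigma_A(\cA)=\Sigma$, as this gives (d) at once and provides a canonical isomorphism $\cA\otimes A\cong\cA/\cA\{A\}$, whence (c).

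The inclusion $\Sigma_A(\cA)\subseteq\Sigma$ is easy: if $gf=n1_X$ and $fg=n1_Y$ with $n$ invertible in $A$, restriction of $gf$ to $\ker f$ gives $n1_{\ker f}=0$, and dually $n1_{\coker f}=0$. The main step is the converse: given $f\colon X\to Y$ with $K\df\ker f$ and $C\df\coker f$ killed by some $n$ invertible in $A$ (take a common $n$), factor $f=\iota p$ with $p\colon X\twoheadrightarrow I$ and $\iota\colon I\hookrightarrow Y$. Since $nK=0$, the map $n1_X$ vanishes on $K$, hence factors uniquely as $\bar t\,p$ for some $\bar t\colon I\to X$; then $p\bar t\,p=np$, and as $p$ is epi we get $p\bar t=n1_I$, so $p\in\Sigma_A(\cA)$. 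Dually, since $nC=0$, the map $n1_Y$ factors as $\iota\tilde s$ for some $\tilde s\colon Y\to I$ with $\tilde s\iota=n1_I$, so $\iota\in\Sigma_A(\cA)$. As $\Sigma_A(\cA)$ is visibly closed under composition, we conclude $f\in\Sigma_A(\cA)$.

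The only genuine subtlety will be the converse $\Sigma\subseteq\Sigma_A(\cA)$; once the factorisation $f=\iota p$ is introduced, however, the diagram chases are completely parallel to (and a bit easier than) those in the proof of Proposition \ref{p1.1} c), so no real obstacle arises.
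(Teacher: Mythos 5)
Your proof is correct, and it is exactly the abelian analogue that the paper has in mind when it says the argument is ``similar to that of Proposition \ref{p1.1} and is left to the reader.'' The image factorisation $f=\iota\circ p$ is the right abelian replacement for the cone trick in Proposition \ref{p1.1}, and invoking the standard description of the Gabriel (Serre) quotient as the localisation at $\Sigma=\{f:\ker f,\,\coker f\in\cA\{A\}\}$ is the cleanest way to assemble (c) and (d) from the identity $\Sigma=\Sigma_A(\cA)$ together with Proposition \ref{pB.1.2}.
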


The following corollary is a direct consequence of Proposition \ref{p1.1ab} and Lemma
\ref{lB.1.3}:

\begin{cor}\label{cB.1.3} Let $\cA$ be an abelian category. Let $\cB$ be a full additive
subcategory of
$\cA$, and suppose that every object of $\cB$ is $A$-isogenous to an object of $\cA$ (see
Definition
\ref{dA.2}). Then $\cB\otimes A$ is abelian, and in particular idempotent-complete.\qed
\end{cor}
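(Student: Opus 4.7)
The proof plan is essentially a direct concatenation of the two results the corollary cites, so it should be brief. First I would invoke Proposition \ref{p1.1ab} c), which asserts that for any abelian category $\cA$ the localisation $\cA\otimes A = \cA/\cA\{A\}$ is again abelian. This takes care of the abelian structure on the ambient category.

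Next, I would apply Lemma \ref{lB.1.3} to the pair $\cB\subseteq \cA$: under the hypothesis that every object of $\cA$ is $A$-isogenous to an object of $\cB$ (which is how the isogeny condition must be read for the conclusion to have content), the natural functor furnishes an equivalence of additive categories
\[\cB\otimes A \iso \cA\otimes A.\]
Combining with the previous step transports the abelian structure from $\cA\otimes A$ back to $\cB\otimes A$, so the latter is abelian.

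Finally, the ``in particular" clause is a standard consequence: any abelian category is idempotent-complete, since for any idempotent endomorphism $e:X\to X$ the kernel $\ker(e)$ and $\ker(1-e)$ exist and yield a splitting $X \simeq \ker(e)\oplus \ker(1-e)$. Hence $\cB\otimes A$ is idempotent-complete.

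There is no real obstacle here; the only point requiring any thought is the reading of the hypothesis (a subcategory is trivially isogenous to its containing category in the wrong direction, so the assumption should be understood as ``every object of $\cA$ is $A$-isogenous to one of $\cB$'', matching the statement of Lemma \ref{lB.1.3}).
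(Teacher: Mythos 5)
Your proof is correct and is exactly the route the paper intends: the corollary is announced as a direct consequence of Proposition \ref{p1.1ab}~(c) and Lemma \ref{lB.1.3}, which is precisely the concatenation you spell out, and your reading of the hypothesis as ``every object of $\cA$ is $A$-isogenous to one of $\cB$'' is the intended one. One minor note: Lemma \ref{lB.1.3} as printed carries the same inverted wording as the corollary, so the corrected reading must be applied to both statements rather than being supplied by the lemma.
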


\subsection{Abelian and derived categories}

\begin{propose}\label{pB.4.1} Let $\cA$ be an abelian category. Then the natural functor
$D^b(\cA)\to D^b(\cA\otimes A)$ induces an equivalence of categories
\[D^b(\cA)\otimes A\iso D^b(\cA\otimes A).\]
In particular, $D^b(\cA)\otimes A$ is idempotent-complete.
\end{propose}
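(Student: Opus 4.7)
The plan is to produce the comparison functor $\tilde F: D^b(\cA) \otimes A \to D^b(\cA \otimes A)$ and then establish that it is an equivalence by checking essential surjectivity and full faithfulness. The exact localization $Q: \cA \to \cA \otimes A$ extends levelwise to a triangulated functor $F: D^b(\cA) \to D^b(\cA \otimes A)$; any $C \in D^b(\cA)\{A\}$ has $n 1_C = 0$ for some $n$ invertible in $A$, which becomes simultaneously zero and an isomorphism on $F(C)$, forcing $F(C) = 0$. Using the identification $D^b(\cA) \otimes A = D^b(\cA)/D^b(\cA)\{A\}$ of Proposition \ref{p1.1}(c), this gives the factorization $\tilde F$.

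For essential surjectivity, I would start from a bounded complex $(C^\bullet, d^\bullet)$ in $\cA \otimes A$, noting that each $C^i$ already lies in $\cA$. Since only finitely many differentials are involved, I would clear a common denominator $N$ invertible in $A$, writing $d^i = e^i/N$ with $e^i \in \cA(C^i, C^{i+1})$; the relation $d^{i+1} d^i = 0$ translates to $M_i \, e^{i+1} e^i = 0$ in $\cA$ for suitable $M_i$ invertible in $A$, and after rescaling the $e^i$ (and correspondingly rescaling certain $C^i$ by units of $A$) one obtains a bona fide bounded complex $\tilde C^\bullet$ in $\cA$ together with an isomorphism $F(\tilde C^\bullet) \cong (C^\bullet, d^\bullet)$ already at the chain level in $C^b(\cA \otimes A)$.

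For full faithfulness, Proposition \ref{p1.1}(c) gives $\Hom_{D^b(\cA) \otimes A}(X,Y) = \Hom_{D^b(\cA)}(X,Y) \otimes A$, so I am reduced to checking that the natural map
\[
\Hom_{D^b(\cA)}(X,Y) \otimes A \longrightarrow \Hom_{D^b(\cA \otimes A)}(X,Y)
\]
is bijective. A standard d\'evissage via the canonical triangles $(\tau_{\le n} X, X, \tau_{>n} X)$ and the five-lemma in each variable reduces me to $X, Y \in \cA$ concentrated in degree zero, in which case the claim is the Yoneda-$\Ext$ statement $\Ext^i_\cA(X,Y) \otimes A \iso \Ext^i_{\cA \otimes A}(X,Y)$. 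Injectivity is the easier direction: a chain of equivalences of $i$-fold extensions in $\cA \otimes A$ witnessing that a class becomes zero involves only finitely many commutative diagrams, and clearing denominators produces an $n$ invertible in $A$ killing the class already in $\Ext^i_\cA(X,Y)$. Surjectivity, the genuinely hard step, requires lifting an $i$-fold extension in $\cA \otimes A$ to a complex in $\cA$ (by the same denominator-clearing as above) whose intermediate cohomologies lie in $\cA\{A\}$, then modifying by splicing in short resolutions of those torsion objects and rescaling by a suitable $n$ invertible in $A$ until a genuine exact sequence in $\cA$ is obtained representing a multiple of the original class.

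The main obstacle is this last surjectivity step; it is essentially the Verdier-type comparison $D^b(\cA)/D^b_{\cA\{A\}}(\cA) \iso D^b(\cA \otimes A)$ for the Serre subcategory $\cA\{A\} \subset \cA$, and can alternatively be deduced from Miyachi's general localization theorem for derived categories of Serre subcategories once one observes that every object of $\cA\{A\}$ is annihilated by an element of $A^{\times}$. Once $\tilde F$ is an equivalence, the idempotent completeness of $D^b(\cA) \otimes A$ is immediate: $D^b(\cA \otimes A)$ is the bounded derived category of an abelian category, in which every idempotent splits (the image and kernel of an idempotent endomorphism in the heart providing the splitting in the derived category).
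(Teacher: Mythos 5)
Your overall plan is sound and your chain-level essential surjectivity argument (clearing denominators) coincides with the paper's Step~1. But the two proofs diverge substantially at the full-faithfulness stage, and this is where your proposal has a real gap.

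You reduce full faithfulness, via a $t$-structure d\'evissage, to surjectivity of $\Ext^i_\cA(X,Y)\otimes A \to \Ext^i_{\cA\otimes A}(X,Y)$, and you yourself flag this as ``the genuinely hard step.'' Your sketch of ``splicing in short resolutions of the torsion cohomologies and rescaling'' does not actually produce an exact sequence in $\cA$ representing a multiple of the given class: after clearing denominators, the boundary maps $N\to E_i$ and $E_1\to M$ need not be mono/epi in $\cA$, only mod $\cA\{A\}$, and trying to repair this by modifying the $E_j$ changes which Yoneda class one is presenting in a way that is not obviously controllable. Falling back on Miyachi's localization theorem for $D^b(\cA)/D^b_{\cA\{A\}}(\cA)\iso D^b(\cA/\cA\{A\})$ is not an innocuous citation either: that statement is essentially what the proposition is asserting (composed with the identification of Proposition~\ref{p1.1}), and in general it is \emph{not} an unconditional theorem for arbitrary Serre subcategories of an arbitrary abelian category --- one needs a hypothesis guaranteeing that acyclic complexes in $\cA/\cB$ lift to complexes in $\cA$ with $\cB$-cohomology that can be killed, which is precisely the point at issue.

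The paper sidesteps this entirely by working one level up. It first shows that $C^b(\cA)\otimes A\to C^b(\cA\otimes A)$ and then $K^b(\cA)\otimes A\to K^b(\cA\otimes A)$ are equivalences (denominator clearing again, plus the same trick for homotopies). Then it writes the morphism of Verdier-quotient short exact sequences
\[
\begin{CD}
A^b(\cA)\otimes A @>>> K^b(\cA)\otimes A @>>> D^b(\cA)\otimes A\\
@VVV @VVV @VVV\\
A^b(\cA\otimes A) @>>> K^b(\cA\otimes A) @>>> D^b(\cA\otimes A)
\end{CD}
\]
and observes that the right vertical is \emph{conservative} (an object of $D^b(\cA)$ that dies in $D^b(\cA\otimes A)$ has all cohomology in $\cA\{A\}$, hence lies in $D^b_{\cA\{A\}}(\cA)=D^b(\cA)\{A\}$ and is already zero in the quotient --- this is cheap). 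Conservativity on the right plus equivalence in the middle forces essential surjectivity, hence equivalence, on the left, and then the right is an equivalence by the universal property of Verdier quotients. This converts the difficult ``surjectivity on $\Ext$'' into the easy ``conservativity'' statement. You should internalize this: for Verdier quotients, it is often far cheaper to verify conservativity of the quotient functor than fullness directly.

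Finally, your justification of idempotent-completeness is incorrect as stated. An idempotent in $D^b(\cA\otimes A)$ need not lie in the heart, so ``take image and kernel in the heart'' does not split a general idempotent. The fact that $D^b$ of an abelian category is idempotent-complete is a genuine theorem (Balmer--Schlichting); the paper invokes it as a black box, and you should too.
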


\begin{proof} In 3 steps:

1) The natural functor $C^b(\cA)\otimes A\to C^b(\cA\otimes A)$ is an equivalence of
categories. Full faithfulness is clear. For essential surjectivity, take a bounded complex $C$
of objects of $\cA\otimes A$. Find a common denominator to all differentials involved in $C$.
Then the corresponding morphisms of $\cA$ have torsion composition; since they are finitely
many, we may multiply by a common bigger integer so that they compose to $0$. The resulting
complex of $C^b(\cA)$ then becomes isomorphic to $C$ in $C^b(\cA\otimes A)$.

2) The functor of 1) induces an equivalence of categories $K^b(\cA)\otimes A\iso
K^b(\cA\otimes A)$. Fullness is clear, and faithfulness is obtained by the same technique as
in a).

3) The functor of 2) induces the desired equivalence of categories. First, the functor
\[D^b(\cA)/D^b_{\cA\{A\}}(\cA)\to D^b(\cA/\cA\{A\})\]
is obviously conservative. But clearly $D^b_{\cA\{A\}}(\cA)=D^b(\cA)\{A\}$. Hence,
by Propositions \ref{p1.1} and \ref{p1.1ab}, this functor translates as
\[D^b(\cA)\otimes A\to D^b(\cA\otimes A).\]

Let $A^b(\cA)$ denote the thick subcategory of $K^b(\cA)$ consiting of acyclic complexes. By
Lemma \ref{lB.2.3} we have a commutative diagram of exact sequences of triangulated categories
\[\begin{CD}
0@>>> A^b(\cA)\otimes A@>>> K^b(\cA)\otimes A@>>> D^b(\cA)\otimes A@>>> 0\\
&&@VVV @VVV @VVV\\
0@>>> A^b(\cA\otimes A)@>>> K^b(\cA\otimes A)@>>> D^b(\cA\otimes A)@>>> 0.
\end{CD}\]

We have just seen that the right vertical functor is conservative, and by b), the middle one
is an equivalence. Hence the left one is essentially surjective, and the result follows. 
\end{proof}

\section{$1$-motives with torsion}\label{AppendixB}

Effective $1$-motives which admit torsion are introduced in \cite[\S
1]{BRS} (in characteristic $0$). We investigate some properties (over a perfect field of  exponential characteristics $p>1$) which are not included in op. cit. as a supplement  to our Sect. 1.

\subsection{Effective $1$-motives}
An \emph{effective $1$-motive with torsion} over $k$ is a complex of group 
schemes $M= [ L \by{u} G]$ where $L$ is finitely generated locally constant for the
{\'e}tale  topology \ie a discrete sheaf of Def.~\ref{d1.1.1}, and
$G$ is a  semi-abelian $k$-scheme. Therefore
$L$ can be represented by an extension 
$$0\to L_{\tor}\to L \to L_{\fr}\to 0$$ where $L_{\tor}$ is a finite \'etale $k$-group scheme and $L_{\fr}$ is free, \ie a lattice. Also $G$ can be represented 
by an extension of an abelian $k$-scheme $A$ by a $k$-torus $T$.

\begin{defn}\label{eff1mot}
 An {\it effective} map from $M = [ L \by{u} G]$ to $M'= [ L' \by{u'} 
G']$ is a commutative square
\[ \begin{CD}
 L @>u>>  G\\
@V{f}VV  @V{g}VV  \\
L'@>u'>> G'
\end{CD}\]
in the category of group schemes. Denote by $(f, g):M\to M'$ such a
map. The natural composition of squares makes  up a category, denoted by
${}^t\M^{\eff}$.  We will denote by $\Hom_{\eff}(M, M')$ the abelian group
of effective  morphisms.\index{${}^t\M$, ${}^t\M^\eff$}
\end{defn}

For a given $1$-motive $M = [L \by{u} G]$  we have (in the abelian 
category of commutative group schemes) a commutative diagram 
\begin{equation}\label{basic}
\begin{CD}
&&&&0&  &0 \\
&&&&@V{}VV @V{}VV  \\
0@>>> \ker (u)\cap L_{\tor}@>>>  L_{\tor}@>{u}>> u(L_{\tor})@>>> 0\\
&&@V{}VV  @V{}VV @V{}VV  \\
0@>>> \ker (u)@>>> L @>{u}>> G\\
&&&&@V{}VV  @V{}VV  \\
&&&&L_{\fr} @>{\bar u}>> G/u(L_{\tor})\\
&&&&@V{}VV  @V{}VV  \\
&&&&0&  &0 
\end{CD}
\end{equation}
with exact rows and columns. We set
\begin{itemize}
\item $M_{\fr}\df [L_{\fr}  \by{\bar u}  G/u(L_{\tor})]$
\item $M_{\tor}\df [\ker (u)\cap L_{\tor}\to 0]$
\item $M_{\tf}\df [L/\ker (u)\cap L_{\tor}  \by{u} G]$
\end{itemize}
considered as effective $1$-motives. From Diagram \eqref{basic}
there  are canonical effective maps $M \to M_{\tf}$, $M_{\tor}\to
M$ and
$M_{\tf} 
\to M_{\fr}$.
\index{$M_{\fr}$, $M_{\tor}$, $M_{\tf}$}
\begin{defn}\label{frtortf}
A $1$-motive $M = [L \by{u} G]$ is {\it free}\, if $L$ is free, \ie if 
$M = M_{\fr}$. $M$ is  {\it torsion}\, if $L$ is torsion and $G =0$, \ie 
if $M=M_{\tor}$, and 
{\it torsion-free}\, if $\ker (u)\cap L_{\tor} =0$, \ie if $M=M_{\tf}$. 

Denote by ${}^t\M^{\eff ,\fr}$, ${}^t\M^{\eff ,\tor}$ and 
${}^t\M^{\eff ,\tf}$, the full sub-categories of 
${}^t\M^{\eff}$ given by free, torsion and torsion-free $1$-motives
respectively.
\end{defn}

The category ${}^t\M^{\eff ,\fr}$ is nothing else than the category $\M$ of Deligne $1$-motives
and we shall henceforth use this notation. It is clear that ${}^t\M^{\eff,\tor}$  is equivalent
to the category of finite \'etale group schemes. If $M$ is torsion-free then Diagram \eqref{basic} is a
pull-back, \ie $L$  is the pull-back of $L_{\fr}$ along the isogeny $G\to G/L_{\tor}$.

\begin{propose} \label{lim} The categories ${}^t\M^{\eff}$ and $\M$ have all finite limits and
colimits. 
\end{propose}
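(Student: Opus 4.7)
The plan is to reduce the claim to the existence of finite biproducts, kernels, and cokernels in each category, since both are additive and any finite (co)limit in an additive category with (co)kernels exists (equalizers are kernels of differences, etc.). The zero object is $[0\to 0]$ and the biproduct $[L\to G]\oplus[L'\to G']=[L\oplus L'\to G\oplus G']$ manifestly lies in ${}^t\M^\eff$ and preserves the freeness of $L$, so only kernels and cokernels require work.

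For the kernel of a morphism $(f,g)\colon[L\xrightarrow{u}G]\to[L'\xrightarrow{u'}G']$, I would take $K_G=(\ker g)^0$, the identity component of $\ker g$ computed in commutative $k$-group schemes. Since $G$ is semi-abelian, the connected commutative group scheme $(\ker g)^0$ has no unipotent part (reduce to tori and abelian varieties via the extension presenting $G$), so it is again semi-abelian. Define $K_L=\ker f\cap u^{-1}(K_G)$, a sub-discrete-sheaf of $L$ (and a sublattice when $L$ is free, since sub-lattices of lattices are lattices). The arrow $[K_L\to K_G]\to[L\to G]$ composes to zero with $(f,g)$; its universality follows because any test arrow from $[L''\to G'']$ whose composite with $(f,g)$ vanishes must send the connected $G''$ into $K_G$, and then the induced map on $L''$ automatically lands in $K_L$.

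For the cokernel in ${}^t\M^\eff$, the image $g(G)\subseteq G'$ is a closed semi-abelian subgroup (the image of a morphism of semi-abelian varieties), so $\bar G:=G'/g(G)$ remains connected semi-abelian; $\bar L:=L'/f(L)$ remains discrete; and the induced $\bar u\colon\bar L\to\bar G$ defines the cokernel $[\bar L\to\bar G]$. For $\M$, this $\bar L$ may carry torsion $\bar L_{\tor}$, which we dispose of by a push-out: $\bar u(\bar L_{\tor})$ is a finite subgroup scheme of $\bar G$, so $\tilde G:=\bar G/\bar u(\bar L_{\tor})$ is still semi-abelian, and $[\bar L/\bar L_{\tor}\to\tilde G]$ has lattice first component and provides the cokernel in $\M$.

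The main obstacle will be the verification of the universal property for this last cokernel in $\M$: any morphism from $M'=[L'\to G']$ to an object $[L''\to G'']$ with $L''$ a lattice which kills $M$ factors through $[\bar L\to\bar G]$ in ${}^t\M^\eff$, and one must then check that the map $\bar L\to L''$ kills $\bar L_{\tor}$ (automatic since $L''$ is torsion-free) and that the induced map $\bar G\to G''$ kills $\bar u(\bar L_{\tor})$ (forced by commutativity of the square and torsion-freeness of $L''$). Together with the symmetric treatment of equalizers as kernels of $(f,g)-(f',g')$ and dually for coequalizers, this yields all finite limits and colimits in both ${}^t\M^\eff$ and $\M$.
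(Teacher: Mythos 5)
Your construction is essentially the same as the paper's: both take, for the kernel of $(f,g)$, the pair $\bigl(\ker f\cap u^{-1}(K_G),\,K_G\bigr)$ with $K_G$ the connected part of $\ker g$, and for the cokernel the naive quotient complex, passing to the free part in $\M$; the universal-property arguments (connectedness of $G''$ forces it into $K_G$; torsion-freeness of $L''$ kills the torsion in $\bar L$) also coincide.

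There is one slip worth fixing. You take $K_G=(\ker g)^0$ and justify it being semi-abelian by saying it ``has no unipotent part,'' but that is not the issue: in characteristic $p>0$ the kernel of a homomorphism of semi-abelian varieties can have non-reduced (infinitesimal) connected component, for instance $\ker F=\mu_p$ for the relative Frobenius on $\G_m$, and then $(\ker g)^0$ is connected with no unipotent part yet fails to be semi-abelian because it is not smooth. The paper avoids this by taking the \emph{reduced} connected component $(\ker g)^0_{\red}$ (legitimate over a perfect field), which is then a smooth connected subgroup of $G$, hence genuinely semi-abelian. Your universal-property argument is unaffected once this replacement is made, because the test object $G''$ is smooth, so its image in $\ker g$ automatically lands in the reduced subscheme. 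In characteristic $0$ the two notions coincide and your argument is fine as written.
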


\begin{proof} Since these are additive categories (with biproducts), it is
enough to show that they have kernels, dually cokernels. Now let
$\phi=(f,g):M\to M'$ be an effective map. We claim that 
$$\ker \phi = [\ker^0(f)  \by{u}  \ker^0(g)]$$
is the pull-back of $\ker^0(g)$ along $u$, where $ \ker^0(g)$ is the (reduced)
connected component of the identity of the kernel of $g : G\to G'$ and
$\ker^0(f) \subseteq
\ker(f)$. We have to show that the following diagram of effective
$1$-motives
\[
\begin{CD}
0&& 0 \\
@V{}VV @V{}VV  \\
\ker^0(f) @>{u}>>  \ker^0(g)\\
@V{}VV @V{}VV  \\
L @>{u}>> G\\
@V{f}VV  @V{g}VV  \\
L' @>{u'}>> G'\\
\end{CD}
\]
satisfies the universal property for kernels. Suppose that $M''= [L'' 
\by{u''}  G'']$ is mapping to $M$ in such a way that the composition 
$M'' \to M \to M'$ is the zero map. Then $L''$ maps to $\ker (f)$ and
$G''$ maps to $\ker (g)$. Since $G''$ is connected, it actually maps
to $\ker^0(g)$ and, by the universal property of pull-backs in the
category of group schemes, $L''$ then maps to $\ker^0(f)$. Finally note
that if $L$ is free then also $\ker^0(f)$ is free. 

For cokernels, we see that 
$$[\coker (f)  \by{\bar u'}  \coker(g)]$$
is an effective $1$-motive which is clearly a cokernel of $\phi$.

For $\M$, it is enough to take the free part of the
cokernel, \ie  given $(f, g) : M \to M'$ then $[\coker (f) \to \coker
(g)]_{\fr}$ meets the universal property
for coker of free $1$-motives.
\end{proof}

\subsection{Quasi-isomorphisms} (\cf \cite[\S 1]{BRS}). 

\begin{defn} An effective morphism of $1$-motives $M \to M'$, here $M = [L \by{u} G]$ and $M' = [L'
\by{u} G']$, is a  {\it quasi-isomorphism}\, (\qi for short) of $1$-motives if it yields a
pull-back diagram
\begin{equation}\label{qi1mot}
\begin{CD}
0&  &0 \\
@V{}VV @V{}VV  \\
  F @= F\\
@V{}VV @V{}VV  \\
 L @>{u}>>  G\\
@V{}VV @V{}VV  \\
L' @>{u'}>> G'\\
@V{}VV@V{}VV  \\
0&  &0 
\end{CD}
\end{equation}
where $F$ is a finite \'etale group.
\end{defn}

\begin{remarks}\label{qi=qi}
1) Note that kernel and cokernel of a quasi-iso\-mor\-phism of
$1$-motives are $0$ but, in general, a quasi-isomorphism is not an
isomorphism in
${}^t\M^{\eff}$. Hence the category ${}^t\M^{\eff}$ is not abelian.\\
2) A \qi of $1$-motives $M \to M'$ is actually a \qi of complexes of group
schemes. In fact, an effective map of $1$-motives $M \to M'$ is a \qi of
complexes if and only if we have the following diagram
\[ \begin{CD}
0&\to& \ker (u)@>>> L @>{u}>> G@>>> \coker (u)&\to& 0\\
&&\veq &&@V{}VV@V{}VV\veq  \\
0&\to & \ker (u')@>>> L' @>{u'}>> G'@>>> \coker (u')&\to& 0.\\
\end{CD}\]

Therefore $\ker$ and $\coker$ of $L\to L'$ and $G\to G'$ are equal. Then 
$\coker (G\to G')= 0$, since it is connected and discrete,
and $\ker (G\to G')$ is a finite group. Conversely, Diagram
\eqref{qi1mot} clearly yields a \qi of complexes. In particular, it
easily follows that the class of \qi of $1$-motives is closed under
composition of effective morphisms.
\end{remarks}

\begin{propose}\label{B1.1} Quasi-isomorphisms are simplifiable on the
left and on the right.
\end{propose}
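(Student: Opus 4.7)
My plan is to unpack the definition of a quasi-isomorphism into component-wise information on $L$ and $G$, and then reduce both cancellability properties to standard facts about epimorphisms and monomorphisms of discrete sheaves and of commutative group schemes.

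Let $s = (s_L, s_G) : M \to M'$ be a \qi, fitting in diagram \eqref{qi1mot}. The key structural observations I would extract are: (i) the maps $s_L : L \to L'$ and $s_G : G \to G'$ are epimorphisms (in discrete sheaves and in commutative $k$-group schemes respectively) with common finite étale kernel $F$; (ii) the commutativity of the top square $F = F \to L \to G$ in \eqref{qi1mot} says exactly that $u$ restricted to $F \hookrightarrow L$ is the given inclusion $F \hookrightarrow G$, so in particular $u$ is injective on the image of $F$ in $L$.

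For right cancellability (the ``epi'' property), I would take $h = (h_L, h_G) : M' \to M''$ with $h \circ s = 0$. Since $s_L$ and $s_G$ are epimorphisms in their respective abelian categories, the equalities $h_L \circ s_L = 0$ and $h_G \circ s_G = 0$ force $h_L = 0$ and $h_G = 0$, whence $h = 0$.

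For left cancellability (the ``mono'' property), I would take $h = (h_L, h_G) : M'' \to M$ with $s \circ h = 0$, where $M'' = [L'' \xrightarrow{u''} G'']$ is an effective $1$-motive, so in particular $G''$ is semi-abelian and hence \emph{connected}. From $s_L \circ h_L = 0$ and $s_G \circ h_G = 0$ we obtain that $h_L$ factors through $F \hookrightarrow L$ and $h_G$ factors through $F \hookrightarrow G$. The only group-scheme morphism from a connected scheme to the finite étale group $F$ is zero, so $h_G = 0$. Then the commutativity of the square defining $h$ gives $u \circ h_L = h_G \circ u'' = 0$; since $u$ is injective on the image of $F$ by observation (ii), we conclude $h_L = 0$, and hence $h = 0$.

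The main (mild) obstacle I anticipate is making precise the assertion that $s_L$ and $s_G$ are epimorphisms in the appropriate categories, and that the only map from a connected group scheme to a finite étale group is zero; both are routine but need the hypotheses that $L''$ is discrete and $G''$ is semi-abelian (hence geometrically connected). Everything else is a direct diagram chase in \eqref{qi1mot}. Note that the argument works verbatim in the subcategory $\M$ of Deligne $1$-motives, since we never introduce torsion in the lattice component of the source or target.
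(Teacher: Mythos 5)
Your proof is correct and takes essentially the same route as the paper's: right cancellability is immediate because both components of a quasi-isomorphism are epimorphisms, and for left cancellability you use the same two observations the paper does, namely that the kernel $F$ is finite \'etale so the connected semi-abelian component $G''$ maps trivially into it, and that $u$ restricts to the inclusion on $F$, which then kills the lattice component. The differences from the paper's argument are purely notational.
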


\begin{proof} The assertion ``on the right" is obvious since the two
components of a \qi are epimorphisms. For the left, let $\phi=(f,g): M\to
M'$ and  $\sigma=(s,t):M' \to\tilde M$ a \qi such that $\sigma\phi=0$.
In the diagram
\[\begin{CD}
 L  @>{u}>>  G\\
@V{f}VV  @V{g}VV  \\
L' @>{u'}>> G'\\
@V{s}VV   @V{s}VV  \\
 \tilde L @>{\tilde u}>> \tilde G
\end{CD}\]
we have $ \tilde L = L'/F$, $\tilde G = G'/F$, for some finite group $F$,
$\im (f)\subseteq F$ and $\im (g) \subseteq F$. Now $u'$ restricts to the
identity on $F$ thus  $\im (f)\subseteq \im (g)$  
and $\im (g) =0$, since $\im (g)$ is connected, hence $\phi=0$.  
\end{proof}

\begin{propose}\label{calfrac} The class of \qi admits a calculus of right
fractions in the sense of (the dual of) \cite[Ch. I, \S 2.3]{GZ}.
\end{propose}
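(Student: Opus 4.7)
The plan is to verify the three axioms for a calculus of right fractions as stated in (the dual of) \cite[Ch.~I, \S 2.3]{GZ}: (a) identities are quasi-isomorphisms and \qi's are closed under composition; (b) the right Ore condition; (c) right cancellation.

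Axiom (a) is immediate: identities trivially fit into diagram \eqref{qi1mot} with $F=0$, and closure under composition of \qi's was already noted in Remark \ref{qi=qi} 2) (the composition of two pull-back squares of the relevant form is again such a square, with the finite kernel being an extension of the two given ones). Axiom (c) is precisely Proposition \ref{B1.1}: if $s\phi=s\psi$ with $s$ a \qi, then simplifiability on the left gives $\phi=\psi$, so one may take $t=1$ in the right cancellation axiom.

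The main step is the right Ore condition. Given an effective morphism $f:M\to M'$ and a \qi $s:M''\to M'$ with $M=[L\by{u}G]$, $M'=[L'\by{u'}G']$, $M''=[L''\by{u''}G'']$, the data of $s$ is a diagram \eqref{qi1mot}, so both $L''\to L'$ and $G''\to G'$ are epimorphisms with the same finite kernel $F$. I will define $\tilde M\df M\times_{M'}M''$ componentwise, namely
\[
\tilde L \df L\times_{L'}L'',\qquad \tilde G\df G\times_{G'}G'',
\]
with structural map $\tilde u\df u\times_{u'}u''$. Since $L''\onto L'$ is epi with kernel $F$, the projection $\tilde L\to L$ is epi with kernel $F$, so $\tilde L$ is a discrete sheaf (extension of $L$ by $F$). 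Since $G''\to G'$ is an isogeny with kernel $F$, its base change $\tilde G\to G$ along $g:G\to G'$ is again an isogeny with kernel $F$, and $\tilde G$ is therefore semi-abelian. Thus $\tilde M\in {}^t\M^{\eff}$. By construction, the two projections give effective morphisms $\tilde s:\tilde M\to M$ and $\tilde f:\tilde M\to M''$, with $f\tilde s=s\tilde f$, and $\tilde s$ is a \qi since its kernel identifies with $[F=F]$.

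I expect axiom (a) and (c) to be entirely formal once Proposition \ref{B1.1} and Remark \ref{qi=qi} 2) are in place; the only place that requires a small verification is the pull-back construction for the Ore axiom, and the key point there is the (easy) fact that the base change of an isogeny of semi-abelian varieties along an arbitrary homomorphism of semi-abelian varieties remains an isogeny, so that $\tilde G$ stays in the correct category. No serious obstacle is expected.
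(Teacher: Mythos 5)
The verification of axioms (a) and (c) is fine and matches the paper's citation of Proposition~\ref{B1.1} and Remark~\ref{qi=qi}~2). The gap is in your treatment of the right Ore condition, and it lies precisely where you flagged ``no serious obstacle.'' The componentwise fibre product $\tilde G := G\times_{G'}G''$ is an extension $0\to F\to \tilde G\to G\to 0$ of the connected group $G$ by the finite \'etale group $F$, and such an extension need \emph{not} be connected, so $\tilde M$ need not lie in ${}^t\M^\eff$. Your claim that ``the base change of an isogeny of semi-abelian varieties along an arbitrary homomorphism of semi-abelian varieties remains an isogeny'' is false: take $G=G'=G''=\G_m$, let $G''\to G'$ be $x\mapsto x^n$ (kernel $\mu_n$), and let $g:G\to G'$ be the trivial homomorphism; then $G\times_{G'}G''\cong \G_m\times\mu_n$, which is disconnected. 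The kernel is still $F$, but the source is not semi-abelian.

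The fix is exactly the device used in the proof of Proposition~\ref{lim}: pass to the identity component. Set $\tilde G^0:=(G\times_{G'}G'')^0$; since $\tilde G\to G$ is surjective with finite kernel and $G$ is connected, $\tilde G^0\to G$ is still surjective, with kernel $F^0:=F\cap\tilde G^0$. Then replace $\tilde L$ by $\tilde L^0:=\tilde L\times_{\tilde G}\tilde G^0$, which is still discrete and still surjects onto $L$ with kernel $F^0$ (use the description $\tilde L=L\times_G\tilde G$ arising from the pull-back nature of the \qi $s$, so $\tilde L^0=L\times_G\tilde G^0$). The resulting $[\tilde L^0\to\tilde G^0]\to[L\to G]$ is a genuine \qi in ${}^t\M^\eff$, and the square commutes with $f$ and $s$ as before. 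The paper sidesteps writing this out by citing \cite[Lemma~1.2]{BRS} for the Ore condition, together with Proposition~\ref{B1.1} for cancellation.
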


\begin{proof} By \cite[Lemma 1.2]{BRS}, the first condition of calculus of right fractions
is verified, and Proposition \ref{B1.1} shows that the second one is
verified as well.  (Note that we here only consider isogenies with \'etale kernel.)
\end{proof}

\begin{remark} The example of the diagram
\[\begin{CD}
[L\to G]@>\sigma>> [L'\to G']\\
@V{(1,0)}VV \\
[L\to 0]
\end{CD}\]
where $\sigma$ is a nontrivial \qi shows that calculus of left fractions
fails in general.
\end{remark}

\begin{comment}
For left fractions, let
\[\begin{CD}
M@>(s,t)>> M'\\
@V{(f,g)}VV \\
M''
\end{CD}\]
be a diagram with $(s,t)$ a \qi, and let $F=\ker s=\ker t$. By Proposition
\ref{lim}, we may take the push-out of this diagram:
\[\begin{CD}
M@>(s,t)>> M'\\
@V{(f,g)}VV @V{}VV\\
M''@>(s',t')>> \hat M
\end{CD}\]

Then $s'$ and $t'$ are onto while $\ker s'=f(F)$, $\ker t'=g(F)$ and
moreover the map $f(F)\to g(F)$ is onto. If $E$ is its kernel, then
$(s',t')$ factors through
\[\bar M''\by{(\bar s',t')} \hat M\]
where $M'':=M/[E\to 0]$; and $(\bar s',t')$ is a \qi
\end{comment}

\begin{lemma}\label{fact} Let $s,t,u$ be three maps in ${}^t\M^{\eff}$, with
$su=t$. If $s$ and $t$ are $\qi$, then so is $u$.\qed
\end{lemma}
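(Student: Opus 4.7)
The plan is to reduce this lemma to the standard 2-out-of-3 property for quasi-isomorphisms of complexes in an abelian category.

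By Remarks \ref{qi=qi} 2), an effective morphism in ${}^t\M^{\eff}$ is a quasi-isomorphism in the sense of Diagram \eqref{qi1mot} precisely when the underlying morphism of two-term complexes of group schemes is a quasi-isomorphism of complexes. Viewing each $1$-motive $[L \to G]$ as a length-$1$ complex in the abelian category $\Shv_\et(Sm(k))$ of \'etale abelian sheaves on $Sm(k)$ (taking $L$ as a discrete sheaf and $G$ as $\uG$), we may regard $u$, $s$ and $t$ as morphisms of complexes in an honest abelian category, where cohomology is well defined.

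Now apply the cohomology functors $H^i$ ($i = 0, 1$) to the relation $t = s \circ u$: we obtain $H^i(t) = H^i(s) \circ H^i(u)$. Since $s$ and $t$ are quasi-isomorphisms, $H^i(s)$ and $H^i(t)$ are isomorphisms of sheaves, so $H^i(u) = H^i(s)^{-1} \circ H^i(t)$ is an isomorphism as well. Hence $u$ is a quasi-isomorphism of complexes, and therefore, by Remarks \ref{qi=qi} 2) again, a quasi-isomorphism in ${}^t\M^{\eff}$.

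No serious obstacle arises: the only mildly delicate point is the choice of an ambient abelian category in which to speak of cohomology, since commutative $k$-group schemes do not themselves form an abelian category; Remarks \ref{qi=qi} 2) handles this transparently by identifying the two notions of quasi-isomorphism via the embedding into sheaves.
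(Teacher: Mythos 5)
Your proof is correct and hinges on a clean reduction that is available in the paper, though the paper's own proof takes a more hands-on route.

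What you do differently: you read off from Remark \ref{qi=qi}~2) the full equivalence ``an effective map in ${}^t\M^{\eff}$ is a \qi\ of $1$-motives in the sense of Diagram~\eqref{qi1mot} if and only if it is a quasi-isomorphism of the underlying complexes of \'etale sheaves,'' and then invoke the standard two-out-of-three property for quasi-isomorphisms of complexes in an abelian category, applying $H^0$ and $H^1$ to $t = s\circ u$. The paper instead works by hand: it writes the six-term exact sequence $0 \to \ker u \to \ker t \to \ker s \to \coker u \to \coker t \to \coker s \to 0$, uses that $\coker s = \coker t = 0$ (since a \qi\ of $1$-motives is surjective in each component), observes that $\coker u$ is then a quotient of $\ker s = [F \by{=} F]$, hence has finite and connected group part which must vanish, and finally argues that $\coker u$ and $\ker u$ are acyclic, which recovers exactly the structure of Diagram~\eqref{qi1mot}. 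Your version is shorter and more conceptual; what it buys is that it outsources the structural verification to Remark \ref{qi=qi}~2). Its soundness therefore depends on reading the hard implication ``\qi\ of complexes $\Rightarrow$ pull-back diagram'' out of that remark: the remark does derive surjectivity of both components and equality of their (finite) kernels via $u$, from which the pull-back condition $L \cong L' \times_{G'} G$ follows, but the pull-back step itself is left implicit there. It would strengthen your write-up to note this explicitly, since that is precisely the content the paper's proof re-derives by hand instead of citing the remark. The paper's approach buys self-containedness and makes the recovery of Diagram~\eqref{qi1mot} concrete.
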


\begin{proof} Consider the exact sequence of complexes of sheaves
\[0\to\ker u\to\ker t\to \ker s\to \coker u\to\coker t\to\coker s\to 0.\]

Since $s$ and $t$ are $\qi$, the last two terms are $0$. Hence $\coker
(u) =[L\to G]$ is a quotient of $\ker (s)$; since $G$ is connected, we must
have $G=0$. On the other hand, as a cokernel of a map of acyclic
complexes of length $1$, $\coker (u)$ is acyclic, hence $L=0$. Similarly,
$\ker (u)$ is acyclic.
\end{proof}

\subsection{$1$-motives} We now
define the category of $1$-motives with torsion from ${}^t\M^{\eff}$
by formally inverting quasi-isomorphisms.

\begin{defn}\label{1tors} The category ${}^t\M$ of  \emph{$1$-motives with torsion} is the localisation of ${}^t\M^{\eff}$ with respect to the multiplicative class $\{\qi\}$ of
quasi-isomor\-phisms. \index{${}^t\M$, ${}^t\M^\eff$}
\end{defn}

\begin{remark}
Note that there are no nontrivial \qi between free (or torsion)
$1$-motives. However, the canonical map  $M_{\tf}  \to M_{\fr}$ is
a quasi-isomorphism (it is an effective isomorphism when
$u(L_{\tor})=0$). 
\end{remark}

It follows from Proposition \ref{calfrac} that the Hom sets in ${}^t\M$ are
given by the formula
$$\Hom (M, M') = \limdir{\rm q.i.} \Hom_{\eff}  (\tilde M, M')$$
where the limit is taken over the filtering set of all quasi-isomorphisms
$\tilde M\to M$. A morphism of $1$-motives $M\to M'$ can be represented as
a diagram 
$$ \begin{array}{ccc}
 M\ &  &\ M'\\
{\scriptstyle {\rm q.i.}}\nwarrow & &  \nearrow {\scriptstyle {\rm eff}}
\\ & \tilde M & 
\end{array}$$
and the composition is given by the existence of a $\hat M $
making the following diagram
$$ \begin{array}{ccccc}
 M\ &  &\ M'\ &&\ M''\\
{\scriptstyle {\rm q.i.}}\nwarrow & &  \nearrow \ \ 
\nwarrow & &  \nearrow {\scriptstyle {\rm eff}}\\
& \tilde M & & \tilde M'&\\
&{\scriptstyle {\rm q.i.}}\nwarrow & &  \nearrow {\scriptstyle {\rm
eff}}&\\ &  &\hat M & &
\end{array}$$
commutative. (This $\hat M$ is in fact unique, see \cite[Lemma 1.2]{BRS}.)

\subsection{Strict morphisms}
The notion of strict morphism is essential in order to show that the
$\Z[1/p]$-linear category of $1$-motives is abelian (\cf \cite[\S 1]{BRS}).

\begin{defn}  We say that an effective morphism $(f,g): M\to M'$ is
\emph{strict} if we have 
\[\ker (f, g)= [\ker (f)\to\ker (g)]\]
\ie if $\ker (g)$ is (connected) semiabelian.
\end{defn}

To get a feeling on the notion of strict morphism, note:

\begin{lemma} Let $\phi=(f,g):M\to N$ be a strict morphism, with $g$ onto.
Suppose that $\phi=\sigma\tilde\phi$, where $\sigma$ is a \qi Then
$\sigma$ is an isomorphism.
\end{lemma}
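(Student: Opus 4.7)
The plan is to unpack what a \qi $\sigma$ looks like concretely and exploit the connectedness hypothesis packed into strictness.

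Write $\tilde\phi = (\tilde f,\tilde g): M\to \tilde N$ with $\tilde N = [\tilde L\to \tilde G]$, and $\sigma=(\sigma_L,\sigma_G):\tilde N\to N$ with $N=[L_N\to G_N]$. Since $\sigma$ is a \qi, by diagram \eqref{qi1mot} both $\sigma_L$ and $\sigma_G$ are surjective with a common finite \'etale kernel $F$; our task reduces to proving $F=0$. I would split this into two steps.

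First, I would argue that $\tilde g:G\to \tilde G$ is surjective. The image $\tilde g(G)$ is a connected subgroup of $\tilde G$, and since $g=\sigma_G\tilde g$ is surjective we get $\tilde g(G)+F=\tilde G$. Because $\tilde G$ is connected and $F$ is finite, this forces $\tilde g(G)=\tilde G$.

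Second, I would use strictness. The factorization $g=\sigma_G\tilde g$ together with the surjectivity of $\tilde g$ yields a short exact sequence of group schemes
\[0\to \ker\tilde g\to \ker g\to F\to 0\]
(surjectivity of the right map comes from $\tilde g$ being onto: any $x\in F\subseteq \tilde G$ lifts to some $y\in G$, and then $g(y)=\sigma_G(x)=0$). By the strictness hypothesis, $\ker g$ is connected semiabelian, so any quotient of it is connected; since $F$ is finite \'etale, this forces $F=0$. Then $\sigma_G$ is an isomorphism, hence so is $\sigma_L$ (both have kernel $F$), and $\sigma$ is an isomorphism.

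The only step that requires any thought is the extraction of the short exact sequence and the observation that strictness supplies exactly the connectedness needed to kill $F$; the rest is a direct unwinding of the definition of a \qi and of the factorization $\phi=\sigma\tilde\phi$.
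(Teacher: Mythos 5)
Your proof is correct, and since the paper presents this lemma without proof (it is offered as an aid ``to get a feeling on the notion of strict morphism''), the expected verification is precisely the sort of definition-unwinding you give. Both steps are sound: surjectivity of $\tilde g$ follows because $\tilde g(G)$ is connected and $\tilde g(G)+F=\tilde G$ forces the finite connected quotient $\tilde G/\tilde g(G)$ to vanish; and the short exact sequence $0\to\ker\tilde g\to\ker g\to F\to 0$ exhibits $F$ as a quotient of the (connected, by strictness) group $\ker g$, hence $F=0$ since $F$ is finite \'etale. The conclusion that $\sigma$ is an isomorphism then follows because both components of the \qi are surjective with kernel $F=0$.
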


Conversely, we obtain: 

\begin{propose}[\protect{\cite[Prop. 1.3]{BRS}}]\label{pstrict}
Any effective morphism
$\phi \in \Hom_{\eff}(M,M')$ can be factored as follows
\begin{equation}\label{strict}
\begin{array}{c}
M  \longby{\phi} M'\\
\scriptstyle \tilde\phi\displaystyle
\searrow\hspace*{0.5cm} \nearrow\\
\tilde M
\end{array}
\end{equation}
where  $\tilde
\phi$ is a strict morphism and
$\tilde M \to M'$ is a \qi or a $p$-power isogeny.
\end{propose}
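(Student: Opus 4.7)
The plan is to construct $\tilde M$ by decomposing $g$ through a canonical intermediate semi-abelian variety, and then enlarging the lattice $L'$ to absorb the finite part of $\ker g$. Write $\phi=(f,g)$, so that $M=[L\xrightarrow{u}G]$ and $M'=[L'\xrightarrow{u'}G']$.

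First, we isolate the strict part of the group-component map. Setting $K:=\ker g$ and letting $K^0$ denote its reduced connected component of the identity (a sub-semi-abelian variety of $G$, since $G$ is smooth), the quotient $\tilde G:=G/K^0$ is semi-abelian. Thus $g$ factors canonically as
\[ G\twoheadrightarrow \tilde G\xrightarrow{\bar g} G', \]
in which the first arrow has connected semi-abelian kernel $K^0$ and the second has finite kernel $F:=K/K^0$.

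Second, we build the intermediate lattice $\tilde L$ as the fibre product $L'\times_{G'}\tilde G$ taken in group schemes, equipped with its second projection $\tilde u:\tilde L\to \tilde G$. The pullback structure yields a short exact sequence $0\to F\to \tilde L\to u'(L')\to 0$, and the effectivity condition $u'f=gu$ produces a natural map $L\to \tilde L$, $l\mapsto (f(l),\pi(u(l)))$, where $\pi:G\twoheadrightarrow \tilde G$. Setting $\tilde M:=[\tilde L\to \tilde G]$ gives the required factorization. Strictness of $M\to \tilde M$ is then immediate, since its group-component is $\pi$ with connected reduced kernel $K^0$. For $\tilde M\to M'$, the pullback construction of $\tilde L$ arranges for both lattice and group components to have the same finite kernel $F$; provided $F$ is \'etale (automatic in characteristic zero), this coincides with the pullback diagram \eqref{qi1mot} defining a \qi.

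The hardest step will be handling positive characteristic, in which $F$ may have an infinitesimal part $F^\circ$---a connected finite group scheme of $p$-power order. In this situation $\tilde L$ fails to be discrete, and we repair it by further quotienting by $F^\circ$, at the cost that $\tilde M\to M'$ becomes an isogeny whose kernel is the connected $p$-group scheme $F^\circ$ rather than a \qi. This is precisely the ``$p$-power isogeny'' alternative of the statement. A routine preliminary reduction, replacing $M'$ by the image sub-$1$-motive generated by $\phi(M)$, covers the case when $g$ is not surjective and reduces matters to the surjective setting where the pullback construction produces either the \qi or the $p$-power isogeny as above.
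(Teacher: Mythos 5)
Your construction coincides with the paper's sketch in its main lines: you factor $g$ through $\tilde G = G/\ker^0(g)$, which makes the group component of $M\to\tilde M$ a quotient by a connected semi-abelian subgroup, and you take $\tilde L = L'\times_{G'}\tilde G$, exactly as the paper suggests. For $g$ surjective in characteristic zero this is precisely the paper's argument and gives the desired \qi.

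However, there are two gaps. First, your positive-characteristic repair is not correctly specified. If only $\tilde L$ is quotiented by $F^\circ$ there is no induced map $\tilde L/F^\circ\to\tilde G$, since $F^\circ$ embeds nontrivially into $\tilde G$ (it is the second projection of $\tilde L\to\tilde G$ applied to the subgroup $F=\ker(\tilde L\to L')$). If instead both $\tilde L$ and $\tilde G$ are quotiented by $F^\circ$, then the kernel of $G\to\tilde G/F^\circ$ becomes an extension of $F^\circ$ by $\ker^0(g)$: this is connected but non-reduced (since $F^\circ$ is infinitesimal), hence not semi-abelian, and $M\to\tilde M$ ceases to be strict in the sense of the paper's definition. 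The intended $\tilde M$ has to keep $\tilde G = G/\ker^0(g)$ unchanged so that strictness is preserved; the $p$-power isogeny has to be built without destroying that. Second, the reduction to the surjective case by replacing $M'$ with $\operatorname{im}(\phi)$ does not work: once you have $M\to\tilde M\to\operatorname{im}(\phi)$ with the second arrow a \qi, composing with the inclusion $\operatorname{im}(\phi)\hookrightarrow M'$ gives a map $\tilde M\to M'$ that is not a \qi or isogeny unless $\operatorname{im}(\phi)=M'$ (both notions force the components of $\tilde M\to M'$ to be epimorphisms). What the paper actually does is the opposite enlargement: it extends the isogeny $\tilde G\twoheadrightarrow g(G)$ to an isogeny of all of $G'$, and this extension step is precisely where the proof relies on the argument from \cite{BRS}.
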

\begin{proof} {\it (Sketch)}\,
Note that if $\phi  = (f,g)$ we always have the following natural
factorisation of the map $g$ between semi-abelian schemes
$$\begin{array}{c}
G  \longby{g} G'\\
\searrow\hspace*{0.5cm} \nearrow\\
G/\ker^0(g)
\end{array}$$
If $g$ is a surjection we get the claimed factorisation
by taking $\tilde M = [ \tilde L \to G/\ker^0(g)]$ where $\tilde L$ is the
pull-back of $L'$, the lifting of $f$ is granted by the universal
property of pull-backs. In general, we can extend the so obtained isogeny  
on the image of $g$ to an isogeny of  $G'$ (see the proof of Prop.
1.3 in \cite{BRS} for details).
\end{proof}

\begin{lemma}\label{strictqi}
Let
\[\begin{CD}
M'@>f>> M\\
@A{u}AA @A{t}AA \\
N' @>h>> N 
\end{CD}\]
be a commutative diagram in ${}^t\M^{\eff}$, where $f$ is strict and $u,t$ are
\qi Then the induced map $v:\coker (h)\to \coker (f)$ is a \qi
\end{lemma}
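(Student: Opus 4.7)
The plan is to work inside the ambient abelian category $\anM^\eff$ and apply the snake lemma. Since $u$ and $t$ are \qi, Diagram \eqref{qi1mot} provides short exact sequences
\[
0\to[F'=F']\to N'\by{u}M'\to 0,\qquad 0\to[F=F]\to N\by{t}M\to 0
\]
in $\anM^\eff$, with $F',F$ finite \'etale groups. Commutativity $fu=th$ implies that $h$ carries $\ker u$ into $\ker t$ and hence induces a morphism of two-term complexes $[F'=F']\to[F=F]$; since both source and target have the identity as differential, this morphism must be of the form $[\bar h=\bar h]$ for a single morphism $\bar h:F'\to F$. We thereby obtain a short exact sequence of arrows in $\anM^\eff$:
\[
0\to\bigl([F'=F']\to[F=F]\bigr)\to(N'\by{h}N)\to(M'\by{f}M)\to 0.
\]

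Applying the snake lemma to this short exact sequence yields the exact sequence
\[
0\to[\ker\bar h=\ker\bar h]\to\ker h\to\ker f\by{\delta}[\coker\bar h=\coker\bar h]\to\coker h\by{v}\coker f\to 0
\]
in $\anM^\eff$. The crucial step, and the only place where the strictness hypothesis is used, is to show that the connecting morphism $\delta$ vanishes. By definition of strict, $\ker f=[\ker f_L\to\ker f_G]$ with $\ker f_G$ a connected semi-abelian variety; the degree-one component of $\delta$ is therefore a morphism from the connected group $\ker f_G$ to the finite group $\coker\bar h$, hence is zero. Compatibility of $\delta$ with the identity differential on its target then forces its degree-zero component to vanish as well, so $\delta=0$.

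With $\delta=0$ secured, the long exact sequence splits into the short exact sequence
\[
0\to[\coker\bar h=\coker\bar h]\to\coker h\by{v}\coker f\to 0,
\]
showing that $v$ is surjective with $\ker v$ the constant complex on the finite \'etale group $\coker\bar h$. Comparing with Diagram \eqref{qi1mot}, this is exactly the assertion that $v$ is a \qi. The main obstacle is the vanishing of $\delta$; the strictness assumption on $f$ is introduced precisely to rule out a nonzero connecting morphism from $\ker f$ to a finite constant complex, after which the conclusion is purely formal.
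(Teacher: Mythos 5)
Your proof is correct, and it follows the same overall strategy as the paper's: a snake-lemma argument in the abelian ambient category $\anM^\eff$, with strictness of $f$ (i.e.\ connectedness of $\ker f_G$) killing the obstruction. The execution of the final step is somewhat cleaner than what the paper does. The paper extends $f$ and $h$ to four-term exact rows $0\to K'\to N'\to N\to\coker h\to 0$ and $0\to K\to M'\to M\to\coker f\to 0$ with vertical maps $w,u,t,v$, and argues that the $G$-component of $w:\ker h\to\ker f$ is surjective ``because $f$ is strict and by dimension reasons''; once $w_G$ is onto, a diagram chase gives termwise exactness of $\ker u\to\ker t\to\ker v$ and hence the \qi You instead apply the snake lemma once to the short exact sequence of arrows coming from the two \qi columns, obtaining the connecting morphism $\delta:\ker f\to\coker\bar h$; its degree-one component is a homomorphism from the connected group scheme $\ker f_G$ to a finite \'etale group, hence zero, and compatibility with the identity differential on $\coker\bar h$ kills the degree-zero component. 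The two observations are formally equivalent (``$w_G$ onto'' and ``$\delta_G=0$'' occur in the same long exact sequence), but your way of seeing the vanishing is more immediate, replacing the paper's dimension count by the trivial fact that there are no nonzero homomorphisms from a connected group to a finite \'etale one.
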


\begin{proof} In all this proof, the term ``kernel" is taken in the
sense of kernel of complexes of sheaves. Let $K$ and
$K'$ be the kernels of $f$ and
$h$ respectively:
\[\begin{CD}
0@>>> K@>>>M'@>f>> M@>>> \coker (f)\\
&&@A{w}AA @A{u}AA @A{t}AA @A{v}AA \\
0@>>> K'@>>> N' @>h>> N@>>> \coker (h) 
\end{CD}\]

By a diagram chase, we see that $v$ is onto and that $\ker t\to \ker v$
is surjective. To conclude, it will be sufficient to show that the
sequence of complexes 
\begin{equation}\label{eq1}
\ker (u)\to \ker (t)\to\ker (v)
\end{equation}
is exact termwise. For this, note that the second component of $w$ is onto
because $f$ is strict and by dimension reasons. This implies by a diagram
chase that the second component of \eqref{eq1} is exact. But then the
first component has to be exact too.
\end{proof}

\subsection{Exact sequences of $1$-motives}
We have the following basic properties of $1$-motives. 

\begin{propose} \label{faith} The canonical functor $${}^t\M^{\eff}\to {}^t\M$$
is left exact and faithful. 
\end{propose}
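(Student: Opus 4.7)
The plan is to handle faithfulness first, and then deduce left exactness using both faithfulness and the explicit construction of kernels in ${}^t\M^{\eff}$ from Proposition \ref{lim}.

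For faithfulness, by additivity it suffices to show that if $\phi\in\Hom_{\eff}(M,M')$ becomes $0$ in ${}^t\M$, then $\phi=0$ already in ${}^t\M^{\eff}$. By the calculus of right fractions (Proposition \ref{calfrac}), the vanishing of $\phi$ in the localisation means that there exists a \qi $\sigma:\tilde M\to M$ with $\phi\circ\sigma=0$ in ${}^t\M^{\eff}$. Then Proposition \ref{B1.1}, which states that quasi-isomorphisms are right simplifiable (indeed, $\sigma$ is epimorphic on both components), forces $\phi=0$.

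For left exactness, since ${}^t\M^{\eff}$ and ${}^t\M$ are additive and both have finite limits (for ${}^t\M^{\eff}$ this is Proposition \ref{lim}; for ${}^t\M$ this will follow a posteriori), it suffices to show that any kernel of an effective map is preserved. So let $\phi=(f,g):M\to M'$ be effective with kernel $k:K=[\ker^0(f)\to\ker^0(g)]\to M$ as described in the proof of Proposition \ref{lim}. I would check the universal property in ${}^t\M$ as follows. Consider $N\in{}^t\M^{\eff}$ and a morphism $\psi:N\to M$ in ${}^t\M$ with $\phi\psi=0$. Represent $\psi=h\tau^{-1}$ where $\tau:\tilde N\to N$ is a \qi and $h:\tilde N\to M$ is effective. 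Since $\phi h\tau^{-1}=0$ in ${}^t\M$, faithfulness applied to $\phi h$ gives... wait, this only shows $\phi h=0$ after further localising; more precisely, calculus of right fractions yields a \qi $\rho:N'\to\tilde N$ with $\phi h\rho=0$ effectively. By the universal property of $k$ in ${}^t\M^{\eff}$ (Proposition \ref{lim}), $h\rho$ factors uniquely as $k\circ\tilde h$ for a unique effective $\tilde h:N'\to K$. Setting $\psi':=\tilde h\circ(\tau\rho)^{-1}$ (legitimate because $\tau\rho$ is a \qi), a routine verification in the localisation gives $k\psi'=\psi$.

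Uniqueness of $\psi'$ — equivalently, that $k$ is a monomorphism in ${}^t\M$ — is the main technical point. The argument is again via calculus of right fractions: if $k\psi'=0$ in ${}^t\M$, write $\psi'=\tilde h(\tau\rho)^{-1}$ as above; then $k\tilde h=0$ after composing with a suitable \qi on the right, so by faithfulness (already proved) $k\tilde h=0$ in ${}^t\M^{\eff}$, whence $\tilde h=0$ since $k$ is a monomorphism in ${}^t\M^{\eff}$ (being a kernel), and hence $\psi'=0$ in ${}^t\M$. The hard point is marshalling calculus of fractions carefully so that the chosen representatives of $\psi'$ are compatible; Lemma \ref{fact} (composition of \qi behaves well) and Proposition \ref{B1.1} are the two bookkeeping tools that make this work.
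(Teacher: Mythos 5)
Your proof is correct, and it reaches both conclusions through arguments that work; but for left exactness it takes a genuinely different route from the paper. The paper's proof is a one-liner: faithfulness from Proposition~\ref{B1.1}, and left exactness from the calculus of right fractions (Proposition~\ref{calfrac}) together with the dual of the general result \cite[Ch.~I, Prop.~3.1]{GZ}, which says that when $\Sigma$ admits a calculus of right fractions the localisation functor preserves finite limits. You instead verify the universal property of the kernel by hand inside the localisation, using faithfulness (already established) as the engine. This buys self-containment at the cost of some bookkeeping. One small inefficiency in your version: after writing $\psi=h\tau^{-1}$ and noting $\phi\psi=0$, you do not actually need the extra quasi-isomorphism $\rho$; since $\tau$ is invertible in ${}^t\M$, the equation $\phi h\tau^{-1}=0$ already gives $\phi h=0$ in ${}^t\M$, and faithfulness then forces $\phi h=0$ effectively, so you can factor $h$ through $k$ directly. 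Your subsequent uniqueness argument (that $k$ remains a monomorphism in ${}^t\M$, again via faithfulness and the fact that $k$ is mono in ${}^t\M^{\eff}$) is exactly the right move. Your observation that in additive categories preservation of kernels (plus the automatic preservation of biproducts) already yields left exactness correctly dispels the apparent circularity of ``${}^t\M$ has finite limits a posteriori''.
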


\begin{proof} Faithfulness immediately follows from Proposition
\ref{B1.1}, while left exactness follows from Proposition
\ref{calfrac} and (the dual of) \cite[Ch. I, Prop. 3.1]{GZ}.
\end{proof}

\begin{lemma}\label{coker} Let $f:M'\to M$ be an effective map. 
\begin{enumerate}
\item The canonical projection $\pi : M \to \coker (f)$ remains an
epimorphism  in ${}^t\M[1/p]$.
\item If $f$ is strict then $\pi$  remains a cokernel in ${}^t\M[1/p]$.
\item  Cokernels exist in ${}^t\M[1/p]$. 
\end{enumerate}
\end{lemma}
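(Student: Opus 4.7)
The plan is to deduce the three assertions from the calculus of right fractions (Proposition \ref{calfrac}), together with Lemma \ref{strictqi} and the strict factorisation of Proposition \ref{pstrict}. The overarching idea is that an effective cokernel in ${}^t\M^{\eff}$ survives localisation once one knows how to pull back effective diagrams across quasi-isomorphisms and how to replace an arbitrary effective map by a strict one up to a q.i.\ or a $p$-power isogeny.

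For (1), I take a morphism $g : \coker(f) \to N$ in ${}^t\M[1/p]$ satisfying $g\pi = 0$, represent it by a right fraction $g = g' \circ q^{-1}$ with $q : \tilde C \to \coker(f)$ a q.i., and pull $\pi$ back across $q$ to an effective square
\[
\begin{CD}
\tilde M @>\tilde\pi>> \tilde C\\
@V\tau VV @VqVV\\
M @>\pi>> \coker(f)
\end{CD}
\]
with $\tau$ a q.i. The hypothesis then forces $g' \tilde\pi = 0$ in ${}^t\M[1/p]$; absorbing a further q.i.\ and a power of $p$ (both invertible in ${}^t\M[1/p]$) reduces this to $g' \tilde\pi = 0$ effectively. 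Since $\pi$ is componentwise surjective, so is its pullback $\tilde\pi$ (on the lattice component directly, and on the semiabelian component after passing to the connected component of the identity as in Proposition \ref{lim}), whence $g' = 0$ and $g = 0$.

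For (2), with $f$ strict and $g : M \to N$ in ${}^t\M[1/p]$ satisfying $gf = 0$, write $g = \bar g \circ t^{-1}$ with $t : \tilde M \to M$ a q.i.\ and pull $f$ back through $t$ to an effective commutative square
\[
\begin{CD}
\tilde M' @>h>> \tilde M\\
@VuVV @VtVV\\
M' @>f>> M
\end{CD}
\]
with $u$ a q.i. The same bookkeeping upgrades $\bar g h = 0$ to an effective identity, so the cokernel in ${}^t\M^{\eff}$ (Proposition \ref{lim}) produces an effective factorisation $\bar g = h' \circ \pi_h$ through $\coker(h)$. The crucial input is now Lemma \ref{strictqi}, which applies verbatim to the square above ($f$ strict, $u,t$ q.i.'s) and yields a q.i.\ $\coker(h) \to \coker(f)$; inverting this q.i.\ in ${}^t\M[1/p]$ gives the sought factorisation $\coker(f) \to N$ of $g$. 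Uniqueness follows from (1).

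For (3), every morphism in ${}^t\M[1/p]$ is represented by an effective map post-composed with the inverse of a q.i., so cokernels of general morphisms reduce to cokernels of effective ones. For effective $f$, Proposition \ref{pstrict} supplies a factorisation $f = \sigma \tilde f$ with $\tilde f$ strict and $\sigma$ either a q.i.\ or a $p$-power isogeny; both become isomorphisms in ${}^t\M[1/p]$, so $\coker(f) = \coker(\tilde f)$ therein and (2) applied to $\tilde f$ delivers the cokernel. The main obstacle I anticipate is the passage from identities in ${}^t\M[1/p]$ to effective identities in the two pullback arguments: one must check, using the explicit roof description of morphisms in the localisation together with the vanishing of $p$-torsion after tensoring with $\Z[1/p]$, that a relation holding ``up to a q.i.\ and up to $p$-torsion'' can indeed be upgraded to an honest effective equality after one further composition with a q.i.
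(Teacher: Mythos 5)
Your arguments for (2) and (3) essentially retrace the paper's: calculus of right fractions transports the vanishing relation across a quasi-isomorphism, the effective cokernel and Lemma \ref{strictqi} compare $\coker(h)$ with $\coker(f)$, and Proposition \ref{pstrict} reduces (3) to (2) since quasi-isomorphisms and $p$-power isogenies become invertible in ${}^t\M[1/p]$. Part (1) is where you diverge from the paper: there one reduces to $f$ a monomorphism of complexes and shows that any pulled-back $\pi'$ factors as the effective projection onto $\coker(f')$ followed by a map which Lemmas \ref{strictqi} and \ref{fact} force to be a quasi-isomorphism, so $\pi'$ is a composite of two effective epimorphisms. You instead pull $\pi$ back along the qi $q$ and aim to show the resulting $\tilde\pi$ is componentwise surjective; this is a shorter route and it does succeed, but the parenthetical justification you give is off.

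You say the semi-abelian component of $\tilde\pi$ is surjective ``after passing to the connected component of the identity as in Proposition \ref{lim}'' while the lattice component is surjective ``directly''; these two claims are in tension. If one really did pass from the naive fibre product to the identity component of its group part, the lattice of the resulting object of ${}^t\M^{\eff}$ would be the preimage of that identity component inside $L\times_{L_{\coker(f)}}L_{\tilde C}$, and its surjection onto $L_{\tilde C}$ would need a separate argument. The correct observation is that no truncation is needed: the kernel of $\pi_G\colon G\to\coker(f_G)$ equals the image $f_G(G')$ of the connected group $G'$, hence is connected, so the group part $G\times_{\coker(f_G)}G_{\tilde C}$ of the naive fibre product is an extension of the connected $G_{\tilde C}$ by a connected kernel and is therefore itself connected and semi-abelian (semi-abelian $k$-groups being closed under extension). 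Thus the naive fibre product already lies in ${}^t\M^{\eff}$, both components of $\tilde\pi$ are surjective as pullbacks of surjections, and $p^k g'\tilde\pi=0$ effectively for some power of $p$ (by faithfulness of ${}^t\M^{\eff}\to{}^t\M$, Proposition \ref{faith}) gives $p^k g'=0$ and hence $g=0$ in ${}^t\M[1/p]$. With this repair, your (1) is valid.
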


\begin{proof} To show Part 1 let $\pi: M \to  N$ be an effective
map.  One sees immediately that $\pi$ is epi in ${}^t\M[1/p]$ if and only if for
any commutative diagram
\[\begin{CD}
M@>\pi>> N\\
@A{s'}AA@A{s}AA \\
Q'@>\pi'>> Q  
\end{CD}\]
with $s,s'$ \qi, the map $\pi'$ is an epi in the effective category. Now
specialise to the case $N = \coker (f)$ and remark that (up to modding
out by $\ker f$) we may assume $f$ to be a monomorphism as a map of
complexes, thus strict. Take $\pi',s,s'$ as above. We have a commutative
diagram of effective maps
\[\begin{CD}
0@>>> M'@>f>> M&@>\pi>> &\coker (f)\\
&&&&&&&\scriptstyle t\displaystyle\nearrow\\
&& @A{s''}AA @A{s'}AA\coker(f')&&@A{s}AA \\
&&&&&\displaystyle\nearrow&&\scriptstyle u\displaystyle\searrow\\
&& Q'' @>f'>> Q'&@>\pi'>> &Q.
\end{CD}\]

\begin{sloppypar}
Here $s''$ is a \qi and $Q'',f',s''$ are obtained by calculus of right
fractions (Proposition \ref{calfrac}). By Lemma \ref{strictqi}, the
induced map  $t:\coker (f') \to \coker (f)$ is a \qi By Proposition
\ref{B1.1}, $\pi'f'=0$, hence the existence of $u$. By Lemma \ref{fact},
$u$ is a \qi. Hence $\pi'$ is a composition of two epimorphisms and
Part 1 is proven.
\end{sloppypar}

To show Part 2, let $gt^{-1}:M
\to M''$ be such that the composition  $M' \to M''$ is zero. By
calculus of right fractions we have a commutative diagram
\[\begin{CD}
M'@>f>> M@>\pi>> \coker (f)\\
@A{u}AA @A{t}AA @A{v}AA\\
N''' @>h>> N'' @>>> \coker (h)\\ 
&&&\scriptstyle{g}\searrow& @V{}VV\\ 
&&&&M''
\end{CD}\]
where all maps are effective and $u$ is a \qi. As above
we have $gh=0$, hence the factorisation of $g$ through $\coker (h)$.
Moreover $\coker (h)$ maps canonically to $\coker (f)$ via a map $v$ (say),
which is a
\qi by Lemma \ref{strictqi}. This shows that $gt^{-1}$ factors
through $\coker (f)$ in ${}^t\M[1/p]$.
Uniqueness of the factorisation is then granted by Part 1.

In a category, the existence of cokernels is
invariant by left or right composition by isomorphisms, hence Part 3 is a
consequence of Parts 1 and 2 via Proposition \ref{pstrict}.
\end{proof}

Now we can show the following key result (\cf \cite[Prop.~1.3]{BRS}).

\begin{thm} \label{1mtora}
The category ${}^t\M[1/p]$ is abelian.
\end{thm}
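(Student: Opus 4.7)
The plan is to verify the three axioms of an abelian category in sequence: existence of kernels, existence of cokernels, and the coincidence of coimage and image for every morphism. The first two are essentially in hand: cokernels in ${}^t\M[1/p]$ exist by Lemma \ref{coker}(3), and for kernels, Proposition \ref{lim} constructs them in ${}^t\M^{\eff}$ while the left exactness of the localisation functor (Proposition \ref{faith}) ensures they remain kernels in ${}^t\M[1/p]$. For a strict morphism $\phi = (f,g)$, the construction of Proposition \ref{lim} simplifies to $\ker\phi = [\ker f \to \ker g]$, since $\ker g$ is already semi-abelian by strictness.

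The main idea for the remaining axiom is to reduce the problem to strict effective morphisms, and here is the precise point where inverting $p$ is used. Any morphism in ${}^t\M[1/p]$ can be written as $\phi\circ s^{-1}$ with $\phi$ effective and $s$ a \qi. By Proposition \ref{pstrict}, $\phi$ factors as $\pi\circ\tilde\phi$ with $\tilde\phi$ strict and $\pi$ either a \qi or a $p$-power isogeny; both become isomorphisms after inverting $p$. Hence, up to pre- and post-composition with isomorphisms (which preserves kernels, cokernels, image and coimage), every morphism in ${}^t\M[1/p]$ coincides with a strict effective morphism, and it suffices to treat that case.

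For a strict $\tilde\phi=(f,g):[L'\to G']\to [L\to G]$, I would compute directly: $\ker\tilde\phi = [\ker f \to \ker g]$ (as above), $\coker\tilde\phi = [\coker f \to \coker g]$ (by Lemma \ref{coker}(2), this effective cokernel really is the cokernel in ${}^t\M[1/p]$), $\mathrm{coim}\,\tilde\phi = [L'/\ker f \to G'/\ker g]$ and $\mathrm{im}\,\tilde\phi = [f(L') \to g(G')]$. The canonical comparison $\mathrm{coim}\,\tilde\phi \to \mathrm{im}\,\tilde\phi$ is then induced by the tautological algebraic isomorphisms $L'/\ker f \iso f(L')$ and $G'/\ker g \iso g(G')$, so it is already an isomorphism in ${}^t\M^{\eff}$, hence a fortiori in ${}^t\M[1/p]$.

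The main obstacle is the image computation: one must verify that $[f(L')\to g(G')]$ genuinely represents the kernel in ${}^t\M[1/p]$ of the projection $M\to \coker\tilde\phi$, rather than merely in ${}^t\M^{\eff}$. This requires combining Lemma \ref{coker}(2) (so that $\coker\tilde\phi$ is correctly described after localisation) with the left exactness of Proposition \ref{faith}, and the observation that strictness makes $g(G')=G'/\ker g$ semi-abelian, so that $[f(L')\to g(G')]$ is a bona fide object of ${}^t\M^{\eff}$ on which the kernel description is well-behaved. Once this bookkeeping is carried out, the three items above fit together to give the abelian structure.
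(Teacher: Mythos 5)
Your proposal is correct and follows essentially the same route as the paper: kernels via Propositions \ref{lim} and \ref{faith}, cokernels via Lemma \ref{coker}, then reduction to strict effective maps (implicit in the paper, which merely asserts ``we are then left to show,'' but justified exactly as you do via Proposition \ref{pstrict} and the fact that \qi's and $p$-power isogenies become invertible after localisation and inverting $p$), and finally a direct verification that $\mathrm{coim}\,\tilde\phi\to\mathrm{im}\,\tilde\phi$ is already an isomorphism at the effective level. The only cosmetic difference is that the paper phrases the final step as splitting $\tilde\phi$ into two short exact sequences of complexes, whereas you compute $\ker$, $\coker$, $\mathrm{im}$, $\mathrm{coim}$ explicitly and observe the comparison is induced by the tautological sheaf isomorphisms $L'/\ker f\iso f(L')$ and $G'/\ker g\iso g(G')$.
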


\begin{proof} Existence and description of kernels follows
from Propositions \ref{lim}, \ref{calfrac}, \ref{faith} and (the dual of)
\cite[Ch. I, Cor. 3.2]{GZ}, while existence of cokernels has been proven
in Lemma \ref{coker}.
We are then left to show that, for any (effective) strict map $\phi: M\to
M'$, the canonical effective morphism  from the coimage of $\phi$ to the
image of $\phi$ is a \qi of $1$-motives, \ie the canonical
morphism
\begin{equation}\label{coimage}
\coker (\ker \phi \to M)\to \ker (M'\to \coker \phi) 
\end{equation}
is a quasi-isomorphism. Since we can split $\phi$ in two short
exact sequences of complexes in which each term is an effective
$1$-motive we see that \eqref{coimage} is even a isomorphism in ${}^t\M^{\eff}[1/p]$.

\end{proof}
\begin{remark} Note that (even in characteristic zero) for a given non-strict effective map $(f,g):
M\to M'$ the effective morphism \eqref{coimage} is not a \qi of
$1$-motives. In fact, the following diagram
\[
\begin{CD}
0&  &0 \\
@V{}VV @V{}VV  \\
  \ker (f)/\ker^0(f) & {\subseteq} & \ker (g)/\ker^0(g)\\
@V{}VV @V{}VV  \\
 L/\ker^0(f) @>>> G/\ker^0(g)\\
@V{}VV @V{}VV  \\
\im (f) @>>> \im (g)\\
@V{}VV @V{}VV  \\
0&  &0 
\end{CD}
\]
is not a pull-back, in general. For example, let $g:G\to G'$ be with finite
kernel and a proper sub-group $F\subsetneq\ker (g)$, and consider 
$$ (0,g): [F\to G] \to [0\to G'].$$
\end{remark} 

\begin{cor} \label{corexseq} A short exact sequence of $1$-motives in ${}^t\M[1/p]$ 
\begin{equation} \label{exseq}
0\to M'\to M \to M''\to 0
\end{equation}
can be represented up to isomorphisms by a strict effective epimorphism
$(f,g) : M\to M''$ with kernel $M'$, \ie by an exact sequence of complexes.
\end{cor}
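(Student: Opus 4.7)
The plan is to start with the given short exact sequence $0 \to M' \to M \to M'' \to 0$ in ${}^t\M[1/p]$ and successively modify the representatives $M$ and $M''$ (within their isomorphism classes in ${}^t\M[1/p]$) until the epimorphism $\beta:M\to M''$ is realized by an effective map that is termwise surjective on the underlying complexes. First I would invoke the calculus of right fractions (Proposition \ref{calfrac}), together with the fact that $p$-power isogenies become invertible in ${}^t\M[1/p]$, to find a representative $\tilde M \to M$ (q.i.\ or $p$-power isogeny, hence iso in ${}^t\M[1/p]$) such that the composite $\tilde M \to M''$ is an effective morphism. Replacing $M$ by $\tilde M$, I may assume $\beta = (f,g)$ is effective. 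Applying Proposition \ref{pstrict} to this effective map yields a factorisation $\beta = \sigma\circ \tilde\phi$, with $\tilde\phi:M \to \tilde M''$ strict and $\sigma:\tilde M''\to M''$ either a q.i.\ or a $p$-power isogeny; in either case $\sigma$ is invertible in ${}^t\M[1/p]$, so I may replace $M''$ by $\tilde M''$ and assume henceforth that $\beta = (f,g):M\to M''$ is \emph{strict effective}.

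Next I would address surjectivity. Since $\beta$ is an epimorphism in ${}^t\M[1/p]$ and is strict, Lemma \ref{coker}(2) identifies the effective cokernel $[\coker f \to \coker g]$ with the categorical cokernel in ${}^t\M[1/p]$, which must therefore vanish. Unpacking this vanishing: a $1$-motive with torsion $[L_1\to G_1]$ is zero in ${}^t\M[1/p]$ iff some $p$-power integer $n$ annihilates its identity, which forces $G_1 = 0$ and $L_1$ to be a finite group of $p$-power order. Hence $\coker g = 0$ (so $g$ is surjective) and $\coker f$ is finite of $p$-power order.

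To repair surjectivity of $f$, I would introduce $\tilde M'' := [f(L)\to G'']$ with the obvious effective inclusion $\iota:\tilde M''\hookrightarrow M''$. Choosing $n = p^N$ annihilating $\coker f$, multiplication by $n$ on $M''$ factors through $\iota$ via an effective map $\pi:M''\to \tilde M''$, and the two composites $\iota\pi$, $\pi\iota$ are both multiplication by $n$; thus $\iota$ is a $p$-power isogeny and hence an isomorphism in ${}^t\M[1/p]$. Replacing $M''$ by $\tilde M''$, the morphism $\beta$ now factors as $(f,g):M \to \tilde M''$ with both $f$ and $g$ surjective, strictness being preserved since $\ker g$ is unchanged.

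Finally, since $(f,g)$ is strict, Proposition \ref{lim} gives the effective kernel as $[\ker f \to \ker g]$ (for strict $\phi$, the inclusion $\ker f \subseteq u^{-1}(\ker g)$ is an equality, so the pullback in the proof of \ref{lim} coincides with $\ker f$). This is a genuine $1$-motive with torsion, and by left-exactness of ${}^t\M^{\eff}\to {}^t\M$ (Proposition \ref{faith}) it represents the categorical kernel $M'$ in ${}^t\M[1/p]$. Thus
\[
0 \to [\ker f\to \ker g] \to [L\to G] \to [f(L)\to G''] \to 0
\]
is a termwise exact sequence of complexes of group schemes, representing the original short exact sequence. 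The main obstacle is the passage from ``strict effective with $g$ surjective and $\coker f$ $p$-power torsion'' to ``$f$ surjective'': this is precisely where inverting $p$ is essential, via the $p$-power isogeny trick in the third paragraph; without it the statement would fail.
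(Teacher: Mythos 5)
Your proof is correct, and it supplies the argument the paper leaves implicit: Corollary \ref{corexseq} is stated without proof as a consequence of Theorem \ref{1mtora}, Proposition \ref{pstrict} and Lemma \ref{coker}, and you reconstruct that deduction accurately. The chain of reductions — scale by a $p$-power and use the right-fraction calculus (Proposition \ref{calfrac}) to make $\beta$ effective, apply Proposition \ref{pstrict} to make it strict, invoke Lemma \ref{coker}(2) to see that $[\coker f\to\coker g]$ vanishes in ${}^t\M[1/p]$, deduce $\coker g=0$ and $\coker f$ finite of $p$-power order, then replace $M''$ by the $p$-isogenous sub-$1$-motive $[f(L)\to G'']$ — is exactly the natural route, and your identification of where inverting $p$ is essential (the last step) is on the mark.

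One small citation imprecision at the end: Proposition \ref{faith} gives left exactness of ${}^t\M^{\eff}\to{}^t\M$, hence identifies $[\ker f\to\ker g]$ as the kernel in ${}^t\M$; to conclude it is also the kernel in ${}^t\M[1/p]$ one should add that the further localization ${}^t\M\to{}^t\M[1/p]$ preserves kernels, because it is a localization at a class admitting calculus of right fractions (Propositions \ref{pB.1.2} and \ref{p1.1ab}). The paper itself invokes Proposition \ref{faith} in this slightly abbreviated way in the proof of Theorem \ref{1mtora}, so this is a matter of making an implicit step explicit rather than a genuine gap.
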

\begin{example} 
Let $M$ be a $1$-motive with torsion. We then always have a canonical
exact sequence in ${}^t\M[1/p]$
\begin{equation}\label{shortfree}
0\to M_{\tor}\to M \to M_{\fr}\to 0
\end{equation}
induced by \eqref{basic}, according to Definition~\ref{frtortf}. 
Note that in the following canonical factorisation
$$\begin{array}{c}
M  \longby{} M_{\fr}\\
\searrow\hspace*{0.4cm} \nearrow \\
M_{\tf}
\end{array}$$
the effective map $M \to M_{\tf}$ is a strict epimorphism with kernel
$M_{\tor}$ and $M_{\tf}\to M_{\fr}$ is a \qi (providing an example of
Proposition~\ref{pstrict}).
\end{example}

\subsection{$\ell$-adic realisation} 
Let $n: M\to M$ be the (effective) multiplication by $n$ on a $1$-motive
$M =[L\by{u} G]$ over a field $k$ where $n$ is prime to the characteristic of
$k$. It is then easy to see, \eg by the description of kernels in
Proposition~\ref{lim}, that 
$${}_nM \df \ker (M\longby{n} M) = [\ker (u)\cap {}_nL \to 0].$$
Thus ${}_nM = 0$ (all $n$ in characteristic zero)  if and only if $M$ is
torsion-free, \ie
$M_{\tor} =0$. Moreover, by Proposition~\ref{pstrict} and 
Lemma~\ref{coker}, we can see that
$$M/n\df \coker (M\longby{n} M)$$ is always a torsion $1$-motive. If $L=0$,
let simply $G$ denote, as usual, the $1$-motive $[0\to G]$. Then we
get an extension in $\M[1/p]$
\begin{equation}\label{ngexseq}
0\to G \longby{n} G \to {}_nG[1]\to 0
\end{equation}
where ${}_nG[1]$ is the torsion $1$-motive $[{}_nG\to 0]$. If $L \neq 0$
then $M/n$ can be regarded as an extension of $L/n$ by $\coker ({}_nL\to
{}_nG)$, \eg also by applying the snake lemma to the multiplication by $n$
on the following canonical short exact sequence (here $L[1] = [L\to 0]$ as
usual)
\begin{equation}\label{stexseq}
0\to G \to M \to L[1]\to 0
\end{equation}
of effective $1$-motives (which is also exact in $\M[1/p]$ by
Corollary~\ref{corexseq}). Summarizing up from \eqref{ngexseq},
\eqref{stexseq} we then get a long exact sequence in ${}^t\M^{\tor}[1/p]$
\begin{equation}\label{nexseq}
0 \to {}_nM \to {}_nL[1]\to {}_nG[1] \to M/n \to L/n[1] \to 0.
\end{equation}

Let now be $n = \ell^{\nu}$ where $\ell \neq {\rm char} (k)$. Set:

\begin{defn}\label{ladic} The {\it $\ell$-adic realisation}\, of a
$1$-motive
$M$ is $$T_{\ell}(M)\df ``\liminv{\nu}" L_\nu$$
in the category of $l$-adic sheaves, where $M/\ell^\nu=[L_\nu\to 0]$.
\end{defn}

Since the inverse system $``\varprojlim_{\nu}" {}_{l^\nu} L$ is Mittag-Leffler
trivial, we obtain a short exact sequence
$$0 \to T_{\ell}(G) \to T_{\ell}(M) \to L\otimes \Z_{\ell}
\to 0$$ 
where $T_{\ell}(G)$ is the Tate module of the semiabelian
variety $G$. More generally, using Corollary \ref{corexseq}, we have:

\begin{lemma}\label{ltlexact} The functor $T_\ell$ is exact on ${}^t\M[1/p]$,
and extends canonically to ${}^t\M\otimes\Z_\ell$.\qed
\end{lemma}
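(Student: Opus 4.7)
The plan is to verify exactness by running the snake lemma at each finite level and then passing to the pro-system, using Mittag-Leffler triviality, and finally to extend the resulting functor using the universal property of $\otimes\Z_\ell$.

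First I would use Corollary \ref{corexseq} to represent a given short exact sequence $0\to M'\to M\to M''\to 0$ in ${}^t\M[1/p]$ by a strict effective epimorphism, i.e.\ by a short exact sequence of complexes of group schemes. Applying the snake lemma to multiplication by $\ell^\nu$ on this short exact sequence in the abelian category ${}^t\M[1/p]$ yields, for each $\nu\ge 1$, a six-term exact sequence
\begin{equation}\label{eq-snake-app}
0\to {}_{\ell^\nu}M'\to {}_{\ell^\nu}M\to {}_{\ell^\nu}M''\to M'/\ell^\nu\to M/\ell^\nu\to M''/\ell^\nu\to 0
\end{equation}
which is functorial in $\nu$ (the transition morphisms being the obvious ones induced by the identity, respectively by multiplication by $\ell$ on the three leftmost torsion terms). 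Since each $M/\ell^\nu$ and each ${}_{\ell^\nu}M$ is torsion, hence a finite torsion $1$-motive of the form $[L_\nu\to 0]$ (see \eqref{nexseq}), taking the discrete part is exact and we obtain, at each $\nu$, the analogous six-term exact sequence of finite discrete sheaves.

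The key step is to pass \eqref{eq-snake-app} to the limit over $\nu$. For any $N\in {}^t\M[1/p]$ with lattice part $L$, the inverse system $\nu\mapsto {}_{\ell^\nu}N$ is Mittag-Leffler trivial, as already observed in the text preceding Definition \ref{ladic}: its value is contained in the finite group ${}_{\ell^\nu}L$ whose $\ell^\infty$-torsion stabilises, so the transition maps (multiplication by $\ell$) are eventually zero. Hence in the category of $\ell$-adic sheaves (pro-systems of finite discrete sheaves modulo ML-trivial ones), the three leftmost pro-systems in \eqref{eq-snake-app} represent the zero object. The remaining portion collapses to the desired short exact sequence
\[0\to T_\ell(M')\to T_\ell(M)\to T_\ell(M'')\to 0,\]
proving exactness.

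Finally, for the extension, observe that the target category of $\ell$-adic sheaves is naturally $\Z_\ell$-linear, so the additive functor $T_\ell\colon {}^t\M[1/p]\to \ell\text{-adic sheaves}$ factors $\Z_\ell$-linearly. Since $\ell\ne p$, $1/p$ is invertible in $\Z_\ell$, and by Proposition \ref{pB.1.2} (applied with $A=\Z_\ell$, viewed through the canonical map $\Z[1/p]\to \Z_\ell$) the universal property of localisation produces a unique $\Z_\ell$-linear extension $T_\ell\colon {}^t\M\otimes \Z_\ell\to \ell\text{-adic sheaves}$; this extension is still exact, since $A$-isogenies become isomorphisms and exactness is detected on the original category. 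The main (small) obstacle is the bookkeeping of the ML-trivial argument applied to the three torsion terms, especially checking that it is compatible with the maps induced from $0\to M'\to M\to M''\to 0$; this is where the snake-lemma functoriality in $\nu$ is used.
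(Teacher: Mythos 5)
Your proof is correct and follows the same approach the paper intends: the lemma carries a \verb|\qed| precisely because the argument is regarded as the ``more general'' version of the discussion just preceding Definition~\ref{ladic}, namely representing a short exact sequence via Corollary~\ref{corexseq}, applying multiplication by $\ell^\nu$, and killing the three torsion pro-systems using Mittag--Leffler triviality. You have spelled out exactly those steps, and your extension argument via $\Z_\ell$-linearity of the target is the standard way to produce the factorisation through ${}^t\M\otimes\Z_\ell$ (note only that Proposition~\ref{pB.1.2} as stated treats subrings of $\Q$, so what you actually need is the more general, but equally elementary, universal property of tensoring an additive category with a commutative ring, applied with $\Z[1/p]\to\Z_\ell$, which requires $\ell\ne p$ as you observe).
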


\subsection{Deligne $1$-motives} Let ${}^t\M^{\fr}[1/p]$, ${}^t\M^{\tor}[1/p]$ and 
${}^t\M^{\tf}[1/p]$ denote the corresponding full sub-categories of 
${}^t\M[1/p]$ given by free, torsion and torsion-free effective $1$-motives
respectively. The following $M\mapsto M_{\fr}$ (resp. $M\mapsto M_{\tor}$) define functors
from ${}^t\M[1/p]$ to ${}^t\M^{\fr}[1/p]$ (resp. from ${}^t\M[1/p]$ to ${}^t\M^{\tor}[1/p]$). We have (\cf \cite[(1.1.3)]{BRS}):

\begin{propose}\label{free} The natural functor 
\[\M[1/p]\to {}^t\M[1/p]\] 
from Deligne
$1$-motives to $1$-motives with torsion has a left adjoint/left inverse given by $M\mapsto
M_{\fr}$. In particular, it is fully faithful and makes $\M[1/p]$ an exact sub-category of ${}^t\M[1/p]$ in the sense of Quillen \cite[\S 2]{Q}. The above left adjoint defines equivalences 
$$\M[1/p]\cong{}^t\M^{\fr}[1/p]\cong {}^t\M^{\tf}[1/p].$$
\end{propose}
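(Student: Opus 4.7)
The plan is to establish the proposition in the following order: first construct the left adjoint, then deduce full faithfulness, then treat the exact structure and the two equivalences at the end.

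The first task is to promote $M \mapsto M_{\fr}$ to a well-defined functor ${}^t\M[1/p] \to \M[1/p]$. I would begin by checking that the effective assignment of diagram \eqref{basic} carries a \qi $M \to M'$ (with kernel $[F = F]$, $F$ finite) to an isomorphism on free parts. The key observation is that, since $F \subseteq L_{\tor}$, the induced map $L_{\fr} = L/L_{\tor} \to L'/L'_{\tor} = L'_{\fr}$ is an isomorphism, and similarly $G/u(L_{\tor}) \to G'/u'(L'_{\tor})$ is an isomorphism (both by quotienting out the same subgroups); note that this step does not even need to invert $p$. By calculus of right fractions (Proposition \ref{calfrac}), this suffices to descend $(-)_{\fr}$ to ${}^t\M[1/p]$.

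Next, for the adjunction, I would fix $N = [L_N \to G_N] \in \M[1/p]$ with $L_N$ free and show that for any effective map $(f,g): \tilde M \to N$, the vanishing of $f$ on $\tilde L_{\tor}$ (automatic since $L_N$ is torsion-free) forces $g$ to kill $u(\tilde L_{\tor})$, hence $(f,g)$ factors uniquely through $\tilde M \to \tilde M_{\fr}$. Therefore the natural map
\[
\Hom_{\eff}(\tilde M_{\fr}, N) \iso \Hom_{\eff}(\tilde M, N)
\]
is a bijection, and taking colimits over \qi's $\tilde M \to M$ gives $\Hom_{\M[1/p]}(M_{\fr}, N) \iso \Hom_{{}^t\M[1/p]}(M, N)$. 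Since $N_{\fr} = N$ for $N \in \M[1/p]$, the counit of the adjunction is the identity, so the inclusion $\M[1/p] \to {}^t\M[1/p]$ is fully faithful by standard category theory.

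The exact subcategory structure follows if I show that $\M[1/p]$ is closed under extensions in the abelian category ${}^t\M[1/p]$ (Theorem \ref{1mtora}). Given a short exact sequence $0 \to M' \to M \to M'' \to 0$ with $M', M'' \in \M[1/p]$, Corollary \ref{corexseq} represents it by a strict epimorphism of complexes; the discrete part $L$ of $M$ sits in a short exact sequence $0 \to L' \to L \to L'' \to 0$ of discrete sheaves with $L', L''$ torsion-free (after inverting $p$). Then $L_\tors$ injects into $L''_\tors = 0$, so $L$ is free and $M \in \M[1/p]$. The main subtlety here, and arguably the one point requiring genuine care, is ensuring that what remains after inverting $p$ really is torsion-free rather than just free of $p$-primary torsion — but this is exactly what the $[1/p]$ convention provides.

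Finally, the equivalences $\M[1/p] \cong {}^t\M^{\fr}[1/p] \cong {}^t\M^{\tf}[1/p]$ come almost for free. The first is essential surjectivity by definition of ${}^t\M^{\fr}[1/p]$ together with the full faithfulness already proven. The second is induced by the functor $(-)_{\fr}$ restricted to torsion-free $1$-motives: the canonical effective map $M_{\tf} \to M_{\fr}$ is a \qi for any $M$, hence an isomorphism in ${}^t\M[1/p]$, and applied to $M \in {}^t\M^{\tf}$ this exhibits every object of ${}^t\M^{\tf}[1/p]$ as isomorphic to a free $1$-motive, giving the equivalence with ${}^t\M^{\fr}[1/p]$.
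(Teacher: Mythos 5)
Your proof is correct and follows essentially the same route as the paper's: descend $(-)_{\fr}$ along quasi-isomorphisms via calculus of right fractions, verify the adjunction on effective maps and pass to the colimit, deduce full faithfulness from the unit/counit, close under extensions by tracking the torsion of the discrete part, and identify ${}^t\M^{\tf}[1/p]$ with ${}^t\M^{\fr}[1/p]$ via the \qi $M_{\tf}\to M_{\fr}$. The only point where you are slightly more compressed than you acknowledge is in the extension step: after applying Corollary \ref{corexseq} the outer terms $L',L''$ may a priori carry $p$-primary torsion (from $p$-power isogenies used in the representation), so the conclusion is that $L_{\tor}$ is a $p$-group rather than literally zero — but as you note, this is exactly what disappears after inverting $p$, so $M\cong M_{\tf}\cong M_{\fr}$ in ${}^t\M[1/p]$ and the argument goes through.
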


\begin{proof} Consider an effective map $(f, g):\tilde M
\to M'$, to a free $1$-motive $M'$, and a \qi $\tilde M \to M$, \ie $M
= [\tilde L/F \to \tilde G/F]$ for a finite group $F$. Since $M'$ is free
then $F$ is contained in the kernel of $f$ and the same holds for $g$. Thus $(f, g)$ induces an effective map $ M\to M'$. Let $M = [L\by{u}
G]$. Then $L_{\tor}\subseteq \ker (f)$ and also $u(L_{\tor})\subseteq \ker
(g)$ yielding an effective map $(f, g): M_{\fr}\to M'$. This proves the first assertion.

Since ${}^t\M^{\eff,\fr}\into {}^t\M^{\eff,\tf}$, the claimed equivalence is
obtained from the canonical \qi $M\to M_{\fr}$ for $M\in {}^t\M^{\eff,\tf}$, see
\eqref{basic}. 
Finally, consider the exact sequence \eqref{exseq} of $1$-motives with torsion such that
$M'_{\tor}=M''_{\tor}=0$. Since $M_{\tor}$ is mapped to zero in $M''$, it
injects in $M'$. Thus also $M$ is torsion-free, \ie $M=M_{\tf}$, and
quasi-isomorphic to $M_{\fr}$.
\end{proof}

\begin{remark} We also clearly have that the functor $M\mapsto
M_{\tor}$ is a right-adjoint to the embedding ${}^t\M^{\tor}[1/p]\into {}^t\M[1/p]$ , \ie 
$$\Hom_{\eff}(M, M'_{\tor}) \cong \Hom (M, M')$$
for $M\in{}^t\M^{\tor}[1/p]$ and $M'\in {}^t\M[1/p]$.
\end{remark}

\begin{cor}\label{isofree} We have ${}^t\M^{\tor}\otimes \Q =0$ and 
the full embedding $\M[1/p]\to {}^t\M[1/p]$ induces an equivalence 
$$\M\otimes \Q\iso {}^t\M\otimes \Q.$$
\end{cor}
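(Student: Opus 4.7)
The plan is to assemble this corollary directly from the structural results on ${}^t\M[1/p]$ established above, with no new computation needed.

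First I would dispose of the vanishing ${}^t\M^{\tor}\otimes\Q=0$. Every object of ${}^t\M^{\tor}[1/p]$ is of the form $[L\to 0]$ with $L$ a finite étale group scheme killed by some integer $n$ (prime to $p$). Multiplication by $n$ is the zero morphism on such an object, so after inverting $n$ in the Hom groups the identity of $M$ becomes equal to $n^{-1}\cdot n\cdot 1_M = 0$, forcing $M\simeq 0$ in ${}^t\M\otimes\Q$.

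Next I would verify that the induced functor $\M\otimes\Q\to{}^t\M\otimes\Q$ is fully faithful. By Proposition~\ref{free}, the embedding $\M[1/p]\hookrightarrow{}^t\M[1/p]$ is fully faithful; tensoring Hom-groups by $\Q$ (equivalently, inverting the remaining integers in the morphism sets) preserves full faithfulness, since the localisation operation $(-)\otimes\Q$ on an additive category is the same whether applied to $\cA$ or to a full additive subcategory of $\cA$.

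For essential surjectivity, I would apply the canonical short exact sequence \eqref{shortfree} in the abelian category ${}^t\M[1/p]$ (Theorem~\ref{1mtora}):
\[
0\to M_{\tor}\to M\to M_{\fr}\to 0.
\]
Since $M_{\tor}\in{}^t\M^{\tor}[1/p]$ becomes $0$ in ${}^t\M\otimes\Q$ by the first step, the map $M\to M_{\fr}$ is an isomorphism in ${}^t\M\otimes\Q$. As $M_{\fr}\in\M[1/p]$, this shows every object of ${}^t\M\otimes\Q$ lies in the essential image of $\M\otimes\Q$. (Alternatively, one may use the factorisation $M\to M_{\tf}\to M_{\fr}$, where the first map is a strict epimorphism with torsion kernel and the second is a quasi-isomorphism and hence already an isomorphism in ${}^t\M[1/p]$.)

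There is no real obstacle here: all the substance has been packaged into Proposition~\ref{free} (full faithfulness and the exact sequence \eqref{shortfree}) and Theorem~\ref{1mtora} (so that \eqref{shortfree} is genuinely exact in ${}^t\M[1/p]$). The only point to watch is that the isogeny-category formalism of Appendix~\ref{appendixA} applies in the form of Lemma~\ref{lB.1.3} to guarantee that inverting isogenies on the full subcategory $\M[1/p]\subseteq{}^t\M[1/p]$ agrees with inverting them globally, which is exactly what the combination of full faithfulness and essential surjectivity provides.
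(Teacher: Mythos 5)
Your argument is correct and, since the paper leaves the corollary without explicit proof, it is exactly what the authors intend: the vanishing of torsion objects after $\otimes\Q$, full faithfulness from Proposition~\ref{free}, and essential surjectivity from the exact sequence \eqref{shortfree}. Two small remarks. The parenthetical ``(prime to $p$)'' in the first paragraph is not literally accurate — a finite \'etale group scheme $L$ may well have $p$-power order — but the conclusion is unaffected: if $n\cdot 1_M=0$ for \emph{any} integer $n>0$, then $1_M=n^{-1}\cdot n\cdot 1_M=0$ in $\Hom\otimes\Q$, whether or not $n$ is coprime to $p$ (in fact the $p$-primary part already dies in ${}^t\M[1/p]$). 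Second, the closing paragraph conflates two routes: once you have established full faithfulness and essential surjectivity you already have an equivalence, and Lemma~\ref{lB.1.3} is not an additional ingredient but a packaged alternative — it would replace your steps (2) and (3) by the single verification that the \qi $M\to M_{\fr}$ is a $\Q$-isogeny in the sense of Definition~\ref{dA.2}~c), which indeed holds since multiplication by any $n$ killing $L_{\tor}$ yields the required two-sided quasi-inverse $M_{\fr}\to M$.
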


\subsection{Non-connected $1$-motives}\label{noncon}
We consider a larger category allowing non-connected (reduced) group schemes as a supplement
of Proposition~\ref{isoab}.

\begin{defn} Let $\ncM^{\eff}$ denote the following category. The objects are
$N = [L \to G]$ complexes of \'etale sheaves over the field $k$ where $L$ is discrete and $G$ is a reduced group scheme locally of finite type over $k$ such that 
\begin{itemize}
\item[{\rm (i)}] the
connected component of the identity $G^0$ is semiabelian, and 
\item[{\rm (ii)}] $\pi_0 (G)$ is finitely generated.
\end{itemize}
The morphisms are just maps of complexes. 
We call $\ncM^{\eff}$ the category of {\it effective non-connected}\, 1-motives.

We denote $\anM^{\eff}$ \index{$\anM^{\eff}$} the full subcategory of $\ncM^{\eff}$ whose objects are  $N = [L \to G]$ as above such that $G$ is of finite type over $k$ (then
condition (ii) is automatically granted and $\pi_0 (G)$ is a finite
group scheme). We call $\anM^{\eff}$ the category of {\it algebraic effective
non-connected}\, 1-motives.
\end{defn}

Note that a representable presheaf on the category of schemes
over $k$ can be characterised by axiomatic methods, including the condition 
(i) above, \cf the Appendix in \cite{BSAP}.

Associated to $N =[L \to G]$ we have the following diagram
\[
\begin{CD}
0&  &0 \\
@V{}VV @V{}VV  \\
  L^0  @>>> G^0\\
@V{}VV @V{}VV  \\
L @>>> G\\
@V{}VV @V{}VV  \\
L/L^0  @>{\subseteq}>> \pi_0(G) \\
@V{}VV @V{}VV  \\
0&  &0 
\end{CD}
\]
here $L^0$ denote the pull-back of $G^0$ along $L \to G$. 
Let $$ N^0 \df [L^0 \to G^0]$$ denote the effective 1-motive associated to
$N$ and denote
$$ \pi_0(N)\df [L/L^0 \into \pi_0(G)].$$
We say that $D = [L \into L']$ is {\it discrete}\, if $ L'$ is a discrete sheaf
and $L$ injects into $L'$. Denote $\ncM^{\dis}$
the full subcategory of $\ncM^{\eff}$ given by discrete objects. Note that $N$ is
discrete if and only if $\pi_0(N) = N$ (if and only if $N^0 = 0$).

\begin{propose}\label{condis} The operation $N\mapsto N^0$ 
defines a functor $$c^{\eff}: \ncM^{\eff}\to {}^t\M^{\eff}$$ which is right adjoint to
the embedding $i_1 : {}^t\M^{\eff}\into \ncM^{\eff}$ and $c^{\eff}i_1 =1$. Moreover, we
have a functor
$$\pi_0:\ncM^{\eff}\to\ncM^{\dis}$$
which is left adjoint to $i_{\dis}: \ncM^{\dis}\into {}^t\M^{\eff}$ and
$\pi_0i_{\dis}=1$.
\end{propose}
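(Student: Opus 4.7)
The plan is to proceed in two parallel halves, one for each adjunction, with the key inputs being well-known properties of connected components of group schemes.

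For the first half, I will first check that $c^{\eff}$ is a well-defined functor. Given $N=[L\to G]\in\ncM^{\eff}$ and $L^0=L\times_G G^0$, the object $N^0=[L^0\to G^0]$ lies in ${}^t\M^{\eff}$ because $G^0$ is semi-abelian by hypothesis and $L^0$ is discrete as a subsheaf of $L$. For a morphism $f=(f_L,f_G):N\to N'$, the map $f_G:G\to G'$ carries $G^0$ to $(G')^0$ (its image is connected and contains the identity), and then $f_L$ automatically sends $L^0$ into $(L')^0$ by the pullback description. That $c^{\eff}\circ i_1=1$ is immediate: for $M=[L\to G]\in{}^t\M^{\eff}$ we already have $G=G^0$ and $L=L^0$. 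For the adjunction, given $M=[L_M\to G_M]\in{}^t\M^{\eff}$ and $N=[L\to G]\in\ncM^{\eff}$, any morphism $\phi=(\phi_L,\phi_G):M\to N$ satisfies $\phi_G(G_M)\subseteq G^0$ (the image is connected and contains the identity, since $G_M$ is connected) and therefore $\phi_L(L_M)\subseteq L^0$ by the pullback property; this produces the unique factorisation through $N^0$, giving the natural bijection
\[
\Hom_{\ncM^{\eff}}(i_1 M,N)\iso\Hom_{{}^t\M^{\eff}}(M,c^{\eff}N).
\]

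For the second half, I will define $\pi_0(N)=[L/L^0\into \pi_0(G)]$ (the injectivity follows from $L^0=L\times_G G^0$); this is a discrete object since $\pi_0(G)$ is discrete by hypothesis. Functoriality is automatic, as is the relation $\pi_0\circ i_{\dis}=1$ (a discrete object has $G=\pi_0(G)$ and $L^0=0$). The adjunction
\[
\Hom_{\ncM^{\eff}}(N,i_{\dis} D)\iso\Hom_{\ncM^{\dis}}(\pi_0(N),D)
\]
rests on the fact that any morphism from a semi-abelian variety $G^0$ to a discrete group scheme is trivial: given $\phi:N\to i_{\dis}D$ with $D=[L'\into L'']$, the component $\phi_G:G\to L''$ must kill $G^0$ and hence factor through $\pi_0(G)$, and then $\phi_L$ kills $L^0$ and factors through $L/L^0$.

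The only mildly delicate point, which I regard as the main (if still minor) obstacle, is checking compatibility of these factorisations with the differentials of the $1$-motive complexes, so that the induced maps really live in ${}^t\M^{\eff}$ or $\ncM^{\dis}$; this is a diagram chase using the definition $L^0=L\times_G G^0$ and the universal property of the quotient. Uniqueness in both adjunctions is then automatic because $L\to L/L^0$ and $G\to\pi_0(G)$ are epimorphisms, and $L^0\to L$, $G^0\to G$ are monomorphisms.
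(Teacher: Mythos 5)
Your proof is correct, and since the paper dispatches this proposition with a single word ("Straightforward"), you are essentially supplying the argument the authors had in mind: the factorisation through $N^0$ uses that a morphism from a connected group scheme lands in the identity component, and the factorisation through $\pi_0(N)$ uses that a morphism from a connected group scheme to a discrete one is trivial, with the injectivity of $L'\into L''$ upgrading ``$\phi_L(\ell)$ maps to $0$'' to ``$\phi_L(\ell)=0$''. (Note that the target of $i_{\dis}$ in the paper's statement is a typo for $\ncM^{\eff}$, and you read it correctly.) The compatibility with differentials you flag as the ``delicate point'' is indeed automatic, since the diagrams you need to commute are restrictions or quotients of the given commuting square for $\phi$; it would be harmless to say so explicitly in one line rather than deferring it.
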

\begin{proof} Straightforward.\end{proof}
\begin{remarks} 1) The same results as in Proposition~\ref{condis} above refine
to $\anM^{\eff}$ and $\anM^{\dis}$.\\
2) Note that $\ncM^{\eff}$ has kernels. Let $\phi = (f,g): N\to N'$ be a map in
$\ncM^{\eff}$. Let $g^0: G^0\to G^{'0}$ and $\pi_0(g) : \pi_0(G)\to
\pi_0(G')$ be the induced maps. Then $\ker (\phi) = [\ker (f)
\to \ker (g)]$ as a complex of sheaves; in fact $\ker (g)$ is representable,
$\ker^0(g^0) = (\ker (g))^0$ and $\pi_0 (\ker (g))$ maps to $\ker (\pi_0(g) )$ with
finite kernel. However, it is easy to see that $\ncM^{\eff}$ is not abelian.
\end{remarks}

\begin{propose} \label{nca}
The category $\anM^{\eff}[1/p]$ is abelian.
\end{propose}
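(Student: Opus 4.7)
The plan is to prove that $\anM^\eff[1/p]$ is abelian by directly exhibiting kernels, cokernels and the coimage-to-image comparison, imitating the proof of Theorem \ref{1mtora} but with a crucial simplification. Unlike ${}^t\M^\eff$, the category $\anM^\eff$ allows non-connected group schemes with finite $\pi_0$, so kernels and cokernels computed naively at the level of the underlying complexes of group schemes already land inside the category — no quasi-isomorphism localisation is needed. The only real issue is that in characteristic $p$ the scheme-theoretic kernel $\ker(g)$ of a map $g:G\to G'$ may fail to be smooth, so that its identity component need not be semi-abelian on the nose.

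The key algebraic observation that repairs this is that \emph{every infinitesimal commutative $k$-group scheme $H$ becomes isomorphic to $0$ in $\anM^\eff[1/p]$}. Indeed $H$ has order $p^n$, hence $p^n\cdot \id_H=0$ in $\End(H)$, so $\id_H$ vanishes after tensoring with $\Z[1/p]$. In particular, for any closed subgroup scheme $K\subseteq G$ of a smooth group, the inclusion $K_{\rm red}\hookrightarrow K$ is an isomorphism in $\anM^\eff[1/p]$, since its cokernel $K/K_{\rm red}$ is infinitesimal. Dually, any quotient of a smooth group by an infinitesimal subgroup is an isomorphism.

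Given $\phi=(f,g):N=[L\by{u}G]\to N'=[L'\by{u'}G']$, I then set
\[
\ker\phi:=[u^{-1}(K)\to K],\qquad \coker\phi:=[\coker f\to \coker g],
\]
with $K=\ker(g)_{\rm red}$. Both complexes lie in $\anM^\eff$: the reduced closed subgroup $K\subseteq G$ has $K^0$ semi-abelian (a smooth connected closed subgroup of $G^0$) and finite $\pi_0$; dually, $\coker(g)$ is a quotient of $G'$ with semi-abelian identity component and finite $\pi_0$. Using the vanishing of infinitesimals, one checks that these objects co-represent the universal properties of kernel and cokernel in $\anM^\eff[1/p]$, independently of the replacement $\ker(g)\leadsto K$. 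For the coimage–image isomorphism, the natural comparison $N/\ker\phi\to \im\phi$ is an isomorphism of discrete sheaves on the $L$-component, while on the $G$-component the map $G/\ker(g)_{\rm red}\to \im(g)$ is an isomorphism of smooth group schemes, and the discrepancy with $G/\ker(g)$ is exactly an infinitesimal quotient, hence trivial in $\anM^\eff[1/p]$.

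The main obstacle I anticipate is the careful bookkeeping in characteristic $p$: one must systematically track how taking reductions interacts with pull-backs, push-outs and the identification of kernels of quotient maps, so that the coimage–image isomorphism really is an isomorphism and not merely an isogeny with non-trivial étale component. Once the infinitesimal-killing principle is applied cleanly — say via an auxiliary strict-morphism factorisation in the spirit of Proposition \ref{pstrict}, where a ``strict'' map in $\anM^\eff$ is one whose $G$-component has \emph{reduced} kernel — the rest of the verification (biproducts, existence of zero, matching of monos with kernels and epis with cokernels) is then formal.
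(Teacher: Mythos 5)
Your overall strategy -- exhibit kernels and cokernels directly at the group-scheme level, using the observation that infinitesimal kernels and quotients are harmless once $p$ is inverted -- is close in spirit to the paper, but there is a genuine error in the kernel formula, and the paper's own argument sidesteps the characteristic-$p$ issues more cleanly than you anticipate.

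\textbf{The kernel formula is wrong.} You set $\ker\phi=[u^{-1}(K)\to K]$ with $K=\ker(g)_{\rm red}$. But in the category of length-one complexes the kernel is computed \emph{termwise}, so it should be $[\ker f\to K]$. Your object is strictly larger: for $x\in u^{-1}(K)$ one only gets $f(x)\in\ker u'$, not $f(x)=0$, so the composite $[u^{-1}(K)\to K]\hookrightarrow N\xrightarrow{\phi}N'$ is \emph{not} the zero map in general -- i.e.\ your candidate does not even satisfy the basic requirement $\phi\circ i=0$. A minimal counterexample: take $N=[\Z\xrightarrow{0}\G_m]$ and $\phi=\mathrm{id}_N$. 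Then $K=0$, $u^{-1}(K)=\Z$, and your formula returns $\ker(\mathrm{id}_N)=[\Z\to 0]\ne 0$. Note also that $u^{-1}(K)/\ker f$ injects into $\ker u'$, a discrete sheaf which may be a nontrivial lattice, so the discrepancy is \emph{not} $p$-power torsion and cannot be absorbed by inverting $p$. It looks like you have mis-transported the pullback description from Proposition \ref{lim}: there, because $G$ is forced to be connected in ${}^t\M^\eff$, one must pass to $\ker^0(g)$ and then pull back \emph{inside} $\ker f$ (i.e.\ take $\ker f\cap u^{-1}(\ker^0 g)$); in $\anM^{\eff}$, where $G$ may be disconnected, no such truncation is needed and the kernel is simply $[\ker f\to\ker g]$. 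This error also propagates into your coimage-to-image discussion, where the claimed ``isomorphism of discrete sheaves on the $L$-component'' fails for the same reason.

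\textbf{Comparison with the paper's route.} The paper simply regards $\anM^{\eff}[1/p]$ as a full subcategory of the abelian category $C^{[-1,0]}(\Shv_{\et}(Sm(k)))[1/p]$ and observes that termwise sheaf-theoretic kernels and cokernels $[\ker f\to\ker g]$, $[\coker f\to\coker g]$ stay in $\anM^\eff$. Since étale sheaves are evaluated only on smooth (hence reduced) $k$-schemes over a perfect field, the non-reduced parts of scheme-theoretic kernels contribute nothing even before inverting $p$ -- so a good deal of the infinitesimal bookkeeping you worry about is automatic. Inverting $p$ is still needed, but for a slightly different reason: the group-scheme cokernel and the étale-sheaf cokernel on $Sm(k)$ can genuinely differ (e.g.\ by a $p$-power Frobenius-type obstruction) by a sheaf of $p$-power exponent, which becomes zero in the $\Z[1/p]$-linearisation. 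Your ``infinitesimals die after inverting $p$'' principle is thus adjacent to what is needed, but it is cleaner to phrase the argument inside the ambient abelian sheaf category, where both the kernel and cokernel formulas are unambiguous termwise constructions and one only has to check that they land back in $\anM^\eff$. Finally, as a small point of hygiene: an infinitesimal group scheme $H$ is not itself an object of $\anM^\eff$ (its identity component is not semi-abelian), so the statement ``$H\cong 0$ in $\anM^\eff[1/p]$'' does not typecheck; the usable form of the idea is that a monomorphism or epimorphism in a larger ambient category with infinitesimal (co)kernel becomes invertible after $\otimes\Z[1/p]$.
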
 
\begin{proof}  Regard $\anM^{\eff}[1/p]$ as a full subcategory of
$C^{[-1,0]}(\Shv(k_{\et}))[1/p]$ of the abelian category of complexes of sheaves
concentrated in degree $-1$ and $0$. For a map 
$\phi = (f,g):N\to N'$, $\ker (\phi) = [\ker (f)
\to \ker (g)]\in \anM^{\eff}$ and $\coker (\phi) = [\coker (f) \to \coker (g)]\in
\anM^{\eff}$. For an extension $0\to N\to N'\to N''\to 0$ in
$C^{[-1,0]}(\Shv(k_{\et}))$ such that 
$N$ and $N''$ belongs to $\anM^{\eff}$ then also $N''\in \anM^{\eff}$.
\end{proof}

\subsection{Homs and Extensions} We will provide a characterisation
of the Yoneda $\Ext$ in the abelian category ${}^t\M[1/p]$. 

\begin{propose}\label{hom} We have
\begin{itemize}
\item[(a)] $\Hom_{{}^t\M} (L[1],L'[1]) = \Hom_k (L,L')$,
\item[(b)] $\Hom_{{}^t\M} (L[1],G') =0$,
\item[(c)] $\Hom_{{}^t\M} (G,G') \subseteq \Hom_k (A,A')\times
\Hom_k(T,T')$ if $G$ (resp. $G'$) is an extension of an abelian
variety $A$ by a torus $T$ (resp. of $A'$ by $T'$),
\item[(d)] $\Hom_{{}^t\M} (G, L'[1])=\Hom_k({}_nG,L'_{\tor})$ if
$nL'_{\tor}=0$.
\end{itemize}
In particular, the group $\Hom_{{}^t\M}(M, M')$ is finitely generated
for all $1$-motives
$M, M'\in {}^t\M[1/p]$. 
\end{propose}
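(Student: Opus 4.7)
The plan is to prove the four identities by first reducing the general case to them via the canonical weight filtration $0 \to [0 \to G] \to M \to [L \to 0] \to 0$ (which is exact in the abelian category ${}^t\M[1/p]$ by Corollary~\ref{corexseq}) and the resulting long exact Hom sequences; the ``in particular'' statement then follows immediately since each of the groups in (a)--(d) is evidently finitely generated.

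For (a) and (b), I would exploit the calculus of right fractions of Proposition~\ref{calfrac}: a quasi-isomorphism $s: \tilde M \to [L \to 0]$ has finite group $F$ which must equal $\tilde G$ (the kernel of $\tilde G \to 0$); since $\tilde G$ is semi-abelian and hence connected, this forces $F = \tilde G = 0$ and $s$ is the identity. Therefore $\Hom_{{}^t\M}$ equals $\Hom_{\eff}$, which is $\Hom_k(L, L')$ in case (a), and is zero in case (b) since any effective square $[L \to 0] \to [0 \to G']$ is forced to be zero.

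For (c), qi's $s: \tilde M \to [0 \to G]$ are precisely the data $[F \hookrightarrow \tilde G] \to [0 \to G]$ with $F$ a finite discrete sheaf and $\tilde G \to G$ an isogeny with kernel $F$; commutativity of the effective square with target $[0 \to G']$ forces the map $\tilde G \to G'$ to kill $F$, so it factors as $G \to G'$. This identifies $\Hom_{{}^t\M[1/p]}([0 \to G], [0 \to G'])$ with $\Hom_k(G, G')[1/p]$, and the injection into $\Hom_k(A,A') \times \Hom_k(T,T')$ comes from the classical vanishing $\Hom_k(A, T') = 0 = \Hom_k(T, A')$: a map inducing zero on both toric parts and abelian quotients must kill $T$ (hence factor through $A$) with image in $T'$ (hence factor through $A \to T'$), which is zero.

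For (d), I would first reduce to $L' = L'_{\tor}$: any effective map $F \to L'_{\fr}$ from a finite group to a lattice is zero, so $\Hom_{{}^t\M[1/p]}([0 \to G], [L'_{\fr} \to 0]) = 0$ and the exact sequence $0 \to [L'_{\tor} \to 0] \to [L' \to 0] \to [L'_{\fr} \to 0] \to 0$ yields $\Hom([0 \to G], [L' \to 0]) \cong \Hom([0 \to G], [L'_{\tor} \to 0])$. Assuming $nL'_{\tor} = 0$, I would then apply $\Hom(-, [L'_{\tor} \to 0])$ to the exact sequence \eqref{ngexseq}, namely $0 \to [0 \to G] \xrightarrow{n} [0 \to G] \to [{}_n G \to 0] \to 0$; by naturality, multiplication by $n$ acts as zero on $\Hom([0 \to G], [L'_{\tor} \to 0])$, forcing $\Hom([{}_n G \to 0], [L'_{\tor} \to 0]) \to \Hom([0 \to G], [L'_{\tor} \to 0])$ to be an isomorphism, and part (a) applied to the source gives $\Hom_k({}_n G, L'_{\tor})$. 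The main obstacle is verifying that \eqref{ngexseq} is genuinely short exact in the abelian category ${}^t\M[1/p]$ — this requires identifying its cokernel via the qi $[{}_n G \hookrightarrow G] \to [0 \to G]$ (with the $G$-component being multiplication by $n$) and using the description of kernels in Proposition~\ref{lim} to recognize the complementary map as multiplication by $n$ on $[0 \to G]$.
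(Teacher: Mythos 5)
Your proposal is correct and follows essentially the same route as the paper: identify the localised Hom with effective Hom via the calculus of right fractions when the source is $L[1]$ or $[0\to G]$, reduce (d) to $L'_\tor$ and kill multiplication by $n$ using the exact sequence \eqref{ngexseq}, and then get the finite-generation statement by d\'evissage through the weight filtration \eqref{stexseq}. The only cosmetic difference is that for (b) and (c) the paper simply cites the full faithfulness of $\M[1/p]\into{}^t\M[1/p]$ (Proposition \ref{free}) rather than unwinding the fractions explicitly as you do; and the exactness of \eqref{ngexseq} that you flag as the main obstacle in (d) is in fact already established in the paper (via Proposition \ref{pstrict} and Lemma \ref{coker}) just before \eqref{ngexseq} is introduced, so there is no remaining gap.
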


\begin{sloppypar}
\begin{proof} Since there are no \qi to $L[1]$, we have
$\Hom_{{}^t\M}(L[1],L'[1])=\Hom_{\eff}(L[1],L'[1])$ and the latter is
clearly isomorphic to $\Hom_k(L,L')$. By Proposition \ref{free}, we
have
$\Hom_{{}^t\M} (L[1],G') = \Hom_{\eff} (L[1],G')$ and 
$\Hom_{{}^t\M} (G, G') =  \Hom_{\eff}  (G, G')$. The former is clearly
$0$ while $\Hom_{\eff}(G,G')=\Hom_k(G,G')\subseteq
\Hom_k (A,A')\times
\Hom_k (T,T')$ since $\Hom_k(T,A')=\Hom_k(A,T')=0$. For (d),
let $[F \to \tilde G] \to [0 \to G]$ be a \qi and $[F \to \tilde G]
\to [L'\to 0]$ be an effective map providing an element of $\Hom
(G, L'[1])$. If
$L'$ is free then it yields the zero map, as $F$ is torsion. Thus
$\Hom_{{}^t\M}(G, L'[1]) = \Hom_{{}^t\M}(G, L'_{\tor}[1])$. For $n\in \N$
consider the short exact sequence \eqref{ngexseq} in ${}^t\M$. If $n$
is such that
$nL'_{\tor} =0$ taking $\Hom_{{}^t\M}(-,L'_{\tor}[1])$ we further
obtain $\Hom_{{}^t\M}(G, L'[1]) = \Hom_k ({}_nG, L'_{\tor})$.

The last statement follows from these computations and
an easy d\'evissage from \eqref{stexseq}.
\end{proof}
\end{sloppypar}

\begin{remark} If we want to get rid of the integer $n$ in (d), we
may equally write
\[\Hom_{{}^t\M}(G,
L'[1])=\Hom_{\text{cont}}(\hat{T}(G),L'_{\tor})=
\Hom_{\text{cont}}(\hat{T}(G),L')\]
where $\hat{T}(G)=\prod_\ell T_\ell(G)$ is the complete Tate module of
$G$.
\end{remark}

\begin{propose}\label{ext} We have isomorphisms (for $\Ext$ in ${}^t\M[1/p]$):
\begin{itemize}
\item[(a)] $\Ext^1_k (L,L')\iso\Ext^1_{{}^t\M} (L[1],L'[1])$,
\item[(b)] $\Hom_k (L, G')\iso\Ext^1_{{}^t\M} (L[1],G')$,
\item[(c)] $ \Ext^1_k (G,G')\iso\Ext^1_{{}^t\M} (G,G')$ and 
\item[(d)] $\displaystyle\limdir{n}
\Ext^1_k ({}_nG,L')\iso\Ext^1_{{}^t\M} (G, L'[1])$; these two groups
are $0$ if $L'$ is torsion. 
\end{itemize}
\begin{comment}
b) Suppose that $k$ is finitely generated over the prime field.
Then, for any $1$-motives $M, M'\in {}^t\M$, the group $\Ext^1_{{}^t\M} (M,
M')$ is finitely generated up to a group of finite exponent.
\end{comment}
\end{propose}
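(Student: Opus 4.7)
The strategy is to classify extensions in ${}^t\M[1/p]$ directly, using the fact that by Corollary \ref{corexseq} every short exact sequence in ${}^t\M[1/p]$ is represented by a strict effective epimorphism of $1$-motives, i.e.\ by an honest exact sequence of complexes of length $1$. This converts each of (a)--(d) into a problem about representing $1$-motives of a very restricted shape.

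For (a), given an extension $0\to L'[1]\to M\to L[1]\to 0$ with effective strict representative $M=[\tilde L\to \tilde G]$, the strictness of the epimorphism $M\to L[1]$ forces $\ker(\tilde G\to 0)=\tilde G$ to be the semi-abelian part of $[L'\to 0]$, hence $\tilde G=0$; so $M=[\tilde L\to 0]$ with $\tilde L$ an extension of $L$ by $L'$ in discrete sheaves. Since the functor $L\mapsto[L\to 0]$ is fully faithful, this gives the bijection with $\Ext^1_k(L,L')$. For (b), the same analysis forces the effective representative to have the form $[L\by{u}G']$ for some $u\colon L\to G'$, and two such extensions are isomorphic (via the identity maps on $L[1]$ and $G'$) if and only if their defining $u$'s agree; this yields $\Ext^1\iso\Hom_k(L,G')$. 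For (c), the strict effective representative must be of the form $[0\to H]$ with $H$ an extension of the semi-abelian $G$ by the semi-abelian $G'$ in commutative group schemes, hence semi-abelian; by the fullness of $\M[1/p]\into {}^t\M[1/p]$ from Proposition~\ref{free}, this recovers $\Ext^1_k(G,G')$.

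For (d), the direct argument is less clean, so I instead apply the contravariant functor $\Hom_{{}^t\M}(-,L'[1])$ to the exact sequence \eqref{ngexseq}
\[
0\to G\by{n}G\to{}_nG[1]\to 0
\]
in ${}^t\M[1/p]$. The resulting six-term exact sequence, combined with part (a) to identify $\Ext^1_{{}^t\M}({}_nG[1],L'[1])=\Ext^1_k({}_nG,L')$ and Proposition~\ref{hom}(d) for the Hom terms, gives an exact sequence exhibiting the $n$-torsion ${}_n\Ext^1_{{}^t\M}(G,L'[1])$ as a cokernel of a map out of $\Ext^1_k({}_nG,L')$. For $n\mid m$ the inclusion ${}_nG\hookrightarrow{}_mG$ induces a morphism of six-term sequences, and passing to the colimit over divisibility produces the canonical map of (d). To see that it is an isomorphism, I will show that $\Ext^1_{{}^t\M}(G,L'[1])$ is a torsion group (any extension admits a representative $[\tilde L\by{u}G]$ whose lattice part has finite rank, so killing the torsion of $L'_\tor$ by a large $n$ produces $n\cdot u=0$ after pull-back), hence equals the directed union of its $n$-torsions; the terms coming from Hom disappear in the colimit by the same divisibility argument. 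The vanishing when $L'$ is torsion then follows because multiplication by any integer $m$ annihilating $L'$ acts as zero on $\Ext^1_k({}_nG,L')$ but is cofinal with the inclusions in the direct system.

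The main obstacle will be step (d): ensuring that the colimit is really computed by the natural transition maps, and justifying torsionness of $\Ext^1_{{}^t\M}(G,L'[1])$ intrinsically (as opposed to element-by-element), are the delicate points. Everything else is formal once one sets up the strict effective representatives of Corollary~\ref{corexseq} and invokes Proposition~\ref{free} to move between the $\M[1/p]$ and ${}^t\M[1/p]$ viewpoints.
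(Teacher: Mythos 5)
Parts (a), (b), (c): your route through strict effective representatives is essentially the paper's own (the paper's proof opens by invoking Corollary \ref{corexseq}). One subtlety you gloss over in (c): the corollary gives a strict effective epimorphism onto something only \emph{isomorphic} in ${}^t\M[1/p]$ to $[0\to G]$, not onto $[0\to G]$ itself, so the representative in general has the shape $[F\to H]\to[F\to\tilde G]$ with $F$ finite, $H$ an extension of $\tilde G$ by $G'$, and $\tilde G/F\cong G$. You still need to kill $F$ by a quasi-isomorphism before reading off a class in $\Ext^1_k(G,G')$ --- this is exactly the paper's ``dividing by $F'$, then $F$'' reduction. The issue does not arise in (a) and (b) because there are no nontrivial quasi-isomorphisms with target of the form $[L\to 0]$.

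Part (d) is a genuinely different organization, but there are two concrete problems. First, the transition maps in the direct system $\{\Ext^1_k({}_nG,L')\}_n$ are \emph{not} induced by the inclusions ${}_nG\hookrightarrow{}_mG$ (those would point the wrong way under contravariance); they are pullback along multiplication $r\colon{}_mG\to{}_nG$, as the paper makes explicit via $0\to{}_rG\to{}_mG\by{r}{}_nG\to 0$. Getting the direction right matters: the induced transition on the $\Hom(G,L'[1])/n$-slot of your six-term sequence is multiplication by $r$, and it is only because of this (combined with finiteness of $\Hom(G,L'[1])$ from Proposition \ref{hom}(d)) that the colimit of those terms vanishes. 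Second, and more seriously, you need $\Ext^1_{{}^t\M}(G,L'[1])$ to be torsion so that $\varinjlim{}_n\Ext^1=\Ext^1$, and your parenthetical sketch does not establish this: when $L'$ is a lattice, $L'_\tor=0$, so ``killing the torsion of $L'_\tor$'' is vacuous, and it is unexplained why $n\cdot u=0$ should kill the class. To close the gap you must either (i) show directly that every class lies in the image of some $\Phi_n$ --- which is precisely the paper's surjectivity argument, so your approach circles back to the paper's --- or (ii) use a weight argument: $\Ext^1_{{}^t\M}(G,L'[1])\otimes\Q\cong\Ext^1_{\M\otimes\Q}(G,L'[1])=0$ because $G$ is pure of weight $\le-1$ and $L'[1]$ of weight $0$ (cf.\ the proof sketch of Proposition \ref{iso1}), with Corollary \ref{isofree} and Proposition \ref{pB.4.1} supplying the base-change of Ext groups. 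Route (ii) would make your (d) genuinely self-contained; as written, it is incomplete.
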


\begin{proof} By Corollary \ref{corexseq}, any short exact sequence
of $1$-motives can be represented up to isomorphism by a short exact
sequence of complexes in which each term is an effective $1$-motive.

For (a), just observe that there are no nontrivial \qi of
$1$-motives  with zero semiabelian part. For (b), note that an
extension of $L[1]$ by $G'$ is given by a diagram 
\begin{equation}\label{ex1} \begin{CD}
0@>>> F' @>{}>> L''@>>> L@>>>  0\\
&&@V{}VV@V{v}VV@V{}VV  \\
0@>>> \tilde G' @>{}>> G''@>>> 0@>>>  0
\end{CD}
\end{equation}
where $\tilde M' = [F'\to \tilde G']$ is \qi to $[0\to G']$. When
$F'=0$, this diagram is equivalent to the datum of $v$: this
provides a linear map $\Hom_k (L, G')\to \Ext^1_{{}^t\M} (L[1],G')$.
This map is surjective since we may always mod out by $F'$ in
\eqref{ex1} and get an quasi-isomorphic exact sequence with $F'=0$.
It is also injective: if \eqref{ex1} (with $F'=0$) splits in ${}^t\M[1/p]$,
it already splits in ${}^t\M^{\eff}[1/p]$ and then $v =0$. 

For (c) we see that an extension of $G$ by $G'$ in ${}^t\M[1/p]$ can be
represented by a diagram 
\[ \begin{CD}
0@>>> F' @>{}>> L''@>>> F@>>>  0\\
&&@V{}VV@V{}VV@V{}VV  \\
0@>>> \tilde G' @>{}>> G''@>>> \tilde G@>>>  0
\end{CD}\]
with $\tilde M'$ as in (b) and $\tilde M = [F\to \tilde
G]$ \qi to $[0\to G]$. Since the top line is exact, $L''$ is
finite. For $F=F'=0$ we just get a group scheme extension of $G$ by
$G'$, hence a homomorphism $\Ext^1_k(G,G')\to \Ext^1_{{}^t\M}(G,G')$.
This homomorphism is surjective:  dividing by
$F'$ we get a quasi-isomorphic exact sequence
\[ \begin{CD}
0@>>> 0 @>{}>> L''/F'@>{\sim}>> F@>>>  0\\
&&@V{}VV@V{}VV@V{}VV  \\
0@>>> G' @>{}>> G''/F'@>>> \tilde G@>>>  0
\end{CD}\]
and further dividing by $F'$ we then obtain 
\[ \begin{CD}
0@>>> 0 @>{}>> 0@>>> 0@>>>  0\\
&&@V{}VV@V{}VV@V{}VV  \\
0@>>> G' @>{}>> G''/L''@>>> G@>>>  0.
\end{CD}\]

Injectivity is seen as in (b).

For (d) we first construct a map $\Phi_n:\Ext^1_k({}_nG,L') \to
\Ext^1_{{}^t\M}(G,L'[1])$ for all $n$.  Let $[L'']\in \Ext_k
({}_nG,L')$ and consider the following diagram
\begin{equation}\label{ex2} 
\begin{CD}
0@>>> L' @>{}>> L''@>>> {}_nG@>>>  0\\
&&@V{}VV@V{}VV @V{}VV  \\
0@>>> 0 @>{}>> G@= G@>>>  0.
\end{CD}\end{equation}

Since $[{}_nG \to G]$ is \qi to $[0\to G]$, this provides an
extension of $G$ by $L'[1]$ in ${}^t\M[1/p]$. For $n$ variable $\{\Ext^1_k
({}_nG,L')\}_n$ is a direct system and one checks easily that the
maps $\Phi_n$ are compatible (by pull-back), yielding a
well-defined linear map
\[\Phi:\varinjlim \Ext^1_k ({}_nG,L')\to\Ext^1_{{}^t\M} (G,
L'[1]).\] 

This map is surjective since any extension of $G$ by $L'[1]$ can be
represented by a diagram \eqref{ex2} for some $n$ (as
multiplication by $n$ is cofinal in the direct sytem of isogenies).
We now show that $\Phi$ is also injective. 

Let $n\mid m$, \eg  $rn =m$, so that the following sequence is exact
$$0\to {}_rG\to {}_mG\longby{r} {}_nG\to 0$$
and yields a long exact sequence
$$\Hom_k ({}_mG,L')\to \Hom_k ({}_rG,L')\to \Ext^1_k
({}_nG,L')\longby{r}
\Ext^1_k ({}_mG,L').$$

If $L'$ is torsion, we have $rL'=0$ for some $r$, hence  
$\varinjlim \Ext^1_k ({}_nG,L')=0$. This shows in particular that
$\Ext^1_{{}^t\M}(G,L'[1])=0$ in this case. 

Suppose now that $L'$ is free. Then we have
$\Hom_k ({}_rG,L')=0$, hence the transition maps are injective.
Therefore, to check that
$\Phi$ is injective it suffices to check that $\Phi_n$ is injective
for all $n$. 

Let $\sigma:G\to [L''\to G]$ be a section of \eqref{ex2} in ${}^t\M[1/p]$.
Then $\sigma$ can be represented by a diagram of effective maps
\[ 
\begin{CD}
&&&&&&{}_{rn}G\\
&&&&&\swarrow&\Big\downarrow &\scriptstyle
r\displaystyle\searrow\\ 
0@>>> L' @>{}>> L''&@>>> &{}_nG\\
&&@V{}VV@V{}VV G&&@V{}VV\\
&&&&&\swarrow&&\scriptstyle r\displaystyle\searrow\\
0@>>> 0 @>{}>> G&@= &G
\end{CD}\]
for some $r$, where the southwest map is a \qi To say that $\sigma$
is a section is to say that this diagram commutes. Hence the image
of
${}_{rn}G$ in $L''$ surjects onto ${}_nG$, and it also injects
since $L'$ is torsion-free. This means that the projection $L''\to
{}_nG$ has a section, hence $[L'']=0$ in $\Ext^1_k({}_nG,L'')$.

For a general $L'$, we reduce to these two special cases through an
easy diagram chase.
\end{proof}

\subsection{Projective objects in ${}^t\M[1/p]$}
We show that there are not enough projective objects in ${}^t\M[1/p]$, at least when $k$ is
algebraically closed:

\begin{propose}\label{proj} Suppose that $k=\bar k$.  Then the only projective object of
${}^t\M[1/p]$  is $0$.
\end{propose}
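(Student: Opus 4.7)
Set $P = [L \by{u} G]$ with $L$ a discrete sheaf and $G$ a connected semi-abelian $k$-scheme. The plan is to derive a contradiction in two steps, first forcing $L=0$ and then $G=0$, by testing projectivity against the short exact sequence
\[
0 \to [0\to G] \to [L\to G] \to [L\to 0] \to 0
\]
in ${}^t\M[1/p]$ (a strict effective epimorphism with kernel $[0\to G]$, hence a short exact sequence by Corollary \ref{corexseq}), combined with the $\Hom$ and $\Ext^1$ computations of Propositions \ref{hom} and \ref{ext} and the elementary observation that $k^*$ is not finitely generated when $k=\bar k$.

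For the first step I apply $\Ext^*(-,N)$ to this SES; projectivity kills $\Ext^1(P,N)$, so the boundary
\[
\Hom(G,N)\to\Ext^1([L\to 0],N)
\]
must be surjective for every $N$. Taking $N=[\Z\to 0]$, Proposition \ref{hom}(d) (with $L'_{\tor}=0$) gives $\Hom(G,[\Z\to 0])=0$, while Proposition \ref{ext}(a) gives $\Ext^1([L\to 0],[\Z\to 0])\cong \Ext^1_\Z(L,\Z)\cong L_{\tor}$; therefore $L_{\tor}=0$ and $L$ is a lattice. Taking next $N=\G_m$, the source $\Hom(G,\G_m)=X^*(G)$ is a finitely generated abelian group, whereas by Proposition \ref{ext}(b) the target $\Ext^1([L\to 0],\G_m)=\Hom_\Z(L,k^*)$ contains $(k^*)^{\rank L}$. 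Since $k=\bar k$ is infinite, $k^*$ is infinite and not finitely generated (in characteristic zero it contains $\Q^*$; in positive characteristic it is an infinite torsion group with elements of unboundedly many prime orders), so no finitely generated group can surject onto $(k^*)^{\rank L}$ unless $\rank L=0$. Hence $L=0$.

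For the second step $P=[0\to G]$, and if $G\neq 0$ then $G$ has positive dimension so $G[\ell]\neq 0$ over $\bar k$ for every prime $\ell\neq p$. Proposition \ref{ext}(d) then yields
\[
\Ext^1(G,[\Z\to 0]) \;=\; \varinjlim_n \Ext^1_\Z({}_nG,\Z) \;=\; \varinjlim_n {}_nG \;\supseteq\; G[\ell] \;\neq\; 0,
\]
contradicting $\Ext^1(P,[\Z\to 0])=0$. Thus $G=0$ and $P=0$. The only real obstacle is bookkeeping: matching the formulas of Propositions \ref{hom} and \ref{ext} to the correct positions in the two long exact sequences. The substantive input is the size mismatch in Step 1 between the finitely generated character group $X^*(G)$ and the non-finitely-generated group $(k^*)^{\rank L}$; this is the core reason why no nonzero object of ${}^t\M[1/p]$ can be projective.
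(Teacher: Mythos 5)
Your proof is correct and uses the same fundamental ingredients as the paper (the short exact sequence \eqref{stexseq} and the explicit $\Hom$/$\Ext$ computations of Propositions \ref{hom} and \ref{ext}), but you choose different test objects and obtain a somewhat cleaner structure. The paper tests against $\G_m$ to get $L$ finite and $A=0$ simultaneously (the finiteness of $\Ext(G,\G_m)$ is deduced from $\Ext^2(L[1],\G_m)$, a step the paper leaves rather implicit), then against $\Z/\ell[1]$ to kill the finite $L$ via an argument that requires lifting group-scheme extensions of $T$ — not the most transparent step — and finally handles the torus part by the concluding remark that multiplication by $n$ on $\G_m$ is a non-split epimorphism. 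You instead test against $\Z[1]$ to kill $L_{\tor}$ at once, against $\G_m$ to kill $\operatorname{rank} L$ (both of you exploit that $k^*$ is divisible hence not finitely generated over $\bar k$), and then dispatch all of $G$ — abelian and toric parts together — by the single computation $\Ext^1_{{}^t\M}(G,\Z[1])=\varinjlim_n\Ext^1_k({}_nG,\Z)\neq0$ from Proposition \ref{ext}(d). This unified treatment of $G$ is an improvement over the paper's case split. One small imprecision: the identification ``$\varinjlim_n\Ext^1_\Z({}_nG,\Z)=\varinjlim_n{}_nG$'' is not literally correct — what one actually gets is $\varinjlim_n\Hom({}_nG,\Q/\Z)$ with transition maps dual to the surjections ${}_mG\twoheadrightarrow{}_nG$, not the inclusions ${}_nG\hookrightarrow{}_mG$ — but since each $\Hom({}_nG,\Q/\Z)\cong{}_nG$ non-canonically and the transition maps are injective (as noted in the proof of Proposition \ref{ext}(d) for $L'$ free), the colimit indeed contains a nonzero copy of $({}_\ell G)^\vee\cong G[\ell]$ and your conclusion stands.
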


\begin{proof} Suppose that $M =[L\to G]\in {}^t\M[1/p]$ is such that $\Ext (M, N)\allowbreak
=0$ for any $N\in {}^t\M[1/p]$. From \eqref{stexseq}
we then get a long exact sequence
\begin{multline*}
\Hom (G,\G_m)\to\Ext (L[1],\G_m)\to\Ext (M,\G_m)\\
\to\Ext (G,\G_m)\to
\Ext^2 (L[1],\G_m)
\end{multline*}
where $\Ext (M,\G_m)=0$, thus {\it i)}\,  $\Ext (G,\G_m)$ is finite, and 
{\it ii)}\,  $\Hom (L,\G_m)$ is finitely generated.
We also have an exact sequence
$$\Hom (T,\G_m)\to\Ext (A,\G_m)\to\Ext (G,\G_m)$$
where $\Hom (T,\G_m)$ is the character group of the torus $T$ and $\Ext
(A,\G_m)$ is the group of $k$-points of the dual abelian variety $A$, the
abelian quotient of $G$. From {\it i)}\, we get $A=0$. Since  $\Hom
(L,\G_m)$ is an extension of a finite group by a divisible group, from {\it
ii)}\, we get that $L$ is a finite group.
Now consider the exact sequence, for $l\ne p$
\begin{multline*}
0\to\Hom (L[1],\Z/l[1])\to\Hom (M,\Z/l[1])\\
\to\Hom (T,\Z/l[1])\to 
\Ext (L[1],\Z/l[1])\to 0
\end{multline*}
where the right-end vanishing is $\Ext (M,\Z/l[1])=0$ by assumption. Now 
$\Hom (T,\Z/l[1])= \Ext (T,\Z/l)$ and any extension of the torus $T$ is
lifted to an extension of $M$ by $\Z/l$, therefore to an element of 
$\Hom (M,\Z/l[1])\allowbreak = \Ext (M,\Z/l)$. This yields $\Ext
(L,\Z/l)=0$ for any prime $l\ne p$, thus we see that $L=0$.

Finally, $[0\to \G_m]$ is not projective since, for $n>1$, the epimorphism
\[[0\to \G_m]\by{n}[0\to \G_m]\]
is not split.
\end{proof}

%\begin{remark}
%{\rm From Proposition~\ref{proj} we see that the category $\M$ has not
%enough projectives.}
%\end{remark}

\subsection{Weights}
If $M = [L\by{u}G]\in {}^t\M[1/p]$ is free then Deligne \cite{D} equipped $M = M_{\fr}$ with an
increasing filtration by sub-$1$-motives as follows:
$$W_{-2}(M) \df [0\to T]\subseteq W_{-1}(M)\df [0\to G] \subseteq W_{0}(M) \df M$$

If $M$ is torsion-free we then pull-back the weight filtration along the 
effective map $M \to M_{\fr}$ as follows:
$$W_i(M) \df \left \{\begin {array}{cl} M & i\geq 0\\{} 
[L_{\tor}\into G]& i= -1\\{}
[L_{\tor}\cap T\into T] & i= -2\\ 0 & i \leq -3 \end{array} \right. $$
Note that $W_i(M) $ is \qi to $W_i(M_{\fr})$.

If $M$ has torsion we then further pull-back the weight filtration along the 
effective map $M \to M_{\tf}$. 
\begin{defn}
{\rm Let $M = [L\by{u}G]$ be an effective $1$-motive. Let $u_A : L \to A$
denote the induced map where $A = G/T$. Define
$$W_i(M) \df \left \{\begin {array}{cl} M & i\geq 0\\{} 
[L_{\tor}\to G]& i= -1\\{}
[L_{\tor}\cap \ker (u_A)\to T] & i= -2\\{}
M_{\tor} = L_{\tor}\cap \ker (u)[1] & i = -3\\
0 & i \leq -4 \end{array} \right.$$}
\end{defn}

\begin{remark}It is easy to see that $M\mapsto W_i(M)$ yields a functor
from ${}^t\M[1/p]$ to ${}^t\M[1/p]$. 
\end{remark}

\section{Homotopy invariance for \'etale sheaves with transfers}

One of the main results of Voevodsky concerning presheaves with transfers is that, over a
perfect field
$k$, a Nisnevich sheaf with transfers $F$ is homotopy invariant (that is, $F(X)\iso F(X\times
\Aff^1)$ for any smooth
$X$) if and only if it is strongly homotopy invariant, that is, $H^i_\Nis(X,F)\iso
H^i_\Nis(X\times
\Aff^1,F)$ for any smooth $X$ and any $i\ge 0$. This allows him to define the \emph{homotopy
$t$-structure} on $\DM_-^\eff$.

These results remain ``as true as can be" in the \'etale topology, at least if $k$ has finite
\'etale cohomological dimension. According to an established tradition, this result is probably
well-known to experts but we haven't been able to find it in the literature: it could have been
formulated and proven for example in \cite{VL}. The aim of this appendix is to provide proofs,
for which our main source of results will be \cite{VL}.

\subsection{Homotopy invariance and strict homotopy invariance}

\begin{defn} We denote as in \cite[Def. 2.1]{VL} by $\PST(k) =\PST$ the category of presheaves
with transfers on smooth $k$-varieties. We also denote by $\EST(k)$, or simply $\EST$, the
cat\'egory of \'etale sheaves with transfers over $k$.
\end{defn}

According to \cite[Def. 2.15 and 9.22]{VL}:

\begin{defn}\label{dD.1} a) An object $F$ of $\PST$ or $\EST$ is \emph{homotopy invariant} if
$F(X)\iso F(X\times \Aff^1)$ for any smooth $k$-variety $X$.\\
b) Let $F\in \EST$. Then $F$ is \emph{strictly homotopy invariant} if $H^i_\et(X,F)\iso
H^i_\et(X\times \Aff^1,F)$ for any smooth $k$-variety $X$ and any $i\ge 0$.\\
We denote by $\HI_\et(k)=\HI_\et$ the full subcategory of $\EST$ consisting of homotopy
invariant sheaves, and by $\HI_\et^s(k)=\HI_\et^s$ the full subcategory of $\HI_\et$ consisting
of strictly homotopy invariant sheaves.\index{$\HI_\et^s$}
\end{defn}

(Strict homotopy invariance for $F$ simply means that $F$ is $\Aff^1$-local in $D^{-}(\EST)$,
see \cite[Lemma 9.24]{VL}.)

Note that $\HI_\et$ is a thick abelian subcategory of $\EST$: if $0\to F'\to F\to F''\to 0$ is
an exact sequence in $\EST$, then $F\in \HI_\et$ if and only if $F',F''\in \HI_\et$. We shall
see below that the same is true for $\HI_\et^s$.

The main example of a sheaf $F$ which is in $\HI_\et$ but not in $\HI_\et^s$ is $F=\Z/p$ in
characteristic $p$: because of the Artin-Schreier exact sequence we have
\[k[t]/\cP(k[t])\iso H^1_\et(\Aff^1_k,\Z/p)\]
where $\cP(x) = x^p-x$.

We are going to show that this captures entirely the obstruction for a sheaf in $\HI_\et$ not
to be in $\HI_\et^s$.

The following is an \'etale analogue of \cite[Th. 13.8]{VL}:

\begin{lemma}\label{lD.1.3} Let $F$ be a homotopy invariant presheaf with transfers. Suppose
moreover that $F$ is a presheaf of $\Z[1/p]$-modules, where $p$ is the exponential
characteristic of $k$. Then the associated \'etale sheaf with transfers \cite[Th. 6.17]{VL}
$F_\et$ is strictly homotopy invariant.
\end{lemma}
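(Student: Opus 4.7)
The plan is to deduce Lemma~\ref{lD.1.3} from its Nisnevich analogue, Voevodsky's Theorem~\cite[Th.~13.8]{VL} (or the earlier Cor.~22.15), by exploiting the $\Z[1/p]$-linearity to control the discrepancy between the étale and Nisnevich topologies, which is governed in positive characteristic by the Artin--Schreier exact sequence.

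First, since $F$ is homotopy invariant with transfers, so is $F_\Nis$, and Voevodsky's theorem yields that $F_\Nis$ is strictly Nisnevich-homotopy invariant. Let $\alpha$ denote the projection from the big étale site of $Sm(k)$ to the big Nisnevich site, so that $F_\et = \alpha^* F_\Nis$, and consider the Leray spectral sequence
\[E_2^{p,q} = H^p_\Nis(X, R^q\alpha_* F_\et) \Rightarrow H^{p+q}_\et(X, F_\et).\]
It would suffice to show that each $R^q\alpha_* F_\et$ is a homotopy invariant Nisnevich sheaf with transfers of $\Z[1/p]$-modules. Voevodsky's theorem then applied to each $R^q\alpha_* F_\et$ would make the $E_2$-terms homotopy invariant in $X$, and the spectral sequence would propagate the homotopy invariance to the abutment $H^{p+q}_\et(X, F_\et)$.

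To verify the structure on the derived direct images, the transfer structure is inherited from that of $F_\et$ via the standard sheafification arguments of Voevodsky for presheaves with transfers (\cite[Sect.~6 and Lemma~6.23]{VL}), which apply equally well to étale sheafification. Homotopy invariance is checked stalkwise using Gersten's principle (Proposition~\ref{pgersten}~c)): it suffices to compare values at separable closures of function fields. The stalk of $R^q\alpha_* F_\et$ at such a point $\Spec \bar K$ computes as $H^q_\et(\bar K, F_\et)$, which vanishes for $q > 0$ and equals $F(\bar K)$ for $q = 0$. The homotopy invariance of $F$ passes to the colimit to give $F(\bar K)\iso F(\Aff^1_{\bar K})$, as required.

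The main obstacle is the verification that $R^q\alpha_* F_\et$ is itself a homotopy invariant Nisnevich sheaf with transfers; once this is in hand, the result falls out by the Gersten--principle reduction plus Voevodsky's Nisnevich theorem. It is precisely at this stalk computation that the $\Z[1/p]$-hypothesis becomes indispensable: without inverting $p$, the Artin--Schreier sequence produces a non-vanishing $H^1_\et(\Aff^1_K, \Z/p)$ for a function field $K$ of characteristic $p$, which would obstruct the comparison and make $F_\et$ fail strict homotopy invariance in general (this is exactly the obstruction already noted in the discussion preceding the lemma).
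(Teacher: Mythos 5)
The proposal has a genuine gap at its central step. Your plan is to show that each $R^q\alpha_*F_\et$ is a homotopy invariant Nisnevich sheaf with transfers, and to check this via a Gersten-style reduction to separably closed function fields (Proposition~\ref{pgersten}~c)). But Gersten's principle is stated for complexes of presheaves with transfers whose cohomology presheaves are \emph{already} homotopy invariant; homotopy invariance is an input to that proposition, not an output, so invoking it to establish homotopy invariance of $R^q\alpha_*F_\et$ is circular. Moreover, the stalk vanishing you are after does not hold as a Nisnevich statement: the Nisnevich stalk of $R^q\alpha_*F_\et$ at a generic point with function field $K$ is $H^q_\et(K,F_\et)$ (Galois cohomology), which is generically nonzero for $q>0$; only after passing to $K_s$ does one get vanishing, and even then Gersten's principle~c) only yields a conclusion after \'etale sheafification, not about the Nisnevich sheaves $R^q\alpha_*F_\et$ themselves --- and these are definitely nonzero for $q>0$ (already for $F=\Z/\ell$, $\ell\ne p$, where $R^1\alpha_*\Z/\ell$ records \'etale $H^1$ that is invisible Nisnevich-locally). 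So the $E_2$-page of your Leray spectral sequence is not under control, and the reduction to Voevodsky's Nisnevich theorem does not go through.

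The paper's proof avoids the Leray spectral sequence entirely and instead runs a d\'evissage along the exact sequence $0\to F_{tors}\to F\to F\otimes\Q\to F\otimes\Q/\Z\to 0$, splitting the problem into a rational case and a (prime-to-$p$) torsion case that require genuinely different inputs. Rationally, \'etale and Nisnevich cohomology of sheaves with transfers agree and Voevodsky's Nisnevich theorem applies directly; for torsion, $F_\et$ is locally constant by Suslin--Voevodsky rigidity and strict homotopy invariance is then a classical consequence of smooth base change (SGA4 XV 2.2). A uniform Leray-type argument of the kind you sketch would in effect need to reprove smooth base change in the torsion case, which is exactly where the real content of the lemma lies and precisely what the $\Z[1/p]$-hypothesis is needed to access.
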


\begin{proof} The following method is classical: let $0\to F'\to F\to F''\to 0$ be an exact
sequence of homotopy invariant presheaves with transfers, and consider the corresponding exact
sequence $0\to F'_\et\to F_\et\to F''_\et\to 0$. If, among $F'_\et,F_\et$ and $F''_\et$, two
are in $\HI_\et^s$, then clearly so is the third. Using the exact sequence
\[0\to F_{tors}\to F\to F\otimes\Q\to F\otimes\Q/\Z\to 0\]
for the sheaf $F$ of Lemma \ref{lD.1.3}, this reduces us to the following cases:
\begin{itemize}
\item $F$ is a presheaf of $\Q$-vector spaces. Then the result is true by \cite[Lemma
14.25]{VL} (reduction to \cite[Th. 13.8]{VL} by the comparison theorem \cite[Prop. 14.23]{VL}).
\item $F$ is a presheaf of torsion abelian groups. Since, by assumption, this torsion is prime
to $p$, $F_\et$ is locally constant by Suslin-Voevodsky rigidity \cite[Th. 7.20]{VL}. Then the
result follows from \cite[XV 2.2]{sga4} (compare \cite[Lemma 9.23]{VL}).
\end{itemize}
\end{proof}

\begin{propose}\label{pD.1.4} The inclusion $\HI_\et^s\to \HI_\et$ has an exact
left adjoint/left inverse given by $F\mapsto F\otimes_\Z \Z[1/p]$. In particular,
$\HI_\et^s=
\HI_\et[1/p]$.
\end{propose}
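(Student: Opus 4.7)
The plan is to take the left adjoint to be $L\colon F\mapsto F\otimes_{\Z}\Z[1/p]$, and to verify its properties in three stages.

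First, since $\Z[1/p]$ is a flat $\Z$-module, $L$ is an exact endofunctor of $\EST$; it preserves étale sheaves (being the filtered colimit of $F\by{p}F\by{p}\cdots$) and preserves homotopy invariance termwise. Then by Lemma \ref{lD.1.3}, applied to $L(F)=F[1/p]$ viewed as a homotopy invariant presheaf of $\Z[1/p]$-modules with transfers whose étale sheafification coincides with itself, $L(F)$ is strictly homotopy invariant. Hence $L$ factors as an exact functor $\HI_\et\to\HI_\et^s$.

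Second, and here lies the main work: for every $G\in\HI_\et^s$, the unit map $G\to L(G)$ is an isomorphism, equivalently multiplication by $p$ is an automorphism of the sheaf $G$. I would argue this by using that $\HI_\et^s$ embeds fully faithfully in $\DM_{-,\et}^\eff$ as the heart of the homotopy $t$-structure (Corollary \ref{cD.2}), so that $\End_{\EST}(G)=\End_{\DM_{-,\et}^\eff}(G)$. By \cite[Prop. 3.3.3 2)]{V}, the category $\DM_{-,\et}^\eff$ is $\Z[1/p]$-linear, so this endomorphism ring is a $\Z[1/p]$-algebra, in which the element $p\cdot 1_G$ is invertible. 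This exhibits a two-sided inverse of $p\colon G\to G$ in $\EST$, and iterating over powers of $p$ shows that $G$ is a sheaf of $\Z[1/p]$-modules.

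Third, combining the two: the universal property of $\Z[1/p]$-localization, applied to the fact that every $G\in\HI_\et^s$ is a $\Z[1/p]$-module, yields the adjunction $\Hom_{\HI_\et^s}(L(F),G)=\Hom_{\HI_\et}(F,G)$, and the second step itself gives $L\circ i=\mathrm{id}_{\HI_\et^s}$. The identification $\HI_\et^s=\HI_\et[1/p]$ then follows immediately, since every object of $\HI_\et^s$ is a $\Z[1/p]$-module, and conversely any $F\in\HI_\et$ on which $p$ is invertible equals $L(F)$, which lies in $\HI_\et^s$ by the first stage.

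The principal obstacle is the second step, which reduces to the $\Z[1/p]$-linearity of $\DM_{-,\et}^\eff$. This is Voevodsky's theorem, stemming from the Artin-Schreier sequence killing mod-$p$ étale cohomology of the affine line in characteristic $p$; I would invoke it rather than reprove it.
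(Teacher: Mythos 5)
Your overall strategy coincides with the paper's: one direction is handled by Lemma \ref{lD.1.3}, and the other reduces to showing $\HI_\et^s$ is $\Z[1/p]$-linear. The only genuine difference lies in how that linearity is established. The paper re-runs the Artin--Schreier plus contractibility-of-$\G_a$ argument directly at the level of strictly homotopy invariant sheaves (``compare \cite[Prop.\ 3.3.3 2)]{V}''), whereas you invoke Voevodsky's $\Z[1/p]$-linearity of $\DM_{-,\et}^\eff$ as a black box and pull it back through the full embedding $\HI_\et^s\hookrightarrow\DM_{-,\et}^\eff$. Both arguments ultimately rest on the same fact; yours is a bit more economical in not reproving it.

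However, your citation of Corollary \ref{cD.2} for the embedding is circular: that corollary occurs later in the appendix and its proof explicitly relies on Proposition \ref{pD.1.4} (thickness of $\HI_\et^s$ in $\EST$ is derived from it). The good news is that you do not need Corollary \ref{cD.2}, nor the homotopy $t$-structure, to get the fully faithful embedding. By the remark immediately after Definition \ref{dD.1} (citing \cite[Lemma 9.24]{VL}), strict homotopy invariance for an \'etale sheaf with transfers is precisely $\Aff^1$-locality in $D^-(\EST)$, and since $\DM_{-,\et}^\eff$ is the Bousfield localization of $D^-(\EST)$ along the $\Aff^1$-weak equivalences, the localization functor restricts to an equivalence on $\Aff^1$-local objects; for a sheaf $G$ concentrated in degree $0$ this gives $\End_{\EST}(G)=\End_{D^-(\EST)}(G)=\End_{\DM_{-,\et}^\eff}(G)$. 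Replace the reference to Corollary \ref{cD.2} by this observation and your Stage 2 goes through, yielding a sound proof.
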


\begin{proof}  The fact that $\HI_\et^s$ is $\Z[1/p]$-linear follows from the Artin-Schreier
exact sequence plus the contractibility of $\G_a$ (compare \cite[Prop. 3.3.3 2)]{V}).
Conversely, Lemma \ref{lD.1.3} implies that any $\Z[1/p]$-linear sheaf of $\HI_\et$ belongs
to $\HI_\et^s$. The rest of the proposition follows.
\end{proof}

As a complement, let us mention the following proposition, which extends Proposition
\ref{pD.1.4}:

\begin{propose}\label{pD.2} Let $F\in \HI_\et$.
Then the complex
$C_*(F)$ is canonically isomorphic to $F[1/p]$ in $\DM_{-,\et}^\eff$.
\end{propose}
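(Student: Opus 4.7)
The plan is to reduce the statement to two separate facts: that $C_*(F)$ is quasi-isomorphic to $F$ as a complex of \'etale sheaves with transfers, and that $F$ becomes canonically isomorphic to $F[1/p]$ once we pass from $D^-(\EST)$ to $\DM_{-,\et}^\eff$.

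First I would verify $C_*(F)\simeq F$ in $D^-(\EST)$. Since $F$ is homotopy invariant, for every smooth $X$ the projections $X\times \Delta^n\to X$ induce isomorphisms $F(X)\iso F(X\times\Delta^n)$, and the face maps $d_i^*:F(X\times\Delta^n)\to F(X\times\Delta^{n-1})$, being retractions of these projections, must all coincide with the inverse isomorphism. Hence the simplicial abelian group $n\mapsto C_n(F)(X)=F(X\times\Delta^n)$ is constant with value $F(X)$, and its associated normalized complex is quasi-isomorphic to $F(X)$ concentrated in degree $0$. This is natural in $X$ and survives \'etale sheafification, so the augmentation $C_*(F)\to F$ is the desired quasi-isomorphism.

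It remains to produce a canonical isomorphism $F\simeq F[1/p]$ in $\DM_{-,\et}^\eff$. The cleanest route is Yoneda. Recall the adjunction $C_*\dashv\iota$, where $\iota:\DM_{-,\et}^\eff\hookrightarrow D^-(\EST)$ is the inclusion, together with the adjunction of Proposition \ref{pD.1.4} between the inclusion $\HI_\et^s\hookrightarrow \HI_\et$ and $(-)\otimes_\Z\Z[1/p]$. For any $X\in\HI_\et^s$, viewed as a sheaf in degree $0$, the chain
\begin{align*}
\Hom_{\DM_{-,\et}^\eff}(C_*(F),X)&\cong \Hom_{D^-(\EST)}(F,X)\\
&\cong \Hom_{\HI_\et}(F,X)\\
&\cong \Hom_{\HI_\et^s}(F[1/p],X)\\
&\cong \Hom_{\DM_{-,\et}^\eff}(F[1/p],X)
\end{align*}
is natural in $X$. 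Since the objects of $\HI_\et^s$ (and their shifts) generate $\DM_{-,\et}^\eff$, the Yoneda lemma yields the desired canonical isomorphism $C_*(F)\simeq F[1/p]$ in $\DM_{-,\et}^\eff$.

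The main obstacle is setting up all these adjunctions compatibly. One must know that $C_*$ lands in $\DM_{-,\et}^\eff$, which relies on the cohomology presheaves of $C_*(F)$ being homotopy invariant (standard) combined with Lemma \ref{lD.1.3}, and that the resulting $C_*$ is genuinely left adjoint to $\iota$. A secondary point is ensuring that the composite isomorphism is indeed the ``obvious'' one, namely the composition of the inverse augmentation $F\to C_*(F)$ (in $D^-(\EST)$) with the localisation morphism $F\to F[1/p]$ --- but this follows from the naturality built into the Yoneda argument. Once these formalities are in place, the proof is essentially purely formal: it repackages Proposition \ref{pD.1.4} and the definition of $\DM_{-,\et}^\eff$ as a localisation.
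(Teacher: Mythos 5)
Your first observation --- that for $F\in\HI_\et$ (merely homotopy invariant, not strictly so) the augmentation $F\to C_*(F)$ is already a quasi-isomorphism of complexes of \'etale sheaves with transfers, because all face maps coincide with the projection pullback so the simplicial sheaf is constant --- is correct. It is a pleasant, slightly more elementary alternative to the appeal to \cite[Lemma 9.15]{VL} that the paper makes for $F[1/p]\in\HI_\et^s$. The chain of adjunction isomorphisms $\Hom_{\DM_{-,\et}^\eff}(C_*(F),X)\cong\cdots\cong\Hom_{\DM_{-,\et}^\eff}(F[1/p],X)$ is also valid step by step, using the adjunction $C_*\dashv\iota$, the full embedding $\HI_\et\into\EST$, the left adjoint of Proposition~\ref{pD.1.4}, and the $\Aff^1$-locality of $X$ and $F[1/p]$.

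However, the final Yoneda step has a real gap: to conclude that the induced morphism $C_*(F)\to F[1/p]$ is an isomorphism you need $\HI_\et^s$ (together with its shifts) to \emph{generate} $\DM_{-,\et}^\eff$ in a sense adequate for a Yoneda/Brown-representability argument, and that is not established. The homotopy $t$-structure with heart $\HI_\et^s$, which would give a Postnikov-tower argument, is only available under finiteness hypotheses on $k$ (Corollary~\ref{cD.2}), and $\DM_{-,\et}^\eff$ contains complexes unbounded below, so even with the $t$-structure the d\'evissage is not immediate. Note also that your Yoneda chain never actually uses your opening quasi-isomorphism, so the two halves of the proposed proof do not mesh. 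The paper sidesteps the issue by factoring differently: the natural map $C_*(F)\to C_*(F)[1/p]=C_*(F[1/p])$ is already an isomorphism in $\DM_{-,\et}^\eff$ because the target category is $\Z[1/p]$-linear (so $p$ acts invertibly on every object), while $F[1/p]\to C_*(F[1/p])$ is a quasi-isomorphism of complexes of sheaves because $F[1/p]\in\HI_\et^s$. You can rescue your own approach similarly and without Yoneda: apply your quasi-isomorphism observation to $F[1/p]$ instead of $F$, so that $F[1/p]\iso C_*(F[1/p])=C_*(F)[1/p]$ in $D^-(\EST)$, and then invoke $\Z[1/p]$-linearity of $\DM_{-,\et}^\eff$ to identify $C_*(F)$ with $C_*(F)[1/p]$ after localisation.
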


\begin{proof} The map $F\to F[1/p]$ induces a map $C_*(F)\to C_*(F[1/p])$. The
latter complex is tautologically equal to $C_*(F)[1/p]$. Since $\DM_{-,\et}^\eff$ is
$\Z[1/p]$-linear, the map $C_*(F)\to C_*(F)[1/p]$ is a quasi-isomorphism. Finally, since
$F[1/p]\in \HI_\et^s$ (Prop. \ref{pD.1.4}), the augmentation $F[1/p]\to C_*(F[1/p])$ is a
quasi-isomorphism by \cite[Lemma 9.15]{VL}.
\end{proof}

\begin{cor}\label{cD.1} Let $F$ be a homotopy invariant Nisnevich sheaf with transfers. Then,
the natural functor $\alpha^*:\DM_-^\eff\to \DM_{-,\et}^\eff$ sends $F$ to
$F_\et[1/p]$.
\end{cor}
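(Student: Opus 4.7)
The plan is to unwind the definition of the change of topology functor $\alpha^*$ and then invoke Proposition \ref{pD.2} for the identification. Recall that $\alpha^* : \DM_-^\eff \to \DM_{-,\et}^\eff$ is constructed by first étale-sheafifying a (complex of) Nisnevich sheaf(ves) with transfers, and then $\Aff^1$-localising, a localisation which on bounded above complexes is computed by the Suslin complex $C_*$ (\cf \cite[\S 3.3]{V}). Thus, starting from $F \in \HI \subset \DM_-^\eff$, the image $\alpha^* F$ is canonically represented in $\DM_{-,\et}^\eff$ by $C_*(F_\et)$, where $F_\et$ denotes the étale sheaf with transfers associated to $F$ (\cf \cite[Th. 6.17]{VL}).

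Next, I would verify that $F_\et \in \HI_\et$, i.e.\ that étale sheafification preserves homotopy invariance. Since $F$ is a homotopy invariant presheaf with transfers, the canonical map $F \to p_{X*}p_X^*F$ is an isomorphism of presheaves for any smooth $X$ (where $p_X : X\times \Aff^1 \to X$). Étale-sheafifying and using that sheafification commutes with pullbacks along $p_X$, one obtains the analogous isomorphism for $F_\et$, hence $F_\et \in \HI_\et$.

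With $F_\et \in \HI_\et$ in hand, Proposition \ref{pD.2} applies and gives a canonical isomorphism
\[ C_*(F_\et) \simeq F_\et[1/p] \]
in $\DM_{-,\et}^\eff$. Combining with the identification $\alpha^* F \simeq C_*(F_\et)$ from the first paragraph, we conclude $\alpha^* F \simeq F_\et[1/p]$ in $\DM_{-,\et}^\eff$, as required.

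The main obstacle (such as it is) is verifying that the formal identification $\alpha^* F \simeq C_*(F_\et)$ is indeed correct at the level of the localised triangulated categories; this requires being precise about whether $\alpha^*$ is defined as the composite with $C_*$ or via a universal property, but in either case the description above is standard (see the discussion around \cite[Prop. 3.3.2]{V}). Once this is granted, everything else is a direct application of Proposition \ref{pD.2}, so the corollary is essentially a reformulation of that proposition.
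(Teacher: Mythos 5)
Your overall strategy coincides with the paper's: unwind $\alpha^*$ as ``\'etale sheafification followed by $C_*$'' (so $\alpha^*F\simeq C_*(F_\et)$, as in \cite[Remark 14.3]{VL}), then invoke Proposition \ref{pD.2} to identify $C_*(F_\et)$ with $F_\et[1/p]$. So the shape of the argument is the paper's.

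The one place you go beyond what the paper says — the verification that $F_\et\in\HI_\et$, which is indeed required by the hypothesis of Proposition \ref{pD.2} — is not correctly argued. Homotopy invariance of a presheaf is the condition that $F(U)\to F(U\times\Aff^1)$ is an isomorphism for all smooth $U$; reformulating it as $F\iso p_{X*}p_X^*F$ and then appealing to ``sheafification commutes with pullbacks'' does not give what you want, because \'etale sheafification certainly does not commute with $p_{X*}$ (pushforward), and $F_\et(U)$ is computed from sections over \'etale covers of $U$, not by sheafifying the groups $F(U)$ one at a time. The cleanest way to repair this is to bypass $F_\et\in\HI_\et$ altogether, essentially unfolding the proof of Proposition \ref{pD.2}: since $\DM_{-,\et}^\eff$ is $\Z[1/p]$-linear, $C_*(F_\et)\iso C_*(F_\et)[1/p]=C_*(F_\et[1/p])$; and $F_\et[1/p]=(F\otimes\Z[1/p])_\et$ is strictly homotopy invariant by Lemma \ref{lD.1.3} applied directly to the homotopy invariant \emph{presheaf} of $\Z[1/p]$-modules $F\otimes\Z[1/p]$ (Lemma \ref{lD.1.3} requires only a presheaf, not an \'etale sheaf), whence $C_*(F_\et[1/p])\iso F_\et[1/p]$ by \cite[Lemma 9.15]{VL}. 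With that substitution the proof is complete.
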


\begin{proof} According to \cite[Remark 14.3]{VL}, $\alpha^*$ may be described as the
composition
\[\DM_-^\eff\into
D^-(\Shv_\Nis(Sm(k)))\longby{\alpha^*}D^-(\Shv_\et(Sm(k)))\longby{RC}\DM_{-,\et}^\eff\]
where the middle functor is induced by the inverse image functor (change of topology) on sheaves
and $RC$ is induced by $K\mapsto C_*(K)$. The result then follows from Proposition \ref{pD.2}.
\end{proof}

\subsection{Friendly complexes}

\begin{defn}\label{dfr} A object $C\in D^{-}(\EST)$ is \emph{friendly} if there exists an
integer
$N=N(C)$ such that, for any prime number $l\ne p$, $H_q(C/l)=0$ for $q>N$ (in other terms,
$C/l$ is uniformly bounded below). We denote by $D^{-}_\fr(\EST)$ the full subcategory of
friendly objects and by $\DM_{\fr,\et}^\eff$ the intersection $D^-_\fr(\EST)\cap
\DM_{-,\et}^\eff$.
\end{defn}

\begin{thm}\label{tfr} $\DM_{\gm,\et}^\eff\subset \DM_{\fr,\et}^\eff$.
\end{thm}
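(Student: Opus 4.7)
The plan is to combine a soft stability argument with a concrete étale--cohomological computation on generators. First I would observe that the class of friendly objects forms a thick triangulated subcategory of $D^{-}(\EST)$: for any exact triangle $A\to B\to C\to A[1]$, the long exact sequence
$$\cdots\to H_q(A/l)\to H_q(B/l)\to H_q(C/l)\to H_{q-1}(A/l)\to\cdots$$
obtained by tensoring with $\Z/l$ shows that uniform bounds $N(A),N(C)$ for the two outer terms give a uniform bound $N(B)=\max(N(A),N(C)+1)$ for the middle one, and stability under direct summands is obvious. Since $\DM_{\gm,\et}^\eff$ is by definition the thick subcategory of $\DM_{-,\et}^\eff$ generated by the motives $M_\et(X)$ with $X$ smooth (indeed smooth quasi-projective suffices), one is reduced to proving friendliness of $M_\et(X)$ for $X$ smooth of pure dimension $d$.

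For such an $X$ and any prime $l\ne p$, I would compute the homology sheaves of $M_\et(X)/l = C_*(L_\et(X))\oo^L\Z/l$ via Voevodsky's rigidity theorem for prime-to-$p$ torsion (\cite[Th.~7.20]{VL}). By Proposition~\ref{pD.1.4} these sheaves are strictly homotopy invariant, and by rigidity they are locally constant on $Sm(k)_\et$; their stalk at a geometric point over a smooth $k$-variety is determined solely by the residue field $K$ and computes Suslin homology $H_q^{\mathrm{Sus}}(X_K,\Z/l)$. The Suslin--Voevodsky comparison theorem identifies the latter, up to a Tate twist, with $H^{2d-q}_{\et,c}(X_K,\mu_l^{\otimes d})$ when $K$ is separably closed, which vanishes trivially in cohomological degree $2d-q<0$, \emph{uniformly in $l\ne p$}. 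Gersten's principle (Proposition~\ref{pgersten}(c)) then lifts this vanishing at all separably closed function-field stalks to the vanishing of the sheaf $H_q(M_\et(X)/l)$ itself, yielding $N(M_\et(X))=2d$ as a friendliness witness.

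The hardest part will be handling the Suslin--Voevodsky comparison with sufficient care: one must verify that the identification of $M_\et(X)/l$ with a shift of an étale cohomology complex really produces a uniform-in-$l$ bound at the level of \emph{sheaves} on $Sm(k)_\et$ (not merely after evaluation at the base field), and that the passage through Gersten's principle is legitimate for the a priori unbounded complex $M_\et(X)/l$ of étale sheaves with transfers. Once these technicalities are addressed, the uniform vanishing is an immediate consequence of the elementary observation that étale cohomology in negative cohomological degree is zero.
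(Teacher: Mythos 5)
Your proof is correct, and it reaches the same bound $N=2\dim X$ as the paper, but by a somewhat different route. The paper computes the dual object $\ihom_\et(C_*(L_\et(X)),\Z/l)\simeq R\pi_*\Z/l$ first (using the same \'etale Suslin--Voevodsky comparison you invoke), observes that this is a \emph{bounded} complex of \emph{constructible} $\Z/l$-sheaves by the cohomological dimension and finiteness theorems of SGA4 and SGA4$\tfrac{1}{2}$, and then recovers $M_\et(X)/l$ via the biduality isomorphism $M_\et(X)/l\iso\ihom_\et(\ihom_\et(M_\et(X),\Z/l),\Z/l)$; boundedness of the bidual is then automatic. You instead compute the homology sheaves $H_q(M_\et(X)/l)$ directly: rigidity makes them locally constant, their separably closed stalks are identified with Suslin homology $H_q^{\mathrm{Sus}}(X_{\bar K},\Z/l)$, and you conclude via the Suslin--Voevodsky comparison plus Poincar\'e duality. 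Both approaches rest on the same two ingredients (the comparison theorem and the finiteness/cohomological dimension bounds for \'etale cohomology of varieties over separably closed fields), so they are morally close; the paper's biduality packages the uniformity-in-$l$ of the bound through constructibility in a single stroke, while yours makes the stalkwise mechanism visible. One small streamlining remark: once rigidity has told you that $H_q(M_\et(X)/l)$ is a locally constant constructible sheaf of $\Z/l$-modules, its vanishing is detected by the single stalk at $\Spec\bar k$, so the detour through Gersten's principle and all function-field stalks, while perfectly valid, is slightly heavier than needed. Also, for the Poincar\'e duality step you should make sure $X$ is taken of pure dimension $d$ (which you do implicitly by saying ``pure dimension''), or else split $X$ into its equidimensional pieces; and the identification of $H_q^{\mathrm{Sus}}$ with a dual of $H^q_\et$ via the universal-coefficients/duality gymnastics is exactly what makes the bound land at $q>2d$ rather than elsewhere, so that bookkeeping should be carried out carefully in a final write-up.
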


\begin{proof} If is clear that $D^{-}_\fr(\EST)$ is a thick triangulated subcategory of
$D^-(\EST)$; hence it suffices to prove that $C_*(L_\et(X))$ is friendly for any smooth scheme
$X$. By \cite[Lemmas 6.23 and 9.15]{VL}, we have for any smooth $U$, any prime $l\ne p$ and any
$q\in \Z$
\[\Hom(C_*(L_\et(X\times U)),\Z/l[q])\simeq H^q_\et(X\times U,\Z/l)\]
hence
\[\sext^q(C_*(L_\et(X)),\Z/l)\simeq R^q_\et \pi_*\Z/l\]
where $\pi:X\to \Spec k$ is the structural morphism. By the cohomological dimension results
of \cite[Exp. X]{sga4} and the finiteness results of \cite[Th. finitude]{sga4 1/2}, this shows
that
$\ihom_\et(C_*(L_\et(X)),\Z/l)$ is a bounded complex of constructible $\Z/l$-sheaves. It
follows that the biduality morphism
\[C_*(L_\et(X))/l\to \ihom_\et(\ihom_\et(C_*(L_\et(X)),\Z/l),\Z/l)\]
is an isomorphism of bounded complexes of constructible $\Z/l$-sheaves. Moreover, the lower
bound is at most $2\dim X$, hence is independent of
$l$.
\end{proof}

\subsection{The \'etale homotopy $t$-structure}

The following is an \'etale analogue of \cite[Prop. 14.8]{VL}:

\begin{propose} Let $K\in
D^{-}(\EST)$ be a bounded above complex of \'etale sheaves with transfers. Suppose either
that the \'etale cohomological dimension of $k$ is finite, or that $K$ is friendly. Then $K$ is
$\Aff^1$-local if and only if all its cohomology sheaves are strictly homotopy invariant.
\end{propose}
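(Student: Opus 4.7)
Both directions go through the hypercohomology spectral sequence
\[E_2^{p,q}(X) = H^p_\et(X, \cH^q(K)) \Rightarrow \HH^{p+q}_\et(X, K)\]
together with its analogue for $X \times \Aff^1$ and the comparison map between them induced by the projection. My first task is to establish strong convergence of this spectral sequence under each of the two hypotheses. Boundedness above of $K$ takes care of the $q$-direction. For the $p$-direction, if $\cd_\et(k) < \infty$ then $\cd_\et(X) \leq \cd_\et(k) + 2\dim X$ for smooth $X$ of finite type, giving convergence directly. Under the friendliness hypothesis, I would split via the triangles $K_\tors \to K \to K \otimes \Q$ and $K \xrightarrow{l} K \to K/l$ for each prime $l$: the rational piece reduces to the Nisnevich situation via \cite[Prop.~3.3.2]{V} where $\cd_\Nis(X) \leq \dim X$; for $l \neq p$, $K/l$ is uniformly bounded by friendliness and étale cohomology with $\Z/l$-coefficients on smooth $k$-varieties has finite cohomological dimension; and the $p$-primary part contributes nothing since any $\Aff^1$-local object lives in $\DM_{-,\et}^\eff$, which is $\Z[1/p]$-linear by \cite[Prop.~3.3.3~2)]{V}.

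The \emph{if} direction follows immediately: strict homotopy invariance of each $\cH^q(K)$ makes the comparison map of $E_2$-pages an isomorphism, so by strong convergence the abutments agree and $K$ is $\Aff^1$-local.

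For the \emph{only if} direction, the key point is that the subcategory of $\Aff^1$-local objects in $D^{-}(\EST)$ coincides with $\DM_{-,\et}^\eff$, which is $\Z[1/p]$-linear. Hence multiplication by $p$ is invertible on $K$, so each $\cH^q(K)$ lies in $\HI_\et \otimes \Z[1/p]$ once I know it is homotopy invariant. By Proposition~\ref{pD.1.4}, $\HI_\et[1/p] = \HI_\et^s$, so homotopy invariance will automatically upgrade to strict homotopy invariance. To prove homotopy invariance of the individual cohomology sheaves I would use descending induction on the top cohomological degree $q_0$ of $K$ (which exists since $K$ is bounded above): once $\cH^{q_0}(K)$ is shown strictly homotopy invariant, the truncation triangle $\tau_{<q_0} K \to K \to \cH^{q_0}(K)[-q_0] \to +$ combined with the ``if'' direction for the top layer and $\Aff^1$-locality of $K$ forces $\tau_{<q_0} K$ to be $\Aff^1$-local, and the induction continues. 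The base of the induction (a single sheaf in one degree) is tautological, since for $K = F[0]$ the condition of $\Aff^1$-locality unwinds to the definition of strict homotopy invariance for $F$.

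The main obstacle is the bootstrapping step of the ``only if'' direction: the $E_2$-pages for $X$ and $X \times \Aff^1$ do not agree \emph{a priori}, only the abutments do, so one cannot read off strict homotopy invariance of individual $\cH^q(K)$ directly from the spectral sequence. Bridging this gap requires either finite cohomological dimension (to ensure that for each fixed $n$ only finitely many rows contribute to $\HH^n$, so a diagram chase isolates the top row) or the friendly decomposition (so that after inverting $p$ and working torsion-by-torsion, each piece reduces to a setting where strict homotopy invariance is already known, via Proposition~\ref{pD.1.4}, Suslin--Voevodsky rigidity \cite[Th.~7.20]{VL}, and \cite[Prop.~14.23]{VL}).
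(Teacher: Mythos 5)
The ``if'' direction of your proposal (spectral sequence comparison, with convergence either via finite \'etale cohomological dimension or via the friendly decomposition) is essentially the paper's argument, which also invokes the spectral sequence of \cite[Prop.~9.30]{VL}; the paper uses the exact triangle $K\to K\otimes\Q\to K\otimes(\Q/\Z)'\by{+1}$ rather than your twofold splitting, together with the observation that both the $\Aff^1$-local condition and the ``strictly homotopy invariant cohomology'' condition are stable under triangles because $\HI_\et^s$ is thick by Proposition~\ref{pD.1.4}, which is a cleaner way to organise the reduction.

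The real trouble is in your ``only if'' direction. Your descending truncation scheme is structurally fine (peel off $\cH^{q_0}(K)[-q_0]$, note that a single sheaf in one degree is $\Aff^1$-local if and only if it is strictly homotopy invariant, and pass to $\tau_{<q_0}K$), but it leaves the decisive step unaddressed: you never actually establish that $\cH^{q_0}(K)$ is strictly homotopy invariant. You acknowledge this as ``the main obstacle'' and then gesture at two fixes --- a diagram chase in the spectral sequence under finite cohomological dimension, or the friendly torsion-by-torsion decomposition --- but neither is carried out, and the first does not obviously work: $E_\infty^{0,q_0}$ is only a subgroup of $E_2^{0,q_0}=H^0(X,\cH^{q_0}(K))$ (outgoing differentials are not killed by bounded-above-ness), and agreement of abutments does not isolate the term you want, no matter how few rows contribute. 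The argument the paper relies on (adapting \cite[Prop.~14.8]{VL}) bypasses spectral sequence filtration issues entirely: for each $n$, the presheaf with transfers $X\mapsto \HH^n_\et(X,K)$ is homotopy invariant by the hypothesis that $K$ is $\Aff^1$-local, it is a presheaf of $\Z[1/p]$-modules because $K$ is $\Z[1/p]$-linear by \cite[Prop.~3.3.3~2)]{V}, and its \'etale sheafification is $\cH^n(K)$ (stalks over strictly Henselian points compute $\cH^n$ directly); Lemma~\ref{lD.1.3} then delivers strict homotopy invariance of $\cH^n(K)$ in one shot, with no induction and no appeal to the cohomological-dimension or friendliness hypotheses for this implication. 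You are right that Proposition~\ref{pD.1.4} is the upgrade from homotopy invariance to strict, but the missing ingredient that your write-up never names is Voevodsky's theorem in the guise of Lemma~\ref{lD.1.3}, which is precisely what makes the ``only if'' direction go through.
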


\begin{proof} In the finite cohomological dimension case, ``if" is trivial (\cf \cite[Prop.
9.30]{VL}). For ``only if", same proof as that of \cite[Prop. 14.8]{VL}, by replacing the
reference to \cite[Th. 13.8]{VL} in \loccit by a reference to Lemma \ref{lD.1.3} (note that if
$K$ is $\Aff^1$-local, then it is $\Z[1/p]$-linear by \cite[Prop. 3.3.3 2)]{V} and thus so are
its cohomology sheaves).

In the friendly case, note that the two conditions
\begin{itemize}
\item $\Aff^1$-local
\item having strictly homotopy invariant cohomology sheaves
\end{itemize}
are stable under triangles: for the first it is obvious and for the second it is because
$\HI_\et^s$ is thick in $\EST$ by Proposition \ref{pD.1.4}. Considering the exact triangle
\[K\to K\otimes \Q\to K\otimes (\Q/\Z)'\by{+1}\]
we are reduced to show the statement separately for $K\otimes \Q$ and for $K\otimes (\Q/\Z)'$.
In the first case this works by reduction to Nisnevich cohomology, while in the second case the
spectral sequence of \cite[Prop. 9.30]{VL} also converges, this time because $K\otimes (\Q/\Z)'$
is bounded below.
\end{proof}

\begin{remark} The finite cohomological dimension hypothesis appears in the spectral sequence
arguments of the proofs of \cite[Prop. 9.30 and 14.8]{VL}. We don't know
if it is really necessary. Nevertheless, Jo\"el Riou pointed out that this argument trivially
extends to fields of virtually finite cohomological dimension: the only issue is for the ``if"
part, but if we know that an object $K$ is $\Aff^1$-local \'etale-locally, then it is clearly
$\Aff^1$-local. (For example, this covers all fields of arithmetic origin.) Therefore:
\end{remark}

\begin{cor}\label{cD.2} If the virtual \'etale cohomological dimension of $k$ is finite, then
$\DM_{-,\et}^\eff$ has a homotopy $t$-structure, with heart $\HI_\et^s$, and the functor
$\alpha^*:\DM_-^\eff\to \DM_{-,\et}^\eff$ is $t$-exact. Without any cohomological
dimension assumption, $\DM_{\fr,\et}$ has a homotopy $t$-structure.\qed
\end{cor}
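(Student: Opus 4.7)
The plan is to derive both $t$-structures by restricting the tautological $t$-structure on $D^{-}(\EST)$ (with heart $\EST$ and canonical truncations $\tau_{\leq n}, \tau_{\geq n}$) to the appropriate full triangulated subcategories. Under the finite virtual \'etale cohomological dimension hypothesis, the preceding proposition characterizes $\DM_{-,\et}^{\eff}$ as the full subcategory of $D^{-}(\EST)$ consisting of complexes whose cohomology sheaves all lie in $\HI_\et^s$. Since $\HI_\et^s$ is thick in $\EST$ (being the essential image of the exact localisation $F \mapsto F \otimes_{\Z} \Z[1/p]$ of Proposition \ref{pD.1.4}), and since the cohomology sheaves of $\tau_{\leq n}K$ and $\tau_{\geq n}K$ are either those of $K$ in the relevant range or zero, the standard truncations preserve $\DM_{-,\et}^{\eff}$. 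The heart, being the intersection of $D^{\leq 0}$ and $D^{\geq 0}$, consists of complexes concentrated in degree zero with value in $\HI_\et^s$, so it is $\HI_\et^s$ itself.

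For the $t$-exactness of $\alpha^{*} : \DM_{-}^{\eff} \to \DM_{-,\et}^{\eff}$, I would use the factorization from \cite[Remark 14.3]{VL} as $\DM_{-}^{\eff} \hookrightarrow D^{-}(\Shv_\Nis(SmCor)) \by{a^{*}} D^{-}(\EST) \by{RC} \DM_{-,\et}^{\eff}$. The functor $a^{*}$ is \'etale sheafification on the underlying sheaves, hence exact and $t$-exact for the naive $t$-structures. Voevodsky's homotopy $t$-structure on $\DM_{-}^{\eff}$ is obtained in exactly the same way as above, by restricting the naive $t$-structure on $D^{-}(\Shv_\Nis(SmCor))$ (using the Nisnevich analogue of the preceding proposition), so the first inclusion is $t$-exact on the nose. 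Finally, Corollary \ref{cD.1} identifies $\alpha^{*}$ on the heart $\HI$ with $F \mapsto F_\et[1/p]$, which lands in $\HI_\et^s$ by Proposition \ref{pD.1.4}; combined with the triangulated structure of $\alpha^{*}$, this forces $\alpha^{*}$ to send each of $D^{\leq 0}$ and $D^{\geq 0}$ to itself.

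For the second statement, the same construction applies to $\DM_{\fr,\et}$ using the friendly half of the preceding proposition. The additional ingredient needed is the stability of $\DM_{\fr,\et}$ under the standard truncations, which follows by tensoring the truncation triangle $\tau_{\leq n}C \to C \to \tau_{> n}C \by{+1}$ with $\Z/\ell$ and chasing in the long exact sequence of homology: if $H_q(C/\ell) = 0$ for $q > N(C)$ uniformly in primes $\ell \neq p$, then the same uniform bound (up to a shift by one) serves for each truncation. The main delicate point is verifying that the friendliness bound remains uniform in $\ell$ under truncation; the remainder of the argument is standard formalism once the preceding proposition is in hand.
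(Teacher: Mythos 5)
Your overall strategy is the same as what the paper leaves implicit: restrict the standard $t$-structure on $D^{-}(\EST)$ to $\DM_{-,\et}^{\eff}$ (resp.\ $\DM_{\fr,\et}$) via the characterization of $\Aff^1$-local complexes given by the preceding proposition, using that $\HI_\et^s$ is thick in $\EST$; and show that friendliness passes to truncations via the mod-$\ell$ long exact sequence (your shift-by-one bookkeeping there is correct, since $\tau_{>n}C$ is automatically bounded below). Those parts are fine and are the intended route.

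The step that does not stand as written is the last sentence of your $t$-exactness argument: that heart-preservation of $\alpha^{*}$, ``combined with the triangulated structure,'' forces $\alpha^{*}(D^{\leq 0})\subseteq D^{\leq 0}$ and $\alpha^{*}(D^{\geq 0})\subseteq D^{\geq 0}$. For bounded-above $t$-categories this is not a general principle: a triangulated functor sending heart to heart does send $D^{\geq n}$ to $D'^{\geq n}$ (induct on length, since those objects are bounded), but there is no a priori control on $H^{i}\bigl(T(\tau_{\leq -n}X)\bigr)$ in positive degree, so $T(D^{\leq 0})\subseteq D'^{\leq 0}$ does not follow from heart-preservation alone. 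To close it, invoke the preceding proposition one more time rather than only Corollary~\ref{cD.1}. You have already shown that $\DM_-^{\eff}\hookrightarrow D^{-}(\Shv_\Nis(SmCor(k)))\longby{a^{*}} D^{-}(\EST)$ is $t$-exact for the naive $t$-structures. For $K\in\DM_-^{\eff}$ the cohomology sheaves of $K_\et:=a^{*}K$ are \'etale sheafifications of homotopy invariant Nisnevich sheaves with transfers; by Lemma~\ref{lD.1.3} (via Proposition~\ref{pD.1.4}) their $\Z[1/p]$-linearizations lie in $\HI_\et^s$, and then the preceding proposition, applied to $K_\et[1/p]$, shows this whole complex is already $\Aff^{1}$-local. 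Hence $\alpha^{*}K\simeq K_\et[1/p]$ in $\DM_{-,\et}^{\eff}$, so $H^{i}(\alpha^{*}K)\simeq(H^{i}K)_\et[1/p]$ for all $i$, which is exactly $t$-exactness. In short, the ``if'' direction of the preceding proposition applied to the full complex is what plays the role of $RC$ being $t$-exact on the relevant image; heart-preservation by itself is insufficient.
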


\printindex

\end{document}